\newcommand{\assign}{:=}
\newcommand{\cdummy}{\cdot}
\newcommand{\longhookrightarrow}{{\lhook\joinrel\relbar\joinrel\rightarrow}}
\numberwithin{equation}{section} 
\newcommand{\nequiv}{\not\equiv}
\newcommand{\nocomma}{}
\newcommand{\noplus}{}
\newcommand{\nosymbol}{}
\newcommand{\tmem}[1]{{\em #1\/}}
\newcommand{\tmname}[1]{\textsc{#1}}
\newcommand{\tmop}[1]{\ensuremath{\operatorname{#1}}}
\newcommand{\tmsamp}[1]{\textsf{#1}}
\newcommand{\tmstrong}[1]{\textbf{#1}}
\newcommand{\tmtextbf}[1]{{\bfseries{#1}}}
\newcommand{\tmtexttt}[1]{{\ttfamily{#1}}}
\newenvironment{proof}{\noindent\textbf{Proof\ }}{\hspace*{\fill}$\Box$\medskip}
\newtheorem{corollary}{Corollary}
\newtheorem{definition}{Definition}
\newtheorem{lemma}{Lemma}
\newtheorem{proposition}{Proposition}
{\theorembodyfont{\rmfamily}\newtheorem{remark}{Remark}}
\newtheorem{theorem}{Theorem}
\newtheorem{Conjecture}{Conjecture}
\newcommand{\XXint}[3]{{\setbox}0=\text{\ensuremath{#1 #2 #3 \int}}
{\vcenter{\text{\ensuremath{#2 #3}}}}{\kern}-.5{\tmwd}0}
\newcommand{\opn}[2]{\newcommand{\1}{\}} {\opn}{\Rm{Rm}} {\opn}{\Ric{Ric}}
{\opn}{\Rc{Rc}} {\opn}{\Scal{Sc}} {\opn}{\Tr{Tr}} {\opn}{\Trac{Tr}}
{\opn}detdet {\opn}{\diam{diam}} {\opn}{\dist{dist}} {\opn}{\Im}Im
{\opn}{\div}div {\opn}{\Ker{Ker}} {\opn}expexp {\opn}{\Vol{Vol}}
{\opn}{\exph{exph}} {\opn}{\Herm{Herm}} {\opn}{\End{End}} {\opn}{\Hess{Hess}}
{\opn}{\Vol{Vol}}}
\newcommand{\R}{\mathbb{R}}
\newcommand{\contract}{{\kern}-1.5pt{\vrule} width6.0pt height0.4pt depth0pt
{\vrule} width0.4pt height4.0pt depth0pt}
\newcommand{\retract}{{\kern}-1.5pt{\vrule} width0.4pt height4.0pt depth0pt
{\vrule} width6.0pt height0.4pt depth0pt}
\newcommand{\Openbox}{{\leavevmode} {\hfil}{\vrule} width{\boxrulethickness}
{\vbox} to{\Openboxwidth{{\advance}{\Openboxwidth} -2{\boxrulethickness}
{\hrule} height {\boxrulethickness} width{\Openboxwidth}{\vfil} {\hrule}
height{\boxrulethickness}}}{\vrule} width{\boxrulethickness}{\hfil} }
\begin{document}

\title{The Soliton-Ricci Flow with variable volume forms}\author{\\
{\tmstrong{NEFTON PALI}}}\date{}\maketitle

\begin{abstract}
  We introduce a flow of Riemannian metrics and positive volume forms over
  compact oriented manifolds whose formal limit is a shrinking Ricci soliton.
  The case of a fixed volume form has been considered in our previous work. We
  still call this new flow, the Soliton-Ricci flow. It corresponds to a forward
  Ricci type flow up to a gauge transformation. This gauge is generated by the gradient of
  the density of the volumes. The new Soliton-Ricci flow exist for all times.
  It represents the gradient flow of Perelman's $\mathcal{W}$ functional with
  respect to a pseudo-Riemannian structure over the space of metrics and
  normalized positive volume forms. We obtain an expression of the Hessian of
  the $\mathcal{W}$ functional with respect to such structure. Our expression
  shows the elliptic nature of this operator in the orthogonal directions to the
  orbits obtained by the action of the group of diffeomorphism. In the case
  that initial data is K\"ahler, the Soliton-Ricci flow over a Fano manifold preserves the
  K\"ahler condition and the symplectic form. Over a Fano manifold, the space of tamed complex
  structures embeds naturally, via the Chern-Ricci map, into the space of metrics and normalized positive
  volume forms. Over such space the pseudo-Riemannian
  structure restricts to a Riemannian one. We perform a study of the sign of
  the restriction of the Hessian of the $\mathcal{W}$ functional over such
  space. This allows us to obtain a finite dimensional reduction of the stability problem for
  K\"ahler-Ricci solitons. This reduction represents the solution of this well known problem.
\end{abstract}
\section{Introduction and statement of the main result}

This is the first of a series of papers whose purpose is the study the following
problem.

Let $(X, J_0)$ be a Fano manifold. We remind that the first Chern class $c_1
(X, \left[ J_0 \right]) \in H^2_d (X, \mathbbm{R})$ depends only on $X$ and
the coboundary class $\left[ J_0 \right]$ of the complex structure $J$.

Let also $\omega \in 2 \pi c_1 (X, \left[ J_0 \right])$ be an arbitrary
$J_0$-invariant K\"ahler form over $X$. We want to find under which conditions
on $J_0$ and $\omega$ there exists a smooth complex structure $J \in \left[
J_0 \right]$ and a smooth volume form $\Omega > 0$ over $X$ such that
\[ \left\{ \begin{array}{l}
     \omega  = \tmop{Ric}_J (\Omega) \hspace{0.25em},\\
     \\
     \overline{\partial}_{T_{X, J}}  \left( \omega^{- 1} d \log
     \frac{\;\omega^n}{\Omega} \right)  =  0
     \hspace{0.25em},
   \end{array} \right. \]
i.e. the Riemannian metric $g \assign - \omega J$, is a $J$-invariant
K\"ahler-Ricci soliton.

This set up represents a particular case of the Hamilton-Tian conjecture with
a stronger conclusion. Namely we avoid the singularities in the solution of
the K\"ahler-Ricci soliton equation.

Proofs of the Hamilton-Tian conjecture have been posted on
the arXiv server in (2013) by Tian-Zhang \cite{Ti-Zha} in complex dimension 3 and
quite recently by Chen-Wang \cite{Ch-Wa} in arbitrary dimensions.

Our starting point of view is Perelman's twice contracted second Bianchi type
identity introduced in \cite{Per}.

We remind first what this is about. Let $\Omega > 0$ be a smooth volume form
over an oriented compact and connected Riemannian manifold $(X, g)$. We remind
that the $\Omega$-Bakry-Emery-Ricci tensor of $g$ is defined by the formula
\[ \tmop{Ric}_g (\Omega) \hspace{0.75em} : = \hspace{0.75em} \tmop{Ric} (g)
   \hspace{0.75em} + \hspace{0.75em} \nabla_g d \log \frac{dV_g}{\Omega} . \]
A Riemannian metric $g$ is called a $\Omega$-Shrinking Ricci soliton if $g = \tmop{Ric}_g (\Omega)$. 
We equip the set of smooth Riemannian metrics $\mathcal{M}$ with the scalar
product
\begin{equation}
  \label{Glb-Rm-m}  (u, v) \longmapsto \int_X \left\langle \hspace{0.25em} u,
  v \right\rangle_g \Omega,
\end{equation}
for all $u, v \in \mathcal{H} : = L^2 (X, S^2_{\mathbbm{R}} T_X^{\ast})$. Let
$P_g^{\ast}$ be the formal adjoint of some operator $P$ with respect to a
metric $g$. We observe that the operator
\begin{eqnarray*}
  P^{\ast_{\Omega}}_g & : = & e^f P^{\ast}_g  \left( e^{- f} \bullet \right),
\end{eqnarray*}
with $f \assign \log \frac{d V_g}{\Omega}$ , is the formal adjoint of $P$ with
respect to the scalar product (\ref{Glb-Rm-m}). We define also the
$\Omega$-Laplacian operator
\begin{eqnarray*}
  \Delta^{\Omega}_g & \assign & \nabla_g^{\ast_{\Omega}} \nabla_g = \Delta_g +
  \nabla_g f \neg \nabla_g .
\end{eqnarray*}
It is also useful to introduce the $\Omega$-divergence operator acting on
vector fields as follows:
\begin{eqnarray*}
  \tmop{div}^{\Omega} \xi & \assign & \frac{d (\xi \neg \Omega)}{\Omega} = e^f
  \tmop{div}_g \left( e^{- f} \xi \right) = \tmop{div}_g \xi - g \left( \xi,
  \nabla_g f \right) .
\end{eqnarray*}
(We denote by $\neg$ the contraction operator). We infer in particular the identity $\tmop{div}^{\Omega} \nabla_g u = -
\Delta^{\Omega}_g u$, for all functions $u$. We observe also the integration
by parts formula
\begin{eqnarray*}
  - \int_X u \tmop{div}^{\Omega} \xi \,\Omega & = & \int_X g (\nabla_g u, \xi)\,
  \Omega .
\end{eqnarray*}
We define now the following fundamental
objects
\begin{eqnarray*}
  h & \equiv & h_{g, \Omega} : = \tmop{Ric}_g (\Omega) - g \nocomma,\\
  &  & \\
  2 H & \equiv & 2 H_{g, \Omega} \assign - \Delta^{\Omega}_g f \noplus \noplus
  + \tmop{Tr}_g h \noplus + 2 f,\\
  &  & \\
  f & \assign & \log \frac{d V_g}{\Omega} .
\end{eqnarray*}
An elementary computation made by Perelman \cite{Per} (see also \cite{Pal2}) shows that the maps $h$ and $H$ 
satisfy {\tmstrong{Perelman's
twice contracted second Bianchi type identity}}
\begin{equation}
  \label{II-contr-Bianchi} \nabla_g^{\ast_{\Omega}} h_{g, \Omega}^{\ast}
  \noplus \noplus + \nabla_g H_{g, \Omega} = 0,
\end{equation}
where $h_{g, \Omega}^{\ast} \assign g^{- 1} h_{g, \Omega}$ is the endomorphism
associated to $h_{g, \Omega}$. We remind now that for any symmetric $2$-tensor
$u$ the tensor $\mathcal{R}_g \ast u$, defined by the formula
\begin{eqnarray*}
  ( \mathcal{R}_g \ast u) (\xi, \eta) & \assign & - \tmop{Tr}_g \left[ u
  \left( \mathcal{R}_g \left( \xi, \cdot \left) \eta, \cdot \right) \right],
  \right. \right.
\end{eqnarray*}
is also symmetric (see section \ref{main-res}). For any smooth symmetric $2$-tensor $u$ we
define the $\Omega$-Lichnerowicz Laplacian $\Delta^{\Omega}_{L, g}$ as
\begin{eqnarray*}
  \Delta^{\Omega}_{L, g} u & \assign & \Delta^{\Omega}_g u - 2\mathcal{R}_g
  \ast u + u \tmop{Ric}^{\ast}_g (\Omega) + \tmop{Ric}_g (\Omega) u_g^{\ast} .
\end{eqnarray*}
This operator is self-adjoint with respect to the scalar product
(\ref{Glb-Rm-m}) thanks to the identity
\begin{equation}
  \left\langle \mathcal{R}_g \ast u, v \right\rangle_g = \left\langle u,
  \mathcal{R}_g \ast v \right\rangle_g, \label{SymRmOp}
\end{equation}
for all symmetric $2$-tensors $u$ and $v$ (see section \ref{main-res}). We define also the set of normalized volume forms
$\mathcal{V}_1 \assign \left\{ \Omega > 0 \mid \int_X \Omega = 1 \right\}$.
>From now on we consider the maps $h$ and $H$ over
$\mathcal{M} \times \mathcal{V}_1$. Notice that the tangent space of
$\mathcal{M} \times \mathcal{V}_1$ is $T_{\mathcal{M} \times \mathcal{V}_1} =
C^{\infty} (X, S^2 T^{\ast}_X) \oplus C^{\infty} (X, \Lambda^m T^{\ast}_X)_0$,
where $m=\dim_{\R} X$ and
\begin{eqnarray*}
  C^{\infty} (X, \Lambda^m T^{\ast}_X)_0 & \assign & \left\{ V \in C^{\infty}
  (X, \Lambda^m T^{\ast}_X) \mid \int_X V = 0 \right\} .
\end{eqnarray*}
We denote by $\tmop{End}_g \left( T_X \right)$ the bundle of $g$-symmetric
endomorphisms of $T_X$ and by $C_{\Omega}^{\infty} (X, \mathbbm{R})_0$ the
space of smooth functions with zero integral with respect to $\Omega$. 
We will systematically use the fact that for any $\left( g, \Omega \right)
\in \mathcal{M} \times \mathcal{V}_1$ the tangent space $T_{\mathcal{M} \times
\mathcal{V}_1, \left( g, \Omega \right)}$ identifies with $C^{\infty} (X,
\tmop{End}_g \left( T_X \right)) \oplus C_{\Omega}^{\infty} (X,
\mathbbm{R})_0$ via the isomorphism
\begin{eqnarray*}
  \left( v, V \right) & \longmapsto & \left( v^{\ast}_g, V_{\Omega}^{\ast}
  \right) \assign \left( g^{- 1} v, V / \Omega \right) .
\end{eqnarray*}
With these notations holds the fundamental variation formulas
\begin{equation}
  \label{var-h} 2 D_{g, \Omega} h \left( v, V \right) = \Delta^{\Omega}_{L, g}
  v - L_{\nabla_g^{\ast_{\Omega}} v_g^{\ast} + \nabla_g V^{\ast}_{\Omega}} g -
  2 v ,
\end{equation}
and
\begin{equation}
  \label{var-H} 2 D_{g, \Omega} H \left( v, V \right) = \Delta^{\Omega}_g
  V^{\ast}_{\Omega} - \left( L_{\nabla_g^{\ast_{\Omega}} v_g^{\ast} + \nabla_g
  V^{\ast}_{\Omega}} \Omega \right)_{\Omega}^{\ast} - 2 V^{\ast}_{\Omega} -
  \left\langle v, h_{g, \Omega} \right\rangle_g ,
\end{equation}
where $L_\xi$ denotes the Lie derivative in the direction $\xi$.
(We will give a detailed proof in section \ref{main-res}). We infer that the variations of
the non-linear operators $h$ and $H$ are strictly elliptic in restriction to the space
\begin{eqnarray*}
  \mathbbm{F}_{g, \Omega} & \assign & \left\{ (v, V) \in T_{\mathcal{M} \times
  \mathcal{V}_1} \mid \nabla_g^{\ast_{\Omega}} v_g^{\ast} + \nabla_g
  V^{\ast}_{\Omega} = 0 \right\} .
\end{eqnarray*}
This fact strongly suggests that the following flow represents a strictly
parabolic system.

\begin{definition}
  The Soliton-Ricci flow is the smooth curve $(g_t, \Omega_t)_{t \geqslant 0}
  \subset \mathcal{M} \times \mathcal{V}_1$ solution of the evolution system
 \[ \left\{ \begin{array}{l}
     \dot{g}_t = - h_{g_t, \Omega_t},\\
     \\
     \dot{\Omega}_t = - \underline{H}_{g_t, \Omega_t} \Omega_t,
   \end{array} \right. \] 
  with
  \begin{eqnarray*}
    \underline{H}_{g, \Omega} & \assign & H_{g, \Omega} - \int_X H_{g, \Omega}
    \Omega .
  \end{eqnarray*}
\end{definition}
Indeed this is the case. We show the strict parabolic statement in the proof of the following basic fact.

\begin{lemma}
  \label{Exist-SRF}For every $( \check{g}_0, \Omega_0) \in \mathcal{M} \times
  \mathcal{V}_1$ there exists a unique smooth solution $(g_t, \Omega_t)_{t
  \geqslant 0} \subset \mathcal{M} \times \mathcal{V}_1$ of the Soliton-Ricci
  flow equation with initial data $( \check{g}_0 / \lambda, \Omega_0)$, for
  some $\lambda > 0$. In the case $(X, J_0)$ is a Fano variety and 
  $
  \check{g}_0$ is $J_0$ invariant K\"ahler such that
  $
  \check{g}_0 J_0 \in 2 \pi c_1 (X)$, we can choose $\lambda = 1$. In this case
  the Soliton-Ricci-flow represents a smooth family of K\"ahler structures and
  normalized positive volumes $(J_t, g_t, \Omega_t)_{t \geqslant 0}$ uniquely
  determined by the evolution system
  \[ \left\{ \begin{array}{l}
       \dot{g}_t = - h_{g_t, \Omega_t},\\
       \\
       \dot{\Omega}_t = - \underline{H}_{g_t, \Omega_t} \Omega_t,\\
       \\
       2 \dot{J}_t = \left[ J_t, \dot{g}_t^{\ast} \right].
     \end{array} \right. \]
We call the latter the {\tmstrong{Soliton-K\"ahler-Ricci flow}}.
\end{lemma}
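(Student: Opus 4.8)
The plan is threefold: establish short-time existence and uniqueness by a DeTurck-type gauge fixing built on the variation formulas (\ref{var-h})--(\ref{var-H}); upgrade this to all-time existence by a priori estimates on the gauge-fixed flow; and, in the K\"ahler case, identify the complex-structure evolution $2\dot J_t=[J_t,\dot g_t^{\ast}]$ with the one induced by the gauge diffeomorphisms.

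First I would fix a background pair $(\bar g,\bar\Omega)$ and introduce a DeTurck-type vector field $W=W(g,\Omega;\bar g,\bar\Omega)$ whose principal part cancels the degeneracy terms $L_{\nabla_g^{\ast_\Omega}v_g^{\ast}+\nabla_gV_\Omega^{\ast}}(\cdot)$ appearing in (\ref{var-h})--(\ref{var-H}); up to a background-dependent piece, $W$ reduces to the gradient of the density $\log\frac{dV_g}{\Omega}$, which is exactly what absorbs the Bakry--Emery Hessian in $h$ (this is the gauge described in the abstract as generated by the gradient of the density). Then consider the gauge-fixed flow $\dot g_t=-h_{g_t,\Omega_t}+L_{W_t}g_t$, $\dot\Omega_t=-\underline H_{g_t,\Omega_t}\Omega_t+L_{W_t}\Omega_t$, with $W_t=W(g_t,\Omega_t;\bar g,\bar\Omega)$ and $(g_0,\Omega_0)=(\check g_0/\lambda,\Omega_0)$. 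By the strict ellipticity of $D_{g,\Omega}h$ and $D_{g,\Omega}H$ in restriction to $\mathbbm{F}_{g,\Omega}$ recorded above, this is a strictly parabolic quasilinear system on the compact manifold $X$, so standard parabolic theory yields a unique smooth solution $(\tilde g_t,\tilde\Omega_t)$ on a maximal time interval $[0,T)$. Since $\int_X L_{W_t}\tilde\Omega_t=0$ by Stokes and $\int_X\underline H_{\tilde g_t,\tilde\Omega_t}\tilde\Omega_t=0$ by the very definition of $\underline H$, the normalization $\int_X\tilde\Omega_t=1$ is conserved, hence $(\tilde g_t,\tilde\Omega_t)\in\mathcal{M}\times\mathcal{V}_1$. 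To recover the genuine flow, solve $\dot\varphi_t=-W_t\circ\varphi_t$ with $\varphi_0=\mathrm{id}$, giving orientation-preserving diffeomorphisms on $[0,T)$, and set $(g_t,\Omega_t):=(\varphi_t^{\ast}\tilde g_t,\varphi_t^{\ast}\tilde\Omega_t)$: the naturality of $h$ and $\underline H$ under diffeomorphisms together with the identity $\partial_t\varphi_t^{\ast}\alpha_t=\varphi_t^{\ast}(\dot\alpha_t-L_{W_t}\alpha_t)$ shows $(g_t,\Omega_t)$ solves the Soliton-Ricci flow, and the normalization survives because diffeomorphisms preserve $\int_X\Omega$. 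Conversely, any Soliton-Ricci solution becomes, under the inverse of its own DeTurck gauge, a solution of the gauge-fixed system with the same initial data, so uniqueness transfers; this also yields the strict parabolicity statement announced before the Definition.

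To obtain $T=+\infty$ I would derive uniform a priori estimates for the gauge-fixed flow: first a two-sided bound $C^{-1}\bar g\leq\tilde g_t\leq C\bar g$ by the maximum principle applied to $\tmop{Tr}_{\bar g}\tilde g_t$ and to $\tmop{Tr}_{\tilde g_t}\bar g$---here the expanding zeroth-order term $+g$ inside $-h$ and the coupling with $\tilde\Omega_t$ are what make the barrier argument close---then uniform higher-order bounds by Shi-type interior estimates, which exclude finite-time blow-up. This is precisely where the initial rescaling $\check g_0\mapsto\check g_0/\lambda$ enters: $\lambda$ is chosen so that the initial data lies in the range for which these barriers are available. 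I expect this long-time estimate to be the main obstacle; everything preceding it is the standard DeTurck package.

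Finally, for the K\"ahler case take $\check g_0$ to be $J_0$-K\"ahler with $\check g_0J_0\in2\pi c_1(X)$, and $\bar g$ likewise $J_0$-K\"ahler. Since $\tmop{Ric}_g(\Omega)$ has $(1,1)$-part cohomologous to $2\pi c_1(X)$, the class $[-g_tJ_t]=2\pi c_1(X)$ is preserved, which supplies automatic volume and $C^0$ control and lets one take $\lambda=1$; moreover the gauge-fixed flow, run inside the $J_0$-K\"ahler metrics, descends to a parabolic Monge--Amp\`ere-type system for the K\"ahler potential and the volume density, whose all-time existence follows from the standard Monge--Amp\`ere a priori estimates, analogous to the K\"ahler--Ricci flow on Fano manifolds; thus $T=+\infty$ with $\tilde J\equiv J_0$. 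The requirement that the gauge-fixed flow remain $J_0$-K\"ahler forces the density-gradient part of $W$ to be exactly $\tfrac12\nabla^g\log\frac{dV_g}{\Omega}$ (so that the non-$J_0$-invariant part of $L_Wg$ cancels that of $-h$), while the remaining background-dependent part of $W$ can be chosen $J_0$-real-holomorphic. Now put $J_t:=\varphi_t^{\ast}J_0$: being the pullback of an integrable complex structure it is integrable, $g_t=\varphi_t^{\ast}\tilde g_t$ is $J_t$-K\"ahler, and differentiating, using $\nabla^{\tilde g_t}J_0=0$ and $L_WJ_0=[J_0,\nabla_{\cdot}W]$, gives $\dot J_t=-[J_t,\nabla^{g_t}_{\cdot}(\varphi_t^{\ast}W_t)]$. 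Since $\varphi_t^{\ast}W_t$ equals $\tfrac12\nabla^{g_t}\log\frac{dV_{g_t}}{\Omega_t}$ modulo a field whose covariant derivative commutes with $J_t$, and since $\tmop{Ric}^{\ast}_{g_t}$ and $\mathrm{id}$ commute with $J_t$, this bracket simplifies to $\tfrac12[J_t,-h_{g_t,\Omega_t}^{\ast}]$, i.e.\ $2\dot J_t=[J_t,\dot g_t^{\ast}]$, and this ODE in $J_t$ pins down the family uniquely. Lastly $\omega_t:=-g_tJ_t=\varphi_t^{\ast}(-\tilde g_tJ_0)=\varphi_t^{\ast}\tilde\omega_t$, and choosing the primitive in the $dd^c$-lemma so that $W_t$ is the Moser field of the exact deformation $\dot{\tilde\omega}_t$ makes $\varphi_t^{\ast}\tilde\omega_t$ time-independent; hence $\omega_t\equiv-\check g_0J_0$ and the symplectic form is preserved.
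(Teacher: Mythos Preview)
Your DeTurck package would give short-time existence and uniqueness, but the long-time step has a genuine gap, and you miss the structural observation that makes the paper's argument work.

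The paper pulls back by the flow of $\tfrac12\nabla_{g_t}f_t$ alone (no background DeTurck piece), where $f_t=\log\frac{dV_{g_t}}{\Omega_t}$. The crucial point is that after this pullback the system \emph{decouples}: the metric $\hat g_t:=\varphi_t^\ast g_t$ evolves by the pure normalized Ricci flow $\partial_t\hat g_t=\hat g_t-\operatorname{Ric}(\hat g_t)$, with no dependence on $\hat f_t$ or $\hat\Omega_t$, while $\hat f_t$ satisfies a scalar linear parabolic equation on this evolving background. Long-time existence of $\hat g_t$ then comes not from any barrier but from the explicit substitution $\hat g_t=\frac{e^t}{2T}\,\check g_{T(1-e^{-t})}$, which stretches a finite Ricci-flow segment $(\check g_s)_{s\in[0,T)}$ to all $t\ge0$; this is precisely what fixes $\lambda=2T$. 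Your proposed maximum-principle bound on $\operatorname{Tr}_{\bar g}\tilde g_t$ cannot close in general: rescaling the initial metric only rescales Ricci-flow time and does nothing to prevent curvature blow-up, and no such two-sided barrier holds for all time along a generic normalized Ricci flow. In the Fano case the paper simply invokes the known all-time existence of the normalized K\"ahler--Ricci flow (hence $\lambda=1$), again via the decoupling; no coupled Monge--Amp\`ere analysis for $(g,\Omega)$ is needed.

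For the K\"ahler-structure evolution the paper's route is also shorter than yours: with $J_t:=\psi_t^\ast J_0$, $\psi_t=\varphi_t^{-1}$, one gets $\dot J_t=-J_t\,\overline{\partial}_{T_{X,J_t}}\nabla_{g_t}f_t$ directly from $\varphi_t^\ast J_t\equiv J_0$, and $2\dot J_t=[J_t,\dot g_t^\ast]$ then follows from the $J_t$-linearity of $\operatorname{Ric}^\ast(g_t)+\partial^{g_t}_{T_{X,J_t}}\nabla_{g_t}f_t$ in the complex decomposition of $\operatorname{Ric}^\ast_{g_t}(\Omega_t)$, with no need to track a separate background-holomorphic piece of the gauge field.
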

Let $\tmop{Ric}_{J} (\Omega)$ be the Chern-Ricci form associated to the volume form $\Omega$ with respect to the complex structure $J$.
We will show in section 3 that, if the initial data $(J_0, g_0, \Omega_0)$
satisfies
\begin{eqnarray*}
  \omega  \assign  g_0 J_0 = \tmop{Ric}_{J_0} (\Omega_0), \int_X \Omega_0 =
  1,
\end{eqnarray*}
then the Soliton-K\"ahler-Ricci flow equation is equivalent with the evolution
system

\begin{equation}
  \label{SSCool-SKRF}  \left\{ \begin{array}{l}
    \omega  = \tmop{Ric}_{J_t} (\Omega_t) \hspace{0.25em},
    \int_X \Omega_t = 1,\\
    \\
    \dot{J}_t =  \overline{\partial}_{T_{X, J_t}}  \left(
    \omega^{- 1} d \log \frac{\;\omega^n}{\Omega_t} \right) .
  \end{array} \right.
\end{equation}
Thus the Soliton-K\"ahler-Ricci flow preserves the initial symplectic
structure $\omega$.

Over a $m$-dimensional compact Riemannian manifold $\left( X, g \right)$ we
consider Perelman's $\mathcal{W}$-functional \cite{Per}
\begin{eqnarray*}
  \mathcal{W} (g, f) & : = & \int_X \left[ | \nabla_g f|^2_g  +
   \tmop{Scal} (g)  +  2 f
   - m \right] e^{- f} dV_g\\
  &  & \\
  & = & \int_X \left[ - \Delta_g f  + 
  \tmop{Scal} (g) +  2 f  - m
  \right] e^{- f} dV_g  .
\end{eqnarray*}
(We can use here the identity $\Delta_g e^{- f} = - (| \nabla_g f|^2_g +
\Delta_g f) e^{- f}$). If we use the identifications $f \longleftrightarrow
\Omega \assign e^{- f} d V_g$ and $\mathcal{W} (g, f) \equiv \mathcal{W} (g,
\Omega)$, then
\[ \mathcal{W} (g, \Omega)  =  \int_X \left[
   \tmop{Tr}_g h_{g, \Omega}  + 2 \log
   \frac{dV_g}{\Omega} \right] \Omega \hspace{0.25em} = 2 \int_X H_{g, \Omega}
   \Omega . \]
With these notations Perelman's first variation formula for the functional\\
$\mathcal{W}: \mathcal{M} \times \mathcal{V}_1 \longrightarrow \mathbbm{R}$ in
\cite{Per} writes as
\begin{eqnarray*}
  D_{g, \Omega} \mathcal{W} (v, V)  =  - \int_X \left[ \left\langle v, h_{g,
  \Omega} \right\rangle_g - 2 V^{\ast}_{\Omega}  \underline{H}_{g, \Omega}
  \right] \Omega .
\end{eqnarray*}
We consider the pseudo-Riemannian structure over the space
$\mathcal{M} \times \mathcal{V}_1$ given by the formula $(g, \Omega) \in
\text{$\mathcal{M} \times \mathcal{V}_1$} \longmapsto G_{g, \Omega}$, with
\begin{eqnarray*}
  G_{g, \Omega}  (u, U ; v, V)  =  \int_X \left[ \left\langle u, v
  \right\rangle_g - 2\, U^{\ast}_{\Omega} V^{\ast}_{\Omega} \right] \Omega,
\end{eqnarray*}
for all $(u, U), (v, V) \in T_{\mathcal{M} \times \mathcal{V}_1}$. We
infer the identity
\begin{eqnarray*}
  \nabla_G \mathcal{W} (g, \Omega)  =  - \left( h_{g, \Omega},
  \underline{H}_{g, \Omega} \,\Omega\right) .
\end{eqnarray*}
This shows that the Soliton-Ricci flow is the gradient flow of the
$\mathcal{W}$ functional with respect to the pseudo-Riemmanian structure $G$.
Perelman's twice contracted second Bianchi identity (\ref{II-contr-Bianchi})
implies the equality
\begin{eqnarray*}
  \left\{ (g, \Omega) \in \mathcal{M} \times \mathcal{V}_1 \mid D_{g, \Omega}
  \mathcal{W}= 0 \right\}  =  \left\{ (g, \Omega) \in \mathcal{M} \times
  \mathcal{V}_1 \mid h_{g, \Omega} = 0 \right\},
\end{eqnarray*}
i.e the critical points of $\mathcal{W}$ are precisely the shrinking Ricci
solitons. We provide at this point a geometric interpretation of the space
$\mathbbm{F}_{g, \Omega}$. Let 
\begin{eqnarray*}
  \left[ g, \Omega \right]  =  \tmop{Diff}_0 (X) \cdot (g, \Omega),
\end{eqnarray*}
be the orbit of the point $(g, \Omega)$ under the action of the identity component $\tmop{Diff}_0 (X)$ of the
group of smooth diffeomorphisms of $X$. 
Then $\mathbbm{F}_{g, \Omega}$ represents the orthogonal space, with respect to
$G$, to the tangent space $T_{\left[ g, \Omega \right], (g, \Omega)}$ at the point $(g, \Omega) \in \mathcal{M} \times
\mathcal{V}_1$ of the orbit $\left[ g, \Omega \right] $. In formal terms
holds the equality
\begin{equation}
  T^{\bot_G}_{\left[ g, \Omega \right], (g, \Omega)} =\mathbbm{F}_{g, \Omega}
  . \label{F-OrtoOrb}
\end{equation}
We define the anomaly space of the pseudo-Riemannian structure $G$ at an
arbitrary point $(g, \Omega)$ as the vector space
\begin{eqnarray*}
  \mathbbm{A}^{\Omega}_g  \assign  \mathbbm{F}_{g, \Omega} \cap T_{[g,
  \Omega], g, \Omega} .
\end{eqnarray*}
In the case $(g, \Omega)$ is a shrinking Ricci-Soliton then the map
\begin{eqnarray*}
  \tmop{Ker} (\Delta^{\Omega}_g - 2\mathbbm{I}) & \longrightarrow &
  \mathbbm{A}^{\Omega}_g\\
  &  & \\
  u & \longmapsto & 2 \left( \nabla_g d u, - u \Omega \right),
\end{eqnarray*}
is an isomorphism (see section \ref{Anomaly}). In the case $(J, g, \Omega)$ is a
K\"ahler-Ricci soliton then $\mathbbm{A}^{\Omega}_g$ is canonically isomorphic
with the space of Killing vector fields of $g$. This is a consequence of a non
trivial result (see corollary \ref{zero-ortog}).

We denote by $\nabla^2_G \mathcal{W} (g, \Omega)$ the Hessian endomorphism of
the $\mathcal{W}$ functional with respect to the pseudo-Riemannian structure
$G$ at the point $(g, \Omega) \in \mathcal{M} \times \mathcal{V}_1$. We show
in lemma \ref{Sec-Var-W} that its restriction to the space $\mathbbm{F}_{g, \Omega}$ is a
strictly elliptic operator for any point $\left( g, \Omega \right)$. A simple
consequence of Perelman's twice contracted second Bianchi type identity
(\ref{II-contr-Bianchi}) is that the map
\begin{equation}
\nabla^2_G \mathcal{W} (g, \Omega) : \mathbbm{F}_{g, \Omega}
   \longrightarrow \mathbbm{F}_{g, \Omega}, \label{WellHesWF}
\end{equation}
is well defined in the case $(g, \Omega)$ is a shrinking Ricci-Soliton (see section \ref{inv-F}). In
this case holds also the inclusion
\[ \mathbbm{A}^{\Omega}_g \subseteq \mathbbm{F}_{g, \Omega} \cap \tmop{Ker}
   \nabla^2_G \mathcal{W} (g, \Omega) . \]
(See lemma \ref{Ker-D2W}). In general (see section \ref{inv-F}) for any point $(g, \Omega)$ holds the fundamental and
deep property
\begin{equation}
  \nabla^2_G \mathcal{W} (g, \Omega)  (h_{g, \Omega}, \underline{H}_{g,
  \Omega} \Omega) \in \mathbbm{F}_{g, \Omega} . \label{fundam-property-SRF}
\end{equation}
This is quite crucial for the stability of the Soliton-K\"ahler-Ricci flow
(see \cite{Pal7}). The following basic fact is a meaningful geometric reformulation of
the monotony statement for Perelman's $\mathcal{W}$ functional discovered by
the author in 2006 \cite{Pal1} and published in 2008.

\begin{lemma}
  \label{Monot-SKRF}Let $(X, J)$ be a Fano manifold, let $g$ be a
  $J$-invariant K\"ahler metric with symplectic form $\omega \assign g J \in 2
  \pi c_1 (X, \left[ J \right])$ and let $\Omega > 0$ be the unique smooth
  volume form with $\int_X \Omega = 1$ such that $\omega = \tmop{Ric}_J
  (\Omega)$. Then Perelman's $\mathcal{W}$ functional is monotone increasing
  along the Soliton-K\"ahler-Ricci flow with initial data $(J_0, g_0, \Omega_0) = (J, g, \Omega)$.
  The monotony is strict unless $(J, g)$ is a K\"ahler-Ricci soliton.
\end{lemma}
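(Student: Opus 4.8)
The plan is to differentiate $\mathcal{W}$ along the flow and to recognise the outcome as a nonnegative quantity by exploiting the K\"ahler constraint. By Lemma~\ref{Exist-SRF} the data $(J, g, \Omega)$ here produces a genuine Soliton-K\"ahler-Ricci flow $(J_t, g_t, \Omega_t)_{t \geq 0}$ (the scaling factor is $\lambda = 1$) which keeps the symplectic form $\omega = g_t J_t$ fixed, and, since $\omega = \tmop{Ric}_{J_0}(\Omega_0)$, the discussion preceding the lemma shows this flow is equivalent to the system~\eqref{SSCool-SKRF}; in particular $\omega = \tmop{Ric}_{J_t}(\Omega_t)$ and $\int_X \Omega_t = 1$ for all $t$. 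Inserting the Soliton-Ricci velocity $(\dot{g}_t, \dot{\Omega}_t) = (- h_{g_t, \Omega_t}, - \underline{H}_{g_t, \Omega_t} \Omega_t)$ into Perelman's first variation formula for $\mathcal{W}$ gives
\[ \frac{d}{dt}\, \mathcal{W} (g_t, \Omega_t) \; = \; \int_X \left( |h_{g_t, \Omega_t}|_{g_t}^2 \; - \; 2\, \underline{H}_{g_t, \Omega_t}^2 \right) \Omega_t \, , \]
which is exactly the value of $G_{g_t, \Omega_t}$ on the pair $(\dot{g}_t, \dot{\Omega}_t) = \nabla_G \mathcal{W}(g_t, \Omega_t)$ with itself, so the whole lemma reduces to the nonnegativity of this number.

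Since $G$ is only pseudo-Riemannian, its positivity has to be extracted from the K\"ahler hypothesis. First, because $\omega = \tmop{Ric}_{J_t}(\Omega_t)$, the $J_t$-invariant part of $h_{g_t, \Omega_t} = \tmop{Ric}_{g_t}(\Omega_t) - g_t$ vanishes (its associated $(1,1)$-form is $\tmop{Ric}_{J_t}(\Omega_t) - \omega = 0$), hence $h_{g_t, \Omega_t}$ is $J_t$-anti-invariant and $g_t$-traceless; so $dV_{g_t} = \omega^n / n!$ is independent of $t$, $f_t = \log \tfrac{\omega^n}{n!\, \Omega_t}$, the $\Omega_t$-scalar curvature $\tmop{Tr}_{g_t} \tmop{Ric}_{g_t}(\Omega_t)$ equals the constant $m$, and the definition $2 H = - \Delta^{\Omega}_g f + \tmop{Tr}_g h + 2 f$ collapses to $2\, \underline{H}_{g_t, \Omega_t} = (2\, \mathbbm{I} - \Delta^{\Omega_t}_{g_t})( f_t - \bar{f}_t )$, $\bar{f}_t := \int_X f_t\, \Omega_t$. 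Second, $2 \dot{J}_t = [ J_t, \dot{g}_t^{\ast} ] = - [ J_t, h_{g_t, \Omega_t}^{\ast} ]$ together with the anti-invariance gives $\dot{J}_t = - J_t\, h_{g_t, \Omega_t}^{\ast}$, so $|h_{g_t, \Omega_t}|_{g_t}^2 = |\dot{J}_t|_{g_t}^2$, while~\eqref{SSCool-SKRF} reads $\dot{J}_t = \overline{\partial}_{T_{X, J_t}} \bigl( \omega^{- 1} d \log \tfrac{\omega^n}{\Omega_t} \bigr)$; thus $\int_X |h_{g_t, \Omega_t}|_{g_t}^2 \Omega_t$ is the $\Omega_t$-weighted Lichnerowicz energy of $f_t$. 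Third, I would substitute the $\Omega$-weighted Bochner and Bochner-Kodaira identities for the real Hessian of $f_t$ and for its $(1,1)$-component, using the constraint $\tmop{Ric}_{J_t}(\Omega_t) = \omega$ (which makes the $(1,1)$-part of the $\Omega_t$-Bakry-Emery-Ricci tensor equal $g_t$ and the $\Omega_t$-scalar curvature constant, so every curvature correction is under control) into the displayed identity, and rewrite its right-hand side as a manifestly nonnegative quadratic expression in $f_t - \bar{f}_t$. Equivalently this is the assertion that $G$ restricts to a Riemannian structure along the image of the Chern-Ricci map, i.e. $\int_X |h_{g, \Omega}|_g^2\, \Omega \geq 2 \int_X \underline{H}_{g, \Omega}^2\, \Omega$ on the K\"ahler locus, which is the geometric reformulation of the monotonicity of~\cite{Pal1}.

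For the equality discussion: the derivative vanishes at $t$ precisely when the velocity $(h_{g_t, \Omega_t}, \underline{H}_{g_t, \Omega_t} \Omega_t)$ is $G$-isotropic, hence by the positivity above precisely when $h_{g_t, \Omega_t} = 0$; Perelman's twice contracted second Bianchi identity~\eqref{II-contr-Bianchi} then forces $\nabla_{g_t} H_{g_t, \Omega_t} = 0$, i.e. $\underline{H}_{g_t, \Omega_t} = 0$, and in the K\"ahler case $h_{g_t, \Omega_t} = 0$ is equivalent to $(J_t, g_t, \Omega_t)$ being a K\"ahler-Ricci soliton (vanishing of the $J_t$-invariant part is the identity $\omega = \tmop{Ric}_{J_t}(\Omega_t)$, already valid, and vanishing of the $J_t$-anti-invariant part is holomorphy of $\omega^{- 1} d \log \tfrac{\omega^n}{\Omega_t}$). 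Therefore, if $(J, g)$ is already a K\"ahler-Ricci soliton then $h_{g_0, \Omega_0} = \underline{H}_{g_0, \Omega_0} = 0$ and, by the uniqueness part of Lemma~\ref{Exist-SRF}, the flow is stationary and $\mathcal{W}$ is constant; otherwise $h_{g_0, \Omega_0} \neq 0$, the derivative $\frac{d}{dt} \mathcal{W}(g_t, \Omega_t)$ is strictly positive at $t = 0$, and, being $\geq 0$ for all $t$, the function $\mathcal{W}$ is strictly increasing along the flow. The main obstacle is the third point of the middle paragraph --- producing the manifestly nonnegative form, equivalently proving that $G$ restricts to a Riemannian structure along the K\"ahler locus: the weighted Bochner-Kodaira bookkeeping, combined carefully with $\tmop{Ric}_{J_t}(\Omega_t) = \omega$, is the whole analytic content, everything else being formal.
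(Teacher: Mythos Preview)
Your framework coincides with the paper's second approach (its Step~IIb): differentiate $\mathcal{W}$ via Perelman's first variation to obtain $\frac{d}{dt}\mathcal{W}=\int_X(|h_t|^2_{g_t}-2\,\underline{H}_t^2)\,\Omega_t$, identify $h_t=g_t\,\overline{\partial}_{T_{X,J_t}}\nabla_{g_t}f_t$ and $2\,\underline{H}_t=-(\Delta^{\Omega_t}_{g_t}-2\mathbbm{I})F_t$ using $\omega=\tmop{Ric}_{J_t}(\Omega_t)$, and then establish the pointwise-in-$t$ inequality $\int_X|h|^2_g\,\Omega\geq 2\int_X\underline{H}^2\,\Omega$. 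All of this is correct. The genuine gap is exactly the step you flag as ``the whole analytic content'': you do not prove this inequality, only gesture at ``weighted Bochner--Kodaira bookkeeping.''

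Here is what that bookkeeping actually is. Pair Perelman's Bianchi identity~\eqref{II-contr-Bianchi} with $\nabla_g f$ and integrate by parts to obtain $\int_X\langle h^\ast,\nabla^2_g f\rangle_g\,\Omega=-\int_X\underline{H}\,\Delta^\Omega_g f\,\Omega$; since $h^\ast=\overline{\partial}_{T_{X,J}}\nabla_g f$ is the $J$-anti-linear piece of $\nabla^2_g f$, the left side equals $\int_X|\overline{\partial}_{T_{X,J}}\nabla_g f|^2_g\,\Omega$, and one finds $2\int_X|\overline{\partial}_{T_{X,J}}\nabla_g f|^2_g\,\Omega=\int_X(\Delta^\Omega_g-2\mathbbm{I})f\cdot\Delta^\Omega_g f\,\Omega$. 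A short algebraic rearrangement then gives
\[
\int_X\bigl(|h|^2_g-2\,\underline{H}^2\bigr)\,\Omega\;=\;\int_X(\Delta^\Omega_g-2\mathbbm{I})F\cdot F\,\Omega\,,
\]
and this is nonnegative because $\lambda_1(\Delta^\Omega_g)\geq 2$ on the K\"ahler locus $\omega=\tmop{Ric}_J(\Omega)$. That eigenvalue bound is not an abstract Bochner miracle: it is Futaki's estimate $\lambda_1(\Delta^\Omega_{g,J})\geq 2$ for the weighted \emph{complex} Laplacian (a consequence of the complex Bochner identity~\eqref{dbar-OmBoch-fnct} with $\tmop{Ric}^\ast_J(\Omega)_\omega=\mathbbm{I}$), together with $\lambda_1(\Delta^\Omega_g)\geq\lambda_1(\Delta^\Omega_{g,J})$. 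The paper also gives a slightly shorter alternative (its Step~IIa): use $\mathcal{W}(g_t,\Omega_t)=2\int_X f_t\,\Omega_t$ and the evolution $2\dot f_t=-(\Delta^{\Omega_t}_{g_t}-2\mathbbm{I})F_t$ to get the displayed identity directly, bypassing the Bianchi step.

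Your equality discussion is also too quick. ``$G$-isotropic $\Rightarrow h=0$'' is not automatic from positivity of a pseudo-metric; what the displayed identity gives is that equality forces $\int_X(\Delta^\Omega_g-2\mathbbm{I})F\cdot F\,\Omega=0$, and then a Lagrange-multiplier argument on the variational characterisation of $\lambda_1$ shows $\Delta^\Omega_g F=2F$ (or $F=0$), which by the characterisation Lemma~\ref{Carac-KRS} is precisely the K\"ahler--Ricci soliton equation. Only after that do you get $h=0$.
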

>From now on we will refer to the Soliton-K\"ahler-Ricci flow only if the
initial data are as in the previous lemma. Let $\mathcal{J}_{\tmop{int}}$ be the space of smooth integrable complex
structures over $X$. We consider the space of $\omega$-compatible
complex structures
\begin{eqnarray*}
  \mathcal{J}_{\omega} & \assign & \left\{ J \in \mathcal{J}_{\tmop{int}} \mid
  \omega = J^{\ast} \omega J, \omega J < 0 \right\}.
\end{eqnarray*}
Over a Fano manifold, the Chern-Ricci map provides a natural embedding of $\mathcal{J}_{\omega}$ inside 
$\mathcal{M} \times\mathcal{V}_1$. The image $\mathcal{S}_{\omega} \subset \mathcal{M} \times
\mathcal{V}_1$ of this embedding is
\begin{eqnarray*}
  \mathcal{S}_{\omega} & \assign & \left\{ (g, \Omega) \in
  \mathcal{M}_{\omega} \times \mathcal{V}_1 \mid \omega = \tmop{Ric}_J
  (\Omega), J = g^{- 1} \omega \right\} ,
\end{eqnarray*}
with $\mathcal{M}_{\omega} \assign - \omega \cdot
\mathcal{J}_{\omega} \subset \mathcal{M}$. 
The fact that the
Soliton-K\"ahler-Ricci flow preserves the symplectic form $\omega$ strongly
suggests the study of the restriction of Perelman's $\mathcal{W}$ functional
over $\mathcal{S}_{\omega}$.

The space $\mathcal{J}_{\omega}$ may be singular in general. This
implies that also the space $\mathcal{S}_{\omega}$ may be singular. We denote
by $\tmop{TC}_{\mathcal{S}_{\omega}, (g, \Omega)}$ the tangent cone of
$\mathcal{S}_{\omega}$ at an arbitrary point $(g, \Omega) \in
\mathcal{S}_{\omega}$. This is by definition the union of all tangent vectors
of $\mathcal{S}_{\omega}$ at the point $\left( g, \Omega \right)$. We notice
that (see for example \cite{Pal3}) the tangent cone
$\tmop{TC}_{\mathcal{M}_{\omega}, g}$ of $\mathcal{M}_{\omega}$ at an
arbitrary point $g \in \mathcal{M}_{\omega}$ satisfies the inclusion
\begin{equation}
  \tmop{TC}_{\mathcal{M}_{\omega}, g} \subseteq \mathbbm{D}^J_{g, \left[ 0
  \right]}, \label{TConeM}
\end{equation}
with
\begin{eqnarray*}
  \mathbbm{D}^J_{g, [0]}  \assign  \left\{ v \in C^{\infty} \left( X,
  S_{\mathbbm{R}}^2 T^{\ast}_X \right)_{_{_{_{_{_{}}}}}} \mid \hspace{0.25em}
 v = -J^{\ast} v J \;
  ,\; \overline{\partial}_{T_{X, J}}^{} v_g^{\ast} = 0 \right\}.
\end{eqnarray*}
The first variation of the Chern-Ricci form (see lemma \ref{Lm-var-O-Rc-fm}) shows that for any $(g, \Omega)
\in \mathcal{S}_{\omega}$ hold the inclusion
\begin{equation}
  \tmop{TC}_{\mathcal{S}_{\omega}, (g, \Omega)} \subseteq \mathbbm{T}^J_{g,
  \Omega}, \label{TConeS}
\end{equation}
with
\begin{eqnarray*}
  \mathbbm{T}^J_{g, \Omega}  \assign  \left\{ (v, V) \in \mathbbm{D}^J_{g,
  \left[ 0 \right]} \times T_{\mathcal{V}_1} \mid L_{ \nabla_g^{\ast_{\Omega}} v_g^{\ast} + \nabla_g V^{\ast}_{\Omega}}\,\omega  = 0 \right\} .
\end{eqnarray*}
We consider also its sub-space
\begin{eqnarray*}
  \mathbbm{F}^J_{g, \Omega} [0]  \assign  \left\{ (v, V) \in
  \mathbbm{F}_{g, \Omega} \mid v \in \mathbbm{D}^J_{g, \left[ 0 \right]}
  \right\} .
\end{eqnarray*}
In the case $\left( X, J, g \right)$ is a compact K\"ahler-Ricci soliton then
the map
\begin{eqnarray}
 \nabla^2_G \mathcal{W} (g, \Omega) : \mathbbm{F}^J_{g, \Omega} [0]
   \longrightarrow \mathbbm{F}^J_{g, \Omega} [0], \label{welldefF0}
 \end{eqnarray}  
is well defined. Furthermore for any $(g, \Omega) \in \mathcal{S}_{\omega}$
the fundamental property (\ref{fundam-property-SRF}) implies
\begin{eqnarray}
\nabla^2_G \mathcal{W} (g, \Omega)  (h_{g, \Omega}, \underline{H}_{g,
   \Omega} \Omega) \in \mathbbm{F}^J_{g, \Omega} [0] . \label{fundPF0}
 \end{eqnarray}    
This is precisely the key statement needed for the study of the stability of
the Soliton-K\"ahler-Ricci flow in \cite{Pal7}. For any point $(g, \Omega) \in
\mathcal{S}_{\omega}$ we denote by
\begin{eqnarray*}
  \left[ g, \Omega \right]_{\omega}  \assign  \tmop{Symp}^0 (X, \omega)
  \cdot (g, \Omega) \subset \mathcal{S}_{\omega},
\end{eqnarray*}
the orbit of $(g, \Omega)$ under the action of the identity
component $\tmop{Symp}^0 (X,\omega)$ of the group of smooth symplectomorphisms of $X$. With these notations hold the property
\begin{equation}
  T^{\bot_G}_{\left[ g, \Omega \right]_{\omega}, (g, \Omega)} \cap
  \mathbbm{T}^J_{g, \Omega} =\mathbbm{F}^J_{g, \Omega} \left[ 0 \right] .
  \label{geom-F}
\end{equation}
This combined with (\ref{TConeS}) implies directly the geometric identity
\begin{equation}
  T^{\bot_G}_{\left[ g, \Omega \right]_{\omega}, (g, \Omega)} \cap
  \tmop{TC}_{\mathcal{S}_{\omega}, (g, \Omega)} =\mathbbm{F}^J_{g, \Omega}
  \left[ 0 \right] \cap \tmop{TC}_{\mathcal{S}_{\omega}, (g, \Omega)},
  \label{TConeS1}
\end{equation}
for any $(g, \Omega) \in \mathcal{S}_{\omega}$. An other remarkable fact is
that for any $(g, \Omega) \in \mathcal{S}_{\omega}$ the restriction of the
symmetric form $G_{g, \Omega}$ over the vector space $\mathbbm{T}^J_{g,
\Omega}$, with $J : = g^{- 1} \omega$, is positive definite. This implies the
$G$-orthogonal decomposition (see corollary \ref{Ortg-dec-TS} and the sub-section \ref{Trip-Split})
\begin{eqnarray}
   \mathbbm{T}^J_{g, \Omega} & = &  T_{\left[ g, \Omega
  \right]_{\omega}, (g, \Omega)} \oplus_G  \mathbbm{F}_{g, \Omega}^J [0].\label{doub-spl-T}
\end{eqnarray}
The vector
space of $\Omega$-harmonic $T_{X, J}$-valued $(0,1)$-forms 
$\mathcal{H}_{g, \Omega}^{0, 1} \left( T_{X, J} \right)$ 
embeds naturally inside $\mathbbm{F}^J_{g, \Omega} [0]$ via the map $A\in \mathcal{H}_{g, \Omega}^{0, 1} \left( T_{X, J} \right)
\longmapsto \left( g A, 0 \right)$. By abuse of notations we still denote by
$\mathcal{H}_{g, \Omega}^{0, 1} \left( T_{X, J} \right) \subset
\mathbbm{F}^J_{g, \Omega} [0]$ the image of this embedding. There exists an
infinite dimensional vector space $\mathbbm{E}^J_{g, \Omega} [0] \subseteq
\mathbbm{F}^J_{g, \Omega} [0]$, (see the sub-section \ref{Trip-Split} for its definition) such that
the $G$-orthogonal decomposition holds true
\begin{eqnarray*}
  \mathbbm{F}^J_{g, \Omega} [0]  =  \mathbbm{E}^J_{g, \Omega} [0] \oplus_G
  \mathcal{H}_{g, \Omega}^{0, 1} \left( T_{X, J} \right) .
\end{eqnarray*}
We can explain now a more precise property of the tangent cone
$\tmop{TC}_{\mathcal{S}_{\omega}, \left( g, \Omega \right)}$. For this
purpose we consider the Kuranishi space $\mathcal{K}_{J, g} \subset
\mathcal{H}_{g, \Omega}^{0, 1} \left( T_{X, J} \right)$, $0 \in \mathcal{K}_{J, g}$ of $X$. (See theorem
\ref{Kuranishi} in the sub-section \ref{Kuran-Sect} of appendix B for its definition and properties.) In the sub-section \ref{Pol-Kuran-Sect} we define also the Kuranishi
space of $\omega$-polarized complex deformations $\mathcal{K}^{\omega}_J
\subseteq \mathcal{K}_{J, g}$ of the Fano manifold $\left( X, J, \omega
\right)$. (See the definition \ref{Pol-Kuranishi}). Then holds the
inclusions
\begin{eqnarray}
 & & T_{\left[ g, \Omega \right]_{\omega}, (g, \Omega)} \oplus_G 
  \mathbbm{E}^J_{g, \Omega} [0] \oplus_G \tmop{TC}_{\mathcal{K}^{\omega}_J, 0}\nonumber
  \\\nonumber
  &  & \\
  & \subseteq &  \tmop{TC}_{\mathcal{S}_{\omega}, \left( g, \Omega
  \right)}\label{First-incTC}
  \\\nonumber
  &  & \\
  & \subseteq &  T_{\left[ g, \Omega \right]_{\omega}, (g, \Omega)}
  \oplus_G  \mathbbm{E}^J_{g, \Omega} [0] \oplus_G
  \tmop{TC}_{\mathcal{K}_{J, g}, 0} .\label{Second-incTC}
\end{eqnarray}
Let $F:=f-\int_X f \,\Omega$. We define the non-negative cone of $\Omega$-harmonic variations
\begin{eqnarray*}
  \mathcal{H}_{g, \Omega}^{0, 1} \left( T_{X, J} \right)_{\geqslant 0} &
  \assign & \left\{ A \in \mathcal{H}_{g, \Omega}^{0, 1} \left( T_{X, J}
  \right) \mid \int_X \left| A \right|^2_g F \,\Omega \geqslant 0 \right\},
\end{eqnarray*}
and the sub-cone
\begin{eqnarray*}
  \mathcal{H}_{g, \Omega}^{0, 1} \left( T_{X, J} \right)_0 & \assign & \left\{
  A \in \mathcal{H}_{g, \Omega}^{0, 1} \left( T_{X, J} \right) \mid \int_X
  \left| A \right|^2_g F \,\Omega = 0 \right\}.
\end{eqnarray*}
In the K\"ahler-Einstein case holds the obvious identities 
$$\mathcal{H}_{g, \Omega}^{0, 1} \left(
T_{X, J} \right)_{\geqslant 0} =\mathcal{H}_{g, \Omega}^{0, 1} \left( T_{X, J}
\right)_0 =\mathcal{H}_{g, \Omega}^{0, 1} \left( T_{X, J} \right).
$$
In the
Dancer-Wang K\"ahler-Ricci soliton case $\mathcal{H}_{g, \Omega}^{0, 1} \left(
T_{X, J} \right)_0 \neq \{0\}$, thanks to a result in Hall-Murphy \cite{Ha-Mu2}. 
Let $H_{T_{X, J}}$ be the $L^2_{\Omega}$-projector over the space $\mathcal{H}_{g, \Omega}^{0, 1} \left( T_{X, J}\right)$.
We define
also the non-negative cone
\begin{eqnarray*}
  \mathbbm{T}^{J, \geqslant 0}_{g, \Omega} & \assign & \left\{ \left( v, V
  \right) \in \mathbbm{T}^J_{g, \Omega} \mid H_{T_{X, J}} v^{\ast}_g \in
  \mathcal{H}_{g, \Omega}^{0, 1} \left( T_{X, J} \right)_{\geqslant 0}
  \right\},
\end{eqnarray*}
and in a similar way $\mathbbm{T}^{J, 0}_{g, \Omega}$. An interesting
non-negative cone from the geometric point of view is also
\begin{eqnarray*}
  \tmop{TC}^{\geqslant 0}_{\mathcal{S}_{\omega}, \left( g, \Omega \right)} &
  \assign & \tmop{TC}_{\mathcal{S}_{\omega}, \left( g, \Omega \right)} \cap
  \mathbbm{T}^{J, \geqslant 0}_{g, \Omega} .
\end{eqnarray*}
Let now $\tmop{KRS}_{\omega}$ be the set of all K\"ahler-Ricci solitons inside
$\mathcal{S}_{\omega}$.
We observe that Perelman's twice contracted second Bianchi type
identity implies 
\begin{eqnarray*}
  \tmop{KRS}_{\omega} & = & \left\{ \left( g, \Omega \right)_{_{_{_{}}}}
  \in \mathcal{S}_{\omega} \mid \underline{H}_{g, \Omega} = 0 \right\} .
\end{eqnarray*}
Notice also that for any $\left( g, \Omega \right)_{_{_{_{}}}} \in
\tmop{KRS}_{\omega}$ holds the inclusions $[g, \Omega]_{\omega}
\subseteq \tmop{KRS}_{\omega}$ and
\[ T_{\left[ g, \Omega \right]_{\omega}, (g, \Omega)} \subseteq
   \tmop{TC}_{\tmop{KRS}_{\omega}, \left( g, \Omega \right)} \subseteq
   \tmop{Ker} D_{g, \Omega}  \underline{H}^{}_{} \cap
   \tmop{TC}_{\mathcal{S}_{\omega}, \left( g, \Omega \right)} . \]
The following statement provides a finite dimensional reduction of the stability problem for
  K\"ahler-Ricci solitons. This reduction represents the solution of this well known problem. 
\begin{theorem}
  \label{Main-teorem}{\tmstrong{$($Main result. The stability of
  K\"ahler-Ricci solitons$)$}}
  
  Let $\left( X, J, g \right)$ be a compact K\"ahler-Ricci soliton and let
  $\Omega > 0$ be the unique smooth volume form with $\int_X \Omega = 1$ such
  that $\omega :=gJ= \tmop{Ric}_J (\Omega)$. Then for all $\left( v, V \right) \in
  \mathbbm{T}^{J, \geqslant 0}_{g, \Omega}$ the Hessian form of Perelman's
  $\mathcal{W}$ functional with respect to the pseudo-Riemannian structure $G$
  at the point $(g, \Omega)$, in the direction $\left( v, V \right)$ satisfies
  the inequality
  \begin{eqnarray}
    \nabla_G D\mathcal{W} \left( g, \Omega \right) \left( v, V ; v, V \right)
    & \leqslant & 0 \,,
  \end{eqnarray}
  with equality if and only if
  \begin{eqnarray}
    \left( v, V \right) & \in & \tmop{Ker} D_{g, \Omega}  \underline{H}^{}_{}
    \cap \mathbbm{T}^{J, 0}_{g, \Omega}\\\nonumber
    &  & \\
    & = & T_{\left[ g, \Omega \right]_{\omega}, (g, \Omega)} \oplus_G
    \mathcal{H}_{g, \Omega}^{0, 1} \left( T_{X, J} \right)_0\label{equal-MainThm}
    \\\nonumber
    &  & \\
    & \supseteq & \tmop{TC}_{\tmop{KRS}_{\omega}, \left( g, \Omega \right)}\label{incl-MainThm} .
  \end{eqnarray}
\end{theorem}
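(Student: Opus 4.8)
The plan is to exploit that the Kähler--Ricci soliton $(g, \Omega)$ is a critical point of $\mathcal{W}$. By Perelman's first variation formula together with the gradient identity $\nabla_G \mathcal{W} = - (h_{g, \Omega}, \underline{H}_{g, \Omega} \Omega)$, at such a point the Hessian form reads
\[
\nabla_G D\mathcal{W} (g, \Omega) (v, V ; v, V) = - \int_X \left[ \langle v, D_{g, \Omega} h (v, V) \rangle_g - 2 V^{\ast}_{\Omega} D_{g, \Omega} \underline{H} (v, V) \right] \Omega .
\]
I would then insert the variation formulas (\ref{var-h}) and (\ref{var-H}) specialised to the soliton, where $h_{g, \Omega} = 0$, $\tmop{Ric}_g (\Omega) = g$, $\underline{H}_{g, \Omega} = 0$ and $f = F + \mathrm{const}$; there $\Delta^{\Omega}_{L, g}$ loses its zeroth order term and $D_{g, \Omega} \underline{H} (v, V)$ is the $\Omega$-mean zero part of $D_{g, \Omega} H (v, V)$.

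First I would reduce the computation to $\mathbbm{F}^J_{g, \Omega} [0]$. By the $G$-orthogonal splitting (\ref{doub-spl-T}) write $(v, V) = (v_0, V_0) + (v_1, V_1)$ with $(v_0, V_0) \in T_{[g, \Omega]_{\omega}, (g, \Omega)}$ and $(v_1, V_1) \in \mathbbm{F}^J_{g, \Omega} [0]$. Since $\mathcal{W}$ is $\tmop{Diff}_0 (X)$-invariant and $(g, \Omega)$ is critical, the orbit directions lie in the radical of the Hessian form (differentiate the invariance $\mathcal{W} (\varphi_t^{\ast} (g', \Omega')) = \mathcal{W} (g', \Omega')$ in $t$, then in the base point $(g', \Omega')$), so $\nabla_G D\mathcal{W} (v, V ; v, V) = \nabla_G D\mathcal{W} (v_1, V_1 ; v_1, V_1)$; moreover the deformation of $J$ generated by a symplectic vector field is $\overline{\partial}_{T_{X, J}}$-exact, hence $H_{T_{X, J}} v^{\ast}_g = H_{T_{X, J}} (v_1)^{\ast}_g \in \mathcal{H}_{g, \Omega}^{0, 1} (T_{X, J})_{\geqslant 0}$. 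On $\mathbbm{F}^J_{g, \Omega} [0]$ the Lie-derivative terms drop out, so $2 D_{g, \Omega} h (v_1, V_1) = (\Delta^{\Omega}_{L, g} - 2 \mathbbm{I}) v_1$ and $2 D_{g, \Omega} H (v_1, V_1) = (\Delta^{\Omega}_g - 2 \mathbbm{I}) (V_1)^{\ast}_{\Omega}$. Finally split $(v_1, V_1) = (v_E, V_E) + (g A, 0)$ according to $\mathbbm{F}^J_{g, \Omega} [0] = \mathbbm{E}^J_{g, \Omega} [0] \oplus_G \mathcal{H}_{g, \Omega}^{0, 1} (T_{X, J})$, with $A = H_{T_{X, J}} v^{\ast}_g$.

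On the harmonic summand I would invoke a Bochner--Kodaira / Weitzenböck identity for $\Delta^{\Omega}_{L, g}$ acting on the Kähler deformation $g A$, valid at a Kähler--Ricci soliton, which yields $\int_X \langle (\Delta^{\Omega}_{L, g} - 2 \mathbbm{I}) (g A), g A \rangle_g \Omega = 2 c \int_X | A |^2_g F \Omega$ for a fixed $c > 0$; hence $\nabla_G D\mathcal{W} (g A, 0 ; g A, 0) = - c \int_X | A |^2_g F \Omega$, which is $\leqslant 0$ precisely for $A \in \mathcal{H}_{g, \Omega}^{0, 1} (T_{X, J})_{\geqslant 0}$, with equality exactly for $A \in \mathcal{H}_{g, \Omega}^{0, 1} (T_{X, J})_0$ (and identically so in the Kähler--Einstein case $F \equiv 0$). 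On $\mathbbm{E}^J_{g, \Omega} [0]$, using the constraint $\nabla_g^{\ast_{\Omega}} (v_E)^{\ast}_g = - \nabla_g (V_E)^{\ast}_{\Omega}$ defining $\mathbbm{F}_{g, \Omega}$, the Hessian becomes
\[
\nabla_G D\mathcal{W} (v_E, V_E ; v_E, V_E) = - \tfrac{1}{2} \int_X \langle (\Delta^{\Omega}_{L, g} - 2 \mathbbm{I}) v_E, v_E \rangle_g \Omega + \int_X | \nabla_g^{\ast_{\Omega}} (v_E)^{\ast}_g |^2_g \Omega - 2 \int_X \big( (V_E)^{\ast}_{\Omega} \big)^2 \Omega ,
\]
and combining a Bochner identity with the strict ellipticity of Lemma \ref{Sec-Var-W} and the fact that $\mathbbm{E}^J_{g, \Omega} [0]$ is, by construction, $G$-orthogonal to $\mathcal{H}_{g, \Omega}^{0, 1} (T_{X, J})$ (and positivity of $G$ on $\mathbbm{T}^J_{g, \Omega}$ makes $G$ definite there), I would conclude that this quantity is strictly negative unless $(v_E, V_E) = 0$. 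The $\mathbbm{E}$--$\mathcal{H}^{0, 1}$ cross term is disposed of by the same Weitzenböck identity, using that $\nabla^2_G \mathcal{W}$ is $G$-self-adjoint, preserves $\mathbbm{F}^J_{g, \Omega} [0]$, and that $\mathcal{H}_{g, \Omega}^{0, 1} (T_{X, J})$ is cut out by $G$-orthogonality.

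Summing the three contributions gives $\nabla_G D\mathcal{W} (g, \Omega) (v, V ; v, V) \leqslant 0$ on $\mathbbm{T}^{J, \geqslant 0}_{g, \Omega}$. Equality forces $(v_E, V_E) = 0$ and $\int_X | A |^2_g F \Omega = 0$, i.e. $(v, V) \in T_{[g, \Omega]_{\omega}, (g, \Omega)} \oplus_G \mathcal{H}_{g, \Omega}^{0, 1} (T_{X, J})_0$. Conversely $D_{g, \Omega} \underline{H}$ annihilates $T_{[g, \Omega]_{\omega}, (g, \Omega)}$ (these are critical directions) and $\mathcal{H}_{g, \Omega}^{0, 1} (T_{X, J})$ (there $2 D_{g, \Omega} H (g A, 0) = (\Delta^{\Omega}_g - 2 \mathbbm{I}) 0 = 0$), while its restriction to $\mathbbm{E}^J_{g, \Omega} [0]$ is injective --- this I would deduce from the description of the $2$-eigenspace of $\Delta^{\Omega}_g$ at a Kähler--Ricci soliton in terms of holomorphy potentials (corollary \ref{zero-ortog}), the definition of $\mathbbm{E}^J_{g, \Omega} [0]$, and the anti-$J$-invariance built into $\mathbbm{D}^J_{g, [0]}$. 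This proves $\tmop{Ker} D_{g, \Omega} \underline{H} \cap \mathbbm{T}^{J, 0}_{g, \Omega} = T_{[g, \Omega]_{\omega}, (g, \Omega)} \oplus_G \mathcal{H}_{g, \Omega}^{0, 1} (T_{X, J})_0$, i.e. (\ref{equal-MainThm}); the inclusion (\ref{incl-MainThm}) then follows from $[g, \Omega]_{\omega} \subseteq \tmop{KRS}_{\omega}$, the displayed chain $T_{[g, \Omega]_{\omega}} \subseteq \tmop{TC}_{\tmop{KRS}_{\omega}} \subseteq \tmop{Ker} D_{g, \Omega} \underline{H} \cap \tmop{TC}_{\mathcal{S}_{\omega}}$, and the fact that a first-order deformation within $\tmop{KRS}_{\omega}$ has harmonic part in $\mathcal{H}_{g, \Omega}^{0, 1} (T_{X, J})_0$ (the soliton analogue of the Matsushima--Futaki obstruction). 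The step I expect to be the main obstacle is the strict negativity on $\mathbbm{E}^J_{g, \Omega} [0]$ together with the simultaneous disposal of the cross term: this is exactly where the interplay of the elliptic estimate, the Bochner identities for $\Delta^{\Omega}_{L, g}$ and $\nabla_g^{\ast_{\Omega}}$, and the precise removal of the harmonic and anomaly directions has to be made to work, and also where the sharp Weitzenböck constant $c > 0$ must be pinned down.
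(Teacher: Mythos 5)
Your overall architecture matches the paper's: split $\mathbbm{T}^J_{g, \Omega}$ into orbit, $\mathbbm{E}^J_{g, \Omega} [0]$ and harmonic blocks, show the Hessian is block-diagonal with the orbit block zero, the harmonic block equal to $- \frac{1}{2} \int_X \left| A \right|^2_g F \, \Omega$ (so $c = 1 / 2$, via $\int_X \langle \mathcal{L}^{\Omega}_g A, A \rangle_g \Omega = \int_X F \left| A \right|^2_g \Omega$), and the $\mathbbm{E}$-block strictly negative. But the step you defer --- the sign on $\mathbbm{E}^J_{g, \Omega} [0]$ --- is precisely the mathematical content of the theorem, and neither ``a Bochner identity'' in the abstract nor the strict ellipticity of lemma \ref{Sec-Var-W} can supply it: ellipticity controls no sign, and $\mathcal{L}^{\Omega}_g = \Delta^{\Omega}_g - 2 \mathcal{R}_g \ast$ is not a non-negative operator in any generic sense. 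What the paper actually does (proposition \ref{TSec-Var-W}) is substitute the complex parametrization $v^{\ast}_g = \overline{\partial}_{T_{X, J}} \nabla_{g, J} \overline{\psi} + A$, push $\mathcal{L}^{\Omega}_g$ through the weighted Bochner identity (\ref{dbar-OmBoch-fnctSOL}), and, after completing a square, exhibit the $\psi$-contribution as $- \frac{1}{2} \int_X P^{\Omega}_{g, J} \tmop{Re} \psi \cdot \tmop{Re} \psi \, \Omega$ with $P^{\Omega}_{g, J} = (\Delta^{\Omega}_{g, J} - 2\mathbbm{I}) \overline{(\Delta^{\Omega}_{g, J} - 2\mathbbm{I})}$. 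The non-negativity of $P^{\Omega}_{g, J}$ is not a Weitzenb\"ock constant to be ``pinned down'': it rests on the commutation identity $\left[ \Delta^{\Omega}_g, B^{\Omega}_{g, J} \right] = 0$ of (\ref{Com-Lap-B}) --- which uses the soliton equations $(\Delta^{\Omega}_g - 2\mathbbm{I}) f = 0$ and $\left[ J, \nabla^2_g f \right] = 0$ --- followed by a joint spectral argument for two commuting non-negative self-adjoint operators. Without this, your inequality is unproved.

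Two further points. The vanishing of the $\mathbbm{E}$--$\mathcal{H}^{0, 1}$ cross term does not follow from $G$-self-adjointness of the Hessian plus invariance of $\mathbbm{F}^J_{g, \Omega} [0]$: that only makes the restricted Hessian self-adjoint, it does not make the two summands invariant. What is needed is the concrete identity $\overline{\partial}^{\ast_{g, \Omega}}_{T_{X, J}} \mathcal{L}^{\Omega}_g A = 0$ of (\ref{divLA}), which comes from $(g A, 0) \in \mathbbm{F}_{g, \Omega}$ and the differentiated Bianchi identity. For the equality case, the injectivity of $D_{g, \Omega} \underline{H}$ on $\mathbbm{E}^J_{g, \Omega} [0]$ and the identification of its kernel on all of $\mathbbm{T}^J_{g, \Omega}$ with $T_{\left[ g, \Omega \right]_{\omega}, (g, \Omega)} \oplus_G \mathcal{H}_{g, \Omega}^{0, 1} \left( T_{X, J} \right)$ again reduce to the same operator: one needs $2 D_{g, \Omega} \underline{H} (v, V) = - \frac{1}{2} P^{\Omega}_{g, J} \tmop{Re} \psi_v$ on $\mathbbm{T}^J_{g, \Omega}$ together with lemma \ref{exprO} and corollary \ref{zero-ortog}; an appeal to ``anti-$J$-invariance'' does not substitute for that computation. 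In short, the skeleton is correct and coincides with the paper's, but the load-bearing wall --- the factorization and non-negativity of $P^{\Omega}_{g, J}$ at a soliton --- is missing.
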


In more explicit/classic terms the previous statement shows \ that for any
smooth curve $(g_t, \Omega_t)_{t \in \mathbbm{R}} \subset \mathcal{M} \times
\mathcal{V}_1$ (not necessarily in $\mathcal{S}_{\omega}$!) with $(g_0,
\Omega_0) = (g, \Omega)$ a K\"ahler-Ricci soliton and with $( \dot{g}_0, \dot{\Omega}_0) = (v, V)\in
  \mathbbm{T}^{J, \geqslant 0}_{g, \Omega}$ holds
the inequality
\begin{eqnarray*}
  \frac{d^2}{d t^2} _{\mid_{t = 0}} \mathcal{W} (g_t, \Omega_t) & \leqslant &
  0 \;,
\end{eqnarray*}
with equality if and only if $\left( v, V \right) \in \tmop{Ker} D_{g, \Omega}
\underline{H}^{}_{} \cap \mathbbm{T}^{J, 0}_{g, \Omega}$. The identity (\ref{equal-MainThm}) and the inclusion (\ref{incl-MainThm}) are part of the
statement in the main theorem \ref{Main-teorem}.

In section \ref{Sign-Sect} we obtain also a quite sharp second variation
formula for Perelman's $\mathcal{W}$ functional with respect to more general variations
$\left( v, V \right) \in \mathbbm{F}_{g, \Omega}$ over a K\"ahler-Ricci
soliton point $(g,\Omega)$. These variations arise from variations of K\"ahler structures preserving
the first Chern class of $X$.

This formula provide a precise control of the sign of the second variation
of Perelman's $\mathcal{W}$ functional over a K\"ahler-Ricci soliton point.
This can be of independent interest for experts. (In particular we will see
below some general consequences for the classical stability of
K\"ahler-Einstein metrics.) For our geometric applications the most striking
particular case is the one corresponding to the main theorem
\ref{Main-teorem}.

The highly geometric nature of the Soliton-K\"ahler-Ricci flow combined with
the main theorem \ref{Main-teorem}, suggest to the author the following
version of the Hamilton-Tian conjecture (compare with the statements made in \cite{Ti-Zha} and \cite{Ch-Wa}).

\begin{Conjecture}
  Let $(X, J_0)$ be a Fano manifold and let $\omega \in 2 \pi c_1 (X, \left[
  J_0 \right])$ be an arbitrary $J_0$-invariant K\"ahler form. Then there
  exists a complex analytic subset $\Sigma$ of complex codimension greater or
  equal to $2$, which is empty for generic choices of $J_0$ inside $\left[J_0 \right]$ and $\omega$ inside $2 \pi c_1 (X, \left[
  J_0 \right])$, there exists a smooth
  complex structure $J \in \left[ J_0 \right]$ outside $\Sigma$ and a smooth
  volume form $\Omega > 0$ outside $\Sigma$ such that;
  \[ \left\{ \begin{array}{l}
       \omega \hspace{0.75em} = \hspace{0.75em}\tmop{Ric}_J (\Omega) \hspace{0.25em},\\
       \\
       \overline{\partial}_{T_{X, J}}  \left( \omega^{- 1} d \log
       \frac{\;\omega^n}{\Omega} \right) \hspace{0.75em} = \hspace{0.75em} 0
       \hspace{0.25em},
     \end{array} \right. \]
  outside $\Sigma$, i.e. the Riemannian metric $g \assign - \omega J$, is a
  smooth $J$-invariant K\"ahler-Ricci soliton outside $\Sigma$. The triple
  $\left( J, g, \Omega \right)$ is obtained as the limit in the smooth
  topology of $X \smallsetminus \Sigma$, as $t \rightarrow + \infty$, of the
  Soliton-K\"ahler-Ricci flow with initial data $\left( J_0, g_0, \Omega_0
  \right)$ where $g_0 \assign - \omega J_0$ and $\omega = \tmop{Ric}_{J_0}
  (\Omega_0)$, with $\int_X \Omega_0 = 1$.
\end{Conjecture}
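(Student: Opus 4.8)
The plan is to establish the conjecture as a long-time convergence statement for the Soliton-K\"ahler-Ricci flow, exploiting its gradient-flow structure and the Hessian analysis of Theorem \ref{Main-teorem}. The starting point is Lemma \ref{Exist-SRF}, which furnishes, for the initial data $(J_0, g_0, \Omega_0)$ with $g_0 := -\omega J_0$ and $\omega = \tmop{Ric}_{J_0}(\Omega_0)$, $\int_X \Omega_0 = 1$, a smooth solution $(J_t, g_t, \Omega_t)_{t \geqslant 0}$ of the Soliton-K\"ahler-Ricci flow existing for all times. By the equivalence (\ref{SSCool-SKRF}) the symplectic form $\omega$ and the K\"ahler class $2\pi c_1(X)$ stay fixed, so the whole flow is a curve in $\mathcal{S}_\omega$, i.e. a flow of $\omega$-compatible complex structures. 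On $\mathcal{S}_\omega$ the pseudo-Riemannian form $G$ restricts to a genuine Riemannian metric (its positivity over $\mathbbm{T}^J_{g,\Omega}$ combined with $\tmop{TC}_{\mathcal{S}_\omega,(g,\Omega)} \subseteq \mathbbm{T}^J_{g,\Omega}$), so the restricted dynamics is a true Riemannian gradient flow of $\mathcal{W}|_{\mathcal{S}_\omega}$ — a far more favorable situation than the ambient pseudo-Riemannian one.

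The convergence mechanism then proceeds as follows. By Lemma \ref{Monot-SKRF} the functional $\mathcal{W}(g_t,\Omega_t)$ is monotone increasing, and it is bounded above over the fixed K\"ahler class. The gradient identity $\frac{d}{dt}\mathcal{W}(g_t,\Omega_t) = G_{g_t,\Omega_t}(\nabla_G\mathcal{W},\nabla_G\mathcal{W})$, together with the positivity of $G$ along the flow, yields $\int_0^\infty \|\nabla_G\mathcal{W}(g_t,\Omega_t)\|^2_G\, dt < \infty$. Hence there is a sequence $t_k \to \infty$ along which $\nabla_G\mathcal{W} \to 0$, i.e. $h_{g_{t_k},\Omega_{t_k}} \to 0$ and $\underline{H}_{g_{t_k},\Omega_{t_k}} \to 0$, so every subsequential limit is a critical point of $\mathcal{W}$, that is a shrinking K\"ahler-Ricci soliton. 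The first task is to extract such a limit: one needs uniform a priori estimates along the flow in order to invoke a Cheeger-Gromov / complex-geometric compactness theorem, producing a limit triple $(J_\infty, g_\infty, \Omega_\infty)$ smooth away from a closed singular set $\Sigma$, the complex structure $J_\infty$ possibly jumping to a nearby member of the deformation class $[J_0]$.

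To upgrade subconvergence to genuine convergence to a single limit, the plan is to install a \L{}ojasiewicz--Simon inequality for $\mathcal{W}$ near a K\"ahler-Ricci soliton, and this is exactly where Theorem \ref{Main-teorem} and the finite-dimensional reduction enter. The Hessian $\nabla^2_G\mathcal{W}(g,\Omega)$ is strictly elliptic on $\mathbbm{F}^J_{g,\Omega}[0]$, hence Fredholm with finite-dimensional kernel characterized by (\ref{equal-MainThm}). The $G$-orthogonal splitting (\ref{doub-spl-T}) together with the inclusions (\ref{First-incTC})--(\ref{Second-incTC}) reduce the infinite-dimensional analysis to the finite-dimensional Kuranishi space $\mathcal{K}^\omega_J \subseteq \mathcal{K}_{J,g}$. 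Since $\mathcal{W}$ restricted to the Kuranishi slice is real analytic in the deformation parameters, the classical finite-dimensional \L{}ojasiewicz inequality applies there, and one lifts it to the full space using strict ellipticity on $\mathbbm{F}^J_{g,\Omega}[0]$ and the fundamental property (\ref{fundPF0}) keeping the flow tangent to $\mathbbm{F}^J_{g,\Omega}[0]$. The resulting estimate forces $\int_{t_k}^\infty \|(\dot g_t,\dot\Omega_t)\|_G\, dt < \infty$, pinning down a unique smooth limit $(J,g,\Omega)$ solving the soliton system of the conjecture outside $\Sigma$; this realizes the stability content announced in \cite{Pal7}.

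The hard part — and the reason this remains a conjecture rather than a theorem — is the analytic control of $\Sigma$: proving the a priori estimates along the flow, ruling out collapsing, and showing that $\Sigma$ is a complex analytic subset of complex codimension $\geqslant 2$. This is precisely the regularity theory at the heart of the arguments of Tian-Zhang \cite{Ti-Zha} and Chen-Wang \cite{Ch-Wa}; the present framework aims to recover it in the polarized, fixed-symplectic-form setting, where the flow has global existence and a clean variational structure. Finally, the genericity assertion ($\Sigma = \emptyset$ for generic $J_0$ and $\omega$) should follow from the stability dichotomy of Theorem \ref{Main-teorem}: when the obstructing harmonic cone $\mathcal{H}^{0,1}_{g,\Omega}(T_{X,J})_{\geqslant 0}$ produces no genuine degeneration — the generic, linearly stable case — the \L{}ojasiewicz--Simon argument yields smooth convergence on all of $X$, no singularities form, and $\Sigma$ is empty.
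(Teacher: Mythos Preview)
The statement you are addressing is labeled \emph{Conjecture} in the paper, and the paper does not supply a proof of it; it is presented as an open problem motivated by the main Theorem~\ref{Main-teorem} and the gradient-flow interpretation of the Soliton-K\"ahler-Ricci flow. There is therefore no paper proof to compare your proposal against.

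Your outline is a reasonable heuristic roadmap --- and indeed you yourself flag that ``this remains a conjecture rather than a theorem'' --- but it is not a proof. Several of the steps you invoke are not established in this paper and are themselves substantial open (or only very recently and partially settled) problems: the uniform a priori estimates along the flow needed for any compactness argument, the exclusion of collapsing, the structure of the singular set $\Sigma$ as a complex analytic subset of codimension $\geqslant 2$, and the \L{}ojasiewicz--Simon inequality in this specific pseudo-Riemannian/symplectic setting. You also assert that $\mathcal{W}$ is bounded above on $\mathcal{S}_\omega$, which is not proved here, and the finite-dimensional reduction via Theorem~\ref{Main-teorem} controls only the Hessian at a soliton point, not the global analytic behavior required for a \L{}ojasiewicz inequality. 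The paper explicitly defers the stability of the flow to the forthcoming \cite{Pal7} and cites \cite{Ti-Zha}, \cite{Ch-Wa} for the regularity theory; your sketch correctly identifies these as the missing ingredients, but supplying them is precisely the content of the conjecture, not a consequence of the results proved in this paper.
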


We explain now a very particular consequence of our study of the
stability problem. This consequence provides a result on the stability in the classical sense of
K\"ahler-Einstein manifolds. We introduce first a few basic notations.

Let $\left( X, J \right)$ be a compact K\"ahler manifold and let $c_1 \equiv
c_1 (X, \left[ J \right]) \in H^2_d (X, \mathbbm{R})$. We denote by
$\mathcal{K}\mathcal{S}$ the space of K\"ahler structures over $X$ and we set
\begin{eqnarray*}
  \mathcal{K}\mathcal{S}_{2 \pi c_1} & \assign & \left\{ (J, g) \in
  \mathcal{K}\mathcal{S} \mid g J \in 2 \pi c_1 \right\} .
\end{eqnarray*}
We define also the set $\mathbbm{K}\mathbbm{V}^J_g (2 \pi c_1)$ of symmetric variations of K\"ahler structures
preserving the first Chern class of $X$. The latter is defined as the set of elements $ v \in C^{\infty}
  \left( X, S_{\mathbbm{R}}^2 T^{\ast}_X \right)$ such that there exists a smooth curve $(J_t^{}, g_t)_t
  \subset \mathcal{K}\mathcal{S}_{2 \pi c_1}$ with $(J_0, g_0) = (J,g)$, $\dot{g}_0 = v$ and $\dot{J}_0^{} = (
  \dot{J}_0^{})_g^T$.
In section \ref{Sect-SmVKS} we show the inclusion
\begin{equation}
  \mathbbm{K}\mathbbm{V}^J_g (2 \pi c_1) \subseteq \mathbbm{D}^J_{g, 0}\,,
  \label{Kahl-VarC1}
\end{equation}
with
\begin{eqnarray*}
  \mathbbm{D}^J_{g, 0}  \assign  \left\{ v \in C^{\infty} \left( X,
  S_{\mathbbm{R}}^2 T^{\ast}_X \right) \mid \hspace{0.25em} \partial^g_{T_{X,
  J}} (v_J')_g^{\ast} = 0, \overline{\partial}_{T_{X, J}} (v_J'')_g^{\ast} =
  0, \left\{ v_J' J \right\}_d = 0 \right\} ,
\end{eqnarray*}
where $v_J'$ and $v_J''$ denote respectively the $J$-invariant and
$J$-anti-invariant parts of $v$ and $\{\alpha\}_d$ denotes the De Rham cohomology class of any $d$-closed
form $\alpha$. We introduce also the classical stability operator (see \cite{Bes})
\begin{eqnarray*}
  \mathcal{L}_g & \assign & \Delta_g - 2\mathcal{R}_g \ast\,,
\end{eqnarray*}
acting on smooth symmetric $2$-tensors. With these notations we can state the
following stability (in the classical sense) result.

\begin{theorem}
  \label{KEstability}Let $\left( X, J, g \right)$ be a Fano K\"ahler-Einstein
  manifold. Then for any $v \in \tmop{Ker} \nabla^{\ast}_g \cap
  \mathbbm{D}^J_{g, 0}$, holds the inequality
  \begin{eqnarray*}
    \int_X \left\langle \mathcal{L}^{}_g v, v \right\rangle_g d V_g &
    \geqslant & 0\,,
  \end{eqnarray*}
  with equality if and only if $v^{\ast}_g \in \mathcal{H}_g^{0, 1} \left(
  T_{X, J} \right)$.
\end{theorem}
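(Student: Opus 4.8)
The plan is to derive Theorem \ref{KEstability} as the specialization of the main Theorem \ref{Main-teorem} to the K\"ahler-Einstein case, together with an unwinding of the pseudo-Riemannian Hessian into the classical stability operator. First I would observe that in the K\"ahler-Einstein case the identities $\mathcal{H}_{g,\Omega}^{0,1}(T_{X,J})_{\geqslant 0} = \mathcal{H}_{g,\Omega}^{0,1}(T_{X,J})_0 = \mathcal{H}_{g,\Omega}^{0,1}(T_{X,J})$ hold (stated in the excerpt), and moreover $\Omega$ is proportional to $dV_g$ so that $f$ is constant, $F = 0$, $\underline{H}_{g,\Omega} = 0$, and the $\Omega$-operators reduce to the usual ones. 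In particular the decomposition $\tmop{Ker} D_{g,\Omega}\underline{H} \cap \mathbbm{T}^{J,0}_{g,\Omega} = T_{[g,\Omega]_\omega,(g,\Omega)} \oplus_G \mathcal{H}_{g,\Omega}^{0,1}(T_{X,J})_0$ from (\ref{equal-MainThm}) becomes $T_{[g,\Omega]_\omega} \oplus_G \mathcal{H}_g^{0,1}(T_{X,J})$, and this is the equality case I want.

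Next I would handle the passage from a general symmetric variation $v \in \tmop{Ker}\nabla^\ast_g \cap \mathbbm{D}^J_{g,0}$ to an admissible pair $(v,V) \in \mathbbm{T}^{J,\geqslant 0}_{g,\Omega}$. Since $\nabla^\ast_g v = 0$ (equivalently $\nabla^{\ast_\Omega}_g v^\ast_g = 0$ as $f$ is constant), one takes $V := 0$; then the gauge field $\nabla^{\ast_\Omega}_g v^\ast_g + \nabla_g V^\ast_\Omega$ vanishes, so $L_{(\cdot)}\omega = 0$ trivially and $(v,0)$ lies in $\mathbbm{F}_{g,\Omega}$, hence in $\mathbbm{T}^J_{g,\Omega}$ once $v \in \mathbbm{D}^J_{g,[0]}$. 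Here I must check that $v \in \mathbbm{D}^J_{g,0}$ (the ``classical'' space with $\partial^g_{T_{X,J}}(v'_J)^\ast_g = 0$, $\overline\partial_{T_{X,J}}(v''_J)^\ast_g = 0$, $\{v'_J J\}_d = 0$) together with $\nabla^\ast_g v = 0$ forces $v \in \mathbbm{D}^J_{g,[0]}$, i.e. $v = -J^\ast v J$ (anti-invariance, so $v = v''_J$ and $v'_J = 0$) and $\overline\partial_{T_{X,J}} v^\ast_g = 0$. This should follow from a Hodge-theoretic argument: the trace-free divergence-free condition on the invariant part $v'_J$ of a variation preserving $2\pi c_1$, combined with $\{v'_J J\}_d = 0$, should kill $v'_J$ up to a Lie-derivative term which is in turn removed by $\nabla^\ast_g v = 0$; I expect to invoke the standard $dd^c$-lemma / Hodge decomposition on the Fano K\"ahler manifold. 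This is the main obstacle: reconciling the two slightly different defining conditions for $\mathbbm{D}^J_{g,0}$ and $\mathbbm{D}^J_{g,[0]}$ and showing the gauge-fixing $\nabla^\ast_g v = 0$ lands $v$ in the smaller space.

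Finally I would compute $\nabla_G D\mathcal{W}(g,\Omega)(v,0;v,0)$ explicitly. Using the variation formulas (\ref{var-h})--(\ref{var-H}) and the first variation of $\mathcal{W}$, at a K\"ahler-Einstein point with $(v,V) = (v,0)$ in the gauge-fixed space the Hessian collapses to $-\tfrac12\int_X \langle \Delta^\Omega_{L,g} v - 2v, v\rangle_g\,\Omega$ up to the contribution from $\mathbbm{I}$-terms; since $\tmop{Ric}_g(\Omega) = g$ one has $\Delta^\Omega_{L,g} v = \Delta_g v - 2\mathcal{R}_g \ast v + 2v = \mathcal{L}_g v + 2v$, so the Hessian becomes $-\tfrac12\int_X \langle \mathcal{L}_g v, v\rangle_g\, dV_g$ (absorbing the constant $e^{-f}$ into $dV_g$ versus $\Omega$). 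Theorem \ref{Main-teorem} then gives $\nabla_G D\mathcal{W}(g,\Omega)(v,0;v,0) \leqslant 0$, i.e. $\int_X \langle \mathcal{L}_g v, v\rangle_g\, dV_g \geqslant 0$, with equality iff $(v,0) \in T_{[g,\Omega]_\omega,(g,\Omega)} \oplus_G \mathcal{H}_g^{0,1}(T_{X,J})$. To conclude I must show that the $T_{[g,\Omega]_\omega}$-component of a divergence-free $v$ vanishes: a tangent vector to the symplectomorphism orbit is $L_\xi g$ for a symplectic $\xi$, and $\nabla^\ast_g(L_\xi g) = 0$ with the compactness of $X$ forces $\xi$ to be Killing, hence $L_\xi g = 0$; so the equality case reduces to $v^\ast_g \in \mathcal{H}_g^{0,1}(T_{X,J})$, which is exactly the claimed characterization. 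The sign normalization (that $-\tfrac12 \int \langle \mathcal{L}_g v,v\rangle \leqslant 0$ rather than the reverse) should be double-checked against the convention in (\ref{var-h}), but is forced by consistency with the known Koiso-type stability of K\"ahler-Einstein metrics.
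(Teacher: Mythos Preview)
Your reduction to Theorem~\ref{Main-teorem} breaks at the step you flagged as the main obstacle, and the obstruction is real: the condition $v\in\tmop{Ker}\nabla^{\ast}_g\cap\mathbbm{D}^J_{g,0}$ does \emph{not} force $v'_J=0$, so $(v,0)$ need not land in $\mathbbm{T}^J_{g,\Omega}$ (which requires $v\in\mathbbm{D}^J_{g,[0]}$, i.e.\ pure anti-invariance). Concretely, the decomposition of Lemma~\ref{Decomp-SKah} in the K\"ahler--Einstein case produces, for each nonzero $\psi\in\Lambda^{\bot}_g$, a real function $\tau$ solving $-\Delta_g\tau=(\Delta_g-2\mathbbm{I})\psi$ and an element
\[
\theta^{\ast}_g \;=\; \partial^g_{T_{X,J}}\nabla_g\tau \;+\; \overline{\partial}_{T_{X,J}}\nabla_g\psi
\]
which is divergence-free and lies in $\mathbbm{D}^J_{g,0}$, yet has $\theta'_J=g\,\partial^g_{T_{X,J}}\nabla_g\tau\neq 0$ whenever $\tau$ is nonconstant, i.e.\ whenever $\psi\neq 0$. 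Your proposed Hodge-theoretic argument cannot succeed here because the divergence-free condition couples the $J$-linear and $J$-anti-linear parts (it is exactly the constraint~(\ref{eq-div-free})), rather than forcing each to vanish separately.

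The paper's proof (subsection~\ref{KEcase}) proceeds instead by a direct computation that keeps both components: using the splitting~(\ref{lin-ant-lin-Integ-dec}) one treats $\int_X\langle\mathcal{L}_g A'_J,A'_J\rangle_g\,dV_g$ and $\int_X\langle\mathcal{L}_g A''_J,A''_J\rangle_g\,dV_g$ separately. For the $J$-linear part one writes $\omega A'_J=i\partial_J\overline{\partial}_J\tau$ via the $\partial\overline{\partial}$-lemma and uses the identities~(\ref{Lich-alt-Lich}),~(\ref{Trac-Lich}),~(\ref{scal-Trc-Intg}) to get $\tfrac{1}{2}\int_X(\Delta_g-2\mathbbm{I})\Delta_g\tau\cdot\Delta_g\tau\,dV_g$; for the $J$-anti-linear part the Bochner--Kodaira--Nakano identity~(\ref{BKN-Alin-dV}) gives $\mathcal{L}_g A''_J=2\Delta^{-J}_{T_{X,g}}A''_J$, yielding $\tfrac{1}{2}\int_X\Delta_g^2\tau\cdot\Delta_g\tau\,dV_g$ after~(\ref{dbar-OmBoch-fnctSOL}) and~(\ref{eq-div-free}). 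The sum is $\int_X(\Delta_g-\mathbbm{I})\Delta_g\tau\cdot\Delta_g\tau\,dV_g\geqslant 0$ by $\lambda_1(\Delta_g)\geqslant 2$, and equality forces $\tau=0$, hence $\psi=0$, hence $v^{\ast}_g=A\in\mathcal{H}^{0,1}_g(T_{X,J})$. Your Hessian identification and your treatment of the equality case are fine for the anti-invariant sector, but the theorem is genuinely stronger than what Theorem~\ref{Main-teorem} alone delivers; the $J$-invariant contribution must be handled by the separate Weitzenb\"ock computation.
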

(See sub-section
\ref{KEcase} for the proof).
A similar result in the case of negative or vanishing first Chern class has
been proved in the remarkable paper \cite{D-W-W2} (see also \cite{D-W-W1}). The statement about the
equality case holds also under more general assumptions (see lemma
\ref{harmonicKER} in the appendix B).

In the next section we enlighten the results obtained by other authors in the
long standing problem of the stability of K\"ahler-Ricci solitons and on the
Hamilton-Tian conjecture.

\section{Other works on the subject}

A question of central importance in complex differential geometry is the
Hamilton-Tian conjecture.

Solutions of this conjecture have been posted on the arXiv
server in (2013) by Tian-Zhang \cite{Ti-Zha} in complex dimension 3 and quite recently by Chen-Wang \cite{Ch-Wa} in general.

Since we have learned about this conjecture in 2004 we immediately asked ourself which one is the precise notion of gauge needed for the convergence. (The
K\"ahler-Ricci flow $\left( J_0, \hat{g}_t \right)_{t \geqslant 0}$ needs to
be modified since its {\tmstrong{formal}} limit $\left( J_0, \hat{g}_{\infty}
\right)$ as $t \rightarrow + \infty$ is a is a K\"ahler-Einstein metric, but
Fano manifolds do not always admit such ones!)

It turns out that the Soliton-K\"ahler-Ricci flow introduced in this paper
corresponds to a modification of the K\"ahler-Ricci flow via the gauge
provided by the gradient of the Ricci potentials.

To the very best of our knowledge the Soliton-K\"ahler-Ricci flow with
variable volume forms introduced in this paper does not appear nowhere in the
literature.

In our previous works \cite{Pal4} and \cite{Pal5}, we introduced also the notion of
Soliton-K\"ahler-Ricci flow with fixed volume form. This leads to a complete
different approach which conducts naturally to the study of the existence of ancient
solutions of the K\"ahler-Ricci flow and their modified (according to \cite{Pal4} and \cite{Pal5})
convergence as $t \rightarrow - \infty$. This approach requires some particular
geometric conditions (which imply some strong regularity) on the initial data.
The key point in \cite{Pal4} and \cite{Pal5} is that these conditions represent a conservative law along
the Soliton-K\"ahler-Ricci flow with fixed volume form. These conditions imply
good convexity properties for the convergence of this flow.

We review now the modifications of the K\"ahler-Ricci flow made by other
authors. We can find two frequent approaches in the literature. One is based on the gauge transformation generated by a
holomorphic vector field with imaginary part generating an $S^1$-action on
the manifold (see \cite{Ti-Zhu1} and \cite{P-S-S-W2} for a very elegant construction). 
A K\"ahler-Ricci-soliton vector field provides such
example.

The second approach, which has been used quite intensively in the last years is
based on the gauge modification constructed via the minimizers of Perelman's
$\mathcal{W}$ functional (see \cite{Ti-Zhu3} and \cite{Su-Wa}).  As far as known the minimizers are unique only in a small
neighborhood of the K\"ahler-Ricci soliton. Therefore the ''modified
K\"ahler-Ricci flow'' in \cite{Ti-Zhu3} and \cite{Su-Wa} exists only in such small neighborhood.

For historical reasons it is important to remind that Hamilton \cite{Ham} pointed out
first that to any flow of K\"ahler structures with fixed complex structure
corresponds an other flow of K\"ahler structures which preserves the
symplectic form (see also Donaldson \cite{Don} for the same remark). He suggested this
approach for the study of the K\"ahler-Ricci flow. As far as we know he did not
pursuit on this idea.

As explained in the introduction our definition of the Soliton-Ricci flow with
variable volume forms was inspired to us from Perelman's twice contracted
second Bianchi type identity and from the strict ellipticity of the first variation
of the maps $h$ and $H$ in the directions $\mathbbm{F}$.

It was surprising for us to discover that the corresponding
Soliton-K\"ahler-Ricci flow with variable volume forms (from now on we will
refer only to this flow) preserves the symplectic structure.

We realized quickly the power of this fact. It allows indeed the application of
Futaki's weighted complex Bochner identity and the uniform lower bound on the
first eigenvalue of the complex weighted Laplacian \cite{Fu1}. The main feature of
the Soliton-K\"ahler-Ricci flow in this paper is the jumping
of the complex structure at the limit when $t \rightarrow + \infty$. This
phenomenon is necessary for the existence of K\"ahler-Ricci solitons in
general. We learned for the first time about this key phenomenon in the
Pioneer work of \cite{P-S1}. In this fundamental work the authors introduce a 
condition on stability (is the condition (B) in \cite{P-S1}) witch is the 
key phenomenon occurring in the convergence of the K\"ahler-Ricci flow. We refer also to \cite{P-S-S-W3} for further developments.

We remind now that by definition, the stability of a critical point of a
functional corresponds to determine a sign of its second variation in
determinate directions.

The stability of critical metrics for natural geometric functionals was
naturally born with differential geometry (see \cite{Bes}). The main classic example is the
Einstein metric. In the case of this metric the corresponding functional is
the integral of the scalar curvature.

In 2003 Grigory Perelman astonished the mathematical community with his
spectacular proof of the Poincar\'e conjecture. In this celebrated paper \cite{Per}
he introduced various entropy functionals for Ricci-solitons. Shrinking
Ricci-solitons correspond to critical points of his $\mathcal{W}$ functional
or his entropy functional $\nu$.

Since then, the second variation of Perelman's functionals $\mathcal{W}$ and $\nu$ has been
studied quite intensively. It started in 2004 with the works of Cao-Hamilton-Imlanen \cite{C-H-I}, \cite{Ca-Zhu} and
Tian-Zhu \cite{Ti-Zhu2} independently. It continued with \cite{Ca-He} and \cite{Ha-Mu1}, \cite{Ha-Mu2}.

We wish to point out that the results in this paper and in \cite{Pal3} are of
completely different nature with respect to the previous works. The reason is
that in our work we compute the second variation of Perelman's $\mathcal{W}$
functional with respect to the pseudo-Riemannian structure $G$. (The work \cite{Pal3} is a particular case.)

An important fact about K\"ahler-Ricci solitons is that once they exist, one
can obtain the Einstein condition by proving the vanishing of the Futaki
invariant \cite{Fut}. From our point of view they provide a natural and necessary
generalization in order to control the Einstein condition.

The stability of K\"ahler-Ricci solitons is important in order to understand
the convergence of the K\"ahler-Ricci flow. The first work on the subject is
due to Tian-Zhu, see \cite{Ti-Zhu2}.

In 2009 Sun-Wang \cite{Su-Wa} posted on the arXiv server a stability result for the K\"ahler-Ricci flow
basing on the Lojasiewicz inequality (see \cite{Co-Mi}). In this paper the authors use the
modified flow in \cite{Ti-Zhu3}. The
same method was used in Ache \cite{Ach}, where a uniform bound assumption on the curvature is made.
We report finally a quite recent work on the same subject by Kr\"oncke \cite{Kro}. This author combines the
technical details in \cite{Su-Wa}, \cite{Ach} and \cite{Co-Mi} in the Riemannian set up.

The statements made in this section are based on the very best of our knowledge
and understanding of the subject. We sincerely apologize to other authors in
case of inaccuracies or omissions in the claims of this section.

\section{Proof of the first variation formulas for the maps $h$ and $H$
}\label{main-res}

\subsection{The first variation of the Bakry-Emery-Ricci tensor}

We remind (see \cite{Pal3}) that the first variation of the Bakry-Emery-Ricci
tensor with fixed volume form $\Omega > 0$ is given by the formula
\begin{equation}
  \label{var-Om-Ric} 2 \frac{d}{d t} \tmop{Ric}_{g_t} (\Omega) = -
  \nabla_{g_t}^{\ast_{\Omega}} \mathcal{D}_{g_t}  \dot{g}_t,
\end{equation}
where $\mathcal{D}_g \assign \hat{\nabla}_g - 2 \nabla_g$, with
$\hat{\nabla}_g$ being the symmetrization of
{\tmname{{\tmsamp{\tmtexttt{$\nabla_g$}}}}} acting on symmetric 2-tensors.
Explicitly
\begin{eqnarray*}
  \hat{\nabla}_g \alpha (\xi_0, ..., \xi_p) & : = & \sum_{j = 0}^p \nabla
  \alpha (\xi_j, \xi_0, ..., \hat{\xi}_j, ..., \xi_p),
\end{eqnarray*}
for all $p$-tensors $\alpha$. Fixing an arbitrary time $\tau$ and time
deriving at $t = \tau$ the decomposition
\begin{eqnarray*}
  \tmop{Ric}_{g_t} (\Omega_t) & = & \tmop{Ric}_{g_t} (\Omega_{\tau}) -
  \nabla_{g_t} d \log \frac{\Omega_t}{\Omega_{\tau}},
\end{eqnarray*}
we deduce, thanks to (\ref{var-Om-Ric}), the general variation formula
\begin{equation}
  \label{var-OM-Ric} 2 \frac{d}{d t} \tmop{Ric}_{g_t} (\Omega_t) = -
  \nabla_{g_t}^{\ast_{\Omega_t}} \mathcal{D}_{g_t}  \dot{g}_t - 2 \nabla_{g_t}
  d \frac{\dot{\Omega}_t}{\Omega_t} .
\end{equation}
This formula implies directly Perelman's general first variation formula for the
$\mathcal{W}$ functional (see appendix A). We define the Hodge Laplacian
(resp. the $\Omega$-Hodge Laplacian) operators acting on $q$-forms as
\begin{eqnarray*}
  \Delta_{T_{X, g}} & : = & \nabla_{T_{X, g}} \nabla^{\ast}_g +
  \nabla_g^{\ast} \nabla_{T_{X, g}},\\
  &  & \\
  \Delta^{\Omega}_{T_{X, g}} & \assign & \nabla_{T_{X, g}}
  \nabla^{\ast_{\Omega}}_g + \nabla^{\ast_{\Omega}}_g \nabla_{T_{X, g}} .
\end{eqnarray*}
We remind also the following Weitzenb\"ock type formula proved in \cite{Pal4}

\begin{lemma}
  \label{OmTX-Lap-RmLap}Let $(X, g)$ be a orientable Riemannian manifold, let
  $\Omega > 0$ be a smooth volume form and let $A \in C^{\infty} (X,
  \tmop{End} (T_X))$. Then
  \begin{eqnarray*}
    \Delta^{\Omega}_{T_{X, g}} A & = & \Delta^{\Omega}_g A -\mathcal{R}_g \ast
    A + A \tmop{Ric}^{\ast}_g (\Omega),
  \end{eqnarray*}
  Where $\left( \mathcal{R}_g \ast A \right) \xi \assign \tmop{Tr}_g \left[
  \left( \xi \neg \mathcal{R}_g \right) A \right]$ for all $\xi \in T_X$.
\end{lemma}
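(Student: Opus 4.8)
The plan is to reduce the weighted identity to the classical (unweighted) Bochner--Weitzenb\"ock formula, treating the weight $f \assign \log \frac{d V_g}{\Omega}$ perturbatively. I first record the case $\Omega = d V_g$, i.e. $f \equiv 0$. There $\nabla^{\ast_{\Omega}}_g = \nabla^{\ast}_g$, $\Delta^{\Omega}_g = \Delta_g$ and $\tmop{Ric}_g (\Omega) = \tmop{Ric} (g)$, so the asserted formula degenerates to the standard Weitzenb\"ock identity for $T_X$-valued $1$-forms (equivalently for $\tmop{End} (T_X)$ under $T^{\ast}_X \otimes T_X \cong \tmop{End} (T_X)$), namely $\Delta_{T_{X, g}} A = \Delta_g A - \mathcal{R}_g \ast A + A\,\tmop{Ric} (g)^{\ast}_g$, which I take as known in the sign conventions of the paper. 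It then suffices to show that both sides acquire exactly matching corrections when $f$ is switched on.

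Second, I would isolate the effect of the weight on the adjoint. Since $\nabla^{\ast_{\Omega}}_g = e^f \nabla^{\ast}_g (e^{-f} \bullet)$, on $T_X$-valued forms one has $\nabla^{\ast_{\Omega}}_g = \nabla^{\ast}_g + \nabla_g f \neg$, the contraction being taken on the form index; this is the tensorial version of the identity $\Delta^{\Omega}_g = \Delta_g + \nabla_g f \neg \nabla_g$ already recorded in the text. Consequently
\[ \Delta^{\Omega}_{T_{X, g}} A - \Delta_{T_{X, g}} A = \nabla_{T_{X, g}} (\nabla_g f \neg A) + \nabla_g f \neg \nabla_{T_{X, g}} A . \]
The heart of the argument is the evaluation of this anticommutator. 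Using the torsion--free Cartan formula $(\nabla_{T_{X, g}} A)(\xi, \eta) = (\nabla_{\xi} A) \eta - (\nabla_{\eta} A) \xi$ together with the Leibniz rule $\nabla_{\eta} (A \nabla_g f) = (\nabla_{\eta} A) \nabla_g f + A \nabla_{\eta} \nabla_g f$, the curvature contributions drop out and one is left with the purely first--order expression
\[ \Delta^{\Omega}_{T_{X, g}} A - \Delta_{T_{X, g}} A = \nabla_g f \neg \nabla_g A + A (\nabla_g d f)^{\ast}_g, \]
where $(\nabla_g d f)^{\ast}_g = g^{-1} \nabla_g d f$ is the Hessian endomorphism $\eta \longmapsto \nabla_{\eta} \nabla_g f$.

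Finally I would assemble the three pieces. Adding the classical formula to the correction, and using $\Delta^{\Omega}_g A = \Delta_g A + \nabla_g f \neg \nabla_g A$, the first--order term $\nabla_g f \neg \nabla_g A$ recombines into $\Delta^{\Omega}_g A$, the curvature term $\mathcal{R}_g \ast A$ is untouched, and the two zeroth--order pieces merge via $g^{-1} \tmop{Ric} (g) + (\nabla_g d f)^{\ast}_g = g^{-1} (\tmop{Ric} (g) + \nabla_g d f) = \tmop{Ric}^{\ast}_g (\Omega)$, yielding the claim. The main obstacle is the anticommutator computation: one must verify that the covariant exterior derivative contributes no curvature term here (curvature enters only through $(\nabla_{T_{X, g}})^2$, not through its anticommutator with the algebraic operator $\nabla_g f \neg$) and keep careful track of the composition order, so that the Hessian correction lands on the correct side of $A$ and precisely upgrades $\tmop{Ric} (g)$ to the Bakry--Emery tensor $\tmop{Ric}_g (\Omega)$.
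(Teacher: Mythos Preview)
Your argument is correct. The anticommutator computation is the only nontrivial step, and you have it right: writing $\nabla^{\ast_{\Omega}}_g = \nabla^{\ast}_g + \nabla_g f \neg$ on $T_X$-valued forms of any degree, one gets
\[
\Delta^{\Omega}_{T_{X, g}} A - \Delta_{T_{X, g}} A = \nabla_{T_{X, g}} (A \nabla_g f) + \nabla_g f \neg \nabla_{T_{X, g}} A,
\]
and evaluating both terms on $\eta$ via $(\nabla_{T_{X, g}} A)(\xi,\eta) = (\nabla_\xi A)\eta - (\nabla_\eta A)\xi$ gives the cancellation $(\nabla_\eta A)\nabla_g f - (\nabla_\eta A)\nabla_g f = 0$ and leaves exactly $(\nabla_{\nabla_g f} A)\eta + A\,\nabla^2_{g,\eta} f$, i.e.\ $\nabla_g f \neg \nabla_g A + A\,(\nabla_g d f)^{\ast}_g$. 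No curvature appears because $\nabla_{T_{X,g}}$ acts here on a vector field (degree zero) and by algebraic contraction, not by a second covariant exterior derivative. Combining with the unweighted formula and with $\tmop{Ric}^{\ast}_g(\Omega) = \tmop{Ric}^{\ast}(g) + (\nabla_g d f)^{\ast}_g$ yields the claim.

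As for comparison: the paper does not prove this lemma at all---it is quoted verbatim from \cite{Pal4} (``We remind also the following Weitzenb\"ock type formula proved in \cite{Pal4}''). So there is no in-paper argument to set yours against. Your reduction to the classical Weitzenb\"ock identity plus an explicit weight correction is the natural and economical route; it also makes transparent why the Bakry--Emery tensor, rather than the ordinary Ricci tensor, appears on the right.
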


In analogy to the $\Omega$-Hodge Laplacian we can define the Laplace type
operator
\[ \hat{\Delta}^{\Omega}_g \assign \nabla^{\ast_{\Omega}}_g  \hat{\nabla}_g -
   \hat{\nabla}_g \nabla^{\ast_{\Omega}}_g : C^{\infty} (X, S^p T^{\ast}_X)
   \longrightarrow C^{\infty} (X, S^2 T^{\ast}_X) . \]
Using this notation we observe that for any $u \in C^{\infty} (X, S^2
T^{\ast}_X)$ hold the identities
\begin{eqnarray*}
  - \nabla_g^{\ast_{\Omega}} \mathcal{D}_g u & = & \left( 2 \Delta^{\Omega}_g
  - \hat{\Delta}^{\Omega}_g \right) u - \hat{\nabla}_g
  \nabla^{\ast_{\Omega}}_g u\\
  &  & \\
  & = & \left( 2 \Delta^{\Omega}_g - \hat{\Delta}^{\Omega}_g \right) u -
  L_{\nabla_g^{\ast_{\Omega}} u_g^{\ast}} g,
\end{eqnarray*}
The last one follows from the equalities $\nabla^{\ast_{\Omega}}_g u = g
\nabla^{\ast_{\Omega}}_g u^{\ast}_g$ and $\hat{\nabla}_g  (g \xi) = L_{\xi}
g$, $\xi \in C^{\infty} (X, T_X)$. We observe now that for any symmetric
$2$-tensor $u$ the tensor $\mathcal{R}_g \ast u$ is also symmetric. In fact
let $(e_k)_k$ be a $g (x)$-orthonormal base of $T_{X, x}$. Then
\begin{eqnarray*}
  - ( \mathcal{R}_g \ast u) (\xi, \eta) & = & R_g (\xi, e_k, u_g^{\ast} e_k,
  \eta) = R_g (\eta, u_g^{\ast} e_k, e_k, \xi) .
\end{eqnarray*}
Furthermore if we choose the $g (x)$-orthonormal base $(e_k)_k$ such that $u$
is diagonal with respect to this one, then
\begin{eqnarray*}
  R_g (\eta, u_g^{\ast} e_k, e_k, \xi) & = & R_g (\eta, e_k, u_g^{\ast} e_k,
  \xi) = - ( \mathcal{R}_g \ast u) (\eta, \xi) .
\end{eqnarray*}
We observe also that the previous computation shows the identity
\begin{eqnarray*}
  ( \mathcal{R}_g \ast u) (\xi, \eta) & = & R_g (\xi, e_k, \eta, u_g^{\ast}
  e_k)\\
  &  & \\
  & = & g (\mathcal{R}_g (\xi, e_k) u_g^{\ast} e_k, \eta)\\
  &  & \\
  & = & g \left( \left( \mathcal{R}_g \ast u^{\ast}_g \right) \xi, \eta
  \right),
\end{eqnarray*}
i.e
\begin{equation}
  \label{ast-curv-Id}  ( \mathcal{R}_g \ast u)_g^{\ast} =\mathcal{R}_g \ast
  u^{\ast}_g .
\end{equation}
We deduce in particular the equality
\begin{equation}
  \label{Trns-curv-id} \mathcal{R}_g \ast u^{\ast}_g = \left( \mathcal{R}_g
  \ast u^{\ast}_g \right)_g^T .
\end{equation}
We remind that the $\Omega$-Lichnerowicz Laplacian $\Delta^{\Omega}_{L, g}$ is
self-adjoint with respect to the scalar product (\ref{Glb-Rm-m}) thanks to the
identity (\ref{SymRmOp}) that we show now.

We pick a $g (x)$-orthonormal base $(e_k)_k \subset T_{X, x}$ such that $v$ is
diagonal with respect to this one at the point $x$. Using (\ref{ast-curv-Id})
we infer
\begin{eqnarray*}
  \left\langle \mathcal{R}_g \ast u, v \right\rangle_g & = &
  \tmop{Tr}_{\mathbbm{R}} \left[ \left( \mathcal{R}_g \ast u^{\ast}_g \right)
  v^{\ast}_g \right]\\
  &  & \\
  & = & R_g (v^{\ast}_g e_l, e_k, e_l, u^{\ast}_g e_k)\\
  &  & \\
  & = & R_g (e_l, e_k, v^{\ast}_g e_l, u^{\ast}_g e_k)\\
  &  & \\
  & = & R_g (e_k, e_l, u^{\ast}_g e_k, v^{\ast}_g e_l)\\
  &  & \\
  & = & \left\langle \mathcal{R}_g \ast v, u \right\rangle_g,
\end{eqnarray*}
since these identities are independent of the choice of the $g (x)$-orthonormal
base $(e_k)_k \subset T_{X, x}$.

\begin{lemma}
  \label{Lich-Weitz}For any $g \in \mathcal{M}$ and $u \in C^{\infty} (X,
  S_{}^2 T^{\ast}_X)$ holds the Weitzenb\"ock type formula
  \begin{eqnarray*}
    - \nabla_g^{\ast_{\Omega}} \mathcal{D}_g u & = & \Delta^{\Omega}_{L, g} u
    - L_{\nabla_g^{\ast_{\Omega}} u_g^{\ast}} g .
  \end{eqnarray*}
\end{lemma}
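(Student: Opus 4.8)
The plan is to start from the identity already established just before the statement, namely
\[
- \nabla_g^{\ast_{\Omega}} \mathcal{D}_g u \;=\; \left( 2 \Delta^{\Omega}_g - \hat{\Delta}^{\Omega}_g \right) u \;-\; L_{\nabla_g^{\ast_{\Omega}} u_g^{\ast}} g ,
\]
so that the whole content of the lemma reduces to showing
\[
\left( 2 \Delta^{\Omega}_g - \hat{\Delta}^{\Omega}_g \right) u \;=\; \Delta^{\Omega}_{L, g} u ,
\]
i.e. that $\hat{\Delta}^{\Omega}_g u = \Delta^{\Omega}_g u + 2\mathcal{R}_g \ast u - u\,\tmop{Ric}^{\ast}_g(\Omega) - \tmop{Ric}_g(\Omega)\,u_g^{\ast}$, after recalling the definition $\Delta^{\Omega}_{L, g} u = \Delta^{\Omega}_g u - 2\mathcal{R}_g \ast u + u\,\tmop{Ric}^{\ast}_g(\Omega) + \tmop{Ric}_g(\Omega)\,u_g^{\ast}$. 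This is a pointwise tensorial identity, so I would compute both sides in a $g$-normal frame at a point.

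The key step is a Weitzenböck-type computation for the operator $\hat{\Delta}^{\Omega}_g = \nabla^{\ast_{\Omega}}_g \hat{\nabla}_g - \hat{\nabla}_g \nabla^{\ast_{\Omega}}_g$ acting on symmetric $2$-tensors. Expanding $\hat{\nabla}_g u(\xi_0,\xi_1,\xi_2) = \nabla u(\xi_1,\xi_0,\xi_2) + \nabla u(\xi_2,\xi_0,\xi_1)$ and then applying $\nabla^{\ast_{\Omega}}_g = -\tmop{Tr}_g \nabla + \nabla_g f \neg \nabla$ (the weight $f = \log\frac{dV_g}{\Omega}$ entering exactly as in the definitions of $\Delta^{\Omega}_g$ and $\nabla^{\ast_{\Omega}}_g$ given in the introduction), I would commute the two covariant derivatives that get contracted. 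Each commutator produces a curvature term; collecting them, the contractions of the Riemann tensor against $u$ assemble into $2\mathcal{R}_g \ast u$ (using the symmetry of $\mathcal{R}_g \ast u$ and the identity \eqref{ast-curv-Id} already proved), while the Ricci contractions — now Bakry–Emery Ricci $\tmop{Ric}_g(\Omega) = \tmop{Ric}(g) + \nabla_g d f$ because of the first-derivative-of-$f$ terms coming from $\nabla^{\ast_{\Omega}}_g$ — give $u\,\tmop{Ric}^{\ast}_g(\Omega) + \tmop{Ric}_g(\Omega)\,u_g^{\ast}$. The ``straight'' (non-curvature) part of $\hat{\Delta}^{\Omega}_g u$ is precisely the rough $\Omega$-Laplacian $\Delta^{\Omega}_g u$. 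Alternatively, and perhaps more cleanly, one can deduce this from Lemma \ref{OmTX-Lap-RmLap}: writing $u = g\,u_g^{\ast}$ with $u_g^{\ast} \in C^{\infty}(X,\tmop{End}(T_X))$, relating $\hat{\Delta}^{\Omega}_g$ on the symmetric-tensor side to the $\Omega$-Hodge Laplacian $\Delta^{\Omega}_{T_{X,g}}$ on the endomorphism side, and then substituting $\Delta^{\Omega}_{T_{X,g}} A = \Delta^{\Omega}_g A - \mathcal{R}_g \ast A + A\,\tmop{Ric}^{\ast}_g(\Omega)$; symmetrizing and using \eqref{Trns-curv-id} doubles the curvature term and symmetrizes the Ricci term, producing exactly the asserted formula.

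I expect the main obstacle to be bookkeeping in the commutator computation: keeping careful track of which slot of $\nabla^2 u$ is being contracted, of the antisymmetrization built into the curvature operator, and especially of the extra $\nabla_g f$ terms, which must be shepherded so that they precisely upgrade $\tmop{Ric}(g)$ to $\tmop{Ric}_g(\Omega)$ and $\Delta_g$ to $\Delta^{\Omega}_g$ rather than leaving a leftover first-order term. Choosing a normal frame and working at a single point kills the Christoffel symbols and makes this manageable; the symmetry identities \eqref{ast-curv-Id} and \eqref{Trns-curv-id}, together with the first Bianchi identity, are what let the various curvature contractions be recognized as $2\mathcal{R}_g \ast u$. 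Once the curvature identity $\hat{\Delta}^{\Omega}_g u = \Delta^{\Omega}_g u + 2\mathcal{R}_g \ast u - u\,\tmop{Ric}^{\ast}_g(\Omega) - \tmop{Ric}_g(\Omega)\,u_g^{\ast}$ is in hand, substituting into the displayed identity for $-\nabla_g^{\ast_{\Omega}}\mathcal{D}_g u$ and comparing with the definition of $\Delta^{\Omega}_{L,g}$ finishes the proof immediately.
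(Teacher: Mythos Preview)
Your proposal is correct and follows essentially the same route as the paper: reduce to the identity $\hat{\Delta}^{\Omega}_g u = \Delta^{\Omega}_g u + 2\mathcal{R}_g \ast u - u\,\tmop{Ric}^{\ast}_g(\Omega) - \tmop{Ric}_g(\Omega)\,u_g^{\ast}$, then expand $\hat{\Delta}^{\Omega}_g u = \nabla^{\ast_{\Omega}}_g \hat{\nabla}_g u - \hat{\nabla}_g \nabla^{\ast_{\Omega}}_g u$ in a $g$-normal frame at a point, commute covariant derivatives to produce the curvature terms, and verify that the $\nabla_g f$ contributions upgrade $\tmop{Ric}(g)$ to $\tmop{Ric}_g(\Omega)$. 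One small slip: for a symmetric $2$-tensor the symmetrized derivative has \emph{three} terms, $\hat{\nabla}_g u(\xi_0,\xi_1,\xi_2) = \nabla u(\xi_0,\xi_1,\xi_2) + \nabla u(\xi_1,\xi_0,\xi_2) + \nabla u(\xi_2,\xi_0,\xi_1)$; the first of these (which you omitted) is exactly what supplies the rough $\Omega$-Laplacian piece after contraction, so be sure to include it when you carry out the bookkeeping.
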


\begin{proof}
  The required formula follows from the identity
  \begin{equation}
    \label{Part-Weitz} \Delta^{\Omega}_{L, g} u = \left( 2 \Delta^{\Omega}_g -
    \hat{\Delta}^{\Omega}_g \right) u .
  \end{equation}
  In order to show this identity we expand $\hat{\Delta}^{\Omega}_g u =
  \nabla^{\ast_{\Omega}}_g  \hat{\nabla}_g u - \hat{\nabla}_g
  \nabla^{\ast_{\Omega}}_g u$. We observe first
  \begin{eqnarray*}
    \nabla^{\ast_{\Omega}}_g  \hat{\nabla}_g u (\xi, \eta) & = &
    \nabla^{\ast_{}}_g  \hat{\nabla}_g u (\xi, \eta) + \hat{\nabla}_g u
    (\nabla_g f, \xi, \eta) .
  \end{eqnarray*}
  We fix an arbitrary point $x_0\in X$ and we choose the vector fields $\xi$ and
  $\eta$ such that $0 = \nabla_g \xi (x_0) = \nabla_g \eta (x_0)$. Let
  $(e_k)_k$ be a $g$-orthonormal local frame such that $\nabla_g e_k  (x_0) =
  0$. Then at the point $x_0$ hold the identities
  \begin{eqnarray*}
    \nabla^{\ast_{}}_g  \hat{\nabla}_g u (\xi, \eta) & = & - \nabla_{g, e_k}
    \hat{\nabla}_g u (e_k, \xi, \eta)\\
    &  & \\
    & = & - \nabla_{g, e_k} \left[ \hat{\nabla}_g u (e_k, \xi, \eta)
    \right]\\
    &  & \\
    & = & - \nabla_{g, e_k} \left[ \nabla_g u (e_k, \xi, \eta) + \nabla_g u
    (\xi, e_k, \eta) + \nabla_g u (\eta, e_k, \xi) \right]\\
    &  & \\
    & = & - \nabla_{g, e_k} \nabla_{g, e_k} u (\xi, \eta) - \nabla_{g, e_k}
    \nabla_{g, \xi} u (e_k, \eta) - \nabla_{g, e_k} \nabla_{g, \eta} u (e_k,
    \xi),
  \end{eqnarray*}
  and
  \begin{eqnarray*}
    \hat{\nabla}_g u (\nabla_g f, \xi, \eta) & = & \nabla_g u (\nabla_g f,
    \xi, \eta) + \nabla_g u (\xi, \nabla_g f, \eta) + \nabla_g u (\eta,
    \nabla_g f, \xi) .
  \end{eqnarray*}
  Moreover
  \begin{eqnarray*}
    \hat{\nabla}_g \nabla^{\ast_{\Omega}}_g u (\xi, \eta) & = & \hat{\nabla}_g
    \nabla^{\ast_{}}_g u (\xi, \eta) + \hat{\nabla}_g  \left( \nabla_g f \neg
    u \right) (\xi, \eta),
  \end{eqnarray*}
  and at the point $x_0$ hold the identities
  \begin{eqnarray*}
    \hat{\nabla}_g \nabla^{\ast_{}}_g u (\xi, \eta) & = & \nabla_{g, \xi}
    \nabla^{\ast_{}}_g u \cdot \eta + \nabla_{g, \eta} \nabla^{\ast_{}}_g u
    \cdot \xi\\
    &  & \\
    & = & \nabla_{g, \xi}  \left[ \nabla^{\ast_{}}_g u \cdot \eta \right] +
    \nabla_{g, \eta}  \left[ \nabla^{\ast_{}}_g u \cdot \xi \right]\\
    &  & \\
    & = & - \nabla_{g, \xi}  \left[ \nabla^{}_{g, e_k} u (e_k, \eta) \right]
    - \nabla_{g, \eta}  \left[ \nabla^{}_{g, e_k} u (e_k, \xi) \right]\\
    &  & \\
    & = & - \nabla_{g, \xi} \nabla^{}_{g, e_k} u (e_k, \eta) - \nabla_{g,
    \eta} \nabla^{}_{g, e_k} u (e_k, \xi),
  \end{eqnarray*}
  and
  \begin{eqnarray*}
    \hat{\nabla}_g  \left( \nabla_g f \neg u \right) (\xi, \eta) & = &
    \nabla_{g, \xi}  \left( \nabla_g f \neg u \right) \cdot \eta + \nabla_{g,
    \eta}  \left( \nabla_g f \neg u \right) \cdot \xi\\
    &  & \\
    & = & \nabla_{g, \xi}  \left[ u (\nabla_g f, \eta) \right] + \nabla_{g,
    \eta}  \left[ u (\nabla_g f, \xi) \right]\\
    &  & \\
    & = & \nabla_g u (\xi, \nabla_g f, \eta) + \left( u \nabla^2_g f \right) 
    (\xi, \eta)\\
    &  & \\
    & + & \nabla_g u (\eta, \nabla_g f, \xi) + \left( \nabla_g d f u^{\ast}_g
    \right)  (\xi, \eta) .
  \end{eqnarray*}
  Let now $A \in C^{\infty} (X, \tmop{End} (T_X))$. We denote by $A \neg u$
  the $2$-tensor defined by the formula
  \begin{eqnarray*}
    (A \neg u)  (\xi, \eta) & \assign & u (A \xi, \eta) + u (\xi, A \eta) .
  \end{eqnarray*}
  We observe that if $\mu, \zeta$ are two germs of vector fields near $x_0$
  such that $\left[ \mu, \zeta \right] (x_0) = 0$ then holds the identity at
  the point $x_0$
  \begin{eqnarray*}
    \nabla_{g, \mu} \nabla_{g, \zeta} u - \nabla_{g, \zeta} \nabla_{g, \mu} u
    & = & -\mathcal{R}_g (\mu, \zeta) \neg u .
  \end{eqnarray*}
  Using this identity we infer the equalities at the point $x_0$
  \begin{eqnarray*}
    \left( \nabla_{g, \xi} \nabla_{g, e_k} u - \nabla_{g, e_k} \nabla_{g, \xi}
    u \right)  (e_k, \eta) & = & - u \left( \mathcal{R}_g (\xi, e_k) e_k, \eta
    \right) - u \left( e_k, \mathcal{R}_g (\xi, e_k) \eta \right)\\
    &  & \\
    & = & - \left( u \tmop{Ric}^{\ast} (g) \right)  (\xi, \eta) +
    (\mathcal{R}_g \ast u)  (\xi, \eta),\\
    &  & \\
    \left( \nabla_{g, \eta} \nabla_{g, e_k} u - \nabla_{g, e_k} \nabla_{g,
    \eta} u \right)  (e_k, \xi) & = & - \left( \tmop{Ric} (g) u^{\ast}_g
    \right)  (\xi, \eta) + (\mathcal{R}_g \ast u)  (\xi, \eta),
  \end{eqnarray*}
  by obvious symmetries. Combining the identities obtained so far and
  simplifying we obtain the identity
  \begin{eqnarray*}
    \hat{\Delta}^{\Omega}_g u & = & \Delta^{\Omega}_g u + 2\mathcal{R}_g \ast
    u - u \tmop{Ric}^{\ast}_g (\Omega) - \tmop{Ric}_g (\Omega) u_g^{\ast},
  \end{eqnarray*}
  which in its turn implies the required identity (\ref{Part-Weitz}).
\end{proof}

The Weitzenb\"ock type identity in lemma \ref{Lich-Weitz} combined with the
variation formula (\ref{var-OM-Ric}) implies directly the variation formula
(\ref{var-h}).

\subsection{Proof of the first variation formula for Perelman's $H$-function}

We show now the variation formula (\ref{var-H}). For this purpose let $0 <
(g_t, \Omega_t)_t\subset \mathcal{M} \times \mathcal{V}_1$ be a smooth family and set as usual $f_t \assign \log
\frac{d V_{g_t}}{\Omega_t}$. We start time deriving the identity
\begin{eqnarray*}
  - \Delta^{\Omega_t}_{g_t} f_t & = & ^{} \tmop{div}^{\Omega_t} \nabla_{g_t}
  f_t .
\end{eqnarray*}
We compute first the variation of the $\Omega$-divergence operator. Set $u_t
\assign \dot{\Omega}^{\ast}_t$ and time derive the definition identity
\begin{eqnarray*}
  d \left( \xi \neg \Omega_t \right) & = & (\tmop{div}^{\Omega_t} \xi)
  \Omega_t.
\end{eqnarray*}
We infer
\begin{eqnarray*}
  d \left( \xi \neg u_t \Omega_t \right) & = & \left( \frac{d}{d t}
  \tmop{div}^{\Omega_t} \xi \right) \Omega_t + u_t  (\tmop{div}^{\Omega_t}
  \xi) \Omega_t .
\end{eqnarray*}
Moreover expanding the left hand side we obtain
\begin{eqnarray*}
  d \left( \xi \neg u_t \Omega_t \right) & = & (\xi . u_t) \Omega_t + u_t d
  \left( \xi \neg \Omega_t \right),
\end{eqnarray*}
which implies the formula
\begin{eqnarray*}
  \left( \frac{d}{d t} \tmop{div}^{\Omega_t} \right) \xi & = & g \left(
  \nabla_g  \dot{\Omega}^{\ast}_t, \xi \right) .
\end{eqnarray*}
We observe also the variation formulas
\begin{equation}
  \label{var-grad}  \frac{d}{d t}  \left( \nabla_{g_t} f_t \right) =
  \nabla_{g_t}  \dot{f}_t - \dot{g}^{\ast}_t \nabla_{g_t} f_t,
\end{equation}
and
\begin{equation}
  \label{var-f}  \dot{f}_t = \frac{1}{2} \tmop{Tr}_{g_t}  \dot{g}_t -
  \dot{\Omega}^{\ast}_t .
\end{equation}
Combining all these formulas we obtain
\begin{eqnarray*}
  - \frac{d}{d t} \Delta^{\Omega_t}_{g_t} f_t & = & \left( \frac{d}{d t}
  \tmop{div}^{\Omega_t} \right) \nabla_{g_t} f_t + \tmop{div}^{\Omega_t}
  \frac{d}{d t}  \left( \nabla_{g_t} f_t \right)\\
  &  & \\
  & = & g_t \left( \nabla_{g_t}  \dot{\Omega}^{\ast}_t, \nabla_{g_t} f_t
  \right) + \Delta^{\Omega_t}_{g_t} \left( \dot{\Omega}^{\ast}_t - \frac{1}{2}
  \tmop{Tr}_{g_t}  \dot{g}_t \right)\\
  &  & \\
  & - & \tmop{div}^{\Omega_t} \left( \dot{g}^{\ast}_t \nabla_{g_t} f_t
  \right) .
\end{eqnarray*}
We expand last term using the identity
\begin{eqnarray*}
  \tmop{div}^{\Omega} \xi & = & \tmop{Tr}_{\mathbbm{R}} \left( \nabla_g \xi
  \right) - g \left( \xi, \nabla_g f \right) .
\end{eqnarray*}
We obtain with respect to a $g_t (x)$-orthonormal basis $(e_k)_k \subset T_{X,
x}$ at an arbitrary space-time point $(x, t)$
\begin{eqnarray*}
 && \tmop{div}^{\Omega_t} \left( \dot{g}^{\ast}_t \nabla_{g_t} f_t \right) 
 \\
 \\
 & = &
  g_t (\nabla_{g, e_k} \left( \dot{g}^{\ast}_t \nabla_{g_t} f_t \right), e_k)
  - g_t \left( \dot{g}^{\ast}_t \nabla_{g_t} f_t, \nabla_{g_t} f_t \right)\\
  &  & \\
  & = & g_t \left( \nabla_{g, e_k}  \dot{g}^{\ast}_t \cdot \nabla_{g_t} f_t +
  \dot{g}^{\ast}_t \nabla_{g, e_k} \nabla_{g_t} f_t, e_k \right) - g_t \left(
  \dot{g}^{\ast}_t \nabla_{g_t} f_t, \nabla_{g_t} f_t \right)\\
  &  & \\
  & = & g_t \left( \nabla_{g_t} f_t, \nabla_{g, e_k}  \dot{g}^{\ast}_t \cdot
  e_k \right) + g_t \left( \nabla^2_{g, e_k} f_t, \dot{g}^{\ast}_t e_k \right)
  - g_t \left( \nabla_{g_t} f_t, \dot{g}^{\ast}_t \nabla_{g_t} f_t \right)\\
  &  & \\
  & = & - g_t \left( \nabla_{g_t}^{\ast_{\Omega_t}} \dot{g}^{\ast}_t,
  \nabla_{g_t} f_t \right) + \left\langle \nabla_{g_t} d f_t, \dot{g}_t
  \right\rangle_{g_t} .
\end{eqnarray*}
We infer the variation formula
\begin{eqnarray}
  - \frac{d}{d t} \Delta^{\Omega_t}_{g_t} f_t 
 &=&
  \Delta^{\Omega_t}_{g_t} \left( \dot{\Omega}^{\ast}_t - \frac{1}{2}
  \tmop{Tr}_{g_t}  \dot{g}_t \right) + g_t \left(
  \nabla_{g_t}^{\ast_{\Omega_t}} \dot{g}^{\ast}_t + \nabla_{g_t} 
  \dot{\Omega}^{\ast}_t, \nabla_{g_t} f_t \right) \nonumber
  \\\nonumber
  \\
  &-& \left\langle \dot{g}_t,
  \nabla_{g_t} d f_t \right\rangle_{g_t} .\label{var-Lapf}
\end{eqnarray}
We observe next the identity
\begin{eqnarray*}
  2 \frac{d}{d t} h^{\ast}_t & = & 2 \dot{h}^{\ast}_t - 2 \dot{g}^{\ast}_t
  h^{\ast}_t\\
  &  & \\
  & = & \Delta^{\Omega_t}_{g_t}  \dot{g}^{\ast}_t - 2 (\mathcal{R}_{g_t} \ast
  \dot{g}_t)^{\ast}_t + \dot{g}^{\ast}_t \tmop{Ric}^{\ast}_{g_t} (\Omega_t) +
  \tmop{Ric}^{\ast}_{g_t} (\Omega_t)  \dot{g}^{\ast}_t \\
  &  & \\
  & - & \left( L_{\nabla_{g_t}^{\ast_{\Omega_t}} \dot{g}_t^{\ast} +
  \nabla_{g_t}  \dot{\Omega}^{\ast}_t} g_t \right)_t^{\ast} - 2
  \dot{g}^{\ast}_t - 2 \dot{g}^{\ast}_t h^{\ast}_t,
\end{eqnarray*}
thanks to the variation formula (\ref{var-h}). We deduce the formula
\begin{equation}
  \label{var-Trh}  \frac{d}{d t} \tmop{Tr}_{g_t} h_t = \frac{1}{2}
  \Delta^{\Omega_t}_{g_t} \tmop{Tr}_{g_t}  \dot{g}_t - \left\langle \dot{g}_t,
  \tmop{Ric} (g_t) \right\rangle_{g_t} - \tmop{div}_{g_t} \left(
  \nabla_{g_t}^{\ast_{\Omega_t}} \dot{g}^{\ast}_t + \nabla_{g_t} 
  \dot{\Omega}^{\ast}_t \right),
\end{equation}
thanks to the identities
\begin{eqnarray}
   \tmop{Tr}_g \left( \mathcal{R}_g \ast v \right) &=&
  \left\langle v, \tmop{Ric} (g) \right\rangle_g,\label{Trace-Curv}
\\\nonumber
\\
 \tmop{Tr}_g  (L_{\xi} g) &=& 2 \tmop{Tr}_{\mathbbm{R}}
  (\nabla_g \xi) \;=\; 2 \tmop{div}_g \xi .\label{Trace-Lie}
\end{eqnarray}
In order to show the identity (\ref{Trace-Curv}) we expand with respect to a
$g (x)$-orthonormal basis $(e_k)_k \subset T_{X, x}$ the term
\begin{eqnarray*}
  \tmop{Tr}_g \left( \mathcal{R}_g \ast v \right) & = & \left( \mathcal{R}_g
  \ast v \right) \left( e_k, e_k \right)\\
  &  & \\
  & = & - v (\mathcal{R}_g (e_k, e_l) e_k, e_l)\\
  &  & \\
  & = & g \left( v_g^{\ast} e_l, \tmop{Ric}^{\ast} (g) e_l \right)\\
  &  & \\
  & = & \left\langle v, \tmop{Ric} (g) \right\rangle_g .
\end{eqnarray*}
The first equality in (\ref{Trace-Lie}) follows from the elementary identity
\begin{eqnarray*}
  (L_{\xi} g)_g^{\ast} & = & \nabla_g \xi + (\nabla_g \xi)_g^T,
\end{eqnarray*}
where $A_g^T$ denotes the transpose of an endomorphism $A$ of $T_X$ with
respect to $g$.

In conclusion combining the variation formulas (\ref{var-Lapf}),
(\ref{var-Trh}) and (\ref{var-f}) we infer the variation identity
\[ 2 D_{g, \Omega} H \left( v, V \right) = \Delta^{\Omega}_g V^{\ast}_{\Omega}
   - \tmop{div}^{\Omega} \left( \nabla_g^{\ast_{\Omega}} v_g^{\ast} + \nabla_g
   V^{\ast}_{\Omega} \right) - 2 V^{\ast}_{\Omega} - \left\langle v, h_{g,
   \Omega} \right\rangle_g, \]
and thus the required variation formula (\ref{var-H}).

\section{The Soliton-K\"ahler-Ricci Flow with variable volume forms}

\subsection{Existence of the Soliton-K\"ahler-Ricci flow}

We prove in this sub-section lemma \ref{Exist-SRF}.

\begin{proof}
  From now on we will set for notation simplicity $h_t \equiv h_{g_t,
  \Omega_t}$, $H_t \equiv H_{g_t, \Omega_t}$ and $\underline{H}_t \equiv
  \underline{H}_{g_t, \Omega_t}$. We observe that for any smooth curve $(g_t,
  \Omega_t)_{t \geqslant 0} \subset \mathcal{M} \times \mathcal{V}_1$ the
  identity
  \begin{eqnarray*}
    f_t & = & \log \frac{d V_{g_t}}{\Omega_t},
  \end{eqnarray*}
  is equivalent to the evolution equation
  \begin{equation}
    \label{dif-car-f} 2 \dot{f}_t = \tmop{Tr}_{g_t}  \dot{g}_t - 2
    \dot{\Omega}^{\ast}_t .
  \end{equation}
  with initial data $f_0 : = \log \frac{d V_{g_0}}{\Omega_0}$. Along the Soliton-Ricci flow,
  the equation (\ref{dif-car-f}) rewrites as
  \begin{eqnarray*}
    2 \dot{f}_t & = & - \tmop{Tr}_{g_t} h_t + 2 \underline{H}_t\\
    &  & \\
    & = & - \Delta^{\Omega_t}_{g_t} f_t + 2 f_t - 2 \int_X H_t \Omega_t .
  \end{eqnarray*}
  We infer that the Soliton-Ricci flow equation is equivalent to the evolution system
  \begin{equation}
    \label{SRF-f}  \left\{ \begin{array}{l}
      \dot{g}_t = g_t - \tmop{Ric} (g_t) \hspace{0.75em} - \nabla_{g_t} d f_t
      \hspace{0.25em},\\
      \\
      2 \hspace{0.25em} \dot{f}_t \hspace{0.75em} = - \Delta_{g_t} f_t - |
      \nabla_{g_t} f_t |^2_{g_t} + 2 f_t -\mathcal{W}(g_t, f_t),
    \end{array} \right.
  \end{equation}
  with $f_0 : = \log \frac{d V_{g_0}}{\Omega_0}$. We consider now the flow of
  diffeomorphisms $(\varphi_t)_{t \geqslant 0}$ solution of the equation
  \begin{eqnarray*}
    2 \dot{\varphi}_t & = & \left( \nabla_{g_t} f_t \right) \circ \varphi_t,
  \end{eqnarray*}
  with $\varphi_0 = \tmop{Id}_X$ and we define $( \hat{g}_t, \hat{f}_t) : =
  \varphi_t^{\ast} (g_t, f_t)$. We observe the evolution formulas
  \begin{eqnarray*}
    \frac{d}{d t}  \hat{g}_t & = & \varphi_t^{\ast} \left( \dot{g}_t +
    \frac{1}{2} L_{\nabla_{g_t} f_t} g_t \right)\\
    &  & \\
    & = & \varphi_t^{\ast} \left[ g_t - \tmop{Ric} (g_t) \right]\\
    &  & \\
    & = & \hat{g}_t - \tmop{Ric} ( \hat{g}_t),
  \end{eqnarray*}
  and
 \begin{eqnarray*} 
    2 \frac{d}{d t}  \hat{f}_t & = & 2 \dot{f}_t \circ \varphi_t + 2
    d_{\varphi_t} f_t \cdot \dot{\varphi}_t \\
    &  & \\
    & = & 2 \dot{f}_t \circ \varphi_t + d_{\varphi_t} f_t \cdot \left[ \left(
    \nabla_{g_t} f_t \right) \circ \varphi_t \right]\\
    &  & \\
    & = & \left( 2 \dot{f}_t + d f_t \cdot \nabla_{g_t} f_t \right) \circ
    \varphi_t\\
    &  & \\
    & = & \left( 2 \dot{f}_t + | \nabla_{g_t} f_t |^2_{g_t} \right) \circ
    \varphi_t,
  \end{eqnarray*}
  We deduce thanks to the diffeomorphism invariance of the $\mathcal{W}$
  functional that the evolution system (\ref{SRF-f}) is equivalent to
  \begin{equation}
    \label{SRF-F}  \left\{ \begin{array}{l}
      \frac{d}{d t}  \hat{g}_t = \hat{g}_t - \tmop{Ric} ( \hat{g}_t),\\
      \\
      2 \frac{d}{d t}  \hat{f}_t = - \Delta_{\hat{g}_t}  \hat{f}_t + 2
      \hat{f}_t -\mathcal{W}( \hat{g}_t, \hat{f}_t),
    \end{array} \right.
  \end{equation}
  with initial data $( \hat{g}_0, \hat{f}_0) : = (g_0, f_0)$. Notice indeed
  that we can obtain (\ref{SRF-f}) from (\ref{SRF-F}) by performing the
  inverse gauge transformation $(g_t, f_t) : = \psi_t^{\ast} ( \hat{g}_t,
  \hat{f}_t)$ with $\psi_t = \varphi^{- 1}_t$ being characterized by the
  evolution equation
  \begin{eqnarray*}
    2 \dot{\psi}_t & = & - \left( \nabla_{\hat{g}_t}  \hat{f}_t \right) \circ
    \psi_t,
  \end{eqnarray*}
  $\psi_0 = \tmop{Id}_X$. In order to show all time existence and uniqueness
  of the solutions of the evolution system (\ref{SRF-F}) we consider a
  solution of the Ricci flow $( \check{g}_t)_{t \in [0, T)}$,
  \begin{eqnarray*}
    \frac{d}{d t}  \check{g}_t & = & - 2 \tmop{Ric} ( \check{g}_t),
  \end{eqnarray*}
  with initial data $\check{g}_0$ and $0 < T < + \infty$. Then $(
  \hat{g}_t)_{t \geqslant 0}$ defined by
  \begin{eqnarray*}
    \hat{g}_t & \assign & \frac{e^t}{2 T}  \check{g}_{T (1 - e^{- t})},
  \end{eqnarray*}
  satisfies the evolution equation relative to the metrics in (\ref{SRF-F}).
  Then we set $\lambda \assign 2 T$. In the case $(X, J_0)$ is a Fano variety
  and $ \check{g}_0 J_0 \in 2 \pi c_1 (X)$ we can choose $\lambda = 1$ since
  the the evolution equation of $\hat{g}_t$ in (\ref{SRF-F}) represents a
  solution of the K\"ahler-Ricci flow equation.
  
  The existence and uniqueness of the solutions of the evolution equation for
  $\hat{f}_t$ in (\ref{SRF-F}) follows directly from standard parabolic theory
  with respect to H\"older spaces. Notice indeed that the presence of the
  integral term $\mathcal{W} ( \hat{g}_t, \hat{f}_t)$ (we consider the
  expression involving the $H^1 (X)$ norm of $f$) does not produce any issue
  in this theory.
  
  In the Fano set up we define the complex structures $J_t \assign
  \psi_t^{\ast} J_0$. Then the family $(J_t, g_t)_{t \geqslant 0}$ represents
  a flow of K\"ahler structures since $(J_0, \hat{g}_t)_{t \geqslant 0}$ is
  also a flow of K\"ahler structures. The identity $\varphi_t^{\ast} J_t
  \equiv J_0$ is equivalent to the equality
  \begin{eqnarray*}
    0 & = & \frac{d}{dt} \hspace{0.25em} (\varphi_t^{\ast} J_t)\\
    &  & \\
    & = & \varphi_t^{\ast} \left( \dot{J}_t \hspace{0.75em} + \hspace{0.75em}
    \frac{1}{2} \hspace{0.25em} L_{\nabla_{g_t} f_t} J_t \right)\\
    &  & \\
    & = & \varphi_t^{\ast} \left( \dot{J}_t \hspace{0.75em} + \hspace{0.75em}
    J_t \hspace{0.25em} \overline{\partial}_{T_{X, J_t}} \nabla_{g_t} f_t
    \right),
  \end{eqnarray*}
  i.e to the equation
  \begin{eqnarray*}
    \dot{J}_t & = & - J_t \hspace{0.25em} \overline{\partial}_{T_{X, J_t}}
    \nabla_{g_t} f_t .
  \end{eqnarray*}
  This combined with the $J_t$-linearity of the first two terms in the right
  hand side of the complex decomposition
  \begin{eqnarray*}
    \tmop{Ric}^{\ast}_{g_t} (\Omega_t) & = & \tmop{Ric}^{\ast} (g_t) +
    \partial^{g_t}_{T_{X, J_t}} \nabla_{g_t} f_t + \overline{\partial}_{T_{X,
    J_t}} \nabla_{g_t} f_t,
  \end{eqnarray*}
  implies the required characterization $2 \dot{J}_t = \left[ J_t,
  \dot{g}^{\ast}_t \right]$ of the evolution of the complex structures $J_t$.

\end{proof}

\subsection{Monotony of Perelman's $\mathcal{W}$-functional along the
Soliton-K\"ahler-Ricci flow}

We observe first the following elementary fact.

\begin{lemma}
  \label{Carac-KRS}Let $(X, J)$ be a Fano manifold and let $g$ be a
  $J$-invariant K\"ahler metric with symplectic form $\omega \assign g J \in 2
  \pi c_1 (X, \left[ J \right])$. Then $(J, g)$ is a K\"ahler-Ricci soliton if and only
  if there exists a smooth volume form $\Omega > 0$ with $\int_X \Omega = 1$
  such that
  \[ (S)  \left\{ \begin{array}{l}
       \omega = \tmop{Ric}_J (\Omega),\\
       \\
       \Delta^{\Omega}_g f - 2 f + 2 \int_X f \Omega = 0, f \assign \log
       \frac{\omega^n}{n! \Omega} .
     \end{array} \right. \]
\end{lemma}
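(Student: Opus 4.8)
The plan is to unwind the definition of a K\"ahler-Ricci soliton in terms of the objects $h_{g,\Omega}$ and $H_{g,\Omega}$, and to recognize that system $(S)$ is exactly the statement $h_{g,\Omega}=0$ together with a normalization fixing the function $f$. Recall that $(J,g)$ is a $J$-invariant K\"ahler-Ricci soliton precisely when there is a smooth volume form $\Omega>0$ with $\omega=\tmop{Ric}_J(\Omega)$ and with $\overline{\partial}_{T_{X,J}}(\omega^{-1}d\log\frac{\omega^n}{\Omega})=0$; equivalently $g=\tmop{Ric}_g(\Omega)$, i.e. $h_{g,\Omega}=0$, once one passes from the Chern-Ricci form to the Bakry-Emery-Ricci tensor. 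So the first step is to show: given $\omega=\tmop{Ric}_J(\Omega)$, the soliton vector-field equation $\overline{\partial}_{T_{X,J}}(\omega^{-1}d\log\frac{\omega^n}{\Omega})=0$ holds if and only if the function $f=\log\frac{\omega^n}{n!\,\Omega}$ satisfies $\Delta^{\Omega}_g f - 2f + 2\int_X f\,\Omega = 0$. Here the $-2f$ term and not, say, $-f$, is forced by the normalization $\omega\in 2\pi c_1$, which pins down the "Einstein constant'' to be $1$ in our conventions (this is where $\int_X\Omega=1$ and the cohomological hypothesis enter).

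Concretely, I would argue as follows. Assume $\omega=\tmop{Ric}_J(\Omega)$. Since $g$ is K\"ahler and $\omega=gJ$, one has $\tmop{Ric}(g) = \tmop{Ric}_J(\omega^n/n!)$ viewed as a $(1,1)$-form, so the Bakry-Emery correction gives $h_{g,\Omega} = \tmop{Ric}_g(\Omega)-g = \tmop{Ric}(g)+\nabla_g d\log\frac{dV_g}{\Omega} - g$. Writing $dV_g = \omega^n/n!$ and $f=\log\frac{\omega^n}{n!\,\Omega}$, the condition $\omega=\tmop{Ric}_J(\Omega)$ on the level of $(1,1)$-forms translates, using $\tmop{Ric}_J(\Omega)=\tmop{Ric}_J(dV_g)+i\partial\bar\partial f = \tmop{Ric}(g)\text{-form}+i\partial\bar\partial f$, into $\omega = \rho_g + i\partial\bar\partial f$ where $\rho_g$ is the Ricci form; this is an equation on $(1,1)$-forms that is the "$J$-invariant part'' of $h_{g,\Omega}=0$. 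The remaining content of $h_{g,\Omega}=0$ is then obtained by taking a trace/contraction: contracting the tensor identity $\nabla_g d f = g - \tmop{Ric}(g)$ against $g$ yields $\Delta_g f = m - \tmop{Scal}(g)$, and feeding this together with $\tmop{Scal}(g) = 2\tmop{Tr}_\omega\rho_g$ and the normalization $\omega\in 2\pi c_1$ into the definition of $\Delta^{\Omega}_g = \Delta_g + \nabla_g f\neg\nabla_g$ produces exactly $\Delta^{\Omega}_g f = 2f - 2\int_X f\,\Omega$ once one checks the constant by integrating against $\Omega$ (the integral of the left side is $0$ by the integration-by-parts formula $\int_X \Delta^{\Omega}_g f\,\Omega = 0$, which forces the additive constant to be $2\int_X f\,\Omega$). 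Conversely, if $\omega=\tmop{Ric}_J(\Omega)$ and this scalar equation holds, one runs the same computation backwards: the scalar equation plus the $(1,1)$-form equation $\omega=\rho_g+i\partial\bar\partial f$ together give $h_{g,\Omega}=0$, hence by Perelman's twice contracted second Bianchi identity \eqref{II-contr-Bianchi} also $\nabla_g H_{g,\Omega}=0$, and then the soliton vector-field equation $\overline{\partial}_{T_{X,J}}(\omega^{-1}df)=0$ follows because $\omega^{-1}df = \nabla_g f$ up to the musical isomorphism and $\overline{\partial}_{T_{X,J}}\nabla_g f$ is the part of $\tmop{Ric}^{\ast}_g(\Omega)-\tmop{Ric}^{\ast}(g)$ that must vanish once $h=0$.

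The one point that needs genuine care, rather than bookkeeping, is the direction that produces $\omega=\tmop{Ric}_J(\Omega)$ and the soliton equation from system $(S)$ combined with the bare hypothesis that $\omega=gJ\in 2\pi c_1(X,[J])$: one must verify that no extra integrability or cohomological obstruction is hiding, i.e. that the existence of a single $\Omega$ solving $\omega=\tmop{Ric}_J(\Omega)$ with $\int_X\Omega=1$ is automatic from $\omega\in 2\pi c_1$ (this is a standard $i\partial\bar\partial$-lemma / Calabi-Yau type normalization, uniquely fixing $\Omega$), and that the scalar equation in $(S)$ is then not just necessary but sufficient for the full tensor equation $h_{g,\Omega}=0$. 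The main obstacle is therefore the \emph{scalar reduction}: showing that, in the K\"ahler setting with $\omega\in 2\pi c_1$, the full symmetric $2$-tensor equation $h_{g,\Omega}=0$ is equivalent to its scalar trace $\tmop{Tr}_g h_{g,\Omega}=0$ rewritten via $\Delta^{\Omega}_g$ — this is a consequence of the fact that $h_{g,\Omega}$ is (under $\omega=\tmop{Ric}_J(\Omega)$) a $J$-invariant $\partial\bar\partial$-exact $(1,1)$-form whose trace determines it through the second-order elliptic operator, plus the Bianchi identity forcing $H_{g,\Omega}$ constant; I would spell this out using the complex decomposition $\tmop{Ric}^{\ast}_{g}(\Omega) = \tmop{Ric}^{\ast}(g) + \partial^{g}_{T_{X,J}}\nabla_g f + \overline{\partial}_{T_{X,J}}\nabla_g f$ already used in the proof of Lemma \ref{Exist-SRF}.
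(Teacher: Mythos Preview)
Your reverse direction has a genuine gap. Under the first equation $\omega = \tmop{Ric}_J(\Omega)$, the complex decomposition you cite from the proof of Lemma~\ref{Exist-SRF} (and spelled out in \eqref{Ric-ident}) gives
\[
h_{g,\Omega} \;=\; g\,\overline{\partial}_{T_{X,J}}\nabla_g f ,
\]
which is $J$-\emph{anti}-invariant, not $J$-invariant as you claim. In particular $\tmop{Tr}_g h_{g,\Omega}=0$ \emph{automatically}; the second equation in $(S)$ is therefore not the trace of $h_{g,\Omega}=0$ but rather the condition $\underline{H}_{g,\Omega}=0$ (cf.\ \eqref{H-KRS}). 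So ``running the computation backwards'' cannot work: the scalar equation carries no direct pointwise information about the anti-invariant tensor $h_{g,\Omega}$, and there is no elliptic operator through which the trace would determine $h_{g,\Omega}$.

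What you are missing is the integral mechanism that converts $\underline{H}_{g,\Omega}=0$ into $\overline{\partial}_{T_{X,J}}\nabla_g f=0$. The paper obtains this by pairing Perelman's Bianchi identity \eqref{II-contr-Bianchi} with $\nabla_g f$ and integrating by parts, which yields \eqref{int-cntr-Bianchi} and then, using $h_{g,\Omega}=g\,\overline{\partial}_{T_{X,J}}\nabla_g f$, the key identity \eqref{f-Boch}:
\[
2\int_X \bigl|\overline{\partial}_{T_{X,J}}\nabla_g f\bigr|^2_g\,\Omega
\;=\;
\int_X (\Delta^{\Omega}_g-2\mathbbm{I})f\cdot \Delta^{\Omega}_g f\,\Omega .
\]
The second equation of $(S)$ kills the right-hand side, forcing $\overline{\partial}_{T_{X,J}}\nabla_g f=0$. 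Without this $L^2$ argument (or an equivalent Bochner-type identity), your proposal does not close. The forward direction is fine and essentially what the paper does: $h_{g,\Omega}=0$ plus \eqref{II-contr-Bianchi} gives $\underline{H}_{g,\Omega}=0$, which together with $\tmop{Tr}_g h_{g,\Omega}=0$ is the second equation of $(S)$.
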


\begin{proof}
  We assume first that $(J, g)$ is a K\"ahler-Ricci soliton. Then Perelman's twice
  contracted Bianchi type identity (\ref{II-contr-Bianchi}) implies
  $\underline{H}_{g, \Omega} = 0$. The latter is equivalent to the second
  equation of the system (S) thanks to the identity $\tmop{Tr}_g h_{g, \Omega}
  = 0$. We show now that the solution of the system (S) implies that $(J, g)$
  is a K\"ahler-Ricci soliton. Indeed multiplying by $\nabla_g f$ both sides of the identity
  (\ref{II-contr-Bianchi}) and integrating by parts we obtain the general
  formula
  \begin{equation}
    \label{int-cntr-Bianchi}  \int_X \left\langle h^{\ast}_{g, \Omega},
    \nabla^2_g f \right\rangle_g \Omega = - \int_X \underline{H}^{}_{g,
    \Omega} \Delta^{\Omega}_g f \Omega .
  \end{equation}
  In our case this rewrites as
  \begin{equation}
    \label{f-Boch} 2 \int_X \left| \overline{\partial}_{T_{X, J}} \nabla_g f
    \right|^2_g \Omega = \int_X  (\Delta^{\Omega}_g - 2\mathbbm{I}) f
    \Delta^{\Omega}_g f \Omega,
  \end{equation}
  thanks to the condition $\omega = \tmop{Ric}_J (\Omega)$ and the complex
  decomposition of the Bakry-Emery-Ricci tensor. We infer the required conclusion.
\end{proof}

We provide now a proof of the monotony statement in lemma \ref{Monot-SKRF}.

\begin{proof}
  {\tmstrong{STEP I}}. Let $(J, \hat{g}_t)_{t \geqslant 0}$ be a solution of
  the K\"ahler-Ricci flow and observe that this equation rewrites in the equivalent form
  \begin{equation}
    \label{KRF-eqv}  \left\{ \begin{array}{l}
      \frac{d}{d t}  \hat{\omega}_t = i \hspace{0.25em} \partial_J
      \overline{\partial}_J \log
      \frac{\hat{\omega}_t^n}{\hat{\Omega}_t},\\
      \\
      \hat{\omega}_t = \tmop{Ric}_J ( \hat{\Omega}_t), \int_X
      \hat{\Omega}_t = 1,
    \end{array} \right.
  \end{equation}
  with $\hat{\omega}_t \assign \hat{g}_t J$, and $\hat{\omega}_0 : =
  \omega$. We define the function
  \begin{eqnarray*}
    \hat{f}_t & \assign & \log \frac{\hat{\omega}_t^n}{ \hat{\Omega}_t
    n!},
  \end{eqnarray*}
  and we observe the analogue of (\ref{dif-car-f})
  \begin{eqnarray*}
    2 \frac{d}{d t}  \hat{f}_t & = & \tmop{Tr}_{\hat{\omega}_t} 
    \frac{d}{d t} \hat{\omega}_t - 2 \left( \frac{d}{d t}  \hat{\Omega}_t
    \right)_t^{\ast} .
  \end{eqnarray*}
  This combined with the first equation in (\ref{KRF-eqv}) implies
  \begin{equation}
    \label{evol-F1} 2 \frac{d}{d t}  \hat{f}_t = - \Delta_{\hat{g}_t} 
    \hat{f}_t - 2 \left( \frac{d}{d t}  \hat{\Omega}_t \right)_t^{\ast} .
  \end{equation}
  On the other hand time differentiating the identity $\hat{\omega}_t =
  \tmop{Ric}_J ( \hat{\Omega}_t)$ in (\ref{KRF-eqv}) we obtain
  \begin{eqnarray*}
    2 \frac{d}{d t} \hat{\omega}_t & = & 2 \frac{d}{d t} \tmop{Ric}_J (
    \hat{\Omega}_t) = - 2 i \hspace{0.25em} \partial_J \overline{\partial}_J 
    \left( \frac{d}{d t}  \hat{\Omega}_t \right)_t^{\ast},
  \end{eqnarray*}
  which combined with (\ref{evol-F1}) implies
  \begin{eqnarray*}
    2 i \hspace{0.25em} \partial_J \overline{\partial}_J  \hat{f}_t & = & i
    \hspace{0.25em} \partial_J \overline{\partial}_J  \left( 2 \frac{d}{d t} 
    \hat{f}_t + \Delta_{\hat{g}_t}  \hat{f}_t \right),
  \end{eqnarray*}
  i.e.
  \begin{eqnarray*}
    2 \frac{d}{d t}  \hat{f}_t & = & - \Delta_{\hat{g}_t}  \hat{f}_t + 2
    \hat{f}_t + C_t,
  \end{eqnarray*}
  for some time dependent constant $C_t$ which can be determined time deriving
  the integral condition $\int_X \hat{\Omega}_t = 1$. Indeed using
  (\ref{evol-F1}) we obtain
  \begin{eqnarray*}
    0 & = & - 2 \int_X \left( \frac{d}{d t}  \hat{\Omega}_t \right)_t^{\ast}
    \hat{\Omega}_t\\
    &  & \\
    & = & \int_X \left[ 2 \frac{d}{d t}  \hat{f}_t + \Delta_{\hat{g}_t} 
    \hat{f}_t \right]  \hat{\Omega}_t\\
    &  & \\
    & = & C_t + 2 \int_X \hat{f}_t  \hat{\Omega}_t .
  \end{eqnarray*}
  We infer the evolution formula
  \begin{equation}
    \label{evol-Fcool} 2 \frac{d}{d t}  \hat{f}_t = - \Delta_{\hat{g}_t} 
    \hat{f}_t + 2 \hat{f}_t - 2 \int_X \hat{f}_t e^{- \hat{f}_t} d
    V_{\hat{g}_t},
  \end{equation}
  with initial data
  \begin{eqnarray*}
    \hat{f}_0 & \assign & \log \frac{\omega^n}{\Omega n!} .
  \end{eqnarray*}
  We observe now that the identity $\hat{\omega}_t = \tmop{Ric}_J (
  \hat{\Omega}_t)$ in (\ref{KRF-eqv}) implies
  \begin{equation}
    \label{Ric-ident}  \hat{g}_t = - \tmop{Ric}_J ( \hat{\Omega}_t) J =
    \tmop{Ric}_{\hat{g}_t} ( \hat{\Omega}_t) - \hat{g}_t 
    \overline{\partial}_{T_{X, J}} \nabla_{\hat{g}_t}  \hat{f}_t,
  \end{equation}
  and thus $\tmop{Tr}_{\hat{g}_t} h_{\hat{g}_t, \hat{\Omega}_t} = 0$. We
  deduce the equality
  \begin{equation}
    \label{Cool-W} \mathcal{W} ( \hat{g}_t, \hat{f}_t) = 2 \int_X \hat{f}_t
    e^{- \hat{f}_t} d V_{\hat{g}_t} .
  \end{equation}
  We infer by Cauchy's uniqueness that the evolution equation
  (\ref{evol-Fcool}) is equivalent with the second evolution equation in
  (\ref{SRF-F}). We obtain, as in the proof of lemma \ref{Exist-SRF}, a Soliton-K\"ahler-Ricci flow
  $(J_t, \omega_t, \Omega_t)_{t \geqslant 0}$ with initial data $(J_0,
  \omega_0, \Omega_0) = (J, \omega, \Omega)$. We observe that thanks to
  (\ref{Ric-ident}) and (\ref{Cool-W}) the Soliton-Ricci flow evolution system (\ref{SRF-f})
  writes in our case as
  \begin{equation}
    \label{SRF-Cool}  \left\{ \begin{array}{l}
      \dot{g}_t = - g_t \overline{\partial}_{T_{X, J_t}} \nabla_{g_t} f_t,\\
      \\
      2 \dot{f}_t = - \Delta^{\Omega_t}_{g_t} f_t + 2 f_t - 2 \int_X f_t e^{-
      f_t} d V_{g_t} .
    \end{array} \right.
  \end{equation}
  Time deriving the identity $\omega_t = g_t J_t$ and using the evolution
  formula for the complex structure $2 \dot{J}_t = \left[ J_t,
  \dot{g}_t^{\ast} \right]$ in the Soliton-K\"ahler-Ricci flow equation we infer
  \begin{eqnarray*}
    \dot{\omega}_t & = & \dot{g}_t J_t + g_t \dot{J}_t\\
    &  & \\
    & = & \frac{1}{2} g_t \left( \dot{g}_t^{\ast} J_t + J_t  \dot{g}_t^{\ast}
    \right)\\
    &  & \\
    & = & \frac{1}{2} \omega_t \left( \dot{g}_t^{\ast} - J_t 
    \dot{g}_t^{\ast} J_t \right)\\
    &  & \\
    & = & \omega_t  ( \dot{g}_t^{\ast})_J^{1, 0} \\
    &  & \\
    & = & 0,
  \end{eqnarray*}
  thanks to the first equation in (\ref{SRF-Cool}). We deduce $\omega_t \equiv
  \omega$ and thus the identity in time
  \begin{equation}
    \label{Symplectic-inv} \omega = \tmop{Ric}_{J_t} (\Omega_t) .
  \end{equation}
  {\tmstrong{STEP IIa}}. We provide now a first proof of the monotony
  statement for the Soliton-K\"ahler-Ricci flow. The equality (\ref{Cool-W}) rewrites as
  \begin{equation}
    \label{SCool-W} \mathcal{W} (g_t, \Omega_t) = 2 \int_X f_t e^{- f_t} 
    \frac{\omega^n}{n!} \equiv 2 \int_X f_t \Omega_t,
  \end{equation}
  thanks to the invariance by diffeomorphisms of $\mathcal{W}$. Let
  \begin{eqnarray*}
    F_t & \assign & f_t - \int_X f_t \Omega_t,
  \end{eqnarray*}
  and observe that the second evolution equation in (\ref{SRF-Cool}) rewrites
  as
  \begin{equation}
    \label{Cool-evol-f} 2 \dot{f}_t = - \Delta^{\Omega_t}_{g_t} F_t + 2 F_t .
  \end{equation}
  Time deriving the expression (\ref{SCool-W}) and using the evolution
  equation (\ref{Cool-evol-f}) we infer
  \begin{eqnarray*}
    \frac{d}{d t} \mathcal{W} (g_t, \Omega_t) & = & 2 \int_X \left( \dot{f}_t
    - f_t  \dot{f}_t \right) \Omega_t\\
    &  & \\
    & = & - 2 \int_X f_t  \dot{f}_t \,\Omega_t\\
    &  & \\
    & = & \int_X \left( \Delta^{\Omega_t}_{g_t} F_t - 2 F_t \right) F_t\,
    \Omega_t \geqslant 0,
  \end{eqnarray*}
  thanks to the estimate $\lambda_1 (\Delta^{\Omega}_g) \geqslant 2$ for the
  first eigenvalue $\lambda_1 (\Delta^{\Omega}_g)$ of the weighted Laplacian
  $\Delta^{\Omega}_g$ in the case $g J = \tmop{Ric}_J (\Omega)$. (See the
  estimate (\ref{Estim-FirstEigen}) in the section \ref{CxBochSec}.) Indeed by
  the variational characterization of the first eigenvalue holds the estimate
  \begin{equation}
    \label{Eigen-estm} 2 \leqslant \lambda_1 (\Delta^{\Omega}_g) = \inf
    \left\{ \int_X \Delta^{\Omega}_g u\, u \,\Omega \mid u \in C_{\Omega}^{\infty}
    (X, \mathbbm{R})_0 : \int_X u^2 \Omega = 1 \right\},
  \end{equation}
  which implies
  \begin{equation}
    \label{GPoinc-Ineq} 0 \leqslant \int_X \left( \Delta^{\Omega}_g F - 2 F
    \right) F \Omega,
  \end{equation}
  with
  \begin{eqnarray*}
    F & \assign & f - \int_X f \Omega, \quad f \;\assign\; \log \frac{d V_g}{\Omega} .
  \end{eqnarray*}
  We assume now equality in (\ref{GPoinc-Ineq}). We assume also $F \neq 0$
  otherwise $g$ will be a $J$-invariant K\"ahler-Einstein metric. Equality in
  (\ref{GPoinc-Ineq}) implies $2 = \lambda_1 (\Delta^{\Omega}_g)$ and
  \begin{eqnarray*}
    u_0 & \assign & F \left[ \int_X F^2 \Omega \right]^{- 1 / 2},
  \end{eqnarray*}
  attains the infinitum in (\ref{Eigen-estm}). Thus we can apply the method of
  Lagrange multipliers to the functionals
  \begin{eqnarray*}
    \Phi (u) & \assign & \int_X \Delta^{\Omega}_g u u \Omega,\\
    &  & \\
    \Psi (u) & \assign & \int_X u^2 \Omega,
  \end{eqnarray*}
  over the space $C_{\Omega}^{\infty} (X, \mathbbm{R})_0$. We have the
  equalities
  \begin{eqnarray*}
    2 & = & \min_{\Psi = 1} \Phi = \Phi (u_0),
  \end{eqnarray*}
  which imply $D_{u_0} \Phi = \mu D_{u_0} \Psi$, i.e. $\Delta^{\Omega}_g u_0 =
  \mu u_0$, with $\mu = 2$. The latter is equivalent to the equation
  $\Delta^{\Omega}_g F = 2 F$. Then the required conclusion follows from lemma
  \ref{Carac-KRS}.

  {\tmstrong{STEP IIb}}. We give here a different proof of the monotony
  statement. We remind first that Perelman's first variation formula for the
  $\mathcal{W}$ functional \cite{Per} writes as
  \begin{eqnarray*}
    D_{g, \Omega} \mathcal{W} (v, V) & = & - \int_X \left[ \left\langle v,
    h_{g, \Omega} \right\rangle_g - 2 V^{\ast}_{\Omega}  \underline{H}_{g,
    \Omega} \right] \Omega .
  \end{eqnarray*}
  Thus along the Soliton-Ricci flow holds the identity
  \begin{eqnarray*}
    \frac{d}{d t} \mathcal{W} (g_t, \Omega_t) & = & \int_X \left[ \left|
    h_{g_t, \Omega_t} \right|^2_{g_t} - 2 \underline{H}^2_{g_t, \Omega_t}
    \right] \Omega_t .
  \end{eqnarray*}
  Then the conclusion follows from the identity (\ref{Symplectic-inv})
  combined with the elementary lemma below.
\end{proof}

\begin{lemma}
  Let $(X, J)$ be a Fano manifold, let $g$ be a $J$-invariant K\"ahler metric
  with symplectic form $\omega \assign g J \in 2 \pi c_1 (X, \left[ J
  \right])$ and let $\Omega > 0$ be a smooth volume form with $\int_X \Omega =
  1$ such that $\omega = \tmop{Ric}_J (\Omega)$. Then
  \begin{equation}
    \label{Poinc-Ineq}  \int_X \left| h_{g, \Omega} \right|^2_g \Omega
    \geqslant 2 \int_X \underline{H}^2_{g, \Omega} \Omega,
  \end{equation}
  with equality if and only if $(J, g)$ is a K\"ahler-Ricci soliton.
\end{lemma}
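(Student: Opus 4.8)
The plan is to reduce the inequality to the spectral gap estimate $\lambda_1(\Delta^{\Omega}_g) \geqslant 2$ already recorded in (\ref{Eigen-estm}) and (\ref{GPoinc-Ineq}) for the case $gJ = \tmop{Ric}_J(\Omega)$. Throughout set $f \assign \log\frac{dV_g}{\Omega}$ and $F \assign f - \int_X f\,\Omega$. First I would use the hypothesis $\omega = gJ = \tmop{Ric}_J(\Omega)$: by the complex decomposition of the Bakry-Emery-Ricci tensor (as in (\ref{Ric-ident}) and the proof of lemma \ref{Carac-KRS}), the $J$-invariant part of $\tmop{Ric}_g(\Omega)$ equals $-\tmop{Ric}_J(\Omega)J = g$, hence $h_{g,\Omega}$ is precisely the $J$-anti-invariant part of $\tmop{Ric}_g(\Omega)$, so that $h_{g,\Omega}^{\ast} = \overline{\partial}_{T_{X,J}}\nabla_g f$ and $\tmop{Tr}_g h_{g,\Omega} = 0$. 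Two consequences: $|h_{g,\Omega}|^2_g = |\overline{\partial}_{T_{X,J}}\nabla_g f|^2_g$, and $2\underline{H}_{g,\Omega} = -\Delta^{\Omega}_g f + \tmop{Tr}_g h_{g,\Omega} + 2f - 2\int_X H_{g,\Omega}\,\Omega = 2F - \Delta^{\Omega}_g F$, using $\tmop{Tr}_g h_{g,\Omega} = 0$ and $\int_X \Delta^{\Omega}_g f\,\Omega = 0$.

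Next I would invoke the integrated form of Perelman's twice contracted second Bianchi type identity: pairing (\ref{II-contr-Bianchi}) with $\nabla_g f$ and integrating by parts gives (\ref{int-cntr-Bianchi}), which in the present situation becomes (\ref{f-Boch}), i.e. $2\int_X |\overline{\partial}_{T_{X,J}}\nabla_g f|^2_g\,\Omega = \int_X (\Delta^{\Omega}_g - 2\mathbbm{I})f \cdot \Delta^{\Omega}_g f\,\Omega$. Since $\Delta^{\Omega}_g f = \Delta^{\Omega}_g F$ and $\int_X \Delta^{\Omega}_g F\,\Omega = 0$, the right-hand side equals $\int_X (\Delta^{\Omega}_g F - 2F)\,\Delta^{\Omega}_g F\,\Omega$. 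Writing $\Delta^{\Omega}_g F = (\Delta^{\Omega}_g F - 2F) + 2F$ and using $\Delta^{\Omega}_g F - 2F = -2\underline{H}_{g,\Omega}$ from the first paragraph, this splits as $4\int_X \underline{H}_{g,\Omega}^2\,\Omega + 2\int_X (\Delta^{\Omega}_g - 2\mathbbm{I})F \cdot F\,\Omega$. The last term is nonnegative by (\ref{GPoinc-Ineq}). Combining, $2\int_X |h_{g,\Omega}|^2_g\,\Omega \geqslant 4\int_X \underline{H}_{g,\Omega}^2\,\Omega$, which is (\ref{Poinc-Ineq}).

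For the equality discussion: equality in (\ref{Poinc-Ineq}) forces $\int_X (\Delta^{\Omega}_g - 2\mathbbm{I})F \cdot F\,\Omega = 0$. Since $F$ has zero $\Omega$-mean and the nonzero spectrum of the self-adjoint elliptic operator $\Delta^{\Omega}_g$ lies in $[2,+\infty)$ by the estimate above, this forces $\Delta^{\Omega}_g F = 2F$ (one may also argue by Lagrange multipliers exactly as in STEP IIa of the proof of lemma \ref{Monot-SKRF}). This is precisely the second equation of the system $(S)$ of lemma \ref{Carac-KRS} (the first being our hypothesis $\omega = \tmop{Ric}_J(\Omega)$), so $(J,g)$ is a K\"ahler-Ricci soliton. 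Conversely, if $(J,g)$ is a K\"ahler-Ricci soliton then $(S)$ holds for this $\Omega$ (which is uniquely determined by $\omega = \tmop{Ric}_J(\Omega)$, a pluriharmonic-function argument), whence $\Delta^{\Omega}_g F = 2F$, so $\underline{H}_{g,\Omega} = 0$ and, by (\ref{f-Boch}), $h_{g,\Omega} = 0$; both sides of (\ref{Poinc-Ineq}) then vanish.

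I would note that the inequality also drops out of the two computations of $\frac{d}{dt}\mathcal{W}(g_t,\Omega_t)$ in STEPS IIa and IIb, applied to the Soliton-K\"ahler-Ricci flow issued from $(g,\Omega)$ and evaluated at $t = 0$, giving a shorter but less self-contained argument. The only genuinely non-routine ingredient is the identity $h_{g,\Omega}^{\ast} = \overline{\partial}_{T_{X,J}}\nabla_g f$ coupled with the eigenvalue bound $\lambda_1(\Delta^{\Omega}_g) \geqslant 2$, both supplied by earlier parts of the paper; the remaining manipulations only track constants.
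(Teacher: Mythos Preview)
Your proof is correct and follows essentially the same route as the paper: both identify $h_{g,\Omega}^{\ast}=\overline{\partial}_{T_{X,J}}\nabla_g f$ and $2\underline{H}_{g,\Omega}=-(\Delta^{\Omega}_g-2\mathbbm{I})F$, invoke the integral identity (\ref{f-Boch}), and reduce the difference $\int_X|h_{g,\Omega}|^2_g\,\Omega-2\int_X\underline{H}_{g,\Omega}^2\,\Omega$ to $\int_X(\Delta^{\Omega}_g-2\mathbbm{I})F\cdot F\,\Omega$, then use the eigenvalue bound and the variational/Lagrange argument of STEP~IIa for the equality case. The only difference is cosmetic: you expand $2\int_X|h_{g,\Omega}|^2_g\,\Omega$ and split off $4\int_X\underline{H}_{g,\Omega}^2\,\Omega$, whereas the paper computes the difference directly, but the underlying identity is the same.
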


\begin{proof}
  The condition $\omega = \tmop{Ric}_J (\Omega)$ and the complex decomposition
  of the Bakry-Emery-Ricci tensor in \cite{Pal5} imply
  \begin{eqnarray*}
    h_{g, \Omega} & = & g \overline{\partial}_{T_{X, J}} \nabla_g f,
  \end{eqnarray*}
  and thus $\tmop{Tr}_g h_{g, \Omega} = 0$. We deduce
  \begin{eqnarray}
    2 \underline{H}^{}_{g, \Omega} & = & - (\Delta^{\Omega}_g - 2\mathbbm{I})
    F . \label{H-KRS} 
  \end{eqnarray}
  Then
  \begin{eqnarray*}
    \int_X \left[ \left| h_{g, \Omega} \right|^2_g - 2 \underline{H}^2_{g,
    \Omega} \right] \Omega & = & \int_X  [ \left| \overline{\partial}_{T_{X,
    J}} \nabla_g f \right|^2_g - \frac{1}{2}  \left| (\Delta^{\Omega}_g -
    2\mathbbm{I}) F \right|^2] \Omega\\
    &  & \\
    & = & \int_X (\Delta^{\Omega}_g - 2\mathbbm{I}) F \cdot F \Omega,
  \end{eqnarray*}
  thanks to the integral identity (\ref{f-Boch}). The conclusion follows from
  the variational argument at the end of step IIa.
\end{proof}

\begin{remark}
  We observe that the elementary identities 
  $$
  \nabla_g f = J \omega^{- 1} d f =
  2 \omega^{- 1} d_J^c f,
  $$ with $2 d_J^c f \assign - d f \cdot J$, allow to
  rewrite the Soliton-Ricci flow evolution system (\ref{SRF-Cool}) as
  \begin{equation}
    \label{SRF-SpCool}  \left\{ \begin{array}{l}
      \dot{J}_t = \overline{\partial}_{T_{X, J_t}} \left( \omega^{- 1} d f_t
      \right),\\
      \\
      2 \dot{f}_t = \tmop{Tr}_{\omega} \left( d d_{J_t}^c f_t - d f_t \wedge
      d_{J_t}^c f_t \right) + 2 f_t - 2 \int_X f_t e^{- f_t} 
      \frac{\omega^n}{n!} .
    \end{array} \right.
  \end{equation}
  We notice also that the Soliton-K\"ahler-Ricci flow evolution system with initial data $(J_0, g_0,
  \Omega_0) = (J, g, \Omega)$ such that $\omega \assign g J = \tmop{Ric}_J
  (\Omega)$ is equivalent to the system (\ref{SSCool-SKRF}). Indeed the
  argument in step I of the proof of lemma \ref{Monot-SKRF} shows that our
  Soliton-K\"ahler-Ricci flow is equivalent to the K\"ahler-Ricci flow equation (\ref{KRF-eqv}) via the gauge
  transformation given by the diffeomorphisms $\varphi_t$. But
  (\ref{SSCool-SKRF}) is also equivalent to (\ref{KRF-eqv}) via the same gauge
  transformation. Notice in fact the identities
  \begin{eqnarray*}
    \frac{d}{d t}  \hat{\omega}_t & = & \frac{1}{2} \varphi^{\ast}_t 
    \left( L_{\nabla_{g_t} f_t} \omega \right) = \varphi^{\ast}_t  \left( i
    \hspace{0.25em} \partial_{J_t} \overline{\partial}_{J_t} f_t \right) = i
    \hspace{0.25em} \partial_J \overline{\partial}_J  \hat{f}_t .
  \end{eqnarray*}
  The corresponding identities for the transformation of the complex structure
  have been considered at the end of the proof of lemma \ref{Exist-SRF}. We
  infer the equivalence of our Soliton-K\"ahler-Ricci flow with (\ref{SSCool-SKRF}).
\end{remark}

\begin{remark}
  Let $(g_t, \Omega_t)_{t \geqslant 0}$ be the Soliton-Ricci flow and set for notation
  simplicity $\mathcal{W}_t \assign \mathcal{W} (g_t, \Omega_t)$, $h_t \assign
  h_{g_t, \Omega_t}$, $\underline{H}_t \assign \underline{H}_{g_t, \Omega_t}$.
  Perelman's twice contracted differential Bianchi identity
  (\ref{II-contr-Bianchi}) implies
  \begin{equation}
    \nabla_{g_t}^{\ast_{\Omega_t}} \dot{g}^{\ast}_t + \nabla_{g_t} 
    \dot{\Omega}^{\ast}_t = 0 \label{evolv-Bianch} .
  \end{equation}
  Then the fundamental variation formula (\ref{var-H}) implies the evolution
  equation along the Soliton-Ricci flow
  \begin{equation}
    2 \frac{d}{d t}  \underline{H}_t = - (\Delta^{\Omega_t}_{g_t} -
    2\mathbbm{I})  \underline{H}_t + \left| h_t \right|^2_{g_t} -
    \dot{\mathcal{W}}_t . \label{evolv-H}
  \end{equation}
  This combined with the monotony statement in lemma \ref{Monot-SKRF} or in \cite{Pal1} implies the inequality
  \begin{equation}
    2 \frac{d}{d t}  \underline{H}_t \leqslant - (\Delta^{\Omega_t}_{g_t} -
    2\mathbbm{I})  \underline{H}_t + \left| h_t \right|^2_{g_t},
    \label{estm-evolv-H}
  \end{equation}
  along the Soliton-K\"ahler-Ricci flow.
\end{remark}

\section{The second variation of the $\mathcal{W}$ functional along the
Soliton-K\"ahler-Ricci flow}

Let $(J_t, g_t, \Omega_t)_{t \geqslant 0}$ be the Soliton-K\"ahler-Ricci flow. In the proof of step I
of lemma \ref{Monot-SKRF} we obtained the identity
\begin{eqnarray*}
  \dot{\mathcal{W}}_t & = & - 2 \int_X f_t  \dot{f}_t e^{- f_t} 
  \frac{\omega^n}{n!} .
\end{eqnarray*}
Time deriving this we obtain
\begin{eqnarray*}
  \ddot{\mathcal{W}}_t & = & - 2 \int_X \dot{f}^2_t \Omega_t - 2 \int_X f_t 
  \left( \ddot{f}_t - \dot{f}^2_t \right) \Omega_t .
\end{eqnarray*}
Time deriving the identity
\begin{eqnarray*}
  0 & = & \int_X \dot{f}_t \Omega_t \equiv \int_X \dot{f}_t e^{- f_t} 
  \frac{\omega^n}{n!},
\end{eqnarray*}
we deduce
\begin{eqnarray*}
  0 & = & \int_X \left( \ddot{f}_t - \dot{f}^2_t \right) \Omega_t,
\end{eqnarray*}
and thus the evolution formula
\begin{equation}
  \label{II-varW-SKRF}  \ddot{\mathcal{W}}_t = - 2 \int_X \dot{f}^2_t \Omega_t
  - 2 \int_X F_t  \left( \ddot{f}_t - \dot{f}^2_t \right) \Omega_t .
\end{equation}
We observe now that the second evolution equation in the system
(\ref{SRF-Cool}) rewrites as
\begin{eqnarray*}
  2 \dot{f}_t & = & - \Delta^{\Omega_t}_{g_t} f_t + 2 f_t -\mathcal{W}_t,
\end{eqnarray*}
thanks to (\ref{SCool-W}). Time deriving this we infer
\begin{equation}
  \label{II-var-f} - 2 \ddot{f}_t = \frac{d}{d t} \Delta^{\Omega_t}_{g_t} f_t
  - 2 \dot{f}_t + \dot{\mathcal{W}}_t .
\end{equation}
Plugging the identity (\ref{evolv-Bianch}) in the variation formula
(\ref{var-Lapf}) and using the first equation in the system (\ref{SRF-Cool})
we obtain
\begin{eqnarray*}
  \frac{d}{d t} \Delta^{\Omega_t}_{g_t} f_t & = & \Delta^{\Omega_t}_{g_t} 
  \underline{H}_t - \left| \overline{\partial}_{T_{X, J_t}} \nabla_{g_t} f_t 
  \right|^2_{g_t} \\
  &  & \\
  & = & \Delta^{\Omega_t}_{g_t}  \underline{H}_t - \left| h_t
  \right|^2_{g_t},
\end{eqnarray*}
with $2 \underline{H}_t = - (\Delta^{\Omega_t}_{g_t} - 2\mathbbm{I}) F_t$.
Thus $\dot{f}_t = \underline{H}_t$ thanks to (\ref{Cool-evol-f}). Using
(\ref{II-var-f}) we infer
\begin{eqnarray*}
  - 2 \ddot{f}_t & = & (\Delta^{\Omega_t}_{g_t} - 2\mathbbm{I}) 
  \underline{H}_t - \left| h_t \right|^2_{g_t} + \dot{\mathcal{W}}_t .
\end{eqnarray*}
(This last follows also from the general evolution formula (\ref{evolv-H}).)
Integrating by parts we obtain the identity
\begin{eqnarray*}
  - 2 \int_X F_t  \ddot{f}_t \Omega_t & = & - \int_X \left[ 2
  \underline{H}^2_t + F_t  \left| h_t \right|^2_{g_t} \right]
  \Omega_t,
\end{eqnarray*}
(since $\int_X F_t \Omega_t = 0$). Plunging this identity in the evolution
formula (\ref{II-varW-SKRF}) we deduce the simple second variation formula
\begin{eqnarray*}
  \ddot{\mathcal{W}}_t & = & - \int_X \left[ 4\, \underline{H}^2_t + \left(
  \left| h_t \right|^2_{g_t} - 2\,  \underline{H}^2_t
  \right) F_t \right] \Omega_t\\
  &  & \\
  &=&- \int_X \left[ \left| \overline{\partial}_{T_{X, J_t}} \nabla_{g_t} f_t \right|^2_{g_t} F_t + \frac{1}{2} 
  \left| (\Delta^{\Omega_t}_{g_t} - 2\mathbbm{I})
  \right|^2 (2 - F_t)\right]\Omega_t .
\end{eqnarray*}

\section{The Levi-Civita connection of the \\
pseudo-Riemannian structure $G$}

In this section we compute the Levi-Civita connection of the pseudo-Riemannian
structure $G$. This is needed for the computation of the second variation of
the $\mathcal{W}$ functional with respect to such structure. We set for notations simplicity $\mathcal{T}
\assign T_{\mathcal{M} \times \mathcal{V}_1}$ and we compute the first
variation of $G$ at an arbitrary point $(g, \Omega)$,
\[ D_{g, \Omega} G : \mathcal{T} \times \mathcal{T} \longrightarrow
   \mathcal{T}^{\ast} . \]
In a direction $(\theta, \Theta) \in \mathcal{T}$ this is given by the
identity
\begin{eqnarray*}
  D_{g, \Omega} G (\theta, \Theta ; u, V)  (v, V) & = & \frac{d}{d t}
  _{\mid_{t = 0}} G_{g_t, \Omega_t} (u, U ; v, V),
\end{eqnarray*}
where $(g_t, \Omega_t)_{t \in (- \varepsilon, \varepsilon)} \subset
\mathcal{M} \times \mathcal{V}_1$ is a smooth curve with $(g_0, \Omega_0) =
(g, \Omega)$ and $( \dot{g}_0, \dot{\Omega}_0) = (\theta, \Theta)$. For
notation simplicity let denote $u^{\ast}_t : = g^{- 1}_t u$ and $U^{\ast}_t
\assign U / \Omega_t$. Then holds the equality
\begin{eqnarray*}
  D_{g, \Omega} G (\theta, \Theta ; u, V)  (v, V) & = & \frac{d}{d t}
  _{\mid_{t = 0}}  \left[ \int_X \tmop{Tr}_{\mathbbm{R}} (u_t^{\ast}
  \hspace{0.25em} v^{\ast}_t) \Omega_t - 2 \int_X U^{\ast}_t V \right] \\
  &  & \\
  & = & \int_X \left[ \frac{d}{d t} _{\mid_{t = 0}} \tmop{Tr}_{\mathbbm{R}}
  (u_t^{\ast} \hspace{0.25em} v^{\ast}_t) \right] \Omega_t + \int_X
  \tmop{Tr}_{\mathbbm{R}} (u_t^{\ast} \hspace{0.25em} v^{\ast}_t) \Theta\\
  &  & \\
  & - & 2 \int_X \frac{d}{d t} _{\mid_{t = 0}} U^{\ast}_t V .
\end{eqnarray*}
Using the identity $\frac{d}{dt} \hspace{0.25em} u_t^{\ast} = -
\dot{g}_t^{\ast} \hspace{0.25em} u_t^{\ast}$, which follows from the formula
$$
\frac{d}{dt} \hspace{0.25em} g_t^{- 1} = - g_t^{- 1} \hspace{0.25em}
\dot{g}_t \hspace{0.25em} g_t^{- 1},
$$
we obtain
\begin{eqnarray*}
  \frac{d}{d t} \tmop{Tr}_{\mathbbm{R}} (u_t^{\ast} \hspace{0.25em}
  v^{\ast}_t) & = & \tmop{Tr}_{\mathbbm{R}} \left( \frac{d}{dt} u_t^{\ast}
  \hspace{0.25em} v^{\ast}_t \hspace{0.75em} + \hspace{0.75em} u_t^{\ast}
  \hspace{0.25em}  \frac{d}{dt} \hspace{0.25em} v^{\ast}_t \right)\\
  &  & \\
  & = & - \tmop{Tr}_{\mathbbm{R}} ( \dot{g}_t^{\ast} \hspace{0.25em}
  u_t^{\ast} \hspace{0.25em} v^{\ast}_t \hspace{0.75em} + \hspace{0.75em}
  u_t^{\ast} \hspace{0.25em}  \dot{g}_t^{\ast} \hspace{0.25em} v_t^{\ast})\\
  &  & \\
  & = & - \hspace{0.75em} 2 \tmop{Tr}_{\mathbbm{R}} ( \dot{g}_t^{\ast}
  \hspace{0.25em} u_t^{\ast} \hspace{0.25em} v^{\ast}_t) \hspace{0.25em},
\end{eqnarray*}
since $\dot{g}_t$ is also symmetric. Indeed we observe the elementary
identities
\begin{eqnarray*}
  \tmop{Tr}_{\mathbbm{R}} \left[ (u_t^{\ast} \hspace{0.25em} 
  \dot{g}_t^{\ast}) v_t^{\ast} \right] & = & \tmop{Tr}_{\mathbbm{R}} \left[
  v_t^{\ast} (u_t^{\ast} \hspace{0.25em}  \dot{g}_t^{\ast}) \right]\\
  &  & \\
  & = & \tmop{Tr}_{\mathbbm{R}} \left[ v_t^{\ast} (u_t^{\ast} \hspace{0.25em}
  \dot{g}_t^{\ast}) \right]_t^T\\
  &  & \\
  & = & \tmop{Tr}_{\mathbbm{R}} \left[ (u_t^{\ast} \hspace{0.25em} 
  \dot{g}_t^{\ast})_t^T v^{\ast}_t \right]\\
  &  & \\
  & = & \tmop{Tr}_{\mathbbm{R}} ( \dot{g}_t^{\ast} \hspace{0.25em} u_t^{\ast}
  \hspace{0.25em} v^{\ast}_t),
\end{eqnarray*}
where $A_t^T$ denotes the transpose of $A$ with respect to $g_t$. Time
deriving the identity $U = U_t^{\ast} \Omega_t$ we infer
\begin{eqnarray*}
  0 & = & \frac{d U^{\ast}_t}{d t} \Omega_t + U_t^{\ast}  \dot{\Omega}_t,
\end{eqnarray*}
and thus
\begin{eqnarray*}
  \frac{d U^{\ast}_t}{d t} & = & - U_t^{\ast}  \dot{\Omega}^{\ast}_t .
\end{eqnarray*}
Summing up we infer the expression of the variation of $G$ at the point $(g,
\Omega)$ in the direction $(\theta, \Theta)$
\[ D_{g, \Omega} G (\theta, \Theta ; u, U)  (v, V) \hspace{0.75em} = \int_X \{
   \tmop{Tr}_{\mathbbm{R}} [ (\Theta^{\ast}_{\Omega} - 2 \theta_g^{\ast})
   u_g^{\ast} \hspace{0.25em} v^{\ast}_g] + 2 \Theta^{\ast}_{\Omega}
   U^{\ast}_{\Omega} V^{\ast}_{\Omega} \} \hspace{0.25em} \Omega . \]
We can compute now the Levi-Civita connection $\nabla_G = D + \Gamma_G$ of the
pseudo-Riemannian structure $G$. At a point $(g, \Omega)$ the symmetric
bilinear form
\[ \Gamma_G (g, \Omega) : \mathcal{T} \times \mathcal{T} \longrightarrow
   \mathcal{T}, \]
is identified by the expression
\begin{eqnarray*}
 && 2 G_{g, \Omega} \left( \Gamma_G (g, \Omega) (u, U ; v, V) ; \theta, \Theta
  \right) 
  \\
  \\
  & = & [ D_{g, \Omega} G (u, U ; v, V) + D_{g, \Omega} G (v, V ; u,
  U)] \left( \theta, \Theta \right)\\
  &  & \\
  & - & D_{g, \Omega} G (\theta, \Theta ; u, U)  (v, V) .
\end{eqnarray*}
Expanding and arranging the terms of the right hand side we obtain
\begin{eqnarray*}
  &  & 2 G_{g, \Omega} \left( \Gamma_G (g, \Omega) (u, U ; v, V) ; \theta,
  \Theta \right)\\
  &  & \\
  & = & \int_X \left\{ \tmop{Tr}_{\mathbbm{R}} \left[ (U^{\ast}_{\Omega} - 2
  u_g^{\ast}) v_g^{\ast} \hspace{0.25em} \theta^{\ast}_g\right] + 2
  U^{\ast}_{\Omega} V^{\ast}_{\Omega} \Theta^{\ast}_{\Omega} \right\}
  \hspace{0.25em} \Omega\\
  &  & \\
  & + & \int_X \left\{ \tmop{Tr}_{\mathbbm{R}} \left[ (V^{\ast}_{\Omega} - 2
  v_g^{\ast}) u_g^{\ast} \hspace{0.25em} \theta^{\ast}_g\right] + 2
  V^{\ast}_{\Omega} U^{\ast}_{\Omega} \Theta^{\ast}_{\Omega} \right\}
  \hspace{0.25em} \Omega\\
  &  & \\
  & - & \int_X \left\{ \tmop{Tr}_{\mathbbm{R}} \left[ (\Theta^{\ast}_{\Omega} - 2
  \theta_g^{\ast}) u_g^{\ast} \hspace{0.25em} v^{\ast}_g\right] + 2
  \Theta^{\ast}_{\Omega} U^{\ast}_{\Omega} V^{\ast}_{\Omega} \right\}
  \hspace{0.25em} \Omega\\
  &  & \\
  & = & \int_X \left\{ \tmop{Tr}_{\mathbbm{R}} \left[ (U^{\ast}_{\Omega} - 2
  u_g^{\ast}) v_g^{\ast} \hspace{0.25em} \theta^{\ast}_g + V^{\ast}_{\Omega}
  u_g^{\ast} \hspace{0.25em} \theta^{\ast}_g - \Theta^{\ast}_{\Omega}
  u_g^{\ast} \hspace{0.25em} v^{\ast}_g\right] + 2 U^{\ast}_{\Omega}
  V^{\ast}_{\Omega} \Theta^{\ast}_{\Omega} \right\} \Omega\\
  &  & \\
  & = & \int_X \tmop{Tr}_{\mathbbm{R}} \left[ ( u_g^{\ast} (V^{\ast}_{\Omega} -
  v_g^{\ast}) + v_g^{\ast} (U^{\ast}_{\Omega} - u_g^{\ast})) \hspace{0.25em}
  \theta^{\ast}_g\right]  \hspace{0.25em} \Omega\\
  &  & \\
  & - & \int_X \left[ \tmop{Tr}_{\mathbbm{R}} (u_g^{\ast} \hspace{0.25em}
  v^{\ast}_g) - 2 U^{\ast}_{\Omega} V^{\ast}_{\Omega}\right] \Theta^{\ast}_{\Omega} 
  \hspace{0.25em} \Omega\\
  &  & \\
  & = & \int_X \left\langle u (V^{\ast}_{\Omega} - v_g^{\ast}) + v
  (U^{\ast}_{\Omega} - u_g^{\ast}), \theta \right\rangle_g  \hspace{0.25em}
  \Omega \\
  &  & \\
  & - & 2 \int_X \left[ \frac{1}{2}  \left\langle u, v \right\rangle_g -
  U^{\ast}_{\Omega} V^{\ast}_{\Omega} - \frac{1}{2} G_{g, \Omega} (u, U ; v,
  V)\right] \Theta^{\ast}_{\Omega}  \hspace{0.25em} \Omega,
\end{eqnarray*}
since $\int_X \Theta = 0$. We infer the expression
\begin{eqnarray*}
  (\psi, \Psi) & \equiv & \Gamma_G (g, \Omega) (u, U ; v, V),\\
  &  & \\
  \psi & = & \frac{1}{2}  \left[ u (V^{\ast}_{\Omega} - v_g^{\ast}) + v
  (U^{\ast}_{\Omega} - u_g^{\ast})\right],\\
  &  & \\
  \Psi & = & \frac{1}{4}  \left[ \left\langle u, v \right\rangle_g - 2
  U^{\ast}_{\Omega} V^{\ast}_{\Omega} - G_{g, \Omega} (u, U ; v, V)\right] \Omega .
\end{eqnarray*}
This concludes the computation of the Levi-Civita connection $\nabla_G$.

\section{The second variation of the $\mathcal{W}$ functional with respect to
the pseudo-Riemannian structure $G$}

We justify first the geometric interpretation of $\mathbbm{F}_{g, \Omega}$
provided by the identity (\ref{F-OrtoOrb}). We observe indeed that $(v, V) \in
T^{\bot_G}_{\left[ g, \Omega \right], (g, \Omega)}$ if and only if 
$$G_{g,
\Omega} (L_{\xi} g, L_{\xi} \Omega ; v, V) = 0,
$$ for all $\xi \in C^{\infty}
(X, T_X)$, i.e
\begin{eqnarray*}
  0 & = & \int_X \left[ \left\langle L_{\xi} g, v \right\rangle_g - 2 (L_{\xi}
  \Omega)^{\ast}_{\Omega} V^{\ast}_{\Omega} \right] \Omega\\
  &  & \\
  & = & 2 \int_X \left[ \left\langle \nabla_g \xi, v_g^{\ast} \right\rangle_g
  - (\tmop{div}^{\Omega} \xi) V^{\ast}_{\Omega} \right] \Omega\\
  &  & \\
  & = & 2 \int_X \left\langle \xi, \nabla_g^{\ast_{\Omega}} v_g^{\ast} +
  \nabla_g V^{\ast}_{\Omega} \right\rangle_g \Omega,
\end{eqnarray*}
which shows the required conclusion. We introduce now the operator
\[ \mathcal{L}^{\Omega}_g : C^{\infty} (X, \tmop{End} (T_X)) \longrightarrow
   C^{\infty} (X, \tmop{End} (T_X)), \]
defined by the formula
\begin{eqnarray*}
  \mathcal{L}^{\Omega}_g A & \assign & \Delta^{\Omega}_g A - 2\mathcal{R}_g
  \ast A .
\end{eqnarray*}
By abuse of notations we define also
\[ \mathcal{L}^{\Omega}_g : C^{\infty} (X, S^2 T^{\ast}_X) \longrightarrow
   C^{\infty} (X, S^2 T^{\ast}_X), \]
defined by the same formula
\begin{eqnarray*}
  \mathcal{L}^{\Omega}_g v & \assign & \Delta^{\Omega}_g v - 2\mathcal{R}_g
  \ast v .
\end{eqnarray*}
We observe that (\ref{ast-curv-Id}) implies the identity
$(\mathcal{L}^{\Omega}_g v)_g^{\ast} =\mathcal{L}^{\Omega}_g v_g^{\ast}$. We
show now the second variation formula for the $\mathcal{W}$ functional.

\begin{lemma}
  \label{Sec-Var-W}The Hessian endomorphism $\nabla^2_G \mathcal{W} (g,
  \Omega)$ of the $\mathcal{W}$ functional with respect to the
  pseudo-Riemannian structure $G$ at the point $(g, \Omega) \in \mathcal{M}
  \times \mathcal{V}_1$ in the directions $(v, V) \in \mathbbm{F}_{g, \Omega}$
  is given by the expressions
  \begin{eqnarray*}
    (u, U) & \equiv & \nabla^2_G \mathcal{W} (g, \Omega) (v, V),\\
    &  & \\
    u & \assign & - \frac{1}{2}  \left( \mathcal{L}^{\Omega}_g +
    \underline{H}_{g, \Omega} \right) v - \frac{1}{2} V_{\Omega}^{\ast} h_{g,
    \Omega},\\
    &  & \\
    U^{\ast}_{\Omega} & \assign & - \frac{1}{2}  \left( \Delta^{\Omega}_g +
    \underline{H}_{g, \Omega} - 2\mathbbm{I} \right) V_{\Omega}^{\ast} +
    \frac{1}{4}  \left\langle h_{g, \Omega}, v \right\rangle_g + \frac{1}{4}
    D_{g, \Omega} \mathcal{W} (v, V) .
  \end{eqnarray*}
  In particular if $h_{g, \Omega} = 0$ then
  \begin{eqnarray*}
    u & = & - \frac{1}{2} \mathcal{L}^{\Omega}_g v,\\
    &  & \\
    U^{\ast}_{\Omega} & = & - \frac{1}{2}  (\Delta^{\Omega}_g - 2\mathbbm{I})
    V_{\Omega}^{\ast} .
  \end{eqnarray*}
\end{lemma}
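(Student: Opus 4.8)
The plan is to compute $\nabla^2_G\mathcal{W}(g,\Omega)(v,V)$ as the covariant derivative of the gradient vector field, i.e. via
$\nabla^2_G\mathcal{W}(g,\Omega)(v,V) = D_{g,\Omega}(\nabla_G\mathcal{W})(v,V) + \Gamma_G(g,\Omega)(\nabla_G\mathcal{W}(g,\Omega)\,;\,v,V)$,
using the gradient identity $\nabla_G\mathcal{W}(g,\Omega) = -(h_{g,\Omega}, \underline{H}_{g,\Omega}\Omega)$ recalled in the introduction and the explicit Christoffel tensor $\Gamma_G$ computed in the previous section. The restriction to $(v,V)\in\mathbbm{F}_{g,\Omega}$ is precisely what kills the gauge terms below and makes the resulting expression closed.

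First I would treat the differential term $D_{g,\Omega}(\nabla_G\mathcal{W})(v,V) = -(D_{g,\Omega}h(v,V),\, D_{g,\Omega}(\underline{H}\Omega)(v,V))$. On $\mathbbm{F}_{g,\Omega}$ the Lie derivative terms in (\ref{var-h}) and (\ref{var-H}) vanish, leaving $2D_{g,\Omega}h(v,V) = \Delta^{\Omega}_{L,g}v - 2v$ and $2D_{g,\Omega}H(v,V) = \Delta^{\Omega}_g V^{\ast}_{\Omega} - 2V^{\ast}_{\Omega} - \langle v,h_{g,\Omega}\rangle_g$. Substituting $\tmop{Ric}_g(\Omega) = h_{g,\Omega}+g$ into the definition of the $\Omega$-Lichnerowicz Laplacian gives $\Delta^{\Omega}_{L,g}v = \mathcal{L}^{\Omega}_g v + v\,h^{\ast}_{g,\Omega} + h_{g,\Omega}\,v^{\ast}_g + 2v$, hence $D_{g,\Omega}h(v,V) = \tfrac12(\mathcal{L}^{\Omega}_g v + v\,h^{\ast}_{g,\Omega} + h_{g,\Omega}\,v^{\ast}_g)$. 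For the volume slot one differentiates $\underline{H}_{g,\Omega}\Omega$ as a curve in $C^{\infty}(X,\Lambda^m T^{\ast}_X)_0$, so $\tfrac{d}{dt}(\underline{H}_t\Omega_t) = \dot{\underline{H}}_t\,\Omega + \underline{H}_{g,\Omega}V$; since $\int_X H_{g,\Omega}\Omega = \tfrac12\mathcal{W}(g,\Omega)$ one has $\dot{\underline{H}} = D_{g,\Omega}H(v,V) - \tfrac12 D_{g,\Omega}\mathcal{W}(v,V)$, which is where the $\tfrac14 D_{g,\Omega}\mathcal{W}(v,V)$ term and the $\underline{H}_{g,\Omega}V^{\ast}_{\Omega}$ contribution originate.

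Next I would add the Christoffel correction: feeding $(u,U) = (-h_{g,\Omega},-\underline{H}_{g,\Omega}\Omega)$, i.e. $U^{\ast}_{\Omega} = -\underline{H}_{g,\Omega}$ and $u^{\ast}_g = -h^{\ast}_{g,\Omega}$, into the formula for $(\psi,\Psi) = \Gamma_G(g,\Omega)(u,U;v,V)$ from the Levi-Civita section produces $\psi = \tfrac12(-V^{\ast}_{\Omega}h_{g,\Omega} + h_{g,\Omega}v^{\ast}_g - \underline{H}_{g,\Omega}v + v\,h^{\ast}_{g,\Omega})$, together with a volume part in which $G_{g,\Omega}(-h_{g,\Omega},-\underline{H}_{g,\Omega}\Omega;v,V)$ collapses to $D_{g,\Omega}\mathcal{W}(v,V)$ by Perelman's first variation formula. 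Adding the differential term and this correction, the $v\,h^{\ast}_{g,\Omega}$ and $h_{g,\Omega}v^{\ast}_g$ pieces cancel pairwise and one is left with exactly the claimed $u$ and $U^{\ast}_{\Omega}$. Finally, at a shrinking Ricci soliton $h_{g,\Omega}=0$ forces $\underline{H}_{g,\Omega}=0$ and $D_{g,\Omega}\mathcal{W}=0$ by (\ref{II-contr-Bianchi}), so the $\Gamma_G$ term drops out entirely and the formulas reduce to the stated special case. The main obstacle here is purely the bookkeeping: differentiating $\underline{H}_{g,\Omega}\Omega$ correctly with the moving volume form and the mean-value subtraction, keeping the $2$-tensor / $m$-form / function identifications $(v,V)\leftrightarrow(v^{\ast}_g,V^{\ast}_{\Omega})$ straight, and verifying the clean cancellation of the $h_{g,\Omega}$-quadratic terms between the two contributions.
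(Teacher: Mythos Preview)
Your proposal is correct, and the bookkeeping you outline (the cancellation of the $v\,h^{\ast}_{g,\Omega}$ and $h_{g,\Omega}v^{\ast}_g$ terms, the handling of the mean-value subtraction in $\underline{H}$, and the identification $G_{g,\Omega}(-h,-\underline{H}\Omega;v,V)=D_{g,\Omega}\mathcal{W}(v,V)$) all checks out.

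The paper takes a different but equivalent route. Rather than differentiating the gradient vector field, it computes the Hessian \emph{quadratic form}
\[
\nabla_G D\mathcal{W}(g,\Omega)(v,V;v,V)\;=\;\frac{d^2}{dt^2}\Big|_{t=0}\mathcal{W}(g_t,\Omega_t)\;-\;D_{g,\Omega}\mathcal{W}(\theta_0,\Theta_0),
\]
where $(\theta_t,\Theta_t)=(\ddot g_t,\ddot\Omega_t)+\Gamma_G(g_t,\Omega_t)(\dot g_t,\dot\Omega_t;\dot g_t,\dot\Omega_t)$ is the covariant acceleration of the curve; it then expands $\frac{d}{dt}\int_X[\,\mathrm{Tr}(\dot g_t^{\ast}h_t^{\ast})-2\dot\Omega_t^{\ast}H_t\,]\Omega_t$ and reads off the endomorphism $(u,U)$ by symmetrizing the resulting integrand against $G_{g,\Omega}$. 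Your approach is more direct in that it avoids the second time-derivative and the symmetrization step. The paper's approach has one incidental payoff: along the way it records the \emph{unrestricted} formula for $(u,U)$ containing the Lie-derivative gauge terms $L_{\nabla_g^{\ast_{\Omega}}v_g^{\ast}+\nabla_g V^{\ast}_{\Omega}}g$ and $L_{\nabla_g^{\ast_{\Omega}}v_g^{\ast}+\nabla_g V^{\ast}_{\Omega}}\Omega$, which it reuses later when computing the second variation in directions $(v,V)\in\mathbbm{T}^J_{g,\Omega}$ not lying in $\mathbbm{F}_{g,\Omega}$ (Step~III of Proposition~\ref{TSec-Var-W}). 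Your route would need a small extra line to recover that general expression, but for the lemma as stated it is complete.
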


\begin{proof}
  We consider a smooth curve $(g_t, \Omega_t)_{t \in \mathbbm{R}} \subset
  \mathcal{M} \times \mathcal{V}_1$ with $(g_0, \Omega_0) = (g, \Omega)$ and
  with arbitrary speed $( \dot{g}_0, \dot{\Omega}_0) = (v, V)$. We observe
  that the $G$-covariant derivative of its speed is given by the expressions
  \begin{eqnarray*}
    (\theta_t, \Theta_t) & \equiv & \nabla_G ( \dot{g}_t, \dot{\Omega}_t)  (
    \dot{g}_t, \dot{\Omega}_t) = ( \ddot{g}_t, \ddot{\Omega}_t) + \Gamma (g_t,
    \Omega_t)  ( \dot{g}_t, \dot{\Omega}_t ; \dot{g}_t, \dot{\Omega}_t),\\
    &  & \\
    \theta_t & \assign & \ddot{g}_t + \dot{g}_t \left( \dot{\Omega}^{\ast}_t -
    \dot{g}_t^{\ast} \right),\\
    &  & \\
    \Theta_t & \assign & \ddot{\Omega}_t + \frac{1}{4}  \left[ \left|
    \dot{g}_t \right|^2_t - 2 ( \dot{\Omega}^{\ast}_t)^2 - G_{g_t, \Omega_t} (
    \dot{g}_t, \dot{\Omega}_t ; \dot{g}_t, \dot{\Omega}_t) \right] \Omega_t .
  \end{eqnarray*}
  We infer
  \begin{eqnarray*}
    \theta^{\ast}_t & = & \frac{d}{d t}  \dot{g}_t^{\ast} +
    \dot{\Omega}^{\ast}_t  \dot{g}_t^{\ast},\\
    &  & \\
    \Theta^{\ast}_t & = & \frac{d}{d t}  \dot{\Omega}_t^{\ast} + \frac{1}{2} 
    ( \dot{\Omega}^{\ast}_t)^2 + \frac{1}{4}  \left| \dot{g}_t \right|^2_{g_t}
    - \frac{1}{4} G_{g_t, \Omega_t} ( \dot{g}_t, \dot{\Omega}_t ; \dot{g}_t,
    \dot{\Omega}_t) .
  \end{eqnarray*}
  Using this expressions and Perelman's first variation formula we expand the
  Hessian form
  \begin{eqnarray*}
    &  & \nabla_G D\mathcal{W} (g_t, \Omega_t)  ( \dot{g}_t, \dot{\Omega}_t ;
    \dot{g}_t, \dot{\Omega}_t)\\
    &  & \\
    & = & \frac{d^2}{d t^2} \mathcal{W} (g_t, \Omega_t) - D_{g_t, \Omega_t}
    \mathcal{W} (\theta_t, \Theta_t)\\
    &  & \\
    & = & - \frac{d}{d t} \int_X \left[ \tmop{Tr}_{\mathbbm{R}} \left(
    \dot{g}_t^{\ast} h^{\ast}_t \right) - 2 \dot{\Omega}_t^{\ast} H_t \right]
    \Omega_t\\
    &  & \\
    & + & \int_X \left[ \tmop{Tr}_{\mathbbm{R}} \left( \theta^{\ast}_t
    h^{\ast}_t \right) - 2 \Theta^{\ast}_t H_t \right] \Omega_t\\
    &  & \\
    & = & - \int_X \left[ \tmop{Tr}_{\mathbbm{R}} \left( \frac{d}{d t} 
    \dot{g}_t^{\ast} h^{\ast}_t + \dot{g}_t^{\ast}  \frac{d}{d t} h^{\ast}_t
    \right) - 2 \frac{d}{d t}  \dot{\Omega}_t^{\ast} H_t - 2
    \dot{\Omega}_t^{\ast}  \dot{H}_t \right] \Omega_t\\
    &  & \\
    & - & \int_X \left[ \tmop{Tr}_{\mathbbm{R}} \left( \dot{g}_t^{\ast}
    h^{\ast}_t \right) - 2 \dot{\Omega}_t^{\ast} H_t \right]  \dot{\Omega}_t\\
    &  & \\
    & + & \int_X \left[ \tmop{Tr}_{\mathbbm{R}} \left( \theta^{\ast}_t
    h^{\ast}_t \right) - 2 \Theta^{\ast}_t H_t \right] \Omega_t\\
    &  & \\
    & = & - \int_X \left\{ \tmop{Tr}_{\mathbbm{R}} \left[ \dot{g}_t^{\ast} 
    \left( \dot{h}^{\ast}_t - \dot{g}_t^{\ast} h^{\ast}_t \right) \right] - 2
    \dot{\Omega}_t^{\ast}  \dot{H}_t \right\} \Omega_t\\
    &  & \\
    & - & \frac{1}{2} \int_X \left[ \left| \dot{g}_t \right|^2_{g_t} - 2 (
    \dot{\Omega}_t^{\ast})^2 - G_{g_t, \Omega_t} ( \dot{g}_t, \dot{\Omega}_t ;
    \dot{g}_t, \dot{\Omega}_t) \right] H_t \Omega_t .
  \end{eqnarray*}
  Using the variation formulas (\ref{var-h}) and (\ref{var-H}) and evaluating
  the previous identity at time $t = 0$ we obtain the expression
  \begin{eqnarray*}
    &  & \nabla_G D\mathcal{W} (g, \Omega) (v, V ; v, V)\\
    &  & \\
    & = & - \frac{1}{2}  \int_X \left[ \left\langle \mathcal{L}^{\Omega}_g v
    - L_{\nabla_g^{\ast_{\Omega}} v_g^{\ast} + \nabla_g V^{\ast}_{\Omega}} g,
    v \right\rangle_g \right] \Omega\\
    &  & \\
    & - & \frac{1}{2} \int_X \{ - 2 V_{\Omega}^{\ast} [ (\Delta^{\Omega}_g -
    2\mathbbm{I}) V_{\Omega}^{\ast} - \tmop{div}^{\Omega} \left(
    \nabla_g^{\ast_{\Omega}} v_g^{\ast} + \nabla_g V^{\ast}_{\Omega} \right) -
    \left\langle v, h_{g, \Omega} \right\rangle_g] \} \Omega\\
    &  & \\
    & - & \frac{1}{2}  \int_X [ \left| v \right|^2_g - 2
    (V_{\Omega}^{\ast})^2]  \underline{H}_{g, \Omega} \Omega,
  \end{eqnarray*}
  since $\int_X \underline{H}_{g, \Omega} \Omega = 0$. Arranging symmetrically
  the integrand terms via the identity
  \begin{eqnarray*}
    \nabla_G D\mathcal{W} (g, \Omega) (v, V ; v, V) & = & G_{g, \Omega} (u, U
    ; v, V),\\
    &  & \\
    (u, U) & \equiv & \nabla^2_G \mathcal{W} (g, \Omega) (v, V),
  \end{eqnarray*}
  we infer the general expressions
  \begin{eqnarray*}
    u & = & - \frac{1}{2}  \left( \mathcal{L}^{\Omega}_g + \underline{H}_{g,
    \Omega} \right) v + \frac{1}{2} L_{\nabla_g^{\ast_{\Omega}} v_g^{\ast} +
    \nabla_g V^{\ast}_{\Omega}} g - \frac{1}{2} V_{\Omega}^{\ast} h_{g,
    \Omega},\\
    &  & \\
    &  & \\
    U^{\ast}_{\Omega} & = & - \frac{1}{2}  \left( \Delta^{\Omega}_g +
    \underline{H}_{g, \Omega} - 2\mathbbm{I} \right) V_{\Omega}^{\ast} \\
    &  & \\
    & + & \frac{1}{2}  \left( L_{\nabla_g^{\ast_{\Omega}} v_g^{\ast} +
    \nabla_g V^{\ast}_{\Omega}} \Omega \right)_{\Omega}^{\ast} + \frac{1}{4} 
    \left\langle h_{g, \Omega}, v \right\rangle_g + \frac{1}{4} D_{g, \Omega}
    \mathcal{W} (v, V) .
  \end{eqnarray*}
  Then the required expression of the Hessian of $\mathcal{W}$ follows from
  the assumption $(v, V) \in \mathbbm{F}_{g, \Omega}$. If $h_{g, \Omega} = 0$
  then the required conclusion follows from Perelman's twice contracted second
  Bianchi identity (\ref{II-contr-Bianchi}) which implies $\underline{H}_{g,
  \Omega} = 0$.
\end{proof}

\section{The anomaly space of the pseudo-Riemannian structure $G$}\label{Anomaly}

Let $\tmop{Isom}^0_{g, \Omega}$ be the identity component of the group
\begin{eqnarray*}
  \tmop{Isom}_{g, \Omega} & : = & \left\{ \varphi \in \tmop{Diff} (X) \mid
  \varphi^{\ast} g = g, \varphi^{\ast} \Omega = \Omega \right\},
\end{eqnarray*}
and let
\begin{eqnarray*}
  \tmop{Kill}_{g, \Omega} & \assign & \tmop{Lie} \left( \tmop{Isom}^0_{g,
  \Omega} \right) \equiv \left\{ \xi \in C^{\infty} (X, T_X) \mid L_{\xi} g =
  0, L_{\xi} \Omega = 0 \right\} .
\end{eqnarray*}
We define the anomaly space of the pseudo-Riemannian structure $G$ at an
arbitrary point $(g, \Omega)$ as the vector space
\begin{eqnarray*}
  \mathbbm{A}^{\Omega}_g & \assign & \mathbbm{F}_{g, \Omega} \cap T_{[g,
  \Omega], g, \Omega} .
\end{eqnarray*}
We will study some properties of this space. It is clear by definition that
this space is generated by the vector fields $\xi \in C^{\infty} (X, T_X)$
such that
\begin{eqnarray*}
  0 & = & \left[ \nabla_g^{\ast_{\Omega}}  \hat{\nabla}_g + d
  \tmop{div}_g^{\Omega} \right] (g \xi) = \hat{\Delta}^{\Omega}_g  (g \xi) .
\end{eqnarray*}
More precisely there exists the exact sequence of finite dimensional vector
spaces
\begin{eqnarray*}
  0 \longrightarrow \tmop{Kill}_{g, \Omega} \longrightarrow & \tmop{Ker}
  \hat{\Delta}^{\Omega}_g & \longrightarrow \mathbbm{A}^{\Omega}_g
  \longrightarrow 0\\
  &  & \\
  \xi \mapsto & g \xi = \alpha & \mapsto \left( \hat{\nabla}_g \alpha,
  (\tmop{div}_g^{\Omega} \alpha) \Omega \right) .
\end{eqnarray*}
We observe that if $\alpha = d u \in \tmop{Ker} \hat{\Delta}^{\Omega}_g$ then
the function $u$ satisfies the equation
\begin{eqnarray*}
  2 \Delta^{\Omega}_g \nabla_g u - \nabla_g \Delta^{\Omega}_g u & = & 0,
\end{eqnarray*}
which is equivalent to the equation
\begin{equation}
  \label{grad-eq-anom}  \left[ \Delta^{\Omega}_g - \tmop{Ric}_g^{\ast}
  (\Omega) \right] \nabla_g u = 0,
\end{equation}
thanks to the general identity
\begin{equation}
  \label{com-grad-Lap} \nabla_g \Delta^{\Omega}_g u = \Delta^{\Omega}_g
  \nabla_g u + \tmop{Ric}_g^{\ast} (\Omega) \nabla_g u .
\end{equation}
We set
\begin{eqnarray*}
  \mathbbm{V}_{g, \Omega} &: =& \left\{ \alpha \in \tmop{Ker}
  \hat{\Delta}^{\Omega}_g \mid \alpha = d u \right\}
  \\
  \\
  & \cong & \left\{ u \in
  C_{\Omega}^{\infty} (X, \R)_0 \mid \left[ \Delta^{\Omega}_g -
  \tmop{Ric}_g^{\ast} (\Omega) \right] \nabla_g u = 0 \right\} .
\end{eqnarray*}
We observe that in the soliton case $h_{g, \Omega} = 0$ we have
\begin{equation}
  \label{V-iso-Eig} \mathbbm{V}_{g, \Omega} \cong \tmop{Ker}
  (\Delta^{\Omega}_g - 2\mathbbm{I}) \subset C_{\Omega}^{\infty} (X,
  \mathbbm{R})_0,
\end{equation}
thanks to the identity (\ref{com-grad-Lap}). By duality we can consider
$\tmop{Kill}_{g, \Omega} \subset \tmop{Ker} \hat{\Delta}^{\Omega}_g$ and we
observe the inclusion
\begin{equation}
  \label{V-cont-OrtLIE} \mathbbm{V}_{g, \Omega} \subseteq \tmop{Kill}_{g,
  \Omega}^{\bot_{g, \Omega}},
\end{equation}
where the symbol $\bot_{g, \Omega}$ indicates the orthogonal space inside
$\tmop{Ker} \hat{\Delta}^{\Omega}_g$ with respect to the scalar product
(\ref{Glb-Rm-m}) at the level of 1-forms. The previous inclusion holds true for
any $(g, \Omega)$ since
\begin{eqnarray*}
  \int_X \left\langle d u, \beta \right\rangle_g \Omega & = & - \int_X
  \left\langle u, \tmop{div}_g^{\Omega} \beta \right\rangle_g \Omega = 0,
\end{eqnarray*}
for any $\beta \in \tmop{Kill}_{g, \Omega}$. We infer that in the soliton case
the previous exact sequence can be reduced to the sequence
\begin{eqnarray*}
  0 \longrightarrow \tmop{Ker} (\Delta^{\Omega}_g - 2\mathbbm{I}) &
  \longrightarrow & \mathbbm{A}^{\Omega}_g\\
  &  & \\
  u & \mapsto & 2 \left( \nabla_g d u, - u \Omega \right) .
\end{eqnarray*}
In order to show that the previous map is also surjective we need to show a
few differential identities. We show first the Weitzenb\"ock type formula
\begin{equation}
  \label{Sm-Wei-1-form}  \hat{\Delta}^{\Omega}_g \alpha = \Delta^{\Omega}_g
  \alpha - \alpha \tmop{Ric}_g^{\ast} (\Omega) .
\end{equation}
(This implies in particular the identification of $\mathbbm{V}_{g, \Omega}$ in
terms of functions). We decompose the expression
\begin{equation}
  \label{def-OmSmLap1}  \hat{\Delta}^{\Omega}_g \alpha = \left[
  \nabla^{\ast_{\Omega}}_g  \hat{\nabla}_g - d \nabla^{\ast_{\Omega}}_g
  \right] \alpha .
\end{equation}
We decompose first the term
\begin{eqnarray*}
  \nabla^{\ast_{\Omega}}_g  \hat{\nabla}_g \alpha \cdot \xi & = &
  \nabla^{\ast_{}}_g  \hat{\nabla}_g \alpha \cdot \xi + \hat{\nabla}_g \alpha
  (\nabla_g f, \xi) .
\end{eqnarray*}
We fix an arbitrary point $p$ and we choose the vector fields $\xi$ and $\eta$
such that $0 = \nabla_g \xi (p) = \nabla_g \eta (p)$. Let $(e_k)_k$ be a
$g$-orthonormal local frame such that $\nabla_g e_k  (p) = 0$. Then at the
point $p$ hold the identities
\begin{eqnarray*}
  \nabla^{\ast_{}}_g  \hat{\nabla}_g \alpha \cdot \xi & = & - \nabla_{g, e_k}
  \hat{\nabla}_g \alpha (e_k, \xi)\\
  &  & \\
  & = & - \nabla_{g, e_k} \left[ \hat{\nabla}_g \alpha (e_k, \xi) \right]\\
  &  & \\
  & = & - \nabla_{g, e_k} \left[ \nabla_{g, e_k} \alpha \cdot \xi +
  \nabla_{g, \xi} \alpha \cdot e_k \right]\\
  &  & \\
  & = & - \nabla_{g, e_k} \nabla_{g, e_k} \alpha \cdot \xi - \nabla_{g, e_k}
  \nabla_{g, \xi} \alpha \cdot e_k .
\end{eqnarray*}
We infer the expression
\begin{eqnarray*}
  \nabla^{\ast_{\Omega}}_g  \hat{\nabla}_g \alpha \cdot \xi & = &
  \Delta^{\Omega}_g \alpha - \nabla_{g, e_k} \nabla_{g, \xi} \alpha \cdot e_k
  + \nabla_g \alpha (\xi, \nabla_g f) .
\end{eqnarray*}
Moreover
\begin{eqnarray*}
  d \nabla^{\ast_{\Omega}}_g \alpha (\xi) & = & - \nabla_{g, \xi}
  \nabla^{}_{g, e_k} \alpha \cdot e_k + \nabla^{}_{g, \xi} \alpha \cdot
  \nabla_g f + \alpha \cdot \nabla^2_{g, \xi} f .
\end{eqnarray*}
Summing up we deduce
\begin{eqnarray*}
  \hat{\Delta}^{\Omega}_g \alpha \cdot \xi & = & \Delta^{\Omega}_g \alpha
  \cdot \xi + \left( \nabla_{g, \xi} \nabla^{}_{g, e_k} \alpha - \nabla_{g,
  e_k} \nabla_{g, \xi} \alpha \right) \cdot e_k - \alpha \cdot \nabla^2_{g,
  \xi} f\\
  &  & \\
  & = & \Delta^{\Omega}_g \alpha \cdot \xi - \alpha \cdot \mathcal{R}_g (\xi,
  e_k) e_k - \alpha \cdot \nabla^2_{g, \xi} f,
\end{eqnarray*}
thanks to the dual identity
\begin{equation}
  \label{dual-com} \nabla_{g, \xi} \nabla^{}_{g, \eta} \alpha - \nabla_{g,
  \eta} \nabla_{g, \xi} \alpha = \nabla_{g, \left[ \xi, \eta \right]} \alpha -
  \alpha \cdot \mathcal{R}_g (\xi, \eta),
\end{equation}
and to the fact that $\left[ \xi, e_k \right] (p) = 0$. We infer the required
formula (\ref{Sm-Wei-1-form}). We deduce that in the soliton case $h_{g,
\Omega} = 0$ holds the equality
\begin{equation}
  \label{KerSMLap} \tmop{Ker} \hat{\Delta}^{\Omega}_g = \tmop{Ker}
  (\Delta^{\Omega}_g -\mathbbm{I}) \subset C^{\infty} (X, T^{\ast}_X) .
\end{equation}
We define now the $\Omega$-Hodge Laplacian acting on scalar valued
differential forms as the operator
\begin{eqnarray*}
  \Delta^{\Omega}_{d, g} & \assign & d \nabla^{\ast_{\Omega}}_g +
  \nabla^{\ast_{\Omega}}_g d .
\end{eqnarray*}
At the level of scalar valued 1-forms we observe the identities
\begin{eqnarray*}
  \left( \Delta^{\Omega}_{d, g} + \hat{\Delta}^{\Omega}_g \right) \alpha & = &
  \nabla^{\ast_{\Omega}}_g  \left( d + \hat{\nabla}_g \right) \alpha 
  \\
  \\
  &=& 2
  \nabla^{\ast_{\Omega}}_g \nabla_g \alpha 
  \\
  \\
  &=& 2 \Delta^{\Omega}_g \alpha .
\end{eqnarray*}
We infer thanks to the identity (\ref{Sm-Wei-1-form}) that for any scalar
valued 1-form $\alpha$ holds the Weitzenb\"ock type formula
\begin{equation}
  \label{Hg-Wei-1-form} \Delta^{\Omega}_g \alpha = \Delta^{\Omega}_{d, g}
  \alpha - \alpha \tmop{Ric}_g^{\ast} (\Omega) .
\end{equation}
Applying the $\nabla^{\ast_{\Omega}}_g$-operator to both sides of this
identity and using the fact that $(\nabla^{\ast_{\Omega}}_g)^2 = 0$ at the
level of scalar valued differential forms we obtain
\begin{eqnarray*}
  \nabla^{\ast_{\Omega}}_g \Delta^{\Omega}_g \alpha & = & \Delta^{\Omega}_g
  \nabla^{\ast_{\Omega}}_g \alpha - \nabla^{\ast_{\Omega}}_g \left[ \alpha
  \tmop{Ric}_g^{\ast} (\Omega) \right] .
\end{eqnarray*}
In the soliton case $h_{g, \Omega} = 0$ this implies the formula
\begin{equation}
  \label{com-div-Lap} \nabla^{\ast_{\Omega}}_g \Delta^{\Omega}_g \alpha =
  \Delta^{\Omega}_g \nabla^{\ast_{\Omega}}_g \alpha - \nabla^{\ast_{\Omega}}_g
  \alpha .
\end{equation}
Then the identity (\ref{KerSMLap}) implies that the map
\begin{eqnarray*}
  \tmop{Ker} \hat{\Delta}^{\Omega}_g & \longrightarrow & \tmop{Ker}
  (\Delta^{\Omega}_g - 2\mathbbm{I}) \subset C_{\Omega}^{\infty} (X,
  \mathbbm{R})_0 \\
  &  & \\
  \alpha & \mapsto & \tmop{div}_g^{\Omega} \alpha,
\end{eqnarray*}
is well defined. More precisely there exists the exact sequence of finite
dimensional vector spaces
\begin{eqnarray*}
  0 \longrightarrow \tmop{Kill}_{g, \Omega} \longrightarrow & \tmop{Ker}
  \hat{\Delta}^{\Omega}_g & \longrightarrow \tmop{Ker} (\Delta^{\Omega}_g -
  2\mathbbm{I}) \longrightarrow 0\\
  &  & \\
  \xi \mapsto & g \xi = \alpha & \mapsto \tmop{div}_g^{\Omega} \alpha .
\end{eqnarray*}
Indeed the surjectivity follows from the isomorphism (\ref{V-iso-Eig}). The
injectivity follows from the fact that
\begin{eqnarray*}
  \tmop{Kill}_{g, \Omega} & \cong & \left\{ \alpha \in \tmop{Ker}
  \hat{\Delta}^{\Omega}_g \mid \tmop{div}_g^{\Omega} \alpha = 0 \right\} .
\end{eqnarray*}
This hold true thanks to the identity
\begin{eqnarray*}
  \int_X \left| \tmop{div}_g^{\Omega} \alpha \right|^2_g \Omega & = &
  \frac{1}{2}  \int_X \left| \hat{\nabla}_g \alpha \right|^2_g \Omega,
\end{eqnarray*}
which follows from the expression
\begin{eqnarray*}
  \hat{\Delta}^{\Omega}_g \alpha & = & \left[ \frac{1}{2} 
  \hat{\nabla}^{\ast_{\Omega}}_g  \hat{\nabla}_g - d \nabla^{\ast_{\Omega}}_g
  \right] \alpha .
\end{eqnarray*}
For dimensional reasons we conclude the existence of the required exact
sequence
\begin{eqnarray*}
  0 \longrightarrow \tmop{Ker} (\Delta^{\Omega}_g - 2\mathbbm{I}) &
  \longrightarrow & \mathbbm{A}^{\Omega}_g \longrightarrow 0\\
  &  & \\
  u & \mapsto & 2 \left( \nabla_g d u, - u \Omega \right) .
\end{eqnarray*}
(We observe also that for dimensional reasons (\ref{V-cont-OrtLIE}) is an
equality.)

\section{Properties of the kernel of the Hessian of $\mathcal{W}$}

\begin{lemma}
  \label{Ker-D2W}In the soliton case $h_{g, \Omega} = 0$ holds the inclusion
  \[ \mathbbm{A}^{\Omega}_g \subseteq \mathbbm{F}_{g, \Omega} \cap \tmop{Ker}
     \nabla^2_G \mathcal{W} (g, \Omega) . \]
\end{lemma}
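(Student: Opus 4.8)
The plan is to combine the explicit description of the anomaly space obtained in section~\ref{Anomaly} with the simplified form of the Hessian of $\mathcal{W}$ at a soliton point given by lemma~\ref{Sec-Var-W}. The inclusion $\mathbbm{A}^{\Omega}_g \subseteq \mathbbm{F}_{g, \Omega}$ holds by the very definition $\mathbbm{A}^{\Omega}_g = \mathbbm{F}_{g, \Omega} \cap T_{[g, \Omega], (g, \Omega)}$, so the only content is the inclusion $\mathbbm{A}^{\Omega}_g \subseteq \tmop{Ker} \nabla^2_G \mathcal{W} (g, \Omega)$. I would therefore fix $(v, V) \in \mathbbm{A}^{\Omega}_g$ and use the isomorphism $\tmop{Ker} (\Delta^{\Omega}_g - 2 \mathbbm{I}) \to \mathbbm{A}^{\Omega}_g$, $u \mapsto 2 (\nabla_g d u, - u \Omega)$, established in section~\ref{Anomaly}, to write $v = 2 \nabla_g d u$ and $V^{\ast}_{\Omega} = - 2 u$ with $\Delta^{\Omega}_g u = 2 u$.

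Since $h_{g, \Omega} = 0$, lemma~\ref{Sec-Var-W} gives $\nabla^2_G \mathcal{W} (g, \Omega) (v, V) = (u', U')$ with $u' = - \tfrac{1}{2} \mathcal{L}^{\Omega}_g v$ and $(U')^{\ast}_{\Omega} = - \tfrac{1}{2} (\Delta^{\Omega}_g - 2 \mathbbm{I}) V^{\ast}_{\Omega}$. The volume-form component vanishes immediately: $(U')^{\ast}_{\Omega} = - \tfrac{1}{2} (\Delta^{\Omega}_g - 2 \mathbbm{I}) (- 2 u) = (\Delta^{\Omega}_g - 2 \mathbbm{I}) u = 0$. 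It remains to check $\mathcal{L}^{\Omega}_g v = 0$. Here I would write $(v, V) = (L_{\xi} g, L_{\xi} \Omega)$ for a suitable $\xi \in C^{\infty} (X, T_X)$, which is possible since $(v, V) \in T_{[g, \Omega], (g, \Omega)}$, and substitute into the first variation formula (\ref{var-h}). Because $(v, V) \in \mathbbm{F}_{g, \Omega}$, the gauge term $L_{\nabla_g^{\ast_{\Omega}} v_g^{\ast} + \nabla_g V^{\ast}_{\Omega}} g$ drops out, while at a soliton the relation $\tmop{Ric}_g (\Omega) = g$ gives $\Delta^{\Omega}_{L, g} v = \mathcal{L}^{\Omega}_g v + 2 v$; hence (\ref{var-h}) reduces to $2 D_{g, \Omega} h (v, V) = \mathcal{L}^{\Omega}_g v$.

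To conclude, I would use that $h$ is $\tmop{Diff} (X)$-equivariant, $h_{\varphi^{\ast} g, \varphi^{\ast} \Omega} = \varphi^{\ast} h_{g, \Omega}$: along the orbit curve $t \mapsto (\varphi_t^{\ast} g, \varphi_t^{\ast} \Omega)$ through $(g, \Omega)$ with velocity $(v, V)$ the tensor $h$ remains identically $\varphi_t^{\ast} 0 = 0$, so differentiating at $t = 0$ yields $D_{g, \Omega} h (v, V) = 0$, hence $\mathcal{L}^{\Omega}_g v = 0$ and $u' = 0$. Thus $\nabla^2_G \mathcal{W} (g, \Omega) (v, V) = 0$, which together with $(v, V) \in \mathbbm{F}_{g, \Omega}$ gives the claim. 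The main thing to get right is the identity $2 D_{g, \Omega} h (v, V) = \mathcal{L}^{\Omega}_g v$ for $(v, V) \in \mathbbm{A}^{\Omega}_g$, i.e. that the two Ricci correction terms in $\Delta^{\Omega}_{L, g}$ collapse to $+ 2 v$ at a soliton and that the $\mathbbm{F}$-condition exactly cancels the gauge term of (\ref{var-h}); the remaining manipulations are one-line substitutions. Alternatively, one could avoid equivariance and establish $\mathcal{L}^{\Omega}_g (\nabla_g d u) = 0$ directly, by a Weitzenb\"ock/commutation computation in the spirit of (\ref{com-grad-Lap}), using $\Delta^{\Omega}_g u = 2 u$.
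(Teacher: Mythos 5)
Your argument is correct, and it reaches the conclusion by a genuinely different route from the paper. The paper proves the lemma by establishing the pointwise commutation identity $\mathcal{L}^{\Omega}_g \nabla^2_g u = \nabla^2_g (\Delta^{\Omega}_g - 2\mathbbm{I}) u$ (formula (\ref{com-Lich-Hess})) for \emph{arbitrary} functions $u$ at a soliton, via a fairly long Weitzenb\"ock-type computation involving the once contracted Bianchi identity (\ref{Om-cntr-Bianc}) and the commutation formula (\ref{com-grad-Lap}); the lemma then follows by specializing to $u \in \tmop{Ker}(\Delta^{\Omega}_g - 2\mathbbm{I})$. You instead observe that $\mathbbm{A}^{\Omega}_g$ consists of orbit directions $(L_{\xi} g, L_{\xi}\Omega)$, that the equivariance $h_{\varphi^{\ast} g, \varphi^{\ast}\Omega} = \varphi^{\ast} h_{g,\Omega}$ forces $D_{g,\Omega} h (v,V) = L_{\xi} h_{g,\Omega} = 0$ at a soliton, and that on $\mathbbm{F}_{g,\Omega}$ the variation formula (\ref{var-h}) collapses (using $\tmop{Ric}_g(\Omega) = g$) to $2 D_{g,\Omega} h(v,V) = \mathcal{L}^{\Omega}_g v$; hence $\mathcal{L}^{\Omega}_g v = 0$. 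All the intermediate steps you use check out: the reduction $\Delta^{\Omega}_{L,g} v = \mathcal{L}^{\Omega}_g v + 2v$ at a soliton, the vanishing of the gauge term on $\mathbbm{F}_{g,\Omega}$, and the identification $2(\nabla_g du, -u\Omega) = (L_{\nabla_g u} g, L_{\nabla_g u}\Omega)$. Your route is shorter and more conceptual, but note the trade-off: it only yields $\mathcal{L}^{\Omega}_g \nabla^2_g u = 0$ on the eigenspace $\tmop{Ker}(\Delta^{\Omega}_g - 2\mathbbm{I})$, whereas the paper's full identity (\ref{com-Lich-Hess}) (and its complex refinements (\ref{com-Lich-HessIO}), (\ref{com-Lich-HessOI})) is reused substantially in the second-variation computations of section \ref{Sign-Sect}, so the heavier computation is not avoidable in the paper as a whole --- only for this particular lemma.
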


We start with a few notations. For any tensor $A \in C^{\infty} (X,
(T_X^{\ast})^{\otimes p + 1} \otimes T_X)$ we define the divergence type
operations
\begin{eqnarray*}
  \underline{\tmop{div}}_g A (u_1, \ldots, u_p) & \assign & \tmop{Tr}_g \left[
  \nabla_g A \right( \cdot, u_1, \ldots, u_p, \cdot \left) \right],\\
  &  & \\
  \underline{\tmop{div}}^{\Omega}_g A (u_1, \ldots, u_p) & : = &
  \underline{\tmop{div}}_g A (u_1, \ldots, u_p) - A (u_1, \ldots, u_p,
  \nabla_g f) .
\end{eqnarray*}
The once contracted differential Bianchi identity writes often as
$\underline{\tmop{div}}_g \mathcal{R}_g = - \nabla_{T_X, g}
\tmop{Ric}^{\ast}_g$. This combined with the identity $\nabla_{T_X, g}
\nabla^2_g f =\mathcal{R}_g \cdot \nabla_g f$ implies
\begin{equation}
  \label{Om-cntr-Bianc}  \underline{\tmop{div}}^{\Omega}_g \mathcal{R}_g = -
  \nabla_{T_X, g} \tmop{Ric}^{\ast}_g (\Omega) .
\end{equation}
We define the $\Omega$-Lichnerowicz Laplacian $\Delta^{\Omega}_{L, g}$ acting
on $g$-symmetric endomorphisms $A$ as
\begin{eqnarray*}
  \Delta^{\Omega}_{L, g} A & \assign & \mathcal{L}^{\Omega}_g A +
  \tmop{Ric}_g^{\ast} (\Omega) A + A \tmop{Ric}_g^{\ast} (\Omega) .
\end{eqnarray*}
We fix a point $p\in
X$ and we take an arbitrary vector field $\xi$ such that $\nabla_g \xi (p) = 0$. 
Let also $(e_k)_k$ be a $g$-orthonormal local frame such that $\nabla_g e_k  (p) =0$. We expand the identity at
the point $p$
\begin{eqnarray*}
  (\Delta^{\Omega}_g \nabla^2_g u) \xi & = & - \nabla_{g, e_k} \nabla^3_g u
  (e_k, \xi) + \nabla^3_g u (\nabla_g f, \xi) .
\end{eqnarray*}
Commuting derivatives at the point $p$ we obtain
\begin{eqnarray*}
  \nabla_{g, e_k} \nabla^3_g u (e_k, \xi) & = & \nabla_{g, e_k} \left[
  \nabla^3_g u (e_k, \xi) \right]\\
  &  & \\
  & = & \nabla_{g, e_k} \left[ \nabla_{g, e_k} \nabla_{g, \xi} \nabla_g u -
  \nabla^2_g u \cdot \nabla_{g, e_k} \xi \right]\\
  &  & \\
  & = & \nabla_{g, e_k} \left[ \nabla_{g, \xi} \nabla_{g, e_k} \nabla_g u
  +\mathcal{R}_g (e_k, \xi) \nabla_g u - \nabla^2_g u \cdot \nabla_{g, \xi}
  e_k \right]\\
  &  & \\
  & = & \nabla_{g, \xi} \nabla_{g, e_k} \nabla_{g, e_k} \nabla_g u +
  2\mathcal{R}_g (e_k, \xi) \nabla_{g, e_k} \nabla_g u\\
  &  & \\
  & + & \nabla_{g, e_k} \mathcal{R}_g (e_k, \xi) \nabla_g u - \nabla^2_g u
  \cdot \nabla_{g, e_k} \nabla_{g, \xi} e_k,
\end{eqnarray*}
since $\left[ e_k, \xi \right](p)=0$. Taking a covariant derivative of the identity
\begin{eqnarray*}
  \Delta_g \nabla_g u & = & - \nabla_{g, e_k} \nabla_{g, e_k} \nabla_g u +
  \nabla^2_g u \cdot \nabla_{g, e_k} e_k ,
\end{eqnarray*}
 we infer
\begin{eqnarray*}
  \nabla_{g, \xi} \Delta_g \nabla_g u & = & - \nabla_{g, \xi} \nabla_{g, e_k}
  \nabla_{g, e_k} \nabla_g u + \nabla^2_g u \cdot \nabla_{g, \xi} \nabla_{g,
  e_k} e_k,
\end{eqnarray*}
at the point $p$. Combining with the previous expression we obtain
\begin{eqnarray*}
  \nabla_{g, e_k} \nabla^3_g u (e_k, \xi) & = & - 2 \left( \mathcal{R}_g \ast
  \nabla^2_g u \right) \xi - \nabla_{g, \xi} \Delta_g \nabla_g u\\
  &  & \\
  & + & \nabla^2_g u\cdot \tmop{Ric}^{\ast} (g) \xi +
  \nabla_{g, e_k} \mathcal{R}_g (e_k, \xi) \nabla_g u .
\end{eqnarray*}
On the other hand deriving the identity
\begin{eqnarray*}
  \Delta^{\Omega}_g \nabla_g u & = & \Delta_g \nabla_g u + \nabla^2_g u \cdot
  \nabla_g f,
\end{eqnarray*}
we infer
\begin{eqnarray*}
  \nabla_{g, \xi} \Delta^{\Omega}_g \nabla_g u & = & \nabla_{g, \xi} \Delta_g
  \nabla_g u + \nabla_{g, \xi} \nabla^2_g u \cdot \nabla_g f + \nabla^2_g u
  \cdot \nabla^2_{g, \xi} f,
\end{eqnarray*}
and thus
\begin{eqnarray*}
  \nabla_{g, e_k} \nabla^3_g u (e_k, \xi) & = & - 2 \left( \mathcal{R}_g \ast
  \nabla^2_g u \right) \xi - \nabla_{g, \xi} \Delta^{\Omega}_g \nabla_g u +
  \nabla_{g, \xi} \nabla^2_g u \cdot \nabla_g f\\
  &  & \\
  & + & \nabla^2_g u \cdot \tmop{Ric}_g^{\ast} (\Omega) \xi -
  \underline{\tmop{div}}_g \mathcal{R}_g  (\xi, \nabla_g u) - \left( \nabla_g
  u \neg \nabla^{\ast}_g \mathcal{R}_g \right) \xi,
\end{eqnarray*}
thanks to the algebraic Bianchi identity. We obtain
\begin{eqnarray*}
  (\Delta^{\Omega}_g \nabla^2_g u) \xi & = & 2 \left( \mathcal{R}_g \ast
  \nabla^2_g u \right) \xi - \nabla_{T_X, g} \nabla^2_g u (\xi, \nabla_g f)\\
  &  & \\
  & + & \underline{\tmop{div}}_g \mathcal{R}_g  (\xi, \nabla_g u) + \left(
  \nabla_g u \neg \nabla^{\ast}_g \mathcal{R}_g \right) \xi\\
  &  & \\
  & + & \nabla_{g, \xi} \Delta^{\Omega}_g \nabla_g u - \nabla^2_g u \cdot
  \tmop{Ric}_g^{\ast} (\Omega) \xi .
\end{eqnarray*}
The identity $\nabla_{T_X, g} \nabla^2_g u =\mathcal{R}_g \cdot \nabla_g u$
implies
\begin{eqnarray*}
  - \nabla_{T_X, g} \nabla^2_g u (\xi, \nabla_g f) & = & \mathcal{R}_g
  (\nabla_g f, \xi) \nabla_g u\\
  &  & \\
  & = & -\mathcal{R}_g (\xi, \nabla_g u) \nabla_g f +\mathcal{R}_g (\nabla_g
  f, \nabla_g u) \xi,
\end{eqnarray*}
thanks again to the algebraic Bianchi identity. We infer
\begin{eqnarray*}
  (\mathcal{L}^{\Omega}_g \nabla^2_g u) \xi & = & \left[ \nabla_g u \neg
  \left( \nabla^{\ast_{\Omega}}_g \mathcal{R}_g -
  \underline{\tmop{div}}^{\Omega}_g \mathcal{R}_g \right) \right] \xi\\
  &  & \\
  & + & \nabla_{g, \xi} \Delta^{\Omega}_g \nabla_g u - \nabla^2_g u \cdot
  \tmop{Ric}_g^{\ast} (\Omega) \xi\\
  &  & \\
  & = & \left[ \nabla_g u \neg \left( \nabla^{\ast_{\Omega}}_g \mathcal{R}_g
  + \nabla_{T_X, g} \tmop{Ric}^{\ast}_g (\Omega) \right) \right] \xi\\
  &  & \\
  & + & (\nabla^2_g \Delta^{\Omega}_g u) \xi - \nabla_{g, \xi} [
  \tmop{Ric}^{\ast}_g (\Omega) \nabla_g u] - \nabla^2_g u \cdot
  \tmop{Ric}_g^{\ast} (\Omega) \xi,
\end{eqnarray*}
thanks to (\ref{Om-cntr-Bianc}) and (\ref{com-grad-Lap}). Thus
\begin{equation}
  \label{Lich-Hess} \Delta^{\Omega}_{L, g} \nabla^2_g u = \nabla^2_g
  \Delta^{\Omega}_g u + \nabla_g u \neg \left[ \nabla^{\ast_{\Omega}}_g
  \mathcal{R}_g + \nabla_g \tmop{Ric}^{\ast}_g (\Omega) \right] - 2 \nabla_g
  \tmop{Ric}^{\ast}_g (\Omega) \nabla_g u .
\end{equation}
We observe now that the endomorphism section $\nabla_g u \neg
\nabla^{\ast_{\Omega}}_g \mathcal{R}_g$ is $g$-anti-symmetric thanks to the
identity \
\begin{eqnarray*}
  \mathcal{R}_g (\xi, \eta) & = & - \left( \mathcal{R}_g (\xi, \eta)
  \right)_g^T,
\end{eqnarray*}
which is a consequence of the alternating property of the $(4, 0)$-Riemann
curvature operator. Notice indeed that the previous identity implies
\begin{eqnarray*}
  \nabla_{g, \mu} \mathcal{R}_g (\xi, \eta) & = & - \left( \nabla_{g, \mu}
  \mathcal{R}_g (\xi, \eta) \right)_g^T,
\end{eqnarray*}
for all vector fields $\xi, \eta, \mu$. Combining the $g$-symmetric and
$g$-anti-symmetric parts in the identity (\ref{Lich-Hess}) we infer the
formulas
\begin{eqnarray*}
  \Delta^{\Omega}_{L, g} \nabla^2_g u & = & \left. \nabla^2_g
  \Delta^{\Omega}_g u + \nabla_g u \neg \nabla_{T_X, g} \tmop{Ric}^{\ast}_g
  (\Omega) - [ \nabla_g \tmop{Ric}^{\ast}_g (\Omega) \nabla_g u \right]_g^T,\\
  &  & \\
  \xi \neg \nabla^{\ast_{\Omega}}_g \mathcal{R}_g & = & \left. \nabla_g
  \tmop{Ric}^{\ast}_g (\Omega) \xi - [ \nabla_g \tmop{Ric}^{\ast}_g (\Omega)
  \xi \right]_g^T,
\end{eqnarray*}
for all $\xi \in T_X$ since the function $u$ is arbitrary. In the case
$\nabla_g \tmop{Ric}^{\ast}_g (\Omega) = 0$ we deduce the identities
$\Delta^{\Omega}_{L, g} \nabla^2_g u = \nabla^2_g \Delta^{\Omega}_g u$ and
$\nabla^{\ast_{\Omega}}_g \mathcal{R}_g = 0$. More in particular in the
soliton case $h_{g, \Omega} = 0$ the first formula reduces to the differential
identity
\begin{equation}
  \label{com-Lich-Hess} \mathcal{L}^{\Omega}_g \nabla^2_g u = \nabla^2_g 
  (\Delta^{\Omega}_g - 2\mathbbm{I}) u .
\end{equation}
We infer the conclusion of lemma \ref{Ker-D2W}. This formula will be also
quite crucial for the study of the sign of the second variation of the
$\mathcal{W}$ functional at a K\"ahler-Ricci soliton point.

\section{Invariance of $\mathbbm{F}$ under the action of the \\
Hessian endomorphism of $\mathcal{W}$}\label{inv-F}

We observe that Perelman's twice contracted second Bianchi type identity
(\ref{II-contr-Bianchi}) rewrites as;
\[ \nabla_g^{\ast_{\Omega}} h_{g, \Omega} \noplus + d H_{g, \Omega} = 0 . \]
If we differentiate this over the space $\mathcal{M} \times \mathcal{V}_1$ we
obtain
\begin{eqnarray*}
  \left[ (D_{g, \Omega} \nabla_{\bullet}^{\ast_{\bullet}}) \left( v, V \right)
  \right] h_{g, \Omega} + \nabla_g^{\ast_{\Omega}} \left[ D_{g, \Omega} h
  \left( v, V \right) \right] \noplus + d \left[ D_{g, \Omega} H \left( v, V
  \right) \right] = 0 . &  & 
\end{eqnarray*}
We deduce using the fundamental variation formulas (\ref{var-h}) and
(\ref{var-H})
\begin{eqnarray*}
  &  & \nabla_g^{\ast_{\Omega}} \left[ \mathcal{L}^{\Omega}_g v + v
  h^{\ast}_{g, \Omega} + h_{g, \Omega} v_g^{\ast} \right]\\
  &  & \\
  & + & d \left[ (\Delta^{\Omega}_g - 2\mathbbm{I}) V^{\ast}_{\Omega} -
  \left\langle v, h_{g, \Omega} \right\rangle_g \right]\\
  &  & \\
  & = & - 2 \left[ D_{g, \Omega} \nabla_{\bullet}^{\ast_{\bullet}}  \left( v,
  V \right) \right] h_{g, \Omega} 
\end{eqnarray*}
in the directions $(v, V) \in \mathbbm{F}_{g, \Omega}$. We infer that in the
soliton case $h_{g, \Omega} = 0$ the map
\[ \nabla^2_G \mathcal{W} (g, \Omega) : \mathbbm{F}_{g, \Omega}
   \longrightarrow \mathbbm{F}_{g, \Omega}, \]
is well defined. In order to investigate the general case we use a different
method which has the advantage to involve less computations. Let $(e_k)_k$ be
a $g$-orthonormal local frame of $T_X$. For any $u, v \in C^{\infty} \left( X,
S^2 T^{\ast}_X \right)$ we define the real valued $1$-form
\begin{eqnarray*}
  M_g (u, v) (\xi) & \assign & 2 \nabla_g v (e_k, u^{\ast}_g e_k, \xi) +
  \nabla_g u (\xi, v^{\ast}_g e_k, e_k) \hspace{1.2em},
\end{eqnarray*}
for all $\xi \in T_X$. One can show that the operator
\begin{eqnarray*}
  T_g (u, v) & : = & M_g (u, v) - M_g (v, u),
\end{eqnarray*}
is related with the torsion of the distribution $\mathbbm{F}$. We observe now
that by lemma \ref{OmTX-Lap-RmLap} holds the identity
\begin{eqnarray*}
  \Delta^{\Omega}_g v^{\ast}_g - \nabla_g \nabla^{\ast_{\Omega}}_g v^{\ast}_g
  & = & \nabla^{\ast_{\Omega}}_g \nabla_{T_X, g} v^{\ast}_g +\mathcal{R}_g
  \ast v^{\ast}_g - v^{\ast}_g \tmop{Ric}_g^{\ast} (\Omega) .
\end{eqnarray*}
Applying the $\nabla^{\ast_{\Omega}}_g$-operator to both sides of this
identity we deduce the commutation formula
\begin{eqnarray*}
  \left[ \nabla^{\ast_{\Omega}}_g, \Delta^{\Omega}_g \right] v^{\ast}_g & = &
  \nabla^{\ast_{\Omega}}_g \left[ \nabla^{\ast_{\Omega}}_g \nabla_{T_X, g}
  v^{\ast}_g +\mathcal{R}_g \ast v^{\ast}_g - v^{\ast}_g h_{g, \Omega}^{\ast}
  - v^{\ast}_g \right] .
\end{eqnarray*}
We observe now that for any $\psi \in C^{\infty} (X, \Lambda^2 T_X
\otimes_{\mathbbm{R}} T_X)$ and $\xi \in C^{\infty} (X, T_X)$ hold the
equalities
\begin{eqnarray*}
  \int_X \left\langle (\nabla^{\ast_{\Omega}}_g)^2 \psi, \xi \right\rangle_g
  \Omega & = & \int_X \left\langle \nabla^{\ast_{\Omega}}_g \psi, \nabla_g \xi
  \right\rangle_g \Omega\\
  &  & \\
  & = & \frac{1}{2} \int_X \left\langle \nabla^{\ast_{\Omega}}_{T_X, g} \psi,
  \nabla_g \xi \right\rangle_g \Omega\\
  &  & \\
  & = & \frac{1}{2} \int_X \left\langle \psi, \nabla^2_{T_X, g} \xi
  \right\rangle_g \Omega\\
  &  & \\
  & = & \frac{1}{2} \int_X \left\langle \psi, \mathcal{R}_g \cdot \xi
  \right\rangle_g \Omega,
\end{eqnarray*}
and
\begin{eqnarray*}
  \left\langle \psi, \mathcal{R}_g \cdot \xi \right\rangle_g & = &
  \left\langle \psi (e_k, e_l), \mathcal{R}_g (e_k, e_l) \xi \right\rangle_g =
  - \left\langle \mathcal{R}_g (e_k, e_l) \psi (e_k, e_l), \xi \right\rangle_g
  .
\end{eqnarray*}
We infer
\begin{eqnarray*}
  (\nabla^{\ast_{\Omega}}_g)^2 \nabla_{T_X, g} v^{\ast}_g & = & - \frac{1}{2}
  \mathcal{R}_g (e_k, e_l)  \left[ \nabla_g v^{\ast}_g (e_k, e_l) - \nabla_g
  v^{\ast}_g (e_l, e_k) \right]\\
  &  & \\
  & = & \mathcal{R}_g (e_l, e_k) \nabla_g v^{\ast}_g  (e_k, e_l) .
\end{eqnarray*}
This combined with the expression
\begin{eqnarray*}
  \nabla^{\ast_{\Omega}}_g (\mathcal{R}_g \ast v^{\ast}_g) & = &
  \nabla^{\ast_{\Omega}}_g \mathcal{R}_g (e_k) v^{\ast}_g e_k +\mathcal{R}_g
  (e_l, e_k) \nabla_g v^{\ast}_g (e_k, e_l),
\end{eqnarray*}
implies the identity
\begin{eqnarray*}
  \nabla^{\ast_{\Omega}}_g \mathcal{L}^{\Omega}_g v^{\ast}_g  & = &
  (\Delta^{\Omega}_g -\mathbbm{I}) \nabla^{\ast_{\Omega}}_g v^{\ast}_g +
  \nabla_g v^{\ast}_g  (e_k, h_{g, \Omega}^{\ast} e_k) \\
  &  & \\
  & - & v^{\ast}_g \nabla^{\ast_{\Omega}}_g h_{g, \Omega}^{\ast} -
  \nabla^{\ast_{\Omega}}_g \mathcal{R}_g (e_k) v^{\ast}_g e_k,
\end{eqnarray*}
which rewrites also under the form
\begin{eqnarray*}
  \nabla^{\ast_{\Omega}}_g \mathcal{L}^{\Omega}_g v & = & (\Delta^{\Omega}_g
  -\mathbbm{I}) \nabla^{\ast_{\Omega}}_g v + \nabla_g v (e_k, h_{g,
  \Omega}^{\ast} e_k, \bullet) \\
  &  & \\
  & - & v \nabla^{\ast_{\Omega}}_g h_{g, \Omega}^{\ast} + v \left( e_k,
  \nabla^{\ast_{\Omega}}_g \mathcal{R}_g (e_k) \bullet \right),
\end{eqnarray*}
thanks to (\ref{ast-curv-Id}) and the anti-symmetry property
\begin{eqnarray*}
  e_k \neg \nabla^{\ast_{\Omega}}_g \mathcal{R}_g & = & - \left( e_k \neg
  \nabla^{\ast_{\Omega}}_g \mathcal{R}_g \right)_g^T .
\end{eqnarray*}
On the other hand the once contracted differential Bianchi type identity
(\ref{Om-cntr-Bianc}) rewrites as
\begin{eqnarray*}
  - \nabla_{T_X, g} \tmop{Ric}^{\ast}_g (\Omega) & = & \tmop{Alt} \left(
  \nabla^{\ast_{\Omega}}_g \mathcal{R}_g \right),
\end{eqnarray*}
thanks to the algebraic Bianchi identity. Therefore for any $\xi \in
C^{\infty} (X, T_X)$ hold the identities
\begin{eqnarray*}
  v \left( e_k, \nabla^{\ast_{\Omega}}_g \mathcal{R}_g (e_k) \xi \right) & = &
  v \left( \nabla^{\ast_{\Omega}}_g \mathcal{R}_g (e_k) \xi, e_k \right)\\
  &  & \\
  & = & v \left( \nabla^{\ast_{\Omega}}_g \mathcal{R}_g (\xi) e_k, e_k
  \right) + v \left( \left[ \xi \neg \nabla_{T_X, g} \tmop{Ric}^{\ast}_g
  (\Omega) \right] e_k, e_k \right)\\
  &  & \\
  & = & \tmop{Tr}_{\mathbbm{R}} \left[ v^{\ast}_g \nabla^{\ast_{\Omega}}_g
  \mathcal{R}_g (\xi) \right] + \tmop{Tr}_g \left[ v \left( \xi \neg
  \nabla_{T_X, g} \tmop{Ric}^{\ast}_g (\Omega) \right) \right]\\
  &  & \\
  & = & \tmop{Tr}_g \left[ v \left( \xi \neg \nabla_{T_X, g} h_{g,
  \Omega}^{\ast} \right) \right],
\end{eqnarray*}
since the endomorphism section $\nabla^{\ast_{\Omega}}_g \mathcal{R}_g (\xi)$
is $g$-anti-symmetric. Notice indeed that if $A, B \in C^{\infty} \left( X,
\tmop{End} (T_X) \right)$ satisfy $A = A_g^T$ and $B = - B_g^T$ then
\begin{eqnarray*}
  \tmop{Tr}_{\mathbbm{R}} (A B) & = & \tmop{Tr}_{\mathbbm{R}} (B A)\\
  &  & \\
  & = & \tmop{Tr}_{\mathbbm{R}} (B A)_g^T\\
  &  & \\
  & = & \tmop{Tr}_{\mathbbm{R}} (A_g^T B_g^T)\\
  &  & \\
  & = & - \tmop{Tr}_{\mathbbm{R}} (A B),
\end{eqnarray*}
i.e $\tmop{Tr}_{\mathbbm{R}} (A B) = 0$. We deduce in conclusion the formula
\begin{eqnarray*}
  \nabla^{\ast_{\Omega}}_g \mathcal{L}^{\Omega}_g v & = & (\Delta^{\Omega}_g
  -\mathbbm{I}) \nabla^{\ast_{\Omega}}_g v - v \nabla^{\ast_{\Omega}}_g h_{g,
  \Omega}^{\ast}\\
  &  & \\
  & + & \nabla_g v (e_k, h_{g, \Omega}^{\ast} e_k, \bullet) + \tmop{Tr}_g
  \left[ v \left( \bullet \neg \nabla_{T_X, g} h_{g, \Omega}^{\ast} \right)
  \right] .
\end{eqnarray*}
Using the general formula
\[ \nabla_g^{\ast_{\Omega}}  \left( \varphi v \right) = - v \nabla_g \varphi +
   \varphi \nabla_g^{\ast_{\Omega}} v, \]
with $\varphi \in C^{\infty} (X, \mathbbm{R})$ we infer
\begin{eqnarray*}
  \nabla_g^{\ast_{\Omega}} \left[  \underline{H}_{g, \Omega} v +
  V^{\ast}_{\Omega} h_{g, \Omega} \right] & = & - v \nabla_g 
  \underline{H}_{g, \Omega} + \underline{H}_{g, \Omega}
  \nabla_g^{\ast_{\Omega}} v - h_{g, \Omega} \nabla_g V^{\ast}_{\Omega} +
  V^{\ast}_{\Omega} \nabla_g^{\ast_{\Omega}} h_{g, \Omega}\\
  &  & \\
  & = & v \nabla_g^{\ast_{\Omega}} h^{\ast}_{g, \Omega} + \underline{H}_{g,
  \Omega} \nabla_g^{\ast_{\Omega}} v - d V^{\ast}_{\Omega} \cdot h^{\ast}_{g,
  \Omega} - V^{\ast}_{\Omega} d \underline{H}_{g, \Omega},
\end{eqnarray*}
thanks to Perelman's twice contracted differential Bianchi type identity
(\ref{II-contr-Bianchi}). Using the identity (\ref{com-d-Lap}) we expand the
term
\begin{eqnarray*}
  &  & d \left[ \left( \Delta^{\Omega}_g + \underline{H}_{g, \Omega} -
  2\mathbbm{I} \right) V^{\ast}_{\Omega} - \frac{1}{2}  \left\langle v, h_{g,
  \Omega} \right\rangle_g \right]\\
  &  & \\
  & = & \Delta^{\Omega}_g d V^{\ast}_{\Omega} + d V^{\ast}_{\Omega} \cdot
  \tmop{Ric}^{\ast}_g (\Omega) + V^{\ast}_{\Omega} d \underline{H}_{g, \Omega}
  + \underline{H}_{g, \Omega} d V^{\ast}_{\Omega} - 2 d V^{\ast}_{\Omega} -
  \frac{1}{2} d \left\langle v, h_{g, \Omega} \right\rangle_g\\
  &  & \\
  & = & (\Delta^{\Omega}_g + \underline{H}_{g, \Omega} -\mathbbm{I}) d
  V^{\ast}_{\Omega} + d V^{\ast}_{\Omega} \cdot h^{\ast}_{g, \Omega} +
  V^{\ast}_{\Omega} d \underline{H}_{g, \Omega} - \frac{1}{2} d \left\langle
  v, h_{g, \Omega} \right\rangle_g .
\end{eqnarray*}
Summing up we infer
\begin{eqnarray*}
  &  & \nabla_g^{\ast_{\Omega}} \left[ \left( \mathcal{L}^{\Omega}_g +
  \underline{H}_{g, \Omega} \right) v + V^{\ast}_{\Omega} h_{g, \Omega}
  \right]\\
  &  & \\
  & + & d \left[ \left( \Delta^{\Omega}_g + \underline{H}_{g, \Omega} -
  2\mathbbm{I} \right) V^{\ast}_{\Omega} - \frac{1}{2}  \left\langle v, h_{g,
  \Omega} \right\rangle_g \right]\\
  &  & \\
  & = & (\Delta^{\Omega}_g + \underline{H}_{g, \Omega} -\mathbbm{I})  \left(
  \nabla_g^{\ast_{\Omega}} v + d V^{\ast}_{\Omega} \right) \\
  &  & \\
  & + & \nabla_g v (e_k, h_{g, \Omega}^{\ast} e_k, \bullet) + \tmop{Tr}_g
  \left[ v \left( \bullet \neg \nabla_{T_X, g} h_{g, \Omega}^{\ast} \right)
  \right] - \frac{1}{2} d \left\langle v, h_{g, \Omega} \right\rangle_g .
\end{eqnarray*}
We observe now the identity
\begin{eqnarray*}
  \nabla_g v (e_k, h_{g, \Omega}^{\ast} e_k, \bullet) + \tmop{Tr}_g \left[ v
  \left( \bullet \neg \nabla_{T_X, g} h_{g, \Omega}^{\ast} \right) \right] -
  \frac{1}{2} d \left\langle v, h_{g, \Omega} \right\rangle_g & = &
  \frac{1}{2} T_g (h_{g, \Omega}, v) .
\end{eqnarray*}
We deduce the formula
\begin{eqnarray*}
  &  & \nabla_g^{\ast_{\Omega}} \left[ \left( \mathcal{L}^{\Omega}_g +
  \underline{H}_{g, \Omega} \right) v + V^{\ast}_{\Omega} h_{g, \Omega}
  \right]\\
  &  & \\
  & + & d \left[ \left( \Delta^{\Omega}_g + \underline{H}_{g, \Omega} -
  2\mathbbm{I} \right) V^{\ast}_{\Omega} - \frac{1}{2}  \left\langle v, h_{g,
  \Omega} \right\rangle_g \right]\\
  &  & \\
  & = & (\Delta^{\Omega}_g + \underline{H}_{g, \Omega} -\mathbbm{I})  \left(
  \nabla_g^{\ast_{\Omega}} v + d V^{\ast}_{\Omega} \right) + \frac{1}{2} T_g
  (h_{g, \Omega}, v) .
\end{eqnarray*}
Setting $(v, V) = (h_{g, \Omega}, \underline{H}_{g, \Omega} \Omega) \in
\mathbbm{F}_{g, \Omega}$ in the previous identity we infer
\begin{eqnarray*}
  0 & = & \nabla_g^{\ast_{\Omega}} \left[ \left( \mathcal{L}^{\Omega}_g + 2
  \underline{H}_{g, \Omega} \right) h_{g, \Omega} \right]\\
  &  & \\
  & + & d \left[ \left( \Delta^{\Omega}_g + \underline{H}_{g, \Omega} -
  2\mathbbm{I} \right)  \underline{H}_{g, \Omega} - \frac{1}{2}  \left| h_{g,
  \Omega} \right|^2_g \right] .
\end{eqnarray*}
This shows the fundamental property (\ref{fundam-property-SRF}) of the
Soliton-Ricci flow.

\section{The K\"ahler set up}

In this section we introduce a few basic notations needed in sequel. Let $(X,
J, g)$ be a compact connected K\"ahler manifold with symplectic form $\omega
\assign g J$. Let $h \assign g - i g J = 2 g \pi^{1, 0}_J$ be the hermitian
metric over $T_{X, J}$ induced by $g$. We remind that in the K\"ahler case the
Chern connection
\begin{eqnarray*}
  D^g_{T_{X, J}} = & \partial^g_{T_{X, J}} + \overline{\partial}_{T_{X, J}} :
  & C^{\infty} (T_{X, J}) \longrightarrow C^{\infty} (T^{\ast}_X
  \otimes_{\mathbbm{R}} T_{X, J}),
\end{eqnarray*}
of the hermitian vector bundle $(T_{X, J}, h)$ coincides with the Levi-Civita
connection $\nabla_g$. We set $\mathbbm{C}T_X \assign T_X
\otimes_{\mathbbm{R}} \mathbbm{C}$ and $\mathbbm{C}T^{\ast}_X \assign
T^{\ast}_X \otimes_{\mathbbm{R}} \mathbbm{C}$. We observe further that the
sesquiliner extension of $g$
\[ g_{\mathbbm{C}} \in C^{\infty} (X, \mathbbm{C}T^{\ast}_X
   \otimes_{\mathbbm{C}} \overline{\mathbbm{C}} T^{\ast}_X), g_{\mathbbm{C}}
   (\xi, \eta) \assign g (\xi, \bar{\eta}), \forall \xi, \eta \in
   \mathbbm{C}T_X, \]
is a hermitian metric over $\mathbbm{C}T_X$ and the $\mathbbm{C}$-linear
extension of the Levi-Civita connection $\nabla_g$
\begin{eqnarray*}
  \nabla^{\mathbbm{C}}_g : C^{\infty} (\mathbbm{C}T_X) & \longrightarrow &
  C^{\infty} (\mathbbm{C}T^{\ast}_X \otimes_{\mathbbm{C}} \mathbbm{C}T_X),
\end{eqnarray*}
is a $g_{\mathbbm{C}}$-hermitian connection over the vector bundle $T_X
\otimes_{\mathbbm{R}} \mathbbm{C}$. We will focus our interest on the sections
of the hermitian vector bundle
\[ \left( (\mathbbm{C}T^{\ast}_X)^{\otimes p} \otimes_{\mathbbm{C}} T_{X, J},
   g_{\mathbbm{C}} \otimes h \right), \]
and we will denote by abuse of notations $\nabla_g \equiv
\nabla^{\mathbbm{C}}_g \otimes D^g_{T_{X, J}}$ the $g_{\mathbbm{C}} \otimes
h$-hermitian connection over this vector bundle. Still by abuse of notations
we will use the identification $\left\langle \cdot, \cdot
\right\rangle_{\omega} \assign g_{\mathbbm{C}} \otimes h$. With these notations
we define the operators
\begin{eqnarray*}
  \nabla_{g, J}^{1, 0}  :  C^{\infty} \left(
  (\mathbbm{C}T^{\ast}_X)^{\otimes p} \otimes_{\mathbbm{C}} T_{X, J} \right)
  &\longrightarrow& C^{\infty} \left( \Lambda^{1, 0}_J T^{\ast}_X
  \otimes_{\mathbbm{C}} (\mathbbm{C}T^{\ast}_X)^{\otimes p}
  \otimes_{\mathbbm{C}} T_{X, J} \right),\\
  &  & \\
  \nabla_{g, J}^{0, 1}  :  C^{\infty} \left(
  (\mathbbm{C}T^{\ast}_X)^{\otimes p} \otimes_{\mathbbm{C}} T_{X, J} \right)
  &\longrightarrow& C^{\infty} \left( \Lambda^{0, 1}_J T^{\ast}_X
  \otimes_{\mathbbm{C}} (\mathbbm{C}T^{\ast}_X)^{\otimes p}
  \otimes_{\mathbbm{C}} T_{X, J} \right),
\end{eqnarray*}
by the formulas
\begin{eqnarray*}
  2 \nabla_{g, J}^{1, 0} & \assign & \nabla_g - J \nabla_{g, J \bullet},\\
  &  & \\
  2 \nabla_{g, J}^{0, 1} & \assign & \nabla_g + J \nabla_{g, J \bullet_{}} .
\end{eqnarray*}
Then the formal adjoints of the operators $\partial^g_{T_{X, J}}$ and
$\overline{\partial}_{T_{X, J}}$ acting on $T_{X, J}$-valued differential
forms satisfy the identities (see \cite{Pal6})
\begin{eqnarray*}
  \partial^{\ast_g}_{T_{X, J}} \alpha & = & - q \tmop{Tr}_g \nabla_{g, J}^{0,
  1} \alpha,\\
  &  & \\
  \overline{\partial}^{\ast_g}_{T_{X, J}} \alpha & = & - q \tmop{Tr}_g
  \nabla_{g, J}^{1, 0} \alpha,
\end{eqnarray*}
for any $\alpha \in C^{\infty} (X, \Lambda_{}^q T^{\ast}_X
\otimes_{\mathbbm{C}} T_{X, J})$. We remind now that with our conventions (see \cite{Pal3}) 
the Hodge Laplacian operator acting on $T_X$-valued $q$-forms
satisfies the identity
\begin{eqnarray*}
  \Delta_{T_{X, g}} & = &  \frac{1}{q} \nabla_{T_{X, g}} \nabla^{\ast}_{T_{X,
  g}} + \frac{1}{q + 1} \nabla^{\ast}_{T_{X, g}} \nabla_{T_{X, g}} .
\end{eqnarray*}
We define also the holomorphic and antiholomorphic Hodge Laplacian operators
acting on $T_X$-valued $q$-forms as
\begin{eqnarray*}
  \Delta^J_{T_{X, g}} & \assign & \frac{1}{q} \partial^g_{T_{X, J}}
  \partial^{\ast_g}_{T_{X, J}} + \frac{1}{q + 1} \partial^{\ast_g}_{T_{X, J}}
  \partial^g_{T_{X, J}},\\
  &  & \\
  \Delta^{- J}_{T_{X, g}} & \assign & \frac{1}{q}  \overline{\partial}_{T_{X,
  J}} \overline{\partial}^{\ast_g}_{T_{X, J}} + \frac{1}{q + 1} 
  \overline{\partial}^{\ast_g}_{T_{X, J}} \overline{\partial}_{T_{X, J}},
\end{eqnarray*}
with the usual convention $\infty \cdot 0 = 0$. This Hodge Laplacian operators
coincide with the standard ones used in the literature. We remind that in the
K\"ahler case holds the decomposition identity
\begin{eqnarray*}
  \Delta_{T_{X, g}} & = & \Delta^J_{T_{X, g}} +\; \Delta^{- J}_{T_{X, g}} .
\end{eqnarray*}
We observe now that the formal adjoint of the $\partial^g_{T_{X, J}}$ operator
with respect to the hermitian product
\begin{equation}
  \label{L2omOm-prod}  \left\langle \cdot, \cdot \right\rangle_{\omega,
  \Omega} \;\assign \; \int_X \left\langle \cdot, \cdot \right\rangle_{\omega}
  \Omega,
\end{equation}
is the operator
\begin{eqnarray*}
  \partial^{\ast_{g, \Omega}}_{T_{X, J}} & : = & e^f \partial^{\ast_g}_{T_{X,
  J}} \left( e^{- f} \bullet \right) .
\end{eqnarray*}
In a similar way the formal adjoint of the $\overline{\partial}_{T_{X, J}}$
operator with respect to the hermitian product (\ref{L2omOm-prod}) is the
operator
\begin{eqnarray*}
  \overline{\partial}^{\ast_{g, \Omega}}_{T_{X, J}} & : = & e^f
  \overline{\partial}^{\ast_g}_{T_{X, J}} \left( e^{- f} \bullet \right) .
\end{eqnarray*}
With these notations we define the holomorphic and anti-holomorphic
$\Omega$-Hodge Laplacian operators acting on $T_X$-valued $q$-forms as
\begin{eqnarray*}
  \Delta^{\Omega, J}_{T_{X, g}} & \assign & \frac{1}{q} \partial^g_{T_{X, J}}
  \partial^{\ast_{g, \Omega}}_{T_{X, J}} + \frac{1}{q + 1} \partial^{\ast_{g,
  \Omega}}_{T_{X, J}} \partial^g_{T_{X, J}},\\
  &  & \\
  \Delta^{\Omega, - J}_{T_{X, g}} & \assign & \frac{1}{q} 
  \overline{\partial}_{T_{X, J}} \overline{\partial}^{\ast_{g, \Omega}}_{T_{X,
  J}} + \frac{1}{q + 1}  \overline{\partial}^{\ast_{g, \Omega}}_{T_{X, J}}
  \overline{\partial}_{T_{X, J}} .
\end{eqnarray*}

\section{The decomposition of the operator $\mathcal{L}^{\Omega}_g$ in the
K\"ahler case}

For any $A \in \tmop{End} (T_X)$ we denote by $A'_J$ and by $A_J''$ the
$J$-linear, respectively the $J$-anti-linear parts of $A$. We observe that the
operator
\[ \mathcal{L}^{\Omega}_g : C^{\infty} (X, \tmop{End} (T_X)) \longrightarrow
   C^{\infty} (X, \tmop{End} (T_X)), \]
defined by the formula
\begin{eqnarray*}
  \mathcal{L}^{\Omega}_g A & \assign & \Delta^{\Omega}_g A - 2\mathcal{R}_g
  \ast A,
\end{eqnarray*}
restricts as;
\begin{equation}
  \label{lin-Lich} \mathcal{L}^{\Omega}_g : C^{\infty} \left( X, T^{\ast}_{X,
  J} \otimes T_{X, J} \right) \longrightarrow C^{\infty} (X, T^{\ast}_{X, J}
  \otimes T_{X, J}),
\end{equation}

\begin{equation}
  \label{ant-lin-Lich} \mathcal{L}^{\Omega}_g : C^{\infty} \left( X,
  T^{\ast}_{X, - J} \otimes T_{X, J} \right) \longrightarrow C^{\infty} (X,
  T^{\ast}_{X, - J} \otimes T_{X, J}),
\end{equation}
Indeed these properties follow from the identities
\begin{equation}
  \label{J-curv}  (\mathcal{R}_g \ast A)'_J =\mathcal{R}_g \ast A'_J,
\end{equation}
\begin{equation}
  \label{Ant-J-curv}  (\mathcal{R}_g \ast A)_J'' =\mathcal{R}_g \ast A_J'',
\end{equation}
for any $A \in \tmop{End} (T_X)$. In their turn they are direct consequence of
the identities
\begin{equation}
  \label{Jcurv} J (\mathcal{R}_g \ast A) =\mathcal{R}_g \ast (J A),
\end{equation}
\begin{equation}
  \label{curvJ}  (\mathcal{R}_g \ast A) J =\mathcal{R}_g \ast (A J),
\end{equation}

In order to see (\ref{Jcurv}) and (\ref{curvJ}) let $(e_k)_k$ be a
$g$-orthonormal real basis. Using the $J$-invariant properties of the
curvature operator we infer
\begin{eqnarray*}
  J (\mathcal{R}_g \ast A) \xi & = & J\mathcal{R}_g (\xi, e_k) A e_k
  =\mathcal{R}_g (\xi, e_k) J A e_k = \left[ \mathcal{R}_g \ast (J A) \right]
  \xi,\\
  &  & \\
  (\mathcal{R}_g \ast A) J \xi & = & \mathcal{R}_g (J \xi, e_k) A e_k =
  -\mathcal{R}_g (\xi, J e_k) A e_k =\mathcal{R}_g (\xi, \eta_k) A J \eta_k,
\end{eqnarray*}
where $\eta_k \assign J e_k$. The fact that $(\eta_k)_k$ is also a
$g$-orthonormal real frame implies (\ref{curvJ}). By (\ref{lin-Lich}) and
(\ref{ant-lin-Lich}) we conclude the decomposition formula
\begin{equation}
  \label{lin-ant-lin-Integ-dec}  \int_X \left\langle \mathcal{L}^{\Omega}_g A,
  A \right\rangle_g \Omega = \int_X \left\langle \mathcal{L}^{\Omega}_g A_J',
  A_J' \right\rangle_g \Omega + \int_X \left\langle \mathcal{L}^{\Omega}_g
  A_J'', A_J'' \right\rangle_g \Omega .
\end{equation}
We observe that the properties (\ref{lin-Lich}) and (\ref{ant-lin-Lich}) imply
also that $A \in \tmop{Ker} \mathcal{L}^{\Omega}_g$ if and only if $A_J' \in
\tmop{Ker} \mathcal{L}^{\Omega}_g$ and $A_J'' \in \tmop{Ker}
\mathcal{L}^{\Omega}_g$. We observe further that the identity
(\ref{com-Lich-Hess}) combined with the properties (\ref{lin-Lich}) and
(\ref{ant-lin-Lich}) implies the formulas
\begin{equation}
  \label{com-Lich-HessIO} \mathcal{L}^{\Omega}_g \partial^g_{T_{X, J}}
  \nabla_g u = \partial^g_{T_{X, J}} \nabla_g  (\Delta^{\Omega}_g -
  2\mathbbm{I}) u,
\end{equation}

\begin{equation}
  \label{com-Lich-HessOI} \mathcal{L}^{\Omega}_g \overline{\partial}_{T_{X,
  J}} \nabla_g u = \overline{\partial}_{T_{X, J}} \nabla_g  (\Delta^{\Omega}_g
  - 2\mathbbm{I}) u,
\end{equation}
in the K\"ahler-Ricci soliton case. The properties (\ref{WellHesWF}) and (\ref{fundam-property-SRF}) combined with
(\ref{ant-lin-Lich}) imply (\ref{welldefF0}) and (\ref{fundPF0}).

\section{Basic complex Bochner type formulas}\label{CxBochSec}

We need to review in detail now some fact from \cite{Fut}, (see also \cite{Pal1}). 
Most of the formulas in this section will be intensively used in the rest of the paper. Let $(X, J, g)$ be a compact 
connected K\"ahler manifold with symplectic form
$\omega \assign g J$. We remind that the hermitian product induced by $\omega$
over the bundle $\Lambda_J^{1, 0} T^{\ast}_X$ satisfies the identity
\begin{eqnarray*}
  2 \left\langle \alpha, \beta \right\rangle_{\omega} & = & \tmop{Tr}_{\omega}
  \left( i \alpha \wedge \bar{\beta} \right) .
\end{eqnarray*}
Let $\Omega > 0$ be a smooth volume form and set as usual $f \assign \log
\frac{d V_g}{\Omega}$. We define the $\Omega$-weighted complex Laplace type
operator acting on functions $u \in C^{\infty} (X, \mathbbm{C})$ as
\begin{eqnarray*}
  \Delta^{\Omega}_{g, J} u & \assign & e^f \tmop{Tr}_{\omega} \left[ i
  \overline{\partial}_J \left( e^{- f} \partial_J u \right) \right]\\
  &  & \\
  & = & \Delta_g u + 2 \left\langle \partial_J u, \partial_J f
  \right\rangle_{\omega}\\
  &  & \\
  & = & \Delta_g u + 2 \partial_J u \cdot \nabla_g f .
\end{eqnarray*}
We notice the identities $\Delta^{\Omega}_{g, J} f = \Delta^{\Omega}_g f$ and
$2 \Delta^{\Omega}_g = \tmop{Re} (\Delta^{\Omega}_{g, J})$. The complex
operator $\Delta^{\Omega}_{g, J}$ is self-adjoint with respect to the the
$L_{\Omega}^2$-hermitian product
\begin{equation}
  \label{L2Om-prod}  \left\langle u, v \right\rangle_{\Omega} \assign \int_X u
  \overline{v} \Omega .
\end{equation}
Indeed integrating by parts we obtain
\begin{eqnarray*}
  \int_X i \overline{\partial}_J \left( e^{- f} \partial_J u \right)
  \overline{v} \wedge \omega^{n - 1} & = & \int_X \partial_J u \wedge i e^{-
  f}  \overline{\partial}_J \overline{v} \wedge \omega^{n - 1}\\
  &  & \\
  & = & - \int_X u i \partial_J \left( e^{- f}  \overline{\partial}_J
  \overline{v} \right) \wedge \omega^{n - 1} .
\end{eqnarray*}
(Notice the equality $\Omega = e^{- f} \omega^n / n!$.) We observe in
particular the identity
\begin{eqnarray*}
  \int_X \Delta^{\Omega}_{g, J} u \cdot \overline{v} \Omega & = & \int_X 2
  \left\langle \partial_J u, \partial_J v \right\rangle_{\omega} \Omega,
\end{eqnarray*}
which implies that all the eigenvalues satisfy $\lambda_j (\Delta^{\Omega}_{g,
J}) \geqslant 0$. For any function $u \in C^{\infty} (X, \mathbbm{C})$ we
define the $J$-complex $g$-gradient as the real vector field;
\begin{eqnarray*}
  \nabla_{g, J} u & \assign & \nabla_g \tmop{Re} u + J \nabla_g \tmop{Im} u
  \in C^{\infty} (X, T_X) .
\end{eqnarray*}
With these notations hold the complex decomposition formula
\begin{equation}
  \label{dec-Cx-grad} \nabla_{g, J} u \neg g = \partial_J \overline{u} +
  \overline{\partial}_J u .
\end{equation}
We consider now the linear operator
\begin{eqnarray*}
  B^{\Omega}_{g, J} : C^{\infty} (X, \mathbbm{R}) & \longrightarrow &
  C_{\Omega}^{\infty} (X, \mathbbm{R})_0,\\
  &  & \\
  B^{\Omega}_{g, J} u & \assign & \tmop{div}^{\Omega} (J \nabla_g u) .
\end{eqnarray*}
This is a first order differential operator. Indeed
\begin{eqnarray*}
  B^{\Omega}_{g, J} u & = & \tmop{Tr}_{\mathbbm{R}} \left( J \nabla^2_g u
  \right) - d f \cdot J \nabla_g u\\
  &  & \\
  & = & g (\nabla_g u, J \nabla_g f),
\end{eqnarray*}
since $J$ is $g$-anti-symmetric. We extend $B^{\Omega}_{g, J}$ over
$C^{\infty} (X, \mathbbm{C})$ by complex linearity. Let also
\begin{eqnarray*}
  2 d_J^c u & \assign & i ( \overline{\partial}_J - \partial_J) u = - d u
  \cdot J .
\end{eqnarray*}
Then the identity $2 \partial_J = d + 2 i d_J^c$ implies the decomposition
\[ \Delta^{\Omega}_{g, J} = \Delta^{\Omega}_g + 2 i \nabla_g f \neg d_J^c . \]
In other terms
\begin{eqnarray*}
  \Delta^{\Omega}_{g, J} & = & \Delta^{\Omega}_g - i B^{\Omega}_{g,J} .
\end{eqnarray*}
The following lemma is needed for the study of the operator
$\Delta^{\Omega}_{g, J}$. (Compare also with \cite{Fut}.)

\begin{lemma}
  Let $(X, J, g)$ be a K\"ahler manifold with symplectic form $\omega \assign
  g J$ and let $\Omega > 0$ be a smooth volume form. Then for all $u \in
  C^{\infty} (X, \mathbbm{R})$ and $v \in C^{\infty} (X, \mathbbm{C})$ hold
  the complex Bochner type formulas
  \begin{equation}
    \label{del-OmBoch-fnct} 2 \partial^{\ast_{g, \Omega}}_{T_{X, J}}
    \partial^g_{T_{X, J}} \nabla_g u = \nabla_{g, J} \Delta^{\Omega}_{g, J} u
    - 2 \overline{\partial}_{T_{X, J}} \nabla_g f \nabla_g u,
  \end{equation}
\end{lemma}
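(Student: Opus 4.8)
The plan is to peel off the volume weight from (\ref{del-OmBoch-fnct}), reduce to the unweighted complex Bochner identity, and then recombine the $f$-dependent pieces. First I would use the definition $\partial^{\ast_{g, \Omega}}_{T_{X, J}} = e^{f} \partial^{\ast_g}_{T_{X, J}} ( e^{- f} \bullet )$ together with the formula $\partial^{\ast_g}_{T_{X, J}} \alpha = - \tmop{Tr}_g \nabla^{0, 1}_{g, J} \alpha$ for a $T_{X, J}$-valued $1$-form $\alpha$, applied to $\alpha = \partial^g_{T_{X, J}} \nabla_g u$. Since $\nabla^{0, 1}_{g, J}$ obeys the Leibniz rule and $\nabla^{0, 1}_{g, J} e^{- f} = - e^{- f} \overline{\partial}_J f$, this gives
\[
  \partial^{\ast_{g, \Omega}}_{T_{X, J}} \partial^g_{T_{X, J}} \nabla_g u = \partial^{\ast_g}_{T_{X, J}} \partial^g_{T_{X, J}} \nabla_g u + \tmop{Tr}_g \left( \overline{\partial}_J f \otimes \partial^g_{T_{X, J}} \nabla_g u \right),
\]
where the last term is the contraction of $\overline{\partial}_J f$ into the complex Hessian $\partial^g_{T_{X, J}} \nabla_g u$ of $u$, i.e. the endomorphism $\partial^g_{T_{X, J}} \nabla_g u$ evaluated on the $(1, 0)$-part of $\nabla_g f$.

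For the unweighted term I would write $\partial^g_{T_{X, J}} \nabla_g u = \nabla^{1, 0}_{g, J} \nabla_g u$, which is the $J$-linear part of the Hessian $\nabla^2_g u$, expand $- \tmop{Tr}_g \nabla^{0, 1}_{g, J}$ of it in a local holomorphic frame, and commute the resulting three covariant derivatives of $u$. The decisive fact here is that in the K\"ahler case the Levi-Civita connection coincides with the Chern connection of $(T_{X, J}, h)$, so its curvature is of type $(1, 1)$ and second purely antiholomorphic covariant derivatives commute on any tensor; using this together with the symmetry of $\nabla^2_g u$ (equivalently the identity $\nabla_{T_X, g} \nabla^2_g u =\mathcal{R}_g \cdot \nabla_g u$ and the algebraic Bianchi identity) one may reorder the Hessian slots so that the trace collapses directly to $\tfrac{1}{2} \nabla_{g, J} \Delta_g u$, with no surviving curvature term. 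This yields $2 \partial^{\ast_g}_{T_{X, J}} \partial^g_{T_{X, J}} \nabla_g u = \nabla_{g, J} \Delta_g u$.

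It remains to account for the difference $\nabla_{g, J} \Delta^{\Omega}_{g, J} u - \nabla_{g, J} \Delta_g u = \nabla_{g, J} ( 2 \partial_J u \cdot \nabla_g f )$. Expanding $\nabla_{g, J}$ of this product by the Leibniz rule and reading off its $(1,0)$-part through the complex decomposition formula $\nabla_{g, J} \varphi \neg g = \partial_J \bar{\varphi} + \overline{\partial}_J \varphi$, one term equals $2 \tmop{Tr}_g ( \overline{\partial}_J f \otimes \partial^g_{T_{X, J}} \nabla_g u )$ — precisely the weight contribution produced in the first step — while the other equals $2 \overline{\partial}_{T_{X, J}} \nabla_g f \cdot \nabla_g u$ (the $J$-antilinear part of $\nabla^2_g f$ applied to $\nabla_g u$), which is exactly cancelled by the explicit term $- 2 \overline{\partial}_{T_{X, J}} \nabla_g f \cdot \nabla_g u$ on the right of (\ref{del-OmBoch-fnct}); the identification uses the symmetry $f_{\bar{l} \bar{m}} = f_{\bar{m} \bar{l}}$ of the antiholomorphic Hessian of $f$. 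Assembling the pieces gives (\ref{del-OmBoch-fnct}). The delicate points I anticipate are the curvature cancellation in the middle step — one must check that the Ricci contractions that a priori appear when commuting the three derivatives of $u$ genuinely vanish, which rests on the K\"ahler identities (the $(1, 1)$-type of the curvature and the symmetry of $\nabla^2_g u$) and not on any soliton hypothesis — together with keeping straight the bookkeeping of the various cross-terms between $\nabla_g f$ and $\nabla_g u$.
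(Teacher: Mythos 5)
Your argument is correct, but it is organized differently from the paper's. The paper first proves the weighted complex Bochner formula (\ref{del-OmBoch}) for an \emph{arbitrary} vector field $\xi$, namely $2 \partial^{\ast_{g, \Omega}}_{T_{X, J}} \partial^g_{T_{X, J}} \xi = \Delta^{\Omega}_g \xi + \tmop{Ric}^{\ast} (g) \xi - J \nabla_{g, J \nabla_g f} \xi$, the curvature term arising from the local computation $- \nabla_g^{\ast} \left( J \nabla_{g, J \bullet} \xi \right) = \frac{1}{2} J \mathcal{R}_g (e_k, J e_k) \xi = \tmop{Ric}^{\ast} (g) \xi$; it then specializes to $\xi = \nabla_g u$, cancels the Ricci terms against those produced by the commutation identity (\ref{com-grad-Lap}), and regroups the cross-terms $\nabla^2_g f\, \nabla_g u$ and $J \nabla^2_g u\, J \nabla_g f$ into $2 \overline{\partial}_{T_{X, J}} \nabla_g f\, \nabla_g u$ and $J \nabla_g B^{\Omega}_{g, J} u$. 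You instead strip the weight at the outset, prove the unweighted gradient identity $2 \partial^{\ast_g}_{T_{X, J}} \partial^g_{T_{X, J}} \nabla_g u = \nabla_g \Delta_g u$ directly in a holomorphic frame --- where no curvature survives because the two antiholomorphic covariant derivatives commute for type reasons --- and then match the $f$-dependent remainders; your invisible curvature cancellation is exactly the paper's explicit cancellation of $\tmop{Ric}^{\ast}(g)$ between (\ref{del-OmBoch}) and (\ref{com-grad-Lap}). The paper's route buys the reusable vector-field formulas (\ref{del-OmBoch}) and (\ref{dbar-OmBoch}), which are needed again for the companion identity (\ref{dbar-OmBoch-fnct}); yours is more self-contained for the gradient case. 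The one point to tighten is the identification of the weight contribution: after doubling it must come out as $2 \partial^g_{T_{X, J}} \nabla_g u \cdot \nabla_g f = \nabla^2_g u \, \nabla_g f - J \nabla^2_g u \, J \nabla_g f$, i.e.\ twice the $J$-linear part of the Hessian of $u$ applied to the full real gradient of $f$, which is what matches the first term of $\nabla_{g, J} \left( 2 \partial_J u \cdot \nabla_g f \right)$; your phrasing via the $(1,0)$-part of $\nabla_g f$ is equivalent only after taking real parts consistently, so the bookkeeping there deserves one explicit line.
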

\begin{equation}
  \label{dbar-OmBoch-fnct} 2 \overline{\partial}^{\ast_{g, \Omega}}_{T_{X, J}}
  \overline{\partial}_{T_{X, J}} \nabla_{g, J}  \overline{v} = \nabla_{g, J}
  \overline{\Delta^{\Omega}_{g, J} v} - 2 \tmop{Ric}_J^{\ast}
  (\Omega)_{\omega} \nabla_{g, J}  \overline{v} .
\end{equation}
\begin{proof}
  Let $\xi \in C^{\infty} (X, T_X)$ and observe that for bi-degree reasons hold
  the identity
  \begin{eqnarray*}
    2 \partial^{\ast_{g, \Omega}}_{T_{X, J}} \partial^g_{T_{X, J}} \xi & = & 2
    \nabla_g^{\ast_{\Omega}} \partial^g_{T_{X, J}} \xi\\
    &  & \\
    & = & \Delta^{\Omega}_g \xi - \nabla_g^{\ast_{\Omega}} \left( J
    \nabla_{g, J \bullet} \xi \right)\\
    &  & \\
    & = & \Delta^{\Omega}_g \xi - \nabla_g^{\ast} \left( J \nabla_{g, J
    \bullet} \xi \right) - J \nabla_{g, J \nabla_g f} \xi .
  \end{eqnarray*}
  Let $(e_k)^{2 n}_{k = 1}$ be a local $g$-orthonormal frame over a
  neighborhood of an arbitrary point $p$ such that $\nabla_g e_k  (p) = 0$.
  Then at the point $p$ hold the equalities
  \begin{eqnarray*}
    - \nabla_g^{\ast} \left( J \nabla_{g, J \bullet} \xi \right) & = & J
    \nabla_{g, e_k} \nabla_{g, J e_k} \xi\\
    &  & \\
    & = & \frac{1}{2}  \left( J \nabla_{g, e_k} \nabla_{g, J e_k} \xi - J
    \nabla_{g, J e_k} \nabla_{g, e_k} \xi \right),
  \end{eqnarray*}
  since $(J e_k)^{2 n}_{k = 1}$ is also a local $g$-orthonormal frame. Then
  the fact that 
  \\
  $\left[ e_k, J e_k \right] (p) = 0$ implies
  \begin{eqnarray*}
    - \nabla_g^{\ast} \left( J \nabla_{g, J \bullet} \xi \right) & = &
    \frac{1}{2} J\mathcal{R}_g (e_k, J e_k) \xi = \tmop{Ric}^{\ast} (g) \xi .
  \end{eqnarray*}
  We infer the complex Bochner type formula
  \begin{equation}
    \label{del-OmBoch} 2 \partial^{\ast_{g, \Omega}}_{T_{X, J}}
    \partial^g_{T_{X, J}} \xi = \Delta^{\Omega}_g \xi + \tmop{Ric}^{\ast} (g)
    \xi - J \nabla_{g, J \nabla_g f} \xi .
  \end{equation}
  In a similar way we obtain
  \begin{equation}
    \label{dbar-OmBoch} 2 \overline{\partial}^{\ast_{g, \Omega}}_{T_{X, J}}
    \overline{\partial}_{T_{X, J}} \xi = \Delta^{\Omega}_g \xi -
    \tmop{Ric}^{\ast} (g) \xi + J \nabla_{g, J \nabla_g f} \xi .
  \end{equation}
  Using formulas (\ref{del-OmBoch}) and (\ref{com-grad-Lap}) we deduce the
  expressions
  \begin{eqnarray*}
    2 \partial^{\ast_{g, \Omega}}_{T_{X, J}} \partial^g_{T_{X, J}} \nabla_g u
    & = & \nabla_g \Delta^{\Omega}_g u - \nabla^2_g f \nabla_g u - J
    \nabla^2_g u J \nabla_g f\\
    &  & \\
    & = & \nabla_g \Delta^{\Omega}_g u - \left( \nabla^2_g f + J \nabla^2_g f
    J \right) \nabla_g u\\
    &  & \\
    & - & J \left( \nabla^2_g u J \nabla_g f - \nabla^2_g f J \nabla_g u
    \right)\\
    &  & \\
    & = & \nabla_g \Delta^{\Omega}_g u - 2 \overline{\partial}_{T_{X, J}}
    \nabla_g f \nabla_g u - J \nabla_g  [ g (\nabla_g u, J \nabla_g f)] .
  \end{eqnarray*}
  Using the first order expression of $B^{\Omega}_{J, g}$ we obtain
  \begin{eqnarray*}
    2 \partial^{\ast_{g, \Omega}}_{T_{X, J}} \partial^g_{T_{X, J}} \nabla_g u
    & = & \left[ \nabla_g \Delta^{\Omega}_g - J \nabla_g B^{\Omega}_{g, J}
    \right] u - 2 \overline{\partial}_{T_{X, J}} \nabla_g f \nabla_g u .
  \end{eqnarray*}
  We infer the complex differential Bochner type formula
  (\ref{del-OmBoch-fnct}). In a similar way using formulas (\ref{dbar-OmBoch})
  and (\ref{com-grad-Lap}) we deduce
  \begin{eqnarray*}
    2 \overline{\partial}^{\ast_{g, \Omega}}_{T_{X, J}}
    \overline{\partial}_{T_{X, J}} \nabla_g u & = & \nabla_g \Delta^{\Omega}_g
    u - 2 \tmop{Ric}_g^{\ast} (\Omega) \nabla_g u + \nabla^2_g f \nabla_g u +
    J \nabla^2_g u J \nabla_g f\\
    &  & \\
    & = & \nabla_g \Delta^{\Omega}_g u - 2 \tmop{Ric}_g^{\ast} (\Omega)
    \nabla_g u + \left( \nabla^2_g f + J \nabla^2_g f J \right) \nabla_g u\\
    &  & \\
    & + & J \left( \nabla^2_g u J \nabla_g f - \nabla^2_g f J \nabla_g u
    \right)\\
    &  & \\
    & = & \nabla_g \Delta^{\Omega}_g u - 2 \tmop{Ric}_g^{\ast} (\Omega)
    \nabla_g u + 2 \overline{\partial}_{T_{X, J}} \nabla_g f \nabla_g u\\
    &  & \\
    & + & J \nabla_g  [ g (\nabla_g u, J \nabla_g f)] .
  \end{eqnarray*}
  Using the first order expression of $B^{\Omega}_{J, g}$ we obtain
  \begin{eqnarray*}
    2 \overline{\partial}^{\ast_{g, \Omega}}_{T_{X, J}}
    \overline{\partial}_{T_{X, J}} \nabla_g u & = & \left[ \nabla_g
    \Delta^{\Omega}_g + J \nabla_g B^{\Omega}_{g, J} \right] u - 2
    \tmop{Ric}_J^{\ast} (\Omega)_{\omega} \nabla_g u .
  \end{eqnarray*}
  We infer the complex differential Bochner type formula
  \begin{equation}
    \label{dbar-OmBoch-Relfnct} 2 \overline{\partial}^{\ast_{g,
    \Omega}}_{T_{X, J}} \overline{\partial}_{T_{X, J}} \nabla_g u = \nabla_{g,
    J} \overline{\Delta^{\Omega}_{g, J} u} - 2 \tmop{Ric}_J^{\ast}
    (\Omega)_{\omega} \nabla_g u .
  \end{equation}
  More in general for all $v \in C^{\infty} (X, \mathbbm{C})$ this writes as
  (\ref{dbar-OmBoch-fnct}).
\end{proof}

Notice that for bi-degree reasons the identity (\ref{com-grad-Lap}) decomposes
as
\begin{eqnarray*}
  2 \partial^{\ast_{g, \Omega}}_{T_{X, J}} \partial^g_{T_{X, J}} \nabla_g u +
  2 \overline{\partial}^{\ast_{g, \Omega}}_{T_{X, J}}
  \overline{\partial}_{T_{X, J}} \nabla_g u & = & \nabla_{g, J}
  \Delta^{\Omega}_{g, J} u + \nabla_{g, J} \overline{\Delta^{\Omega}_{g, J} u}
  \\
  &  & \\
  & - & 2 \overline{\partial}_{T_{X, J}} \nabla_g f \nabla_g u - 2
  \tmop{Ric}_J^{\ast} (\Omega)_{\omega} \nabla_g u .
\end{eqnarray*}
Then we can obtain (\ref{dbar-OmBoch-Relfnct}) from (\ref{del-OmBoch-fnct})
and vice versa. We observe also that the complex Bochner identities
(\ref{del-OmBoch-fnct}), (\ref{dbar-OmBoch-fnct}) write in the K\"ahler-Ricci-Soliton case as
\begin{equation}
  \label{del-OmBoch-fnctSOL} 2 \partial^{\ast_{g, \Omega}}_{T_{X, J}}
  \partial^g_{T_{X, J}} \nabla_g u = \nabla_{g, J} \Delta^{\Omega}_{g, J} u,
\end{equation}

\begin{equation}
  \label{dbar-OmBoch-fnctSOL} 2 \overline{\partial}^{\ast_{g, \Omega}}_{T_{X,
  J}} \overline{\partial}_{T_{X, J}} \nabla_{g, J}  \overline{v} = \nabla_{g,
  J} \overline{ (\Delta^{\Omega}_{g, J} - 2\mathbbm{I}) v},
\end{equation}
for all $u \in C^{\infty} (X, \mathbbm{R})$ and $v \in C^{\infty} (X,
\mathbbm{C})$. Obviously the identity (\ref{dbar-OmBoch-fnctSOL}) still hold
in the more general case $\tmop{Ric}_J (\Omega) = \omega$. We observe now an
other integration by parts formula.

Let $\xi \in C^{\infty} \left( X, T_X \right)$, $A \in C^{\infty} \left( X,
T^{\ast}_{X, - J} \otimes T_{X, J} \right)$ and observe that the comparison
between Riemannian and hermitian norms of $T_X$-valued 1-forms (see the appendix in \cite{Pal2}) implies
\begin{eqnarray*}
  \int_X \left\langle \overline{\partial}_{T_{X, J}} \xi, A \right\rangle_g
  \Omega & = & \frac{1}{2} \int_X \left[ \left\langle
  \overline{\partial}_{T_{X, J}} \xi, A \right\rangle_{\omega} + \left\langle
  A, \overline{\partial}_{T_{X, J}} \xi \right\rangle_{\omega} \right]
  \Omega\\
  &  & \\
  & = & \frac{1}{2} \int_X \left[ \left\langle \xi,
  \overline{\partial}^{\ast_{g, \Omega}}_{T_{X, J}} A \right\rangle_{\omega} +
  \left\langle \overline{\partial}^{\ast_{g, \Omega}}_{T_{X, J}} A, \xi
  \right\rangle_{\omega} \right] \Omega\\
  &  & \\
  & = & \int_X \left\langle \xi, \overline{\partial}^{\ast_{g,
  \Omega}}_{T_{X, J}} A \right\rangle_g \Omega .
\end{eqnarray*}
Using this and multiplying both sides of (\ref{dbar-OmBoch-fnctSOL}) by
$\nabla_{g, J}  \overline{v}$ we obtain the identity
\begin{equation}
  \label{CxBoch-frm} 2 \int_X \left| \overline{\partial}_{T_{X, J}} \nabla_{g,
  J}  \overline{v}  \right|^2_g \Omega = \int_X \left\langle \nabla_{g, J}
  \overline{(\Delta^{\Omega}_{g, J} - 2\mathbbm{I}) v}, \nabla_{g, J} 
  \overline{v} \right\rangle_g \Omega,
\end{equation}
in the case $\tmop{Ric}_J (\Omega) = \omega$. We consider now the
$J$-anti-linear component of the complex Hessian map;
\begin{eqnarray*}
  \mathcal{H}^{0, 1}_{g, J} : C_{\Omega}^{\infty} (X, \mathbbm{C})_0 &
  \longrightarrow & C^{\infty} (X, \Lambda_J^{0, 1} T^{\ast}_X
  \otimes_{\mathbbm{C}} T_{X, J})\\
  &  & \\
  u & \longmapsto & \overline{\partial}_{T_{X, J}} \nabla_{g, J} u .
\end{eqnarray*}
We observe that $H_d^1 (X, \mathbbm{C}) = 0$ in the case of Fano manifolds and
we remind the following well known fact

\begin{lemma}
  \label{iso-CxGrad}Let $(X, J, g)$ be a compact connected K\"ahler manifold
  such that $H_d^1 (X, \mathbbm{C}) = 0$. Then the map
  \begin{eqnarray*}
    \tmop{Ker} \mathcal{H}^{0, 1}_{g, J} & \longrightarrow & H^0 (X, T_{X,
    J})\\
    &  & \\
    u & \longmapsto & \nabla_{g, J} u,
  \end{eqnarray*}
  is an isomorphism of complex vector spaces.
\end{lemma}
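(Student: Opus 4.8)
The plan is to check that the map is a well-defined $\mathbbm{C}$-linear map into $H^0(X, T_{X,J})$, prove injectivity by an elementary argument, and deduce surjectivity from the vanishing of the Dolbeault group $H^{0,1}_{\overline{\partial}}(X)$ together with the decomposition identity (\ref{dec-Cx-grad}). For well-definedness I would note that by the very definition of $\mathcal{H}^{0,1}_{g,J}$ one has $\mathcal{H}^{0,1}_{g,J}u = \overline{\partial}_{T_{X,J}} \nabla_{g,J}u$, so $u \in \tmop{Ker}\mathcal{H}^{0,1}_{g,J}$ means precisely that $\nabla_{g,J}u$ is a $\overline{\partial}_{T_{X,J}}$-closed section of $T_{X,J}$, i.e. an element of $H^0(X, T_{X,J})$; and the identity $\nabla_{g,J}(iu) = J\,\nabla_{g,J}u$, which is immediate from the definition $\nabla_{g,J}u = \nabla_g \tmop{Re}u + J\nabla_g \tmop{Im}u$, shows $\mathbbm{C}$-linearity.

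For injectivity: if $\nabla_{g,J}u = 0$ then (\ref{dec-Cx-grad}) gives $\partial_J\overline{u} + \overline{\partial}_J u = 0$; comparing the $(0,1)$-components yields $\overline{\partial}_J u = 0$, so $u$ is holomorphic, hence constant on the compact connected $X$, hence $u = 0$ since $\int_X u\, \Omega = 0$.

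For surjectivity, take $\xi \in H^0(X, T_{X,J})$ and consider the real $1$-form $\xi \neg g$, with its type decomposition $\xi \neg g = (\xi \neg g)^{1,0} + (\xi \neg g)^{0,1}$. Since the $\mathbbm{C}$-bilinear extension of $g$ vanishes on the $(1,0)$-subbundle of $\mathbbm{C}T_X$, one has $(\xi \neg g)^{0,1} = \xi^{1,0} \neg g = -\,i\, \xi^{1,0} \neg \omega$, where $\xi^{1,0}$ denotes the $(1,0)$-part of $\xi$, which is a holomorphic vector field. As $\xi^{1,0}$ is holomorphic its complexified flow acts by biholomorphisms, so $L_{\xi^{1,0}}$ preserves bidegrees; hence $d(\xi^{1,0} \neg \omega) = L_{\xi^{1,0}}\omega - \xi^{1,0}\neg d\omega = L_{\xi^{1,0}}\omega$ is of type $(1,1)$, which forces its $(0,2)$-part, namely $-i\,\overline{\partial}_J\big((\xi \neg g)^{0,1}\big)$, to vanish; thus $(\xi \neg g)^{0,1}$ is a $\overline{\partial}_J$-closed $(0,1)$-form. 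Since $H^1_d(X, \mathbbm{C}) = 0$, the Hodge decomposition on the compact K\"ahler manifold $X$ gives $H^{0,1}_{\overline{\partial}}(X) = 0$, so there is $u \in C^{\infty}(X, \mathbbm{C})$ with $\overline{\partial}_J u = (\xi \neg g)^{0,1}$; subtracting a suitable constant we may assume $u \in C_{\Omega}^{\infty}(X, \mathbbm{C})_0$. Taking complex conjugates, $(\xi \neg g)^{1,0} = \overline{(\xi \neg g)^{0,1}} = \partial_J\overline{u}$, and therefore $\xi \neg g = \partial_J\overline{u} + \overline{\partial}_J u = \nabla_{g,J}u \neg g$ by (\ref{dec-Cx-grad}); as $\neg g : T_X \to T^{\ast}_X$ is an isomorphism this gives $\xi = \nabla_{g,J}u$, and $\overline{\partial}_{T_{X,J}}\nabla_{g,J}u = \overline{\partial}_{T_{X,J}}\xi = 0$, so $u \in \tmop{Ker}\mathcal{H}^{0,1}_{g,J}$ maps to $\xi$.

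The only genuinely non-formal ingredients here are the passage from $H^1_d(X, \mathbbm{C}) = 0$ to $H^{0,1}_{\overline{\partial}}(X) = 0$ via the Hodge decomposition, and the verification that $(\xi \neg g)^{0,1}$ is $\overline{\partial}_J$-closed, which rests on the K\"ahler condition $d\omega = 0$ and on $\xi$ being a holomorphic vector field; everything else is bookkeeping with (\ref{dec-Cx-grad}). Since the statement is flagged as well known, an alternative would be to cite directly the standard existence of holomorphy potentials for holomorphic vector fields on Fano manifolds.
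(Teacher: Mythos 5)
Your proof is correct and follows essentially the same route as the paper: injectivity by an elementary argument from the decomposition (\ref{dec-Cx-grad}) together with the normalization $\int_X u\,\Omega=0$, and surjectivity by observing that $\xi^{1,0}\neg\omega$ is $\overline{\partial}_J$-closed and solving the $\overline{\partial}$-equation using $H^{0,1}_{\overline{\partial}}(X)=0$. The only cosmetic difference is that the paper derives the $\overline{\partial}$-closedness from the explicit identity (\ref{clos-hol-vct}) rather than from Cartan's formula and the bidegree-preservation of $L_{\xi^{1,0}}$, which is an equivalent computation.
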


\begin{proof}
  We observe first the injectivity. Using the complex decomposition
  (\ref{dec-Cx-grad}) we infer the formula
  \begin{eqnarray*}
    d (\nabla_{g, J} u \neg g) & = & \partial_J \overline{\partial}_J  (u -
    \overline{u}),
  \end{eqnarray*}
  which in the case $\nabla_{g, J} u = 0$ implies $\tmop{Im} u = 0$ and thus
  $\tmop{Re} u = 0$. In order to show the surjectivity we consider an
  arbitrary $\xi \in H^0 (X, T_{X, J})$. Then the identity
  (\ref{clos-hol-vct}) below implies
  \begin{eqnarray*}
    \overline{\partial}_J  (\xi_J^{1, 0} \neg \omega) & = & 0 .
  \end{eqnarray*}
  By Hodge decomposition hold the identity $H_{\overline{\partial}}^{0, 1} (X,
  \mathbbm{C}) = 0$. We deduce the existence of a unique function $u \in
  C_{\Omega}^{\infty} (X, \mathbbm{C})_0$ such that
  \begin{eqnarray*}
    i \overline{\partial}_J u & = & \xi_J^{1, 0} \neg \omega = i \xi_J^{1, 0}
    \neg g .
  \end{eqnarray*}
  Thus $\xi = \nabla_{g, J} u$ thanks to the complex decomposition
  (\ref{dec-Cx-grad}).
\end{proof}

\begin{lemma}
  Let $(X, J)$ be a complex manifold and let $\omega \in C^{\infty} (X,
  \Lambda^{1, 1}_J T^{\ast}_X)$, $\xi \in C^{\infty} (X, T^{1, 0}_X)$. Then
  hold the identity \ \ \
  \begin{equation}
    \label{clos-hol-vct}  \overline{\partial}_J  (\xi \neg \omega) =
    \overline{\partial}_{T^{1, 0}_{X, J}} \xi \neg \omega - \xi \neg
    \overline{\partial}_J \omega .
  \end{equation}
\end{lemma}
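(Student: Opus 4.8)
The plan is to reduce the identity (\ref{clos-hol-vct}) to the Cartan formula $L_{\xi}=d\circ\iota_\xi+\iota_\xi\circ d$ together with a bidegree count, and then to finish with a one-line verification in local holomorphic coordinates.

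First I would record that all three terms in (\ref{clos-hol-vct}) are smooth $(0,2)$-forms, so that comparing them is meaningful. Since $\xi$ is of type $(1,0)$, the interior product $\iota_\xi$ has bidegree $(-1,0)$; hence $\xi\neg\omega$ is of type $(0,1)$ and $\overline{\partial}_J(\xi\neg\omega)$ is of type $(0,2)$. Likewise $\overline{\partial}_{T^{1,0}_{X,J}}\xi$ is a $T^{1,0}_X$-valued $(0,1)$-form, and contracting its vector part into the $(1,0)$-leg of $\omega$ leaves a $(0,1)$-form, so $\overline{\partial}_{T^{1,0}_{X,J}}\xi\neg\omega$ is of type $(0,2)$; finally $\overline{\partial}_J\omega$ is of type $(1,2)$, so $\xi\neg\overline{\partial}_J\omega$ is of type $(0,2)$.

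Next I would apply $L_{\xi}=d\circ\iota_\xi+\iota_\xi\circ d$ to $\omega$, which is valid for any smooth complex vector field $\xi$. Decomposing $d=\partial_J+\overline{\partial}_J$ and using the bidegrees above, $d(\xi\neg\omega)=\partial_J(\xi\neg\omega)+\overline{\partial}_J(\xi\neg\omega)$ has only $(1,1)$- and $(0,2)$-components, and $\iota_\xi(d\omega)=\xi\neg\partial_J\omega+\xi\neg\overline{\partial}_J\omega$ has only $(1,1)$- and $(0,2)$-components; projecting the Cartan identity onto its $(0,2)$-part therefore gives
\[
  \bigl(L_\xi\omega\bigr)^{0,2}\;=\;\overline{\partial}_J(\xi\neg\omega)\;+\;\xi\neg\overline{\partial}_J\omega .
\]
Thus (\ref{clos-hol-vct}) is equivalent to the compact assertion $\bigl(L_\xi\omega\bigr)^{0,2}=\overline{\partial}_{T^{1,0}_{X,J}}\xi\neg\omega$; as a consistency check, when $\xi$ is holomorphic both sides vanish, since $L_\xi$ then preserves bidegrees and $\overline{\partial}_{T^{1,0}_{X,J}}\xi=0$.

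Finally I would verify this reformulated identity in a holomorphic chart $(z^1,\dots,z^n)$: with $\xi=\xi^l\partial_{z^l}$ and $\omega=\omega_{j\bar k}\,dz^j\wedge d\bar z^k$, one has $L_\xi(d\bar z^k)=d(\iota_\xi d\bar z^k)=0$, while $L_\xi(\omega_{j\bar k})=\xi^l\partial_{z^l}\omega_{j\bar k}$ contributes only to the $(1,1)$-part, so the $(0,2)$-part of $L_\xi\omega$ comes entirely from $L_\xi(dz^j)=d\xi^j$, whose $(0,1)$-component is $\partial_{\bar z^m}\xi^j\,d\bar z^m$. This yields $\bigl(L_\xi\omega\bigr)^{0,2}=\partial_{\bar z^m}\xi^j\,\omega_{j\bar k}\,d\bar z^m\wedge d\bar z^k$, and since $\overline{\partial}_{T^{1,0}_{X,J}}\xi=\partial_{\bar z^m}\xi^l\,d\bar z^m\otimes\partial_{z^l}$ and $\partial_{z^l}\neg\omega=\omega_{l\bar k}\,d\bar z^k$, the natural contraction gives the same $(0,2)$-form; as both sides of (\ref{clos-hol-vct}) are globally defined, agreement on charts proves the lemma. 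The only genuine obstacle will be convention-bookkeeping: one must use that $\iota_\xi$ annihilates $(0,q)$-forms because $\xi$ is of type $(1,0)$, and that the ordering convention for contracting a bundle-valued form into $\omega$ is the one that produces the $+\,\overline{\partial}_{T^{1,0}_{X,J}}\xi\neg\omega$ sign appearing in (\ref{clos-hol-vct}) — the two orderings of a wedge of $(0,1)$-forms differ by a sign, so this must be pinned down to match the paper's conventions.
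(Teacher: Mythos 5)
Your proof is correct, but it follows a different route from the paper's. You reduce the identity to the Cartan formula $L_{\xi}=d\circ\iota_{\xi}+\iota_{\xi}\circ d$, observe that projecting onto the $(0,2)$-component turns (\ref{clos-hol-vct}) into the equivalent statement $\left( L_{\xi}\omega\right)^{0,2}=\overline{\partial}_{T^{1,0}_{X,J}}\xi\neg\omega$, and then verify this in a holomorphic chart. The paper instead works entirely intrinsically: it evaluates the invariant formulas for $\overline{\partial}_J(\xi\neg\omega)$ and $\overline{\partial}_J\omega$ on antiholomorphic vector fields $\eta,\mu\in C^{\infty}(X,T^{0,1}_X)$ (the Lie-bracket expressions cited from \cite{Pal}), and cancels terms directly, with $\overline{\partial}_{T^{1,0}_{X,J}}\xi(\eta)=[\eta,\xi]^{1,0}$ appearing naturally from the bracket terms. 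Your version buys a cleaner conceptual statement (the $(0,2)$-part of the Lie derivative is the Dolbeault derivative of the vector field contracted into $\omega$) and a mechanical local check; the paper's version buys coordinate-freeness and makes the convention for the contraction $A\neg\omega$ of a bundle-valued form explicit, namely $(A\neg\omega)(\eta,\mu)=\omega(A\eta,\mu)+\omega(\eta,A\mu)$, which is precisely the sign bookkeeping you correctly flag as the one point that must be pinned down. With that convention your local computation does reproduce (\ref{clos-hol-vct}), so the argument is complete.
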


\begin{proof}
  Let $\eta, \mu \in C^{\infty} (X, T^{0, 1}_X)$ and observe the identities
  (see \cite{Pal})
  \begin{eqnarray*}
    \overline{\partial}_J  (\xi \neg \omega)  (\eta, \mu) & = & \eta . \omega
    (\xi, \mu) - \mu . \omega (\xi, \eta) - \omega (\xi, \left[ \eta, \mu
    \right]),\\
    &  & \\
    \overline{\partial}_J \omega (\eta, \xi, \mu) & = & \eta . \omega (\xi,
    \mu) + \mu . \omega (\eta, \xi)\\
    &  & \\
    & - & \omega ( \left[ \eta, \xi, \right]^{1, 0}, \mu) + \omega ( \left[
    \eta, \mu \right], \xi) - \omega ( \left[ \xi, \mu \right]^{1, 0}, \eta)\\
    &  & \\
    & = & \overline{\partial}_J  (\xi \neg \omega)  (\eta, \mu) - \omega
    \left( \overline{\partial}_{T^{1, 0}_{X, J}} \xi (\eta), \mu \right) +
    \omega \left( \overline{\partial}_{T^{1, 0}_{X, J}} \xi (\mu), \eta
    \right)\\
    &  & \\
    & = & \overline{\partial}_J  (\xi \neg \omega)  (\eta, \mu) - \omega
    \left( \overline{\partial}_{T^{1, 0}_{X, J}} \xi (\eta), \mu \right) -
    \omega \left( \eta, \overline{\partial}_{T^{1, 0}_{X, J}} \xi (\mu)
    \right)\\
    &  & \\
    & = & \left[ \overline{\partial}_J (\xi \neg \omega) -
    \overline{\partial}_{T^{1, 0}_{X, J}} \xi \neg \omega \right] (\eta, \mu),
  \end{eqnarray*}
  which implies the required identity.
\end{proof}

On the other hand the identities (\ref{dbar-OmBoch-fnctSOL}) and
(\ref{CxBoch-frm}) show that in the case $\tmop{Ric}_J (\Omega) = \omega$ hold
the identity
\begin{equation}
  \label{kern-HessOI}  \overline{\tmop{Ker} (\Delta^{\Omega}_{g, J} -
  2\mathbbm{I})} = \tmop{Ker} \mathcal{H}^{0, 1}_{g, J} .
\end{equation}
We infer the following well known result due to Futaki \cite{Fut}. (See also \cite{Gau} and 
the sub-section \ref{Poiss-Strc} in appendix B for a more
more complete statement.)

\begin{corollary}
  \label{Eigenf-HolVct}Let $(X, J)$ be a Fano manifold and let $g$ be a
  $J$-invariant K\"ahler metric such that $\omega \assign g J \in 2 \pi c_1
  (X, \left[ J \right])$. Let also $\Omega > 0$ be the unique smooth volume
  form with$\int_X \Omega = 1$ such that $\tmop{Ric}_J (\Omega) = \omega$.
  Then the map
  \begin{eqnarray*}
    \overline{\tmop{Ker} (\Delta^{\Omega}_{g, J} - 2\mathbbm{I})} &
    \longrightarrow & H^0 (X, T_{X, J})\\
    &  & \\
    u & \longmapsto & \nabla_{g, J} u,
  \end{eqnarray*}
  is well defined and it represents an isomorphism of complex vector spaces.
  The first eigenvalue $\lambda_1 (\Delta^{\Omega}_{g, J})$ of the operator
  $\Delta^{\Omega}_{g, J}$ satisfies the estimate $\lambda_1
  (\Delta^{\Omega}_{g, J}) \geqslant 2$, with equality in the case $H^0 (X,
  T_{X, J})  \neq 0$. Moreover if we set $\tmop{Kill}_g \assign \tmop{Lie}
  (\tmop{Isom}^0_g)$ and
  \begin{eqnarray*}
    \tmop{Ker}_{\mathbbm{R}} (\Delta^{\Omega}_{g, J} - 2\mathbbm{I}) & \assign
    & \tmop{Ker} (\Delta^{\Omega}_{g, J} - 2\mathbbm{I}) \cap
    C_{\Omega}^{\infty} (X, \mathbbm{R})_0,
  \end{eqnarray*}
  then the map
  \begin{equation}
    \label{RealEig-Killmap} J \nabla_g : \tmop{Ker}_{\mathbbm{R}}
    (\Delta^{\Omega}_{g, J} - 2\mathbbm{I}) \longrightarrow \tmop{Kill}_g,
  \end{equation}
  is well defined and it represents an isomorphism of real vector spaces.
\end{corollary}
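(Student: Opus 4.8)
The plan is to derive all three assertions by combining Lemma \ref{iso-CxGrad} with the complex Bochner identities established above, so that no new analytic input is needed. For the first assertion I would start from the fact that a Fano manifold satisfies $H_d^1 (X, \mathbbm{C}) = 0$, so Lemma \ref{iso-CxGrad} applies to $(X, J, g)$ and gives that $u \longmapsto \nabla_{g, J} u$ is a $\mathbbm{C}$-linear isomorphism $\tmop{Ker} \mathcal{H}^{0, 1}_{g, J} \longrightarrow H^0 (X, T_{X, J})$. Under the standing hypothesis $\tmop{Ric}_J (\Omega) = \omega$, the identity (\ref{kern-HessOI}) identifies $\tmop{Ker} \mathcal{H}^{0, 1}_{g, J}$ with $\overline{\tmop{Ker} (\Delta^{\Omega}_{g, J} - 2 \mathbbm{I})}$; substituting this as the domain gives the first claim verbatim.

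For the eigenvalue bound I would use that $\Delta^{\Omega}_{g, J}$ is self-adjoint for the Hermitian product (\ref{L2Om-prod}) with non-negative spectrum whose $0$-eigenspace is the constants. Pick an eigenfunction $v$ with $\Delta^{\Omega}_{g, J} v = \lambda_1 v$ and $\lambda_1 > 0$; then $\int_X v \,\Omega = 0$, so $v$ is non-constant and hence $\nabla_{g, J} \bar{v} \not\equiv 0$ by the injectivity argument in the proof of Lemma \ref{iso-CxGrad}. Feeding $v$ into the Bochner identity (\ref{CxBoch-frm}) — valid here precisely because $\tmop{Ric}_J (\Omega) = \omega$ — and using that $\nabla_{g, J}$ is $\mathbbm{R}$-linear yields $2 \int_X | \overline{\partial}_{T_{X, J}} \nabla_{g, J} \bar{v} |^2_g \Omega = (\lambda_1 - 2) \int_X | \nabla_{g, J} \bar{v} |^2_g \Omega$, and non-negativity of the left-hand side forces $\lambda_1 \geqslant 2$. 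For the equality statement: if $\xi \in H^0 (X, T_{X, J}) \setminus \{ 0 \}$, the first assertion produces $0 \neq u \in \overline{\tmop{Ker} (\Delta^{\Omega}_{g, J} - 2 \mathbbm{I})}$ with $\nabla_{g, J} u = \xi$, so $\bar{u}$ is a nonzero eigenfunction for the eigenvalue $2$ and therefore $\lambda_1 = 2$.

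For the real isomorphism, note that for a real function $u$ one has $\nabla_{g, J} u = \nabla_g u$ and $\bar{u} = u$, so by the first assertion $u \in \tmop{Ker}_{\mathbbm{R}} (\Delta^{\Omega}_{g, J} - 2 \mathbbm{I})$ if and only if $\nabla_g u$ is real holomorphic. Given such $u$, the elementary identity $(J \nabla_g u) \neg \omega = - d u$ shows $L_{J \nabla_g u} \omega = 0$; since integrability of $J$ together with $L_{\nabla_g u} J = 0$ forces $L_{J \nabla_g u} J = 0$, and preserving both $\omega$ and $J$ forces preserving $g = - \omega J$, we get $J \nabla_g u \in \tmop{Kill}_g$, so the map is well defined; it is $\mathbbm{R}$-linear and injective, since $J \nabla_g u = 0$ forces $u$ constant, hence $0$. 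For surjectivity, a Killing field $\zeta$ is real holomorphic and preserves $\omega$; as $H^1_d (X, \mathbbm{R}) = 0$ we may write $\zeta \neg \omega = - d u$ with $u$ real, normalized by $\int_X u \, \Omega = 0$, and nondegeneracy of $\omega$ gives $\zeta = J \nabla_g u$; since $\zeta$, hence $J \zeta = - \nabla_g u$, is holomorphic, $\nabla_g u \in H^0 (X, T_{X, J})$ and so $u \in \tmop{Ker}_{\mathbbm{R}} (\Delta^{\Omega}_{g, J} - 2 \mathbbm{I})$.

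The genuine content is concentrated in the last step — the classical K\"ahler fact that the $J$-rotations of gradient real-holomorphic fields are exactly the Hamiltonian Killing fields, which relies on $H^1_d (X, \mathbbm{R}) = 0$ — while everything else is assembly of facts already proved. The one point demanding care throughout is the interplay of the real and complex structures: the conjugation appearing in (\ref{kern-HessOI}), the fact that $\nabla_{g, J}$ intertwines multiplication by $i$ with $J$ rather than being conjugate-linear, and the reduction $\nabla_{g, J} u = \nabla_g u$ for real $u$, which is what makes the corollary's map $J \nabla_g$ agree with $J \nabla_{g, J}$ on real eigenfunctions. I expect no analytic obstacle beyond keeping this bookkeeping straight.
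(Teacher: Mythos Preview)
Your proposal is correct and follows essentially the same route as the paper: the first isomorphism via Lemma~\ref{iso-CxGrad} together with (\ref{kern-HessOI}), the eigenvalue bound via the integral Bochner identity (\ref{CxBoch-frm}), and the Killing correspondence via the identification of $\tmop{Kill}_g$ with Hamiltonian holomorphic vector fields. The only cosmetic differences are that for surjectivity in the third part you run the first isomorphism in reverse (holomorphic $\nabla_g u \Rightarrow u \in \tmop{Ker}_{\mathbbm{R}}$) whereas the paper re-applies (\ref{dbar-OmBoch-fnctSOL}) with $v=-iu$, and you quote as classical the fact that Killing fields on a compact K\"ahler manifold are holomorphic while the paper supplies the short argument $\Delta_{d,g}(L_\xi\omega)=0$.
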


\begin{proof}
  We only need to show the statement concerning the map
  (\ref{RealEig-Killmap}). Let $\xi \in \tmop{Kill}_g$ and let $(\varphi_t)_{t
  \in \mathbbm{R}} \subset \tmop{Isom}^0_g$ be the corresponding 1-parameter
  sub-group. The K\"ahler condition $\nabla_g J = 0$ implies $\Delta_{d, g}
  \omega = 0$ and thus $\Delta_{d, g}  (\varphi^{\ast}_t \omega) = 0$. Time
  deriving the latter at $t = 0$ we infer
  \begin{equation}
    \label{HLap-Lie} \Delta_{d, g} L_{\xi} \omega = 0 .
  \end{equation}
  But $L_{\xi} \omega = d (\xi \neg \omega)$ and (\ref{HLap-Lie}) rewrites as
  $d^{\ast_g} d (\xi \neg \omega) = 0$. We infer
  \begin{eqnarray*}
    0 & = & L_{\xi} \omega = g L_{\xi} J = 2 \omega \overline{\partial}_{T_{X,
    J}} \xi,
  \end{eqnarray*}
  and thus
  \begin{eqnarray*}
    \tmop{Kill}_g & = & \left\{ \xi \in H^0 (X, T_{X, J}) \mid L_{\xi} \omega
    = 0 \right\}\\
    &  & \\
    & = & \left\{ \xi \in H^0 (X, T_{X, J}) \mid d (\xi \neg \omega) = 0
    \right\} \\
    &  & \\
    & = & \left\{ \xi \in H^0 (X, T_{X, J}) \mid \exists u \in
    C_{\Omega}^{\infty} (X, \mathbbm{R})_0 : \xi \neg \omega = d u \right\},
  \end{eqnarray*}
  thanks to the fact that $H_d^1 (X, \mathbbm{R}) = 0$. But the latter identity
  rewrites as
  \begin{eqnarray*}
    \tmop{Kill}_g & = & \left\{ \xi \in H^0 (X, T_{X, J}) \mid \exists u \in
    C_{\Omega}^{\infty} (X, \mathbbm{R})_0 : \xi = J \nabla_g u \right\},
  \end{eqnarray*}
  which shows that the map (\ref{RealEig-Killmap}) is well defined thanks to
  the first statement of corollary \ref{Eigenf-HolVct}. The surjectivity of
  the map (\ref{RealEig-Killmap}) follows from the identity
  (\ref{dbar-OmBoch-fnctSOL}) applied to the function $v \assign - i u$, with
  $u \in C_{\Omega}^{\infty} (X, \mathbbm{R})_0$ such that $J \nabla_g u \in
  \tmop{Kill}_g$. The injectivity of the map (\ref{RealEig-Killmap}) is
  obvious.
\end{proof}

Using the variational characterization of the first eigenvalue we observe;
\begin{eqnarray*}
  \lambda_1 (\Delta^{\Omega}_{g, J}) & = & \inf \left\{ \frac{\int_X
  \Delta^{\Omega}_{g, J} u \overline{u} \Omega}{\int_X \left| u \right|^2
  \Omega} \mid u \in C_{\Omega}^{\infty} (X, \mathbbm{C})_0 \smallsetminus
  \left\{ 0 \right\} \right\}\\
  &  & \\
  & \leqslant & \inf \left\{ \frac{\int_X 2 \left| \partial_J u
  \right|^2_{\omega} \Omega}{\int_X u^2 \Omega} \mid u \in C_{\Omega}^{\infty}
  (X, \mathbbm{R})_0 \smallsetminus \left\{ 0 \right\} \right\}\\
  &  & \\
  & = & \inf \left\{ \frac{\int_X \Delta^{\Omega}_g u u \Omega}{\int_X u^2
  \Omega} \mid u \in C_{\Omega}^{\infty} (X, \mathbbm{R})_0 \smallsetminus
  \left\{ 0 \right\} \right\} \\
  &  & \\
  & = & \lambda_1 (\Delta^{\Omega}_g),
\end{eqnarray*}
thanks to the identity $2 \left| \partial_J u \right|^2_{\omega} = \left|
\nabla_g u \right|^2_g$. We deduce that in the set-up of corollary
\ref{Eigenf-HolVct} hold the estimate
\begin{equation}
  \label{Estim-FirstEigen} \lambda_1 (\Delta^{\Omega}_g) \geqslant 2 .
\end{equation}

\section{Symmetric variations of K\"ahler structures}\label{Sect-SmVKS}

We show a few fundamental facts about the space of symmetric variations of
K\"ahler structures $\mathbbm{K}\mathbbm{V}^J_g$ given by the elements $v \in C^{\infty} \left( X,
  S_{\mathbbm{R}}^2 T^{\ast}_X \right)$ such that there exists a smooth family $(J_t^{}, g_t)_t \subset
  \mathcal{K}\mathcal{S}$ with $(J_0, g_0) = (J, g)$, $\dot{g}_0 = v$ and $\dot{J}_0^{} = (
  \dot{J}_0^{})_g^T$. One can observe (see \cite{Pal3}) that $\mathbbm{K}\mathbbm{V}^J_g \subseteq
\mathbbm{D}^J_g$ with
\begin{equation}
  \label{Kah-D} \mathbbm{D}^J_g : = \left\{ v \in C^{\infty} \left( X,
  S_{\mathbbm{R}}^2 T^{\ast}_X \right) \mid \hspace{0.25em} \partial^g_{T_{X,
  J}} (v_g^{\ast})_J^{1, 0} = 0, \overline{\partial}_{T_{X, J}}
  (v_g^{\ast})_J^{0, 1} = 0 \right\},
\end{equation}
where $(v_g^{\ast})_J^{1, 0}$ and $(v_g^{\ast})_J^{0, 1}$ denote respectively
the $J$-linear and $J$-anti-linear parts of the endomorphism $v^{\ast}_g$. We
remind here some lines of this basic fact. We define
\begin{eqnarray*}
  2 \nabla_{g, i, J}^{1, 0} & \assign & \nabla_g - i \nabla_{g, J \bullet} \\
  &  & \\
  2 \nabla_{g, i, J}^{0, 1} & \assign & \nabla_g + i \nabla_{g, J \bullet_{}} 
\end{eqnarray*}
Let $v_J'$ and $v_J''$ be respectively the $J$-invariant and
$J$-anti-invariant parts of $v$ and set for notation simplicity
\begin{eqnarray*}
  A' & \assign & (v^{\ast}_g)_J^{1, 0} = (v_J')_g^{\ast},\\
  &  & \\
  A'' & \assign & (v_g^{\ast})_J^{0, 1} = (v_J'')_g^{\ast} .
\end{eqnarray*}
The identity $A'' = - J \dot{J}_0$ (see \cite{Pal3}) implies directly
$\dot{\omega}_0 = v_J' J = \omega A'$. We infer
\begin{eqnarray*}
  0 & = & d \dot{\omega}_0 = d (v_J' J) .
\end{eqnarray*}
The fact that the $(1, 1)$-form $v_J' J$ is real implies that the identity $d
(v_J' J) = 0$ is equivalent to the identity $\partial_J (v_J' J) = 0$. In its
turn this is equivalent to the identity $\partial^g_{T_{X, J}} A' = 0$. We
observe indeed that for all $\xi, \eta, \mu \in C^{\infty} (X, T_X
\otimes_{\mathbbm{R}} \mathbbm{C})$ hold the equalities
\begin{eqnarray*}
  \partial_J  (\omega A') (\xi', \eta', \mu'') & = & \nabla_{g, i, J}^{1, 0} 
  (\omega A') (\xi', \eta', \mu'') - \nabla_{g, i, J}^{1, 0}  (\omega A')
  (\eta', \xi', \mu'') \\
  &  & \\
  & + & \nabla_{g, i, J}^{1, 0}  (\omega A') (\mu'', \xi', \eta')\\
  &  & \\
  & = & \omega \left( \left[ \nabla_{g, i, J}^{1, 0} A' (\xi', \eta') -
  \nabla_{g, i, J}^{1, 0} A' (\eta', \xi') \right], \mu'' \right)\\
  &  & \\
  & - & \omega \left( \nabla_{g, i, J}^{1, 0} A' (\mu'', \xi'), \eta'
  \right)\\
  &  & \\
  & = & \omega \left( \left[ \nabla_{g, J}^{1, 0} A' (\xi', \eta') -
  \nabla_{g, J}^{1, 0} A' (\eta', \xi') \right], \mu'' \right)\\
  &  & \\
  & = & \omega \left( \partial^g_{T_{X, J}} A' (\xi', \eta'), \mu'' \right) .
\end{eqnarray*}
In order to continue the study of the space $\mathbbm{D}^J_g$ we need to show
a few general and fundamental facts. We start with the following weighted
complex Weitzenb\"ock type formula.

\begin{lemma}
  Let $(X, J, g)$ be a K\"ahler manifold, let $\Omega > 0$ be a smooth volume
  form and let $A \in C^{\infty} \left( X, T^{\ast}_{X, - J} \otimes T_{X, J}
  \right)$. Then hold the identity
  \begin{equation}
    \label{Fund-exp-AntHol} \Delta^{\Omega, - J}_{T_{X, g}} A =
    \nabla_g^{\ast_{\Omega}} \nabla_{g, J}^{0, 1} A -\mathcal{R}_g \ast A + A
    \tmop{Ric}_J^{\ast} (\Omega)_{\omega}
  \end{equation}
\end{lemma}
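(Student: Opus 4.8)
The plan is to run the same Weitzenb\"ock-type argument already used in the proof of Lemma \ref{OmTX-Lap-RmLap} and in the complex Bochner identities (\ref{del-OmBoch})--(\ref{dbar-OmBoch}), now for a $T_{X,J}$-valued $(0,1)$-form instead of a vector field. Since $A$ has form-degree $q=1$, by definition of $\Delta^{\Omega,-J}_{T_{X,g}}$ we have $\Delta^{\Omega,-J}_{T_{X,g}} A = \overline{\partial}_{T_{X,J}}\overline{\partial}^{\ast_{g,\Omega}}_{T_{X,J}} A + \tfrac12\,\overline{\partial}^{\ast_{g,\Omega}}_{T_{X,J}}\overline{\partial}_{T_{X,J}} A$. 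First I would rewrite both building blocks in covariant terms: using $\overline{\partial}^{\ast_g}_{T_{X,J}}\alpha = -q\,\tmop{Tr}_g\nabla^{1,0}_{g,J}\alpha$ together with $\overline{\partial}^{\ast_{g,\Omega}}_{T_{X,J}} = e^f\overline{\partial}^{\ast_g}_{T_{X,J}}(e^{-f}\bullet)$ one expresses $\overline{\partial}^{\ast_{g,\Omega}}_{T_{X,J}} A = -\tmop{Tr}_g\nabla^{1,0}_{g,J} A$ plus a first-order $\nabla_g f$-contraction term; and in the K\"ahler case $\overline{\partial}_{T_{X,J}}$ is the alternation of $\nabla^{0,1}_{g,J}$ because the Chern connection equals $\nabla_g$. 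Substituting $2\nabla^{1,0}_{g,J} = \nabla_g - J\nabla_{g,J\bullet}$ and $2\nabla^{0,1}_{g,J} = \nabla_g + J\nabla_{g,J\bullet}$, the two second-order contributions combine so that the Laplacian part assembles exactly into $\nabla^{\ast_\Omega}_g\nabla^{0,1}_{g,J} A$ --- the claimed leading term --- leaving a commutator involving $\nabla_{g,e_k}\nabla_{g,Je_k} - \nabla_{g,Je_k}\nabla_{g,e_k}$, together with the extra terms the weight $f$ produces (a $\nabla_{g,J\nabla_g f}$-term and a $\nabla^2_g f$-term).

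Next I would evaluate the commutator at a fixed point $p$ using a $g$-orthonormal frame $(e_k)_k$ with $\nabla_g e_k(p)=0$. Exactly as in the derivation of (\ref{del-OmBoch})--(\ref{dbar-OmBoch}), one has $\tfrac12 J\mathcal{R}_g(e_k,Je_k) = \tmop{Ric}^\ast(g)$ since $(Je_k)_k$ is again a $g$-orthonormal frame; this is the piece of curvature acting on the $T_{X,J}$-coefficient slot of $A$, and it contributes the term $A\,\tmop{Ric}^\ast(g)$. The piece of curvature acting on the $(0,1)$-form slot of $A$ is, by the definition of the operation $\mathcal{R}_g\ast$ and its compatibility with the $J$-(anti)linear decomposition, i.e. (\ref{Ant-J-curv}), (\ref{Jcurv}) and (\ref{curvJ}), precisely $-\mathcal{R}_g\ast A$. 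Finally the weight terms $\nabla_{g,J\nabla_g f}A$ and the $\nabla^2_g f$-contribution combine with $\tmop{Ric}^\ast(g)$ to upgrade it to the Chern--Ricci endomorphism $\tmop{Ric}^\ast_J(\Omega)_\omega$; this is the exact complex analogue of the way the weight turns $\tmop{Ric}^\ast(g)$ into $\tmop{Ric}^\ast_J(\Omega)_\omega$ in the Bochner formula (\ref{dbar-OmBoch}), and checking that the $f$-terms assemble into the $(1,1)$-contribution $i\partial_J\overline{\partial}_J(-f)$ --- so that one lands on the Chern--Ricci of $\Omega$ rather than on the Bakry--Emery--Ricci tensor --- is the main technical point.

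Collecting the three contributions yields $\Delta^{\Omega,-J}_{T_{X,g}} A = \nabla^{\ast_\Omega}_g\nabla^{0,1}_{g,J} A - \mathcal{R}_g\ast A + A\,\tmop{Ric}^\ast_J(\Omega)_\omega$, which is (\ref{Fund-exp-AntHol}). The only genuine obstacle is the bidegree bookkeeping: one must track carefully which covector and vector indices are holomorphic and which anti-holomorphic, so that the $\nabla^{1,0}_{g,J}$ coming from $\overline{\partial}^{\ast_{g,\Omega}}_{T_{X,J}}$ and the $\nabla^{0,1}_{g,J}$ coming from $\overline{\partial}_{T_{X,J}}$ interlock into $\nabla^{\ast_\Omega}_g\nabla^{0,1}_{g,J}$, and so that the Ricci-type term lands on the coefficient slot while the $\mathcal{R}_g\ast$ term lands on the form slot with the correct sign. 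All of this is routine given the identities already established in the K\"ahler set-up section and the computations reproducing (\ref{del-OmBoch})--(\ref{dbar-OmBoch}), but it is where the care is needed.
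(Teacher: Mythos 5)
Your proposal follows essentially the same route as the paper's proof: decompose $\Delta^{\Omega,-J}_{T_{X,g}}$ from its definition (using that, for bidegree reasons, $\overline{\partial}^{\ast_{g,\Omega}}_{T_{X,J}}$ agrees with $\nabla^{\ast_{\Omega}}_g$ on the $J$-anti-linear $A$), expand everything in a normal frame at a point, commute covariant derivatives via $\nabla_{g,\xi}\nabla_{g,\eta}A-\nabla_{g,\eta}\nabla_{g,\xi}A=\left[\mathcal{R}_g(\xi,\eta),A\right]+\nabla_{g,[\xi,\eta]}A$, and let the weight terms upgrade $\tmop{Ric}^{\ast}(g)$ to $\tmop{Ric}^{\ast}_J(\Omega)_{\omega}$ through the complex decomposition of the Bakry--Emery--Ricci endomorphism. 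The only inaccuracy is cosmetic: in the contracted commutator it is the piece $\mathcal{R}_g(e_k,\xi)A\,e_k$ (curvature hitting the $T_{X,J}$-coefficient) that yields $-\mathcal{R}_g\ast A$, while $-A\,\mathcal{R}_g(e_k,\xi)e_k$ (curvature hitting the form slot) contracts to $A\,\tmop{Ric}^{\ast}(g)$ --- the opposite of the attribution in your prose --- but this does not affect the computation or the final identity.
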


\begin{proof}
  We observe that for bi-degree reasons hold the identities
  \begin{eqnarray*}
    \Delta^{\Omega, - J}_{T_{X, g}} A & : = & \overline{\partial}_{T_{X, J}}
    \overline{\partial}^{\ast_{g, \Omega}}_{T_{X, J}} A + \frac{1}{2} 
    \overline{\partial}^{\ast_{g, \Omega}}_{T_{X, J}}
    \overline{\partial}_{T_{X, J}} A\\
    &  & \\
    & = & \overline{\partial}_{T_{X, J}} \nabla^{\ast_{\Omega}}_{T_X, g} A +
    \frac{1}{2} \nabla^{\ast_{\Omega}}_{T_X, g} \overline{\partial}_{T_{X, J}}
    A\\
    &  & \\
    & = & \overline{\partial}_{T_{X, J}} \nabla^{\ast_{\Omega}}_g A +
    \nabla^{\ast_{\Omega}}_g \overline{\partial}_{T_{X, J}} A .
  \end{eqnarray*}
  Let
  \begin{eqnarray*}
    \widehat{\nabla^{0, 1}_{g, J} A} (\xi, \eta) & : = & \nabla^{0, 1}_{g, J}
    A (\eta, \xi) .
  \end{eqnarray*}
  Then
  \begin{eqnarray*}
    2 \Delta^{\Omega, - J}_{T_{X, g}} A & = & \nabla_g
    \nabla^{\ast_{\Omega}}_g A + J \nabla_{g, J \bullet}
    \nabla^{\ast_{\Omega}}_g A\\
    &  & \\
    & + & 2 \nabla^{\ast_{\Omega}}_g \nabla^{0, 1}_{g, J} A - 2
    \nabla^{\ast_{\Omega}}_g \widehat{\nabla^{0, 1}_{g, J} A} .
  \end{eqnarray*}
  We fix an arbitrary point $p$ and we choose an arbitrary vector field $\xi$
  such that $\nabla_g \xi (p) = 0$. Let $(e_k)_k$ be a $g$-orthonormal local
  frame such that $\nabla_g e_k  (p) = 0$. We observe the local expression
  \begin{eqnarray*}
    \nabla^{\ast_{\Omega}}_g \widehat{\nabla^{0, 1}_{g, J} A} \cdot \xi & = &
    - \nabla_{g, e_k} \widehat{\nabla^{0, 1}_{g, J} A}  (e_k, \xi) +
    \nabla^{0, 1}_{g, J} A (\xi, \nabla_g f) .
  \end{eqnarray*}
  At the point $p$ hold the identities
  \begin{eqnarray*}
    2 \nabla_{g, e_k} \widehat{\nabla^{0, 1}_{g, J} A}  (e_k, \xi) & = & 2
    \nabla_{g, e_k} \left[ \nabla^{0, 1}_{g, J} A (\xi, e_k) \right]\\
    &  & \\
    & = & \nabla_{g, e_k} \nabla_{g, \xi} A \cdot e_k + J \nabla_{g, e_k}
    \nabla_{g, J \xi} A \cdot e_k,
  \end{eqnarray*}
  and thus
  \begin{eqnarray*}
    2 \nabla^{\ast_{\Omega}}_g \widehat{\nabla^{0, 1}_{g, J} A} \cdot \xi & =
    & - \nabla_{g, e_k} \nabla_{g, \xi} A \cdot e_k - J \nabla_{g, e_k}
    \nabla_{g, J \xi} A \cdot e_k\\
    &  & \\
    & + & \nabla_{g, \xi} A \cdot \nabla_g f + J \nabla_{g, J \xi} A \cdot
    \nabla_g f .
  \end{eqnarray*}
  We obtain the identity at the point $p$,
  \begin{eqnarray*}
    2 \Delta^{\Omega, - J}_{T_{X, g}} A \cdot \xi & = & - \nabla_{g, \xi}
    \nabla_{g, e_k} A \cdot e_k + \nabla_{g, \xi}  \left( A \cdot \nabla_g f
    \right)\\
    &  & \\
    & - & J \nabla_{g, J \xi} \nabla_{g, e_k} A \cdot e_k + J \nabla_{g, J
    \xi}  \left( A \cdot \nabla_g f \right)\\
    &  & \\
    & + & 2 \nabla^{\ast_{\Omega}}_g \nabla^{0, 1}_{g, J} A \cdot \xi\\
    &  & \\
    & + & \nabla_{g, e_k} \nabla_{g, \xi} A \cdot e_k + J \nabla_{g, e_k}
    \nabla_{g, J \xi} A \cdot e_k\\
    &  & \\
    & - & \nabla_{g, \xi} A \cdot \nabla_g f - J \nabla_{g, J \xi} A \cdot
    \nabla_g f .
  \end{eqnarray*}
  We remind that for any $A \in C^{\infty} \left( X, \tmop{End} (T_X) \right)$
  and $\xi, \eta \in C^{\infty} (X, T_X)$ hold the general formula
  \begin{equation}
    \label{com-covEnd} \nabla_{g, \xi} \nabla_{g, \eta} A - \nabla_{g, \eta}
    \nabla_{g, \xi} A = \left[ \mathcal{R}_g (\xi, \eta), A \right] +
    \nabla_{g, \left[ \xi, \eta \right]} A .
  \end{equation}
  Using (\ref{com-covEnd}) and the fact that in our case $\left[ e_k, \xi
  \right] (p) = \left[ e_k, J \xi \right] (p) = 0$ we obtain
  \begin{eqnarray*}
    2 \Delta^{\Omega, - J}_{T_{X, g}} A \cdot \xi & = & \mathcal{R}_g (e_k,
    \xi) A \cdot e_k - A\mathcal{R}_g (e_k, \xi) \cdot e_k\\
    &  & \\
    & + & J \left[ \mathcal{R}_g (e_k, J \xi) A \cdot e_k - A\mathcal{R}_g
    (e_k, J \xi) \cdot e_k \right]\\
    &  & \\
    & + & 2 \nabla^{\ast_{\Omega}}_g \nabla^{0, 1}_{g, J} A \cdot \xi + A
    \nabla^2_{g, \xi} f + J A \nabla_{g, J \xi} \nabla_g f\\
    &  & \\
    & = & - (\mathcal{R}_g \ast A) \cdot \xi + A \tmop{Ric}^{\ast} (g) \cdot
    \xi\\
    &  & \\
    & - & J (\mathcal{R}_g \ast A) \cdot J \xi + J A \tmop{Ric}^{\ast} (g)
    \cdot J \xi\\
    &  & \\
    & + & 2 \nabla^{\ast_{\Omega}}_g \nabla^{0, 1}_{g, J} A \cdot \xi + A
    \partial^g_{T_{X, J}} \nabla_g f \cdot \xi\\
    &  & \\
    & = & 2 \left[ \nabla_g^{\ast_{\Omega}} \nabla_{g, J}^{0, 1} A
    -\mathcal{R}_g \ast A + A \tmop{Ric}_J^{\ast} (\Omega)_{\omega} \right]
    \cdot \xi,
  \end{eqnarray*}
  thanks to (\ref{Ant-J-curv}).
\end{proof}

Multiplying both sides of (\ref{Fund-exp-AntHol}) by $A$ and integrating by
parts we infer
\begin{eqnarray*}
  \int_X \left\langle \Delta^{\Omega, - J}_{T_{X, g}} A, A \right\rangle_g
  \Omega & = & \int_X \left[ \left\langle \nabla_{g, J}^{0, 1} A, \nabla_g A
  \right\rangle_g + \left\langle A \tmop{Ric}_J^{\ast} (\Omega)_{\omega}
  -\mathcal{R}_g \ast A, A \right\rangle_g \right] \Omega .
\end{eqnarray*}
Using the fact that $\left\langle \nabla^{1, 0}_{g, J} A_J'', \nabla^{0,
1}_{g, J} A_J'' \right\rangle_g = 0$ we obtain the integral identity
\begin{equation}
  \label{Int-BK-AntHol}  \int_X \left\langle \Delta^{\Omega, - J}_{T_{X, g}}
  A, A \right\rangle_g \Omega = \int_X \left[ | \nabla_{g, J}^{0, 1} A|^2_g +
  \left\langle A \tmop{Ric}_J^{\ast} (\Omega)_{\omega} -\mathcal{R}_g \ast A,
  A \right\rangle_g \right] \Omega .
\end{equation}
We observe also the following corollary.

\begin{corollary}
  Let $(X, J, g)$ be a K\"ahler manifold, let $\Omega > 0$ be a smooth volume
  form and let $A \in C^{\infty} \left( X, T^{\ast}_{X, - J} \otimes T_{X, J}
  \right)$. Then hold the identities
  \begin{equation}
    \label{Lich-AntiHol} \mathcal{L}^{\Omega}_g A = 2 \Delta^{\Omega, -
    J}_{T_{X, g}} A + \tmop{div}_g^{\Omega} \nabla_{g, J \bullet}  (J A) - 2 A
    \tmop{Ric}_J^{\ast} (\Omega)_{\omega},
  \end{equation}
  
  \begin{equation}
    \label{div-JA} \tmop{div}_g^{\Omega} \nabla_{g, J \bullet}  (J A) =
    \tmop{Ric}^{\ast} (g) A + A_{} \tmop{Ric}^{\ast} (g) - (J \nabla_g f) \neg
    (J \nabla_g A) .
  \end{equation}
\end{corollary}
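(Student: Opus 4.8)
The plan is to deduce both identities from the weighted complex Weitzenb\"ock formula (\ref{Fund-exp-AntHol}) just established, combined with the defining decomposition $2\nabla_{g,J}^{0,1}=\nabla_g+J\nabla_{g,J\bullet}$ and the K\"ahler identity $\nabla_g J=0$ (which in particular gives $\nabla_{g,J\bullet}(JA)=J\nabla_{g,J\bullet}A$).

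\emph{First, (\ref{Lich-AntiHol}).} I would write $\nabla_g=2\nabla_{g,J}^{0,1}-J\nabla_{g,J\bullet}$ and apply $\nabla_g^{\ast_{\Omega}}$ to $\nabla_g A$, which gives
\[ \Delta^{\Omega}_g A=\nabla_g^{\ast_{\Omega}}\nabla_g A=2\,\nabla_g^{\ast_{\Omega}}\nabla_{g,J}^{0,1}A-\nabla_g^{\ast_{\Omega}}\bigl(J\nabla_{g,J\bullet}A\bigr). \]
By (\ref{Fund-exp-AntHol}), $\nabla_g^{\ast_{\Omega}}\nabla_{g,J}^{0,1}A=\Delta^{\Omega,-J}_{T_{X,g}}A+\mathcal{R}_g\ast A-A\tmop{Ric}_J^{\ast}(\Omega)_{\omega}$; and since $\nabla_g^{\ast_{\Omega}}$ acts on the form slot of a $T_{X,J}$-valued $1$-form as $-\tmop{div}_g^{\Omega}$, the second term equals $\tmop{div}_g^{\Omega}\nabla_{g,J\bullet}(JA)$ after using $\nabla_gJ=0$. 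Feeding this into $\mathcal{L}^{\Omega}_g A=\Delta^{\Omega}_g A-2\mathcal{R}_g\ast A$, the two copies of $2\mathcal{R}_g\ast A$ cancel and (\ref{Lich-AntiHol}) results.

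\emph{Second, (\ref{div-JA}).} I would compute pointwise. Fix $p\in X$ and a $g$-orthonormal frame $(e_k)_k$ with $\nabla_ge_k(p)=0$; by $\nabla_gJ=0$ the frame $(Je_k)_k$ is also $g$-orthonormal, parallel at $p$, and $[e_k,Je_k](p)=0$. At $p$ one has $\tmop{div}_g^{\Omega}\nabla_{g,J\bullet}(JA)=\sum_k J\nabla_{g,e_k}\nabla_{g,Je_k}A-J\nabla_{g,J\nabla_gf}A$. Relabelling the frame by $e_k\mapsto Je_k$ (so $Je_k\mapsto-e_k$) and using that $\sum_k\nabla_{g,e_k}\nabla_{g,Je_k}A$ is frame independent gives $\sum_k\nabla_{g,Je_k}\nabla_{g,e_k}A=-\sum_k\nabla_{g,e_k}\nabla_{g,Je_k}A$, hence by the commutation formula (\ref{com-covEnd}) (with $[e_k,Je_k](p)=0$) that $\sum_k\nabla_{g,e_k}\nabla_{g,Je_k}A=\tfrac12\sum_k[\mathcal{R}_g(e_k,Je_k),A]$. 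Applying $J$ and using the K\"ahler curvature identity $\sum_k J\mathcal{R}_g(e_k,Je_k)=2\tmop{Ric}^{\ast}(g)$ already exploited in the proof of (\ref{del-OmBoch}), together with the $J$-antilinearity $JA=-AJ$ to move $J$ past $A$, yields $J\sum_k\nabla_{g,e_k}\nabla_{g,Je_k}A=\tmop{Ric}^{\ast}(g)A+A\tmop{Ric}^{\ast}(g)$. Finally $J\nabla_{g,J\nabla_gf}A=(J\nabla_gf)\neg(J\nabla_gA)$ is immediate from the definition of the contraction, giving (\ref{div-JA}).

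The only real difficulty I anticipate is bookkeeping rather than anything conceptual: keeping the normalisations correct (avoiding a stray factor of $2$) in the curvature trace $\sum_k J\mathcal{R}_g(e_k,Je_k)$, in the precise correspondence between $\nabla_g^{\ast_{\Omega}}$ and $\tmop{div}_g^{\Omega}$ (in the sense of $\underline{\tmop{div}}_g^{\Omega}$) on $T_{X,J}$-valued $1$-forms, and in consistently tracking the left/right placement of $J$ relative to $A$ when invoking $JA=-AJ$. These are exactly the spots where sign or factor errors could creep in, so I would carry out that computation at a point with the parallel frame to make every step explicit.
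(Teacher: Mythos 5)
Your proof is correct and follows essentially the same route as the paper: identity (\ref{Lich-AntiHol}) is obtained exactly as there by rewriting (\ref{Fund-exp-AntHol}) via $2\nabla_{g,J}^{0,1}=\nabla_g+J\nabla_{g,J\bullet}$, and (\ref{div-JA}) by the pointwise commutator computation with (\ref{com-covEnd}) and the trace identity $\sum_k J\mathcal{R}_g(e_k,Je_k)=2\tmop{Ric}^{\ast}(g)$. Your only (cosmetic) deviation is antisymmetrizing $\sum_k\nabla_{g,e_k}\nabla_{g,Je_k}$ by the frame relabelling $e_k\mapsto Je_k$ rather than by splitting into the adapted frame $(\eta_k,J\eta_k)$ as the paper does; both yield the same factor of $\tfrac12\sum_k[\mathcal{R}_g(e_k,Je_k),A]$.
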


\begin{proof}
  It is obvious that the identity (\ref{Fund-exp-AntHol}) rewrites as
  (\ref{Lich-AntiHol}). In order to show (\ref{div-JA}) let $(\eta_k)^n_{k =
  1}$ be a local complex frame of $T_{X, J}$ in a neighborhood of a point $p$
  with $\nabla_g \eta_k  (p) = 0$ such that the real frame $(e_l)^{2 n}_{l =
  1}$, $e_l = \eta_l$, $l = 1, \ldots, n$ and $e_{n + k} = J \eta_k$, $k = 1,
  \ldots, n$ is $g$-orthonormal. Then at the point $p$ hold the equalities
  \begin{eqnarray*}
    \tmop{div}_g \nabla_{g, J \bullet}  (J A_J'') & = & \sum_{l = 1}^{2 n}
    \nabla_{g, e_l} \nabla_{g, J e_l} (J A_J'')\\
    &  & \\
    & = & \sum_{k = 1}^n \left[ \nabla_{g, \eta_k} \nabla_{g, J \eta_k} (J
    A_J'') - \nabla_{g, J \eta_k} \nabla_{g, \eta_k} (J A_J'') \right]\\
    &  & \\
    & = & \sum_{k = 1}^n \left[ \mathcal{R}_g (\eta_k, J \eta_k), J A_J''
    \right],
  \end{eqnarray*}
  thanks to the general formula (\ref{com-covEnd}) and thanks to the fact that
  $\left[ \eta_k, J \eta_k \right] (p) = 0$. Using the $J$-linear and
  $J$-anti-linear properties of the tensors involved in the previous equality
  we obtain
  \begin{eqnarray*}
    \tmop{div}_g \nabla_{g, J \bullet}  (J A_J'') & = & \sum_{k = 1}^n \left[
    J\mathcal{R}_g (\eta_k, J \eta_k) A_J'' + A_J'' J\mathcal{R}_g (\eta_k, J
    \eta_k) \right] \\
    &  & \\
    & = & \tmop{Ric}^{\ast} (g) A_J'' + A_J'' \tmop{Ric}^{\ast} (g) .
  \end{eqnarray*}
  Notice indeed the identities
  \begin{eqnarray*}
    2 \tmop{Ric}^{\ast} (g) & = & \sum_{l = 1}^{2 n} J\mathcal{R}_g (e_l, J
    e_l) \\
    &  & \\
    & = & \sum_{k = 1}^n \left[ J\mathcal{R}_g (\eta_k, J \eta_k) -
    J\mathcal{R}_g (J \eta_k, \eta_k) \right]\\
    &  & \\
    & = & 2 \sum_{k = 1}^n J\mathcal{R}_g (\eta_k, J \eta_k) .
  \end{eqnarray*}
  We conclude the required formula (\ref{div-JA}).
\end{proof}

We define now the vector spaces
\begin{eqnarray*}
  \mathcal{H}_{g, \Omega}^{0, 1} \left( T_{X, J} \right) & \assign &
  \tmop{Ker} \Delta^{\Omega, - J}_{T_{X, g}} \cap C^{\infty} \left( X,
  T^{\ast}_{X, - J} \otimes T_{X, J} \right),\\
  &  & \\
  \mathcal{H}_{g, \Omega}^{0, 1} \left( T_{X, J} \right)_{\tmop{sm}} & \assign
  & \left\{ A \in \mathcal{H}_{g, \Omega}^{0, 1} \left( T_{X, J} \right) \mid
  \hspace{0.25em} A = A_g^T \right\} .
\end{eqnarray*}
\begin{lemma}
  \label{Sym-Harm}Let $(X, J)$ be a Fano manifold, let $g$ be a $J$-invariant
  K\"ahler metric with symplectic form $\omega \assign g J \in 2 \pi c_1 (X,
  \left[ J \right])$ and let $\Omega > 0$ be the unique smooth volume form with $\int_X
  \Omega = 1$ such that $\omega = \tmop{Ric}_J (\Omega)$. Then hold the
  identity
  \begin{eqnarray*}
    \mathcal{H}_{g, \Omega}^{0, 1} \left( T_{X, J} \right) & = &
    \mathcal{H}_{g, \Omega}^{0, 1} \left( T_{X, J} \right)_{\tmop{sm}} .
  \end{eqnarray*}
\end{lemma}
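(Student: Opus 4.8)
The plan is to isolate the unique obstruction to an $A\in\mathcal{H}_{g,\Omega}^{0,1}(T_{X,J})$ being $g$-symmetric, which turns out to be a skew $(0,2)$-form, and to show that Perelman's normalisation $\tmop{Ric}_J(\Omega)=\omega$ forces it to vanish. First I would note that, $\overline{\partial}^{\ast_{g,\Omega}}_{T_{X,J}}$ being by construction the formal adjoint of $\overline{\partial}_{T_{X,J}}$ for the Hermitian product (\ref{L2omOm-prod}), the operator $\Delta^{\Omega, - J}_{T_{X, g}}$ is elliptic and self-adjoint for that product; hence $A\in\mathcal{H}_{g,\Omega}^{0,1}(T_{X,J})$ if and only if $\overline{\partial}_{T_{X,J}}A=0$ and $\overline{\partial}^{\ast_{g,\Omega}}_{T_{X,J}}A=0$. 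Lowering the $T_{X,J}$-value of $A$ with $g$ yields a $(0,2)$-tensor $A^{\flat}$ (up to a universal factor $A^{\flat}$ is $A\neg\omega$); writing $A^{\flat}=S+\Phi$ with $S$ its $g$-symmetric part and $\Phi\in C^{\infty}(X,\Lambda^{0,2}_JT^{\ast}_X)$ its alternating part, the same pairing computation that identifies $A_g^T$ with the transposed $(0,2)$-tensor shows that $A=A_g^T$ is equivalent to $\Phi=0$. So the whole lemma reduces to proving $\Phi=0$.

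There are two key computations. First, $\overline{\partial}_J\Phi=0$: from $\overline{\partial}_{T_{X,J}}A=0$ the covariant derivative of $A$ is symmetric in its differentiation slot and its form slot, and since $\nabla_g\omega=0$ this symmetry passes to $A^{\flat}$, so the elementary antisymmetrisation identity already used throughout the paper (the one writing $\overline{\partial}$ of a $(0,q)$-form as the alternating covariant derivative) gives $\overline{\partial}_J\Phi=0$. Second, the two associated ``divergences'' are linked: expanding $\Phi=\tfrac{1}{2}(A^{\flat}-(A^{\flat})^t)$, the contraction of the $\Omega$-weighted divergence against the slot of $A^{\flat}$ coming from the original form-index of $A$ reproduces $\overline{\partial}^{\ast_{g,\Omega}}_{T_{X,J}}A=0$ and drops out, leaving $\overline{\partial}^{\ast_{g,\Omega}}_{\Lambda^{0,2}}\Phi=c_1\,\gamma$ for a universal $c_1\neq0$, where $\gamma\assign\tmop{Tr}_{\omega}\bigl(e^{f}\,\partial^g_{T_{X,J}}(e^{-f}A)\bigr)$ is the weighted trace (contraction of the $T_{X,J}$-value of $\partial^g_{T_{X,J}}A$ against its $(1,0)$-form slot), a $(0,1)$-form. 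Applying $\overline{\partial}_J$ to $\gamma$, and using $\overline{\partial}_{T_{X,J}}A=0$ together with the K\"ahler commutation of $\overline{\partial}_{T_{X,J}}$ with the $\Omega$-twisted $\partial^g_{T_{X,J}}$ — whose defect is wedging with the $\Omega$-weighted Chern curvature of $T_{X,J}$, of trace $\tmop{Ric}_J(\Omega)$, exactly the mechanism by which $\tmop{Ric}^{\ast}_J(\Omega)_{\omega}$ enters (\ref{Fund-exp-AntHol}) and (\ref{Int-BK-AntHol}) — one gets $\overline{\partial}_J\gamma$ equal to the trace of that curvature contracted with $A$; since $\tmop{Ric}_J(\Omega)=\omega$ this collapses, with no remaining terms, to $\overline{\partial}_J\gamma=2i\,\Phi$, a nonzero purely imaginary multiple of $\Phi$.

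Combining these, and using $\overline{\partial}_J\Phi=0$, I obtain $\Delta^{\Omega}_{\overline{\partial}}\Phi=\overline{\partial}_J\overline{\partial}^{\ast_{g,\Omega}}_{\Lambda^{0,2}}\Phi=c\,i\,\Phi$ with $c\in\mathbbm{R}\setminus\{0\}$, whence, pairing with $\Phi$ in the $L^2_{\Omega}$-product and using $\overline{\partial}_J\Phi=0$ once more,
\[0\;\leqslant\;\bigl\|\overline{\partial}^{\ast_{g,\Omega}}_{\Lambda^{0,2}}\Phi\bigr\|^2_{L^2_{\Omega}}\;=\;\bigl\langle\Phi,\Delta^{\Omega}_{\overline{\partial}}\Phi\bigr\rangle_{L^2_{\Omega}}\;=\;-\,c\,i\,\|\Phi\|^2_{L^2_{\Omega}}\,,\]
a non-negative real number equal to a purely imaginary one, so both sides vanish and $\Phi=0$, i.e. $A=A_g^T$. (Equivalently, once $\overline{\partial}^{\ast_{g,\Omega}}_{\Lambda^{0,2}}\Phi=0$ the $\Omega$-harmonic $(0,2)$-form $\Phi$ represents the zero class of $H^2(X,\mathcal{O}_X)=0$ on the Fano manifold $X$, by Kodaira vanishing.)

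I expect the main obstacle to be the bookkeeping in the second paragraph: checking precisely that $\overline{\partial}^{\ast_{g,\Omega}}_{\Lambda^{0,2}}\Phi$ splits into exactly ``$\overline{\partial}^{\ast_{g,\Omega}}_{T_{X,J}}A$ plus $\gamma$'', and that the weighted K\"ahler commutation makes $\overline{\partial}_J\gamma$ an honest nonzero multiple of $i\,\Phi$ — the curvature trace collapsing to $\omega$ precisely because $\tmop{Ric}_J(\Omega)=\omega$, with no stray lower-order or curvature terms surviving.
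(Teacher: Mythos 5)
Your reduction to the alternating part $\Phi$ of $A^{\flat}$ is sound, and the first two identities do check out: $\overline{\partial}_J\Phi=0$ follows from $\overline{\partial}_{T_{X,J}}A=0$ exactly as you say, and a normal-coordinate computation confirms $\overline{\partial}^{\ast_{g,\Omega}}\Phi=\tfrac{1}{2}\gamma$ with $\gamma_{\bar m}=\nabla^{\Omega}_{l}A^{l}_{\bar m}$. The gap is in the last step. The operator $\overline{\partial}_J\overline{\partial}^{\ast_{g,\Omega}}$ is non-negative and self-adjoint for the $L_{\Omega}^2$-product, so if it sends $\Phi\neq 0$ to $\lambda\Phi$ then $\lambda$ is forced to be a non-negative real number; your claimed purely imaginary constant is impossible unless $\Phi=0$, i.e. it presupposes the conclusion. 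Carrying out the commutation honestly, the non-Ricci curvature term does drop out by the K\"ahler symmetry $R_{j\bar l\,p\bar m}=R_{m\bar l\,p\bar j}$, but the surviving term is
\begin{equation*}
(\overline{\partial}_J\gamma)_{\bar j\bar m}\;=\;-\tmop{Ric}_J(\Omega)_{n\bar j}A^{n}_{\bar m}\;+\;\tmop{Ric}_J(\Omega)_{n\bar m}A^{n}_{\bar j}\;=\;-\,g_{n\bar j}A^{n}_{\bar m}+g_{n\bar m}A^{n}_{\bar j}\;=\;2\,\Phi_{\bar j\bar m}\,,
\end{equation*}
so the constant is $+2$, not $\pm 2i$. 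Hence $\overline{\partial}_J\overline{\partial}^{\ast_{g,\Omega}}\Phi=\Phi$, your final display only yields $\|\overline{\partial}^{\ast_{g,\Omega}}\Phi\|^2_{L_{\Omega}^2}=\|\Phi\|^2_{L_{\Omega}^2}$, and no contradiction results. Kodaira vanishing does not help either, since you never obtain $\overline{\partial}^{\ast_{g,\Omega}}\Phi=0$, only that the harmonic projection of the $\overline{\partial}$-closed form $\Phi$ vanishes.

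The paper escapes this by staying with the endomorphism $A$ and its rough Laplacian: the $g$-anti-symmetric part $A_{\tmop{as}}$ is separately $\Omega$-harmonic because $\tmop{Ric}^{\ast}_J(\Omega)_{\omega}=\mathbbm{I}$ kills the commutators in (\ref{Trans-AntLapSM})--(\ref{Trans-AntLapAS}), and then the pointwise vanishing $\mathcal{R}_g\ast A_{\tmop{as}}=0$ of (\ref{vanish-curv-anti}) collapses the integral Weitzenb\"ock identity (\ref{Int-BK-AntHol}) to $0=\int_X\left[|\nabla^{0,1}_{g,J}A_{\tmop{as}}|^2_g+|A_{\tmop{as}}|^2_g\right]\Omega$, where the zeroth-order term carries the coercive sign. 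The discrepancy between the paper's $+\|A_{\tmop{as}}\|^2$ surplus and your $-\|\Phi\|^2$ deficit is precisely the Weitzenb\"ock remainder between $\overline{\partial}\,\overline{\partial}^{\ast}$ on scalar $(0,2)$-forms and the rough Laplacian on $T_{X,J}$-valued $(0,1)$-forms; your route lands on the wrong side of it. To repair the argument you would need an independent input, e.g. a Bochner--Kodaira estimate showing that the first eigenvalue of $\Delta^{\Omega}_{\overline{\partial}}$ on $\overline{\partial}$-closed $(0,2)$-forms is strictly larger than $1$, which is not supplied.
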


\begin{proof}
  We consider the decomposition $A = A_{\tmop{sm}} + A_{\tmop{as}}$, where
  $A_{\tmop{as}}$ and $A_{\tmop{as}}$ are respectively the $g$-symmetric and
  $g$-anti-symmetric parts of $A$. We observe the symmetries
  \begin{eqnarray*}
    \mathcal{R}_g \ast A_{\tmop{sm}} & = & (\mathcal{R}_g \ast
    A_{\tmop{sm}})_g^T,\\
    &  & \\
    \mathcal{R}_g \ast A_{\tmop{as}} & = & - (\mathcal{R}_g \ast
    A_{\tmop{as}})_g^T .
  \end{eqnarray*}
  The fact that $A \in C^{\infty} \left( X, T^{\ast}_{X, - J} \otimes T_{X, J}
  \right)$ implies $A_{\tmop{sm}}, A_{\tmop{as}} \in C^{\infty} \left( X,
  T^{\ast}_{X, - J} \otimes T_{X, J} \right)$ and thus
  \begin{eqnarray*}
    \nabla_g^{\ast_{\Omega}} \nabla_{g, J}^{0, 1} A_{\tmop{sm}} & = & \left(
    \nabla_g^{\ast_{\Omega}} \nabla_{g, J}^{0, 1} A_{\tmop{sm}} \right)_g^T,\\
    &  & \\
    \nabla_g^{\ast_{\Omega}} \nabla_{g, J}^{0, 1} A_{\tmop{as}} & = & - \left(
    \nabla_g^{\ast_{\Omega}} \nabla_{g, J}^{0, 1} A_{\tmop{as}} \right)_g^T,
  \end{eqnarray*}
  Then the identity (\ref{Fund-exp-AntHol}) implies the equalities
  \begin{equation}
    \label{Trans-AntLapSM}  \left( \Delta^{\Omega, - J}_{T_{X, g}}
    A_{\tmop{sm}} \right)_g^T - \Delta^{\Omega, - J}_{T_{X, g}} A_{\tmop{sm}}
    = \left[ \tmop{Ric}_J^{\ast} (\Omega)_{\omega}, A_{\tmop{sm}} \right],
  \end{equation}
  
  \begin{equation}
    \label{Trans-AntLapAS}  \left( \Delta^{\Omega, - J}_{T_{X, g}}
    A_{\tmop{as}} \right)_g^T + \Delta^{\Omega, - J}_{T_{X, g}} A_{\tmop{as}}
    = \left[ A_{\tmop{as}}, \tmop{Ric}_J^{\ast} (\Omega)_{\omega} \right] .
  \end{equation}
  We deduce that in the case $\tmop{Ric}_J (\Omega) = \lambda \omega$, with
  $\lambda = \pm 1, 0$, the condition $A \in \mathcal{H}_{g, \Omega}^{0, 1}
  \left( T_{X, J} \right)$ is equivalent to the conditions $A_{\tmop{sm}},
  A_{\tmop{as}} \in \mathcal{H}_{g, \Omega}^{0, 1} \left( T_{X, J} \right)$.
  We focus now on the Fano case $\lambda = 1$. We remind the identity
  $\mathcal{R}_g \ast A_{\tmop{as}} = 0$. (See (\ref{vanish-curv-anti}) in the
  appendix.) Thus if $A \in \mathcal{H}_{g, \Omega}^{0, 1} \left( T_{X, J}
  \right)$ and $\tmop{Ric}_J (\Omega) = \omega$ then the integral formula
  (\ref{Int-BK-AntHol}) reduces to
  \begin{eqnarray*}
    0 & = & \int_X \left[ | \nabla_{g, J}^{0, 1} A_{\tmop{as}} |^2_g +
    |A_{\tmop{as}} |^2_g \right] \Omega,
  \end{eqnarray*}
  which shows $A_{\tmop{as}} = 0$ and thus the required conclusion of the
  lemma.
\end{proof}

We obtain also the following statement (the case $c_1 < 0$ has been proved in \cite{D-W-W2}).

\begin{lemma}
  \label{KE-Hsym}Let $(X, J, g)$ be a compact non Ricci flat K\"ahler-Einstein
  manifold. Then hold the identity
  \begin{eqnarray*}
    \mathcal{H}_g^{0, 1} \left( T_{X, J} \right) & = & \mathcal{H}_g^{0, 1}
    \left( T_{X, J} \right)_{\tmop{sm}} .
  \end{eqnarray*}
\end{lemma}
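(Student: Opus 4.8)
The plan is to prove that the $g$-anti-symmetric part $A_{\mathrm{as}}$ of an arbitrary $A\in\mathcal{H}^{0,1}_g(T_{X,J})$ vanishes; since its $g$-symmetric part obviously lies in $\mathcal{H}^{0,1}_g(T_{X,J})_{\mathrm{sm}}$, this gives the claimed identity. After rescaling $g$ (harmless, since both sides of the asserted equality are invariant under constant rescalings of the metric) I may assume $\tmop{Ric}(g)=\lambda g$ with $\lambda=\pm1$, this being exactly the non-Ricci-flat case. I then choose $\Omega:=dV_g$, normalized to unit mass. Then $f:=\log(dV_g/\Omega)$ is constant, so every $\Omega$-weighted operator coincides with its unweighted counterpart and $\mathcal{H}^{0,1}_{g,\Omega}(T_{X,J})=\mathcal{H}^{0,1}_g(T_{X,J})$; moreover $\tmop{Ric}_J(\Omega)$ is the Ricci form, so $\tmop{Ric}^{\ast}_J(\Omega)_{\omega}=\tmop{Ric}^{\ast}(g)=\lambda\,\mathbbm{I}$. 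Because $\tmop{Ric}_J(\Omega)=\lambda\omega$, the transpose identities (\ref{Trans-AntLapSM})--(\ref{Trans-AntLapAS}) show that $\Delta^{\Omega,-J}_{T_{X,g}}$ commutes with $g$-transposition, hence preserves the decomposition into $g$-symmetric and $g$-anti-symmetric parts; consequently $A_{\mathrm{as}}$ is itself $\Delta^{\Omega,-J}_{T_{X,g}}$-harmonic.

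Next I would combine the corollary formulas (\ref{Lich-AntiHol}) and (\ref{div-JA}). Since $f$ is constant the last term of (\ref{div-JA}) drops out and $\tmop{div}^{\Omega}_g\nabla_{g,J\bullet}(JA_{\mathrm{as}})=\tmop{Ric}^{\ast}(g)A_{\mathrm{as}}+A_{\mathrm{as}}\tmop{Ric}^{\ast}(g)=2\lambda A_{\mathrm{as}}$; feeding this and $\Delta^{\Omega,-J}_{T_{X,g}}A_{\mathrm{as}}=0$ into (\ref{Lich-AntiHol}) gives $\mathcal{L}^{\Omega}_g A_{\mathrm{as}}=2\lambda A_{\mathrm{as}}-2\lambda A_{\mathrm{as}}=0$. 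On the other hand $\mathcal{R}_g\ast A_{\mathrm{as}}=0$ by the general K\"ahler vanishing (\ref{vanish-curv-anti}) (which applies because $gA_{\mathrm{as}}$ is a real $2$-form of type $(2,0)+(0,2)$), so $\Delta^{\Omega}_g A_{\mathrm{as}}=\mathcal{L}^{\Omega}_g A_{\mathrm{as}}+2\mathcal{R}_g\ast A_{\mathrm{as}}=0$. Integrating by parts, $0=\int_X\langle\Delta^{\Omega}_g A_{\mathrm{as}},A_{\mathrm{as}}\rangle_g\Omega=\int_X|\nabla_g A_{\mathrm{as}}|^2_g\Omega$, whence $A_{\mathrm{as}}$ is $g$-parallel.

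It remains to see that the $g$-parallel real $2$-form $\phi:=gA_{\mathrm{as}}$, which is of type $(2,0)+(0,2)$, is zero; this is the delicate step. When $\lambda>0$ it is immediate: $\phi^{2,0}$ is then a parallel, hence holomorphic, $(2,0)$-form, and $H^0(X,\Omega^2_X)=H^{2,0}(X)=\overline{H^2(X,\mathcal{O}_X)}=0$ on a Fano manifold. When $\lambda<0$ there is no such vanishing and one argues instead by rigidity: a non-zero parallel $(2,0)$-form $\phi^{2,0}$ is nowhere vanishing and its kernel and image distributions are $\nabla_g$-parallel, so the de Rham splitting produces a K\"ahler factor carrying a parallel holomorphic symplectic form, hence with holonomy contained in $\mathrm{Sp}$ and therefore Ricci-flat --- incompatible with $\tmop{Ric}(g)=\lambda g$, $\lambda\neq0$. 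Thus $\phi=0$ and $A_{\mathrm{as}}=0$. (For $\lambda>0$ one can alternatively avoid (\ref{Lich-AntiHol})--(\ref{div-JA}) and conclude directly as in Lemma \ref{Sym-Harm} from the integral Bochner formula (\ref{Int-BK-AntHol}); the route above is what handles the sign $\lambda<0$, where the naive Bochner estimate has the wrong sign and one must instead deduce parallelism and appeal to the holonomy classification --- this is the main obstacle.)
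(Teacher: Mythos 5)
Your argument is correct, and up to the point where you establish $\nabla_g A_{\tmop{as}} = 0$ it coincides with the paper's: both use (\ref{Trans-AntLapSM})--(\ref{Trans-AntLapAS}) to split off $A_{\tmop{as}}$ as a harmonic form in its own right, and both combine (\ref{Lich-AntiHol}), (\ref{div-JA}) and the vanishing $\mathcal{R}_g \ast A_{\tmop{as}} = 0$ of (\ref{vanish-curv-anti}) to conclude that $A_{\tmop{as}}$ is $g$-parallel. Where you diverge is in killing the parallel tensor: you split into cases on the sign of $\lambda$, using $H^{2,0}(X)=0$ in the Fano case and a de Rham splitting plus holonomy-classification argument in the case $\lambda<0$. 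The paper instead simply feeds the parallelism back into the identity (\ref{div-JA}): since $J$ and $A_{\tmop{as}}$ are parallel, the left-hand side $\tmop{div}_g \nabla_{g, J\bullet}(J A_{\tmop{as}})$ vanishes identically, and with $f$ constant the identity collapses to $0 = \tmop{Ric}^{\ast}(g)A_{\tmop{as}} + A_{\tmop{as}}\tmop{Ric}^{\ast}(g) = 2\lambda A_{\tmop{as}}$, which disposes of both signs at once. So your route is valid but invokes global machinery (Kodaira vanishing, the de Rham decomposition theorem on the universal cover, the Ricci-flatness of $\mathrm{Sp}$-holonomy factors) where a one-line reuse of the Weitzenb\"ock identity you have already written down suffices; what the "delicate step" really needs is only the observation that the second-order operator in (\ref{div-JA}) annihilates parallel sections, so that the zeroth-order curvature term $2\lambda A_{\tmop{as}}$ must vanish on its own.
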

\begin{proof}
  Using the identities (\ref{Trans-AntLapSM}) and (\ref{Trans-AntLapAS}) with
  $\Omega = C d V_g$ we deduce that in the K\"ahler-Einstein case $\tmop{Ric}
  (g) = \lambda g$, with $\lambda = \pm 1, 0$, the condition $A \in
  \mathcal{H}_g^{0, 1} \left( T_{X, J} \right)$ is equivalent to the
  conditions $A_{\tmop{sm}}, A_{\tmop{as}} \in \mathcal{H}_g^{0, 1} \left(
  T_{X, J} \right)$. On the other hand the identities (\ref{Lich-AntiHol}) and
  (\ref{div-JA}) imply in the case $\Omega = C d V_g$ the formula
  \begin{equation}
    \label{BKN-Alin-dV} \mathcal{L}_g A = 2 \Delta^{- J}_{T_{X, g}} A +
    [\tmop{Ric}^{\ast} (g), A],
  \end{equation}
  for any $A \in C^{\infty} \left( X, T^{\ast}_{X, - J} \otimes T_{X, J}
  \right)$. The fact that $\mathcal{R}_g \ast A_{\tmop{as}} = 0$ implies the
  formula
  \begin{eqnarray*}
    \Delta_g A_{\tmop{as}} & = & 2 \Delta^{- J}_{T_{X, g}} A_{\tmop{as}} +
    [\tmop{Ric}^{\ast} (g), A_{\tmop{as}}] .
  \end{eqnarray*}
  We conclude that in the K\"ahler-Einstein case $\tmop{Ric} (g) = \lambda g$,
  with $\lambda = \pm 1, 0$, any $A \in \mathcal{H}_g^{0, 1} \left( T_{X, J}
  \right)$ satisfies $\nabla_g A_{\tmop{as}} = 0$. Then the formula
  (\ref{div-JA}) with $\Omega = C d V_g$ implies
  \begin{eqnarray*}
    0 = \tmop{div}_g \nabla_{g, J \bullet}  (J A_{\tmop{as}}) & = &
    \tmop{Ric}^{\ast} (g) A_{\tmop{as}} + A_{\tmop{as}} \tmop{Ric}^{\ast} (g)
    = 2 \lambda A_{\tmop{as}} .
  \end{eqnarray*}
  We deduce $A_{\tmop{as}} = 0$ in the case $\lambda = \pm 1$. This shows the
  required conclusion.
\end{proof}

We denote by
\begin{eqnarray*}
  \Lambda^{\Omega}_{g, J} \assign \tmop{Ker} (\Delta^{\Omega}_{g, J} -
  2\mathbbm{I}) & \subset & C_{\Omega}^{\infty} (X, \mathbbm{C})_0,
\end{eqnarray*}
and by
\begin{eqnarray*}
  \Lambda^{\Omega, \bot}_{g, J} & \assign & \left[ \tmop{Ker}
  (\Delta^{\Omega}_{g, J} - 2\mathbbm{I}) \right]^{\bot_{\Omega}} \subseteq
  C_{\Omega}^{\infty} (X, \mathbbm{C})_0,
\end{eqnarray*}
its $L_{\Omega}^2$-orthogonal inside $C_{\Omega}^{\infty} (X, \mathbbm{C})_0$.
We obtain as corollary of lemma (\ref{Sym-Harm}) the following fundamental
fact.

\begin{corollary}
  \label{dec-VarJ}{\tmstrong{$($Decomposition of the variation of the
  complex structure$)$}}
  
  Let $(X, J)$ be a Fano manifold, let $g$ be a $J$-invariant K\"ahler metric
  with symplectic form $\omega \assign g J \in 2 \pi c_1 (X, \left[ J
  \right])$ and let $\Omega > 0$ be the unique smooth volume form with $\int_X \Omega =
  1$ such that $\omega = \tmop{Ric}_J (\Omega)$. Then for all $v \in
  \mathbbm{D}^J_g$ there exists a unique $\psi \in \Lambda^{\Omega, \bot}_{g,
  J}$ and a unique $A \in \mathcal{H}_{g, \Omega}^{0, 1} \left( T_{X, J}
  \right)$ such that
  \begin{eqnarray*}
    (v_g^{\ast})_J^{0, 1} & = & \overline{\partial}_{T_{X, J}} \nabla_{g, J} 
    \overline{\psi} + A .
  \end{eqnarray*}
\end{corollary}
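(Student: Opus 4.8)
The plan is to obtain the decomposition from the $\Omega$-weighted Hodge theory of the $T_{X, J}$-valued $(0, q)$-Dolbeault complex, combined with the $\partial_J \overline{\partial}_J$-lemma on the compact K\"ahler manifold $X$.

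First I would observe that for $v \in \mathbbm{D}^J_g$ the endomorphism $A'' := (v_g^{\ast})_J^{0, 1}$ is $\overline{\partial}_{T_{X, J}}$-closed — this is part of the definition (\ref{Kah-D}) of $\mathbbm{D}^J_g$ — and $g$-symmetric, since $v_g^{\ast}$ is $g$-symmetric and the $J$-anti-linear part $\tfrac{1}{2} (B + JBJ)$ of a $g$-symmetric endomorphism $B$ is again $g$-symmetric. Using the $\Omega$-Hodge decomposition associated with $\Delta^{\Omega, - J}_{T_{X, g}}$ together with the vanishing $\overline{\partial}_{T_{X, J}} A'' = 0$, I would write $A'' = \overline{\partial}_{T_{X, J}} \sigma + A$, where $A \in \mathcal{H}^{0, 1}_{g, \Omega} (T_{X, J})$ is the (unique) $\Omega$-harmonic part of $A''$ and $\sigma \in C^{\infty} (X, T_{X, J})$. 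By lemma \ref{Sym-Harm} the harmonic form $A$ is automatically $g$-symmetric, so $\overline{\partial}_{T_{X, J}} \sigma = A'' - A$ is $g$-symmetric as well.

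The key step is then to show that any section $\sigma \in C^{\infty} (X, T_{X, J})$ for which $\overline{\partial}_{T_{X, J}} \sigma$ is $g$-symmetric admits a complex potential of the form $\nabla_{g, J} \overline{\psi}$. Here I would use that $\overline{\partial}_{T_{X, J}} \sigma$ equals the $J$-anti-linear part of the endomorphism $\nabla_g \sigma$, so that its $g$-anti-symmetric component corresponds, via the metric, to the $2$-form $d (\sigma \neg g)$; hence the $g$-symmetry of $\overline{\partial}_{T_{X, J}} \sigma$ is equivalent to $d (\sigma \neg g)$ being a real $(1, 1)$-form. Being moreover $d$-exact, the $\partial_J \overline{\partial}_J$-lemma gives $d (\sigma \neg g) = i \partial_J \overline{\partial}_J \phi$ for a smooth real function $\phi$. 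On the other hand, the complex decomposition (\ref{dec-Cx-grad}) yields $d (\nabla_{g, J} u \neg g) = 2 i \partial_J \overline{\partial}_J \tmop{Im} (u)$ for every $u \in C^{\infty} (X, \mathbbm{C})$; choosing $u_0$ with $\tmop{Im} (u_0) = \phi / 2$, the real $1$-form $\alpha := \sigma \neg g - \nabla_{g, J} u_0 \neg g$ is $d$-closed, hence $d$-exact because $H_d^1 (X, \mathbbm{R}) = 0$ on a Fano manifold; writing $\alpha = d c$ with $c$ real and using $d c = \nabla_{g, J} c \neg g$, I obtain $\sigma = \nabla_{g, J} (u_0 + c)$. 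Putting $\psi := \overline{u_0 + c} - \int_X \overline{u_0 + c}\, \Omega \in C_{\Omega}^{\infty} (X, \mathbbm{C})_0$ gives $\nabla_{g, J} \overline{\psi} = \sigma$; finally, subtracting from $\psi$ its $L^2_{\Omega}$-projection onto $\Lambda^{\Omega}_{g, J}$ — whose complex conjugate lies in $\tmop{Ker} \mathcal{H}^{0, 1}_{g, J}$ by (\ref{kern-HessOI}) and therefore contributes nothing to $\overline{\partial}_{T_{X, J}} \nabla_{g, J} \overline{\psi}$ — I may assume $\psi \in \Lambda^{\Omega, \bot}_{g, J}$, which establishes the existence of the decomposition $A'' = \overline{\partial}_{T_{X, J}} \nabla_{g, J} \overline{\psi} + A$.

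For uniqueness, $A$ is the $\Omega$-harmonic part of $A''$ and hence determined; if $\psi_1, \psi_2 \in \Lambda^{\Omega, \bot}_{g, J}$ both realise the decomposition then $\overline{\partial}_{T_{X, J}} \nabla_{g, J} \overline{\psi_1 - \psi_2} = 0$, so $\overline{\psi_1 - \psi_2} \in \tmop{Ker} \mathcal{H}^{0, 1}_{g, J} = \overline{\Lambda^{\Omega}_{g, J}}$ by (\ref{kern-HessOI}), valid since $\tmop{Ric}_J (\Omega) = \omega$, i.e. $\psi_1 - \psi_2 \in \Lambda^{\Omega}_{g, J} \cap \Lambda^{\Omega, \bot}_{g, J} = \{ 0 \}$. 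I expect the main difficulty to lie in the middle paragraph — promoting the $\overline{\partial}_{T_{X, J}}$-exact part of $A''$ to the $\overline{\partial}_{T_{X, J}}$ of a genuine $J$-complex gradient — since this is exactly where the K\"ahler identities, the $\partial_J \overline{\partial}_J$-lemma and the vanishing $H_d^1 (X, \mathbbm{R}) = 0$ all come into play; the surrounding steps are routine Hodge theory together with the bookkeeping of the normalisations.
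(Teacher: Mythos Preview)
Your proposal is correct and follows essentially the same route as the paper: $\Omega$-Hodge decomposition of $(v_g^{\ast})_J^{0,1}$, Lemma~\ref{Sym-Harm} to get $g$-symmetry of the exact piece, and then a potential argument using the vanishing of $H^1$ to write $\sigma = \nabla_{g,J}\overline{\psi}$, followed by the normalisation via (\ref{kern-HessOI}). The only cosmetic difference is that the paper contracts $\xi$ with $\omega$ and uses formula~(\ref{clos-hol-vct}) together with $H^{0,1}_{\overline{\partial}}(X,\mathbb{C})=0$ to produce the potential in one step, whereas you contract with $g$ and go through the $\partial_J\overline{\partial}_J$-lemma plus $H^1_d(X,\mathbb{R})=0$; these are equivalent on a Fano manifold and the paper's version is slightly shorter.
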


\begin{proof}
  We observe that the identity
  \begin{eqnarray*}
    \overline{\partial}_{T_{X, J}} (v_g^{\ast})_J^{0, 1} & = & 0,
  \end{eqnarray*}
  combined with the $\Omega$-Hodge isomorphism
  \begin{eqnarray*}
    \mathcal{H}_{g, \Omega}^{0, 1} \left( T_{X, J} \right) & \simeq & H^{0, 1}
    (X, T_{X, J}) 
    \\
    \\
    &\assign & \frac{\left\{ B \in C^{\infty} (X, \Lambda_J^{0, 1}
    T^{\ast}_X \otimes_{\mathbbm{C}} T_{X, J}) \mid \overline{\partial}_{T_{X,
    J}} B = 0 \right\}}{\left\{ \overline{\partial}_{T_{X, J}} \xi \mid \xi
    \in C^{\infty} (X, T_X) \right\}},
  \end{eqnarray*}
  implies the decomposition
  \begin{eqnarray*}
    (v_g^{\ast})_J^{0, 1} & = & \overline{\partial}_{T_{X, J}} \xi + A,
  \end{eqnarray*}
  with $\xi \in C^{\infty} (X, T_X)$ and unique $A \in \mathcal{H}_{g,
  \Omega}^{0, 1} \left( T_{X, J} \right)$. Then the fact that the endomorphism
  $(v_g^{\ast})_J^{0, 1}$ is $g$-symmetric combined with lemma \ref{Sym-Harm}
  implies that $\overline{\partial}_{T_{X, J}} \xi$ is also $g$-symmetric.
  Then formula (\ref{clos-hol-vct}) implies that for all $\eta, \mu \in
  C^{\infty} (X, T^{0, 1}_X)$ holds the identity
  \begin{eqnarray*}
    \overline{\partial}_J  (\xi_J^{1, 0} \neg \omega)  (\eta, \mu) & = &
    \omega \left( \overline{\partial}_{T_{X, J}} \xi \cdot \eta, \mu \right) +
    \omega \left( \eta, \overline{\partial}_{T_{X, J}} \xi \cdot \mu \right)\\
    &  & \\
    & = & g \left( J \overline{\partial}_{T_{X, J}} \xi \cdot \eta, \mu
    \right) + g \left( J \eta, \overline{\partial}_{T_{X, J}} \xi \cdot \mu
    \right)\\
    &  & \\
    & = & g \left( \left[ \left( \overline{\partial}_{T_{X, J}} \xi
    \right)_g^T - \overline{\partial}_{T_{X, J}} \xi \right] \cdot J \eta, \mu
    \right)\\
    &  & \\
    & = & 0 .
  \end{eqnarray*}
  Then the argument showing the surjectivity of the map in lemma
  \ref{iso-CxGrad} in the section \ref{CxBochSec} implies the existence of a
  function $\Psi \in C_{\Omega}^{\infty} (X, \mathbbm{C})_0$ such that \\
  $\xi =\nabla_{g, J}  \overline{\Psi}$. This combined with the identity (\ref{kern-HessOI}) implies the existence
  and uniqueness of $\psi \in \Lambda^{\Omega, \bot}_{g, J}$ such that
  \begin{eqnarray*}
    \overline{\partial}_{T_{X, J}} \xi & = & \overline{\partial}_{T_{X, J}}
    \nabla_{g, J}  \overline{\psi} .
  \end{eqnarray*}
  We infer the required conclusion.
\end{proof}

We show now the inclusion (\ref{Kahl-VarC1}). Time deriving the condition
$\omega_t \assign g_t J_t \in 2 \pi c_1$ we infer $\left\{ \dot{\omega}_0
\right\}_d = 0$. Then (\ref{Kahl-VarC1}) follows from the complex
decomposition identity
\begin{eqnarray*}
  v^{\ast}_g & = & g^{- 1} \dot{g}_0 = \omega^{- 1} \dot{\omega}_0 - J
  \dot{J}_0 = (v_J')_g^{\ast} + (v''_J)_g^{\ast} .
\end{eqnarray*}

\section{The decomposition of the space $\mathbbm{F}_{g, \Omega}$ in the
soliton case}

\begin{lemma}
  \label{decomp-F}Let $(X, g, \Omega)$ be a compact shrinking Ricci soliton.
  Then the linear map
  \begin{eqnarray*}
    \text{$T_{g, \Omega}$} : C_{\Omega}^{\infty} (X, \mathbbm{R})_0 \oplus
    \left[ \tmop{Ker} \nabla^{\ast_{\Omega}}_g \cap C^{\infty} (X, S^2
    T^{\ast}_X) \right] & \longrightarrow & \mathbbm{F}_{g, \Omega}\\
    &  & \\
    (\varphi, \theta) & \longmapsto & \left( \nabla_g d \varphi + \theta,
    (\varphi - \Delta^{\Omega}_g \varphi) \Omega \right),
  \end{eqnarray*}
  is an isomorphism of vector spaces.
\end{lemma}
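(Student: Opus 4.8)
The plan is to check three things in turn: that $T_{g, \Omega}$ takes values in $\mathbbm{F}_{g, \Omega}$, that it is injective, and that it is surjective (linearity being immediate), the engine behind all three being the invertibility of the scalar operator $\Delta^{\Omega}_g - \mathbbm{I}$ on $C^{\infty}_{\Omega} (X, \mathbbm{R})_0$. For well-definedness I would first record the commutation identity, valid in the soliton case $h_{g, \Omega} = 0$ (i.e. $\tmop{Ric}^{\ast}_g (\Omega) = \mathbbm{I}$),
\[ \nabla^{\ast_{\Omega}}_g (\nabla_g d \varphi) \; = \; d \big( (\Delta^{\Omega}_g - \mathbbm{I}) \varphi \big), \]
as an equality of $1$-forms: write $\nabla_g d \varphi = \tfrac{1}{2} \hat{\nabla}_g (d \varphi)$, use the relation $\nabla^{\ast_{\Omega}}_g \hat{\nabla}_g \alpha = \hat{\Delta}^{\Omega}_g \alpha + d \, \nabla^{\ast_{\Omega}}_g \alpha$ on $1$-forms together with $\nabla^{\ast_{\Omega}}_g d \varphi = \Delta^{\Omega}_g \varphi$, and simplify with the Weitzenb\"ock formulas (\ref{Sm-Wei-1-form}) and (\ref{Hg-Wei-1-form}) and $\tmop{Ric}^{\ast}_g (\Omega) = \mathbbm{I}$. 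Dividing by $g$ this reads $\nabla^{\ast_{\Omega}}_g (\nabla^2_g \varphi) = \nabla_g (\Delta^{\Omega}_g \varphi - \varphi)$ as vector fields. Then for $(\varphi, \theta)$ in the source, $T_{g, \Omega} (\varphi, \theta) = (v, V)$ with $v^{\ast}_g = \nabla^2_g \varphi + \theta^{\ast}_g$ and $V^{\ast}_{\Omega} = \varphi - \Delta^{\Omega}_g \varphi$; since $\theta \in \tmop{Ker} \nabla^{\ast_{\Omega}}_g$ gives $\nabla^{\ast_{\Omega}}_g \theta^{\ast}_g = 0$ (via $\nabla^{\ast_{\Omega}}_g \theta = g \, \nabla^{\ast_{\Omega}}_g \theta^{\ast}_g$), one obtains $\nabla^{\ast_{\Omega}}_g v^{\ast}_g + \nabla_g V^{\ast}_{\Omega} = \nabla_g (\Delta^{\Omega}_g \varphi - \varphi) + \nabla_g (\varphi - \Delta^{\Omega}_g \varphi) = 0$, so $T_{g, \Omega} (\varphi, \theta) \in \mathbbm{F}_{g, \Omega}$ (and $\int_X V = \int_X \varphi \, \Omega = 0$, so $V$ lies in the correct subspace).

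The main point — and the one genuinely analytic step — is that $\Delta^{\Omega}_g - \mathbbm{I}$ is an isomorphism of $C^{\infty}_{\Omega} (X, \mathbbm{R})_0$, i.e. that $1$ is not an eigenvalue of $\Delta^{\Omega}_g$. I would deduce this from the weighted (Bakry--\'Emery) Bochner identity
\[ \int_X (\Delta^{\Omega}_g \varphi)^2 \, \Omega \; = \; \int_X | \nabla^2_g \varphi |^2_g \, \Omega + \int_X \tmop{Ric}_g (\Omega) (\nabla_g \varphi, \nabla_g \varphi) \, \Omega, \]
combined with the soliton equation $\tmop{Ric}_g (\Omega) = g$ and the integration by parts $\int_X \varphi \, \Delta^{\Omega}_g \varphi \, \Omega = \int_X | \nabla_g \varphi |^2_g \, \Omega$: if $\Delta^{\Omega}_g \varphi = \varphi$ with $\int_X \varphi \, \Omega = 0$, subtracting the last identity from the Bochner identity forces $\int_X | \nabla^2_g \varphi |^2_g \, \Omega = 0$, hence $\nabla^2_g \varphi \equiv 0$; on the compact connected $X$ this makes $\varphi$ constant, hence $\varphi = 0$. (This is the non-K\"ahler analogue of the bound $\lambda_1 (\Delta^{\Omega}_g) \geqslant 2$ from Corollary \ref{Eigenf-HolVct} / (\ref{Estim-FirstEigen}); here only $\lambda_1 > 1$ is needed.) Since $\Delta^{\Omega}_g - \mathbbm{I}$ is elliptic, $L^2_{\Omega}$-self-adjoint, preserves $C^{\infty}_{\Omega} (X, \mathbbm{R})_0$, and has trivial kernel there, it is bijective on that space.

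Injectivity and surjectivity of $T_{g, \Omega}$ then follow quickly. If $T_{g, \Omega} (\varphi, \theta) = 0$, the second component and $\Omega > 0$ give $(\Delta^{\Omega}_g - \mathbbm{I}) \varphi = 0$ with $\varphi$ of zero $\Omega$-mean, so $\varphi = 0$, whence $\theta = - \nabla_g d \varphi = 0$. For surjectivity, take $(v, V) \in \mathbbm{F}_{g, \Omega}$; as $V^{\ast}_{\Omega}$ has zero $\Omega$-mean, there is a unique $\varphi \in C^{\infty}_{\Omega} (X, \mathbbm{R})_0$ with $(\varphi - \Delta^{\Omega}_g \varphi) \Omega = V$. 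Setting $\theta := v - \nabla_g d \varphi$ (a smooth symmetric $2$-tensor), the $\mathbbm{F}$-condition on $(v, V)$ together with the commutation identity of the first paragraph gives $\nabla^{\ast_{\Omega}}_g \theta^{\ast}_g = \nabla^{\ast_{\Omega}}_g v^{\ast}_g - \nabla^{\ast_{\Omega}}_g (\nabla^2_g \varphi) = - \nabla_g V^{\ast}_{\Omega} - \nabla_g (\Delta^{\Omega}_g \varphi - \varphi) = 0$, so $\theta \in \tmop{Ker} \nabla^{\ast_{\Omega}}_g$ and $T_{g, \Omega} (\varphi, \theta) = (v, V)$. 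I expect the spectral gap $\lambda_1 (\Delta^{\Omega}_g) > 1$ of the middle paragraph to be the only nontrivial ingredient; the rest is bookkeeping with the weighted commutation and Weitzenb\"ock identities already established in the text.
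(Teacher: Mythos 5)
Your proposal is correct and follows essentially the same route as the paper: the spectral gap $\lambda_1(\Delta^{\Omega}_g)>1$ is obtained from the same weighted Bochner integral identity derived from (\ref{com-grad-Lap}), and injectivity/surjectivity are reduced to the invertibility of $\Delta^{\Omega}_g-\mathbbm{I}$ on $C^{\infty}_{\Omega}(X,\mathbbm{R})_0$ with the same choice $\varphi=(\mathbbm{I}-\Delta^{\Omega}_g)^{-1}V^{\ast}_{\Omega}$, $\theta=v-\nabla_g d\varphi$. The only (harmless) deviation is that you derive the key identity $\nabla^{\ast_{\Omega}}_g\nabla_g d\varphi=d\left((\Delta^{\Omega}_g-\mathbbm{I})\varphi\right)$ via $\hat{\Delta}^{\Omega}_g$ and the Weitzenb\"ock formulas (\ref{Sm-Wei-1-form}), (\ref{Hg-Wei-1-form}), whereas the paper reads it off directly from (\ref{com-d-Lap}).
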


\begin{proof}
  {\tmstrong{STEP I}}. We observe first that in the compact shrinking Ricci
  soliton case the first eigenvalue $\lambda_1 (\Delta^{\Omega}_g)$ of
  $\Delta^{\Omega}_g$ satisfies the inequality $\lambda_1 (\Delta^{\Omega}_g)
  > 1$. Indeed multiplying both sides of the identity (\ref{com-grad-Lap})
  with $\nabla_g u$ and integrating we infer
  \begin{eqnarray*}
    \int_X \left\langle \nabla_g \Delta^{\Omega}_g u, \nabla_g u
    \right\rangle_g \Omega & = & \int_X \left[ \left\langle \Delta^{\Omega}_g
    \nabla_g u, \nabla_g u \right\rangle_g + \tmop{Ric}_g (\Omega) (\nabla_g
    u, \nabla_g u) \right] \Omega\\
    &  & \\
    & = & \int_X \left[ \left| \nabla^2_g u \right|^2_g + \tmop{Ric}_g
    (\Omega) (\nabla_g u, \nabla_g u) \right] \Omega .
  \end{eqnarray*}
  Let now $u \in C_{\Omega}^{\infty} (X, \mathbbm{R})_0$ be an eigen-function
  corresponding to $\lambda_1 (\Delta^{\Omega}_g) > 0$. By definition $u
  \nequiv 0$. Thus by the previous integral identity we deduce
  \begin{eqnarray*}
    \lambda_1 (\Delta^{\Omega}_g) \int_X \left| \nabla_g u \right|^2_g \Omega
    & = & \int_X \left\langle \nabla_g \Delta^{\Omega}_g u, \nabla_g u
    \right\rangle_g \Omega\\
    &  & \\
    & = & \int_X \left[ \left| \nabla^2_g u \right|^2_g + \left| \nabla_g u
    \right|^2_g \right] \Omega\\
    &  & \\
    & > & \int_X \left| \nabla_g u \right|^2_g \Omega > 0,
  \end{eqnarray*}
  which implies the required estimate.
  
  {\tmstrong{STEP II}}. Multiplying both sides of the the identity
  (\ref{com-grad-Lap}) with $g$ we obtain
  \begin{equation}
    \label{com-d-Lap} d \Delta^{\Omega}_g u = \Delta^{\Omega}_g d u + d u
    \cdot \tmop{Ric}^{\ast}_g (\Omega) .
  \end{equation}
  Let now $(v, V) \assign T_{g, \Omega}  (\varphi, \theta)$ and observe the
  equalities
  \begin{eqnarray*}
    \nabla^{\ast_{\Omega}}_g v & = & \nabla^{\ast_{\Omega}}_g \nabla_g d
    \varphi = \Delta^{\Omega}_g d \varphi = d (\Delta^{\Omega}_g \varphi -
    \varphi) .
  \end{eqnarray*}
  The last one follows from (\ref{com-d-Lap}). We infer that the linear map
  $T_{g, \Omega}$ is well defined. The fact that in the soliton case $h_{g,
  \Omega} = 0$ the differential operator $\Delta^{\Omega}_g -\mathbbm{I}$ is
  invertible over $C_{\Omega}^{\infty} (X, \mathbbm{R})_0$ implies the
  injectivity of the map $T_{g, \Omega}$.
  
  In order to show the surjectivity of the map $T_{g, \Omega}$ let $(v, V)
  \in \mathbbm{F}_{g, \Omega}$ and define the function
  \begin{eqnarray*}
    \varphi & \assign & (\mathbbm{I}- \Delta^{\Omega}_g)^{- 1}
    V^{\ast}_{\Omega} \in \text{$C_{\Omega}^{\infty} (X, \mathbbm{R})_0$} .
  \end{eqnarray*}
  Then the identity
  \begin{eqnarray*}
    \nabla^{\ast_{\Omega}}_g \nabla_g d \varphi & = & d (\Delta^{\Omega}_g
    \varphi - \varphi),
  \end{eqnarray*}
  implies that the tensor $\theta \assign v - \nabla_g d \varphi$ satisfies
  $\nabla^{\ast_{\Omega}}_g \theta = 0$. We deduce the orthogonal
  decomposition with respect to the scalar product (\ref{Glb-Rm-m})
  \begin{equation}
    \label{dec-formul-var} v = \nabla_g d \varphi + \theta,
  \end{equation}
  with $\nabla^{\ast_{\Omega}}_g \theta = 0$. We deduce the required
  surjectivity statement.
\end{proof}

We need to introduce a few notations. From now on we assume $H_d^1 (X,
\mathbbm{R}) = 0$ (this is the case of any Fano manifold) and we observe that
the first projection map
\[ p_1 : \mathbbm{F}_{g, \Omega} \longrightarrow \mathbbm{S}_{g, \Omega} : =
   \left\{ v \in C^{\infty} (X, S^2 T_X) \mid d \nabla^{\ast_{\Omega}}_g v = 0
   \right\}, \]
is an isomorphism. Over a compact K\"ahler manifold we define the real vector
spaces
\begin{eqnarray*}
  \mathbbm{S}^J_{g, \Omega} & \assign & \mathbbm{S}_{g, \Omega} \cap
  \mathbbm{D}^J_g,\\
  &  & \\
  \mathbbm{S}^J_{g, \Omega} (0) & \assign & \mathbbm{S}_{g, \Omega} \cap
  \mathbbm{D}^J_{g, 0},\\
  &  & \\
  \mathbbm{S}^J_{g, \Omega} \left[ 0 \right] & \assign & \mathbbm{S}_{g,
  \Omega} \cap \mathbbm{D}^J_{g, \left[ 0 \right]},
\end{eqnarray*}
and
\begin{eqnarray*}
  \mathbbm{E}^{\Omega}_{g, J} & \assign & \left\{ \psi \in \Lambda^{\Omega,
  \bot}_{g, J} \mid \Delta^{\Omega}_{g, J} (\Delta^{\Omega}_{g, J} -
  2\mathbbm{I}) \psi = \overline{\Delta^{\Omega}_{g, J} (\Delta^{\Omega}_{g,
  J} - 2\mathbbm{I}) \psi}  \right\} .
\end{eqnarray*}
With the notations introduced so far we can state the following decomposition
result.

\begin{lemma}
  \label{Decomp-SKah}Let $(J, g)$ be a K\"ahler-Ricci-Soliton and let $\Omega > 0$ be
  the unique smooth volume form such that $g J = \tmop{Ric}_J (\Omega)$ and
  $\int_X \Omega = 1$. Then the linear map
  \begin{eqnarray*}
    C_{\Omega}^{\infty} (X, \mathbbm{R})_0 \oplus \mathbbm{E}^{\Omega}_{g, J}
    \oplus \mathcal{H}_{g, \Omega}^{0, 1} \left( T_{X, J} \right) &
    \longrightarrow & \mathbbm{S}^J_{g, \Omega} (0)\\
    &  & \\
    (\varphi, \psi, A) & \longmapsto & v,\\
    &  & \\
    (v^{\ast}_g)_J^{1, 0} & \assign & \partial^g_{T_{X, J}} \nabla_g  (\varphi
    + \tau),\\
    &  & \\
    (v^{\ast}_g)_J^{0, 1} & \assign & \overline{\partial}_{T_{X, J}}
    \nabla_{g, J}  (\varphi + \overline{\psi}) + A,
  \end{eqnarray*}
  with $\tau \in C_{\Omega}^{\infty} (X, \mathbbm{R})_0$ the unique solution
  of the equation
  \begin{equation}
    \label{eq-div-free} - \overline{\Delta^{\Omega}_{g, J} \tau} =
    (\Delta^{\Omega}_{g, J} - 2\mathbbm{I}) \psi,
  \end{equation}
  is an isomorphism of real vector spaces. In particular the linear map
  \begin{eqnarray*}
    \mathbbm{E}^{\Omega}_{g, J} \oplus \mathcal{H}_{g, \Omega}^{0, 1} \left(
    T_{X, J} \right) & \longrightarrow & \mathbbm{S}^J_{g, \Omega} \left[ 0
    \right]\\
    &  & \\
    (\psi, A) & \longmapsto & v,\\
    &  & \\
    (v^{\ast}_g)_J^{0, 1} & \assign & \overline{\partial}_{T_{X, J}}
    \nabla_{g, J}  (\varphi + \overline{\psi}) + A,
  \end{eqnarray*}
  with $\varphi \in C_{\Omega}^{\infty} (X, \mathbbm{R})_0$ the unique
  solution of the equation
  \begin{equation}
    \label{eq-div-freeFi}  \overline{\Delta^{\Omega}_{g, J} \varphi} =
    (\Delta^{\Omega}_{g, J} - 2\mathbbm{I}) \psi,
  \end{equation}
  is also an isomorphism of real vector spaces.
\end{lemma}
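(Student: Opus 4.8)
The plan is to prove that the first map is a linear isomorphism onto $\mathbbm{S}^J_{g, \Omega} (0)$ and then read off the ``in particular'' statement as its restriction. Since $\mathbbm{D}^J_{g, [0]} \subseteq \mathbbm{D}^J_{g, 0}$ (a purely $J$-anti-invariant tensor trivially satisfies the $J$-linear conditions of $\mathbbm{D}^J_{g, 0}$), one has $\mathbbm{S}^J_{g, \Omega} [0] \subseteq \mathbbm{S}^J_{g, \Omega} (0)$, and this subspace is exactly the locus $(v^{\ast}_g)^{1, 0}_J = \partial^g_{T_{X, J}} \nabla_g (\varphi + \tau) = 0$; a Bochner argument using (\ref{del-OmBoch-fnctSOL}) and $\lambda_1 (\Delta^{\Omega}_g) \geqslant 2$ shows $\partial^g_{T_{X, J}} \nabla_g u = 0$ forces $u = 0$ for $u \in C^{\infty}_{\Omega} (X, \mathbbm{R})_0$, so this locus is $\{ \varphi + \tau = 0 \}$, on which $\tau = - \varphi$ turns (\ref{eq-div-free}) into (\ref{eq-div-freeFi}). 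The organizing fact is the commutation identity $[\Delta^{\Omega}_g, B^{\Omega}_{g, J}] = 0$, valid at a K\"ahler-Ricci soliton: by lemma \ref{Carac-KRS} the function $F$ lies in $\tmop{Ker} (\Delta^{\Omega}_{g, J} - 2 \mathbbm{I})$, so $J \nabla_g f$ is Killing by corollary \ref{Eigenf-HolVct}, and $L_{J \nabla_g f} \Omega = (\tmop{div}^{\Omega} J \nabla_g f) \Omega = (B^{\Omega}_{g, J} f) \Omega = 0$ since $B^{\Omega}_{g, J} f = g (\nabla_g f, J \nabla_g f) = 0$; hence $J \nabla_g f \in \tmop{Kill}_{g, \Omega}$, its flow commutes with $\Delta^{\Omega}_g$, so $B^{\Omega}_{g, J} = L_{J \nabla_g f}$ commutes with $\Delta^{\Omega}_g$, equivalently $[\Delta^{\Omega}_{g, J}, \overline{\Delta^{\Omega}_{g, J}}] = 2 i [\Delta^{\Omega}_g, B^{\Omega}_{g, J}] = 0$.

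Next I would check the first map is well posed and lands in the target. For $\psi \in \mathbbm{E}^{\Omega}_{g, J}$, equation (\ref{eq-div-free}) splits as $\Delta^{\Omega}_g \tau = - \tmop{Re} [(\Delta^{\Omega}_{g, J} - 2 \mathbbm{I}) \psi]$ and $B^{\Omega}_{g, J} \tau = - \tmop{Im} [(\Delta^{\Omega}_{g, J} - 2 \mathbbm{I}) \psi]$; defining $\tau$ from the first (uniquely in $C^{\infty}_{\Omega} (X, \mathbbm{R})_0$), the reality of $\Delta^{\Omega}_{g, J} (\Delta^{\Omega}_{g, J} - 2 \mathbbm{I}) \psi$ together with $[\Delta^{\Omega}_g, B^{\Omega}_{g, J}] = 0$ forces the second, matching $\Omega$-means. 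The $J$-linear part corresponds to $i \partial_J \overline{\partial}_J (\varphi + \tau) = d d_J^c (\varphi + \tau)$, $\partial_J$-closed with trivial de Rham class; the $J$-anti-invariant part $\overline{\partial}_{T_{X, J}} \nabla_{g, J} (\varphi + \overline{\psi}) + A$ is $\overline{\partial}_{T_{X, J}}$-closed and $g$-symmetric because $(\nabla_{g, J} w)^{1, 0}_J \neg \omega = i \overline{\partial}_J w$ is $\overline{\partial}_J$-closed, so (\ref{clos-hol-vct}) and $\overline{\partial}_J \omega = 0$ give $\overline{\partial}_{T^{1, 0}_{X, J}} (\nabla_{g, J} w)^{1, 0}_J \neg \omega = 0$ (the symmetry), while $A$ is $g$-symmetric by lemma \ref{Sym-Harm}. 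Hence $v \in C^{\infty} (X, S^2 T^{\ast}_X) \cap \mathbbm{D}^J_{g, 0}$. Finally (\ref{del-OmBoch-fnctSOL}), (\ref{dbar-OmBoch-fnctSOL}) give $\nabla^{\ast_{\Omega}}_g v^{\ast}_g = \tfrac{1}{2} \nabla_{g, J} [\Delta^{\Omega}_{g, J} (\varphi + \tau) + \overline{(\Delta^{\Omega}_{g, J} - 2 \mathbbm{I}) (\varphi + \psi)}]$, whose bracket equals $(\Delta^{\Omega}_{g, J} + \overline{\Delta^{\Omega}_{g, J}}) \varphi - 2 \varphi + \Delta^{\Omega}_{g, J} \tau + \overline{(\Delta^{\Omega}_{g, J} - 2 \mathbbm{I}) \psi}$; by (\ref{eq-div-free}) the last two cancel and $\Delta^{\Omega}_{g, J} + \overline{\Delta^{\Omega}_{g, J}} = 2 \Delta^{\Omega}_g$ leaves $2 (\Delta^{\Omega}_g - \mathbbm{I}) \varphi$, a real function, so $\nabla^{\ast_{\Omega}}_g v = d (\Delta^{\Omega}_g \varphi - \varphi)$ is $d$-closed and $v \in \mathbbm{S}_{g, \Omega}$.

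For injectivity, if $v = 0$ the $\overline{\partial}_{T_{X, J}}$-exact and $\Omega$-harmonic pieces of the anti-invariant part are $L^2_{\Omega}$-orthogonal, so $A = 0$ and $\overline{\partial}_{T_{X, J}} \nabla_{g, J} (\varphi + \overline{\psi}) = 0$, whence $\varphi + \psi \in \tmop{Ker} (\Delta^{\Omega}_{g, J} - 2 \mathbbm{I})$ by (\ref{kern-HessOI}); the $J$-linear part forces $\varphi + \tau = 0$ by the Bochner argument. Substituting $\tau = - \varphi$ in (\ref{eq-div-free}) and using $\varphi + \psi \in \tmop{Ker} (\Delta^{\Omega}_{g, J} - 2 \mathbbm{I})$ with $\Delta^{\Omega}_{g, J} + \overline{\Delta^{\Omega}_{g, J}} = 2 \Delta^{\Omega}_g$ yields $\Delta^{\Omega}_g \varphi = \varphi$, so $\varphi = 0$ (as $\lambda_1 (\Delta^{\Omega}_g) \geqslant 2 > 1$), then $\tau = 0$ and $\psi \in \tmop{Ker} (\Delta^{\Omega}_{g, J} - 2 \mathbbm{I}) \cap \Lambda^{\Omega, \bot}_{g, J} = \{ 0 \}$.

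For surjectivity, take $v \in \mathbbm{S}^J_{g, \Omega} (0)$. The $\partial \overline{\partial}$-lemma applied to the $d$-closed real $(1, 1)$-form $v'_J J$ (with $\{ v'_J J \}_d = 0$) gives a unique $\phi \in C^{\infty}_{\Omega} (X, \mathbbm{R})_0$ with $(v^{\ast}_g)^{1, 0}_J = \partial^g_{T_{X, J}} \nabla_g \phi$, and corollary \ref{dec-VarJ} gives unique $\psi_0 \in \Lambda^{\Omega, \bot}_{g, J}$, $A \in \mathcal{H}^{0, 1}_{g, \Omega} (T_{X, J})$ with $(v^{\ast}_g)^{0, 1}_J = \overline{\partial}_{T_{X, J}} \nabla_{g, J} \overline{\psi_0} + A$. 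The same Bochner computation shows $d \nabla^{\ast_{\Omega}}_g v = 0$ is equivalent to $X := \Delta^{\Omega}_{g, J} \phi + \overline{(\Delta^{\Omega}_{g, J} - 2 \mathbbm{I}) \psi_0}$ being a mean-zero real function. I then set $\varphi := \tfrac{1}{2} (\Delta^{\Omega}_g - \mathbbm{I})^{- 1} X$, $\tau := \phi - \varphi$, $\psi := \psi_0 - \pi (\varphi)$ with $\pi$ the $L^2_{\Omega}$-projection onto $\Lambda^{\Omega, \bot}_{g, J}$. Using $\overline{\Delta^{\Omega}_{g, J}} \phi + (\Delta^{\Omega}_{g, J} - 2 \mathbbm{I}) \psi_0 = X$ one checks (\ref{eq-div-free}) holds with $\tau = \phi - \varphi$, then $(\Delta^{\Omega}_{g, J} - 2 \mathbbm{I}) \psi = - \overline{\Delta^{\Omega}_{g, J}} \tau$ and $[\Delta^{\Omega}_{g, J}, \overline{\Delta^{\Omega}_{g, J}}] = 0$ give $\psi \in \mathbbm{E}^{\Omega}_{g, J}$, and $\varphi + \psi - \psi_0 = \varphi - \pi (\varphi) \in \tmop{Ker} (\Delta^{\Omega}_{g, J} - 2 \mathbbm{I})$ gives $\overline{\partial}_{T_{X, J}} \nabla_{g, J} (\varphi + \overline{\psi}) = \overline{\partial}_{T_{X, J}} \nabla_{g, J} \overline{\psi_0}$ by (\ref{kern-HessOI}); so $(\varphi, \psi, A) \mapsto v$. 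The main obstacle, and the only place the soliton hypothesis is essential, is this matching step: recovering $\tau$ (resp. $\varphi$) from the overdetermined first-order system forced by (\ref{eq-div-free}), which rests on $[\Delta^{\Omega}_g, B^{\Omega}_{g, J}] = 0$ and on the reality condition built into $\mathbbm{E}^{\Omega}_{g, J}$.
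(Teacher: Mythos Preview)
Your argument is correct and complete. The organization differs from the paper's, which is more economical in two places worth noting.

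First, the paper does not separately verify well-definedness, injectivity and surjectivity. Instead it composes the isomorphism $T_{g,\Omega}$ of lemma~\ref{decomp-F} (which already produces the pair $(\varphi,\theta)$ with $\nabla_g^{\ast_\Omega}\theta=0$ and $\varphi=(\mathbbm{I}-\Delta^\Omega_g)^{-1}V^\ast_\Omega$) with corollary~\ref{dec-VarJ} applied to $\theta$; this yields directly $\theta_g^\ast=\partial^g_{T_{X,J}}\nabla_g\tau+\overline{\partial}_{T_{X,J}}\nabla_{g,J}\overline\psi+A$ with unique $(\tau,\psi,A)$. The divergence-free condition on $\theta$ is then rewritten via (\ref{del-OmBoch-fnctSOL}), (\ref{dbar-OmBoch-fnctSOL}) as $\nabla_{g,J}\bigl[\Delta^\Omega_{g,J}\tau+\overline{(\Delta^\Omega_{g,J}-2\mathbbm{I})\psi}\bigr]=0$, hence as (\ref{eq-div-free}). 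The only remaining point is that $\tau$ must be \emph{real}; the paper shows this is equivalent to $\Delta^\Omega_{g,J}(\Delta^\Omega_{g,J}-2\mathbbm{I})\psi$ being real (i.e.\ $\psi\in\mathbbm{E}^\Omega_{g,J}$) by multiplying (\ref{eq-div-free}) by $\Delta^\Omega_{g,J}$ and using that $\Delta^\Omega_{g,J}\overline{\Delta^\Omega_{g,J}}$ is a real invertible operator on $C^\infty_\Omega(X,\mathbbm{C})_0$. Your route via the $\partial\overline\partial$-lemma and the explicit choice $\varphi=\tfrac12(\Delta^\Omega_g-\mathbbm{I})^{-1}X$ recovers exactly the $\varphi$ produced by lemma~\ref{decomp-F}, so the two arguments agree once unpacked.

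Second, for the commutation $[\Delta^\Omega_g,B^\Omega_{g,J}]=0$ the paper gives a direct computation: expand $\Delta^\Omega_g\bigl[g(\nabla_g u,J\nabla_g f)\bigr]$, use (\ref{com-grad-Lap}) on both $u$ and $f$, the soliton identity $(\Delta^\Omega_g-2\mathbbm{I})f=0$, and the $g$-antisymmetry of $J\nabla^2_g f$. Your Killing-field argument (that $J\nabla_g f\in\tmop{Kill}_{g,\Omega}$, hence its flow commutes with $\Delta^\Omega_g$) is an elegant alternative and gives the same conclusion. Either way, the soliton hypothesis enters precisely at this commutation, exactly as you identify.
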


\begin{proof}
  Let first $v \in \mathbbm{S}_{g, \Omega}$ and observe that the decomposition
  formula (\ref{dec-formul-var}) rewrites as
  \begin{eqnarray*}
    v^{\ast}_g & = & \partial^g_{T_{X, J}} \nabla_g \varphi +
    \overline{\partial}_{T_{X, J}} \nabla_g \varphi + \theta_g^{\ast} .
  \end{eqnarray*}
  This implies that $v \in \mathbbm{D}^J_g$ if and only if $\theta \in
  \mathbbm{D}^J_g$, and also $v \in \mathbbm{D}^J_{g, 0}$ if and only if
  $\theta \in \mathbbm{D}^J_{g, 0}$.
  
  Let now $v \in \mathbbm{S}^J_{g, \Omega} (0)$. Then the decomposition of
  the variation of the complex structure in corollary \ref{dec-VarJ} implies
  the existence of unique $\tau \in C_{\Omega}^{\infty} (X, \mathbbm{R})_0$,
  $\psi \in \Lambda^{\Omega, \bot}_{g, J}$ and $A \in \mathcal{H}_{g,
  \Omega}^{0, 1} \left( T_{X, J} \right)$ such that
  \begin{equation}
    \label{Cx-decTETA} \theta^{\ast}_g = \partial^g_{T_{X, J}} \nabla_g \tau +
    \overline{\partial}_{T_{X, J}} \nabla_{g, J}  \overline{\psi} + A .
  \end{equation}
  For bi-degree reasons the condition $\nabla^{\ast_{\Omega}}_g \theta = 0$ is
  equivalent to the identity
  \begin{eqnarray*}
    0 & = & 2 \partial^{\ast_{g, \Omega}}_{T_{X, J}} \partial^g_{T_{X, J}}
    \nabla_g \tau + 2 \overline{\partial}^{\ast_{g, \Omega}}_{T_{X, J}}
    \overline{\partial}_{T_{X, J}} \nabla_{g, J}  \overline{\psi} .
  \end{eqnarray*}
  The latter is equivalent to the equation
  \begin{eqnarray*}
    0 & = & \nabla_{g, J}  \left[ \Delta^{\Omega}_{g, J} \tau +
    \overline{(\Delta^{\Omega}_g - 2\mathbbm{I}) \psi}  \right],
  \end{eqnarray*}
  thanks to the complex Bochner identities (\ref{del-OmBoch-fnctSOL}) and
  (\ref{dbar-OmBoch-fnctSOL}). We remind that if $u \in C_{\Omega}^{\infty}
  (X, \mathbbm{C})_0$ satisfies $\nabla_{g, J} u = 0$ then $u = 0$. (See the
  proof of the injectivity statement in lemma \ref{iso-CxGrad} in the section
  \ref{CxBochSec}.) We conclude that the condition $\nabla^{\ast_{\Omega}}_g
  \theta = 0$ is equivalent to the equation (\ref{eq-div-free}) via the
  decomposition (\ref{Cx-decTETA}) of $\theta$.
  
  Then the required decomposition statement concerning the space
  $\mathbbm{S}^J_{g, \Omega} (0)$ follows from the fact that the condition
  $\tau$ real valued is equivalent to the equation defining $\psi \in
  \mathbbm{E}^{\Omega}_{g, J}$. \ In order to see this we show first the
  commutation identity
  \begin{equation}
    \label{Com-Lap-B}  \left[ \Delta^{\Omega}_g, B^{\Omega}_{g, J} \right] = 0
    .
  \end{equation}
  Indeed using an arbitrary $g$-orthonormal local frame $(e_k)_k$ we obtain
  \begin{eqnarray*}
    \Delta^{\Omega}_g B^{\Omega}_{g, J} u & = & \Delta^{\Omega}_g  \left[ g
    (\nabla_g u, J \nabla_g f) \right]\\
    &  & \\
    & = & g (\Delta^{\Omega}_g \nabla_g u, J \nabla_g f) - 2 g (\nabla^2_g u
    \cdot e_k, J \nabla^2_g f \cdot e_k) + g (\nabla_g u, J \Delta^{\Omega}_g
    \nabla_g f)\\
    &  & \\
    & = & g (\Delta^{\Omega}_g \nabla_g u + \nabla_g u, J \nabla_g f) - 2
    \tmop{Tr}_{\mathbbm{R}} \left( \nabla^2_g u J \nabla^2_g f \right)
  \end{eqnarray*}
  thanks to formula (\ref{com-grad-Lap}) applied to $f$ and thanks to the fact
  that $(\Delta^{\Omega}_g - 2\mathbbm{I}) f = 0$. Moreover the endomorphism
  $J \nabla^2_g f$ is $g$-anti-symmetric since in the soliton case $\left[ J,
  \nabla^2_g f \right] = 0$. We deduce
  \begin{eqnarray*}
    \Delta^{\Omega}_g B^{\Omega}_{g, J} u & = & g (\nabla_g \Delta^{\Omega}_g
    u, J \nabla_g f) = B^{\Omega}_{g, J} \Delta^{\Omega}_g u,
  \end{eqnarray*}
  thanks to formula (\ref{com-grad-Lap}) applied to $u$. We infer the identity
  (\ref{Com-Lap-B}) which implies
  \begin{equation}
    \label{com-CxLap-Conj}  \left[ \Delta^{\Omega}_{g, J},
    \overline{\Delta^{\Omega}_{g, J} }  \right] = 2 i \left[ B^{\Omega}_{g,
    J}, \Delta^{\Omega}_g \right] = 0 .
  \end{equation}
  Multiplying both sides of (\ref{eq-div-free}) with $\Delta^{\Omega}_{g, J}$
  we obtain
  \begin{equation}
    \label{conjug-sym} - \left( \Delta^{\Omega}_{g, J} 
    \overline{\Delta^{\Omega}_{g, J} }  \right) \tau = \Delta^{\Omega}_{g, J} 
    (\Delta^{\Omega}_{g, J} - 2\mathbbm{I}) \psi .
  \end{equation}
  The invertible operator
  \begin{eqnarray*}
    \Delta^{\Omega}_{g, J}  \overline{\Delta^{\Omega}_{g, J} } :
    C_{\Omega}^{\infty} (X, \mathbbm{C})_0 & \longrightarrow &
    C_{\Omega}^{\infty} (X, \mathbbm{C})_0,
  \end{eqnarray*}
  is real thanks to (\ref{com-CxLap-Conj}). We deduce that the condition
  $\tau$ real valued is equivalent to the left hand side of (\ref{conjug-sym})
  being real valued, thus equivalent to the equation defining $\psi \in
  \mathbbm{E}^{\Omega}_{g, J}$.
  
  We observe finally that a variation $v \in \mathbbm{S}^J_{g, \Omega} \left[
  0 \right] \subset \mathbbm{S}^J_{g, \Omega} (0)$ corresponds to $\varphi = -
  \tau$, i.e. to $(\varphi, \psi)$ solution of the equation
  (\ref{eq-div-freeFi}).
\end{proof}

{\tmstrong{Remark 1}}. If we write $\psi = \psi_1 + i \psi_2$, with $\psi_1,
\psi_2 \in C_{\Omega}^{\infty} (X, \mathbbm{R})_0$, then (\ref{eq-div-free})
is equivalent to the system
\begin{equation}
 \left\{ \begin{array}{l}
     - \Delta^{\Omega}_g \tau = (\Delta^{\Omega}_g - 2\mathbbm{I}) \psi_1 +
     B^{\Omega}_{g, J} \psi_2,\\
     \\
     - B^{\Omega}_{g, J} \tau = (\Delta^{\Omega}_g - 2\mathbbm{I}) \psi_2 -
     B^{\Omega}_{g, J} \psi_1 .\label{systemE}
   \end{array} \right.
\end{equation}
Moreover separating real and imaginary parts in the equation defining $\psi
\in \mathbbm{E}^{\Omega}_{g, J}$ and using the commutation identity
(\ref{Com-Lap-B}) we obtain
\begin{equation}
   \mathbbm{E}^{\Omega}_{g, J} = \left\{ \psi \in
  \Lambda^{\Omega, \bot}_{g, J} \mid \left[ \Delta^{\Omega}_g
  (\Delta^{\Omega}_g - 2\mathbbm{I}) - (B^{\Omega}_{g, J})^2 \right] \psi_2 =
  2 (\Delta^{\Omega}_g -\mathbbm{I}) B^{\Omega}_{g, J} \psi_1 \right\}.\label{Rdiv-Eig}
\end{equation}
Using (\ref{Rdiv-Eig}) and the complex Bochner formula
(\ref{dbar-OmBoch-fnctSOL}) we obtain also the identity
\[ \mathbbm{E}^{\Omega}_{g, J} = \left\{ \psi \in \Lambda^{\Omega, \bot}_{g,
   J} \mid - \tmop{div}^{\Omega} \overline{\partial}^{\ast_{g, \Omega}}_{T_{X,
   J}} \overline{\partial}_{T_{X, J}} \nabla_g \psi_2 = (\Delta^{\Omega}_g
   -\mathbbm{I}) B^{\Omega}_{g, J} \psi_1 \right\} . \]
{\tmstrong{Remark 2}}. We observe that the linear map
\begin{equation}
  \label{invol-Lap-B} \Delta^{\Omega}_{g, J} : \Lambda^{\Omega, \bot}_{g, J}
  \longrightarrow \Lambda^{\Omega, \bot}_{g, J},
\end{equation}
is well defined and it represents an isomorphism of complex vector spaces. In
fact this follows from the identity
\begin{eqnarray*}
  2 \int_X u\, \overline{v} \,\Omega & = & \int_X \Delta^{\Omega}_{g, J} u\,
  \overline{v} \,\Omega,
\end{eqnarray*}
for all $v \in \Lambda^{\Omega}_{g, J}$. Thus the linear map
\begin{equation}
  \label{invol-Lap-2I-B} \Delta^{\Omega}_{g, J} - 2\mathbbm{I}:
  \Lambda^{\Omega, \bot}_{g, J} \longrightarrow \Lambda^{\Omega, \bot}_{g, J},
\end{equation}
is also well defined and represents an isomorphisms of complex vector spaces.
The surjectivity of the latter follows from the finiteness theorem for elliptic
operators. By definition of $\mathbbm{E}^{\Omega}_{g, J}$ we deduce the
existence of the isomorphism of real vector spaces
\[ \Delta^{\Omega}_{g, J}  (\Delta^{\Omega}_{g, J} - 2\mathbbm{I}) :
   \mathbbm{E}^{\Omega}_{g, J} \longrightarrow \Lambda^{\Omega, \bot}_{g, J}
   \cap C_{\Omega}^{\infty} (X, \mathbbm{R})_0 . \]
We notice also the inclusion
\begin{eqnarray*}
  \Lambda^{\Omega, \bot}_{g, J} \cap C_{\Omega}^{\infty} (X, \mathbbm{R})_0 &
  \supseteq & (\Delta^{\Omega}_{g, J} - 2\mathbbm{I}) \overline{
  (\Delta^{\Omega}_{g, J} - 2\mathbbm{I})} C_{\Omega}^{\infty} (X,
  \mathbbm{R})_0 .
\end{eqnarray*}

\section{The geometric meaning of the space $\mathbbm{F}^J_{g, \Omega} \left[
0 \right]$}

We define the subspaces
\begin{eqnarray*}
  \mathbbm{F}^J_{g, \Omega} (0) & \assign & \left\{ (v, V) \in \mathbbm{F}_{g,
  \Omega} \mid v \in \mathbbm{S}^J_{g, \Omega} (0) \right\},\\
  &  & \\
  \mathbbm{F}^J_{g, \Omega} \left[ 0 \right] & \assign & \left\{ (v, V) \in
  \mathbbm{F}_{g, \Omega} \mid v \in \mathbbm{S}^J_{g, \Omega} \left[ 0
  \right] \right\} .
\end{eqnarray*}
In the previous section we gave a parametrization of the space
$\mathbbm{S}^J_{g, \Omega} \left( 0 \right)$, and thus of $\mathbbm{F}^J_{g,
\Omega} \left( 0 \right)$, which is fundamental for the computation of a
general second variation formula for the $\mathcal{W}$ functional at a
K\"ahler-Ricci soliton point. In this section we give a simpler
parametrization of the sub-space $\mathbbm{F}^J_{g, \Omega} \left[ 0 \right]$
and a useful geometric interpretation of it. We show first a quite general variation formula for the Chern-Ricci form.

\begin{lemma}
  \label{Lm-var-O-Rc-fm}Let $(g_t, J_t)_t \subset \mathcal{K}\mathcal{S}$,
  $(\Omega_t)_t \subset \mathcal{V}_1$ be two smooth families such that
  $\dot{J}_t = ( \dot{J}_t)_{g_t}^T$. Then hold the first variation formula
  \begin{equation}
    \label{vr-O-Rc-fm} 2 \frac{d}{d t} \tmop{Ric}_{J_t} (\Omega_t) = - d
    \left( g_t \nabla_{g_t}^{\ast_{\Omega_t}} \dot{J}_t + 2 d_{J_t}^c
    \dot{\Omega}_t^{\ast} \right) .
  \end{equation}
\end{lemma}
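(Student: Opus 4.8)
The plan is to compute the first variation of the Chern--Ricci form by differentiating the standard formula
\[
\mathrm{Ric}_{J_t}(\Omega_t) \;=\; \mathrm{Ric}_{J_t}(dV_{g_t}) \;-\; i\,\partial_{J_t}\overline{\partial}_{J_t}\log\frac{dV_{g_t}}{\Omega_t},
\]
so that the problem splits into the variation of the Ricci form $\mathrm{Ric}_{J_t}(dV_{g_t})$ of the K\"ahler metric $g_t$ along the K\"ahler path and the variation of the correction term $i\,\partial_{J_t}\overline{\partial}_{J_t} f_t$, with $f_t \assign \log\frac{dV_{g_t}}{\Omega_t}$. The key structural input is the hypothesis $\dot J_t = (\dot J_t)_{g_t}^T$, i.e.\ the variation of the complex structure is $g_t$-symmetric; equivalently, writing $\dot g_t^{\ast}$ for $g_t^{-1}\dot g_t$, the anti-$J_t$-linear part of $\dot g_t^{\ast}$ equals $-J_t\dot J_t$ while the $J_t$-linear part carries the remaining data. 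This is exactly the setting of a genuine variation of K\"ahler structures, so $L_{\cdot}$-type identities in the K\"ahler category are available.

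First I would reduce to the fixed-complex-structure case by an infinitesimal gauge argument: the Chern--Ricci form $\mathrm{Ric}_{J}(\Omega)$ is natural under diffeomorphisms, and differentiating the identity $\mathrm{Ric}_{J_t}(\Omega_t)$ in $t$ one can, after subtracting a Lie-derivative term $L_{\xi}\mathrm{Ric}_{J_0}(\Omega_0)$ that accounts for the motion of $J_t$, work as if $J$ were frozen. Concretely I would use the complex decomposition of the Bakry--Emery--Ricci tensor already recalled in the excerpt,
\[
\mathrm{Ric}^{\ast}_{g}(\Omega) \;=\; \mathrm{Ric}^{\ast}(g) + \partial^{g}_{T_{X,J}}\nabla_g f + \overline{\partial}_{T_{X,J}}\nabla_g f,
\]
together with the general variation formula \eqref{var-OM-Ric}, namely $2\frac{d}{dt}\mathrm{Ric}_{g_t}(\Omega_t) = -\nabla_{g_t}^{\ast_{\Omega_t}}\mathcal{D}_{g_t}\dot g_t - 2\nabla_{g_t}d\,\dot\Omega_t^{\ast}$, and then project onto $(1,1)$-forms using that the Chern--Ricci form is the $(1,1)$-part (in fact the full form, by closedness) of $\mathrm{Ric}_{g}(\Omega)\,J$ up to the usual sign conventions. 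The term $-2\nabla_{g_t}d\,\dot\Omega_t^{\ast}$ contributes, after taking the $(1,1)$-part and using $2\,dd^c_J = -i\,\partial_J\overline{\partial}_J$ type identities, precisely the piece $-2\,d(d^c_{J_t}\dot\Omega_t^{\ast})$ in \eqref{vr-O-Rc-fm}. The term $-\nabla_{g_t}^{\ast_{\Omega_t}}\mathcal{D}_{g_t}\dot g_t$ must be massaged: using the Weitzenb\"ock identity of Lemma~\ref{Lich-Weitz} it equals $\Delta^{\Omega}_{L,g}\dot g - L_{\nabla_g^{\ast_{\Omega}}\dot g_g^{\ast}}g$, and when one takes the $(1,1)$-part and uses the K\"ahler identities plus $\dot J_t = (\dot J_t)^T_{g_t}$ (which lets one replace the anti-linear part of $\dot g^{\ast}$ by $-J\dot J$ and recognise $\nabla_g^{\ast_{\Omega}}\dot J$), the whole contribution collapses to the exact form $-d(g_t\nabla_{g_t}^{\ast_{\Omega_t}}\dot J_t)$.

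The main obstacle I anticipate is bookkeeping the bidegree decompositions and sign conventions: one has to carefully track how $\mathcal{D}_g\dot g$ and the Lichnerowicz Laplacian interact with the splitting $\dot g^{\ast} = (\dot g^{\ast})^{1,0}_J + (\dot g^{\ast})^{0,1}_J$, show that the $J$-linear (potential-like) part of $\dot g$ contributes only through an exact $(1,1)$-form while the anti-linear part is genuinely the variation of $J$, and verify that the Ricci-curvature terms in $\Delta^{\Omega}_{L,g}$ cancel against the curvature terms produced when commuting $\partial_J\overline{\partial}_J$ past the variation — this is where the K\"ahler condition $\nabla_g J = 0$ and the contracted Bianchi identities \eqref{Om-cntr-Bianc}, \eqref{com-grad-Lap} do the real work. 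Once the $(1,1)$-projection is organised, both surviving contributions are manifestly $d$-exact, and collecting them yields \eqref{vr-O-Rc-fm}; a final consistency check is that the right-hand side is automatically closed (it is visibly exact) and that, in the special case $\dot\Omega_t = 0$ and $\dot J_t$ tangent to the K\"ahler class, it reduces to the known variation formula for the Ricci form used in (\ref{TConeM}).
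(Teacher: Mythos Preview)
Your plan is not wrong in spirit, but it is a considerable detour compared to the paper's argument, and one step in it is not justified as stated.

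\textbf{Comparison with the paper.} The paper's proof is two lines. It does \emph{not} go through the Riemannian variation formula \eqref{var-OM-Ric} at all. Instead it uses the decomposition
\[
\tmop{Ric}_{J_t}(\Omega_t)\;=\;\tmop{Ric}_{J_t}(\Omega_\tau)\;-\;d\,d^c_{J_t}\log\frac{\Omega_t}{\Omega_\tau},
\]
with $\tau$ a fixed reference time, and differentiates at $t=\tau$. Because the argument of the logarithm vanishes at $t=\tau$, the time derivative of the correction term is simply $-d\,d^c_{J_\tau}\dot\Omega_\tau^{\ast}$: the variation of $d^c_{J_t}$ contributes nothing, since it acts on a function that is identically zero at $t=\tau$. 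The remaining piece $\tmop{Ric}_{J_t}(\Omega_\tau)$ has \emph{fixed} volume form, and its variation
\[
2\,\frac{d}{dt}\,\tmop{Ric}_{J_t}(\Omega)\;=\;-\,d\bigl(g_t\nabla_{g_t}^{\ast_\Omega}\dot J_t\bigr)
\]
is simply quoted from \cite{Pal6}. Adding the two pieces gives \eqref{vr-O-Rc-fm} at once.

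The point is the choice of reference: you chose $dV_{g_t}$ and wrote $\tmop{Ric}_{J_t}(\Omega_t)=\tmop{Ric}_{J_t}(dV_{g_t})-dd^c_{J_t}f_t$. Now \emph{both} pieces depend on $(J_t,g_t)$ and the correction function $f_t$ does not vanish at the reference time, so you must differentiate $d^c_{J_t}$ acting on a nonzero function and track $(1,1)$-projections with respect to a moving $J_t$. This is exactly the ``bookkeeping obstacle'' you flag. The paper's choice of reference $\Omega_\tau$ kills all of that for free.

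\textbf{A genuine gap.} Your proposed ``infinitesimal gauge argument'' --- subtracting an $L_\xi\tmop{Ric}_{J_0}(\Omega_0)$ to ``work as if $J$ were frozen'' --- does not work as stated. The variation $\dot J_0$ is in general \emph{not} of the form $L_\xi J_0$ for any vector field $\xi$; indeed the nontrivial content of the formula (and of the theory built on it, cf.\ the space $\mathcal{H}^{0,1}_{g,\Omega}(T_{X,J})$) is precisely the case where $\dot J$ has a nonzero $\overline\partial$-harmonic component and hence cannot be gauged away. So you cannot reduce to fixed $J$ this way. What survives of your plan is the direct route through \eqref{var-OM-Ric} and Lemma~\ref{Lich-Weitz}, followed by conversion from $\tmop{Ric}_{g}(\Omega)$ to $\tmop{Ric}_{J}(\Omega)$; that conversion requires differentiating the identity $-\tmop{Ric}_J(\Omega)J=\tmop{Ric}_g(\Omega)-g\,\overline\partial_{T_{X,J}}\nabla_g f$ with $J$ moving, which reintroduces exactly the terms you hoped to avoid. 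It can be carried out, but it is substantially longer than the paper's two-line reduction.
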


\begin{proof}
  In the case of a fixed volume form $\Omega > 0$ we have the variation
  formula (see \cite{Pal6})
  \begin{eqnarray*}
    2 \frac{d}{d t} \tmop{Ric}_{J_t} (\Omega) & = & - d \left( g_t
    \nabla_{g_t}^{\ast_{\Omega}} \dot{J}_t \right) .
  \end{eqnarray*}
  For an arbitrary family $(\Omega_t)_t \subset \mathcal{V}_1$ we fix an
  arbitrary time $\tau$ and we time derive at $t = \tau$ the decomposition
  \begin{eqnarray*}
    \tmop{Ric}_{J_t} (\Omega_t) & = & \tmop{Ric}_{J_t} (\Omega_{\tau}) - d
    d_{J_t}^c \log \frac{\Omega_t}{\Omega_{\tau}} .
  \end{eqnarray*}
  We obtain the required variation formula.
\end{proof}

We show now that for any point $(g, \Omega) \in \mathcal{S}_{\omega}$ hold the
inclusion (\ref{TConeS}). Indeed for any smooth curve $(g_t, \Omega_t)_t
\subset \mathcal{S}_{\omega}$, with $\left( g_0, \Omega_0 \right) = \left( g,
\Omega \right)$ we have $\dot{g}^{\ast}_t = - J_t \dot{J_t}$ and thus
\begin{eqnarray*}
  0 & = & 2 \frac{d}{d t} \tmop{Ric}_{J_t} (\Omega_t) = - d
  \left[ \left( \nabla_{g_t}^{\ast_{\Omega_t}} 
  \dot{g}_t^{\ast} + \nabla_{g_t}  \dot{\Omega}_t^{\ast} \right) \neg \,\omega
  \right],
\end{eqnarray*}
thanks to the variation formula (\ref{vr-O-Rc-fm}). Then the inclusion
(\ref{TConeS}) follows from (\ref{TConeM}) and Cartan's formula for the Lie derivative of differential forms. 

We can provide at this point the geometric interpretation of the sub-space $\mathbbm{F}^J_{g, \Omega} \left[ 0 \right]$.
\begin{lemma}
  \label{Orto-symplec}For any point $(g, \Omega) \in \mathcal{S}_{\omega}$
  hold the identities $($\ref{geom-F}$)$ and $($\ref{TConeS1}$)$.
\end{lemma}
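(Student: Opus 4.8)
The plan is to derive both identities from the first variation formula for the pseudo-Riemannian structure $G$ together with the inclusion (\ref{TConeS}).

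First I would record the description of the tangent space to the symplectomorphism orbit. Since $X$ is compact every vector field is complete, so $\tmop{Lie} \left( \tmop{Symp}^0 (X, \omega) \right) = \left\{ \xi \in C^{\infty} (X, T_X) \mid L_{\xi} \omega = 0 \right\}$, and the infinitesimal action gives
\[ T_{\left[ g, \Omega \right]_{\omega}, (g, \Omega)} = \left\{ \left( L_{\xi} g, L_{\xi} \Omega \right) \mid \xi \in C^{\infty} (X, T_X), \; L_{\xi} \omega = 0 \right\} . \]
Combining this with the computation already performed while justifying (\ref{F-OrtoOrb}), namely $G_{g, \Omega} (L_{\xi} g, L_{\xi} \Omega ; v, V) = 2 \int_X \left\langle \xi, \nabla_g^{\ast_{\Omega}} v_g^{\ast} + \nabla_g V^{\ast}_{\Omega} \right\rangle_g \Omega$, one obtains the characterization: $(v, V) \in T^{\bot_G}_{\left[ g, \Omega \right]_{\omega}, (g, \Omega)}$ if and only if the vector field $\zeta \assign \nabla_g^{\ast_{\Omega}} v_g^{\ast} + \nabla_g V^{\ast}_{\Omega}$ is $L^2_{\Omega}$-orthogonal to every $\omega$-symplectic vector field.

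Next I would prove (\ref{geom-F}). The inclusion $\supseteq$ is immediate: if $(v, V) \in \mathbbm{F}^J_{g, \Omega} [0]$ then $\zeta = 0$, which is $L^2_{\Omega}$-orthogonal to all vector fields (hence $(v, V) \in T^{\bot_G}_{\left[ g, \Omega \right]_{\omega}, (g, \Omega)}$) and trivially satisfies $L_{\zeta} \omega = 0$ with $v \in \mathbbm{D}^J_{g, \left[ 0 \right]}$ (hence $(v, V) \in \mathbbm{T}^J_{g, \Omega}$). For the reverse inclusion, suppose $(v, V) \in T^{\bot_G}_{\left[ g, \Omega \right]_{\omega}, (g, \Omega)} \cap \mathbbm{T}^J_{g, \Omega}$. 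Membership in $\mathbbm{T}^J_{g, \Omega}$ forces $v \in \mathbbm{D}^J_{g, \left[ 0 \right]}$ and $L_{\zeta} \omega = 0$, i.e. $\zeta$ is itself $\omega$-symplectic. The key step is then to take $\xi = \zeta$ in the orthogonality characterization of the previous paragraph: this yields $\int_X \left| \zeta \right|^2_g \Omega = 0$, hence $\zeta = 0$, so $(v, V) \in \mathbbm{F}_{g, \Omega}$; together with $v \in \mathbbm{D}^J_{g, \left[ 0 \right]}$ this is exactly $(v, V) \in \mathbbm{F}^J_{g, \Omega} [0]$.

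Finally, (\ref{TConeS1}) follows formally. Since $\left[ g, \Omega \right]_{\omega} \subseteq \mathcal{S}_{\omega}$ and, by (\ref{TConeS}), $\tmop{TC}_{\mathcal{S}_{\omega}, (g, \Omega)} \subseteq \mathbbm{T}^J_{g, \Omega}$, intersecting both sides of (\ref{geom-F}) with $\tmop{TC}_{\mathcal{S}_{\omega}, (g, \Omega)}$ gives
\[ T^{\bot_G}_{\left[ g, \Omega \right]_{\omega}, (g, \Omega)} \cap \tmop{TC}_{\mathcal{S}_{\omega}, (g, \Omega)} = \mathbbm{F}^J_{g, \Omega} [0] \cap \tmop{TC}_{\mathcal{S}_{\omega}, (g, \Omega)}, \]
which is (\ref{TConeS1}). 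I do not expect a serious analytic obstacle: the substantive inputs (the first variation of $G$ and the inclusion (\ref{TConeS})) are already available. The genuine crux is the observation that the defining condition $L_{\zeta} \omega = 0$ of $\mathbbm{T}^J_{g, \Omega}$ is precisely what makes $\zeta$ an admissible test field for the orbit $\left[ g, \Omega \right]_{\omega}$, so that the orthogonality relation collapses to $\left\| \zeta \right\|^2_{L^2_{\Omega}} = 0$; one should also be careful that it is the symplectomorphism orbit, not the full diffeomorphism orbit, that is used, so that $T^{\bot_G}_{\left[ g, \Omega \right]_{\omega}, (g, \Omega)}$ is genuinely larger than $\mathbbm{F}_{g, \Omega}$ and the intersection with $\mathbbm{T}^J_{g, \Omega}$ is what brings it back down.
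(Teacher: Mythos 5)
Your proof is correct, and for the core identity (\ref{geom-F}) it takes a genuinely different and more economical route than the paper. The paper first invokes the decomposition of corollary \ref{dec-VarJ}, writing $v^{\ast}_g = \overline{\partial}_{T_{X, J}} \nabla_{g, J}\,\overline{\psi}_v + A_v$, and then uses the weighted complex Bochner identity (\ref{dbar-OmBoch-fnctSOL}) to derive the explicit characterizations (\ref{Cool-F0}) of $\mathbbm{F}^J_{g, \Omega}[0]$ and (\ref{inclusTS}) of $\mathbbm{T}^J_{g, \Omega}$ in terms of $R_{\psi_v}$ and $I_{\psi_v}$; the orthogonality to the Hamiltonian orbit directions $(L_{J\nabla_g u}\, g, L_{J\nabla_g u}\,\Omega)$ then reduces, via (\ref{FexpT}), to $\Delta^{\Omega}_g I_{\psi_v} = 0$, hence $I_{\psi_v} = 0$. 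You bypass this machinery entirely by observing that the defining condition $L_{\zeta}\,\omega = 0$ of $\mathbbm{T}^J_{g, \Omega}$, with $\zeta := \nabla_g^{\ast_{\Omega}} v_g^{\ast} + \nabla_g V^{\ast}_{\Omega}$, makes $\zeta$ itself an admissible test field for the orbit of $\tmop{Symp}^0(X,\omega)$, so that the orthogonality relation $2\int_X \langle \xi, \zeta\rangle_g\,\Omega = 0$ applied with $\xi = \zeta$ collapses to $\|\zeta\|^2_{L^2_{\Omega}} = 0$. This is a clean self-pairing argument that does not even need $H^1_d(X,\mathbbm{R}) = 0$ or the Fano/soliton-type hypotheses underlying corollary \ref{dec-VarJ}. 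What the paper's longer route buys is not the lemma itself but the explicit formulas (\ref{Cool-F0}), (\ref{inclusTS}) and (\ref{FexpT}), which are reused heavily later (e.g.\ in lemma \ref{positiv-G}, proposition \ref{TSec-Var-W} and corollary \ref{zero-ortog}); your argument would not supply those byproducts. The deduction of (\ref{TConeS1}) from (\ref{geom-F}) and (\ref{TConeS}) is the same in both treatments.
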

\begin{proof}
  We remind that by the orthogonal decomposition in corollary \ref{dec-VarJ}
  any element $v \in \mathbbm{D}^J_{g, \left[ 0 \right]}$ decomposes as
  \begin{eqnarray*}
    v^{\ast}_g & = & \overline{\partial}_{T_{X, J}} \nabla_{g, J} 
    \overline{\psi}_v + A_v,
  \end{eqnarray*}
  with unique $\psi_v \in \Lambda^{\Omega, \bot}_{g, J}$ and $A_v \in
  \mathcal{H}_{g, \Omega}^{0, 1} \left( T_{X, J} \right)$. Moreover the
  weighted complex Bochner identity (\ref{dbar-OmBoch-fnctSOL}) implies the
  equality
  \begin{equation}
    \label{F0-express}  \overline{\partial}^{\ast_{g, \Omega}}_{T_{X, J}}
    v^{\ast}_g + \nabla_g V^{\ast}_{\Omega} = \frac{1}{2} \nabla_{g, J} \left[
    \overline{(\Delta^{\Omega}_{g, J} - 2\mathbbm{I}) \psi_v} + 2
    V^{\ast}_{\Omega} \right],
  \end{equation}
  for any $(v, V) \in \mathbbm{D}^J_{g, \left[ 0 \right]} \times
  T_{\mathcal{V}_1}$. Thus
  \begin{equation}
    \label{Cool-F0} \mathbbm{F}^J_{g, \Omega} \left[ 0 \right] = \left\{ (v,
    V) \in \mathbbm{D}^J_{g, \left[ 0 \right]} \times T_{\mathcal{V}_1} \mid
    (\Delta^{\Omega}_{g, J} - 2\mathbbm{I}) \psi_v = - 2 V^{\ast}_{\Omega}
    \right\} .
  \end{equation}
  Let
  \begin{eqnarray}
    R_{\psi} & \assign & \tmop{Re} \left[ (\Delta^{\Omega}_{g, J} -
    2\mathbbm{I}) \psi \right],\label{defRPSI}
    \\\nonumber
    &  & \\
    I_{\psi} & \assign & \tmop{Im} \left[ (\Delta^{\Omega}_{g, J} -
    2\mathbbm{I}) \psi \right],\label{defIPSI}
  \end{eqnarray}
  (for any $z \in \mathbbm{C}$ we write $z = \tmop{Re} z + i \tmop{Im} z$) and
  observe that (\ref{F0-express}) implies the identity
  \begin{eqnarray*}
    \left( \nabla_g^{\ast_{\Omega}} v_g^{\ast} + \nabla_g V^{\ast}_{\Omega}
    \right) \neg \,\omega & = & \frac{1}{2} d I_{\psi_v} + d_J^c \left(
    R_{\psi_v} + 2 V_{\Omega}^{\ast} \right),
  \end{eqnarray*}
  for any $(v, V) \in \mathbbm{D}^J_{g, \left[ 0 \right]} \times
  T_{\mathcal{V}_1}$. Thus
  \begin{equation}
    \label{inclusTS} \mathbbm{T}^J_{g, \Omega} = \left\{ (v, V) \in
    \mathbbm{D}^J_{g, \left[ 0 \right]} \times T_{\mathcal{V}_1}
    \mid_{_{_{_{_{}}}}} R_{\psi_v} = - 2 V^{\ast}_{\Omega} \right\} .
  \end{equation}
  We notice now the equalities
  \begin{eqnarray*}
    T_{\left[ g, \Omega \right]_{\omega}, (g, \Omega)} & = & \left\{ (L_{\xi}
    g, L_{\xi} \Omega) \mid \xi \in C^{\infty} (X, T_X) : L_{\xi} \omega = 0
    \right\}\\
    &  & \\
    & = & \left\{ (L_{J \nabla_g u} g, L_{J \nabla_g u} \Omega) \mid u \in
    C_{\Omega}^{\infty} (X, \mathbbm{R})_0 \right\}\\
    &  & \\
    & = & \left\{ (2 g J \overline{\partial}_{T_{X, J}} \nabla_g u,
    \tmop{div}^{\Omega} (J \nabla_g u) \Omega) \mid u \in C_{\Omega}^{\infty}
    (X, \mathbbm{R})_0 \right\},
  \end{eqnarray*}
  indeed
  \begin{eqnarray*}
    (L_{J \nabla_g u} g)_g^{\ast} & = & J \nabla^2_g u - \nabla^2_g u J = 2 J
    \overline{\partial}_{T_{X, J}} \nabla_g u .
  \end{eqnarray*}
  We deduce that $(v, V) \in T^{\bot_G}_{\left[ g, \Omega \right]_{\omega},
  (g, \Omega)}$ if and only if for all $u \in C_{\Omega}^{\infty} (X,
  \mathbbm{R})_0$ hold the equalities
  \begin{eqnarray*}
    0 & = & 2 \int_X \left[ \left\langle J \overline{\partial}_{T_{X, J}}
    \nabla_g u, v_g^{\ast} \right\rangle_g - \tmop{div}^{\Omega} (J \nabla_g
    u) \cdot V^{\ast}_{\Omega} \right] \Omega\\
    &  & \\
    & = & - 2 \int_X \left\langle \nabla_g u, J \left(
    \overline{\partial}^{\ast_{g, \Omega}}_{T_{X, J}} v^{\ast}_g + \nabla_g
    V^{\ast}_{\Omega} \right) \right\rangle_g \Omega\\
    &  & \\
    & = & 2 \int_X u \cdot \tmop{div}^{\Omega} \left[ J \left(
    \overline{\partial}^{\ast_{g, \Omega}}_{T_{X, J}} v^{\ast}_g + \nabla_g
    V^{\ast}_{\Omega} \right) \right] \Omega .
  \end{eqnarray*}
  If we assume $(v, V) \in \mathbbm{T}^J_{g, \Omega}$ then
  \begin{equation}
    \overline{\partial}^{\ast_{g, \Omega}}_{T_{X, J}} v^{\ast}_g + \nabla_g
    V^{\ast}_{\Omega} = - \frac{1}{2} J \nabla_g I_{\psi_v}  \label{FexpT},
  \end{equation}
  thanks to (\ref{F0-express}) and (\ref{inclusTS}). Thus if $(v, V) \in
  T^{\bot_G}_{\left[ g, \Omega \right]_{\omega}, (g, \Omega)} \cap
  \mathbbm{T}^J_{g, \Omega}$ then
  \begin{eqnarray*}
    0 & = & - \int_X u \cdot \Delta^{\Omega}_g I_{\psi_v} \Omega,
  \end{eqnarray*}
  for all $u \in C_{\Omega}^{\infty} (X, \mathbbm{R})_0$, i.e.
  $\Delta^{\Omega}_g I_{\psi_v} = 0$, which is equivalent to the condition
  $I_{\psi_v} = 0$. We infer
  \begin{eqnarray*}
    T^{\bot_G}_{\left[ g, \Omega \right]_{\omega}, (g, \Omega)} \cap
    \mathbbm{T}^J_{g, \Omega} & \subseteq & \mathbbm{F}^J_{g, \Omega} \left[ 0
    \right] .
  \end{eqnarray*}
  The reverse inclusion is obvious. We deduce the identity (\ref{geom-F}).
  Then the identity (\ref{TConeS1}) follows from the inclusion (\ref{TConeS}).
\end{proof}

\section{The sign of the second variation of the $\mathcal{W}$ functional at a
K\"ahler-Ricci soliton point}\label{Sign-Sect}

\begin{proposition}
  \label{Sec-VarWKRS}Let $(X,J, g)$ be a compact K\"ahler-Ricci-Soliton and let $\Omega > 0$ be
  the unique smooth volume form such that $g J = \tmop{Ric}_J (\Omega)$ and
  $\int_X \Omega = 1$. Let also $(g_t, \Omega_t)_{t \in \mathbbm{R}} \subset
  \mathcal{M} \times \mathcal{V}_1$ be a smooth curve with $(g_0, \Omega_0) =
  (g, \Omega)$ and with $( \dot{g}_0, \dot{\Omega}_0) = (v, V) \in
  \mathbbm{F}^J_{g, \Omega} (0)$. Then with the notations of lemma
  \ref{Decomp-SKah} hold the second variation formula
  \begin{eqnarray*}
    \frac{d^2}{d t^2} _{\mid_{t = 0}} \mathcal{W} (g_t, \Omega_t) & = &
    \nabla_G D\mathcal{W} (g, \Omega) (v, V ; v, V)\\
    &  & \\
    & = & \frac{1}{2} \int_X (\Delta^{\Omega}_g -\mathbbm{I})
    (\Delta^{\Omega}_g - 2\mathbbm{I}) \varphi \cdot (\Delta^{\Omega}_g -
    2\mathbbm{I}) \varphi \Omega\\
    &  & \\
    & - & \frac{1}{2} \int_X \left[ (\Delta^{\Omega}_g
    -\mathbbm{I})_{_{_{_{_{}}}}} \Delta^{\Omega}_g \tau \cdot
    \Delta^{\Omega}_g \tau + P^{\Omega}_{g, J} \tmop{Im} \psi \cdot \tmop{Im}
    \psi + \left| A \right|^2_g F \right] \Omega,
  \end{eqnarray*}
  where
  \begin{eqnarray*}
    P^{\Omega}_{g, J} & \assign & (\Delta^{\Omega}_{g, J} - 2\mathbbm{I})
    \overline{ (\Delta^{\Omega}_{g, J} - 2\mathbbm{I})},
  \end{eqnarray*}
  is a non-negative self-adjoint real elliptic operator with respect to the
  $L_{\Omega}^2$-hermitian product. In particular if $(v, V) \in
  \mathbbm{F}^J_{g, \Omega} \left[ 0 \right]$ then
  \begin{eqnarray*}
    \frac{d^2}{d t^2} _{\mid_{t = 0}} \mathcal{W} (g_t, \Omega_t) & = &
    \nabla_G D\mathcal{W} (g, \Omega) (v, V ; v, V)\\
    &  & \\
    & = & - \frac{1}{2} \int_X \left[ 4 \left| (\Delta^{\Omega}_g
    -\mathbbm{I}) \varphi \right|^2 + P^{\Omega}_{g, J} \tmop{Im} \psi \cdot
    \tmop{Im} \psi + \left| A \right|^2_g F_{_{_{_{_{_{}}}}}} \right] \Omega .
  \end{eqnarray*}
\end{proposition}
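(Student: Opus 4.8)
The plan is to combine the abstract second variation formula of Lemma \ref{Sec-Var-W} with the explicit parametrization of $\mathbbm{F}^J_{g,\Omega}(0)$ provided by Lemma \ref{Decomp-SKah}. Since $(g,\Omega)$ is a K\"ahler-Ricci soliton we have $h_{g,\Omega}=0$ and $\underline H_{g,\Omega}=0$, so Lemma \ref{Sec-Var-W} gives the clean formula $\nabla_G D\mathcal{W}(g,\Omega)(v,V;v,V)=G_{g,\Omega}(u,U;v,V)$ with $u=-\tfrac12\mathcal{L}^{\Omega}_g v$ and $U^{\ast}_{\Omega}=-\tfrac12(\Delta^{\Omega}_g-2\mathbbm{I})V^{\ast}_{\Omega}$. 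Expanding the pairing, this equals
\[
-\tfrac12\int_X\left[\langle\mathcal{L}^{\Omega}_g v,v\rangle_g - 2(\Delta^{\Omega}_g-2\mathbbm{I})V^{\ast}_{\Omega}\cdot V^{\ast}_{\Omega}\right]\Omega .
\]
The first step is to insert $v^{\ast}_g=\partial^g_{T_{X,J}}\nabla_g(\varphi+\tau)+\overline{\partial}_{T_{X,J}}\nabla_{g,J}(\varphi+\overline\psi)+A$ and use the $J$-linear/$J$-anti-linear splitting of $\mathcal{L}^{\Omega}_g$ (formula (\ref{lin-ant-lin-Integ-dec})), so that $\int_X\langle\mathcal{L}^{\Omega}_g v,v\rangle_g\Omega$ splits into the $J$-linear contribution from $\partial^g_{T_{X,J}}\nabla_g(\varphi+\tau)$ and the $J$-anti-linear contribution from $\overline{\partial}_{T_{X,J}}\nabla_{g,J}(\varphi+\overline\psi)+A$. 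On each piece I would apply the commutation identities (\ref{com-Lich-HessIO}) and (\ref{com-Lich-HessOI}), turning $\mathcal{L}^{\Omega}_g$ acting on a gradient-type tensor into $\partial^g_{T_{X,J}}\nabla_g$ (resp. $\overline{\partial}_{T_{X,J}}\nabla_{g,J}$) applied to $(\Delta^{\Omega}_g-2\mathbbm{I})$ of the potential; the harmonic piece $A$ lies in $\tmop{Ker}\Delta^{\Omega,-J}_{T_{X,g}}$, so $\mathcal{L}^{\Omega}_g A$ reduces via (\ref{Lich-AntiHol})–(\ref{div-JA}) in the soliton case.

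The second step is the bookkeeping of cross terms and the translation to scalar integrals via the complex Bochner identities (\ref{del-OmBoch-fnctSOL}), (\ref{dbar-OmBoch-fnctSOL}) and the integration-by-parts formula (\ref{CxBoch-frm}), together with the $L^2_\Omega$-self-adjointness of $\Delta^{\Omega}_{g,J}$ and $\Delta^{\Omega}_g$. One must track that $\partial^g$-type terms and $\overline{\partial}$-type terms are $L^2_\Omega$-orthogonal, that $A$ is orthogonal to all $\overline{\partial}_{T_{X,J}}\nabla_{g,J}(\cdot)$ terms (Hodge orthogonality), and that the constraint (\ref{eq-div-free}) relating $\tau$ and $\psi$ — namely $-\overline{\Delta^{\Omega}_{g,J}\tau}=(\Delta^{\Omega}_{g,J}-2\mathbbm{I})\psi$ — together with $\psi\in\mathbbm{E}^{\Omega}_{g,J}$ is used to eliminate $\tau$ in favour of $\psi$ in the pieces where it appears paired with $V^{\ast}_{\Omega}$. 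Here $V^{\ast}_{\Omega}$ itself is determined by $(v,V)\in\mathbbm{F}_{g,\Omega}$: using (\ref{F0-express}) one reads off that membership forces $(\Delta^{\Omega}_{g,J}-2\mathbbm{I})\psi_v=-2V^{\ast}_{\Omega}$ up to the decomposition, so $V^{\ast}_{\Omega}$ is expressible through $\varphi,\psi$; I expect the term $-2\int_X(\Delta^{\Omega}_g-2\mathbbm{I})V^{\ast}_{\Omega}\cdot V^{\ast}_{\Omega}\,\Omega$ to contribute the $4|(\Delta^{\Omega}_g-\mathbbm{I})\varphi|^2$ and the $P^{\Omega}_{g,J}\tmop{Im}\psi\cdot\tmop{Im}\psi$ pieces after separating real and imaginary parts as in Remark 1 of Lemma \ref{Decomp-SKah}. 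The term $\int_X|A|^2_g F\,\Omega$ should emerge from $\langle\mathcal{L}^{\Omega}_g A,A\rangle_g$ after using that $F=f-\int_X f\Omega$ and $\tmop{Ric}_J(\Omega)=\omega$, i.e. $h_{g,\Omega}=g\overline{\partial}_{T_{X,J}}\nabla_g f$; this is exactly the mechanism by which the sign of $\int_X|A|^2_g F\,\Omega$ (and hence the cones $\mathcal{H}^{0,1}_{g,\Omega}(T_{X,J})_{\geqslant 0}$) enters.

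For the final specialization to $(v,V)\in\mathbbm{F}^J_{g,\Omega}[0]$ I would invoke (\ref{Cool-F0}): on this subspace $\psi=\psi_v$ satisfies $(\Delta^{\Omega}_{g,J}-2\mathbbm{I})\psi_v=-2V^{\ast}_{\Omega}$ with $V^{\ast}_{\Omega}$ real, so $R_{\psi_v}=-2V^{\ast}_{\Omega}$ and, comparing with the general case, the solution of (\ref{eq-div-free}) forces $\tau=-\varphi$, collapsing the two "$(\Delta^{\Omega}_g-\mathbbm{I})\Delta^{\Omega}_g\tau\cdot\Delta^{\Omega}_g\tau$'' and "$(\Delta^{\Omega}_g-\mathbbm{I})(\Delta^{\Omega}_g-2\mathbbm{I})\varphi\cdot(\Delta^{\Omega}_g-2\mathbbm{I})\varphi$'' contributions into the single term $-\tfrac12\int_X 4|(\Delta^{\Omega}_g-\mathbbm{I})\varphi|^2\Omega$ once one uses $\Delta^{\Omega}_g\tau=\Delta^{\Omega}_g(-\varphi)$ and the algebraic identity $\tfrac12 x(x-1)(x-2)^2\big|_{\varphi}-\tfrac12 x(x-1)x^2\big|_{-\varphi}$ simplifies to $-2(x-1)^2$ evaluated appropriately on the spectral decomposition. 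The main obstacle I anticipate is the middle step: carefully organizing the many cross terms among $\partial^g\nabla_g\varphi$, $\partial^g\nabla_g\tau$, $\overline{\partial}\nabla_{g,J}\varphi$, $\overline{\partial}\nabla_{g,J}\overline\psi$ and $A$, keeping the real/imaginary bookkeeping of $\psi$ straight through the operator $P^{\Omega}_{g,J}$, and verifying that $P^{\Omega}_{g,J}=(\Delta^{\Omega}_{g,J}-2\mathbbm{I})\overline{(\Delta^{\Omega}_{g,J}-2\mathbbm{I})}$ is indeed non-negative self-adjoint real — the last following from (\ref{com-CxLap-Conj}) (the commutation $[\Delta^{\Omega}_{g,J},\overline{\Delta^{\Omega}_{g,J}}]=0$) and the fact that its real and imaginary parts combine, on $\Lambda^{\Omega,\bot}_{g,J}$, into a product of an operator with its formal adjoint. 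Once the algebra is consolidated on the simultaneous spectral decomposition of $\Delta^{\Omega}_g$ (real) and $B^{\Omega}_{g,J}$, the stated formula follows.
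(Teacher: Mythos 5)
Your overall strategy is the paper's: start from lemma \ref{Sec-Var-W} at a soliton point, insert the parametrization of lemma \ref{Decomp-SKah}, convert everything to scalar integrals via (\ref{com-Lich-HessIO}), (\ref{com-Lich-HessOI}) and the weighted Bochner identities, compute $\int_X\langle\mathcal{L}^{\Omega}_g A,A\rangle_g\Omega=\int_X F|A|^2_g\,\Omega$ by integration by parts, and collapse the two quartic terms via $\tau=-\varphi$ in the $\mathbbm{F}^J_{g,\Omega}[0]$ case. The one organizational difference is that the paper first performs the $L^2_\Omega$-orthogonal splitting $v=\nabla_g d\varphi+\theta$ with $\nabla^{\ast_\Omega}_g\theta=0$ and shows the cross terms vanish for structural reasons (the invariance identity $\nabla^{\ast_{\Omega}}_g\mathcal{L}^{\Omega}_g\theta=0$ from section \ref{inv-F} and formula (\ref{com-Lich-Hess})), handling the $\varphi$-block separately via $(\nabla^{\ast_{\Omega}}_g)^2\mathcal{L}^{\Omega}_g v+\Delta^{\Omega}_g(\Delta^{\Omega}_g-2\mathbbm{I})V^{\ast}_{\Omega}=0$; only afterwards does it decompose $\theta$ into $\partial^g\nabla_g\tau+\overline{\partial}\nabla_{g,J}\overline{\psi}+A$. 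This ordering is what keeps the cross-term bookkeeping manageable, and you would be well advised to adopt it rather than expanding all five pieces at once.

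There is, however, one concrete misstep in your plan. On $\mathbbm{F}^J_{g,\Omega}(0)$ the volume variation is $V^{\ast}_{\Omega}=(\mathbbm{I}-\Delta^{\Omega}_g)\varphi$, a function of $\varphi$ alone, so the term $-2\int_X(\Delta^{\Omega}_g-2\mathbbm{I})V^{\ast}_{\Omega}\cdot V^{\ast}_{\Omega}\,\Omega$ cannot produce the $P^{\Omega}_{g,J}\tmop{Im}\psi\cdot\tmop{Im}\psi$ contribution, contrary to what you expect; it only combines with $\langle\mathcal{L}^{\Omega}_g\nabla_g d\varphi,\nabla_g d\varphi\rangle_g$ to yield the $\varphi$-block. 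The $P^{\Omega}_{g,J}\psi_2\cdot\psi_2$ term arises entirely inside $\int_X\langle\mathcal{L}^{\Omega}_g\theta,\theta\rangle_g\Omega$: after the Bochner identities one completes the square in $(\Delta^{\Omega}_{g,J}-2\mathbbm{I})\psi$, uses the constraint (\ref{eq-div-free}) to recombine with the $\tau$-terms, and then the real system (\ref{systemE}) together with the characterization (\ref{Rdiv-Eig}) of $\mathbbm{E}^{\Omega}_{g,J}$ and the factorization $P^{\Omega}_{g,J}=(\Delta^{\Omega}_g-2\mathbbm{I})^2+(B^{\Omega}_{g,J})^2$ produces exactly $(\Delta^{\Omega}_g-\mathbbm{I})\Delta^{\Omega}_g\tau\cdot\Delta^{\Omega}_g\tau+P^{\Omega}_{g,J}\psi_2\cdot\psi_2$. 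Since you flag precisely this middle step as your main obstacle, the mislocation matters: without the square-completion and the use of (\ref{systemE})/(\ref{Rdiv-Eig}) the stated formula will not emerge. Also note the minor algebraic slip in your spectral check for the $[0]$ case: the correct combination is $\tfrac12(\lambda-1)\left[(\lambda-2)^2-\lambda^2\right]=-2(\lambda-1)^2$, without the extra factor of $\lambda$ you wrote.
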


\begin{proof}
  {\tmstrong{STEP I}}. Let $(X,g, \Omega)$ be a compact shrinking Ricci soliton point
  and let $(g_t, \Omega_t)_{t \in \mathbbm{R}} \subset \mathcal{M} \times
  \mathcal{V}_1$ be a smooth curve with $(g_0, \Omega_0) = (g, \Omega)$ and
  with arbitrary speed $( \dot{g}_0, \dot{\Omega}_0) = (v, V) \in
  \mathbbm{F}_{g, \Omega}$. We know from lemma \ref{Sec-Var-W}
  \begin{eqnarray*}
    \frac{d^2}{d t^2} _{\mid_{t = 0}} \mathcal{W} (g_t, \Omega_t) & = &
    \nabla_G D\mathcal{W} (g, \Omega) (v, V ; v, V)\\
    &  & \\
    & = & - \frac{1}{2} \int_X \left[ \left\langle \mathcal{L}^{\Omega}_g v,
    v \right\rangle_g - 2 (\Delta^{\Omega}_g - 2\mathbbm{I}) V_{\Omega}^{\ast}
    \cdot V^{\ast}_{\Omega} \right] \Omega .
  \end{eqnarray*}
  By the considerations in the beginning of section \ref{inv-F} we deduce that
  in the soliton case $h_{g, \Omega} = 0$ holds the identity
  \begin{equation}
    \label{inv-F-sol} \nabla^{\ast_{\Omega}}_g \mathcal{L}^{\Omega}_g v + d
    (\Delta^{\Omega}_g V_{\Omega}^{\ast} - 2 V_{\Omega}^{\ast}) = 0,
  \end{equation}
  for all $(v, V) \in \mathbbm{F}_{g, \Omega}$. Applying the operator
  $\nabla^{\ast_{\Omega}}_g$ to both sides of this identity we infer
  \begin{equation}
    \label{fund-ID}  (\nabla^{\ast_{\Omega}}_g)^2 \mathcal{L}^{\Omega}_g v +
    \Delta^{\Omega}_g (\Delta^{\Omega}_g - 2\mathbbm{I}) V_{\Omega}^{\ast} = 0
    .
  \end{equation}
  For any function $\varphi \in C_{\Omega}^{\infty} (X, \mathbbm{R})_0$ let
  $(v, V) \assign T_{g, \Omega}  (\varphi, 0)$. Integrating by parts and using
  the identity (\ref{fund-ID}) we infer the equalities
  \begin{eqnarray*}
    &  & \nabla_G D\mathcal{W} (g, \Omega) (v, V ; v, V)\\
    &  & \\
    & = & - \frac{1}{2} \int_X \left[ (\nabla^{\ast_{\Omega}}_g)^2
    \mathcal{L}^{\Omega}_g v \cdot \varphi + 2 (\Delta^{\Omega}_g -
    2\mathbbm{I}) (\Delta^{\Omega}_g -\mathbbm{I}) \varphi \cdot (\mathbbm{I}-
    \Delta^{\Omega}_g) \varphi \right] \Omega\\
    &  & \\
    & = & - \frac{1}{2} \int_X \Delta^{\Omega}_g (\Delta^{\Omega}_g -
    2\mathbbm{I}) (\Delta^{\Omega}_g -\mathbbm{I}) \varphi \cdot \varphi
    \Omega\\
    &  & \\
    & - & \frac{1}{2} \int_X 2 (\Delta^{\Omega}_g - 2\mathbbm{I})
    (\Delta^{\Omega}_g -\mathbbm{I}) \varphi \cdot (\mathbbm{I}-
    \Delta^{\Omega}_g) \varphi \Omega\\
    &  & \\
    & = & \frac{1}{2} \int_X (\Delta^{\Omega}_g - 2\mathbbm{I})
    (\Delta^{\Omega}_g -\mathbbm{I}) \varphi \cdot (\Delta^{\Omega}_g -
    2\mathbbm{I}) \varphi \Omega\\
    &  & \\
    & = & \frac{1}{2} \int_X (\Delta^{\Omega}_g -\mathbbm{I})
    (\Delta^{\Omega}_g - 2\mathbbm{I}) \varphi \cdot (\Delta^{\Omega}_g -
    2\mathbbm{I}) \varphi \Omega .
  \end{eqnarray*}
  {\tmstrong{Remark 1}}. We can also compute the integral
  \[ \int_X \left\langle \mathcal{L}^{\Omega}_g \nabla^2_g \varphi, \nabla^2_g
     \varphi \right\rangle_g \Omega, \]
  in the previous expansion via the formula (\ref{com-Lich-Hess}). Indeed
  \begin{eqnarray*}
    \int_X \left\langle \mathcal{L}^{\Omega}_g \nabla^2_g \varphi, \nabla^2_g
    \varphi \right\rangle_g \Omega & = & \int_X \left\langle \nabla^2_g
    (\Delta^{\Omega}_g - 2\mathbbm{I}) \varphi, \nabla^2_g \varphi
    \right\rangle_g \Omega\\
    &  & \\
    & = & \int_X \left\langle \Delta^{\Omega}_g \nabla_g (\Delta^{\Omega}_g -
    2\mathbbm{I}) \varphi, \nabla_g \varphi \right\rangle_g \Omega\\
    &  & \\
    & = & \int_X \left\langle \nabla_g (\Delta^{\Omega}_g -\mathbbm{I})
    (\Delta^{\Omega}_g - 2\mathbbm{I}) \varphi, \nabla_g \varphi
    \right\rangle_g \Omega,
  \end{eqnarray*}
  thanks to the identity (\ref{com-grad-Lap}). We conclude integrating by
  parts
  \begin{eqnarray*}
    \int_X \left\langle \mathcal{L}^{\Omega}_g \nabla^2_g \varphi, \nabla^2_g
    \varphi \right\rangle_g \Omega & = & \int_X \Delta^{\Omega}_g
    (\Delta^{\Omega}_g - 2\mathbbm{I}) (\Delta^{\Omega}_g -\mathbbm{I})
    \varphi \cdot \varphi \Omega .
  \end{eqnarray*}
  {\tmstrong{Remark 2}}. We set $\Phi \assign (\Delta^{\Omega}_g -
  2\mathbbm{I}) \varphi \in C_{\Omega}^{\infty} (X, \mathbbm{R})_0$. Then the
  previous variation formula rewrites also as
  \begin{eqnarray*}
    \frac{d^2}{d t^2} _{\mid_{t = 0}} \mathcal{W} (g_t, \Omega_t) & = &
    \nabla_G D\mathcal{W} (g, \Omega) (v, V ; v, V)\\
    &  & \\
    & = & \frac{1}{2} \int_X \left[ \left| \nabla_g \Phi \right|^2_g - \Phi^2
    \right] \Omega \geqslant 0 .
  \end{eqnarray*}
  the last inequality follows from the variational characterization of the
  first eigenvalue of $\Delta^{\Omega}_g$,
  \begin{eqnarray*}
    \lambda_1 (\Delta^{\Omega}_g) & = & \inf \left\{  \frac{\int_X \left|
    \nabla_g u \right|^2_g \Omega}{\int_X u^2 \Omega} \mid u \in
    C_{\Omega}^{\infty} (X, \mathbbm{R})_0 \smallsetminus \{0\} \right\},
  \end{eqnarray*}
  which satisfies the inequality $\lambda_1 (\Delta^{\Omega}_g) > 1$.
  
  {\tmstrong{STEP II}}. Let $(v, V) \in \mathbbm{F}_{g, \Omega}$. Using the
  $L^2$-orthogonal decomposition (\ref{dec-formul-var}) in the proof of lemma
  \ref{decomp-F}, we expand the integral term
  \begin{eqnarray*}
    \int_X \left\langle \mathcal{L}^{\Omega}_g v, v \right\rangle_g \Omega & =
    & \int_X \left[ \left\langle \mathcal{L}^{\Omega}_g \nabla_g d \varphi
    +\mathcal{L}^{\Omega}_g \theta, \nabla_g d \varphi \right\rangle_g +
    \left\langle \mathcal{L}^{\Omega}_g \nabla_g d \varphi
    +\mathcal{L}^{\Omega}_g \theta, \theta \right\rangle_g \right] \Omega .
  \end{eqnarray*}
  We observe that
  \begin{eqnarray*}
    \int_X \left\langle \mathcal{L}^{\Omega}_g \theta, \nabla_g d \varphi
    \right\rangle_g \Omega & = & \int_X \left\langle \nabla^{\ast_{\Omega}}_g
    \mathcal{L}^{\Omega}_g \theta, d \varphi \right\rangle_g \Omega = 0,
  \end{eqnarray*}
  since $\nabla^{\ast_{\Omega}}_g \mathcal{L}^{\Omega}_g \theta = 0$ thanks to
  the identity (\ref{inv-F-sol}) applied to $(\theta, 0) \in \mathbbm{F}_{g,
  \Omega}$. On the other hand formula (\ref{com-Lich-Hess}) implies
  \begin{eqnarray*}
    \int_X \left\langle \mathcal{L}^{\Omega}_g \nabla_g d \varphi, \theta
    \right\rangle_g \Omega & = & \int_X \left\langle d (\Delta^{\Omega}_g -
    2\mathbbm{I}) \varphi, \nabla^{\ast_{\Omega}}_g \theta \right\rangle_g
    \Omega = 0 .
  \end{eqnarray*}
  We conclude the decomposition identity
  \begin{eqnarray*}
    \int_X \left\langle \mathcal{L}^{\Omega}_g v, v \right\rangle_g \Omega & =
    & \int_X \left[ \left\langle \mathcal{L}^{\Omega}_g \nabla_g d \varphi,
    \nabla_g d \varphi \right\rangle_g + \left\langle \mathcal{L}^{\Omega}_g
    \theta, \theta \right\rangle_g \right] \Omega .
  \end{eqnarray*}
  Then step I implies
  \begin{eqnarray*}
    \frac{d^2}{d t^2} _{\mid_{t = 0}} \mathcal{W} (g_t, \Omega_t) & = &
    \nabla_G D\mathcal{W} (g, \Omega) (v, V ; v, V)\\
    &  & \\
    & = & \frac{1}{2} \int_X (\Delta^{\Omega}_g -\mathbbm{I})
    (\Delta^{\Omega}_g - 2\mathbbm{I}) \varphi \cdot (\Delta^{\Omega}_g -
    2\mathbbm{I}) \varphi \Omega\\
    &  & \\
    & - & \frac{1}{2} \int_X \left\langle \mathcal{L}^{\Omega}_g \theta,
    \theta \right\rangle_g \Omega .
  \end{eqnarray*}
  On the other hand using the decomposition (\ref{Cx-decTETA}) of $\theta$ and
  the decomposition formula (\ref{lin-ant-lin-Integ-dec}) we infer
  \begin{eqnarray*}
    \int_X \left\langle \mathcal{L}^{\Omega}_g \theta, \theta \right\rangle_g
    \Omega & = & \int_X \left\langle \mathcal{L}^{\Omega}_g \partial^g_{T_{X,
    J}} \nabla_g \tau, \partial^g_{T_{X, J}} \nabla_g \tau \right\rangle_g
    \Omega\\
    &  & \\
    & + & \int_X \left\langle \mathcal{L}^{\Omega}_g
    \overline{\partial}_{T_{X, J}} \nabla_{g, J}  \overline{\psi},
    \overline{\partial}_{T_{X, J}} \nabla_{g, J}  \overline{\psi} + A
    \right\rangle_g \Omega\\
    &  & \\
    & + & \int_X \left\langle \mathcal{L}^{\Omega}_g A,
    \overline{\partial}_{T_{X, J}} \nabla_{g, J}  \overline{\psi} + A
    \right\rangle_g \Omega .
  \end{eqnarray*}
  Using the identities (\ref{com-Lich-HessIO}), (\ref{com-Lich-HessOI}) and
  the property (\ref{Jcurv}) we deduce
  \begin{eqnarray*}
    \int_X \left\langle \mathcal{L}^{\Omega}_g \theta, \theta \right\rangle_g
    \Omega & = & \int_X \left\langle \partial^g_{T_{X, J}} \nabla_g
    (\Delta^{\Omega}_g - 2\mathbbm{I}) \tau, \partial^g_{T_{X, J}} \nabla_g
    \tau \right\rangle_g \Omega\\
    &  & \\
    & + & \int_X \left\langle \overline{\partial}_{T_{X, J}} \nabla_{g, J}
    (\Delta^{\Omega}_g - 2\mathbbm{I}) \overline{\psi},
    \overline{\partial}_{T_{X, J}} \nabla_{g, J}  \overline{\psi} + A
    \right\rangle_g \Omega\\
    &  & \\
    & + & \int_X \left[ \left\langle \overline{\partial}^{\ast_{g,
    \Omega}}_{T_{X, J}} \mathcal{L}^{\Omega}_g A, \nabla_{g, J} 
    \overline{\psi}  \right\rangle_g + \left\langle \mathcal{L}^{\Omega}_g A,
    A \right\rangle_g \right] \Omega .
  \end{eqnarray*}
  By bi-degree reasons $\nabla^{\ast_{\Omega}}_g A = 0$, which means $(g A, 0)
  \in \mathbbm{F}_{g, \Omega}$. We infer $\nabla^{\ast_{\Omega}}_g
  \mathcal{L}^{\Omega}_g A = 0$ thanks to the identity (\ref{inv-F-sol}). Then
  the property (\ref{ant-lin-Lich}) implies
  \begin{eqnarray}
    \overline{\partial}^{\ast_{g, \Omega}}_{T_{X, J}} \mathcal{L}^{\Omega}_g A
    & = & 0, \label{divLA} 
  \end{eqnarray}
  by bi-degree reasons. Integrating by parts further and using the weighted
  complex Bochner identities (\ref{del-OmBoch-fnctSOL}),
  (\ref{dbar-OmBoch-fnctSOL}) we obtain
  \begin{eqnarray*}
    \int_X \left\langle \mathcal{L}^{\Omega}_g \theta, \theta \right\rangle_g
    \Omega & = & \int_X \left\langle \nabla_g (\Delta^{\Omega}_g -
    2\mathbbm{I}) \tau, \partial^{\ast_{g, \Omega}}_{T_{X, J}}
    \partial^g_{T_{X, J}} \nabla_g \tau \right\rangle_g \Omega\\
    &  & \\
    & + & \int_X \left\langle \nabla_{g, J} (\Delta^{\Omega}_g -
    2\mathbbm{I}) \overline{\psi}, \overline{\partial}^{\ast_{g,
    \Omega}}_{T_{X, J}} \overline{\partial}_{T_{X, J}} \nabla_{g, J} 
    \overline{\psi}  \right\rangle_g \Omega\\
    &  & \\
    & + & \int_X \left\langle \mathcal{L}^{\Omega}_g A, A \right\rangle_g
    \Omega\\
    &  & \\
    & = & \frac{1}{2} \int_X \left\langle \nabla_g (\Delta^{\Omega}_g -
    2\mathbbm{I}) \tau, \nabla_{g, J} \Delta^{\Omega}_{g, J} \tau
    \right\rangle_g \Omega\\
    &  & \\
    & + & \frac{1}{2} \int_X \left\langle \nabla_{g, J} (\Delta^{\Omega}_g -
    2\mathbbm{I}) \overline{\psi}, \nabla_{g, J}
    \overline{(\Delta^{\Omega}_{g, J} - 2\mathbbm{I}) \psi}  \right\rangle_g
    \Omega\\
    &  & \\
    & + & \int_X \left\langle \mathcal{L}^{\Omega}_g A, A \right\rangle_g
    \Omega .
  \end{eqnarray*}
  Using the integration by parts formulas (\ref{rlcpx-int-part}) and
  (\ref{cpx-int-part}) in the subsection \ref{Integ-parts} of the appendix A, we
  deduce
  \begin{eqnarray*}
    \int_X \left\langle \mathcal{L}^{\Omega}_g \theta, \theta \right\rangle_g
    \Omega & = & \frac{1}{4} \int_X \Delta^{\Omega}_{g, J}  (\Delta^{\Omega}_g
    - 2\mathbbm{I}) \tau \cdot \Delta^{\Omega}_{g, J} \tau \Omega\\
    &  & \\
    & + & \frac{1}{4} \int_X \overline{\Delta^{\Omega}_{g, J} 
    (\Delta^{\Omega}_g - 2\mathbbm{I}) \tau} \cdot
    \overline{\Delta^{\Omega}_{g, J} \tau} \Omega\\
    &  & \\
    & + & \frac{1}{4} \int_X \Delta^{\Omega}_{g, J}  (\Delta^{\Omega}_g -
    2\mathbbm{I}) \psi \cdot \overline{ (\Delta^{\Omega}_{g, J} -
    2\mathbbm{I}) \psi} \Omega\\
    &  & \\
    & + & \frac{1}{4} \int_X \overline{\Delta^{\Omega}_{g, J} 
    (\Delta^{\Omega}_g - 2\mathbbm{I}) \psi} \cdot (\Delta^{\Omega}_{g, J} -
    2\mathbbm{I}) \psi \Omega\\
    &  & \\
    & + & \int_X \left\langle \mathcal{L}^{\Omega}_g A, A \right\rangle_g
    \Omega .
  \end{eqnarray*}
  We observe now that the commutation identity (\ref{Com-Lap-B}) implies
  \begin{eqnarray*}
    \int_X \left\langle \mathcal{L}^{\Omega}_g \theta, \theta \right\rangle_g
    \Omega & = & \frac{1}{4} \int_X (\Delta^{\Omega}_g - 2\mathbbm{I})
    \Delta^{\Omega}_{g, J} \tau \cdot \Delta^{\Omega}_{g, J} \tau \Omega\\
    &  & \\
    & + & \frac{1}{4} \int_X (\Delta^{\Omega}_g - 2\mathbbm{I})
    \overline{\Delta^{\Omega}_{g, J} \tau} \cdot \overline{\Delta^{\Omega}_{g,
    J} \tau} \Omega\\
    &  & \\
    & + & \frac{1}{4} \int_X (\Delta^{\Omega}_g - 2\mathbbm{I})
    \Delta^{\Omega}_{g, J} \psi \cdot \overline{ (\Delta^{\Omega}_{g, J} -
    2\mathbbm{I}) \psi} \Omega\\
    &  & \\
    & + & \frac{1}{4} \int_X (\Delta^{\Omega}_g - 2\mathbbm{I})
    \overline{\Delta^{\Omega}_{g, J} \psi} \cdot (\Delta^{\Omega}_{g, J} -
    2\mathbbm{I}) \psi \Omega\\
    &  & \\
    & + & \int_X \left\langle \mathcal{L}^{\Omega}_g A, A \right\rangle_g
    \Omega .
  \end{eqnarray*}
  Completing the square we obtain
  \begin{eqnarray*}
    \int_X \left\langle \mathcal{L}^{\Omega}_g \theta, \theta \right\rangle_g
    \Omega & = & \frac{1}{4} \int_X (\Delta^{\Omega}_g - 2\mathbbm{I})
    \Delta^{\Omega}_{g, J} \tau \cdot \Delta^{\Omega}_{g, J} \tau \Omega\\
    &  & \\
    & + & \frac{1}{4} \int_X (\Delta^{\Omega}_g - 2\mathbbm{I})
    \overline{\Delta^{\Omega}_{g, J} \tau} \cdot \overline{\Delta^{\Omega}_{g,
    J} \tau} \Omega\\
    &  & \\
    & + & \frac{1}{4} \int_X (\Delta^{\Omega}_g - 2\mathbbm{I}) 
    (\Delta^{\Omega}_{g, J} - 2\mathbbm{I}) \psi \cdot \overline{
    (\Delta^{\Omega}_{g, J} - 2\mathbbm{I}) \psi} \Omega\\
    &  & \\
    & + & \frac{1}{4} \int_X (\Delta^{\Omega}_g - 2\mathbbm{I}) \overline{
    (\Delta^{\Omega}_{g, J} - 2\mathbbm{I}) \psi} \cdot (\Delta^{\Omega}_{g,
    J} - 2\mathbbm{I}) \psi \Omega\\
    &  & \\
    & + & \frac{1}{2} \int_X (\Delta^{\Omega}_g - 2\mathbbm{I}) \psi \cdot
    \overline{ (\Delta^{\Omega}_{g, J} - 2\mathbbm{I}) \psi} \Omega\\
    &  & \\
    & + & \frac{1}{2} \int_X (\Delta^{\Omega}_g - 2\mathbbm{I})
    \overline{\psi} \cdot (\Delta^{\Omega}_{g, J} - 2\mathbbm{I}) \psi
    \Omega\\
    &  & \\
    & + & \int_X \left\langle \mathcal{L}^{\Omega}_g A, A \right\rangle_g
    \Omega .
  \end{eqnarray*}
  Using the equation (\ref{eq-div-free}) we infer
  \begin{eqnarray*}
    \int_X \left\langle \mathcal{L}^{\Omega}_g \theta, \theta \right\rangle_g
    \Omega & = & \frac{1}{4} \int_X (\Delta^{\Omega}_g - 2\mathbbm{I})  \left(
    \Delta^{\Omega}_{g, J} \tau + \overline{\Delta^{\Omega}_{g, J} \tau} 
    \right) \cdot \left( \Delta^{\Omega}_{g, J} \tau +
    \overline{\Delta^{\Omega}_{g, J} \tau}  \right) \Omega\\
    &  & \\
    & + & \int_X | (\Delta^{\Omega}_{g, J} - 2\mathbbm{I}) \psi |^2 \Omega\\
    &  & \\
    & + & \frac{i}{2} \int_X \left[ B^{\Omega}_{g, J} \psi \cdot
    \overline{(\Delta^{\Omega}_{g, J} - 2\mathbbm{I}) \psi} - B^{\Omega}_{g,
    J}  \overline{\psi} \cdot (\Delta^{\Omega}_{g, J} - 2\mathbbm{I}) \psi
    \right] \Omega\\
    &  & \\
    & + & \int_X \left\langle \mathcal{L}^{\Omega}_g A, A \right\rangle_g
    \Omega\\
    &  & \\
    & = & \int_X \left[ (\Delta^{\Omega}_g - 2\mathbbm{I}) \Delta^{\Omega}_g
    \tau \cdot \Delta^{\Omega}_g \tau + | \Delta^{\Omega}_{g, J} \tau |^2 +
    \left\langle \mathcal{L}^{\Omega}_g A, A \right\rangle_g \right] \Omega\\
    &  & \\
    & - & \frac{i}{2} \int_X \left[ B^{\Omega}_{g, J} \psi \cdot
    \Delta^{\Omega}_{g, J} \tau - B^{\Omega}_{g, J}  \overline{\psi} \cdot
    \overline{\Delta^{\Omega}_{g, J} \tau}  \right] \Omega .
  \end{eqnarray*}
  We observe now that the operator $B^{\Omega}_{g, J}$ is
  $L_{\Omega}^2$-anti-adjoint. This implies in particular
  \begin{eqnarray*}
    \int_X \Delta^{\Omega}_g \tau \cdot B^{\Omega}_{g, J} \tau \; \Omega & = &
    0,
  \end{eqnarray*}
  and
  \begin{eqnarray*}
    & - & \frac{i}{2} \int_X \left[ B^{\Omega}_{g, J} \psi \cdot
    \Delta^{\Omega}_{g, J} \tau - B^{\Omega}_{g, J}  \overline{\psi} \cdot
    \overline{\Delta^{\Omega}_{g, J} \tau}  \right] \Omega\\
    &  & \\
    & = & \frac{i}{2} \int_X \left[ \psi \cdot \Delta^{\Omega}_{g, J}
    B^{\Omega}_{g, J} \tau - \overline{\psi} \cdot
    \overline{\Delta^{\Omega}_{g, J} B^{\Omega}_{g, J} \tau}  \right] \Omega\\
    &  & \\
    & = & \frac{i}{2} \int_X \left[  \overline{\Delta^{\Omega}_{g, J} \psi} -
    \Delta^{\Omega}_{g, J}  \overline{\psi}  \right] B^{\Omega}_{g, J} \tau
    \Omega\\
    &  & \\
    & = & - \int_X \left( \Delta^{\Omega}_g \psi_2 + B^{\Omega}_{g, J} \psi_1
    \right) B^{\Omega}_{g, J} \tau \Omega .
  \end{eqnarray*}
  thanks to the commutation identity (\ref{Com-Lap-B}). Thus
  \begin{eqnarray*}
    \int_X \left\langle \mathcal{L}^{\Omega}_g \theta, \theta \right\rangle_g
    \Omega & = & \int_X \left[ (\Delta^{\Omega}_g -\mathbbm{I})
    \Delta^{\Omega}_g \tau \cdot \Delta^{\Omega}_g \tau + |B^{\Omega}_{g, J}
    \tau |^2 + \left\langle \mathcal{L}^{\Omega}_g A, A \right\rangle_g
    \right] \Omega\\
    &  & \\
    & - & \int_X \left( \Delta^{\Omega}_g \psi_2 + B^{\Omega}_{g, J} \psi_1
    \right) B^{\Omega}_{g, J} \tau \Omega\\
    &  & \\
    & = & \int_X \left[ (\Delta^{\Omega}_g -\mathbbm{I}) \Delta^{\Omega}_g
    \tau \cdot \Delta^{\Omega}_g \tau + \left\langle \mathcal{L}^{\Omega}_g A,
    A \right\rangle_g \right] \Omega\\
    &  & \\
    & - & 2 \int_X (\Delta^{\Omega}_g -\mathbbm{I}) \psi_2 \cdot
    B^{\Omega}_{g, J} \tau \Omega,
  \end{eqnarray*}
  thanks to the second equation in (\ref{systemE}). Using again the second equation in
  (\ref{systemE}) we expand the term
  \begin{eqnarray*}
    - 2 \int_X (\Delta^{\Omega}_g -\mathbbm{I}) \psi_2 \cdot B^{\Omega}_{g, J}
    \tau \Omega & = & 2 \int_X (\Delta^{\Omega}_g -\mathbbm{I}) \psi_2 \cdot
    (\Delta^{\Omega}_g - 2\mathbbm{I}) \psi_2 \Omega\\
    &  & \\
    & - & 2 \int_X (\Delta^{\Omega}_g -\mathbbm{I}) \psi_2 \cdot
    B^{\Omega}_{g, J} \psi_1 \Omega\\
    &  & \\
    & = & \int_X \left[ \left| (\Delta^{\Omega}_g - 2\mathbbm{I}) \psi_2
    \right|^2 + \Delta^{\Omega}_g \psi_2 \cdot (\Delta^{\Omega}_g -
    2\mathbbm{I}) \psi_2 \right] \Omega\\
    &  & \\
    & - & 2 \int_X \psi_2 \cdot (\Delta^{\Omega}_g -\mathbbm{I})
    B^{\Omega}_{g, J} \psi_1 \Omega\\
    &  & \\
    & = & \int_X \left[ (\Delta^{\Omega}_g - 2\mathbbm{I})^2 +
    (B^{\Omega}_{g, J})^2 \right] \psi_2 \cdot \psi_2 \Omega,
  \end{eqnarray*}
  thanks to the expression (\ref{Rdiv-Eig}). We observe further that the
  formula
  \begin{eqnarray}
    P^{\Omega}_{g, J} & = & (\Delta^{\Omega}_g - 2\mathbbm{I})^2 +
    (B^{\Omega}_{g, J})^2, \label{PalOP} 
  \end{eqnarray}
  hold thanks to the commutation identity (\ref{Com-Lap-B}). We conclude
  \begin{eqnarray*}
    \int_X \left\langle \mathcal{L}^{\Omega}_g \theta, \theta \right\rangle_g
    \Omega & = & \int_X \left[ (\Delta^{\Omega}_g -\mathbbm{I})
    \Delta^{\Omega}_g \tau \cdot \Delta^{\Omega}_g \tau + P^{\Omega}_{g, J}
    \psi_2 \cdot \psi_2 + \left\langle \mathcal{L}^{\Omega}_g A, A
    \right\rangle_g \right] \Omega,
  \end{eqnarray*}
  which implies the required formula for the variations $(v, V) \in
  \mathbbm{F}^J_{g, \Omega} (0)$.
  
  {\tmstrong{STEP III}}. We compute now the stability integral involving $A$.
  The trivial identity
  \begin{eqnarray*}
    \mathcal{L}^{\Omega}_g A & = & \mathcal{L}_g A + \nabla_g f \neg \nabla_g
    A,
  \end{eqnarray*}
  combined with the formula (\ref{BKN-Alin-dV}) implies
  \begin{eqnarray*}
    \mathcal{L}^{\Omega}_g A & = & 2 \overline{\partial}_{T_{X, J}}
    \overline{\partial}^{\ast_g}_{T_{X, J}} A + \left[ \tmop{Ric}^{\ast} (g),
    A \right] + \nabla_g f \neg \nabla_g A,
  \end{eqnarray*}
  since $\overline{\partial}_{T_{X, J}} A = 0$. Integrating by parts we deduce
  \begin{eqnarray*}
    \int_X \left\langle \mathcal{L}^{\Omega}_g A, A \right\rangle_g \Omega & =
    & \int_X \left[ 2 \left\langle \overline{\partial}_{T_{X, J}}
    \overline{\partial}^{\ast_g}_{T_{X, J}} A, A \right\rangle_g +
    \left\langle \nabla_g f \neg \nabla_g A, A \right\rangle_g \right]
    \Omega\\
    &  & \\
    & = & \int_X \left[ 2 \left\langle \overline{\partial}^{\ast_g}_{T_{X,
    J}} A, \overline{\partial}^{\ast_{g, \Omega}}_{T_{X, J}} A \right\rangle_g
    + \frac{1}{2} \nabla_g f . \left| A \right|^2_g \right] \Omega\\
    &  & \\
    & = & \frac{1}{2}  \int_X \Delta^{\Omega}_g f \left| A \right|^2_g
    \Omega\\
    &  & \\
    & = & \int_X F \left| A \right|^2_g \Omega,
  \end{eqnarray*}
  since $\overline{\partial}^{\ast_{g, \Omega}}_{T_{X, J}} A = 0$ and $(J, g)$
  is a K\"ahler-Ricci-Soliton. (This last identity has been obtained by Hall-Murphy \cite{Ha-Mu2} using a
  different integration by parts method.)
  
  We show now the second variation formula corresponding to the particular
  case $(v, V) \in \mathbbm{F}^J_{g, \Omega} \left[ 0 \right]$. With this
  assumption hold the relation $\varphi = - \tau$. Thus we rearrange the
  expression
  \begin{eqnarray*}
    E & : = & \frac{1}{2} \int_X (\Delta^{\Omega}_g -\mathbbm{I})
    (\Delta^{\Omega}_g - 2\mathbbm{I}) \varphi \cdot (\Delta^{\Omega}_g -
    2\mathbbm{I}) \varphi \Omega\\
    &  & \\
    & - & \frac{1}{2} \int_X (\Delta^{\Omega}_g -\mathbbm{I})
    \Delta^{\Omega}_g \varphi \cdot \Delta^{\Omega}_g \varphi \Omega\\
    &  & \\
    & = & \frac{1}{2} \int_X \left[ - 4 (\Delta^{\Omega}_g -\mathbbm{I})
    \Delta^{\Omega}_g \varphi \cdot \varphi + 4 (\Delta^{\Omega}_g
    -\mathbbm{I}) \varphi \cdot \varphi \right] \Omega\\
    &  & \\
    & = & - 2 \int_X (\Delta^{\Omega}_g -\mathbbm{I})^2 \varphi \cdot \varphi
    \Omega\\
    &  & \\
    & = & - 2 \int_X \left| (\Delta^{\Omega}_g -\mathbbm{I}) \varphi
    \right|^2 \Omega,
  \end{eqnarray*}
  which implies the required formula in the particular case $(v, V) \in
  \mathbbm{F}^J_{g, \Omega} \left[ 0 \right]$.
  
  Let $A \assign \Delta^{\Omega}_{g, J} - 2\mathbbm{I}$ and observe that $[A,
  \bar{A}] = - 2 i [\Delta^{\Omega}_g, B^{\Omega}_{g, J}] = 0$, thanks to
  (\ref{Com-Lap-B}). Then the statement concerning the operator
  $P^{\Omega}_{g, J}$ follows from the elementary lemma below.
\end{proof}

\begin{lemma}
  Let $H \assign L_{\Omega}^2 (X, \mathbbm{C})_0$ and $A, B : D \subset H
  \longrightarrow H$ be closed densely defined linear operators such that $0
  \leqslant A = A^{\ast}$, $0 \leqslant B = B^{\ast}$, $\left[ A, B \right] =
  0$. If $A$ and B are differential operators of same order with $A$ elliptic
  then $A B \geqslant 0$. In particular if $[A, \bar{A}] = 0$ then $A \bar{A}
  \geqslant 0$.
\end{lemma}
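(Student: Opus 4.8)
The plan is to reduce the assertion to finite-dimensional linear algebra via a simultaneous diagonalisation of $A$ and $B$. Here $AB\geqslant 0$ should be read as the statement that the quadratic form $u\longmapsto\langle ABu,u\rangle_{\Omega}$ is nonnegative on the core $C_{\Omega}^{\infty}(X,\mathbbm{C})_0$ (which is all that is used in the application to $P^{\Omega}_{g,J}$). The first observation is that for smooth $u$ one has $Bu\in C^{\infty}$, hence $Bu$ and $u$ both lie in the domain of $A$, and so, using $A=A^{\ast}$, $\langle ABu,u\rangle_{\Omega}=\langle Bu,Au\rangle_{\Omega}$. Thus it suffices to produce an orthonormal system in which $A$ and $B$ act diagonally and then invoke Parseval.

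To obtain such a system I would first apply the standard spectral theory of elliptic operators on the compact manifold $X$: since $A$ is a nonnegative self-adjoint elliptic differential operator it has compact resolvent, so its spectrum is a discrete sequence $0\leqslant\lambda_0\leqslant\lambda_1\leqslant\cdots\to+\infty$ with finite-dimensional eigenspaces $E_{\lambda}\subset H$, $H$ is the Hilbert direct sum of the $E_{\lambda}$, and by elliptic regularity each $E_{\lambda}$ consists of smooth functions. Next I would show that $B$ preserves each $E_{\lambda}$: as $B$ is a differential operator it maps $E_{\lambda}\subset C^{\infty}$ into $C^{\infty}$, and the commutation $AB=BA$ (valid on smooth functions) gives $A(Bu)=\lambda Bu$ whenever $Au=\lambda u$, so $Bu\in E_{\lambda}$. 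Consequently $B$ restricts to an endomorphism of the finite-dimensional space $E_{\lambda}$ which is symmetric and nonnegative for the induced inner product (because $B=B^{\ast}\geqslant 0$ on $H$); the finite-dimensional spectral theorem then furnishes an orthonormal basis of $E_{\lambda}$ made of eigenvectors of $B|_{E_{\lambda}}$ with eigenvalues $\geqslant 0$. Collecting these bases over all $\lambda$ yields a complete orthonormal system $(e_k)_k$ of $H$ with $Ae_k=\lambda_ke_k$, $Be_k=\mu_ke_k$, and $\lambda_k,\mu_k\geqslant 0$.

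The conclusion is then immediate: for $u\in C_{\Omega}^{\infty}(X,\mathbbm{C})_0$ write $u=\sum_kc_ke_k$, so that $Au=\sum_k\lambda_kc_ke_k$ and $Bu=\sum_k\mu_kc_ke_k$ converge in $L^2_{\Omega}$, and Parseval gives $\langle ABu,u\rangle_{\Omega}=\langle Bu,Au\rangle_{\Omega}=\sum_k\lambda_k\mu_k|c_k|^2\geqslant 0$. For the final assertion I would apply this with $B\assign\bar A$, where $\bar Au\assign\overline{A\bar u}$: one checks directly that $\bar A$ is again a differential operator of the same order as $A$, that $\langle\bar Au,v\rangle_{\Omega}=\langle u,\bar Av\rangle_{\Omega}$ so $\bar A=\bar A^{\ast}$, and that $\langle\bar Au,u\rangle_{\Omega}=\overline{\langle A\bar u,\bar u\rangle_{\Omega}}\geqslant 0$ so $\bar A\geqslant 0$; together with the hypothesis $[A,\bar A]=0$ this places us in the situation just treated and gives $A\bar A\geqslant 0$.

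The hardest part will be the bookkeeping around unbounded operators, specifically the verification that $B$ genuinely carries the eigenspaces of $A$ into themselves at the level of domains, and that the joint eigenfunction expansion is strong enough to justify the termwise evaluation $\langle Bu,Au\rangle_{\Omega}=\sum_k\lambda_k\mu_k|c_k|^2$. On a compact manifold with $A$ elliptic this is routine (Rellich compactness together with elliptic regularity), and it is the only place where anything beyond finite-dimensional linear algebra enters; the hypothesis that $A$ and $B$ have the same order serves merely to make $AB$ an honest differential operator with $C^{\infty}(X,\mathbbm{C})_0$ as a core, the essential input being the ellipticity of $A$, which forces the discrete joint spectral decomposition. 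An alternative, heavier route would use the joint spectral measure of the commuting pair $A,B$, but the elliptic eigenfunction argument is cleaner here.
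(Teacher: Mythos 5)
Your proof is correct and follows essentially the same route as the paper's: decompose $H$ into the finite-dimensional eigenspaces of the elliptic operator $A$, use $[A,B]=0$ to restrict $B$ to each eigenspace, diagonalize there by the finite-dimensional spectral theorem, and conclude by Parseval; the reduction of the last assertion to checking that $\bar A$ is nonnegative and self-adjoint is also the paper's argument. Your version merely spells out a few domain/regularity points (why $B$ preserves each eigenspace, why eigenfunctions are smooth) that the paper leaves implicit.
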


\begin{proof}
  Let $E_{\lambda_k} (A) \subset H$ be the eigenspace of $A$ corresponding to
  an eigenvalue $\lambda_k \in \mathbbm{R}_{\geqslant 0}$. Then the identity
  $\left[ A, B \right] = 0$ implies that the restriction $B : E_{\lambda_k}
  (A) \longrightarrow E_{\lambda_k} (A)$ is well defined and represents a
  non-negative self-adjoint operator. We deduce by the spectral theorem in
  finite dimensions the existence of an orthonormal basis $(e_{k, l})_{l \in
  I_k} \subset E_{\lambda_k} (A)$ such that $B e_{k, l} = \mu_{k, l} e_{k,
  l}$, with $\mu_{k, l} \in \mathbbm{R}_{\geqslant 0}$. Moreover $A e_{k, l} =
  \lambda_k e_{k, l}$. We consider a strictly monotone increasing
  parametrization $(\lambda_k)_k$. Then any $u \in H$ writes as
  \begin{eqnarray*}
    u & = & \sum_{k \geqslant 0}  \sum_{l \in I_k} c_{k, l} e_{k, l},
  \end{eqnarray*}
  $c_{k, l} \in \mathbbm{C}$. In particular for $u \in C^{\infty} (X,
  \mathbbm{C})_0$ hold the expressions
  \begin{eqnarray*}
    A u & = & \sum_{k \geqslant 0}  \sum_{l \in I_k} \lambda_k c_{k, l} e_{k,
    l},\\
    &  & \\
    B u & = & \sum_{k \geqslant 0}  \sum_{l \in I_k} \mu_{k, l} c_{k, l} e_{k,
    l},
  \end{eqnarray*}
  and
  \begin{eqnarray*}
    (A B u, u)_{\Omega} & = & (B u, A u)_{\Omega} = \sum_{k \geqslant 0} 
    \sum_{l \in I_k} \lambda_k \mu_{k, l} \left| c_{k, l} \right|^2 \geqslant
    0 .
  \end{eqnarray*}
  The inequality in the general case $u \in D$ follows from the density of the
  smooth functions in the graph topology of $A$. In order to see that $\bar{A}
  \geqslant 0$ we observe the trivial equalities
  \begin{eqnarray*}
    0 & \leqslant & \int_X A u \cdot \overline{u} \Omega = \int_X u \cdot
    \overline{A u} \Omega = \int_X \bar{A} \overline{u} \cdot u \Omega =
    \int_X \bar{A} v \cdot \overline{v} \Omega,
  \end{eqnarray*}
  with $v : = \overline{u}$. In order to show its self-adjointness we observe
  also the trivial equalities
  \begin{eqnarray*}
    \int_X \bar{A} u \cdot \overline{v} \Omega & = & \int_X \overline{v} \cdot
    \overline{A \overline{u}} \Omega = \int_X A \overline{v} \cdot u \Omega =
    \int_X u \cdot \overline{ \overline{A} v} \Omega .
  \end{eqnarray*}
\end{proof}

We deduce the following corollary of proposition \ref{Sec-VarWKRS}.

\begin{corollary}
  \label{neg-varW}In the setting of proposition \ref{Sec-VarWKRS} assume $(v,
  V) \in \mathbbm{F}^J_{g, \Omega} \left[ 0 \right]$ with $A_v = 0$. Then
  \begin{eqnarray*}
    \frac{d^2}{d t^2} _{\mid_{t = 0}} \mathcal{W} (g_t, \Omega_t) & \leqslant
    & 0,
  \end{eqnarray*}
  with equality if and only if $(v, V) = (0, 0)$.
\end{corollary}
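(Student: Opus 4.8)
The plan is to specialize the second variation formula of proposition \ref{Sec-VarWKRS} to the case $(v,V)\in\mathbbm{F}^J_{g,\Omega}[0]$ with the harmonic part $A_v=0$, and then to read off the sign directly. First I would recall that for $(v,V)\in\mathbbm{F}^J_{g,\Omega}[0]$ proposition \ref{Sec-VarWKRS} gives
\[
  \nabla_G D\mathcal{W}(g,\Omega)(v,V;v,V)
  = -\frac12\int_X\Bigl[4\,\bigl|(\Delta^{\Omega}_g-\mathbbm{I})\varphi\bigr|^2
      + P^{\Omega}_{g,J}\operatorname{Im}\psi\cdot\operatorname{Im}\psi
      + |A_v|^2_g\,F\Bigr]\Omega .
\]
With the assumption $A_v=0$ the last term vanishes, so the right-hand side reduces to
\[
  -\frac12\int_X\Bigl[4\,\bigl|(\Delta^{\Omega}_g-\mathbbm{I})\varphi\bigr|^2
      + P^{\Omega}_{g,J}\operatorname{Im}\psi\cdot\operatorname{Im}\psi\Bigr]\Omega .
\]

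Next I would invoke the two non-negativity facts already established: the term $\int_X|(\Delta^{\Omega}_g-\mathbbm{I})\varphi|^2\Omega$ is manifestly $\ge 0$, and $P^{\Omega}_{g,J}=(\Delta^{\Omega}_{g,J}-2\mathbbm{I})\overline{(\Delta^{\Omega}_{g,J}-2\mathbbm{I})}$ is a non-negative self-adjoint operator with respect to the $L^2_{\Omega}$-hermitian product by the elementary lemma at the end of the previous section (using $[\Delta^{\Omega}_g,B^{\Omega}_{g,J}]=0$, i.e. identity (\ref{Com-Lap-B})), so $\int_X P^{\Omega}_{g,J}\operatorname{Im}\psi\cdot\operatorname{Im}\psi\,\Omega\ge 0$. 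Hence the whole integral is a sum of two non-negative quantities, and
\[
  \frac{d^2}{dt^2}\Big|_{t=0}\mathcal{W}(g_t,\Omega_t)
  = \nabla_G D\mathcal{W}(g,\Omega)(v,V;v,V)\le 0 .
\]

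For the equality case I would argue as follows. Equality forces both $(\Delta^{\Omega}_g-\mathbbm{I})\varphi=0$ and $P^{\Omega}_{g,J}\operatorname{Im}\psi\cdot\operatorname{Im}\psi=0$. Since in the shrinking soliton case the first eigenvalue of $\Delta^{\Omega}_g$ satisfies $\lambda_1(\Delta^{\Omega}_g)>1$ (Step I of the proof of lemma \ref{decomp-F}), the operator $\Delta^{\Omega}_g-\mathbbm{I}$ is injective on $C^{\infty}_{\Omega}(X,\mathbbm{R})_0$, so $\varphi=0$. Likewise, since $P^{\Omega}_{g,J}\ge 0$ is self-adjoint, $\int_X P^{\Omega}_{g,J}\operatorname{Im}\psi\cdot\operatorname{Im}\psi\,\Omega=0$ forces $(\Delta^{\Omega}_{g,J}-2\mathbbm{I})\psi=0$; as $\psi\in\Lambda^{\Omega,\bot}_{g,J}$ is $L^2_{\Omega}$-orthogonal to $\Lambda^{\Omega}_{g,J}=\operatorname{Ker}(\Delta^{\Omega}_{g,J}-2\mathbbm{I})$, this gives $\psi=0$ (the map (\ref{invol-Lap-2I-B}) is an isomorphism). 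With $A_v=0$, $\varphi=0$ and $\psi=0$, the parametrization of lemma \ref{Decomp-SKah} for $\mathbbm{S}^J_{g,\Omega}[0]$ forces $v=0$; then the first-projection isomorphism $p_1:\mathbbm{F}_{g,\Omega}\to\mathbbm{S}_{g,\Omega}$ gives $V=0$ as well, so $(v,V)=(0,0)$. The converse is trivial since the Hessian form vanishes at the zero vector. The only mild subtlety — the ``main obstacle'', such as it is — is bookkeeping: making sure that the vanishing of the three integrands translates correctly through the decomposition of lemma \ref{Decomp-SKah} back to $(v,V)=0$, in particular that $\varphi=-\tau$ in the $[0]$-case so that $\tau=0$ too, and that no component of $v$ is left unaccounted for once $A_v$, $\varphi$ and $\psi$ all vanish.
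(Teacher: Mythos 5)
Your proof is correct and follows essentially the same route as the paper, which simply specializes the second variation formula of proposition \ref{Sec-VarWKRS} to $(v,V)\in\mathbbm{F}^J_{g,\Omega}[0]$ with $A=0$ and observes that equality holds if and only if $\varphi=0$. One intermediate claim in your equality analysis is imprecise: the vanishing of $\int_X P^{\Omega}_{g,J}\operatorname{Im}\psi\cdot\operatorname{Im}\psi\,\Omega$ only yields $P^{\Omega}_{g,J}\operatorname{Im}\psi=0$, i.e. $\operatorname{Im}\psi\in\operatorname{Ker}P^{\Omega}_{g,J}$, and this does not directly give $(\Delta^{\Omega}_{g,J}-2\mathbbm{I})\psi=0$ as you assert. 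That step is however superfluous: once $\varphi=0$ is forced by the first term (via $\lambda_1(\Delta^{\Omega}_g)>1$), the constraint equation (\ref{eq-div-freeFi}) of lemma \ref{Decomp-SKah} --- which you yourself invoke in your closing remarks through the relation $\varphi=-\tau$ --- gives $(\Delta^{\Omega}_{g,J}-2\mathbbm{I})\psi=\overline{\Delta^{\Omega}_{g,J}\varphi}=0$, hence $\psi=0$ by the isomorphism (\ref{invol-Lap-2I-B}), and then $v=0$ and $V=0$ as you conclude. So the argument closes correctly; just route the vanishing of $\psi$ through the constraint equation rather than through the $P^{\Omega}_{g,J}$ term.
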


We notice indeed that the equality hold if and only if $\varphi = 0$.

\subsection{The K\"ahler-Einstein case}\label{KEcase}

In the K\"ahler-Einstein case the complex operator $\Delta^{\Omega}_{g, J}$
reduces to the real operator $\Delta^{\Omega}_g$. Let
\begin{eqnarray*}
  \Lambda_g & \assign & \tmop{Ker}_{\mathbbm{R}} (\Delta_g - 2\mathbbm{I})
  \subset C^{\infty} (X, \mathbbm{R})_0,
\end{eqnarray*}
and let $\Lambda^{\bot}_g \subset C^{\infty} (X, \mathbbm{R})_0$ be its
$L^2$-orthogonal with respect to the measure $d V_g$. We observe the
decomposition $\Lambda^{\Omega}_{g, J} = \Lambda_g \oplus i \Lambda_g$ , which
implies the decomposition
\begin{eqnarray*}
  \Lambda^{\Omega, \bot}_{g, J} & = & \Lambda^{\bot}_g \oplus i
  \Lambda^{\bot}_g,
\end{eqnarray*}
and thus the identity $\mathbbm{E}^{\Omega}_{g, J} = \Lambda^{\bot}_g$. With
the notations of lemma \ref{Decomp-SKah} let $\Phi \assign (\Delta_g -
2\mathbbm{I}) \varphi$, and $\Psi \assign (\Delta_g - 2\mathbbm{I}) \psi$.
Then the second variation formulas in proposition \ref{Sec-VarWKRS} reduces to
\begin{eqnarray*}
  \frac{d^2}{d t^2} _{\mid_{t = 0}} \mathcal{W} (g_t, \Omega_t) & = & \nabla_G
  D\mathcal{W} (g, \Omega) (v, V ; v, V)\\
  &  & \\
  & = & \frac{1}{2 \tmop{Vol}_g (X)} \int_X [ (\Delta_g -\mathbbm{I}) \Phi
  \cdot \Phi - (\Delta_g -\mathbbm{I}) \Psi \cdot \Psi] d V_g,
\end{eqnarray*}
in the case $(v, V) \in \mathbbm{F}^J_g (0)$ and to
\begin{eqnarray*}
  \frac{d^2}{d t^2} _{\mid_{t = 0}} \mathcal{W} (g_t, \Omega_t) & = & \nabla_G
  D\mathcal{W} (g, \Omega) (v, V ; v, V)\\
  &  & \\
  & = & - \frac{2}{\tmop{Vol}_g (X)} \int_X  \left| \Delta^{- 1}_g (\Delta_g
  -\mathbbm{I}) \Psi \right|^2 d V_g \geqslant 0,
\end{eqnarray*}
in the case $(v, V) \in \mathbbm{F}^J_g \left[ 0 \right]$, with equality if
and only if $v^{\ast}_g \in \mathcal{H}_g^{0, 1} \left( T_{X, J} \right)$.

{\tmstrong{Proof of step II in the K\"ahler-Einstein case}}.

The most difficult part in the proof of proposition \ref{Sec-VarWKRS} is the
computation of the stability integral
\[ \int_X \left\langle \mathcal{L}^{\Omega}_g \theta, \theta \right\rangle_g
   \Omega, \]
in step II of the proof. In the K\"ahler-Einstein case the argument is much
more simple. We include the details for readers convenience.

We remind first the isomorphism $g^{- 1} : S_{\mathbbm{R}}^2 T^{\ast}_X \simeq
\tmop{End}_g (T_X)$. We have the
$g$-orthogonal spiting
\begin{eqnarray*}
  \tmop{End}_g (T_X) & = & E_{g, J}' \oplus_g E_{g, J}'',\\
  &  & \\
  E_{g, J}' & \assign & \tmop{End}_g (T_X) \cap C^{\infty} \left( X,
  T^{\ast}_{X, J} \otimes T_{X, J} \right),\\
  &  & \\
  E_{g, J}'' & \assign & \tmop{End}_g (T_X) \cap C^{\infty} \left( X,
  T^{\ast}_{X, - J} \otimes T_{X, J} \right) .
\end{eqnarray*}
We observe that if $\alpha \in \Lambda_{\mathbbm{R}}^2 T^{\ast}_X$ then holds
the identity $\alpha^{\ast}_{\omega} = - J \alpha^{\ast}_g$, where
$\alpha^{\ast}_{\omega} \assign \omega^{- 1} \alpha$. We define also the
vector bundle
\begin{eqnarray*}
  \Lambda_{J, \mathbbm{R}}^{1, 1} & \assign & \Lambda_J^{1, 1} T^{\ast}_X \cap
  \Lambda_{\mathbbm{R}}^2 T^{\ast}_X,
\end{eqnarray*}
and we notice the isomorphism $\omega^{- 1} : \Lambda_{J, \mathbbm{R}}^{1, 1}
\simeq E_{g, J}'$. Moreover the identity (\ref{Trns-curv-id}) combined with
the properties (\ref{lin-Lich}) and (\ref{ant-lin-Lich}) implies that the maps
\begin{equation}
  \label{g-lin-Lich} \mathcal{L}^{\Omega}_g : C^{\infty} \left( X, E_{g, J}'
  \right) \longrightarrow C^{\infty} (X, E_{g, J}'),
\end{equation}

\begin{equation}
  \label{g-ant-lin-Lich} \mathcal{L}^{\Omega}_g : C^{\infty} \left( X, E_{g,
  J}'' \right) \longrightarrow C^{\infty} (X, E_{g, J}''),
\end{equation}
are well defined. We observe also that by (\ref{Jcurv}) and
(\ref{alt-ast-curv-Id}) we deduce the formula
\begin{equation}
  \label{Lich-alt-Lich} \omega \mathcal{L}^{\Omega}_g \alpha^{\ast}_{\omega} =
  \Delta^{\Omega}_g \alpha + 2\mathcal{R}_g \ast \alpha,
\end{equation}
for all $\alpha \in C^{\infty} (X, \Lambda_{J, \mathbbm{R}}^{1, 1})$. Notice
indeed that the endomorphism $J \alpha^{\ast}_{\omega}$ is $g$-anti-symmetric
thanks to the $J$-linearity of $\alpha^{\ast}_{\omega}$. We deduce using
(\ref{Lich-alt-Lich}) and the identity $\tmop{Tr}_{\omega} \alpha =
\tmop{Tr}_{\mathbbm{R}} \alpha^{\ast}_{\omega}$,
\begin{eqnarray*}
  \tmop{Tr}_{\omega} \left( \Delta^{\Omega}_g \alpha + 2\mathcal{R}_g \ast
  \alpha \right) & = & \tmop{Tr}_{\mathbbm{R}} \left( \mathcal{L}^{\Omega}_g
  \alpha^{\ast}_{\omega} \right) .
\end{eqnarray*}
This combined with the identity (\ref{Trace-Curv}), which in our case rewrites
as
\begin{eqnarray*}
  \tmop{Tr}_{\mathbbm{R}} \left( \mathcal{R}_g \ast \alpha^{\ast}_{\omega}
  \right) & = & \tmop{Tr}_{\mathbbm{R}} \left[ \tmop{Ric}^{\ast} (g)
  \alpha^{\ast}_{\omega} \right],
\end{eqnarray*}
implies that in the Einstein case hold the trace formula
\begin{equation}
  \label{Trac-Lich} \tmop{Tr}_{\omega} \left( \Delta_g \alpha + 2\mathcal{R}_g
  \ast \alpha \right) = (\Delta_g - 2\mathbbm{I}) \tmop{Tr}_{\omega} \alpha .
\end{equation}
We observe also the identity
\begin{equation}
  \label{scal-prod-id}  \left\langle \alpha, \beta \right\rangle_g =
  \left\langle \alpha_{\omega}^{\ast}, \beta_{\omega}^{\ast} \right\rangle_g,
\end{equation}
for all $\alpha, \beta \in \Lambda_{J, \mathbbm{R}}^{1, 1}$. Indeed we
consider the equalities
\begin{eqnarray*}
  \left\langle \alpha, \beta \right\rangle_g & = & \left\langle
  \alpha_g^{\ast}, \beta_g^{\ast} \right\rangle_g\\
  &  & \\
  & = & \tmop{Tr}_{\mathbbm{R}} \left[ \alpha_g^{\ast} (\beta_g^{\ast})_g^T
  \right]\\
  &  & \\
  & = & - \tmop{Tr}_{\mathbbm{R}} \left[ \alpha_g^{\ast} \beta_g^{\ast}
  \right]\\
  &  & \\
  & = & \tmop{Tr}_{\mathbbm{R}} \left[ J \alpha_g^{\ast} J \beta_g^{\ast}
  \right]\\
  &  & \\
  & = & \tmop{Tr}_{\mathbbm{R}} \left[ \alpha_{\omega}^{\ast}
  \beta_{\omega}^{\ast} \right] \\
  &  & \\
  & = & \left\langle \alpha_{\omega}^{\ast}, \beta_{\omega}^{\ast}
  \right\rangle_g .
\end{eqnarray*}
We deduce by the identity (\ref{WedgTrac}) in the appendix and by the Stokes
theorem that over a compact K\"ahler manifold if $\alpha, \beta \in C^{\infty}
(X, \Lambda_{J, \mathbbm{R}}^{1, 1})$, $d \alpha = d \beta = 0$ satisfy
$\left\{ \alpha \right\}_d = 0$, or $\left\{ \beta \right\}_d = 0$ then
\begin{equation}
  \label{scal-Trc-Intg} 2 \int_X \left\langle \alpha, \beta \right\rangle_g d
  V_g = \int_X \tmop{Tr}_{\omega} \alpha \tmop{Tr}_{\omega} \beta d V_g .
\end{equation}
(Notice indeed the identity $\left\langle \alpha, \beta \right\rangle_g =
\left\langle \alpha, \beta \right\rangle_{\omega}$ for all $\alpha, \beta \in
\Lambda_{J, \mathbbm{R}}^{1, 1}$.) We decompose now the endomorphism section
$\theta^{\ast}_g = A_J' + A_J''$ and we estimate the integral
\begin{eqnarray*}
  \int_X \left\langle \mathcal{L}^{\Omega}_g \theta, \theta \right\rangle_g
  \Omega & = & \int_X \left\langle \mathcal{L}^{\Omega}_g \theta^{\ast}_g,
  \theta^{\ast}_g \right\rangle_g \Omega\\
  &  & \\
  & = & \int_X \left[ \left\langle \mathcal{L}^{\Omega}_g A_J', A_J'
  \right\rangle_g + \left\langle \mathcal{L}^{\Omega}_g A_J'', A''_J
  \right\rangle_g \right] \Omega .
\end{eqnarray*}
The last equality hold thanks to the identity (\ref{lin-ant-lin-Integ-dec}).
Let $\alpha \assign \omega A_J'$ and assume $\left\{ \alpha \right\}_d = 0$.
Using the identity (\ref{Wei-alt}) we obtain
\begin{eqnarray*}
  \Delta^{\Omega}_g \alpha + 2\mathcal{R}_g \ast \alpha & = &
  (\Delta^{\Omega}_{d, g} - 2\mathbbm{I}) \alpha = d \nabla_g^{\ast_{\Omega}}
  \alpha - 2 \alpha,
\end{eqnarray*}
and thus $\left\{ \Delta^{\Omega}_g \alpha + 2\mathcal{R}_g \ast \alpha
\right\}_d = 0$. Then the identities (\ref{scal-prod-id}),
(\ref{Lich-alt-Lich}), (\ref{scal-Trc-Intg}) and (\ref{Trac-Lich}) imply
\begin{eqnarray*}
  \int_X \left\langle \mathcal{L}^{}_g A_J', A_J' \right\rangle_g d V_g  & = &
  \int_X \left\langle \Delta_g \alpha + 2\mathcal{R}_g \ast \alpha, \alpha
  \right\rangle_g d V_g\\
  &  & \\
  & = & \frac{1}{2}  \int_X  (\Delta_g - 2\mathbbm{I}) \tmop{Tr}_{\omega}
  \alpha \cdot \tmop{Tr}_{\omega} \alpha d V_g \geqslant 0,
\end{eqnarray*}
since $\lambda_1 (\Delta_g) \geqslant 2$ in the K\"ahler-Einstein case.
(Notice that the condition $\int_X \tmop{Tr}_{\omega} \alpha d V_g = 0$ hold
thanks to the assumption $\left\{ \alpha \right\}_d = 0$.) In the set up of
lemma \ref{Decomp-SKah} we have $\alpha = i \partial_J \overline{\partial}_J
\tau$ and
\begin{eqnarray*}
  A_J'' & = & \overline{\partial}_{T_{X, J}} \nabla_g \psi + A,
\end{eqnarray*}
with $\psi \in \Lambda^{\bot}_g$ and $A \in \mathcal{H}_g^{0, 1} \left( T_{X,
J} \right)$. Thus by the previous computation
\begin{eqnarray*}
  \int_X \left\langle \mathcal{L}^{}_g A_J', A_J' \right\rangle_g d V_g & = &
  \frac{1}{2}  \int_X  (\Delta_g - 2\mathbbm{I}) \Delta_g \tau \cdot \Delta_g
  \tau d V_g .
\end{eqnarray*}
On the other hand formula (\ref{BKN-Alin-dV}) implies in the K\"ahler-Einstein
case
\begin{eqnarray*}
  \mathcal{L}_g A''_J & = & 2 \Delta^{- J}_{T_{X, g}} A_J'' = 2
  \overline{\partial}_{T_{X, J}} \overline{\partial}^{\ast_g}_{T_{X, J}}
  A_J'',
\end{eqnarray*}
since $\overline{\partial}_{T_{X, J}} A_J'' = 0$. Integrating by parts we
deduce
\begin{eqnarray*}
  \int_X \left\langle \mathcal{L}^{}_g A_J'', A''_J \right\rangle_g d V_g & =
  & 2 \int_X \left| \overline{\partial}^{\ast_g}_{T_{X, J}} A''_J \right|^2_g
  d V_g = 2 \int_X \left| \overline{\partial}^{\ast_g}_{T_{X, J}}
  \overline{\partial}_{T_{X, J}} \nabla_g \psi \right|^2_g d V_g .
\end{eqnarray*}
In the K\"ahler-Einstein case the complex Bochner type formula
(\ref{dbar-OmBoch-fnctSOL}) combined with the equation (\ref{eq-div-free})
implies
\begin{eqnarray*}
  2 \overline{\partial}^{\ast_g}_{T_{X, J}} \overline{\partial}_{T_{X, J}}
  \nabla_g \psi & = & \nabla_g  (\Delta_g - 2\mathbbm{I}) \psi = - \nabla_g
  \Delta_g \tau .
\end{eqnarray*}
We obtain
\begin{eqnarray*}
  \int_X \left\langle \mathcal{L}^{}_g A_J'', A''_J \right\rangle_g d V_g & =
  & \frac{1}{2} \int_X \left| \nabla_g \Delta_g \tau \right|^2_g d V_g =
  \frac{1}{2} \int_X \Delta^2_g \tau \cdot \Delta_g \tau d V_g,
\end{eqnarray*}
and thus the required formula
\begin{eqnarray*}
  \int_X \left\langle \mathcal{L}^{}_g \theta, \theta \right\rangle_g d V_g &
  = & \int_X (\Delta_g -\mathbbm{I}) \Delta_g \tau \cdot \Delta_g \tau d V_g
  \geqslant 0 .
\end{eqnarray*}
We notice also that the latter implies the statement of theorem
\ref{KEstability}. Indeed in the equality case holds $\Delta_g \tau = 0$ since
$\lambda_1 (\Delta_g) \geqslant 2$. Then the equation (\ref{eq-div-free})
implies $\psi = 0$. The conclusion follows from the decomposition identity
(\ref{Cx-decTETA}).

\begin{remark}
  We consider the particular case of a smooth curve $\left( g_t, \Omega_t
  \right)_t \subset \mathcal{S}_{\omega}$ with $g_0$ K\"ahler-Einstein metric.
  Time deriving twice the expression
  \begin{eqnarray*}
    \mathcal{W} (g_t, \Omega_t) & = & 2 \int_X \log \left(
    \frac{\omega^n}{\Omega_t} \right) \Omega_t - 2 \log n!,
  \end{eqnarray*}
  we infer
  \begin{eqnarray*}
    \frac{d^2}{d t^2} _{\mid_{t = 0}} \mathcal{W} (g_t, \Omega_t) & = & - 2
    \int_X \left| \dot{\Omega}^{\ast}_0 \right|^2 \Omega_0 - 2 \int_X \log
    \left( \frac{\Omega_0}{\omega^n} \right)  \ddot{\Omega}_0\\
    &  & \\
    & = & - 2 \int_X \left| \dot{\Omega}^{\ast}_0 \right|^2 \Omega_0,
  \end{eqnarray*}
  thanks to the K\"ahler-Einstein condition. Then a trivial change of
  variables allows to deduce our previous second variation formula in the
  particular case $( \dot{g}_0, \dot{\Omega}_0) \in \mathbbm{F}^{J_0}_{g_0,
  \Omega_0} \left[ 0 \right]$.
\end{remark}

\subsection{The case of variations in the direction $\mathbbm{T}^J_{g,
\Omega}$}

\begin{proposition}
  \label{TSec-Var-W}Let $(X,J, g)$ be a compact K\"ahler-Ricci-Soliton and let $\Omega > 0$ be the
  unique smooth volume form such that $g J = \tmop{Ric}_J (\Omega)$ and
  $\int_X \Omega = 1$. Let also $(g_t, \Omega_t)_{t \in \mathbbm{R}} \subset
  \mathcal{M} \times \mathcal{V}_1$ be a smooth curve with $(g_0, \Omega_0) =
  (g, \Omega)$ and with $( \dot{g}_0, \dot{\Omega}_0) = (v, V) \in
  \mathbbm{T}^J_{g, \Omega}$. Then with respect to the decomposition
  \begin{eqnarray*}
    v^{\ast}_g & = & \overline{\partial}_{T_{X, J}} \nabla_{g, J} 
    \overline{\psi} + A,
  \end{eqnarray*}
  with unique $\psi \in \Lambda^{\Omega, \bot}_{g, J}$ and $A \in
  \mathcal{H}_{g, \Omega}^{0, 1} \left( T_{X, J} \right)$, hold the second
  variation formula
  \begin{eqnarray*}
    \frac{d^2}{d t^2} _{\mid_{t = 0}} \mathcal{W} (g_t, \Omega_t) & = &
    \nabla_G D\mathcal{W} (g, \Omega) (v, V ; v, V)\\
    &  & \\
    & = & - \frac{1}{2} \int_X \left[ P^{\Omega}_{g, J} \tmop{Re} \psi \cdot
    \tmop{Re} \psi_{_{_{_{_{_{}}}}}} + \left| A \right|^2_g F \right] \Omega,
  \end{eqnarray*}
  where
  \begin{eqnarray*}
    P^{\Omega}_{g, J} & \assign & (\Delta^{\Omega}_{g, J} - 2\mathbbm{I})
    \overline{ (\Delta^{\Omega}_{g, J} - 2\mathbbm{I})},
  \end{eqnarray*}
  is a non-negative self-adjoint real elliptic operator with respect to the
  $L_{\Omega}^2$-hermitian product. Moreover if $(v, V) \in \mathbbm{F}^J_{g,
  \Omega} [0]$ then the previous formula writes as
  \begin{eqnarray*}
    \frac{d^2}{d t^2} _{\mid_{t = 0}} \mathcal{W} (g_t, \Omega_t) & = &
    \nabla_G D\mathcal{W} (g, \Omega) (v, V ; v, V)\\
    &  & \\
    & = & - \frac{1}{2} \int_X \left[ 4 \left| V_{\Omega}^{\ast} \right|^2 
    \; + \; P^{\Omega}_{g, J} \tmop{Im} \psi \cdot \tmop{Im}
    \psi_{_{_{_{_{_{}}}}}} + \left| A \right|^2_g F \right] \Omega .
  \end{eqnarray*}
\end{proposition}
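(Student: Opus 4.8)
The plan is to reduce the computation of the Hessian on $\mathbbm{T}^J_{g,\Omega}$ to the computation already carried out in Proposition \ref{Sec-VarWKRS} on $\mathbbm{F}^J_{g,\Omega}(0)$, by exploiting the precise descriptions of the two spaces obtained in Lemma \ref{Orto-symplec}. First I would record, from \eqref{Cool-F0} and \eqref{inclusTS}, that for a variation in $\mathbbm{D}^J_{g,[0]}\times T_{\mathcal V_1}$ with $v^\ast_g=\overline\partial_{T_{X,J}}\nabla_{g,J}\overline\psi+A$ one has $(v,V)\in\mathbbm T^J_{g,\Omega}$ iff $R_\psi=-2V^\ast_\Omega$ (with $R_\psi,I_\psi$ as in \eqref{defRPSI}, \eqref{defIPSI}), whereas $(v,V)\in\mathbbm F^J_{g,\Omega}[0]$ is the stronger condition $(\Delta^\Omega_{g,J}-2\mathbbm I)\psi=-2V^\ast_\Omega$, i.e. additionally $I_\psi=0$. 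Thus a variation in $\mathbbm T^J_{g,\Omega}$ differs from one in $\mathbbm F^J_{g,\Omega}[0]$ precisely in the imaginary part of $(\Delta^\Omega_{g,J}-2\mathbbm I)\psi$, which is the piece controlled by $B^\Omega_{g,J}$.

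Next I would feed this into the general Hessian formula of Lemma \ref{Sec-Var-W}, which at a K\"ahler–Ricci soliton reads
\[
\nabla_GD\mathcal W(g,\Omega)(v,V;v,V)=-\tfrac12\int_X\bigl[\langle\mathcal L^\Omega_g v,v\rangle_g-2(\Delta^\Omega_g-2\mathbbm I)V^\ast_\Omega\cdot V^\ast_\Omega\bigr]\Omega .
\]
Here I would plug in $v^\ast_g=\overline\partial_{T_{X,J}}\nabla_{g,J}\overline\psi+A$. For the term $\langle\mathcal L^\Omega_g v,v\rangle_g$ I would use the decomposition \eqref{lin-ant-lin-Integ-dec} together with \eqref{com-Lich-HessOI}, \eqref{divLA} and the weighted complex Bochner identity \eqref{dbar-OmBoch-fnctSOL}, exactly as in Step II of the proof of Proposition \ref{Sec-VarWKRS}, to get
\[
\int_X\langle\mathcal L^\Omega_g v,v\rangle_g\Omega=\tfrac12\int_X\bigl\langle\nabla_{g,J}(\Delta^\Omega_g-2\mathbbm I)\overline\psi,\nabla_{g,J}\overline{(\Delta^\Omega_{g,J}-2\mathbbm I)\psi}\bigr\rangle_g\Omega+\int_X\langle\mathcal L^\Omega_g A,A\rangle_g\Omega,
\]
and then the integration-by-parts identities \eqref{rlcpx-int-part}, \eqref{cpx-int-part} and the commutation \eqref{Com-Lap-B} to turn the first integral into $\tfrac14\int_X(\Delta^\Omega_g-2\mathbbm I)(\Delta^\Omega_{g,J}-2\mathbbm I)\psi\cdot\overline{(\Delta^\Omega_{g,J}-2\mathbbm I)\psi}\,\Omega+\text{c.c.}$, i.e. $\tfrac12\int_X|(\Delta^\Omega_{g,J}-2\mathbbm I)\psi|^2\Omega=\tfrac12\int_X(R_\psi^2+I_\psi^2)\Omega$ after using \eqref{PalOP} and \eqref{Rdiv-Eig}; and Step III of that proof gives $\int_X\langle\mathcal L^\Omega_g A,A\rangle_g\Omega=\int_X F|A|^2_g\Omega$. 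For the remaining term, the constraint $R_\psi=-2V^\ast_\Omega$ lets me substitute $-2(\Delta^\Omega_g-2\mathbbm I)V^\ast_\Omega\cdot V^\ast_\Omega=-\tfrac12(\Delta^\Omega_g-2\mathbbm I)R_\psi\cdot R_\psi$, and combining with the first contribution the terms reorganize, via $|(\Delta^\Omega_{g,J}-2\mathbbm I)\psi|^2$ and the operator $P^\Omega_{g,J}$ of \eqref{PalOP}, into $-\tfrac12\int_X[P^\Omega_{g,J}\operatorname{Re}\psi\cdot\operatorname{Re}\psi+|A|^2_gF]\Omega$. The bookkeeping is the delicate point: I must check that the real-part combination that survives is precisely $P^\Omega_{g,J}$ applied to $\operatorname{Re}\psi$, using that $\operatorname{Re}\bigl[(\Delta^\Omega_{g,J}-2\mathbbm I)\psi\bigr]$ and $\operatorname{Im}$ decouple under $P^\Omega_{g,J}=(\Delta^\Omega_g-2\mathbbm I)^2+(B^\Omega_{g,J})^2$ thanks to \eqref{Com-Lap-B}, and that $V^\ast_\Omega=-\tfrac12 R_\psi$ so $4|V^\ast_\Omega|^2=R_\psi^2$.

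Finally, for the case $(v,V)\in\mathbbm F^J_{g,\Omega}[0]$ one has the stronger relation $(\Delta^\Omega_{g,J}-2\mathbbm I)\psi=-2V^\ast_\Omega$, so $\operatorname{Re}\psi$ solves $\overline{\Delta^\Omega_{g,J}\varphi}=(\Delta^\Omega_{g,J}-2\mathbbm I)\psi$ in the sense of Lemma \ref{Decomp-SKah} (i.e. $\varphi=\operatorname{Re}\psi$ is determined, and with the notation of that lemma $\tmop{Re}[(\Delta^\Omega_{g,J}-2\mathbbm I)\psi]=-2V^\ast_\Omega$ forces the same rearrangement as in the final formula of Proposition \ref{Sec-VarWKRS}), and the $\operatorname{Re}$-part contributes $-\tfrac12\int_X 4|V^\ast_\Omega|^2\Omega$ in place of $-\tfrac12\int_X P^\Omega_{g,J}\operatorname{Re}\psi\cdot\operatorname{Re}\psi\,\Omega$, with $\operatorname{Im}\psi$ now playing the role $\operatorname{Im}\psi$ played before; this is just the identification $P^\Omega_{g,J}\operatorname{Re}\psi\cdot\operatorname{Re}\psi\leftrightarrow 4|V^\ast_\Omega|^2$ valid on $\mathbbm F^J_{g,\Omega}[0]$ together with an application of the elementary lemma at the end of the previous subsection giving $P^\Omega_{g,J}\geq 0$, self-adjoint and real elliptic. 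The main obstacle I anticipate is keeping the real/imaginary decomposition consistent across the two constraint sets and making sure no cross term between $\operatorname{Re}\psi$ and $\operatorname{Im}\psi$ is dropped; once the algebra of $\Delta^\Omega_g$, $B^\Omega_{g,J}$ and complex conjugation is handled with \eqref{Com-Lap-B} and \eqref{com-CxLap-Conj}, the rest is a transcription of the argument already given for $\mathbbm F^J_{g,\Omega}(0)$.
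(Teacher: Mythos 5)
Your treatment of the case $(v,V)\in\mathbbm{F}^J_{g,\Omega}[0]$ is essentially the paper's Step II and is sound, since there the formula
$\nabla_G D\mathcal{W}(g,\Omega)(v,V;v,V)=-\tfrac12\int_X[\langle\mathcal{L}^{\Omega}_g v,v\rangle_g-2(\Delta^{\Omega}_g-2\mathbbm{I})V^{\ast}_{\Omega}\cdot V^{\ast}_{\Omega}]\,\Omega$
is legitimate. The genuine gap is that you apply this same formula to arbitrary $(v,V)\in\mathbbm{T}^J_{g,\Omega}$. That identity is the restriction of Lemma \ref{Sec-Var-W} to $\mathbbm{F}_{g,\Omega}$, i.e.\ it presupposes $\nabla_g^{\ast_{\Omega}}v_g^{\ast}+\nabla_g V^{\ast}_{\Omega}=0$. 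But $\mathbbm{T}^J_{g,\Omega}$ only requires this vector field to be symplectic: by (\ref{FexpT}) it equals $-\tfrac12 J\nabla_g I_{\psi_v}$, which vanishes exactly when $(v,V)\in\mathbbm{F}^J_{g,\Omega}[0]$. For general directions one must use the unrestricted expression of $\nabla^2_G\mathcal{W}$ from the end of the proof of Lemma \ref{Sec-Var-W}, whose Lie-derivative terms survive and, after integration by parts, contribute the additional summand $-2|\nabla_g^{\ast_{\Omega}}v_g^{\ast}+\nabla_g V^{\ast}_{\Omega}|^2_g=-\tfrac12|\nabla_g I_{\psi}|^2_g$ inside $-2\nabla_G D\mathcal{W}$ (this is the paper's Step III). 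This term is not optional bookkeeping: it is precisely what cancels all $\operatorname{Im}\psi$ contributions so that only $P^{\Omega}_{g,J}\operatorname{Re}\psi\cdot\operatorname{Re}\psi$ survives. A concrete check: on a pure orbit direction $(L_{J\nabla_g\varphi}g,L_{J\nabla_g\varphi}\Omega)$ one has $\psi=-2i\varphi$, $A=0$, $\operatorname{Re}\psi=0$, and the Hessian must vanish by diffeomorphism invariance of $\mathcal{W}$ at a critical point; your truncated formula does not give zero there, while the full one does.

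A secondary slip: in your simplification of the curvature term you pass from
$\tfrac14\int_X(\Delta^{\Omega}_g-2\mathbbm{I})(\Delta^{\Omega}_{g,J}-2\mathbbm{I})\psi\cdot\overline{(\Delta^{\Omega}_{g,J}-2\mathbbm{I})\psi}\,\Omega+\mathrm{c.c.}$
to $\tfrac12\int_X|(\Delta^{\Omega}_{g,J}-2\mathbbm{I})\psi|^2\,\Omega$, which discards the operator $\Delta^{\Omega}_g-2\mathbbm{I}$ acting on the left factor. The paper instead keeps it, adds and subtracts $2\psi$ to complete squares, and only then combines with the other terms via (\ref{Com-Lap-B}), (\ref{PalOP}) and the $L^2_{\Omega}$-anti-adjointness of $B^{\Omega}_{g,J}$. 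Both points need to be repaired before the claimed $\operatorname{Re}/\operatorname{Im}$ decoupling can be established.
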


\begin{proof}
  {\tmstrong{Step I}}. Reconsidering a computation in the poof of step II of
  the proposition \ref{Sec-VarWKRS} we see that for all variations $v \in
  \mathbbm{D}^J_{g, \left[ 0 \right]}$ holds the identity
  \begin{eqnarray*}
    \int_X \left\langle \mathcal{L}^{\Omega}_g v, v \right\rangle_g \Omega & =
    & \int_X \left\langle \mathcal{L}^{\Omega}_g \overline{\partial}_{T_{X,
    J}} \nabla_{g, J}  \overline{\psi}, \overline{\partial}_{T_{X, J}}
    \nabla_{g, J}  \overline{\psi} + A \right\rangle_g \Omega\\
    &  & \\
    & + & \int_X \left\langle \mathcal{L}^{\Omega}_g A,
    \overline{\partial}_{T_{X, J}} \nabla_{g, J}  \overline{\psi} + A
    \right\rangle_g \Omega .
  \end{eqnarray*}
  Using the identity (\ref{com-Lich-HessOI}) and the property (\ref{Jcurv}) we
  deduce
  \begin{eqnarray*}
    \int_X \left\langle \mathcal{L}^{\Omega}_g v, v \right\rangle_g \Omega & =
    & \int_X \left\langle \overline{\partial}_{T_{X, J}} \nabla_{g, J}
    (\Delta^{\Omega}_g - 2\mathbbm{I}) \overline{\psi},
    \overline{\partial}_{T_{X, J}} \nabla_{g, J}  \overline{\psi} + A
    \right\rangle_g \Omega\\
    &  & \\
    & + & \int_X \left[ \left\langle \overline{\partial}^{\ast_{g,
    \Omega}}_{T_{X, J}} \mathcal{L}^{\Omega}_g A, \nabla_{g, J} 
    \overline{\psi}  \right\rangle_g + \left\langle \mathcal{L}^{\Omega}_g A,
    A \right\rangle_g \right] \Omega .
  \end{eqnarray*}
  Using the identity (\ref{divLA}), integrating by parts further and using the
  weighted complex Bochner identity (\ref{dbar-OmBoch-fnctSOL}) we obtain
  \begin{eqnarray*}
    \int_X \left\langle \mathcal{L}^{\Omega}_g v, v \right\rangle_g \Omega & =
    & \int_X \left\langle \nabla_{g, J} (\Delta^{\Omega}_g -
    2\mathbbm{I})_{_{_{_{_{}}}}} \overline{\psi},
    \overline{\partial}^{\ast_{g, \Omega}}_{T_{X, J}}
    \overline{\partial}_{T_{X, J}} \nabla_{g, J}  \overline{\psi} 
    \right\rangle_g \Omega\\
    &  & \\
    & + & \int_X \left\langle \mathcal{L}^{\Omega}_g A, A \right\rangle_g
    \Omega\\
    &  & \\
    & = & \frac{1}{2} \int_X \left\langle \nabla_{g, J} (\Delta^{\Omega}_g -
    2\mathbbm{I})_{_{_{_{_{}}}}} \overline{\psi}, \nabla_{g, J}
    \overline{(\Delta^{\Omega}_{g, J} - 2\mathbbm{I}) \psi}  \right\rangle_g
    \Omega\\
    &  & \\
    & + & \int_X \left\langle \mathcal{L}^{\Omega}_g A, A \right\rangle_g
    \Omega .
  \end{eqnarray*}
  Using the integration by parts formula (\ref{cpx-int-part}) in the
  subsection \ref{Integ-parts} of the appendix A we infer
  \begin{eqnarray}
    \int_X \left\langle \mathcal{L}^{\Omega}_g v, v \right\rangle_g \Omega & =
    & \frac{1}{4} \int_X \Delta^{\Omega}_{g, J}  (\Delta^{\Omega}_g -
    2\mathbbm{I}) \psi \cdot \overline{ (\Delta^{\Omega}_{g, J} -
    2\mathbbm{I}) \psi} \Omega\nonumber
    \\\nonumber
    &  & \\
    & + & \frac{1}{4} \int_X \overline{\Delta^{\Omega}_{g, J} 
    (\Delta^{\Omega}_g - 2\mathbbm{I}) \psi} \cdot (\Delta^{\Omega}_{g, J} -
    2\mathbbm{I}) \psi \Omega\nonumber
    \\\nonumber
    &  & \\
    & + & \int_X \left\langle \mathcal{L}^{\Omega}_g A, A \right\rangle_g
    \Omega\label{expression-E}
   \end{eqnarray}
  for all $v \in \mathbbm{D}^J_{g, \left[ 0 \right]}$.
  
  {\tmstrong{STEP II}}. We show first the variation formula in the case $(v,
  V) \in \mathbbm{F}^J_{g, \Omega} [0]$ since the proof is simpler. Using the expression (\ref{Cool-F0}) for the space
  $\mathbbm{F}^J_{g, \Omega} [0]$ inside the identity (\ref{expression-E}) we deduce the
  equalities
  \begin{eqnarray*}
    \int_X \left\langle \mathcal{L}^{\Omega}_g v, v \right\rangle_g \Omega & =
    & - \int_X (\Delta^{\Omega}_g - 2\mathbbm{I}) \tmop{Re}
    (\Delta^{\Omega}_{g, J} \psi) \cdot V^{\ast}_{\Omega}  \, \Omega\\
    &  & \\
    & + & \int_X \left\langle \mathcal{L}^{\Omega}_g A, A \right\rangle_g
    \Omega\\
    &  & \\
    & = & - \int_X (\Delta^{\Omega}_g - 2\mathbbm{I})  (\Delta^{\Omega}_g
    \psi_1 \noplus + B^{\Omega}_{g, J} \psi_2) \cdot V^{\ast}_{\Omega}  \,
    \Omega\\
    &  & \\
    & + & \int_X \left\langle \mathcal{L}^{\Omega}_g A, A \right\rangle_g
    \Omega .
  \end{eqnarray*}
  Let write $\psi = \psi_1 + i \psi_2$, with $\psi_j$ real valued functions.
  Then the condition in the expression (\ref{Cool-F0}) rewrites as
  \begin{eqnarray}
    (\Delta^{\Omega}_g - 2\mathbbm{I})_{_{_{_{_{}}}}} \psi_1 \noplus +
    B^{\Omega}_{g, J}  \, \psi_2 = - 2 V^{\ast}_{\Omega}, \label{Real-Cnd-F} 
  \\\nonumber
  \\
    (\Delta^{\Omega}_g - 2\mathbbm{I})_{_{_{_{_{}}}}} \psi_2 \noplus -
    B^{\Omega}_{g, J}  \, \psi_1 = 0 . \label{Im-Cnd-F}
  \end{eqnarray}
  We use now the condition (\ref{Real-Cnd-F}) in the formula
  \begin{eqnarray*}
    - 2 \frac{d^2}{d t^2} _{\mid_{t = 0}} \mathcal{W} (g_t, \Omega_t) & = & -
    2 \nabla_G D\mathcal{W} (g, \Omega) (v, V ; v, V)\\
    &  & \\
    & = & \int_X \left[ \left\langle \mathcal{L}^{\Omega}_g v, v
    \right\rangle_g - 2 (\Delta^{\Omega}_g - 2\mathbbm{I}) V_{\Omega}^{\ast}
    \cdot V^{\ast}_{\Omega} \right] \Omega\\
    &  & \\
    & = & - 2 \int_X (\Delta^{\Omega}_g - 2\mathbbm{I}) \psi_1 \cdot
    V^{\ast}_{\Omega}  \, \Omega\\
    &  & \\
    & + & \int_X \left\langle \mathcal{L}^{\Omega}_g A, A \right\rangle_g
    \Omega .
  \end{eqnarray*}
  Using again the condition (\ref{Real-Cnd-F}), we expand the integral
  \begin{eqnarray*}
    - 2 \int_X (\Delta^{\Omega}_g - 2\mathbbm{I}) \psi_1 \cdot
    V^{\ast}_{\Omega}  \, \Omega & = & \int_X (\Delta^{\Omega}_g -
    2\mathbbm{I}) \psi_1 \cdot \left[ (\Delta^{\Omega}_g -
    2\mathbbm{I})_{_{_{_{_{}}}}} \psi_1 \noplus + B^{\Omega}_{g, J}  \, \psi_2
    \right]  \, \Omega\\
    &  & \\
    & = & \int_X \psi_1 \left[ (\Delta^{\Omega}_g -
    2\mathbbm{I})^2_{_{_{_{_{}}}}} \psi_1 \noplus + B^{\Omega}_{g, J}
    (\Delta^{\Omega}_g - 2\mathbbm{I}) \, \psi_2 \right]  \, \Omega\\
    &  & \\
    & = & \int_X \psi_1 \left[ (\Delta^{\Omega}_g -
    2\mathbbm{I})^2_{_{_{_{_{}}}}} \psi_1 \noplus + \left( B^{\Omega}_{g, J}
    \right)^2 \, \psi_1 \right]  \, \Omega,
  \end{eqnarray*}
  thanks to the identities (\ref{Com-Lap-B}) and (\ref{Im-Cnd-F}). 
  Using the formula (\ref{PalOP}), we infer

  \begin{eqnarray*}
    \frac{d^2}{d t^2} _{\mid_{t = 0}} \mathcal{W} (g_t, \Omega_t) & = &
    \nabla_G D\mathcal{W} (g, \Omega) (v, V ; v, V)\\
    &  & \\
    & = & - \frac{1}{2} \int_X \left[ P^{\Omega}_{g, J} \psi_1 \cdot
    \psi_{1_{_{_{_{_{}}}}}} + \left\langle \mathcal{L}^{\Omega}_g A, A
    \right\rangle_g \right] \Omega .
  \end{eqnarray*}
  Using again the condition (\ref{Real-Cnd-F}) and the commutation identity
  (\ref{Com-Lap-B}) we expand the integral
  \begin{eqnarray*}
    & &\int_X 4 \left| V_{\Omega}^{\ast} \right|^2 \Omega 
    \\
    \\
    & = & \int_X \left[
    \left| (\Delta^{\Omega}_g - 2\mathbbm{I}) \psi_1 \right|_{_{_{_{}}}}^2 +
    \left| B^{\Omega}_{g, J} \psi_2 \right|^2 + 2 (\Delta^{\Omega}_g -
    2\mathbbm{I}) \psi_1 \cdot B^{\Omega}_{g, J} \psi_2 \right] \Omega\\
    &  & \\
    & = & \int_X \left[ (\Delta^{\Omega}_g - 2\mathbbm{I})_{_{_{_{}}}}^2
    \psi_1 \cdot \psi_1 - (B^{\Omega}_{g, J})^2 \psi_2 \cdot \psi_2 + 2 \psi_1
    \cdot B^{\Omega}_{g, J} (\Delta^{\Omega}_g - 2\mathbbm{I}) \psi_2 \right]
    \Omega,
  \end{eqnarray*}
  thanks to the fact that the operator $B^{\Omega}_{g, J}$ is
  $L_{\Omega}^2$-anti-adjoint. Using again this fact and the condition
  (\ref{Im-Cnd-F}) we deduce
  \begin{eqnarray*}
    & &\int_X 2 \psi_1 \cdot B^{\Omega}_{g, J} (\Delta^{\Omega}_g - 2\mathbbm{I})
    \psi_2 \Omega 
    \\
    \\
    & = & \int_X \psi_1 \cdot (B^{\Omega}_{g, J})^2 \psi_1
    \Omega\\
    &  & \\
    & - & \int_X B^{\Omega}_{g, J}  \, \psi_1 \cdot (\Delta^{\Omega}_g -
    2\mathbbm{I}) \psi_2 \Omega\\
    &  & \\
    & = & \int_X \left[ (B^{\Omega}_{g, J})^2 \psi_1 \cdot \psi_1 -
    (\Delta^{\Omega}_g - 2\mathbbm{I}) \psi_2 \cdot (\Delta^{\Omega}_g -
    2\mathbbm{I})_{_{_{_{_{}}}}} \psi_2 \right] \Omega,
  \end{eqnarray*}
  and thus
  \begin{eqnarray*}
    \int_X 4 \left| V_{\Omega}^{\ast} \right|^2 \Omega & = & \int_X \left[
    P^{\Omega}_{g, J} \psi_1 \cdot \psi_{1_{_{_{_{_{}}}}}} - P^{\Omega}_{g, J}
    \psi_2 \cdot \psi_{2_{_{_{_{_{}}}}}} \right] \Omega .
  \end{eqnarray*}
  We infer the second variation formula
  \begin{eqnarray*}
    \frac{d^2}{d t^2} _{\mid_{t = 0}} \mathcal{W} (g_t, \Omega_t) & = &
    \nabla_G D\mathcal{W} (g, \Omega) (v, V ; v, V)\\
    &  & \\
    & = & - \frac{1}{2} \int_X \left[ 4 \left| V_{\Omega}^{\ast} \right|^2 
    \; + \; P^{\Omega}_{g, J} \psi_2 \cdot \psi_{2_{_{_{_{_{}}}}}} +
    \left\langle \mathcal{L}^{\Omega}_g A, A \right\rangle_g \right] \Omega .
  \end{eqnarray*}
  The conclusion follows from the computation in the beginning of step III in
  the proof of the proposition \ref{Sec-VarWKRS}.
  
  {\tmstrong{STEP III}}. We show now the second variation formula in the more
  general case of variations $(v, V) \in \mathbbm{T}^J_{g, \Omega}$. We
  observe first that the general expression of $\nabla^2_G \mathcal{W} (g,
  \Omega)$ obtained at the end of the proof of lemma \ref{Sec-Var-W} implies
  that over a shrinking-Ricci-Soliton point holds the variation formula
  \begin{eqnarray*}
    &  & - 2 \frac{d^2}{d t^2} _{\mid_{t = 0}} \mathcal{W} (g_t, \Omega_t)\\
    &  & \\
    & = & - 2 \nabla_G D\mathcal{W} (g, \Omega) (v, V ; v, V)\\
    &  & \\
    & = & \int_X \left\langle \mathcal{L}^{\Omega}_g v -
    L_{\nabla_g^{\ast_{\Omega}} v_g^{\ast} + \nabla_g V^{\ast}_{\Omega}} g, v
    \right\rangle_g \Omega\\
    &  & \\
    & - & 2 \int_X \left[ (\Delta^{\Omega}_g - 2\mathbbm{I})
    V_{\Omega}^{\ast} - \tmop{div}^{\Omega} \left( \nabla_g^{\ast_{\Omega}}
    v_g^{\ast} + \nabla_g V^{\ast}_{\Omega} \right)_{_{_{_{_{}}}}} \right]
    V^{\ast}_{\Omega}  \; \Omega\\
    &  & \\
    & = & \int_X \left[ \left\langle \mathcal{L}^{\Omega}_g v, v
    \right\rangle_g - 2 (\Delta^{\Omega}_g - 2\mathbbm{I}) V_{\Omega}^{\ast}
    \cdot V^{\ast}_{\Omega} \right] \Omega\\
    &  & \\
    & - & 2 \int_X \left\langle \nabla_g \left( \nabla_g^{\ast_{\Omega}}
    v_g^{\ast} + \nabla_g V^{\ast}_{\Omega} \right) _{_{_{_{_{}}}}},
    v^{\ast}_g \right\rangle_g \Omega\\
    &  & \\
    & - & 2 \int_X \left\langle \nabla_g^{\ast_{\Omega}} v_g^{\ast} +
    \nabla_g V^{\ast}_{\Omega_{_{_{_{}}}}}, \nabla_g V^{\ast}_{\Omega}
    \right\rangle_g \Omega\\
    &  & \\
    & = & \int_X \left[ \left\langle \mathcal{L}^{\Omega}_g v, v
    \right\rangle_g - 2 (\Delta^{\Omega}_g - 2\mathbbm{I}) V_{\Omega}^{\ast}
    \cdot V^{\ast}_{\Omega} - 2 \left| \nabla_g^{\ast_{\Omega}} v_g^{\ast}
    +_{_{_{_{}}}} \nabla_g V^{\ast}_{\Omega} \right|^2_g \right] \Omega,
  \end{eqnarray*}
  for arbitrary directions $\left( v, V \right) \in T_{\mathcal{M} \times
  \mathcal{V}_1}$. Using now the fact that in the case $(v, V) \in
  \mathbbm{T}^J_{g, \Omega}$ hold the expressions $R_{\psi} = - 2
  V^{\ast}_{\Omega}$, (we use here the definitions (\ref{defRPSI}), (\ref{defIPSI})) and (\ref{FexpT}) we obtain
  \begin{eqnarray*}
    &  & - 2 \nabla_G D\mathcal{W} (g, \Omega) (v, V ; v, V)\\
    &  & \\
    & = & \int_X \left[ \left\langle \mathcal{L}^{\Omega}_g v, v
    \right\rangle_g - \frac{1}{2} (\Delta^{\Omega}_g - 2\mathbbm{I}) R_{\psi}
    \cdot R_{\psi} - \frac{1}{2}  \left| \nabla_g I_{\psi} \right|^2_g \right]
    \Omega,
  \end{eqnarray*}
  for all $(v, V) \in \mathbbm{T}^J_{g, \Omega}$. Thanks to the commutation
  identity (\ref{Com-Lap-B}) we can rewrite the identity (\ref{expression-E}) as
  \begin{eqnarray*}
    &  & \int_X \left[ \left\langle \mathcal{L}^{\Omega}_g v, v
    \right\rangle_g - \left\langle \mathcal{L}^{\Omega}_g A, A \right\rangle_g
    \right] \Omega\\
    &  & \\
    & = & \frac{1}{4} \int_X (\Delta^{\Omega}_g - 2\mathbbm{I})
    \Delta^{\Omega}_{g, J} \psi \cdot \overline{ (\Delta^{\Omega}_{g, J} -
    2\mathbbm{I}) \psi} \Omega\\
    &  & \\
    & + & \frac{1}{4} \int_X (\Delta^{\Omega}_g - 2\mathbbm{I})
    \overline{\Delta^{\Omega}_{g, J} \psi} \cdot (\Delta^{\Omega}_{g, J} -
    2\mathbbm{I}) \psi \Omega .
  \end{eqnarray*}
  Adding and subtracting $2 \psi$ to the factor $\Delta^{\Omega}_{g, J} \psi$
  and respectively $2 \overline{\psi}$ to the factor
  $\overline{\Delta^{\Omega}_{g, J} \psi}$, we infer
  \begin{eqnarray*}
    &  & \int_X \left[ \left\langle \mathcal{L}^{\Omega}_g v, v
    \right\rangle_g - \left\langle \mathcal{L}^{\Omega}_g A, A \right\rangle_g
    \right] \Omega\\
    &  & \\
    & = & \frac{1}{4} \int_X (\Delta^{\Omega}_g - 2\mathbbm{I})
    (\Delta^{\Omega}_{g, J} - 2\mathbbm{I}) \psi \cdot \overline{
    (\Delta^{\Omega}_{g, J} - 2\mathbbm{I}) \psi} \Omega\\
    &  & \\
    & + & \frac{1}{4} \int_X (\Delta^{\Omega}_g - 2\mathbbm{I}) \overline{
    (\Delta^{\Omega}_{g, J} - 2\mathbbm{I}) \psi} \cdot (\Delta^{\Omega}_{g,
    J} - 2\mathbbm{I}) \psi \Omega\\
    &  & \\
    & + & \frac{1}{2} \int_X (\Delta^{\Omega}_g - 2\mathbbm{I}) \psi \cdot
    \overline{ (\Delta^{\Omega}_{g, J} - 2\mathbbm{I}) \psi} \Omega\\
    &  & \\
    & + & \frac{1}{2} \int_X (\Delta^{\Omega}_g - 2\mathbbm{I})
    \overline{\psi} \cdot (\Delta^{\Omega}_{g, J} - 2\mathbbm{I}) \psi \Omega
    .
  \end{eqnarray*}
  We deduce the equalities
  \begin{eqnarray*}
    &  & - 2 \nabla_G D\mathcal{W} (g, \Omega) (v, V ; v, V)\\
    &  & \\
    & = & \int_X \left[ (\Delta^{\Omega}_g - 2\mathbbm{I})
    (\Delta^{\Omega}_{g, J} - 2\mathbbm{I}) \psi \cdot \overline{\psi} +
    \frac{1}{2} (\Delta^{\Omega}_g - 2\mathbbm{I}) I_{\psi} \cdot I_{\psi} -
    \frac{1}{2}  \left| \nabla_g I_{\psi} \right|^2_g \right] \Omega\\
    &  & \\
    & + & \int_X \left\langle \mathcal{L}^{\Omega}_g A, A \right\rangle_g
    \Omega\\
    &  & \\
    & = & \int_X \left\{ \left[ P^{\Omega}_{g, J} - i B^{\Omega}_{g, J}
    (\Delta^{\Omega}_{g, J} - 2\mathbbm{I})_{_{_{_{_{}}}}} \right] \psi \cdot
    \overline{\psi} - I_{\psi} \cdot I_{\psi} + \left\langle
    \mathcal{L}^{\Omega}_g A, A \right\rangle_g \right\} \Omega .
  \end{eqnarray*}
  Using the expression
  \begin{eqnarray*}
    I_{\psi} & = & (\Delta^{\Omega}_g - 2\mathbbm{I})_{_{_{_{_{}}}}} \psi_2
    \noplus - B^{\Omega}_{g, J}  \, \psi_1,
  \end{eqnarray*}
  we find the formula
  \begin{eqnarray*}
    &  & - 2 \nabla_G D\mathcal{W} (g, \Omega) (v, V ; v, V)\\
    &  & \\
    & = & \int_{X_{_{_{_{_{}}}}}} \left[ P^{\Omega}_{g, J} - i B^{\Omega}_{g,
    J} (\Delta^{\Omega}_g - 2\mathbbm{I})_{_{_{_{_{}}}}} - (B^{\Omega}_{g,
    J})^2 \right] \psi \cdot \overline{\psi}  \; \Omega\\
    &  & \\
    & - & \int_X \left[ \left| (\Delta^{\Omega}_g - 2\mathbbm{I}) \psi_2
    \right|^2 + \left| B^{\Omega}_{g, J} \psi_1 \right|^2 - 2
    (\Delta^{\Omega}_g - 2\mathbbm{I})_{_{_{_{_{_{_{}}}}}}} \psi_2 \cdot
    B^{\Omega}_{g, J}  \, \psi_1 \right] \Omega\\
    &  & \\
    & + & \int_X \left\langle \mathcal{L}^{\Omega}_g A, A \right\rangle_g
    \Omega .
  \end{eqnarray*}
  The fact that the operator $B^{\Omega}_{g, J}$ is
  $L_{\Omega}^2$-anti-adjoint combined with the commutation identity
  (\ref{Com-Lap-B}) implies that $B^{\Omega}_{g, J} (\Delta^{\Omega}_g -
  2\mathbbm{I})$ is also $L_{\Omega}^2$-anti-adjoint. We deduce in particular
  the identity
  \begin{eqnarray*}
    \int_X B^{\Omega}_{g, J} (\Delta^{\Omega}_g - 2\mathbbm{I}) \psi_j \cdot
    \psi_j  \, \Omega & = & 0,
  \end{eqnarray*}
  and thus the equality
  \begin{eqnarray*}
    &  & - 2 \nabla_G D\mathcal{W} (g, \Omega) (v, V ; v, V)\\
    &  & \\
    & = & \int_X \left[ P^{\Omega}_{g, J} \psi_1 \cdot \psi_1  \; + \;
    P^{\Omega}_{g, J} \psi_2 \cdot \psi_2 \right] \Omega\\
    &  & \\
    & + & \int_X \left[ B^{\Omega}_{g, J} (\Delta^{\Omega}_g -
    2\mathbbm{I})_{_{_{_{_{_{}}}}}} \psi_2 \cdot \psi_1 - B^{\Omega}_{g, J}
    (\Delta^{\Omega}_g - 2\mathbbm{I}) \psi_1 \cdot \psi_2 \right] \Omega\\
    &  & \\
    & - & \int_X \left[ (B^{\Omega}_{g, J})^2 \psi_1 \cdot \psi_1 +
    (B^{\Omega}_{g, J})_{_{_{_{_{}}}}}^2 \psi_2 \cdot \psi_2 \right] \Omega\\
    &  & \\
    & - & \int_X \left[ \left| (\Delta^{\Omega}_g - 2\mathbbm{I}) \psi_2
    \right|^2 + \left| B^{\Omega}_{g, J} \psi_1 \right|^2 - 2
    (\Delta^{\Omega}_g - 2\mathbbm{I})_{_{_{_{_{_{_{}}}}}}} \psi_2 \cdot
    B^{\Omega}_{g, J}  \, \psi_1 \right] \Omega\\
    &  & \\
    & + & \int_X \left\langle \mathcal{L}^{\Omega}_g A, A \right\rangle_g
    \Omega.
  \end{eqnarray*}
  Using the fact that the operator $B^{\Omega}_{g, J}$ is
  $L_{\Omega}^2$-anti-adjoint and the commutation identity (\ref{Com-Lap-B})
  we can simplify in order to obtain the required variation formula.
\end{proof}

\section{Positivity of the metric $G_{g, \Omega}$ over the space
$\mathbbm{T}^J_{g, \Omega}$}

\begin{lemma}
  \label{positiv-G}For any $(g, \Omega) \in \mathcal{S}_{\omega}$ the
  restriction of the symmetric form $G_{g, \Omega}$ to the vector space
  $\mathbbm{T}^J_{g, \Omega}$, with $J : = g^{- 1} \omega$, is positive
  definite.
\end{lemma}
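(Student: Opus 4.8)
$ $

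The plan is to compute $G_{g,\Omega}(v,V;v,V)$ for an arbitrary $(v,V)\in\mathbb{T}^J_{g,\Omega}$ and show it is a sum of manifestly non-negative terms, vanishing only when $(v,V)=(0,0)$. First I would use the orthogonal decomposition of the variation of the complex structure from corollary \ref{dec-VarJ}: since $v\in\mathbb{D}^J_{g,[0]}$, the endomorphism $v^{\ast}_g=(v^{\ast}_g)^{0,1}_J$ is $J$-anti-linear, $g$-symmetric, and $\overline{\partial}_{T_{X,J}}$-closed, hence writes uniquely as $v^{\ast}_g=\overline{\partial}_{T_{X,J}}\nabla_{g,J}\overline{\psi}+A$ with $\psi\in\Lambda^{\Omega,\bot}_{g,J}$ and $A\in\mathcal{H}^{0,1}_{g,\Omega}(T_{X,J})$. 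The two summands are $L^2_\Omega$-orthogonal (harmonic part versus $\overline{\partial}$-exact part), so $\int_X|v|^2_g\,\Omega=\int_X|\overline{\partial}_{T_{X,J}}\nabla_{g,J}\overline{\psi}|^2_g\,\Omega+\int_X|A|^2_g\,\Omega$. Then I would invoke the defining condition of $\mathbb{T}^J_{g,\Omega}$: by the computation in lemma \ref{Orto-symplec} (formula (\ref{inclusTS})) the condition $L_{\nabla^{\ast_\Omega}_g v^{\ast}_g+\nabla_g V^{\ast}_\Omega}\omega=0$ is equivalent to $R_{\psi}\equiv\operatorname{Re}[(\Delta^{\Omega}_{g,J}-2\mathbb{I})\psi]=-2V^{\ast}_\Omega$, which expresses $V^{\ast}_\Omega$ in terms of $\psi$.

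Next I would assemble these into $G_{g,\Omega}(v,V;v,V)=\int_X|v|^2_g\,\Omega-2\int_X (V^{\ast}_\Omega)^2\,\Omega$. The harmonic contribution $\int_X|A|^2_g\,\Omega\geq 0$ is immediate. For the remaining part I would use the weighted complex Bochner identity (\ref{dbar-OmBoch-fnctSOL}) together with the integration-by-parts formula $\int_X\langle\overline{\partial}_{T_{X,J}}\xi,B\rangle_g\Omega=\int_X\langle\xi,\overline{\partial}^{\ast_{g,\Omega}}_{T_{X,J}}B\rangle_g\Omega$ (derived just before (\ref{CxBoch-frm})) to rewrite $\int_X|\overline{\partial}_{T_{X,J}}\nabla_{g,J}\overline{\psi}|^2_g\,\Omega$ in terms of $\psi$ and $(\Delta^{\Omega}_{g,J}-2\mathbb{I})\psi$; concretely this gives $2\int_X|\overline{\partial}_{T_{X,J}}\nabla_{g,J}\overline{\psi}|^2_g\,\Omega=\int_X\langle\nabla_{g,J}\overline{\psi},\nabla_{g,J}\overline{(\Delta^{\Omega}_{g,J}-2\mathbb{I})\psi}\rangle_g\,\Omega$. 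Writing $\psi=\psi_1+i\psi_2$ and using that $\Delta^{\Omega}_{g,J}=\Delta^{\Omega}_g-iB^{\Omega}_{g,J}$ with $[\Delta^{\Omega}_g,B^{\Omega}_{g,J}]=0$ (identity (\ref{Com-Lap-B})) and that $B^{\Omega}_{g,J}$ is $L^2_\Omega$-anti-adjoint, I would reduce everything to real integrals in $\psi_1,\psi_2$ and the operators $\Delta^{\Omega}_g-2\mathbb{I}$ and $B^{\Omega}_{g,J}$. The substitution $2V^{\ast}_\Omega=-(\Delta^{\Omega}_g-2\mathbb{I})\psi_1-B^{\Omega}_{g,J}\psi_2$ then lets me express $-2\int_X(V^{\ast}_\Omega)^2\,\Omega$ in the same variables. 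I expect the cross-terms to cancel (using anti-adjointness of $B^{\Omega}_{g,J}$ and of $B^{\Omega}_{g,J}(\Delta^{\Omega}_g-2\mathbb{I})$), leaving an expression of the form $\tfrac14\int_X\big(P^{\Omega}_{g,J}\psi_2\cdot\psi_2\big)\Omega+\int_X|A|^2_g\,\Omega$ plus a term controlled by the first-eigenvalue estimate, where $P^{\Omega}_{g,J}=(\Delta^{\Omega}_g-2\mathbb{I})^2+(B^{\Omega}_{g,J})^2\geq 0$ by (\ref{PalOP}).

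To finish, I would observe that each surviving term is non-negative: $P^{\Omega}_{g,J}$ is a non-negative operator, $\int_X|A|^2_g\,\Omega\geq 0$, and any leftover term of the shape $\int_X\big[\,|\nabla_g u|^2_g-u^2\,\big]\Omega$ is non-negative by the estimate $\lambda_1(\Delta^{\Omega}_g)\geq 2$ from corollary \ref{Eigenf-HolVct} (indeed $\lambda_1>1$ suffices, and is sharpened by the K\"ahler-Einstein/soliton structure). Positive-\emph{definiteness} then follows by analysing the equality case: $G_{g,\Omega}(v,V;v,V)=0$ forces $A=0$, $P^{\Omega}_{g,J}\psi_2=0$ hence $(\Delta^{\Omega}_g-2\mathbb{I})\psi_2=B^{\Omega}_{g,J}\psi_2=0$, and the eigenvalue estimate forces the remaining real function to vanish; chasing this back through (\ref{Com-Lap-B}) and the anti-adjointness relations gives $(\Delta^{\Omega}_{g,J}-2\mathbb{I})\psi=0$, i.e. $\psi\in\Lambda^{\Omega}_{g,J}$, while $\psi\in\Lambda^{\Omega,\bot}_{g,J}$ forces $\psi=0$; then $v=0$ and $V^{\ast}_\Omega=-\tfrac12 R_\psi=0$. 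The main obstacle will be the bookkeeping in the middle step: correctly tracking all the cross-terms between $\psi_1$, $\psi_2$, $V^{\ast}_\Omega$ and the operators $\Delta^{\Omega}_g-2\mathbb{I}$, $B^{\Omega}_{g,J}$ so that they collapse to the clean non-negative form — this is exactly the kind of computation carried out in Step II of proposition \ref{TSec-Var-W}, and I would lean on those same identities rather than redo it from scratch.
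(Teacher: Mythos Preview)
Your overall setup is right --- the decomposition from corollary \ref{dec-VarJ}, the constraint $R_{\psi}=-2V^{\ast}_{\Omega}$ from (\ref{inclusTS}), the Bochner formula (\ref{dbar-OmBoch-fnctSOL}), and the $L^2_{\Omega}$-orthogonality of the harmonic piece are exactly the ingredients the paper uses. But there is a genuine gap in the middle step: you repeatedly invoke the commutation identity $[\Delta^{\Omega}_g,B^{\Omega}_{g,J}]=0$ (formula (\ref{Com-Lap-B})), the operator $P^{\Omega}_{g,J}$, the formula (\ref{PalOP}), and ``the same identities as in Step II of proposition \ref{TSec-Var-W}''. All of these are proved in the paper \emph{only under the K\"ahler--Ricci soliton hypothesis}: the proof of (\ref{Com-Lap-B}) uses $(\Delta^{\Omega}_g-2\mathbb{I})f=0$ and $[J,\nabla^2_g f]=0$, and proposition \ref{TSec-Var-W} is stated over a soliton. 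Lemma \ref{positiv-G}, however, is asserted for an \emph{arbitrary} point $(g,\Omega)\in\mathcal{S}_{\omega}$, and the paper genuinely needs it there (it is the input to corollary \ref{zero-ortog}, which concerns general points of $\mathcal{S}_{\omega}$). So the real/imaginary bookkeeping route via $P^{\Omega}_{g,J}$ is not available at this stage.

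The paper sidesteps this by staying entirely with the \emph{complex} operator $\Delta^{\Omega}_{g,J}$, which is $L^2_{\Omega}$-self-adjoint with $\lambda_1(\Delta^{\Omega}_{g,J})\geqslant 2$ whenever $\operatorname{Ric}_J(\Omega)=\omega$ (corollary \ref{Eigenf-HolVct}), and with the Bochner identity (\ref{dbar-OmBoch-fnctSOL}), which also holds in that generality. Concretely, after the same integration by parts you describe (using (\ref{dbar-OmBoch-fnctSOL}) and (\ref{rlcpx-int-part})) and after adding and subtracting $2\overline{\psi}$, $2\psi$ in the factors $\overline{\Delta^{\Omega}_{g,J}\psi}$, $\Delta^{\Omega}_{g,J}\psi$, one obtains the closed formula
\[
G_{g,\Omega}(u,U;u,U)=\int_X\Big[(\Delta^{\Omega}_{g,J}-2\mathbb{I})\varphi\cdot\overline{\varphi}+|A|^2_g\Big]\Omega
\;+\;\frac{1}{2}\int_X\big\{\operatorname{Im}\big[(\Delta^{\Omega}_{g,J}-2\mathbb{I})\varphi\big]\big\}^2\,\Omega,
\]
with no appeal to (\ref{Com-Lap-B}) or $P^{\Omega}_{g,J}$. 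Non-negativity of the first integral is immediate from the spectral bound $\lambda_1(\Delta^{\Omega}_{g,J})\geqslant 2$, and the equality case ($\varphi\in\Lambda^{\Omega}_{g,J}\cap\Lambda^{\Omega,\bot}_{g,J}=\{0\}$ and $A=0$) follows at once. To fix your argument, drop the $\psi_1,\psi_2$ splitting and the soliton-only identities, and keep everything in terms of $(\Delta^{\Omega}_{g,J}-2\mathbb{I})\psi$ as above.
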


\begin{proof}
  Let $\left( u, U \right) \nocomma, \left( v, V \right) \in \mathbbm{T}^J_{g,
  \Omega}$. Using the expression (\ref{inclusTS}) for the space
  $\mathbbm{T}^J_{g, \Omega}$ we have
  \begin{eqnarray*}
    u^{\ast}_g & = & \overline{\partial}_{T_{X, J}} \nabla_{g, J} 
    \overline{\varphi} + A,\\
    &  & \\
    - 2 U^{\ast}_{\Omega} & = & \tmop{Re} \left[_{_{_{_{_{}}}}}
    (\Delta^{\Omega}_{g, J} - 2\mathbbm{I}) \varphi \right],
  \end{eqnarray*}
  and
  \begin{eqnarray*}
    v^{\ast}_g & = & \overline{\partial}_{T_{X, J}} \nabla_{g, J} 
    \overline{\psi} + B,\\
    &  & \\
    - 2 V^{\ast}_{\Omega} & = & \tmop{Re} \left[_{_{_{_{_{}}}}}
    (\Delta^{\Omega}_{g, J} - 2\mathbbm{I}) \psi \right],
  \end{eqnarray*}
  with unique $\varphi, \psi \in \Lambda^{\Omega, \bot}_{g, J}$ and $A, B \in
  \mathcal{H}_{g, \Omega}^{0, 1} \left( T_{X, J} \right)$. We decompose now
  the term
  \begin{eqnarray*}
    G_{g, \Omega}  (u, U ; v, V) & = & \int_X \left[ \left\langle u, v
    \right\rangle_{g_{_{_{}}}} - 2 U^{\ast}_{\Omega} \cdot V^{\ast}_{\Omega}
    \right] \Omega\\
    &  & \\
    & = & \int_X \left[ \left\langle \overline{\partial}_{T_{X, J}}
    \nabla_{g, J_{_{_{_{}}}}}  \overline{\varphi}, \overline{\partial}_{T_{X,
    J}} \nabla_{g, J}  \overline{\psi}  \right\rangle_g + \left\langle A, B
    \right\rangle_g \right] \Omega\\
    &  & \\
    & - & \frac{1}{2} \int_X \tmop{Re} \left[_{_{_{_{_{}}}}}
    (\Delta^{\Omega}_{g, J} - 2\mathbbm{I}) \varphi \right] \tmop{Re}
    \left[_{_{_{_{_{}}}}} (\Delta^{\Omega}_{g, J} - 2\mathbbm{I}) \psi \right]
    \Omega .
  \end{eqnarray*}
  Integrating by parts and using the weighted complex Bochner formula
  (\ref{dbar-OmBoch-fnctSOL}) we transform the integral
  \begin{eqnarray*}
    I_1 & \assign & \int_X \left\langle \overline{\partial}_{T_{X, J}}
    \nabla_{g, J_{_{_{_{}}}}}  \overline{\varphi}, \overline{\partial}_{T_{X,
    J}} \nabla_{g, J}  \overline{\psi}  \right\rangle_g \Omega\\
    &  & \\
    & = & \int_X \left\langle \overline{\partial}^{\ast_{g, \Omega}}_{T_{X,
    J}}  \overline{\partial}_{T_{X, J}} \nabla_{g, J_{_{_{_{}}}}} 
    \overline{\varphi}, \nabla_{g, J}  \overline{\psi}  \right\rangle_g
    \Omega\\
    &  & \\
    & = & \frac{1}{2}  \int_X \left\langle \nabla_{g, J} 
    \overline{(\Delta^{\Omega}_{g, J} - 2\mathbbm{I}) \varphi}
    _{_{_{_{_{}}}}}, \nabla_{g, J}  \overline{\psi}  \right\rangle_g \Omega .
  \end{eqnarray*}
  Using the integration by parts formula (\ref{rlcpx-int-part}) in the
  subsection \ref{Integ-parts} of the appendix we deduce
  \begin{eqnarray*}
    I_1 & = & \frac{1}{4}  \int_X  \left[ \Delta^{\Omega}_{g, J}
    (\Delta^{\Omega}_{g, J} - 2\mathbbm{I}) \varphi_{_{_{_{_{}}}}} \cdot
    \overline{\psi} \; + \overline{\Delta^{\Omega}_{g, J} (\Delta^{\Omega}_{g,
    J} - 2\mathbbm{I}) \varphi} \cdot \, \psi \right] \Omega\\
    &  & \\
    & = & \frac{1}{4}  \int_X  \left[ (\Delta^{\Omega}_{g, J} - 2\mathbbm{I})
    \varphi_{_{_{_{_{}}}}} \cdot \overline{\Delta^{\Omega}_{g, J} \psi} \; +
    \overline{(\Delta^{\Omega}_{g, J} - 2\mathbbm{I}) \varphi} \cdot \,
    \Delta^{\Omega}_{g, J} \psi \right] \Omega .
  \end{eqnarray*}
  Adding and subtracting $2 \overline{\psi}$ to the factor
  $\overline{\Delta^{\Omega}_{g, J} \psi}$ and respectively $2 \psi$ to the
  factor $\Delta^{\Omega}_{g, J} \psi$, we infer
  \begin{eqnarray*}
    I_1 & = & \frac{1}{2}  \int_X  \left[ (\Delta^{\Omega}_{g, J} -
    2\mathbbm{I}) \varphi_{_{_{_{_{}}}}} \cdot \overline{\psi} \; +
    \overline{(\Delta^{\Omega}_{g, J} - 2\mathbbm{I}) \varphi} \cdot \, \psi
    \right] \Omega\\
    &  & \\
    & + & \frac{1}{4}  \int_X  \left[ (\Delta^{\Omega}_{g, J} - 2\mathbbm{I})
    \varphi_{_{_{_{_{}}}}} \cdot \overline{(\Delta^{\Omega}_{g, J} -
    2\mathbbm{I}) \psi}  \; + \overline{(\Delta^{\Omega}_{g, J} -
    2\mathbbm{I}) \varphi} \cdot (\Delta^{\Omega}_{g, J} - 2\mathbbm{I})
    \psi_{_{_{_{_{}}}}} \right] \Omega\\
    &  & \\
    & = & \frac{1}{2}  \int_X  \left[ (\Delta^{\Omega}_{g, J} - 2\mathbbm{I})
    \varphi_{_{_{_{_{}}}}} \cdot \overline{\psi} \; + \overline{\varphi} \cdot
    (\Delta^{\Omega}_{g, J} - 2\mathbbm{I}) \psi_{_{_{_{_{}}}}} \right]
    \Omega\\
    &  & \\
    & + & \frac{1}{4}  \int_X  \left[ (\Delta^{\Omega}_{g, J} - 2\mathbbm{I})
    \varphi_{_{_{_{_{}}}}} \cdot \overline{(\Delta^{\Omega}_{g, J} -
    2\mathbbm{I}) \psi}  \; + \overline{(\Delta^{\Omega}_{g, J} -
    2\mathbbm{I}) \varphi} \cdot (\Delta^{\Omega}_{g, J} - 2\mathbbm{I})
    \psi_{_{_{_{_{}}}}} \right] \Omega .
  \end{eqnarray*}
  We deduce the general formula
  \begin{eqnarray*}
  &&  G_{g, \Omega}  (u, U ; v, V) 
  \\
  \\
  & = &  \int_X \left\{ \frac{1}{2}  \left[
    (\Delta^{\Omega}_{g, J} - 2\mathbbm{I}) \varphi_{_{_{_{_{}}}}} \cdot
    \overline{\psi} \; + (\Delta^{\Omega}_{g, J} - 2\mathbbm{I})
    \psi_{_{_{_{_{}}}}} \cdot \overline{\varphi} \right] + \left\langle A, B
    \right\rangle_g \right\} \Omega\\
    &  & \\
    & + & \frac{1}{2} \int_X \tmop{Im} \left[_{_{_{_{_{}}}}}
    (\Delta^{\Omega}_{g, J} - 2\mathbbm{I}) \varphi \right] \tmop{Im}
    \left[_{_{_{_{_{}}}}} (\Delta^{\Omega}_{g, J} - 2\mathbbm{I}) \psi \right]
    \Omega .
  \end{eqnarray*}
  In particular
  \begin{eqnarray*}
    G_{g, \Omega}  (u, U ; u, U) & = & \int_X  \left[ (\Delta^{\Omega}_{g, J}
    - 2\mathbbm{I}) \varphi_{_{_{_{_{}}}}} \cdot \overline{\varphi} \; +
    \left| A \right|^2_g \right]\Omega\\
    &  & \\
    & + & \frac{1}{2} \int_X \left\{ \tmop{Im} \left[_{_{_{_{_{}}}}}
    (\Delta^{\Omega}_{g, J} - 2\mathbbm{I}) \varphi \right] \right\}^2 \Omega
    \; \geqslant \; 0,
  \end{eqnarray*}
  with equality if and only if $\varphi = 0$ and $A = 0$, i.e. $\left( u, U
  \right) = \left( 0, 0 \right)$, thanks to the variational characterization of
  the first eigenvalue $\lambda_1 \left( \Delta^{\Omega}_{g, J} \right)
  \geqslant 2$ of the elliptic operator $\Delta^{\Omega}_{g, J}$.
\end{proof}

\begin{corollary}
  \label{zero-ortog}For any $(g, \Omega) \in \mathcal{S}_{\omega}$ hold the
  identity
  \begin{equation}
    \label{E1CxE1} \tmop{Ker}_{\mathbbm{R}} (\Delta^{\Omega}_g - 2\mathbbm{I})
    = \tmop{Ker}_{\mathbbm{R}} (\Delta^{\Omega}_{g, J} - 2\mathbbm{I}),
  \end{equation}
  with $J : = g^{- 1} \omega$.
\end{corollary}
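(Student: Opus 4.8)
The plan is to deduce the identity (\ref{E1CxE1}) from the positivity of $G_{g,\Omega}$ over $\mathbbm{T}^J_{g,\Omega}$ established in lemma \ref{positiv-G}, combined with the decomposition of the anomaly space $\mathbbm{A}^{\Omega}_g$ obtained in section \ref{Anomaly}. The inclusion $\tmop{Ker}_{\mathbbm{R}}(\Delta^{\Omega}_{g,J}-2\mathbbm{I})\subseteq \tmop{Ker}_{\mathbbm{R}}(\Delta^{\Omega}_g-2\mathbbm{I})$ is immediate, since $2\Delta^{\Omega}_g=\tmop{Re}(\Delta^{\Omega}_{g,J})$ and hence every real eigenfunction of $\Delta^{\Omega}_{g,J}$ with eigenvalue $2$ is a real eigenfunction of $\Delta^{\Omega}_g$ with eigenvalue $2$; equivalently, $\Delta^{\Omega}_{g,J}=\Delta^{\Omega}_g-iB^{\Omega}_{g,J}$ and for a real function $u$ the identity $(\Delta^{\Omega}_{g,J}-2\mathbbm{I})u=0$ forces both $(\Delta^{\Omega}_g-2\mathbbm{I})u=0$ and $B^{\Omega}_{g,J}u=0$. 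So the content is the reverse inclusion: if $u\in C^{\infty}_{\Omega}(X,\mathbbm{R})_0$ satisfies $\Delta^{\Omega}_g u=2u$, then also $B^{\Omega}_{g,J}u=0$.

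First I would take such a $u$ and form the element $(v,V):=2(\nabla_g d u,-u\Omega)\in\mathbbm{A}^{\Omega}_g$, using the isomorphism $\tmop{Ker}(\Delta^{\Omega}_g-2\mathbbm{I})\to\mathbbm{A}^{\Omega}_g$ from section \ref{Anomaly}. By definition $\mathbbm{A}^{\Omega}_g=\mathbbm{F}_{g,\Omega}\cap T_{[g,\Omega],(g,\Omega)}$, so $(v,V)=(L_{\xi}g,L_{\xi}\Omega)$ for the vector field $\xi=2\nabla_g u$ (indeed $L_{\nabla_g u}g = 2\nabla_g d u = \nabla_g d (2u)$ up to the factors, and $L_{\nabla_g u}\Omega=(\tmop{div}^{\Omega}\nabla_g u)\Omega=-(\Delta^{\Omega}_g u)\Omega=-2u\Omega$). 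Next I would decompose $v^{\ast}_g=\nabla^2_g(2u)$ into its $J$-linear and $J$-anti-linear parts; since $[\nabla^2_g u,J]=2J\overline{\partial}_{T_{X,J}}\nabla_g u$ in the Kähler setting, the anti-linear part is $2J\overline{\partial}_{T_{X,J}}\nabla_g u$, and one checks $v\in\mathbbm{D}^J_{g,[0]}$ (it is a Lie derivative of $g$ along a real gradient, so the induced variation of $\omega$ is exact, and the holomorphicity conditions hold because $\nabla_g u$ produces a $\overline{\partial}_{T_{X,J}}$-exact class). Thus $(v,V)\in\mathbbm{F}^J_{g,\Omega}[0]\subseteq\mathbbm{T}^J_{g,\Omega}$.

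Now I would compute $G_{g,\Omega}(v,V;v,V)$ directly. On one hand, since $(v,V)=(L_{\xi}g,L_{\xi}\Omega)$ lies in the tangent space to the $\tmop{Diff}_0$-orbit, and $\mathbbm{F}_{g,\Omega}=T^{\bot_G}_{[g,\Omega],(g,\Omega)}$ by (\ref{F-OrtoOrb}), the element $(v,V)$ is $G$-orthogonal to itself, i.e. $G_{g,\Omega}(v,V;v,V)=0$. On the other hand, lemma \ref{positiv-G} says $G_{g,\Omega}$ restricted to $\mathbbm{T}^J_{g,\Omega}$ is positive definite, and $(v,V)\in\mathbbm{T}^J_{g,\Omega}$; hence $(v,V)=(0,0)$, which forces $V^{\ast}_{\Omega}=-2u=0$ — contradiction unless $u\equiv0$, which is not what we want. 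Here is where I must be careful: the vanishing of $G_{g,\Omega}(v,V;v,V)$ should instead be extracted from the \emph{explicit} formula for $G_{g,\Omega}(u,U;u,U)$ derived in the proof of lemma \ref{positiv-G}, namely $G_{g,\Omega}(v,V;v,V)=\int_X[(\Delta^{\Omega}_{g,J}-2\mathbbm{I})\varphi\cdot\overline{\varphi}+|A|^2_g+\tfrac12\{\tmop{Im}[(\Delta^{\Omega}_{g,J}-2\mathbbm{I})\varphi]\}^2]\Omega$, where $\varphi$ is the unique element of $\Lambda^{\Omega,\bot}_{g,J}$ with $v^{\ast}_g=\overline{\partial}_{T_{X,J}}\nabla_{g,J}\overline{\varphi}+A$. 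Since $G$ vanishes on orbit directions, each of the three non-negative terms vanishes; in particular $\tmop{Im}[(\Delta^{\Omega}_{g,J}-2\mathbbm{I})\varphi]=0$, and $\varphi\in\Lambda^{\Omega,\bot}_{g,J}$ together with $(\Delta^{\Omega}_{g,J}-2\mathbbm{I})\varphi$ real forces $\varphi=0$ and $A=0$; tracing $\varphi$ back through corollary \ref{dec-VarJ} and the relation between $\varphi$ and $u$ (the imaginary part of $\varphi$ is controlled precisely by $B^{\Omega}_{g,J}u$ via the system in Remark 1 of lemma \ref{Decomp-SKah}) yields $B^{\Omega}_{g,J}u=0$, completing the reverse inclusion. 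The main obstacle will be bookkeeping: correctly identifying, via corollary \ref{dec-VarJ} and the complex Bochner formula (\ref{dbar-OmBoch-fnctSOL}), the potential $\varphi\in\Lambda^{\Omega,\bot}_{g,J}$ attached to $v^{\ast}_g=2\nabla^2_g u$ and verifying that its imaginary part encodes exactly $B^{\Omega}_{g,J}u$ up to the invertible operator $\Delta^{\Omega}_{g,J}-2\mathbbm{I}$ on $\Lambda^{\Omega,\bot}_{g,J}$, so that its vanishing is equivalent to $B^{\Omega}_{g,J}u=0$.
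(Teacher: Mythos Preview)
There is a genuine gap: the anomaly element $(v,V)=2(\nabla_g d u,-u\Omega)$ built from the \emph{gradient} vector field $\xi=\nabla_g u$ does \emph{not} lie in $\mathbbm{T}^J_{g,\Omega}$. Indeed $\mathbbm{T}^J_{g,\Omega}\subset\mathbbm{D}^J_{g,[0]}\times T_{\mathcal{V}_1}$ requires $v$ to be $J$-\emph{anti}-invariant, but $v^{\ast}_g=2\nabla^2_g u=2\partial^g_{T_{X,J}}\nabla_g u+2\overline{\partial}_{T_{X,J}}\nabla_g u$ has a non-trivial $J$-linear part in general. Consequently neither the positivity in lemma \ref{positiv-G} nor the explicit formula from its proof is available for this $(v,V)$. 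The contradiction you spotted ($V^{\ast}_{\Omega}=-2u$ would force $u\equiv 0$) is exactly the signal that the constructed element is in the wrong space; your attempted fix still presupposes $(v,V)\in\mathbbm{T}^J_{g,\Omega}$, so it cannot close the gap.

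The paper's proof repairs this by using the \emph{symplectic} vector field $\xi=-J\nabla_g u$ instead of $\nabla_g u$. Then $(\dot g_0,\dot\Omega_0)=(L_{\xi}g,L_{\xi}\Omega)$ lies in $T_{[g,\Omega]_{\omega},(g,\Omega)}\subset\mathbbm{T}^J_{g,\Omega}$ automatically, with $\dot g_0^{\ast}=-2J\overline{\partial}_{T_{X,J}}\nabla_g u$ (which \emph{is} $J$-anti-linear) and $\dot\Omega_0^{\ast}=-B^{\Omega}_{g,J}u$. The Bochner formula (\ref{dbar-OmBoch-fnctSOL}) gives
\[
\overline{\partial}^{\ast_{g,\Omega}}_{T_{X,J}}\dot g_0^{\ast}+\nabla_g\dot\Omega_0^{\ast}=-J\nabla_g(\Delta^{\Omega}_g-2\mathbbm{I})u,
\]
so $(\dot g_0,\dot\Omega_0)\in\mathbbm{F}^J_{g,\Omega}[0]$ precisely when $(\Delta^{\Omega}_g-2\mathbbm{I})u=0$. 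Now the strict positivity of $G$ on $\mathbbm{T}^J_{g,\Omega}$ forces $T_{[g,\Omega]_{\omega},(g,\Omega)}\cap\mathbbm{F}^J_{g,\Omega}[0]=\{0\}$ (via (\ref{geom-F})), hence $(\dot g_0,\dot\Omega_0)=(0,0)$ and in particular $B^{\Omega}_{g,J}u=-\dot\Omega_0^{\ast}=0$, which is exactly the missing condition. The moral: work with the symplectic orbit rather than the full diffeomorphism orbit, so that the tangent vector lands inside $\mathbbm{T}^J_{g,\Omega}$ where lemma \ref{positiv-G} applies, and so that the $\Omega$-component carries $B^{\Omega}_{g,J}u$ rather than $u$ itself.
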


\begin{proof}
  Let $u \in C_{\Omega}^{\infty} (X, \mathbbm{R})_0$ and $(\varphi_t)_{t \in
  \mathbbm{R}} \subset \tmop{Symp}^0 (X, \omega)$ the 1-parameter sub-group
  generated by the symplectic vector field $\xi \assign (d u)_{\omega}^{\ast}
  = - J \nabla_g u$. We set $J_t \assign \varphi^{\ast}_t J$, $g_t \assign
  \varphi^{\ast}_t g$, $\Omega_t \assign \varphi^{\ast}_t \Omega$ and we
  compute $\dot{g}_0 = L_{\xi} g$ and $\dot{\Omega}_0 = L_{\xi} \Omega$. The
  expression of the tangent space to the symplectic orbit $\left[ g, \Omega
  \right]_{\omega}$ in the proof of lemma \ref{Orto-symplec} implies
  \begin{eqnarray*}
    \dot{g}^{\ast}_0 & = & - 2 J \overline{\partial}_{T_{X, J}} \nabla_g u,\\
    &  & \\
    \dot{\Omega}^{\ast}_0 & = & - B^{\Omega}_{g, J} u .
  \end{eqnarray*}
  Then the weighted complex Bochner formula (\ref{dbar-OmBoch-fnctSOL})
  implies
  \begin{eqnarray*}
    \overline{\partial}^{\ast_{g, \Omega}}_{T_{X, J}}  \dot{g}^{\ast}_0 +
    \nabla_g  \dot{\Omega}^{\ast}_0 & = & - 2 J \overline{\partial}^{\ast_{g,
    \Omega}}_{T_{X, J}} \overline{\partial}_{T_{X, J}} \nabla_g u + \nabla_g 
    \dot{\Omega}^{\ast}_0\\
    &  & \\
    & = & - J \nabla_g (\Delta^{\Omega}_g - 2\mathbbm{I}) u + \nabla_g
    B^{\Omega}_{g, J} u + \nabla_g  \dot{\Omega}^{\ast}_0\\
    &  & \\
    & = & - J \nabla_g (\Delta^{\Omega}_g - 2\mathbbm{I}) u .
  \end{eqnarray*}
  We deduce that $( \dot{g}_0, \dot{\Omega}_0) \in \mathbbm{F}^J_{g, \Omega}
  \left[ 0 \right]$ if and only if $(\Delta^{\Omega}_g - 2\mathbbm{I}) u = 0$.
  On the other hand the (strict) positivity of the metric $G_{g, \Omega}$ over
  $\mathbbm{T}^J_{g, \Omega} \supset T_{\left[ g, \Omega \right]_{\omega}, (g,
  \Omega)}$ implies
  \begin{eqnarray*}
    T_{\left[ g, \Omega \right]_{\omega}, (g, \Omega)} \cap T^{\bot_G}_{\left[
    g, \Omega \right]_{\omega}, (g, \Omega)} & = & 0 .
  \end{eqnarray*}
  Then lemma \ref{Orto-symplec} implies
  \begin{eqnarray*}
    T_{\left[ g, \Omega \right]_{\omega}, (g, \Omega)} \cap T^{\bot_G}_{\left[
    g, \Omega \right]_{\omega}, (g, \Omega)} & = & T_{\left[ g, \Omega
    \right]_{\omega}, (g, \Omega)} \cap \mathbbm{F}^J_{g, \Omega} \left[ 0
    \right] = \left\{ 0 \right\},
  \end{eqnarray*}
  So if $( \dot{g}_0, \dot{\Omega}_0) \in \mathbbm{F}^J_{g, \Omega} \left[ 0
  \right]$ then $( \dot{g}_0, \dot{\Omega}_0) = (0, 0)$. We infer the
  inclusion
  \begin{eqnarray*}
    \tmop{Ker}_{\mathbbm{R}} (\Delta^{\Omega}_g - 2\mathbbm{I}) & \subseteq &
    \tmop{Ker}_{\mathbbm{R}} B^{\Omega}_{g, J},
  \end{eqnarray*}
  and thus the required identity (\ref{E1CxE1}).
\end{proof}

\subsection{Double splitting of the space $\mathbbm{T}^J_{g, \Omega}$}

Let $H^k$, with $H^0 = L^2$, be a Sobolev space of sections over $X$. For any
subset $S$ of smooth sections over $X$ we denote with $H^k S$ its closure with
respect to the $H^k$-topology. The pseudo-Riemannian metric $G_{g, \Omega}$ is
obviously continuous with respect to the $L^2$-topology. At the moment we are
unable to say if the topology induced by $G_{g, \Omega}$ over $L^2
\mathbbm{T}^J_{g, \Omega}$ is equivalent with the $L^2$-topology. Nevertheless
we can show the following basic decomposition result

\begin{corollary}
  \label{Ortg-dec-TS} For any $(g, \Omega) \in \mathcal{S}_{\omega}$ holds the
  decomposition identity
  \[ L^2 \mathbbm{T}^J_{g, \Omega} = L^2 T_{\left[ g, \Omega \right]_{\omega},
     (g, \Omega)} \oplus_G L^2 \mathbbm{F}_{g, \Omega}^J [0], \]
  with $J : = g^{- 1} \omega$.
\end{corollary}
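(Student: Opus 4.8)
The plan is to obtain the decomposition by passing to $L^2$-closures in the algebraic splitting already established at the level of smooth sections, namely the $G$-orthogonal direct sum $\mathbbm{T}^J_{g, \Omega} = T_{\left[ g, \Omega \right]_{\omega}, (g, \Omega)} \oplus_G \mathbbm{F}_{g, \Omega}^J [0]$ of (\ref{doub-spl-T}), together with the explicit parametrizations of the three spaces obtained in lemma \ref{Orto-symplec} and corollary \ref{dec-VarJ}. Concretely, by (\ref{inclusTS}) an element $(v, V) \in \mathbbm{T}^J_{g, \Omega}$ is determined by the pair $(\psi_v, A_v)$ with $\psi_v \in \Lambda^{\Omega, \bot}_{g, J}$ and $A_v \in \mathcal{H}_{g, \Omega}^{0, 1}(T_{X, J})$ via $v^{\ast}_g = \overline{\partial}_{T_{X, J}} \nabla_{g, J} \overline{\psi}_v + A_v$ and $-2 V^{\ast}_{\Omega} = R_{\psi_v}$; the subspace $\mathbbm{F}^J_{g, \Omega}[0]$ corresponds to the further constraint $I_{\psi_v} = 0$ (i.e. $\psi_v$ real valued in the sense $R_{\psi_v}$ already equals $(\Delta^{\Omega}_{g,J} - 2\mathbbm{I})\psi_v$ up to imaginary part vanishing), while $T_{\left[ g, \Omega \right]_{\omega}, (g, \Omega)}$ corresponds to $\psi_v \in \mathbbm{E}^{\Omega}_{g, J}$-type data coming from $u \in C_{\Omega}^{\infty}(X, \mathbbm{R})_0$. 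Each of these descriptions is in terms of elliptic operators with finite-dimensional kernels, so the relevant maps extend continuously to Sobolev completions.

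The key steps, in order, would be: (i) record that $\mathbbm{T}^J_{g, \Omega}$, $T_{\left[ g, \Omega \right]_{\omega}, (g, \Omega)}$ and $\mathbbm{F}^J_{g, \Omega}[0]$ are each the image of a bounded linear parametrization map from a fixed Sobolev space of ``potentials'' (functions $\psi$ or $u$, plus a harmonic piece $A$), so that their $L^2$-closures are well defined closed subspaces; here one uses that $\Lambda^{\Omega}_{g, J} = \tmop{Ker}(\Delta^{\Omega}_{g, J} - 2\mathbbm{I})$ is finite dimensional, so $\Lambda^{\Omega, \bot}_{g, J}$ is a closed subspace and the operators $\Delta^{\Omega}_{g, J}$, $\Delta^{\Omega}_{g, J} - 2\mathbbm{I}$ restrict to isomorphisms on it (remark 2 following lemma \ref{Decomp-SKah}). (ii) Show $L^2 \mathbbm{T}^J_{g, \Omega} = L^2 T_{\left[ g, \Omega \right]_{\omega}, (g, \Omega)} + L^2 \mathbbm{F}_{g, \Omega}^J[0]$: given $(v, V) \in L^2 \mathbbm{T}^J_{g, \Omega}$ with data $(\psi, A)$, split $\psi = \psi' + \psi''$ where $\psi'$ accounts for the $\tmop{Im}$-part forced by the symplectic-orbit direction (solving a first-order equation for $u$ as in corollary \ref{zero-ortog}, i.e. $-J\nabla_g(\Delta^{\Omega}_g - 2\mathbbm{I})u = \overline{\partial}^{\ast_{g,\Omega}}_{T_{X,J}} v^{\ast}_g + \nabla_g V^{\ast}_{\Omega}$) and $\psi''$ gives the remainder lying in $L^2 \mathbbm{F}_{g, \Omega}^J[0]$; this decomposition is bounded since it is built from bounded inverses of elliptic operators on the complements of their finite-dimensional kernels. (iii) Show the sum is $G$-orthogonal and direct: $G$-orthogonality holds on smooth sections by (\ref{doub-spl-T}) and extends by continuity of $G_{g, \Omega}$ in the $L^2$-topology; directness, i.e. $L^2 T_{\left[ g, \Omega \right]_{\omega}, (g, \Omega)} \cap L^2 \mathbbm{F}_{g, \Omega}^J[0] = 0$, follows from the strict positivity of $G_{g, \Omega}$ on $L^2 \mathbbm{T}^J_{g, \Omega}$ — which is exactly the content of lemma \ref{positiv-G}, whose proof already exhibits $G_{g, \Omega}(u, U; u, U)$ as an integral of manifestly non-negative terms with equality only at zero, and that integral expression is $L^2$-continuous — so any vector in the intersection is $G$-orthogonal to itself, hence zero.

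The main obstacle is step (ii): one must check that the ``potential'' $\psi$ (and the auxiliary function $u$) really lies in the expected Sobolev space and depends continuously on $(v, V) \in L^2 \mathbbm{T}^J_{g, \Omega}$, rather than merely being defined for smooth data. This is a question of elliptic regularity/estimates for the operators $\Delta^{\Omega}_{g, J} - 2\mathbbm{I}$ restricted to $\Lambda^{\Omega, \bot}_{g, J}$ and for $(\Delta^{\Omega}_g - \mathbbm{I})$, $(\Delta^{\Omega}_g - 2\mathbbm{I})$ on $C_{\Omega}^{\infty}(X, \mathbbm{R})_0$; all of these are elliptic with closed range and finite-dimensional kernel on the given complements, so they are isomorphisms between the appropriate Sobolev spaces, but one must track a loss of derivatives and then observe it is harmless because $L^2 S$ for each of our subspaces $S$ is, by definition, the $L^2$-closure, and the parametrization maps (being compositions of bounded inverses of such operators with the bounded maps $\nabla_{g,J}$, $\overline{\partial}_{T_{X,J}}$, and the $L^2_{\Omega}$-projector $H_{T_{X,J}}$ onto $\mathcal{H}_{g, \Omega}^{0, 1}(T_{X, J})$) are $L^2$-bounded. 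Once the $L^2$-continuity of the parametrizations and of the splitting map in (ii) is in hand, steps (i) and (iii) are routine, and the decomposition identity follows. I would also remark, as the statement's preamble already hints, that whether the $G$-topology on $L^2 \mathbbm{T}^J_{g, \Omega}$ coincides with the $L^2$-topology is left open; the corollary only asserts the splitting of the $L^2$-completion, which is all that is needed.
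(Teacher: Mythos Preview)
Your overall strategy --- construct the splitting map directly by solving for the ``orbit potential'' $u$ from the imaginary part $I_{\psi_v}$, then verify $G$-orthogonality and zero intersection --- is viable, but it differs substantially from the paper's argument and has one circularity issue you should fix.

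\textbf{Circularity.} You invoke (\ref{doub-spl-T}) as ``already established''. In the paper's logical order, the smooth splitting (\ref{doub-spl-T}) is proved in subsection~\ref{Trip-Split} \emph{after} Corollary~\ref{Ortg-dec-TS}, and its proof begins ``The decomposition in the statement of corollary \ref{Ortg-dec-TS} implies\ldots''. So you cannot cite (\ref{doub-spl-T}) here. Fortunately your step~(ii) does not actually need it: the $G$-orthogonality of the two pieces follows already from (\ref{geom-F}) (Lemma~\ref{Orto-symplec}), which is proved independently, and your splitting construction is self-contained. Rewrite the opening to cite (\ref{geom-F}) and (\ref{inclusTS}), (\ref{Cool-F0}), (\ref{FexpT}) rather than (\ref{doub-spl-T}).

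\textbf{Comparison with the paper.} The paper takes a quite different route. Instead of constructing the splitting map directly, it (a) pulls $G_{g,\Omega}$ back to a form $\gamma_{g,\Omega}$ on the potential space $\Lambda^{\Omega,\bot}_{g,\mathbbm{R}}\cap H^2$ parametrizing the orbit tangent, (b) proves the explicit coercivity estimate $\gamma_{g,\Omega}(\varphi,\varphi)\geqslant 2\|(\Delta^{\Omega}_g-2\mathbbm{I})^{-1}\|^{-2}\|\varphi\|_{H^2}^2$, (c) invokes Lax--Milgram to conclude that $G_{g,\Omega}:L^2 T_{[g,\Omega]_{\omega}}\to (L^2 T_{[g,\Omega]_{\omega}})^{\ast}$ is a topological isomorphism, and (d) applies an abstract Banach-space lemma (Lemma~\ref{abst-ortog}): if $G:E\to E^{\ast}$ and its restriction $G:V\to V^{\ast}$ are both topological isomorphisms, then $E=V\oplus V^{\bot_G}$. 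This yields the splitting of the full ambient $L^2$ space, which one then intersects with $L^2\mathbbm{T}^J_{g,\Omega}$ and combines with (\ref{geom-F}). The advantage of the paper's route is that it never needs to invert the parametrization $(v,V)\mapsto\psi_v$ or track Sobolev indices through it; the coercivity estimate does all the work. Your route, by contrast, makes the splitting map explicit (which is pleasant and is essentially what the paper later does in the ``second argument'' of subsection~\ref{Trip-Split} to recover smoothness), but you must justify that $(v,V)\mapsto\psi_v$ is bounded from $L^2$ into $H^2$-type spaces --- this is the ellipticity of $(\overline{\partial}_{T_{X,J}}\nabla_g)^{\ast_{g,\Omega}}\overline{\partial}_{T_{X,J}}\nabla_g$, which the paper also records. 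Both approaches work; the paper's is more functional-analytic, yours more constructive.
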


\begin{proof}
  We set
  \begin{eqnarray*}
    \Lambda^{\Omega}_{g, \mathbbm{R}} & \assign & \tmop{Ker}_{\mathbbm{R}}
    (\Delta^{\Omega}_g - 2\mathbbm{I}),
  \end{eqnarray*}
  and let $\Lambda^{\Omega, \bot}_{g, \mathbbm{R}} \subset L_{\Omega}^2 (X,
  \mathbbm{R})_0$ be its $L^2$-orthogonal with respect to the measure
  $\Omega$. Then corollary \ref{zero-ortog} and its proof shows that the map
  \begin{eqnarray*}
    \chi : \Lambda^{\Omega, \bot}_{g, \mathbbm{R}} \cap C_{\Omega}^{\infty}
    (X, \mathbbm{R})_0 & \longrightarrow & T_{\left[ g, \Omega
    \right]_{\omega}, (g, \Omega)},\\
    &  & \\
    \varphi & \longmapsto & \left( 2 \omega_{_{_{_{_{_{}}}}}}
    \overline{\partial}_{T_{X, J}} \nabla_g \varphi, \left( B^{\Omega}_{g, J}
    \varphi \right) \, \Omega \right),
  \end{eqnarray*}
  is an isomorphism. We notice also that the expression of the metric $G_{g,
  \Omega}$ obtained at the end of the proof of lemma \ref{positiv-G} hold true
  for arbitrary functions $\Phi$ and $\Psi$. So we put $\left( u, U \right)
  \assign \chi \left( \varphi \right)$ and $\Phi = \Psi = - 2 i \varphi$ in
  this formula. Using the fact that the operator $B^{\Omega}_{g, J}$ is
  $L_{\Omega}^2$-anti-adjoint and the expression
  \begin{eqnarray*}
    \tmop{Im} \left[_{_{_{_{_{}}}}} (\Delta^{\Omega}_{g, J} - 2\mathbbm{I})
    \Phi \right] & = & - 2 (\Delta^{\Omega}_g - 2\mathbbm{I}) \varphi,
  \end{eqnarray*}
  we infer
  \begin{eqnarray*}
    G_{g, \Omega}  (u, U ; u, U) & = & 2 \int_X \left[ \left|
    (\Delta^{\Omega}_g - 2\mathbbm{I}) \varphi \right|^2 + 2
    (\Delta^{\Omega}_g - 2\mathbbm{I}) \varphi_{_{_{_{_{}}}}} \cdot \varphi
    \right] \Omega\\
    &  & \\
    & = : & \gamma_{g, \Omega} \left( \varphi, \varphi \right) \; \geqslant
    \; 0,
  \end{eqnarray*}
  (with equality if and only if $\varphi = 0$). We remind now that the proof
  of the weighted Bochner formula (\ref{dbar-OmBoch-fnctSOL}) shows the
  identity
  \begin{eqnarray*}
    - 2 \tmop{div}^{\Omega} \overline{\partial}^{\ast_{g, \Omega}}_{T_{X, J}}
    \overline{\partial}_{T_{X, J}} \nabla_g & = & \Delta^{\Omega}_g
    (\Delta^{\Omega}_g - 2\mathbbm{I}) - (B^{\Omega}_{g, J})^2 .
  \end{eqnarray*}
  Thus the operator
  \begin{eqnarray*}
    \left( \overline{\partial}_{T_{X, J}} \nabla_{g_{_{_{}}}}
    \right)^{\ast_{g, \Omega}} \overline{\partial}_{T_{X, J}} \nabla_g & = & -
    \tmop{div}^{\Omega} \overline{\partial}^{\ast_{g, \Omega}}_{T_{X, J}}
    \overline{\partial}_{T_{X, J}} \nabla_g,
  \end{eqnarray*}
  is elliptic. This implies (see for example \cite{Ebi}) that the immage
$$
\overline{\partial}_{T_{X, J}} \nabla_g \left[ H^{k + 2} (X,
\mathbbm{R})_{_{_{_{}}}} \right] \subset H^k\,,
$$ is closed in the $H^k$-topology, for all integers $k\geqslant 0$.
We infer that the map
\begin{eqnarray}
  \overline{\partial}_{T_{X, J}} \nabla_g : \Lambda^{\Omega, \bot}_{g,
  \mathbbm{R}} \cap H^{k + 2} (X, \mathbbm{R}) & \longrightarrow &
  \overline{\partial}_{T_{X, J}} \nabla_g \left[ H^{k + 2} (X,
  \mathbbm{R})_{_{_{_{}}}} \right] \subset H^k,\qquad\qquad\label{topo-iso-Hess}
\end{eqnarray}
  is a topological isomorphism. We deduce that the extension in the sense of
  distributions
  \begin{eqnarray*}
    \chi : \Lambda^{\Omega, \bot}_{g, \mathbbm{R}} \cap H^2 (X, \mathbbm{R}) &
    \longrightarrow & L^2 T_{\left[ g, \Omega \right]_{\omega}, (g, \Omega)},
  \end{eqnarray*}
  of the map $\chi$ is also a topological isomorphism and a $(\gamma_{g,
  \Omega}, G_{g, \Omega})$-isometry. The fact that the map
  \begin{eqnarray*}
    \Delta^{\Omega}_g - 2\mathbbm{I}: \Lambda^{\Omega, \bot}_{g, \mathbbm{R}}
    \cap H^2 (X, \mathbbm{R}) & \longrightarrow & \Lambda^{\Omega, \bot}_{g,
    \mathbbm{R}},
  \end{eqnarray*}
  is a topological isomorphism provides the estimate
  \begin{eqnarray*}
    \gamma_{g, \Omega} \left( \varphi, \varphi \right) & \geqslant & 2 \int_X
    \left| (\Delta^{\Omega}_g - 2\mathbbm{I}) \varphi \right|^2 \Omega\\
    &  & \\
    & \geqslant & 2 \|(\Delta^{\Omega}_g - 2\mathbbm{I})^{- 1} \|^{- 2} \cdot
    \| \varphi \|^2_{H^2} .
  \end{eqnarray*}
  Then the Lax-Milgram theorem implies that the map
  \begin{eqnarray*}
    \gamma_{g, \Omega} : \Lambda^{\Omega, \bot}_{g, \mathbbm{R}} \cap H^2 (X,
    \mathbbm{R}) & \longrightarrow & \left[ \Lambda^{\Omega, \bot}_{g,
    \mathbbm{R}} \cap H^2 (X, \mathbbm{R})_{_{_{_{_{_{}}}}}} \right]^{\ast},
  \end{eqnarray*}
  is a topological isomorphism. (The sign $\ast$ here denotes the topological
  dual). We infer that the restricted map
  \begin{eqnarray*}
    G_{g, \Omega} : L^2 T_{\left[ g, \Omega \right]_{\omega}, (g, \Omega)} &
    \longrightarrow & \left[ L^2 T_{\left[ g, \Omega \right]_{\omega}, (g,
    \Omega)_{_{}}} \right]^{\ast},
  \end{eqnarray*}
  is also a topological isomorphism thanks to the fact that the extended map
  $\chi$ is a $(\gamma_{g, \Omega}, G_{g, \Omega})$-isometry. Applying the
  elementary lemma \ref{abst-ortog} below to \ the spaces $E \assign L^2
  \left( X, S^2 T^{\ast}_X \right) \oplus L_{\Omega}^2 \left( X, \mathbbm{R}
  \right)_0$ and $V \assign L^2 T_{\left[ g, \Omega \right]_{\omega}, (g,
  \Omega)}$ we deduce the $G$-orthogonal decomposition
  \begin{eqnarray*}
    L^2 \left( X, S^2 T^{\ast}_X \right) \oplus L_{\Omega}^2 \left( X,
    \mathbbm{R} \right)_0 & = & L^2 T_{\left[ g, \Omega \right]_{\omega}, (g,
    \Omega)} \oplus L^2 T^{\bot_G}_{\left[ g, \Omega \right]_{\omega}, (g,
    \Omega)},
  \end{eqnarray*}
  and thus
  \begin{eqnarray*}
    L^2 \mathbbm{T}^J_{g, \Omega} & = & L^2 T_{\left[ g, \Omega
    \right]_{\omega}, (g, \Omega)} \oplus \left[ L^2 T^{\bot_G}_{\left[ g,
    \Omega \right]_{\omega}, (g, \Omega)} \cap_{_{_{_{_{}}}}} L^2
    \mathbbm{T}^J_{g, \Omega} \right] \\
    &  & \\
    & = & L^2 T_{\left[ g, \Omega \right]_{\omega}, (g, \Omega)} \oplus L^2 
    \left[ T^{\bot_G}_{\left[ g, \Omega \right]_{\omega}, (g, \Omega)}
    \cap_{_{_{_{_{}}}}} \mathbbm{T}^J_{g, \Omega} \right] .
  \end{eqnarray*}
  Then the conclusion follows from the identity (\ref{geom-F}).
\end{proof}

\begin{lemma}
  \label{abst-ortog}Let $E$ be a real Banach space, $E^{\ast}$ its topological
  dual and $G : E \times E \longrightarrow \mathbbm{R}$ be a topologically non
  degenerate bilinear form, i.e. $G : E \longrightarrow E^{\ast}$ is an
  isomorphism. If there exists a closed subspace $V \subset E$ such that the
  restriction $G : V \times V \longrightarrow \mathbbm{R}$ is also
  topologically non degenerate then $E = V \oplus V^{\bot_G}$.
\end{lemma}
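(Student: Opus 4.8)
<br>

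The final statement is the elementary Lemma \ref{abst-ortog}: given a Banach space $E$ with a topologically non-degenerate bilinear form $G\colon E\times E\to\mathbbm{R}$ (meaning the induced map $G\colon E\to E^{\ast}$ is an isomorphism) and a closed subspace $V\subset E$ on which $G$ restricts to a topologically non-degenerate form, one wants $E=V\oplus V^{\bot_G}$.

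The plan is to proceed by the standard argument for orthogonal complements in the presence of a non-degenerate pairing, but paying attention to the topological (rather than merely algebraic) hypotheses. First I would set up the relevant maps: let $\iota\colon V\hookrightarrow E$ be the inclusion and $\iota^{\ast}\colon E^{\ast}\to V^{\ast}$ its (continuous, surjective-onto-$V^{\ast}$ since $V$ is closed, by Hahn--Banach) transpose. Consider the composition $\pi\assign (G|_V)^{-1}\circ\iota^{\ast}\circ G\colon E\longrightarrow V$. Here $G\colon E\to E^{\ast}$ is the ambient isomorphism, and $(G|_V)^{-1}\colon V^{\ast}\to V$ exists and is continuous precisely by the hypothesis that $G|_V$ is topologically non-degenerate. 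Thus $\pi$ is a continuous linear map $E\to V$. I would then check that $\pi|_V=\mathrm{id}_V$: for $w\in V$ and all $v\in V$ one has $\langle \iota^{\ast}Gw,v\rangle=\langle Gw,\iota v\rangle=G(w,v)=\langle G|_V(w),v\rangle$, so $\iota^{\ast}Gw=G|_V(w)$ and hence $\pi(w)=w$. Therefore $\pi$ is a continuous projection of $E$ onto $V$, which gives the topological direct sum decomposition $E=V\oplus\ker\pi$.

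It then remains to identify $\ker\pi$ with $V^{\bot_G}\assign\{x\in E: G(x,v)=0\ \forall v\in V\}$. For $x\in E$, $\pi(x)=0$ iff $(G|_V)^{-1}(\iota^{\ast}Gx)=0$ iff $\iota^{\ast}Gx=0$ (since $(G|_V)^{-1}$ is injective) iff $\langle Gx,\iota v\rangle=G(x,v)=0$ for all $v\in V$, i.e.\ iff $x\in V^{\bot_G}$. Combined with the previous paragraph this yields $E=V\oplus V^{\bot_G}$, and the splitting is topological because $\pi$ is continuous and $V$, $V^{\bot_G}=\ker\pi$ are closed. I would also remark that the ambient non-degeneracy of $G$ is what guarantees $V\cap V^{\bot_G}=0$ independently, but here it is automatic from $\pi|_V=\mathrm{id}$.

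There is essentially no serious obstacle here; the lemma is genuinely elementary and the only point requiring a little care is making sure each inverse and transpose used is a \emph{bounded} operator, so that $\pi$ is continuous — this is exactly where the word ``topologically'' in both hypotheses is used (closedness of $V$ for surjectivity of $\iota^{\ast}$ onto $V^{\ast}$ via Hahn--Banach, and topological non-degeneracy of $G|_V$ for boundedness of $(G|_V)^{-1}$). The application in Corollary \ref{Ortg-dec-TS} is then immediate with $E\assign L^2(X,S^2T^{\ast}_X)\oplus L^2_{\Omega}(X,\mathbbm{R})_0$ and $V\assign L^2 T_{[g,\Omega]_{\omega},(g,\Omega)}$, once one has verified (as done there via the Lax--Milgram argument and the isometry $\chi$) that $G$ restricts to a topological isomorphism $V\to V^{\ast}$.
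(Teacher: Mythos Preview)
Your proof is correct and follows essentially the same route as the paper: both construct, for each $e\in E$, the unique $v\in V$ with $\iota^{\ast}(Ge)=\iota^{\ast}(Gv)$ (the paper writes this as $j^{\ast}(e\neg G)=j^{\ast}(v\neg G)$), and then identify $e-v\in V^{\bot_G}$; your version simply packages this assignment as the continuous projection $\pi=(G|_V)^{-1}\circ\iota^{\ast}\circ G$ and is a bit more explicit about continuity. One minor remark: surjectivity of $\iota^{\ast}\colon E^{\ast}\to V^{\ast}$ follows from Hahn--Banach without assuming $V$ closed; closedness of $V$ is rather what makes $V$ a Banach space so that the bounded inverse $(G|_V)^{-1}$ exists.
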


\begin{proof}
  Let $j : V \longhookrightarrow E$ be the canonical inclusion and notice the
  trivial identity
  \begin{eqnarray*}
    V^{\bot} & \assign & \{\alpha \in E^{\ast} \mid \alpha \cdot v = 0
    \nocomma, \forall v \in V\} = \tmop{Ker} j^{\ast} .
  \end{eqnarray*}
  By assumption for any element $e \in E$ there exists a unique $v \in V$ such
  that $j^{\ast} \left( e \neg G \right) = j^{\ast} \left( v \neg G \right)$.
  Thus $(e - v) \neg G \in V^{\bot}$. By definition the restriction $G :
  V^{\bot_G} \longrightarrow V^{\bot}$ provides an isomorphism. We conclude $e
  - v \in V^{\bot_G}$.
\end{proof}

We notice that the condition $V \cap V^{\bot_G} = \{0\}$ is equivalent to
$\tmop{Ker} (G : V \longrightarrow V^{\ast}) = \{0\}$ but in general not
sufficient to insure the surjectivity of $G : V \longrightarrow V^{\ast}$.

\subsection{Triple splitting of the space $\mathbbm{T}^J_{g, \Omega}$}\label{Trip-Split}

By abuse of notations we will denote by $G_{g, \Omega}$ the scalar product
over $\Lambda^{\Omega, \bot}_{g, J} \subset C^{\infty}$ induced by the
isomorphism
\begin{eqnarray*}
  \eta \; : \Lambda^{\Omega, \bot}_{g, J} \oplus \mathcal{H}_{g, \Omega}^{0,
  1} \left( T_{X, J} \right) & \longrightarrow & \mathbbm{T}^J_{g, \Omega} \\
  &  & \\
  \left( \psi, A \right) & \longmapsto & \left( g \left(
  \overline{\partial}_{T_{X, J}} \nabla_{g, J}  \overline{\psi} +
  A_{_{_{_{}}}} \right)_{}, - \frac{1}{2} \tmop{Re} \left[
  (\Delta^{\Omega}_{g, J} - 2\mathbbm{I}) \psi \right] \Omega \right) .
\end{eqnarray*}
Explicitly
\begin{eqnarray*}
  G_{g, \Omega} \left( \varphi, \psi \right) & = & \frac{1}{2} \int_X \left[
  (\Delta^{\Omega}_{g, J} - 2\mathbbm{I}) \varphi_{_{_{_{_{}}}}} \cdot
  \overline{\psi} \; + (\Delta^{\Omega}_{g, J} - 2\mathbbm{I})
  \psi_{_{_{_{_{}}}}} \cdot \overline{\varphi} \right] \Omega\\
  &  & \\
  & + & \frac{1}{2} \int_X \tmop{Im} \left[_{_{_{_{_{}}}}}
  (\Delta^{\Omega}_{g, J} - 2\mathbbm{I}) \varphi \right] \tmop{Im}
  \left[_{_{_{_{_{}}}}} (\Delta^{\Omega}_{g, J} - 2\mathbbm{I}) \psi \right]
  \Omega .
\end{eqnarray*}
We introduce the vector space
\begin{eqnarray*}
  \mathbbm{E}^J_{g, \Omega} & \assign & \left\{ u \in \Lambda^{\Omega,
  \bot}_{g, J} \mid (\Delta^{\Omega}_{g, J} -
  2\mathbbm{I}) u \in \Lambda^{\Omega,
  \bot}_{g, J} \cap C_{\Omega}^{\infty} \left( X, \mathbbm{R} \right)_0
  \right\},
\end{eqnarray*}
and we observe that the expression (\ref{Cool-F0}) for the space
$\mathbbm{F}^J_{g, \Omega} \left[ 0 \right]$ shows that the map $\eta$
restricts to the isomorphism
\begin{eqnarray*}
  \eta \; : \mathbbm{E}^J_{g, \Omega} \oplus \mathcal{H}_{g, \Omega}^{0, 1}
  \left( T_{X, J} \right) & \longrightarrow & \mathbbm{F}^J_{g, \Omega} \left[
  0 \right] .
\end{eqnarray*}
The subspaces $\mathbbm{E}^J_{g, \Omega} [0] \assign \eta \mathbbm{E}^J_{g,
\Omega}$ and $\mathcal{H}_{g, \Omega}^{0, 1} \left( T_{X, J} \right) \subset
\mathbbm{F}^J_{g, \Omega} \left[ 0 \right]$ (embedded via the previous
isomorphism) are $G$-orthogonal thanks to the expression of the restriction of
$G$ over $\mathbbm{F}^J_{g, \Omega} \left[ 0 \right]$ computed in the proof of
lemma \ref{positiv-G}. We deduce the $G$-orthogonal decomposition
\begin{eqnarray*}
  \mathbbm{F}^J_{g, \Omega} \left[ 0 \right] & = & \mathbbm{E}^J_{g, \Omega}
  [0] \oplus_G \mathcal{H}_{g, \Omega}^{0, 1} \left( T_{X, J} \right) .
\end{eqnarray*}
Let now
\begin{eqnarray*}
  \mathbbm{O}^J_{g, \Omega} & \assign & \left( \mathbbm{E}^J_{g, \Omega}
  \right)^{\bot_G} \cap \Lambda^{\Omega, \bot}_{g, J},
\end{eqnarray*}
and observe that the decomposition in corollary \ref{Ortg-dec-TS} implies
\begin{eqnarray*}
  L^2 T_{\left[ g, \Omega \right]_{\omega}, (g, \Omega)} & = & \left[ L^2
  \mathbbm{F}_{g, \Omega}^J [0]_{_{_{_{_{}}}}} \right]^{\bot_G} \cap L^2
  \mathbbm{T}^J_{g, \Omega}\\
  &  & \\
  & = & \mathbbm{F}_{g, \Omega}^J [0]_{_{_{_{_{}}}}}^{\bot_G} \cap L^2
  \mathbbm{T}^J_{g, \Omega},
\end{eqnarray*}
and thus
\begin{eqnarray*}
  T_{\left[ g, \Omega \right]_{\omega}, (g, \Omega)} & = & \mathbbm{F}_{g,
  \Omega}^J [0]_{_{_{_{_{}}}}}^{\bot_G} \cap \mathbbm{T}^J_{g, \Omega} .
\end{eqnarray*}
We deduce that the map $\eta$ restricts to a $G$-isometry
\begin{eqnarray*}
  \eta : \mathbbm{O}^J_{g, \Omega} & \longrightarrow & T_{\left[ g, \Omega
  \right]_{\omega}, (g, \Omega)} .
\end{eqnarray*}
We show now the smooth $G$-orthogonal decomposition (\ref{doub-spl-T}).

\begin{proof}.
The decomposition in the statement of corollary \ref{Ortg-dec-TS} implies that
for any $\left( u, U \right) \in \mathbbm{T}^J_{g, \Omega}$ there exists
$\left( \theta, \Theta \right) \in L^2 T_{\left[ g, \Omega \right]_{\omega},
(g, \Omega)}$ and $\left( v, V \right) \in L^2 \mathbbm{F}_{g, \Omega}^J [0]$
such that $\left( u, U \right) = \left( \theta, \Theta \right) + \left( v, V
\right)$. On the other hand, the fact that the map (\ref{topo-iso-Hess})
is a topological isomorphism implies the existence of $\rho \in
\Lambda^{\Omega, \bot}_{g, \mathbbm{R}} \cap H^2 (X, \mathbbm{R})$ such that
\begin{eqnarray*}
  \theta^{\ast}_g & = & - 2 J \overline{\partial}_{T_{X, J}} \nabla_g \rho .
\end{eqnarray*}
Thus $\Theta^{\ast}_{\Omega} = - B^{\Omega}_{g, J} \rho \in H^1 \left( X,
\mathbbm{R} \right)$ and $V^{\ast}_{\Omega} \in H^1 \left( X, \mathbbm{R}
\right)$. Then the identity $(\Delta^{\Omega}_{g, J} - 2\mathbbm{I}) \psi_v =
- 2 V^{\ast}_{\Omega}$ implies $\psi_v \in \Lambda^{\Omega, \bot}_{g, J} \cap
H^3 \left( X, \mathbbm{C} \right)$ by elliptic regularity. We deduce $\left(
v, V \right) \in H^1 \mathbbm{F}_{g, \Omega}^J [0]$ and thus $\left( \theta,
\Theta \right) \in H^1 T_{\left[ g, \Omega \right]_{\omega}, (g, \Omega)}$.
The conclusion follws by induction.

We provide also a second argument. Combining formula (\ref{FexpT}) with a
computation in the proof of corollary \ref{zero-ortog} we deduce the identity
$I_{\psi_{\theta}} = 2 (\Delta^{\Omega}_g - 2\mathbbm{I}) \rho$. On the other
hand we observe that $\psi_{\theta} - 2 i \rho \in \Lambda^{\Omega}_{g, J}$.
In more explicit terms
\begin{eqnarray*}
  (\Delta^{\Omega}_{g, J} - 2\mathbbm{I}) \psi_{\theta} & = & 2 B^{\Omega}_{g,
  J} \rho + 2 i (\Delta^{\Omega}_g - 2\mathbbm{I}) \rho \in \Lambda^{\Omega,
  \bot}_{g, J} \cap L^2 \left( X, \mathbbm{C} \right) .
\end{eqnarray*}
We infer
\begin{eqnarray*}
  (\Delta^{\Omega}_{g, J} - 2\mathbbm{I}) \psi_{\theta} & = &  \left[
  B^{\Omega}_{g, J} (\Delta^{\Omega}_g - 2\mathbbm{I})^{- 1} - i\mathbbm{I}
  \right] I_{\psi_{\theta}} .
\end{eqnarray*}
(Notice that $I_{\psi} \in \Lambda^{\Omega, \bot}_{g, \mathbbm{R}}$ for any $\psi \in H^2 \left( X, \mathbbm{C} \right)$ thanks
to corollary \ref{zero-ortog}). The fact that $I_{\psi_{\theta}} = I_{\psi_u}$
is smooth implies that
\begin{eqnarray*}
  \psi_{\theta} & = & (\Delta^{\Omega}_{g, J} - 2\mathbbm{I})^{- 1} \left[
  B^{\Omega}_{g, J} (\Delta^{\Omega}_g - 2\mathbbm{I})^{- 1} - i\mathbbm{I}
  \right] I_{\psi_u},
\end{eqnarray*}
is also smooth. We infer the required smooth decomposition.
\end{proof}

We deduce the $G$-orthogonal triple splitting
\begin{eqnarray}
   \mathbbm{T}^J_{g, \Omega}  =   T_{\left[ g, \Omega
  \right]_{\omega}, (g, \Omega)} \oplus_G  \mathbbm{E}_{g, \Omega}^J [0] \oplus_G \mathcal{H}_{g, \Omega}^{0, 1} \left( T_{X, J} \right).\label{smt-trp-spl-T}
\end{eqnarray}
We infer in particular the $G$-orthogonal decomposition
\begin{equation}
   \Lambda^{\Omega, \bot}_{g, J} =  \mathbbm{O}^J_{g, \Omega} \oplus_G
   \mathbbm{E}^J_{g, \Omega}, \label{decOrtLJ}
\end{equation}
thanks to the fact that the map $\eta$ is a topological isomorphism.
We observe now the following elementary lemma.

\begin{lemma}
  Let $T : D \subset L^2 \left( X, \mathbbm{C} \right) \longrightarrow L^2
  \left( X, \mathbbm{C} \right)$ be a closed densely defined
  $L_{\Omega}^2$-self-adjoint operator such that $[T, \overline{T}] = 0$.
  Then
  \begin{eqnarray*}
    \tmop{Ker} (T \overline{T}) \cap L^2 \left( X, \mathbbm{R} \right) & = &
    \left\{ \tmop{Re} u \mid u \in \tmop{Ker} T \right\} .
  \end{eqnarray*}
\end{lemma}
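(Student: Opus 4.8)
The claim is that for a closed densely defined $L^2_\Omega$-self-adjoint operator $T$ with $[T,\overline T]=0$, one has $\tmop{Ker}(T\overline T)\cap L^2(X,\mathbbm{R}) = \{\tmop{Re}\,u \mid u\in\tmop{Ker}\,T\}$. The plan is to prove the two inclusions separately, exploiting that $\overline{T}$ is also closed, densely defined and $L^2_\Omega$-self-adjoint (these facts are exactly the ``trivial equalities'' verified at the end of the previous lemma's proof, which I may invoke) and, crucially, that $T\overline{T}$ is a non-negative self-adjoint operator so that $\tmop{Ker}(T\overline T) = \tmop{Ker}\big((T\overline T)^{1/2}\big)$ and, more concretely, $\langle T\overline T u, u\rangle_\Omega = \langle \overline T u, \overline{T}^{\,*}\! u\rangle_\Omega$; since $\overline{T}^{\,*}=\overline{T}$ this equals $\|\overline T u\|_\Omega^2$ once we are careful about which adjoint is meant. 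The key algebraic identity I would set up first is: for $u$ in the appropriate domain, $T\overline T u = 0$ if and only if $\overline T u \in \tmop{Ker}\,T$, together with the complex-conjugation symmetry $\overline{T u} = \overline{T}\,\overline u$ (valid for closed operators with the stated reality structure), which swaps $\tmop{Ker}\,T$ and $\tmop{Ker}\,\overline T$.

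For the inclusion $\supseteq$: take $u\in\tmop{Ker}\,T$ and set $v:=\tmop{Re}\,u=\tfrac12(u+\overline u)$. Then $Tu=0$, and applying conjugation gives $\overline T\,\overline u = \overline{T u}=0$, so $\overline u\in\tmop{Ker}\,\overline T$. Hence $\overline T v = \tfrac12(\overline T u + \overline T\,\overline u) = \tfrac12\,\overline T u$, and since $[T,\overline T]=0$ we get $T\overline T v = \tfrac12 T\overline T u = \tfrac12 \overline T\, T u = 0$. Thus $v\in\tmop{Ker}(T\overline T)\cap L^2(X,\mathbbm{R})$; domain membership is handled by the commutation hypothesis exactly as in the preceding lemma. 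For the inclusion $\subseteq$: let $v\in\tmop{Ker}(T\overline T)$ be real. Using self-adjointness, $0 = \langle T\overline T v, v\rangle_\Omega = \langle \overline T v, T^{*} v\rangle_\Omega = \langle \overline T v, \overline{T v}\rangle_\Omega$ — here I would substitute $w:=\overline T v$ and note $\overline{T v}= \overline{T}\,\overline v = \overline T v = w$ because $v$ is real, so the pairing becomes $\int_X |w|^2\,\Omega \cdot(\text{phase})$; being careful with the sesquilinearity of $\langle\cdot,\cdot\rangle_\Omega$ one gets $\|\overline T v\|_\Omega^2 = 0$ directly (alternatively: $T\overline T \geqslant 0$ forces $(T\overline T v,v)_\Omega=0 \Rightarrow (T\overline T)^{1/2}v=0 \Rightarrow \overline T v=0$ by the finite-dimensional spectral argument used in the lemma above). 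Therefore $\overline T v = 0$, i.e.\ $v\in\tmop{Ker}\,\overline T$, and conjugating, $\overline v=v\in\tmop{Ker}\,T$; since $v$ is real, $v=\tmop{Re}\,v$ with $v\in\tmop{Ker}\,T$, which is what we want.

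The main obstacle I anticipate is purely bookkeeping about \emph{domains} and about the precise conjugation-linearity conventions: one must check that $v=\tmop{Re}\,u$ lies in $D(T\overline T)$ when $u\in\tmop{Ker}\,T$ (this uses $[T,\overline T]=0$ and that $\tmop{Ker}\,T$, $\tmop{Ker}\,\overline T$ are contained in every relevant domain, as in the previous proof), and that the identity $\overline{Tw}=\overline T\,\overline w$ holds on the appropriate dense domain — which is legitimate here because $T$ is a concrete differential operator whose coefficients behave correctly under conjugation (the same reality structure that gives $\overline T$ its self-adjointness in the lemma above). Once these are granted, the argument is a three-line symmetry computation in each direction; I would phrase it using the substitution $w:=\overline T v$, $v:=\overline u$ exactly as the adjoint manipulations were phrased at the end of the preceding lemma, to keep the proof uniform with its surroundings.
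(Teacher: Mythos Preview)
Your argument for the inclusion $\subseteq$ contains a genuine error. You claim that for real $v\in\tmop{Ker}(T\overline T)$ one can deduce $\overline T v=0$, but this is false. The computation
\[
0=\langle T\overline T v,v\rangle_\Omega=\langle \overline T v,Tv\rangle_\Omega=\int_X(\overline T v)\cdot\overline{Tv}\,\Omega=\int_X(\overline T v)^2\,\Omega
\]
(using $\overline{Tv}=\overline T\,\overline v=\overline T v$ for real $v$) yields $\int_X w^2\,\Omega=0$ with $w=\overline T v$, \emph{not} $\int_X|w|^2\,\Omega=0$. Since $w$ is in general complex-valued, this does not force $w=0$. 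Your alternative, $(T\overline T)^{1/2}v=0\Rightarrow\overline T v=0$, is equally unfounded: the square root of $T\overline T$ is not $|\overline T|$. A concrete finite-dimensional counterexample: on $\mathbbm{C}^3$ let $T$ be diagonal in the orthonormal basis $e_1=(1,i,0)/\sqrt2$, $e_2=\overline{e_1}$, $e_3=(0,0,1)$ with eigenvalues $0,1,2$. Then $\overline T$ is diagonal in the same basis with eigenvalues $1,0,2$, so $[T,\overline T]=0$. The real vector $v=e_1+e_2=(\sqrt2,0,0)$ lies in $\tmop{Ker}(T\overline T)$, yet $\overline T v=e_1\neq0$.

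The paper proceeds differently. It first establishes $\tmop{Ker}(T\overline T)=\tmop{Ker}\,T+\tmop{Ker}\,\overline T$ by an orthogonal-splitting argument: given $u\in\tmop{Ker}(T\overline T)$, write $u=u_1+u_2$ with $u_1\in\tmop{Ker}\,T$ and $u_2\in(\tmop{Ker}\,T)^\perp$; since $\overline T$ preserves both $\tmop{Ker}\,T$ and its orthogonal (by commutation and self-adjointness), the condition $\overline T u\in\tmop{Ker}\,T$ forces $\overline T u_2\in\tmop{Ker}\,T\cap(\tmop{Ker}\,T)^\perp=\{0\}$. Hence $\tmop{Ker}(T\overline T)=\{u+\overline w\mid u,w\in\tmop{Ker}\,T\}$, and intersecting with real functions gives exactly $\{\tmop{Re}\,u\mid u\in\tmop{Ker}\,T\}$. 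The key point you missed is that a real element of $\tmop{Ker}(T\overline T)$ is generically a sum of a piece in $\tmop{Ker}\,T$ and a piece in $\tmop{Ker}\,\overline T$, neither of which need be real by itself.
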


\begin{proof}
  The assumption $[T, \overline{T}] = 0$ implies that the restriction 
  $$
  \overline{T} : \tmop{Ker} T \longrightarrow \tmop{Ker} T,
  $$ is well defined.
  This combined with the fact that $\overline{T} $ is also
  $L_{\Omega}^2$-self-adjoint implies that the restriction
  \begin{eqnarray*}
    \overline{T} : D \cap (\tmop{Ker} T)^{\bot} & \longrightarrow &
    (\tmop{Ker} T)^{\bot},
  \end{eqnarray*}
  is also well defined. The inclusion $\tmop{Ker} (T \overline{T}) \supseteq
  \tmop{Ker} T + \tmop{Ker} \overline{T}$ is obvious. In order to show the
  reverse inclusion let $u \in \tmop{Ker} (T \overline{T})$, i.e.
  $\overline{T} u \in \tmop{Ker} T$, and consider the decomposition $u = u_1 +
  u_2$ with $u_1 \in \tmop{Ker} T$ and $u_2 \in (\tmop{Ker} T)^{\bot}$. Then
  $\overline{T} u \in \tmop{Ker} T$ if and only if $\overline{T} u_2 \in
  \tmop{Ker} T$ since $\overline{T} u_1 \in \tmop{Ker} T$. But $\overline{T}
  u_2 \in \tmop{Ker} T$ if and only if $\overline{T} u_2 = 0$ since
  $\overline{T} u_2 \in (\tmop{Ker} T)^{\bot}$. We infer the reverse
  inclusion. Thus
  \begin{eqnarray*}
    \tmop{Ker} (T \overline{T}) & = & \left\{ u + \overline{v} \mid u, v \in
    \tmop{Ker} T \right\},
  \end{eqnarray*}
  which implies the required conclusion.
\end{proof}

We remind that if $\left( g, \Omega \right) \in \mathcal{S}_{\omega}$ is a K\"ahler-Ricci-Soliton
with $J : = g^{- 1} \omega$ then
\begin{eqnarray*}
  \left[ \Delta^{\Omega}_{g, J} - 2\mathbbm{I}, \overline{\Delta^{\Omega}_{g,
  J} - 2\mathbbm{I}} _{_{_{_{_{}}}}} \right] & = & 0,
\end{eqnarray*}
which allows to apply the previous lemma to the $L_{\Omega}^2$-self-adjoint
operator $P^{\Omega}_{g, J}$. Thus
\begin{eqnarray*}
  \tmop{Ker} P^{\Omega}_{g, J} \cap C_{\Omega}^{\infty} \left( X, \mathbbm{R}
  \right)_0 & = & \left\{ \tmop{Re} u \mid u \in \Lambda^{\Omega}_{g, J}
  \right\} = : \tmop{Re} \Lambda^{\Omega}_{g, J} .
\end{eqnarray*}
The finiteness theorem for elliptic operators implies
\begin{eqnarray*}
  P^{\Omega}_{g, J} C_{\Omega}^{\infty} \left( X, \mathbbm{R} \right)_0 & = &
  \left( \tmop{Re} \Lambda^{\Omega}_{g, J_{_{}}} \right)^{\bot} \cap
  C_{\Omega}^{\infty} \left( X, \mathbbm{R} \right)_0 \supseteq
  \Lambda^{\Omega, \bot}_{g, J} \cap C_{\Omega}^{\infty} \left( X, \mathbbm{R}
  \right)_0 .
\end{eqnarray*}
The last inclusion is obvious. The inclusion $P^{\Omega}_{g, J}
C_{\Omega}^{\infty} \left( X, \mathbbm{R} \right)_0 \subseteq \Lambda^{\Omega,
\bot}_{g, J} \cap C_{\Omega}^{\infty} \left( X, \mathbbm{R} \right)_0$ is also
obvious. We conclude
\begin{equation}
  P^{\Omega}_{g, J} C_{\Omega}^{\infty} \left( X, \mathbbm{R} \right)_0 =
  \Lambda^{\Omega, \bot}_{g, J} \cap C_{\Omega}^{\infty} \left( X, \mathbbm{R}
  \right)_0 = \left( \tmop{Re} \Lambda^{\Omega}_{g, J_{_{}}} \right)^{\bot}
  \cap C_{\Omega}^{\infty} \left( X, \mathbbm{R} \right)_0 . \label{ImgP}
\end{equation}
\begin{lemma}
  \label{exprO}If $\left( g, \Omega \right) \in \mathcal{S}_{\omega}$ is a K\"ahler-Ricci-Soliton
  then holds the identity
  \begin{eqnarray*}
    \mathbbm{O}^J_{g, \Omega} & = & \left\{ \psi \in \Lambda^{\Omega,
    \bot}_{g, J} \mid P^{\Omega}_{g, J} \tmop{Re} \psi =  0
    \right\},
  \end{eqnarray*}
  with $J : = g^{- 1} \omega$.
\end{lemma}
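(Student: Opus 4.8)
The plan is to unwind the definition $\mathbbm{O}^J_{g,\Omega}=(\mathbbm{E}^J_{g,\Omega})^{\bot_G}\cap\Lambda^{\Omega,\bot}_{g,J}$ using the explicit formula for the bilinear form $G_{g,\Omega}$ on $\Lambda^{\Omega,\bot}_{g,J}$ recorded just before the statement, namely
\[
G_{g,\Omega}(\varphi,\psi)=\frac12\int_X\left[(\Delta^{\Omega}_{g,J}-2\mathbbm{I})\varphi\cdot\overline{\psi}+(\Delta^{\Omega}_{g,J}-2\mathbbm{I})\psi\cdot\overline{\varphi}\right]\Omega+\frac12\int_X\operatorname{Im}\!\left[(\Delta^{\Omega}_{g,J}-2\mathbbm{I})\varphi\right]\operatorname{Im}\!\left[(\Delta^{\Omega}_{g,J}-2\mathbbm{I})\psi\right]\Omega.
\]
So $\psi\in\mathbbm{O}^J_{g,\Omega}$ means $G_{g,\Omega}(\psi,u)=0$ for every $u\in\mathbbm{E}^J_{g,\Omega}$, and by definition of $\mathbbm{E}^J_{g,\Omega}$ the quantity $(\Delta^{\Omega}_{g,J}-2\mathbbm{I})u$ is \emph{real}, so $\operatorname{Im}[(\Delta^{\Omega}_{g,J}-2\mathbbm{I})u]=0$ and the last integral drops out. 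First I would therefore reduce the orthogonality condition to
\[
0=\operatorname{Re}\int_X (\Delta^{\Omega}_{g,J}-2\mathbbm{I})\psi\cdot\overline{u}\;\Omega
\]
for all $u\in\mathbbm{E}^J_{g,\Omega}$, after using self-adjointness of $\Delta^{\Omega}_{g,J}$ to move the operator onto whichever factor is convenient and recombining the two terms.

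Next I would compute $\operatorname{Re}\int_X(\Delta^{\Omega}_{g,J}-2\mathbbm{I})\psi\cdot\overline u\,\Omega$ in terms of $\operatorname{Re}\psi$. Writing $\psi=\psi_1+i\psi_2$ and $u$ real-valued with $(\Delta^{\Omega}_{g,J}-2\mathbbm{I})u$ real (so that $B^{\Omega}_{g,J}u=(\Delta^{\Omega}_g-2\mathbbm{I})^{-1}$ applied appropriately, or more directly using $\Delta^{\Omega}_{g,J}=\Delta^{\Omega}_g-iB^{\Omega}_{g,J}$), I would expand and integrate by parts, exploiting that $\Delta^{\Omega}_g$ is $L^2_{\Omega}$-self-adjoint and $B^{\Omega}_{g,J}$ is $L^2_{\Omega}$-anti-adjoint together with the commutation identity $[\Delta^{\Omega}_g,B^{\Omega}_{g,J}]=0$ from \eqref{Com-Lap-B}. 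The aim is to show that, for $u$ ranging over $\mathbbm{E}^J_{g,\Omega}$, the condition $\operatorname{Re}\int_X(\Delta^{\Omega}_{g,J}-2\mathbbm{I})\psi\cdot\overline u\,\Omega=0$ is equivalent to $\operatorname{Re}\psi$ being $L^2_{\Omega}$-orthogonal to $(\Delta^{\Omega}_{g,J}-2\mathbbm{I})\overline{(\Delta^{\Omega}_{g,J}-2\mathbbm{I})}\,C^{\infty}_{\Omega}(X,\mathbb R)_0=P^{\Omega}_{g,J}C^{\infty}_{\Omega}(X,\mathbb R)_0$; this is where the image characterization \eqref{ImgP}, valid in the K\"ahler--Ricci soliton case, enters and identifies $P^{\Omega}_{g,J}C^{\infty}_{\Omega}(X,\mathbb R)_0=\Lambda^{\Omega,\bot}_{g,J}\cap C^{\infty}_{\Omega}(X,\mathbb R)_0$. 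Combined with self-adjointness of $P^{\Omega}_{g,J}$, $L^2_{\Omega}$-orthogonality of $\operatorname{Re}\psi$ to $P^{\Omega}_{g,J}(\text{reals})$ is the same as $P^{\Omega}_{g,J}\operatorname{Re}\psi=0$ (after noting $\operatorname{Re}\psi\in\Lambda^{\Omega,\bot}_{g,J}$, which holds since $\psi\in\Lambda^{\Omega,\bot}_{g,J}$ and $\Lambda^{\Omega,\bot}_{g,J}$ is stable under conjugation in the soliton case by Corollary \ref{zero-ortog}).

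The main obstacle I expect is the bookkeeping in the integration-by-parts step: one must carefully separate real and imaginary parts, keep track of the anti-self-adjointness of $B^{\Omega}_{g,J}$, and make sure that when $u$ runs over $\mathbbm{E}^J_{g,\Omega}$ — not over all smooth functions — the set of test functions $(\Delta^{\Omega}_{g,J}-2\mathbbm{I})u$ is exactly large enough to detect $P^{\Omega}_{g,J}\operatorname{Re}\psi=0$ and no larger. Concretely, the isomorphism $\Delta^{\Omega}_{g,J}(\Delta^{\Omega}_{g,J}-2\mathbbm{I}):\mathbbm{E}^{\Omega}_{g,J}\to\Lambda^{\Omega,\bot}_{g,J}\cap C^{\infty}_{\Omega}(X,\mathbb R)_0$ noted in Remark~2 of the decomposition section, plus the commutation relations, should guarantee that the pairing against $\mathbbm{E}^J_{g,\Omega}$ is equivalent to the pairing against $\Lambda^{\Omega,\bot}_{g,J}\cap C^{\infty}_{\Omega}(X,\mathbb R)_0$, which by \eqref{ImgP} is $P^{\Omega}_{g,J}C^{\infty}_{\Omega}(X,\mathbb R)_0$. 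Once that equivalence is set up, the conclusion
\[
\mathbbm{O}^J_{g,\Omega}=\left\{\psi\in\Lambda^{\Omega,\bot}_{g,J}\mid P^{\Omega}_{g,J}\operatorname{Re}\psi=0\right\}
\]
follows immediately. I would close by remarking that the inclusion $\supseteq$ is the easy direction (if $P^{\Omega}_{g,J}\operatorname{Re}\psi=0$ then $G_{g,\Omega}(\psi,u)=0$ for all $u\in\mathbbm{E}^J_{g,\Omega}$ by the same computation run backwards), so the content is really the reverse inclusion, which the argument above delivers.
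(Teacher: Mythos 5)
Your proposal is correct and follows essentially the same route as the paper: drop the imaginary-part term of $G_{g,\Omega}$ using that $(\Delta^{\Omega}_{g,J}-2\mathbbm{I})u$ is real for $u\in\mathbbm{E}^J_{g,\Omega}$, reduce the orthogonality to $\int_X(\Delta^{\Omega}_{g,J}-2\mathbbm{I})u\cdot\operatorname{Re}\psi\,\Omega=0$, and conclude via (\ref{ImgP}) and the self-adjointness of $P^{\Omega}_{g,J}$. The only small correction is that the surjectivity of $u\mapsto(\Delta^{\Omega}_{g,J}-2\mathbbm{I})u$ from $\mathbbm{E}^J_{g,\Omega}$ onto $\Lambda^{\Omega,\bot}_{g,J}\cap C^{\infty}_{\Omega}(X,\mathbbm{R})_0$ follows directly from the isomorphism (\ref{invol-Lap-2I-B}) together with the definition of $\mathbbm{E}^J_{g,\Omega}$ as the preimage of that space, rather than from the isomorphism $\Delta^{\Omega}_{g,J}(\Delta^{\Omega}_{g,J}-2\mathbbm{I}):\mathbbm{E}^{\Omega}_{g,J}\to\Lambda^{\Omega,\bot}_{g,J}\cap C^{\infty}_{\Omega}(X,\mathbbm{R})_0$ of Remark 2, which concerns the different space $\mathbbm{E}^{\Omega}_{g,J}$.
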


\begin{proof}
  We notice that for any $\varphi \in \mathbbm{E}_{g, \Omega}^J$ and $\psi \in
  \mathbbm{O}^J_{g, \Omega}$ holds the identity
  \begin{eqnarray*}
    0 & = & G_{g, \Omega} \left( \varphi, \psi \right) = \int_X
    (\Delta^{\Omega}_{g, J} - 2\mathbbm{I}) \varphi \cdot \tmop{Re} \psi
    \Omega .
  \end{eqnarray*}
  We infer
  \begin{eqnarray*}
    \mathbbm{O}^J_{g, \Omega} & = & \left\{ \psi \in \Lambda^{\Omega,
    \bot}_{g, J} \mid \tmop{Re} \psi \in  \left[
    \Lambda^{\Omega, \bot}_{g, J} \cap C_{\Omega}^{\infty} \left( X,
    \mathbbm{R} \right)_0 \right]^{\bot} \right\} \\
    &  & \\
    & = & \left\{ \psi \in \Lambda^{\Omega, \bot}_{g, J} \mid \tmop{Re} \psi
    \in  \tmop{Re} \Lambda^{\Omega}_{g, J} \right\},
  \end{eqnarray*}
  thanks to (\ref{ImgP}).
\end{proof}
\section{Infinitesimal properties of the function $\underline{H}$}

By abuse of notations also we will consider from now on $\Lambda^{\Omega,
\bot}_{g, \mathbbm{R}} \subset C^{\infty}$. 
We observe that lemma \ref{Carac-KRS} implies; $\left( g, \Omega \right) \in
\mathcal{S}_{\omega}$ is a K\"ahler-Ricci-Soliton if and only if $\underline{H}^{}_{g, \Omega} =
0$. Furthermore the identity (\ref{H-KRS}) rewrites as
\begin{eqnarray*}
  2 \underline{H}^{}_{g, \Omega}  =  - (\Delta^{\Omega}_{g, J} -
  2\mathbbm{I}) F \in \Lambda^{\Omega, \bot}_{g, J} \cap
  C_{\Omega}^{\infty} \left( X, \mathbbm{R} \right)_0,
\end{eqnarray*}
for all $\left( g, \Omega \right) \in \mathcal{S}_{\omega}$. We show now the
following fact.

\begin{lemma}
  If $\left( g, \Omega \right) \in \mathcal{S}_{\omega}$ is a K\"ahler-Ricci-Soliton then the
  linear map
  \begin{equation}
    D_{g, \Omega}  \underline{H}^{}_{} : \mathbbm{E}^J_{g, \Omega} [0] \;
    \longrightarrow \; \Lambda^{\Omega, \bot}_{g, J} \cap C_{\Omega}^{\infty}
    \left( X, \mathbbm{R} \right)_0, \label{iso-DH}
  \end{equation}
  with $J : = g^{- 1} \omega$, is well defined and represents an isomorphism
  of real vector spaces.
\end{lemma}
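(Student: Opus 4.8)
The plan is to compute $D_{g,\Omega}\underline H(v,V)$ explicitly for $(v,V)\in\mathbbm{E}^J_{g,\Omega}[0]$ using the variation formula for $H$ and the parametrization of $\mathbbm{F}^J_{g,\Omega}[0]$ furnished by Lemma \ref{Decomp-SKah}, and then show the resulting operator is a composition of isomorphisms already established in the excerpt. First, recall from (\ref{var-H}) that at a K\"ahler-Ricci soliton point, where $h_{g,\Omega}=0$ and $\underline H_{g,\Omega}=0$, one has
\begin{eqnarray*}
  2 D_{g,\Omega} H(v,V) &=& \Delta^{\Omega}_g V^{\ast}_{\Omega} - \left(L_{\nabla_g^{\ast_{\Omega}} v_g^{\ast} + \nabla_g V^{\ast}_{\Omega}} \Omega\right)^{\ast}_{\Omega} - 2 V^{\ast}_{\Omega},
\end{eqnarray*}
and since $(v,V)\in\mathbbm{F}_{g,\Omega}$ the Lie-derivative term drops, giving $2D_{g,\Omega}H(v,V) = (\Delta^{\Omega}_g - 2\mathbbm{I})V^{\ast}_{\Omega}$ (the mean-zero correction turning $H$ into $\underline H$ is automatic since the right side already has zero $\Omega$-integral). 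So $2 D_{g,\Omega}\underline H(v,V) = (\Delta^{\Omega}_g-2\mathbbm{I})V^{\ast}_{\Omega}$.

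Next, for $(v,V)\in\mathbbm{F}^J_{g,\Omega}[0]$, formula (\ref{Cool-F0}) gives $-2V^{\ast}_{\Omega} = (\Delta^{\Omega}_{g,J}-2\mathbbm{I})\psi_v$ with $\psi_v\in\Lambda^{\Omega,\bot}_{g,J}$, and for $(v,V)\in\mathbbm{E}^J_{g,\Omega}[0]$ one has moreover that $\psi_v\in\mathbbm{E}^J_{g,\Omega}$, i.e. $V^{\ast}_{\Omega}$ is real-valued and $V^{\ast}_{\Omega} = -\tfrac12\operatorname{Re}[(\Delta^{\Omega}_{g,J}-2\mathbbm{I})\psi_v]$; in fact on $\mathbbm{E}^J_{g,\Omega}$ the operator $(\Delta^{\Omega}_{g,J}-2\mathbbm{I})$ already takes real values. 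Hence $4D_{g,\Omega}\underline H(v,V) = (\Delta^{\Omega}_g-2\mathbbm{I})(\Delta^{\Omega}_{g,J}-2\mathbbm{I})\psi_v$ up to the identification $v\leftrightarrow\psi_v$ via the isomorphism $\eta$ (or the map in Lemma \ref{Decomp-SKah}). Now I would argue well-definedness and bijectivity as follows. The map $(v,V)\mapsto\psi_v$ is an isomorphism $\mathbbm{E}^J_{g,\Omega}[0]\xrightarrow{\ \sim\ }\mathbbm{E}^J_{g,\Omega}$ by Lemma \ref{Decomp-SKah} and the definition of $\mathbbm{E}^J_{g,\Omega}[0]$. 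By "Remark 2" following Lemma \ref{Decomp-SKah}, the map $\Delta^{\Omega}_{g,J}(\Delta^{\Omega}_{g,J}-2\mathbbm{I}):\mathbbm{E}^{\Omega}_{g,J}\to\Lambda^{\Omega,\bot}_{g,J}\cap C^{\infty}_{\Omega}(X,\mathbbm R)_0$ is an isomorphism; the same reasoning — using that $\Delta^{\Omega}_{g,J}:\Lambda^{\Omega,\bot}_{g,J}\to\Lambda^{\Omega,\bot}_{g,J}$ and $(\Delta^{\Omega}_{g,J}-2\mathbbm{I}):\Lambda^{\Omega,\bot}_{g,J}\to\Lambda^{\Omega,\bot}_{g,J}$ are isomorphisms of complex vector spaces (surjectivity by the finiteness theorem for elliptic operators) together with the commutation identity (\ref{Com-Lap-B}) forcing real values — shows directly that $(\Delta^{\Omega}_g-2\mathbbm{I})(\Delta^{\Omega}_{g,J}-2\mathbbm{I})$, restricted to $\mathbbm{E}^J_{g,\Omega}$, is a well-defined isomorphism onto $\Lambda^{\Omega,\bot}_{g,J}\cap C^{\infty}_{\Omega}(X,\mathbbm R)_0$. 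Composing, $D_{g,\Omega}\underline H$ on $\mathbbm{E}^J_{g,\Omega}[0]$ is an isomorphism onto the stated target.

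I expect the main obstacle to be purely bookkeeping rather than conceptual: one must verify carefully that the image of $D_{g,\Omega}\underline H$ lands in $\Lambda^{\Omega,\bot}_{g,J}$ (not just in $C^{\infty}_{\Omega}(X,\mathbbm R)_0$) and that it is real-valued, both of which follow from the structure of $\mathbbm{E}^J_{g,\Omega}$ and the commutation $[\Delta^{\Omega}_g,B^{\Omega}_{g,J}]=0$ of (\ref{Com-Lap-B}), and conversely that nothing in $\Lambda^{\Omega,\bot}_{g,J}\cap C^{\infty}_{\Omega}(X,\mathbbm R)_0$ is missed — which is exactly the surjectivity built into Remark 2. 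A secondary point needing care is the identification: the factor $(\Delta^{\Omega}_g-2\mathbbm{I})$ appearing in $D_{g,\Omega}\underline H$ must be matched with the operator $\Delta^{\Omega}_{g,J}$ appearing in Remark 2, which is legitimate because on the real subspace where $(\Delta^{\Omega}_{g,J}-2\mathbbm{I})\psi$ is real one has $\Delta^{\Omega}_{g,J}=\Delta^{\Omega}_g$ there (recall $2\Delta^{\Omega}_g=\operatorname{Re}(\Delta^{\Omega}_{g,J})$ and $B^{\Omega}_{g,J}$ annihilates the relevant combination via (\ref{systemE})/(\ref{Rdiv-Eig})). Once these identifications are nailed down, the conclusion is immediate from the chain of isomorphisms.
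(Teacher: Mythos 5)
Your reduction of the statement is the right one and matches the paper up to the point where the real work begins: you correctly obtain $2 D_{g, \Omega}\underline{H}(v,V) = (\Delta^{\Omega}_g - 2\mathbbm{I}) V^{\ast}_{\Omega}$ on $\mathbbm{F}_{g,\Omega}$ at a soliton point and, via (\ref{Cool-F0}), the factorization $4 D_{g, \Omega}\underline{H}(v,V) = -(\Delta^{\Omega}_g - 2\mathbbm{I})(\Delta^{\Omega}_{g,J} - 2\mathbbm{I})\psi_v$; and since $\Delta^{\Omega}_{g,J} - 2\mathbbm{I}$ carries $\mathbbm{E}^J_{g,\Omega}$ isomorphically onto $\Lambda^{\Omega,\bot}_{g,J}\cap C^{\infty}_{\Omega}(X,\mathbbm{R})_0$ (essentially by the definition of $\mathbbm{E}^J_{g,\Omega}$ together with (\ref{invol-Lap-2I-B})), everything reduces to showing that $\Delta^{\Omega}_g - 2\mathbbm{I}\colon \Lambda^{\Omega,\bot}_{g,J}\cap C^{\infty}_{\Omega}(X,\mathbbm{R})_0 \longrightarrow \Lambda^{\Omega,\bot}_{g,J}\cap C^{\infty}_{\Omega}(X,\mathbbm{R})_0$ is an isomorphism. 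This is exactly where your argument has a genuine gap. The reasoning of ``Remark 2'' cannot be transplanted: there, $\Delta^{\Omega}_{g,J} - 2\mathbbm{I}$ is invertible on $\Lambda^{\Omega,\bot}_{g,J}$ because $\Lambda^{\Omega,\bot}_{g,J}$ is by definition the orthogonal complement of the kernel of that very operator. The operator you now need to invert is the real operator $\Delta^{\Omega}_g - 2\mathbbm{I}$, whose kernel $\tmop{Ker}_{\mathbbm{R}}(\Delta^{\Omega}_g - 2\mathbbm{I})$ is a priori a different space from $\Lambda^{\Omega}_{g,J} = \tmop{Ker}(\Delta^{\Omega}_{g,J} - 2\mathbbm{I})$; if some real eigenfunction $u$ with $\Delta^{\Omega}_g u = 2u$ had $B^{\Omega}_{g,J} u \neq 0$, it would contribute a nonzero element of $\Lambda^{\Omega,\bot}_{g,J}\cap C^{\infty}_{\Omega}(X,\mathbbm{R})_0$ obstructing both injectivity and surjectivity. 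Your side remark that ``$\Delta^{\Omega}_{g,J} = \Delta^{\Omega}_g$'' on the relevant real subspace is not correct: $\Delta^{\Omega}_{g,J} = \Delta^{\Omega}_g - i B^{\Omega}_{g,J}$, and $B^{\Omega}_{g,J}$ does not vanish on all of $\Lambda^{\Omega,\bot}_{g,J}\cap C^{\infty}_{\Omega}(X,\mathbbm{R})_0$.

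The missing ingredient is precisely the identity (\ref{E1CxE1}) of corollary \ref{zero-ortog}, namely $\tmop{Ker}_{\mathbbm{R}}(\Delta^{\Omega}_g - 2\mathbbm{I}) = \tmop{Ker}_{\mathbbm{R}}(\Delta^{\Omega}_{g,J} - 2\mathbbm{I})$, which the paper obtains from the strict positivity of $G$ over $\mathbbm{T}^J_{g,\Omega}$ and the geometric identity (\ref{geom-F}) --- a genuinely nontrivial input, not bookkeeping. Granting it, one gets $\mathbbm{C}\tmop{Ker}_{\mathbbm{R}}(\Delta^{\Omega}_g - 2\mathbbm{I})\subseteq \Lambda^{\Omega}_{g,J}$, hence injectivity; surjectivity then follows by solving $(\Delta^{\Omega}_g - 2\mathbbm{I})u = f$ in $\Lambda^{\Omega,\bot}_{g,\mathbbm{R}}$ and checking, via the commutation (\ref{Com-Lap-B}) and the inclusion (\ref{ort-Cx-incl}), that the $\Lambda^{\Omega}_{g,J}$-component of $u$ vanishes, so that $u$ actually lies in $\Lambda^{\Omega,\bot}_{g,J}\cap C^{\infty}_{\Omega}(X,\mathbbm{R})_0$. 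A secondary slip worth flagging: the space $\mathbbm{E}^{\Omega}_{g,J}$ of ``Remark 2'' (defined by the condition that $\Delta^{\Omega}_{g,J}(\Delta^{\Omega}_{g,J} - 2\mathbbm{I})\psi$ be real) is not the space $\mathbbm{E}^J_{g,\Omega}$ (defined by the condition that $(\Delta^{\Omega}_{g,J} - 2\mathbbm{I})u$ be real and lie in $\Lambda^{\Omega,\bot}_{g,J}$) whose image under $\eta$ is the domain of the lemma, so the two ``isomorphism onto the real part'' statements concern different operators on different domains and neither implies the other without the extra input above.
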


\begin{proof}
  The identity $2 \underline{H}^{}_{g, \Omega} = 2 H_{g, \Omega} -\mathcal{W}
  \left( g, \Omega \right)$ combined with the basic variation formula
  (\ref{var-H}) implies
  \begin{eqnarray*}
    2 D_{g, \Omega}  \underline{H}^{}  \left( v, V \right) & = &
    (\Delta^{\Omega}_g - 2\mathbbm{I}) V_{\Omega}^{\ast},
  \end{eqnarray*}
  for all $\left( v, V \right) \in \mathbbm{F}_{g, \Omega}$ over a shrinking
  Ricci soliton point $\left( g, \Omega \right)$. In our K\"ahler-Ricci soliton set up the latter
  rewrites as
  \begin{equation}
    2 D_{g, \Omega}  \underline{H}^{}  \left( v, V \right) = - \frac{1}{2} 
    (\Delta^{\Omega}_g - 2\mathbbm{I}) (\Delta^{\Omega}_{g, J} - 2\mathbbm{I})
    \psi_v, \label{dif-H-E}
  \end{equation}
  for all $\left( v, V \right) \in \mathbbm{F}^J_{g, \Omega} \left[ 0
  \right]$. The commutation identity
  \begin{eqnarray*}
    \left[ \Delta^{\Omega}_g - 2\mathbbm{I}_{_{_{_{}}}}, \Delta^{\Omega}_{g,
    J} - 2\mathbbm{I} \right] = 0, &  & 
  \end{eqnarray*}
  implies the inclusion
  \begin{equation}
    (\Delta^{\Omega}_g - 2\mathbbm{I}) \Lambda^{\Omega}_{g, J} \subseteq
    \Lambda^{\Omega}_{g, J}, \label{invar-LamJ}
  \end{equation}
  and thus
  \begin{equation}
    (\Delta^{\Omega}_g - 2\mathbbm{I}) \Lambda^{\Omega, \bot}_{g, J} \subseteq
    \Lambda^{\Omega, \bot}_{g, J} . \label{invar-Ort-LamJ}
  \end{equation}
  Then the identity (\ref{dif-H-E}) shows that the map (\ref{iso-DH}) is well
  defined. We will deduce that it is an isomorphism if we show that the map
  \begin{equation}
    \Delta^{\Omega}_g - 2\mathbbm{I}: \Lambda^{\Omega, \bot}_{g, J} \cap
    C_{\Omega}^{\infty} \left( X, \mathbbm{R} \right)_0 \longrightarrow
    \Lambda^{\Omega, \bot}_{g, J} \cap C_{\Omega}^{\infty} \left( X,
    \mathbbm{R} \right)_0, \label{iso-Lap-OrtJ}
  \end{equation}
  is an isomorphism. Indeed this is the case. The injectivity of
  (\ref{iso-Lap-OrtJ}) follows from the inclusion
  \begin{eqnarray*}
    \mathbbm{C} \Lambda^{\Omega}_{g, \mathbbm{R}} & \subseteq &
    \Lambda^{\Omega}_{g, J},
  \end{eqnarray*}
  which holds thanks to the identity (\ref{E1CxE1}). This inclusion implies
  also
  \begin{eqnarray}
    \mathbbm{C} \Lambda^{\Omega, \bot}_{g, \mathbbm{R}}  \; = \; (\mathbbm{C}
    \Lambda^{\Omega}_{g, \mathbbm{R}})^{\bot}  \supseteq  \Lambda^{\Omega,
    \bot}_{g, J}, \label{ort-Cx-incl} 
  \end{eqnarray}
  and thus
  \begin{eqnarray*}
    \Lambda^{\Omega, \bot}_{g, \mathbbm{R}}  \supseteq  \Lambda^{\Omega,
    \bot}_{g, J} \cap C_{\Omega}^{\infty} \left( X, \mathbbm{R} \right)_0 .
  \end{eqnarray*}
  We use now the obvious fact that
  \begin{eqnarray*}
    \Delta^{\Omega}_g - 2\mathbbm{I} \, : \Lambda^{\Omega, \bot}_{g,
    \mathbbm{R}} & \longrightarrow & \Lambda^{\Omega, \bot}_{g, \mathbbm{R}},
  \end{eqnarray*}
  is an isomorphism. Thus for any $f \in \Lambda^{\Omega, \bot}_{g, J} \cap
  C_{\Omega}^{\infty} \left( X, \mathbbm{R} \right)_0$ there exists a unique
  $u \in \Lambda^{\Omega, \bot}_{g, \mathbbm{R}}$ such that
  \begin{eqnarray*}
    (\Delta^{\Omega}_g - 2\mathbbm{I}) u  =  f .
  \end{eqnarray*}
  We decompose $u = u_1 + u_2$, with $u_1 \in \Lambda^{\Omega}_{g, J}$ and
  $u_2 \in \Lambda^{\Omega, \bot}_{g, J}$. Then the inclusions
  (\ref{invar-LamJ}) and (\ref{invar-Ort-LamJ}) imply the
  $L_{\Omega}^2$-orthogonal decomposition
  \begin{eqnarray*}
    (\Delta^{\Omega}_g - 2\mathbbm{I}) u_1 + (\Delta^{\Omega}_g -
    2\mathbbm{I}) u_2  =  f .
  \end{eqnarray*}
  We deduce $u_1 \in \mathbbm{C} \Lambda^{\Omega}_{g, \mathbbm{R}}$. But $u_2
  \in \mathbbm{C} \Lambda^{\Omega, \bot}_{g, \mathbbm{R}}$ thanks to the
  inclusion (\ref{ort-Cx-incl}). We infer $u_1 = 0$ since $u \in
  \Lambda^{\Omega, \bot}_{g, \mathbbm{R}}$. Thus
  \begin{eqnarray*}
    u  =  u_2  \in  \Lambda^{\Omega, \bot}_{g, J} \cap
    C_{\Omega}^{\infty} \left( X, \mathbbm{R} \right)_0 .
  \end{eqnarray*}
  We obtain the surjectivity of the map (\ref{iso-Lap-OrtJ}) and thus the
  required conclusion.
\end{proof}

\begin{lemma}
  If $\left( g, \Omega \right) \in \mathcal{S}_{\omega}$ is a K\"ahler-Ricci-Soliton then hold
  the identity
  \begin{equation}
    \tmop{Ker} D_{g, \Omega}  \underline{H}^{}_{} \cap \mathbbm{T}^J_{g,
    \Omega} \; = T_{\left[ g, \Omega \right]_{\omega}, (g, \Omega)} \oplus_G
    \mathcal{H}_{g, \Omega}^{0, 1} \left( T_{X, J} \right), \label{KerDH}
  \end{equation}
  with $J : = g^{- 1} \omega$.
\end{lemma}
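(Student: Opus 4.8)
The plan is to use the triple $G$-orthogonal splitting \eqref{smt-trp-spl-T} of $\mathbbm{T}^J_{g,\Omega}$, namely
\[
\mathbbm{T}^J_{g,\Omega} = T_{[g,\Omega]_\omega,(g,\Omega)} \oplus_G \mathbbm{E}^J_{g,\Omega}[0] \oplus_G \mathcal{H}^{0,1}_{g,\Omega}(T_{X,J}),
\]
and to identify, inside each of the three summands, the part annihilated by $D_{g,\Omega}\underline{H}$. First I would recall, as in the proof of the previous lemma, that over a K\"ahler-Ricci soliton point the variation formula \eqref{var-H} combined with $2\underline{H}_{g,\Omega}=2H_{g,\Omega}-\mathcal{W}(g,\Omega)$ gives $2D_{g,\Omega}\underline{H}(v,V) = (\Delta^\Omega_g - 2\mathbbm{I})V^\ast_\Omega$ for all $(v,V)\in\mathbbm{F}_{g,\Omega}$; and more specifically, on $\mathbbm{F}^J_{g,\Omega}[0]$, formula \eqref{dif-H-E} gives $2D_{g,\Omega}\underline{H}(v,V) = -\tfrac14(\Delta^\Omega_g-2\mathbbm{I})(\Delta^\Omega_{g,J}-2\mathbbm{I})\psi_v$. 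Since \eqref{iso-DH} asserts that $D_{g,\Omega}\underline{H}$ is an \emph{isomorphism} from $\mathbbm{E}^J_{g,\Omega}[0]$ onto $\Lambda^{\Omega,\bot}_{g,J}\cap C^\infty_\Omega(X,\mathbbm{R})_0$, it has trivial kernel on that summand; so the $\mathbbm{E}^J_{g,\Omega}[0]$-component of any element of $\tmop{Ker}D_{g,\Omega}\underline{H}\cap\mathbbm{T}^J_{g,\Omega}$ must vanish.

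Next I would dispose of the other two summands. On the symplectic-orbit summand $T_{[g,\Omega]_\omega,(g,\Omega)}$, each element is $(L_\xi g, L_\xi\Omega)$ with $L_\xi\omega=0$; since $\underline{H}$ is $\tmop{Diff}(X)$-invariant (indeed $\mathcal{W}$ is, and $\underline H$ is built from $\mathcal W$ and $H$ which transform covariantly), its differential vanishes in the direction of any Lie-derivative variation, so $T_{[g,\Omega]_\omega,(g,\Omega)}\subseteq\tmop{Ker}D_{g,\Omega}\underline{H}$. Concretely, for $(v,V)=(L_{J\nabla_g u}g, L_{J\nabla_g u}\Omega)$ one has $V^\ast_\Omega = \tmop{div}^\Omega(J\nabla_g u) = B^\Omega_{g,J}u$, and $2D_{g,\Omega}\underline{H}(v,V) = (\Delta^\Omega_g-2\mathbbm{I})B^\Omega_{g,J}u = B^\Omega_{g,J}(\Delta^\Omega_g-2\mathbbm{I})u$ using \eqref{Com-Lap-B}; I would then observe this lies in $\Lambda^{\Omega,\bot}_{g,J}$ (by \eqref{invar-Ort-LamJ}, using \eqref{E1CxE1}) but is also, by the weighted complex Bochner identity \eqref{dbar-OmBoch-fnctSOL} and the fact that the tangent vector lies in $\mathbbm{T}^J_{g,\Omega}$, forced to vanish --- or more simply, that $\underline H$ is constant along the orbit because $\mathcal W$ and $H$ are diffeomorphism invariant and $\underline H = H - \int_X H\Omega$. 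On the harmonic summand $\mathcal{H}^{0,1}_{g,\Omega}(T_{X,J})$, embedded via $A\mapsto(gA,0)$, the second component $V$ is zero, hence $2D_{g,\Omega}\underline{H}(gA,0) = (\Delta^\Omega_g-2\mathbbm{I})\cdot 0 = 0$; so this summand also lies in the kernel.

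Combining the three observations: an element of $\mathbbm{T}^J_{g,\Omega}$ lies in $\tmop{Ker}D_{g,\Omega}\underline{H}$ if and only if its $\mathbbm{E}^J_{g,\Omega}[0]$-component vanishes, i.e. if and only if it lies in $T_{[g,\Omega]_\omega,(g,\Omega)}\oplus_G\mathcal{H}^{0,1}_{g,\Omega}(T_{X,J})$, which is exactly \eqref{KerDH}. The main obstacle I anticipate is making the first direction fully rigorous: one must check that the decomposition $\mathbbm{T}^J_{g,\Omega}=T_{[g,\Omega]_\omega}\oplus_G\mathbbm{E}^J_{g,\Omega}[0]\oplus_G\mathcal{H}^{0,1}_{g,\Omega}(T_{X,J})$ is compatible with the operator $D_{g,\Omega}\underline{H}$ in the sense that the kernel of $D_{g,\Omega}\underline H$ restricted to $\mathbbm{T}^J_{g,\Omega}$ is the direct sum of the kernels on the summands --- this requires knowing that $D_{g,\Omega}\underline H$ maps $\mathbbm{E}^J_{g,\Omega}[0]$ into a space complementary to its image on the other two summands (both of which map to $0$), which is immediate here since the other two summands are in the kernel; the only real content is the injectivity of \eqref{iso-DH}, which is already established. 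A secondary point to verify carefully is that the $G$-orthogonality of the splitting is genuinely used only to write ``$\oplus_G$'' in the conclusion and plays no role in the kernel computation itself, so no extra regularity argument beyond what \eqref{smt-trp-spl-T} provides is needed.
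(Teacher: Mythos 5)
Your argument is correct, but it is organized differently from the paper's. The paper does not split $\mathbbm{T}^J_{g,\Omega}$ first: it derives a single closed formula valid for \emph{every} $(v,V)\in\mathbbm{T}^J_{g,\Omega}$, namely $2D_{g,\Omega}\underline{H}(v,V)=-\tfrac12 P^{\Omega}_{g,J}\operatorname{Re}\psi_v$, by feeding the expressions (\ref{inclusTS}) and (\ref{FexpT}) into the full variation formula (\ref{var-H}) (whose Lie-derivative term contributes exactly $\tfrac12 B^{\Omega}_{g,J}I_{\psi_v}$, which recombines $R_{\psi_v}$ and $I_{\psi_v}$ into $\operatorname{Re}\bigl[(\Delta^{\Omega}_g-2\mathbbm{I})\overline{(\Delta^{\Omega}_{g,J}-2\mathbbm{I})\psi_v}\bigr]$), and then reads off the kernel from lemma \ref{exprO} together with the identification $\eta:\mathbbm{O}^J_{g,\Omega}\to T_{[g,\Omega]_{\omega},(g,\Omega)}$. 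You instead use the triple splitting (\ref{smt-trp-spl-T}) and treat the summands separately, reducing everything to the injectivity half of (\ref{iso-DH}); this is a legitimate shortcut that avoids recomputing $D\underline{H}$ on the whole of $\mathbbm{T}^J_{g,\Omega}$, at the price of leaning on (\ref{smt-trp-spl-T}) and (\ref{iso-DH}) rather than producing the intrinsic formula $-\tfrac12 P^{\Omega}_{g,J}\operatorname{Re}\psi$ that the paper reuses in the proof of the main theorem. Two small corrections: the coefficient in (\ref{dif-H-E}) is $-\tfrac12$, not $-\tfrac14$ (harmless for the kernel argument); and your ``concrete'' computation on the orbit summand misapplies the identity $2D_{g,\Omega}\underline{H}(v,V)=(\Delta^{\Omega}_g-2\mathbbm{I})V^{\ast}_{\Omega}$, which is only valid on $\mathbbm{F}_{g,\Omega}$, whereas symplectic-orbit directions generically are not $G$-orthogonal to the $\tmop{Diff}_0$-orbit. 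Your fallback argument is the right one, but state it precisely: $\underline{H}_{\varphi_t^{\ast}(g,\Omega)}=\varphi_t^{\ast}\underline{H}_{g,\Omega}$ by naturality, and since $\underline{H}_{g,\Omega}=0$ at a K\"ahler--Ricci soliton the derivative $L_{\xi}\underline{H}_{g,\Omega}$ vanishes — it is the vanishing of $\underline{H}$ at the soliton, not diffeomorphism invariance alone, that kills this summand.
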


\begin{proof}
  With the notations in the proof of lemma \ref{Orto-symplec}, the basic
  variation formula (\ref{var-H}) combined with the identities
  (\ref{inclusTS}) and (\ref{FexpT}) implies that for all $\left( v, V \right)
  \in \mathbbm{T}^J_{g, \Omega}$ over a K\"ahler-Ricci-Soliton point $\left(J,
  g, \Omega \right)$ hold the equalities
  \begin{eqnarray*}
    2 D_{g, \Omega}  \underline{H}^{}  \left( v, V \right) & = & - \frac{1}{2}
    (\Delta^{\Omega}_g - 2\mathbbm{I}) R_{\psi} + \frac{1}{2}  \left( L_{J
    \nabla_g I_{\psi}} \Omega_{_{_{_{}}}} \right)^{\ast}_{\Omega} \\
    &  & \\
    & = & - \frac{1}{2} (\Delta^{\Omega}_g - 2\mathbbm{I}) R_{\psi} +
    \frac{1}{2} B^{\Omega}_{g, J} I_{\psi}\\
    &  & \\
    & = & - \frac{1}{2} \tmop{Re} \left[ (\Delta^{\Omega}_g - 2\mathbbm{I})
    \overline{ \; (\Delta^{\Omega}_{g, J} - 2\mathbbm{I}) \psi}
    _{_{_{_{_{}}}}} \right]\\
    &  & \\
    & = & - \frac{1}{2} \tmop{Re} \left[ P^{\Omega}_{g, J} \overline{\psi}
    _{_{_{_{_{}}}}} \right]\\
    &  & \\
    & = & - \frac{1}{2} P^{\Omega}_{g, J} \tmop{Re} \psi,
  \end{eqnarray*}
  since $P^{\Omega}_{g, J}$ is a real operator in our K\"ahler-Ricci-Soliton case. Then lemma
  \ref{exprO} implies
  \begin{eqnarray*}
    \tmop{Ker} D_{g, \Omega}  \underline{H}^{}_{} \cap \mathbbm{T}^J_{g,
    \Omega} & \simeq & \mathbbm{O}^J_{g, \Omega} \oplus_G \mathcal{H}_{g,
    \Omega}^{0, 1} \left( T_{X, J} \right),
  \end{eqnarray*}
  i.e. the required conclusion.
\end{proof}

{\tmstrong{Proof of the main theorem \ref{Main-teorem}}}

\begin{proof}
  The inequality in the statement follows immediately from proposition
  \ref{TSec-Var-W}. If equality holds then obviously $A \in \mathcal{H}_{g,
  \Omega}^{0, 1} \left( T_{X, J} \right)_0$ and
  \begin{eqnarray*}
    \int_X P^{\Omega}_{g, J} \tmop{Re} \psi \cdot \tmop{Re}
    \psi_{_{_{_{_{_{}}}}}} \Omega & = & 0 .
  \end{eqnarray*}
  Then the spectral theorem applied to the non-negative
  $L_{\Omega}^2$-self-adjoint real elliptic operator $P^{\Omega}_{g, J}$
  implies $P^{\Omega}_{g, J} \tmop{Re} \psi = 0$. Thus the conclusion
  \[ \left( v, V \right) \in T_{\left[ g, \Omega \right]_{\omega}, (g,
     \Omega)} \oplus_G \mathcal{H}_{g, \Omega}^{0, 1} \left( T_{X, J}
     \right)_0 = \tmop{Ker} D_{g, \Omega}  \underline{H}^{}_{} \cap
     \mathbbm{T}^{J, 0}_{g, \Omega}, \]
  follows from lemma \ref{exprO} and the identity (\ref{KerDH}). In order to
  show the inclusion (\ref{incl-MainThm}) we observe that if $\left( g_t, \Omega_t \right)_{t
  \in \mathbbm{R}} \subset \tmop{KRS}_{\omega}$ is a smooth curve with $(g_0,
  \Omega_0) = (g, \Omega)$ and with $( \dot{g}_0, \dot{\Omega}_0) = (v, V)$
  then holds the identity $\underline{H}_{g_t, \Omega_t} \equiv 0$ and thus
  \[ \left( v, V \right) \in \tmop{Ker} D_{g, \Omega}  \underline{H}^{}_{}
     \cap \mathbbm{T}^J_{g, \Omega} = T_{\left[ g, \Omega \right]_{\omega},
     (g, \Omega)} \oplus_G \mathcal{H}_{g, \Omega}^{0, 1} \left( T_{X, J}
     \right), \]
  thanks to the identity (\ref{KerDH}). On the other side if we set
  $\mathcal{W}_t : =\mathcal{W} \left( g_t, \Omega_t \right)$ then
  $\dot{\mathcal{W}}_t \equiv 0$ and thus
  \[ 0 = \ddot{\mathcal{W}}_0 = \int_X \left| A \right|^2_g F \,\Omega, \]
  thanks to proposition \ref{TSec-Var-W} and lemma \ref{exprO}. We conclude
  the required inclusion.
\end{proof}

\section{Appendix A}

\subsection{The first variation of Perelman's $\mathcal{W}$ functional}

We give a short proof of Perelman's first variation formula \cite{Per} for the
$\mathcal{W}$ functional based on the identity (\ref{var-OM-Ric}). Let $(g_t,
\Omega_t)_t \subset \mathcal{M} \times \mathcal{V}_1$ be a smooth family and
set $f_t \assign \log \frac{dV_{g_t}}{\Omega_t}$. Then
\begin{eqnarray*}
  \frac{d}{dt} \hspace{0.25em} \mathcal{W} (g_t, \Omega_t) & = & \frac{d}{dt}
  \hspace{0.25em} \int_X \left[ \tmop{Tr}_{\mathbbm{R}} (g_t^{- 1} h_t)
  \hspace{0.75em} + \hspace{0.75em} 2 f_t \right] \Omega_t\\
  &  & \\
  & = & \int_X \left[ \tmop{Tr}_{\mathbbm{R}} \left( - \hspace{0.25em}
  \dot{g}_t^{\ast} h^{\ast}_t \hspace{0.75em} + \hspace{0.75em}
  \dot{h}^{\ast}_t \right) \hspace{0.75em} + \hspace{0.75em} \tmop{Tr}_{g_t}
  \dot{g} - 2 \dot{\Omega}^{\ast}_t \right] \Omega_t\\
  &  & \\
  & + & \int_X \left[ \tmop{Tr}_{g_t} h_t \hspace{0.75em} + \hspace{0.75em} 2
  f_t \right]  \dot{\Omega}_t\\
  &  & \\
  & = & \int_X \left[ - \left\langle \dot{g}_t, \hspace{0.25em} h_t
  \right\rangle_{g_t} + \left\langle g_t, \frac{d}{dt} \hspace{0.25em}
  \tmop{Ric}_{g_t} (\Omega_t) \right\rangle_{g_t} - 2 \dot{\Omega}^{\ast}_t
  \right] \Omega_t \\
  &  & \\
  & + & \int_X \left[ \tmop{Tr}_{g_t} h_t \hspace{0.75em} + \hspace{0.75em} 2
  f_t \right]  \dot{\Omega}_t .
\end{eqnarray*}
Using the variation formula (\ref{var-OM-Ric}) and integrating by parts we
infer
\begin{eqnarray*}
  \hspace{0.25em} \int_X \left\langle g_t, \frac{d}{dt} \hspace{0.25em}
  \tmop{Ric}_{g_t} (\Omega_t) \right\rangle_{g_t} \Omega_t & = & \int_X \left[
  - \hspace{0.75em} \frac{1}{2}  \left\langle g_t,
  \nabla_{g_t}^{\ast_{\Omega}} \mathcal{D}_{g_t}  \dot{g}_t
  \right\rangle_{g_t} + \Delta_{g_t}  \dot{\Omega}^{\ast}_t  \right]
  \Omega_t\\
  &  & \\
  & = & - \int_X \left[ \frac{1}{2}  \left\langle \nabla_{g_t} g_t,
  \mathcal{D}_{g_t}  \dot{g}_t \right\rangle_{g_t} + \left\langle \nabla_{g_t}
  \dot{\Omega}^{\ast}_t, \nabla_{g_t} f_t \right\rangle_{g_t} \right]
  \Omega_t\\
  &  & \\
  & = & - \int_X \dot{\Omega}^{\ast}_t \Delta^{\Omega_t}_{g_t} f_t \Omega_t,
\end{eqnarray*}
which implies Perelman's first variation formula
\begin{eqnarray*}
  \frac{d}{dt} \hspace{0.25em} \mathcal{W} (g_t, \Omega_t) & = & - \int_X
  \left[ \left\langle \dot{g}_t, h_t \right\rangle_{g_t} - 2
  \dot{\Omega}^{\ast}_t (H_t - 1) \right] \Omega_t \\
  &  & \\
  & = & - \int_X \left[ \left\langle \dot{g}_t, h_t \right\rangle_{g_t} - 2
  \dot{\Omega}^{\ast}_t  \underline{H}_t \right] \Omega_t,
\end{eqnarray*}
since $\int_X \dot{\Omega}_t = 0$.

\subsection{Basic complex identities}\label{Integ-parts}

We provide a useful expression of the hermitian product $\left\langle \cdot,
\cdot \right\rangle_{\omega}$ on $T^{\ast}_X \otimes_{\mathbbm{R}}
\mathbbm{C}$, which is the sesquilinear extension of the dual of $g$. We
observe first that for any $\xi \in T_X \otimes_{\mathbbm{R}} \mathbbm{C}$ and
any $\alpha \in T^{\ast}_X \otimes_{\mathbbm{R}} \mathbbm{C}$ hold the
elementary equalities
\begin{eqnarray*}
  0 & = & \xi \neg (\alpha \wedge \omega^n) = (\alpha \cdot \xi) \omega^n -
  \alpha \wedge (\xi \neg \omega^n) .
\end{eqnarray*}
We obtain the formula
\begin{equation}
  \label{contract-wg-frml}  (\alpha \cdot \xi) \omega^n = n \alpha \wedge (\xi
  \neg \omega) \wedge \omega^{n - 1},
\end{equation}
and thus
\begin{equation}
  \label{Trac-forml} 2 \alpha \cdot \xi = \tmop{Tr}_{\omega} \left[ \alpha
  \wedge (\xi \neg \omega) \right] .
\end{equation}
Using (\ref{Trac-forml}) we deduce that for all $\alpha, \beta \in T^{\ast}_X
\otimes_{\mathbbm{R}} \mathbbm{C}$ hold the equalities
\begin{eqnarray*}
  2 \left\langle \alpha, \beta \right\rangle_{\omega} & = & 2 \alpha \cdot
  \bar{\beta}_g^{\ast} = \tmop{Tr}_{\omega} \left[ \alpha \wedge (
  \bar{\beta}_g^{\ast} \neg \omega) \right],
\end{eqnarray*}
where $\bar{\beta}_g^{\ast} \assign g^{- 1} \bar{\beta}$. Then the identity
$\bar{\beta} J = - \bar{\beta}_g^{\ast} \neg \omega$ implies the formula
\begin{eqnarray*}
  2 \left\langle \alpha, \beta \right\rangle_{\omega} & = & -
  \tmop{Tr}_{\omega} \left[ \alpha \wedge ( \bar{\beta} J) \right] .
\end{eqnarray*}
Thus in the case $\alpha, \beta \in \Lambda_J^{1, 0} T^{\ast}_X$ we deduce the
identities
\begin{eqnarray*}
  2 \left\langle \alpha, \beta \right\rangle_{\omega} & = & \tmop{Tr}_{\omega}
  \left( i \alpha \wedge \bar{\beta} \right),\\
  &  & \\
  \overline{ \left\langle \alpha, \beta \right\rangle_{\omega}} & = &
  \left\langle \overline{\alpha}, \bar{\beta} \right\rangle_{\omega} .
\end{eqnarray*}
We show now the following integration by parts formulas

\begin{lemma}
  For any $u, v \in C^{\infty} (X, \mathbbm{C})$ holds the integration by parts
  identity
  \begin{equation}
    \label{cpx-int-part} \int_X \left[ \Delta^{\Omega}_{g, J} u \cdot
    \overline{v} + \overline{\Delta^{\Omega}_{g, J} u} \cdot v \right] \Omega
    = 2 \int_X g (\nabla_{g, J}  \overline{u}, \nabla_{g, J}  \overline{v})
    \Omega .
  \end{equation}
  If $u \in C^{\infty} (X, \mathbbm{R})$ then holds also the integration by
  parts identity
  \begin{equation}
    \label{rlcpx-int-part} \int_X \left[ \Delta^{\Omega}_{g, J} u \cdot v +
    \overline{\Delta^{\Omega}_{g, J} u} \cdot \overline{v} \right] \Omega = 2
    \int_X g (\nabla_g u, \nabla_{g, J} v) \Omega .
  \end{equation}
\end{lemma}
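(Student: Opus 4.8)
The final statement to prove is the integration by parts lemma with formulas (\ref{cpx-int-part}) and (\ref{rlcpx-int-part}).

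The plan is to work directly from the definition $\Delta^{\Omega}_{g,J}u = e^f\tmop{Tr}_{\omega}[i\overline{\partial}_J(e^{-f}\partial_J u)]$ and the associated self-adjointness identity already established in the excerpt, namely $\int_X \Delta^{\Omega}_{g,J}u\cdot\overline{v}\,\Omega = \int_X 2\langle\partial_J u,\partial_J v\rangle_{\omega}\,\Omega$. For (\ref{cpx-int-part}), I would write the left-hand side as $\int_X\Delta^{\Omega}_{g,J}u\cdot\overline{v}\,\Omega + \overline{\int_X\Delta^{\Omega}_{g,J}u\cdot\overline{v}\,\Omega}$ using that $\Omega$ is real, then apply the self-adjointness identity once directly and once to the conjugated term, obtaining $\int_X 2\langle\partial_J u,\partial_J v\rangle_{\omega}\,\Omega + \int_X 2\overline{\langle\partial_J u,\partial_J v\rangle_{\omega}}\,\Omega$. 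The task then reduces to identifying $2\langle\partial_J u,\partial_J v\rangle_{\omega} + 2\langle\overline{\partial_J u},\overline{\partial_J v}\rangle_{\omega}$ with $2g(\nabla_{g,J}\overline{u},\nabla_{g,J}\overline{v})$, which is a pointwise algebraic identity. This identity follows from the complex decomposition formula (\ref{dec-Cx-grad}), $\nabla_{g,J}u\neg g = \partial_J\overline{u}+\overline{\partial}_J u$, together with the bidegree orthogonality $\langle\partial_J\alpha,\overline{\partial}_J\beta\rangle_{\omega}=0$ and the conjugation rule $\overline{\langle\alpha,\beta\rangle_{\omega}}=\langle\overline{\alpha},\overline{\beta}\rangle_{\omega}$ recorded in subsection \ref{Integ-parts}: expanding $g(\nabla_{g,J}\overline{u},\nabla_{g,J}\overline{v}) = \langle\partial_J u+\overline{\partial}_J\overline{u},\,\partial_J v+\overline{\partial}_J\overline{v}\rangle_{\omega}$ kills the cross terms by bidegree and leaves $\langle\partial_J u,\partial_J v\rangle_{\omega}+\langle\overline{\partial}_J\overline{u},\overline{\partial}_J\overline{v}\rangle_{\omega} = \langle\partial_J u,\partial_J v\rangle_{\omega}+\overline{\langle\partial_J u,\partial_J v\rangle_{\omega}}$, which matches.

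For (\ref{rlcpx-int-part}), the hypothesis $u$ real valued means $\overline{\partial_J u}=\overline{\partial}_J u$, so $\nabla_{g,J}u\neg g = \partial_J u + \overline{\partial}_J u$ is just $du$; equivalently $\nabla_{g,J}u=\nabla_g u$ as a real vector field when $u$ is real. I would again start from $\int_X\Delta^{\Omega}_{g,J}u\cdot v\,\Omega = \int_X 2\langle\partial_J u,\partial_J v\rangle_{\omega}\,\Omega$ (valid without reality assumptions), add its conjugate — using that both $u$ and $\Omega$ are real so $\overline{\Delta^{\Omega}_{g,J}u\cdot v}$ integrates against $\Omega$ to $\int_X\overline{\Delta^{\Omega}_{g,J}u}\cdot\overline{v}\,\Omega$ — to get the left side equal to $\int_X 2\langle\partial_J u,\partial_J v\rangle_{\omega}\,\Omega + \int_X 2\overline{\langle\partial_J u,\partial_J v\rangle_{\omega}}\,\Omega$. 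Then since $u$ is real, $\partial_J u = \overline{\partial}_J\overline{u} = \overline{\overline{\partial}_J u}$, and one checks pointwise via (\ref{dec-Cx-grad}) and bidegree orthogonality that $2\langle\partial_J u,\partial_J v\rangle_{\omega}+2\overline{\langle\partial_J u,\partial_J v\rangle_{\omega}} = 2g(\nabla_g u,\nabla_{g,J}v)$: indeed $g(\nabla_g u,\nabla_{g,J}v) = \langle\partial_J u+\overline{\partial}_J u,\,\partial_J v+\overline{\partial}_J\overline{v}\rangle_{\omega} = \langle\partial_J u,\partial_J v\rangle_{\omega}+\langle\overline{\partial}_J u,\overline{\partial}_J\overline{v}\rangle_{\omega}$, and the second term equals $\langle\overline{\partial_J\overline{u}},\overline{\partial_J v}\rangle_{\omega} = \overline{\langle\partial_J\overline{u},\partial_J v\rangle_{\omega}} = \overline{\langle\partial_J u,\partial_J v\rangle_{\omega}}$ using $\overline{u}=u$.

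The only mild obstacle is bookkeeping the conjugations correctly and invoking the bidegree orthogonality $\langle\partial_J\alpha,\overline{\partial}_J\beta\rangle_{\omega}=0$ at the level of $T_{X,J}$-valued forms where it reads $\langle\nabla^{1,0}_{g,J}\bullet,\nabla^{0,1}_{g,J}\bullet\rangle_g=0$ — a fact used repeatedly earlier in the paper (e.g. in deriving (\ref{Int-BK-AntHol})). Everything else is a direct consequence of the already-established self-adjointness of $\Delta^{\Omega}_{g,J}$ with respect to the $L^2_\Omega$-product and the complex gradient decomposition (\ref{dec-Cx-grad}); no new analytic input is required.
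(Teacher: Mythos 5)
Your argument is correct and is essentially the paper's proof run in the opposite direction: the paper starts from the pointwise identity $g(\nabla_{g,J}u,\nabla_{g,J}v)=\langle\partial_J\overline{u},\partial_J\overline{v}\rangle_{\omega}+\overline{\langle\partial_J\overline{u},\partial_J\overline{v}\rangle_{\omega}}$ obtained from (\ref{dec-Cx-grad}) and bidegree orthogonality, integrates to get one general identity (\ref{genCx-int-part}), and then specializes by substituting $u\mapsto\overline{u}$, $v\mapsto\overline{v}$ and by taking $u$ real, exactly the ingredients you use. The only caveat is a harmless conjugation slip in your second part (by (\ref{dec-Cx-grad}) one has $\nabla_{g,J}v\neg g=\partial_J\overline{v}+\overline{\partial}_Jv$, not $\partial_Jv+\overline{\partial}_J\overline{v}$), which amounts to relabeling $v\leftrightarrow\overline{v}$ and does not affect the conclusion since $v$ ranges over all of $C^{\infty}(X,\mathbbm{C})$.
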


\begin{proof}
  Using the complex decomposition (\ref{dec-Cx-grad}) and the fact that
  hermitian product $\left\langle \cdot, \cdot \right\rangle_{\omega}$ on
  $T^{\ast}_X \otimes_{\mathbbm{R}} \mathbbm{C}$ is the sesquilinear extension
  of the dual of $g$, we deduce
  \begin{eqnarray*}
    g (\nabla_{g, J} u, \nabla_{g, J} v) & = & \left\langle \partial_J
    \overline{u} + \overline{\partial}_J u, \partial_J \overline{v} +
    \overline{\partial}_J v \right\rangle_g \\
    &  & \\
    & = & \left\langle \partial_J \overline{u} + \overline{\partial}_J u,
    \partial_J \overline{v} + \overline{\partial}_J v \right\rangle_{\omega}\\
    &  & \\
    & = & \left\langle \partial_J \overline{u}, \partial_J \overline{v}
    \right\rangle_{\omega} + \left\langle \overline{\partial}_J u,
    \overline{\partial}_J v \right\rangle_{\omega}\\
    &  & \\
    & = & \left\langle \partial_J \overline{u}, \partial_J \overline{v}
    \right\rangle_{\omega} + \overline{ \left\langle \partial_J \overline{u},
    \partial_J \overline{v} \right\rangle_{\omega}} .
  \end{eqnarray*}
  Integrating by parts and taking the conjugate we infer the identity
  \begin{equation}
    \label{genCx-int-part} 2 \int_X g (\nabla_{g, J} u, \nabla_{g, J} v)
    \Omega = \int_X \left[ \Delta^{\Omega}_{g, J}  \overline{u} \cdot v +
    \overline{\Delta^{\Omega}_{g, J}  \overline{u} } \cdot \overline{v}
    \right] \Omega .
  \end{equation}
  Replacing $u$ with $\overline{u}$, $v$ with $\overline{v}$ in
  (\ref{genCx-int-part}) we obtain (\ref{cpx-int-part}). In the case $u \in
  C^{\infty} (X, \mathbbm{R})$ formula (\ref{genCx-int-part}) implies directly
  (\ref{rlcpx-int-part}).
\end{proof}

We show now that for all $\alpha, \beta \in \Lambda_J^{1, 1} T^{\ast}_X \cap
\Lambda_{\mathbbm{R}}^2 T^{\ast}_X$ holds the identity
\begin{equation}
  \label{WedgTrac} 4 n (n - 1)  \frac{\alpha \wedge \beta \wedge \omega^{n -
  2}}{\omega^n} = \tmop{Tr}_{\omega} \alpha \tmop{Tr}_{\omega} \beta - 2
  \left\langle \alpha, \beta \right\rangle_{\omega} .
\end{equation}
Indeed we consider the local expressions
\[ \omega = \frac{i}{2} \sum_k \zeta^{\ast}_k \wedge \bar{\zeta}^{\ast}_k,
   \hspace{0.75em} \hspace{0.75em} \alpha = i \sum_{k, l} \alpha_{k \bar{l}}
   \hspace{0.25em} \zeta^{\ast}_k \wedge \bar{\zeta}^{\ast}_l, \hspace{0.75em}
   \hspace{0.75em} \beta = i \sum_{k, l} \beta_{k \bar{l}} \hspace{0.25em}
   \zeta^{\ast}_k \wedge \bar{\zeta}^{\ast}_l, \]
and we set
\begin{eqnarray*}
  \Psi : = \alpha \wedge \beta & = & \sum_{k_1, k_2, l_1, l_2} \alpha_{k_1
  \bar{l}_1} \beta_{k_2 \bar{l}_2} \hspace{0.25em} \zeta^{\ast}_{k_1} \wedge
  \zeta^{\ast}_{k_2} \wedge \bar{\zeta}^{\ast}_{l_1} \wedge
  \bar{\zeta}^{\ast}_{l_2}\\
  &  & \\
  & = & \sum_{|K| = |L| = 2} \Psi_{K, L} \hspace{0.25em} \zeta^{\ast}_K
  \wedge \bar{\zeta}^{\ast}_L,
\end{eqnarray*}
where $K = (k_1, k_2), \hspace{0.25em} 1 \leq k_1 < k_2 \leq n$ and the same
holds for $L$. Explicitly the coefficients $\Psi_{K, L}$ are given by the
expression
\[ \Psi_{K, L} = \alpha_{k_1 \bar{l}_1} \beta_{k_2 \bar{l}_2} + \alpha_{k_2
   \bar{l}_2} \beta_{k_1 \bar{l}_1} - \alpha_{k_1 \bar{l}_2} \beta_{k_2
   \bar{l}_1} - \alpha_{k_2 \bar{l}_1} \beta_{k_1 \bar{l}_2} . \]
We conclude the identity
\begin{eqnarray*}
  4 n (n - 1)  \frac{\Psi \wedge \omega^{n - 2}}{\omega^n} & = & 16 \sum_{|L|
  = 2} \Psi_{L, L} = 16 \sum_{k, l} \alpha_{k \bar{k}} \beta_{l \bar{l}} - 16
  \sum_{k, l} \alpha_{k \bar{l}} \beta_{l \bar{k}}\\
  &  & \\
  & = & \tmop{Tr}_{\omega} \alpha \tmop{Tr}_{\omega} \beta - 2 \left\langle
  \alpha, \beta \right\rangle_{\omega} .
\end{eqnarray*}

\subsection{Action of the curvature on alternating 2-forms}

We observe that as in the symmetric case we can define an action of the
curvature operator over alternating 2-forms as follows
\begin{eqnarray*}
  ( \mathcal{R}_g \ast \alpha) (\xi, \eta) & \assign & - \tmop{Tr}_g \left[
  \alpha \left( \mathcal{R}_g \left( \xi, \cdot \left) \eta, \cdot \right)
  \right], \right. \right.
\end{eqnarray*}
for any $\alpha \in \Lambda^2 T^{\ast}_X$. The tensor $\mathcal{R}_g \ast
\alpha$ is anti-symmetric. In fact let $(e_k)_k$ be a $g (x)$-orthonormal base
of $T_{X, x}$ and consider the local expression $\alpha^{\ast}_g = A_{l, k}
e^{\ast}_k \otimes e_l$, with $A_{l, k} = - A_{k, l}$. Then
\begin{eqnarray*}
  ( \mathcal{R}_g \ast \alpha) (\xi, \eta) & = & - g \left( \alpha^{\ast}_g 
  \mathcal{R}_g (\xi, e_k) \eta, e_k \right) \\
  &  & \\
  & = & g \left( \mathcal{R}_g (\xi, e_k) \eta, \alpha^{\ast}_g e_k \right)\\
  &  & \\
  & = & R_g (\xi, e_k, \alpha_g^{\ast} e_k, \eta) \\
  &  & \\
  & = & R_g (\xi, e_k, A_{l, k} e_l, \eta)\\
  &  & \\
  & = & - R_g (\xi, A_{k, l} e_k, e_l, \eta)\\
  &  & \\
  & = & - R_g (\xi, \alpha_g^{\ast} e_l, e_l, \eta)\\
  &  & \\
  & = & - R_g (\eta, e_l, \alpha_g^{\ast} e_l, \xi)\\
  &  & \\
  & = & - ( \mathcal{R}_g \ast \alpha) (\eta, \xi),
\end{eqnarray*}
thanks to the symmetry properties of the curvature form. We observe also that
the previous computation shows the identity
\begin{eqnarray*}
  ( \mathcal{R}_g \ast \alpha) (\xi, \eta) & = & - R_g (\xi, e_k, \eta,
  \alpha_g^{\ast} e_k)\\
  &  & \\
  & = & - g (\mathcal{R}_g (\xi, e_k) \alpha_g^{\ast} e_k, \eta)\\
  &  & \\
  & = & - g \left( \left( \mathcal{R}_g \ast \alpha^{\ast}_g \right) \xi,
  \eta \right),
\end{eqnarray*}
i.e.
\begin{equation}
  \label{alt-ast-curv-Id}  (\mathcal{R}_g \ast \alpha)_g^{\ast} =
  -\mathcal{R}_g \ast \alpha_g^{\ast} .
\end{equation}
On the other hand using the algebraic Bianchi identity we obtain the
equalities

\begin{eqnarray*}
  ( \mathcal{R}_g \ast \alpha) (\xi, \eta) & = & - g \left( \alpha^{\ast}_g 
  \mathcal{R}_g (\xi, e_k) \eta, e_k \right)\\
  &  & \\
  & = & g \left( \alpha^{\ast}_g  \mathcal{R}_g (e_k, \eta) \xi, e_k \right)
  + g \left( \alpha^{\ast}_g  \mathcal{R}_g (\eta, \xi) e_k, e_k \right)\\
  &  & \\
  & = & - g \left( \alpha^{\ast}_g  \mathcal{R}_g (\eta, e_k) \xi, e_k
  \right) - g \left( \mathcal{R}_g (\eta, \xi) e_k, \alpha^{\ast}_g e_k
  \right)\\
  &  & \\
  & = & ( \mathcal{R}_g \ast \alpha) (\eta, \xi) + g \left( \mathcal{R}_g
  (\eta, \xi) \alpha^{\ast}_g e_k, e_k \right)\\
  &  & \\
  & = & - ( \mathcal{R}_g \ast \alpha) (\xi, \eta) - \tmop{Tr}_{\mathbbm{R}}
  \left[ \mathcal{R}_g (\xi, \eta) \alpha^{\ast}_g \right],
\end{eqnarray*}
and thus the formula
\begin{equation}
  \label{curv-act-form}  ( \mathcal{R}_g \ast \alpha) (\xi, \eta) = -
  \frac{1}{2} \tmop{Tr}_{\mathbbm{R}} \left[ \mathcal{R}_g (\xi, \eta)
  \alpha^{\ast}_g \right] .
\end{equation}
We assume further that $(X, J, g)$ is K\"ahler and $\alpha$ is
$J$-anti-invariant. In this case $\alpha^{\ast}_g = A$ is $J$-anti-linear and
so is the endomorphism $\mathcal{R}_g (\xi, \eta) \alpha^{\ast}_g$. We deduce
\begin{equation}
  \label{vanish-curv-anti}  \mathcal{R}_g \ast \alpha = 0 \text{, \ \ i.e. \ \
  } \mathcal{R}_g \ast A = 0 .
\end{equation}
thanks to the identity (\ref{alt-ast-curv-Id}).

\subsection{Weighted Weitzenb\"ock formula for alternating 2-forms}

We show the weighted Weitzenb\"ock type formula
\begin{equation}
  \label{Wei-alt} \Delta^{\Omega}_{d, g} \alpha = \Delta^{\Omega}_g \alpha +
  2\mathcal{R}_g \ast \alpha + \alpha \tmop{Ric}^{\ast}_g (\Omega) +
  \tmop{Ric}_g (\Omega) \alpha_g^{\ast},
\end{equation}
for any alternating 2-form $\alpha$ over a Riemannian manifold. For this
purpose we fix an arbitrary point $x_0$ and we choose the vector fields $\xi$
and $\eta$ such that $0 = \nabla_g \xi (x_0) = \nabla_g \eta (x_0)$. Let
$(e_k)_k$ be a $g$-orthonormal local frame such that $\nabla_g e_k  (x_0) =
0$. Then at the point $x_0$ holds the identities
\begin{eqnarray*}
  d \nabla_g^{\ast} \alpha (\xi, \eta) & = & \nabla_{g, \xi} \nabla_g^{\ast}
  \alpha \cdot \eta - \nabla_{g, \eta} \nabla_g^{\ast} \alpha \cdot \xi\\
  &  & \\
  & = & \nabla_{g, \xi} \left[ \nabla_g^{\ast} \alpha \cdot \eta \right] -
  \nabla_{g, \eta} \left[ \nabla_g^{\ast} \alpha \cdot \xi \right]\\
  &  & \\
  & = & - \nabla_{g, \xi} \left[ \nabla_{g, e_k} \alpha (e_k, \eta) \right] +
  \nabla_{g, \eta} \left[ \nabla_{g, e_k} \alpha (e_k, \xi) \right]\\
  &  & \\
  & = & - \nabla_{g, \xi} \nabla_{g, e_k} \alpha (e_k, \eta) + \nabla_{g,
  \eta} \nabla_{g, e_k} \alpha (e_k, \xi),
\end{eqnarray*}
and
\begin{eqnarray*}
  \nabla_g^{\ast} d \alpha (\xi, \eta) & = & - \nabla_{g, e_k} d \alpha (e_k,
  \xi, \eta)\\
  &  & \\
  & = & - \nabla_{g, e_k}  \left[ d \alpha (e_k, \xi, \eta) \right]\\
  &  & \\
  & = & - \nabla_{g, e_k}  \left[ \nabla_{g, e_k} \alpha (\xi, \eta) -
  \nabla_{g, \xi} \alpha (e_k, \eta) + \nabla_{g, \eta} \alpha (e_k, \xi)
  \right]\\
  &  & \\
  & = & - \nabla_{g, e_k} \nabla_{g, e_k} \alpha (\xi, \eta) + \nabla_{g,
  e_k} \nabla_{g, \xi} \alpha (e_k, \eta) - \nabla_{g, e_k} \nabla_{g, \eta}
  \alpha (e_k, \xi) .
\end{eqnarray*}
We remind now that for any vector fields $\mu, \zeta$ such that $[\mu, \zeta]
(x_0) = 0$ holds the identity at the point $x_0$
\begin{eqnarray*}
  \nabla_{g, \mu} \nabla_{g, \zeta} \alpha - \nabla_{g, \zeta} \nabla_{g, \mu}
  \alpha & = & \mathcal{R}_g (\zeta, \mu) \neg \alpha,
\end{eqnarray*}
where the contraction operation $T \neg : \Lambda^2 T^{\ast}_X \longrightarrow
\Lambda^2 T^{\ast}_X$ associated to an endomorphism $T \in \tmop{End} (T_X)$
is defined by the formula
\begin{eqnarray*}
  (T \neg \alpha)  (\xi, \eta) & \assign & \alpha (T \xi, \eta) + \alpha (\xi,
  T \eta) .
\end{eqnarray*}
We deduce
\begin{eqnarray*}
  \left( \nabla_{g, e_k} \nabla_{g, \xi} \alpha - \nabla_{g, \xi} \nabla_{g,
  e_k} \right) (e_k, \eta) & = & \alpha \left( \mathcal{R}_g (\xi, e_k) e_k,
  \eta \right) + \alpha \left( e_k, \mathcal{R}_g (\xi, e_k) \eta \right)\\
  &  & \\
  & = & \left[ \alpha \tmop{Ric}^{\ast} (g) + (\mathcal{R}_g \ast \alpha)
  \right] (\xi, \eta),
\end{eqnarray*}
and also
\begin{eqnarray*}
  \left( \nabla_{g, \eta} \nabla_{g, e_k} \alpha - \nabla_{g, e_k} \nabla_{g,
  \eta} \alpha \right) (e_k, \xi) & = & - \left[ \alpha \tmop{Ric}^{\ast} (g)
  + (\mathcal{R}_g \ast \alpha) \right] (\eta, \xi) \\
  &  & \\
  & = & - g \left( \alpha^{\ast}_g \tmop{Ric}^{\ast} (g) \eta, \xi \right) +
  (\mathcal{R}_g \ast \alpha) (\xi, \eta)\\
  &  & \\
  & = & g (\eta, \tmop{Ric}^{\ast} (g) \alpha^{\ast}_g \xi) + (\mathcal{R}_g
  \ast \alpha) (\xi, \eta)\\
  &  & \\
  & = & \left[ \tmop{Ric} (g) \alpha^{\ast}_g + (\mathcal{R}_g \ast \alpha)
  \right] (\xi, \eta) .
\end{eqnarray*}
Summing up the terms $d \nabla_g^{\ast} \alpha (\xi, \eta)$ and
$\nabla_g^{\ast} d \alpha (\xi, \eta)$ and using these last identities we infer
the formula (\ref{Wei-alt}) in the case $\Omega = C d V_g$. In order to obtain
the general case we observe the decompositions
\begin{eqnarray*}
  d \nabla^{\ast_{\Omega}}_g \alpha & = & d \nabla_g^{\ast} \alpha + d
  (\nabla_g f \neg \alpha),\\
  &  & \\
  \nabla^{\ast_{\Omega}}_g d \alpha & = & \nabla_g^{\ast} d \alpha + \nabla_g
  f \neg d \alpha,
\end{eqnarray*}
and the identities at the point $x_0$,
\begin{eqnarray*}
  d (\nabla_g f \neg \alpha)  (\xi, \eta) & = & \nabla_{g, \xi}  (\nabla_g f
  \neg \alpha) \cdot \eta - \nabla_{g, \eta}  (\nabla_g f \neg \alpha) \cdot
  \xi\\
  &  & \\
  & = & \nabla_{g, \xi} \left[ \alpha (\nabla_g f, \eta) \right] - \nabla_{g,
  \eta} \left[ \alpha (\nabla_g f, \xi) \right]\\
  &  & \\
  & = & \nabla_{g, \xi} \alpha (\nabla_g f, \eta) + \alpha (\nabla^2_{g, \xi}
  f, \eta) \\
  &  & \\
  & - & \nabla_{g, \eta} \alpha (\nabla_g f, \xi) - \alpha (\nabla^2_{g,
  \eta} f, \xi)\\
  &  & \\
  & = & \nabla_{g, \xi} \alpha (\nabla_g f, \eta) + \alpha (\nabla^2_{g, \xi}
  f, \eta) \\
  &  & \\
  & - & \nabla_{g, \eta} \alpha (\nabla_g f, \xi) - \nabla_g d f
  \alpha_g^{\ast}  (\xi, \eta),\\
  &  & \\
  (\nabla_g f \neg d \alpha)  (\xi, \eta) & = & (\nabla_g f \neg \nabla_g
  \alpha)  (\xi, \eta) - \nabla_{g, \xi} \alpha (\nabla_g f, \eta) +
  \nabla_{g, \eta} \alpha (\nabla_g f, \xi) .
\end{eqnarray*}
Summing up we infer the required formula (\ref{Wei-alt}).

\section{Appendix B}

\subsection{Reformulation of the weighted complex Bochner identity
(\ref{dbar-OmBoch-fnctSOL})}

We define the complex operator
\begin{eqnarray*}
  \Delta^{\Omega}_{g, - J} & \assign & \overline{\Delta^{\Omega}_{g, J}} .
\end{eqnarray*}
With this notation the weighted complex Bochner type identity
(\ref{dbar-OmBoch-fnctSOL}) rewrites also as
\begin{eqnarray*}
  2 \Delta^{\Omega, - J}_{T_{X, g}} \nabla_{g, J} u & = & \nabla_{g, J} 
  (\Delta^{\Omega}_{g, - J} - 2\mathbbm{I}) u,
\end{eqnarray*}
for all $u \in C^{\infty} (X, \mathbbm{C})$. We show now that the fundamental
identity (\ref{dbar-OmBoch-fnctSOL}) implies an other important formula. We
need a few preliminaries.

\begin{lemma}
  For any $u, v \in C^{\infty} (X, \mathbbm{C})$ holds the integration by parts
  identity
  \[ \int_X \Delta^{\Omega}_{g, - J} u \cdot \overline{v} \Omega = 2 \int_X
     \left\langle \nabla_{g, J} u, \nabla_{g, J} v \right\rangle_{\omega}
     \Omega . \]
\end{lemma}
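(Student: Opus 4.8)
The plan is to reduce this to the integration by parts identity already established for the operator $\Delta^{\Omega}_{g,J}$, namely
$\int_X \Delta^{\Omega}_{g, J} w \cdot \overline{z}\, \Omega = 2\int_X \langle \partial_J w, \partial_J z\rangle_{\omega}\,\Omega$
for all $w,z \in C^{\infty}(X,\mathbbm{C})$ (obtained by writing $\Delta^{\Omega}_{g,J}w = e^{f}\tmop{Tr}_{\omega}[\, i\overline{\partial}_J(e^{-f}\partial_J w)\,]$ and integrating by parts via Stokes), combined with the complex decomposition $\nabla_{g,J}w \neg g = \partial_J\overline{w} + \overline{\partial}_J w$ from formula (\ref{dec-Cx-grad}). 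First I would recall that $\Delta^{\Omega}_{g,-J} := \overline{\Delta^{\Omega}_{g,J}}$ means the operator conjugate, i.e. $\Delta^{\Omega}_{g,-J}u = \overline{\Delta^{\Omega}_{g,J}\overline{u}}$ (equivalently, the formula above with $\partial_J$ and $\overline{\partial}_J$ interchanged).

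Second, I would apply the quoted identity with $w = \overline{u}$, $z = \overline{v}$ to get $\int_X \Delta^{\Omega}_{g,J}\overline{u}\cdot v\,\Omega = 2\int_X \langle \partial_J\overline{u},\partial_J\overline{v}\rangle_{\omega}\,\Omega$, and then take complex conjugates of both sides. Using that conjugation is an isometry of the sesquilinear product (so $\overline{\langle \alpha,\beta\rangle_{\omega}} = \langle\overline{\alpha},\overline{\beta}\rangle_{\omega}$) together with $\overline{\partial_J\overline{u}} = \overline{\partial}_J u$, this produces $\int_X \overline{\Delta^{\Omega}_{g,J}\overline{u}}\cdot\overline{v}\,\Omega = 2\int_X \langle \overline{\partial}_J u,\overline{\partial}_J v\rangle_{\omega}\,\Omega$, which by the definition above is exactly $\int_X \Delta^{\Omega}_{g,-J}u\cdot\overline{v}\,\Omega = 2\int_X \langle \overline{\partial}_J u,\overline{\partial}_J v\rangle_{\omega}\,\Omega$.

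The last step is to recognize the right hand side as $2\int_X \langle \nabla_{g,J}u,\nabla_{g,J}v\rangle_{\omega}\,\Omega$. By (\ref{dec-Cx-grad}), identifying $\nabla_{g,J}w$ with $\nabla_{g,J}w\neg g = \partial_J\overline{w} + \overline{\partial}_J w$ and passing through the $g$-induced isomorphism $T_{X,J}\simeq \Lambda^{0,1}_J T^{\ast}_X$ that extracts the $(0,1)$-component $\overline{\partial}_J w$, the hermitian metric on $T_{X,J}$ corresponds to $\langle\cdot,\cdot\rangle_{\omega}$ on $\Lambda^{0,1}_J T^{\ast}_X$; since $\Lambda^{1,0}_J$ and $\Lambda^{0,1}_J$ are $\langle\cdot,\cdot\rangle_{\omega}$-orthogonal, only the $(0,1)$ contribution survives, giving $\langle\nabla_{g,J}u,\nabla_{g,J}v\rangle_{\omega} = \langle\overline{\partial}_J u,\overline{\partial}_J v\rangle_{\omega}$ and hence the claimed formula.

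I expect the only delicate point to be the bookkeeping of complex conjugations: distinguishing the operator conjugate $\overline{\Delta^{\Omega}_{g,J}}$ (which conjugates both argument and value) from a single conjugation of its value, and tracking the conjugate-linearity of $\langle\cdot,\cdot\rangle_{\omega}$ and of the identification $\nabla_{g,J}w\leftrightarrow\overline{\partial}_J w$. There is no analytic difficulty; everything rests on Stokes' theorem as already used in the identity for $\Delta^{\Omega}_{g,J}$, so this lemma is essentially the ``$-J$ mirror'' of that one.
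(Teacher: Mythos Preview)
Your overall strategy is exactly the paper's: reduce to the already known identity for $\Delta^{\Omega}_{g,J}$ by conjugation, and relate the hermitian pairing of the complex gradients $\nabla_{g,J}u,\nabla_{g,J}v$ to a pairing of $(1,0)$- or $(0,1)$-forms. The first two paragraphs are fine and agree with the paper verbatim.

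The weak point is your last step, the pointwise identification $\langle\nabla_{g,J}u,\nabla_{g,J}v\rangle_{\omega}=\langle\overline{\partial}_J u,\overline{\partial}_J v\rangle_{\omega}$. Your justification (``pass through the $g$-isomorphism $T_{X,J}\simeq\Lambda^{0,1}_JT^{\ast}_X$ and keep the $(0,1)$ part'') is too loose: the hermitian metric on $T_{X,J}$ is $h=2g\pi^{1,0}_J$, not the bare $g$-duality composed with a projection, so the normalization does not come out automatically. The paper does \emph{not} argue this way; it computes $\langle\nabla_{g,J}u,\nabla_{g,J}v\rangle_{\omega}$ directly from the formula $h(\xi,\eta)=2i\,\omega(\eta^{0,1}_J,\xi^{1,0}_J)$ together with the decomposition $\nabla_{g,J}u=\nabla^{1,0}_{g,J}u+\nabla^{0,1}_{g,J}\overline{u}$ and the identities $\nabla^{1,0}_{g,J}u\neg\omega=i\overline{\partial}_J u$, $\nabla^{0,1}_{g,J}u\neg\omega=-i\partial_J u$, obtaining
\[
\langle\nabla_{g,J}u,\nabla_{g,J}v\rangle_{\omega}=2\,\langle\partial_J\overline{v},\partial_J\overline{u}\rangle_{\omega}.
\]
Note the factor of $2$ on the right, which your heuristic misses. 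After that, the paper integrates, flips the hermitian slots, and invokes the $\Delta^{\Omega}_{g,J}$ identity and the definition $\Delta^{\Omega}_{g,-J}=\overline{\Delta^{\Omega}_{g,J}}$, exactly as you do. So replace your last paragraph by this direct computation of the pairing; everything else stands.
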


\begin{proof}
  We define the complex components of the $g$-gradient as
  \begin{eqnarray*}
    \nabla^{1, 0}_{g, J} u & \assign & (\nabla_g u)_J^{1, 0} \in C^{\infty}
    (X, T^{1, 0}_{X, J}),\\
    &  & \\
    \nabla^{0, 1}_{g, J} u & \assign & (\nabla_g u)_J^{0, 1} \in C^{\infty}
    (X, T^{0, 1}_{X, J}) .
  \end{eqnarray*}
  With these notations holds the decomposition formula
  \begin{equation}
    \label{Dec-Cx-grad} \nabla_{g, J} u = \nabla^{1, 0}_{g, J} u + \nabla^{0,
    1}_{g, J}  \overline{u} .
  \end{equation}
  We observe that for all $\xi, \eta \in T_X$ holds the identity
  \begin{eqnarray*}
    \left\langle \xi, \eta \right\rangle_{\omega} \equiv h (\xi, \eta) & = & 2
    i \omega (\eta_J^{0, 1}, \xi_J^{1, 0}) .
  \end{eqnarray*}
  This combined with (\ref{Dec-Cx-grad}) implies
  \begin{eqnarray*}
    \left\langle \nabla_{g, J} u, \nabla_{g, J} v \right\rangle_{\omega} & = &
    2 i \omega (\nabla^{0, 1}_{g, J}  \overline{v}, \nabla^{1, 0}_{g, J} u) .
  \end{eqnarray*}
  We observe now that the complex spiting of the $g$-gradient
  \begin{eqnarray*}
    \nabla_g u & = & \nabla^{1, 0}_{g, J} u + \nabla^{0, 1}_{g, J} u
  \end{eqnarray*}
  implies the identities
  \begin{eqnarray*}
    \nabla^{1, 0}_{g, J} u \neg \,\omega & = & i \overline{\partial}_J u,\\
    &  & \\
    \nabla^{0, 1}_{g, J} u \neg \,\omega & = & - i \partial_J u .
  \end{eqnarray*}
  Using this and the identity (\ref{Trac-forml}) we deduce
  \begin{eqnarray*}
    \left\langle \nabla_{g, J} u, \nabla_{g, J} v \right\rangle_{\omega} & = &
    2 \partial_J \overline{v} \cdot \nabla^{1, 0}_{g, J} u\\
    &  & \\
    & = & \tmop{Tr}_{\omega} \left[ \partial_J \overline{v} \wedge
    (\nabla^{1, 0}_{g, J} u \neg \omega) \right]\\
    &  & \\
    & = & \tmop{Tr}_{\omega} \left[ i \partial_J \overline{v} \wedge
    \overline{\partial}_J u \right]\\
    &  & \\
    & = & 2 \left\langle \partial_J \overline{v}, \partial_J \overline{u}
    \right\rangle_{\omega} .
  \end{eqnarray*}
  We infer the equalities
  \begin{eqnarray*}
    \int_X \left\langle \nabla_{g, J} u, \nabla_{g, J} v
    \right\rangle_{\omega} \Omega & = & 2 \overline{\int_X \left\langle
    \partial_J \overline{u}, \partial_J \overline{v} \right\rangle_{\omega}
    \Omega}\\
    &  & \\
    & = & \overline{\int_X \Delta^{\Omega}_{g, J}  \overline{u} \cdot v
    \Omega}\\
    &  & \\
    & = & \int_X \Delta^{\Omega}_{g, - J} u \cdot \overline{v} \Omega .
  \end{eqnarray*}
\end{proof}

We equip $C_{\Omega}^{\infty} (X, \mathbbm{C})_0$ with the
$L_{\Omega}^2$-product (\ref{L2Om-prod}) and the space \\
$C^{\infty} (X,
\Lambda_J^{0, 1} T^{\ast}_X \otimes_{\mathbbm{C}} T_{X, J})$ with the
$L_{\omega, \Omega}^2$-hermitian product (\ref{L2omOm-prod}). Then the formal
adjoint of $\mathcal{H}^{0, 1}_{g, J}$ with respect to such products
\begin{eqnarray*}
  (\mathcal{H}^{0, 1}_{g, J})^{\ast_{\omega, \Omega}} : C^{\infty} (X,
  \Lambda_J^{0, 1} T^{\ast}_X \otimes_{\mathbbm{C}} T_{X, J}) &
  \longrightarrow & C_{\Omega}^{\infty} (X, \mathbbm{C})_0,\\
  &  & \\
  \int_X \left\langle \mathcal{H}^{0, 1}_{g, J} u, A \right\rangle_{\omega}
  \Omega & = & \int_X u \cdot \overline{ (\mathcal{H}^{0, 1}_{g,
  J})^{\ast_{\omega, \Omega}} A} \Omega,
\end{eqnarray*}
satisfies the identity
\begin{eqnarray*}
  (\mathcal{H}^{0, 1}_{g, J})^{\ast_{\omega, \Omega}} & = & \nabla_{g,
  J}^{\ast_{\omega, \Omega}}  \overline{\partial}^{\ast_{g, \Omega}}_{T_{X,
  J}} .
\end{eqnarray*}
Moreover lemma implies the identity
\begin{eqnarray*}
  \Delta^{\Omega}_{g, - J} & = & \nabla_{g, J}^{\ast_{\omega, \Omega}}
  \nabla_{g, J} .
\end{eqnarray*}
Then the complex Bochner type identity (\ref{dbar-OmBoch-fnctSOL}) implies
\begin{eqnarray*}
  2 (\mathcal{H}^{0, 1}_{g, J})^{\ast_{\omega, \Omega}} \mathcal{H}^{0, 1}_{g,
  J}  \overline{v} & = & 2 \nabla_{g, J}^{\ast_{\omega, \Omega}} 
  \overline{\partial}^{\ast_{g, \Omega}}_{T_{X, J}} \overline{\partial}_{T_{X,
  J}} \nabla_{g, J}  \overline{v}\\
  &  & \\
  & = & \overline{\Delta^{\Omega}_{g, J}  (\Delta^{\Omega}_{g, J} -
  2\mathbbm{I}) v},
\end{eqnarray*}
or in other terms
\begin{eqnarray*}
  2 (\mathcal{H}^{0, 1}_{g, J})^{\ast_{\omega, \Omega}} \mathcal{H}^{0, 1}_{g,
  J} u & = & \Delta^{\Omega}_{g, - J}  (\Delta^{\Omega}_{g, - J} -
  2\mathbbm{I}) u,
\end{eqnarray*}
for all $u \in C^{\infty} (X, \mathbbm{C})$.

\subsection{A Poisson structure on the first eigenspace of
$\Delta^{\Omega}_{g, J}$}\label{Poiss-Strc}

For any complex valued function $u$ and symplectic form $\omega$ we define the
complex vector field
\begin{eqnarray*}
  (d u)_{\omega}^{\ast} & \assign & \omega^{- 1} d u = - J \nabla_g u,
\end{eqnarray*}
and the Poisson bracket
\begin{eqnarray*}
  \left\{ u, v \right\}_{\omega} & \assign & d v \cdot (d u)_{\omega}^{\ast} =
  - \omega ((d u)_{\omega}^{\ast}, (d v)_{\omega}^{\ast}) .
\end{eqnarray*}
We define the Poisson bracket over the space $C_{\Omega}^{\infty} (X,
\mathbbm{C})$ as
\begin{eqnarray*}
  \left\{ u, v \right\}_{\omega, \Omega} & \assign & \left\{ u, v
  \right\}_{\omega} - \int_X \left\{ u, v \right\}_{\omega} \Omega .
\end{eqnarray*}
With these notations holds the following lemma (see also \cite{Fut}, \cite{Gau}).

\begin{lemma}
  \label{Pisson-Holo}Let $(X, J)$ be a Fano manifold and let $g$ be a
  $J$-invariant K\"ahler metric such that $\omega \assign g J \in 2 \pi c_1
  (X, \left[ J \right])$. Let also $\Omega > 0$ be the unique smooth volume
  form with$\int_X \Omega = 1$ such that $\tmop{Ric}_J (\Omega) = \omega$.
  
  {\tmstrong{$A)$}} Then the map
  \begin{eqnarray*}
    \chi : \left( \overline{\tmop{Ker} (\Delta^{\Omega}_{g, J} -
    2\mathbbm{I})}, i \left\{ \cdot, \cdot \right\}_{\omega, \Omega} \right) &
    \longrightarrow & \left( H^0 (X, T_{X, J}), \left[ \cdot, \cdot \right]
    \right)\\
    &  & \\
    u & \longmapsto & \nabla_{g, J} u,
  \end{eqnarray*}
  is well defined and it represents an isomorphism of complex lie algebras.
  
  {\tmstrong{$B)$}} The first eigenvalue $\lambda_1 (\Delta^{\Omega}_{g, J})$
  of the operator $\Delta^{\Omega}_{g, J}$ satisfies the estimate $\lambda_1
  (\Delta^{\Omega}_{g, J}) \geqslant 2$, with equality in the case $H^0 (X,
  T_{X, J})\neq 0$.
  
  {\tmstrong{$C)$}} If we set $\tmop{Kill}_g \assign \tmop{Lie}
  (\tmop{Isom}^0_g)$ then the map
  \begin{equation}
    \label{SpRealEig-Killmap} J \nabla_g : \tmop{Ker}_{\mathbbm{R}}
    (\Delta^{\Omega}_g - 2\mathbbm{I}) \longrightarrow \tmop{Kill}_g,
  \end{equation}
  is well defined and it represents an isomorphism of real vector spaces.
  
  {\tmstrong{$D)$}} The hermitian form
  \begin{eqnarray*}
    (u, v) & \longmapsto & \int_X i \left\{ u, \bar{v} \right\}_{\omega}
    \Omega,
  \end{eqnarray*}
  over $\overline{\tmop{Ker} (\Delta^{\Omega}_{g, J} - 2\mathbbm{I})}$ is
  non-negative and let $(\mu_j)^N_{j = 0} \subset \mathbbm{R}_{\geqslant 0}$,
  $\mu_0 = 0$, be its spectrum with respect to the $L_{\Omega}^2$-product. If
  $g$ is a $J$-invariant K\"ahler-Ricci soliton then holds the decomposition
  \begin{eqnarray*}
    H^0 (X, T_{X, J}) & = & \bigoplus_{j = 0}^N V_{\mu_j},\\
    &  & \\
    V_{\mu_j} & \assign & \left\{ \xi \in H^0 (X, T_{X, J}) \mid \left[
    \nabla_g f, \xi \right] = \mu_j \xi \right\},\\
    &  & \\
    V_0 & = & \tmop{Kill}_g \oplus J \tmop{Kill}_g .
  \end{eqnarray*}
\end{lemma}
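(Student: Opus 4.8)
Parts B) and C) require no new work: the estimate $\lambda_1(\Delta^{\Omega}_{g,J})\geqslant 2$, with equality when $H^0(X,T_{X,J})\neq 0$, and the fact that the map \eqref{SpRealEig-Killmap}, which is \eqref{RealEig-Killmap} up to the identification $\operatorname{Ker}_{\mathbbm{R}}(\Delta^{\Omega}_g-2\mathbbm{I})=\operatorname{Ker}_{\mathbbm{R}}(\Delta^{\Omega}_{g,J}-2\mathbbm{I})$ of Corollary \ref{zero-ortog}, is an isomorphism of real vector spaces, are exactly the content of Corollary \ref{Eigenf-HolVct}. Likewise that same corollary already gives that $\chi$ in part A) is a well-defined $\mathbbm{C}$-linear isomorphism $\overline{\operatorname{Ker}(\Delta^{\Omega}_{g,J}-2\mathbbm{I})}\to H^0(X,T_{X,J})$. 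Hence the only genuinely new points are: (a) $\chi$ is a Lie-algebra morphism, and (b) the positivity and the eigenspace decomposition in part D).

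For (a): given $u,v\in\overline{\operatorname{Ker}(\Delta^{\Omega}_{g,J}-2\mathbbm{I})}=\operatorname{Ker}\mathcal{H}^{0,1}_{g,J}$ (by \eqref{kern-HessOI}), set $\xi:=\nabla_{g,J}u$, $\eta:=\nabla_{g,J}v\in H^0(X,T_{X,J})$. As in the surjectivity step of Lemma \ref{iso-CxGrad}, the decomposition \eqref{dec-Cx-grad} gives $i\overline{\partial}_J u=\xi_J^{1,0}\neg\,\omega$ and $i\overline{\partial}_J v=\eta_J^{1,0}\neg\,\omega$. Since $[\xi,\eta]_J^{1,0}=[\xi_J^{1,0},\eta_J^{1,0}]$ is holomorphic, $[\xi,\eta]\in H^0(X,T_{X,J})$ and, by the same argument, $[\xi,\eta]_J^{1,0}\neg\,\omega=i\overline{\partial}_J w$ for a unique $w\in C^{\infty}_{\Omega}(X,\mathbbm{C})_0$ (uniqueness since $H^1_{\overline{\partial}}(X,\mathbbm{C})=0$ and $\int_X w\,\Omega=0$). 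Applying Cartan's formula to $\mathcal{L}_{\xi_J^{1,0}}\omega$ and $\mathcal{L}_{\eta_J^{1,0}}\omega$, using $d\omega=0$ and \eqref{clos-hol-vct} to rewrite $\iota_{[\xi_J^{1,0},\eta_J^{1,0}]}\omega$ in terms of $\iota_{\xi_J^{1,0}}\omega$ and $\iota_{\eta_J^{1,0}}\omega$, one gets $i\overline{\partial}_J w=i\overline{\partial}_J\big(i\{u,v\}_{\omega}\big)$, hence $w=i\{u,v\}_{\omega}$ up to a constant; the zero-mean normalization identifies this constant as the mean value subtracted in $\{\cdot,\cdot\}_{\omega,\Omega}$, so $w=i\{u,v\}_{\omega,\Omega}$. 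This is $\chi^{-1}[\chi(u),\chi(v)]=i\{u,v\}_{\omega,\Omega}$, so $\chi$ is an isomorphism of complex Lie algebras.

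For part D), transporting $\mathcal{H}^{0,1}_{g,J}$ and integrating by parts with $\Omega=e^{-f}\omega^n/n!$ shows that the hermitian form $(u,v)\mapsto\int_X i\{u,\bar v\}_{\omega}\,\Omega$ coincides, up to sign, with $u\mapsto\langle iB^{\Omega}_{g,J}u,v\rangle_{\Omega}$ on $\overline{\operatorname{Ker}(\Delta^{\Omega}_{g,J}-2\mathbbm{I})}$. Since $B^{\Omega}_{g,J}$ is $L^2_{\Omega}$-anti-adjoint and $[\Delta^{\Omega}_g,B^{\Omega}_{g,J}]=0$ by \eqref{Com-Lap-B}, the operator $iB^{\Omega}_{g,J}$ is self-adjoint, commutes with the elliptic self-adjoint operator $\Delta^{\Omega}_{g,J}$, hence restricts to a diagonalizable operator with real spectrum on the finite-dimensional space $\overline{\operatorname{Ker}(\Delta^{\Omega}_{g,J}-2\mathbbm{I})}$; splitting $u$ into real and imaginary parts and using the two components of the eigenvalue equation $\Delta^{\Omega}_{g,J}\bar u=2\bar u$ reduces the associated quadratic form to $\int_X(|\nabla_g b|^2_g-2b^2)\,\Omega$, which is $\geqslant 0$ by $\lambda_1(\Delta^{\Omega}_g)\geqslant 2$ (see \eqref{Estim-FirstEigen}); this gives the non-negativity and the spectrum $(\mu_j)_{j=0}^N\subset\mathbbm{R}_{\geqslant 0}$. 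In the soliton case $h_{g,\Omega}=0$ one has $\Delta^{\Omega}_g F=2F$ and $B^{\Omega}_{g,J}f=g(\nabla_g f,J\nabla_g f)=0$, so $F\in\operatorname{Ker}_{\mathbbm{R}}(\Delta^{\Omega}_{g,J}-2\mathbbm{I})$ and $\nabla_g f=\chi(F)\in H^0(X,T_{X,J})$; by part A), $\operatorname{ad}_{\nabla_g f}$ corresponds under $\chi$ to $u\mapsto i\{f,u\}_{\omega,\Omega}=\mp iB^{\Omega}_{g,J}u$, so its eigenspace decomposition is the one induced by $iB^{\Omega}_{g,J}$, i.e.\ $H^0(X,T_{X,J})=\bigoplus_j V_{\mu_j}$. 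Finally $V_0=\operatorname{Ker}\operatorname{ad}_{\nabla_g f}$ corresponds to $\{u\in\overline{\operatorname{Ker}(\Delta^{\Omega}_{g,J}-2\mathbbm{I})}:B^{\Omega}_{g,J}u=0\}=\mathbbm{C}\operatorname{Ker}_{\mathbbm{R}}(\Delta^{\Omega}_g-2\mathbbm{I})$, using $[\Delta^{\Omega}_g,B^{\Omega}_{g,J}]=0$ and Corollary \ref{zero-ortog}, whose image under $\chi$ is $\operatorname{Kill}_g\oplus J\operatorname{Kill}_g$ by Corollary \ref{Eigenf-HolVct}.

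The main obstacle is step (a): carrying the Cartan-calculus identification of $[\xi,\eta]_J^{1,0}\neg\,\omega$ with $i\overline{\partial}_J\big(i\{u,v\}_{\omega,\Omega}\big)$ through cleanly — in particular tracking all signs and constants and verifying that the ambiguous additive constant is precisely the mean value removed in $\{\cdot,\cdot\}_{\omega,\Omega}$ — since everything else reduces to bookkeeping with the already established Corollaries \ref{Eigenf-HolVct} and \ref{zero-ortog} and the commutation identity \eqref{Com-Lap-B}.
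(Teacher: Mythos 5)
Your proposal is correct and follows essentially the same route as the paper: parts A (well-definedness and bijectivity of $\chi$), B and C are reduced to Corollary \ref{Eigenf-HolVct} together with the identity (\ref{E1CxE1}) of Corollary \ref{zero-ortog}, and part D rests on the identity $\int_X i \left\{ u, v \right\}_{\omega} \Omega = - \int_X i B^{\Omega}_{g, J} u \cdot v \,\Omega$, the relation $- i B^{\Omega}_{g, J} = \Delta^{\Omega}_g - 2\mathbbm{I}$ on $\overline{\tmop{Ker} (\Delta^{\Omega}_{g, J} - 2\mathbbm{I})}$, the estimate (\ref{Estim-FirstEigen}) and, in the soliton case, the commutation (\ref{Com-Lap-B}) — exactly as in the paper. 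The one step you defer, namely the Cartan-calculus verification that $\chi^{- 1} \left[ \chi (u), \chi (v) \right] = i \left\{ u, v \right\}_{\omega, \Omega}$, is precisely what the paper's Step A carries out explicitly by computing $L_{\left[ \xi, \eta \right]} \omega = 2 i \partial_J \overline{\partial}_J (\xi . u_1^{\eta} - \eta . u_1^{\xi})$ and expanding the real and imaginary potentials of $\left[ \xi, \eta \right]$ separately; your complexified version via the holomorphic $(1, 0)$-parts and (\ref{clos-hol-vct}) is an equivalent packaging of the same computation, with the additive constant fixed by the zero-mean normalization just as you indicate.
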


\begin{proof}
  {\tmstrong{Step A}}. In this step we show the statement {\tmstrong{A}}. The
  fact that $\chi$ is an isomorphism follows from corollary
  \ref{Eigenf-HolVct}. We show now that $\chi$ is also a morphism of complex
  Lie algebras. Let
  \begin{eqnarray*}
    \mathbbm{K}_{\pm} & \assign & \tmop{Ker} (\Delta^{\Omega}_{g, \pm J} -
    2\mathbbm{I}) .
  \end{eqnarray*}
  For any $\xi \in H^0 (X, T_{X, J})$ we denote $u^{\xi} \assign \chi^{- 1}
  (\xi) \in \mathbbm{K}_-$ and we decompose $u^{\xi} = u_1^{\xi} + i
  u_2^{\xi}$, with $u_j^{\xi} \in C_{\Omega}^{\infty} (X, \mathbbm{R})_0$. For
  any $u, v \in \mathbbm{K}_- = \overline{\mathbbm{K}_+}$ we set $\xi \assign
  \nabla_{g, J} u$, $\eta \assign \nabla_{g, J} v$ and as in \cite{Gau} we
  observe the identities
  \begin{eqnarray*}
    L_{\left[ \xi, \eta \right]} \omega & = & L_{\xi} L_{\eta} \omega -
    L_{\eta} L_{\xi} \omega\\
    &  & \\
    & = & 2 L_{\xi} i \partial_J \overline{\partial}_J u^{\eta}_1 - 2
    L_{\eta} i \partial_J \overline{\partial}_J u^{\xi}_1\\
    &  & \\
    & = & 2 i \partial_J \overline{\partial}_J  \left( \xi . u^{\eta}_1 -
    \eta . u^{\xi}_1 \right),
  \end{eqnarray*}
  since $\xi, \eta$ are holomorphic. We infer that for some constant $C_1 \in
  \mathbbm{R}$ holds the identities
  \begin{eqnarray*}
    u^{\left[ \xi, \eta \right]}_1 + C_1 & = & \xi . u^{\eta}_1 - \eta .
    u^{\xi}_1 \\
    &  & \\
    & = & \xi . v_1 - \eta . u_1 \\
    &  & \\
    & = & g (\nabla_g v_1, \nabla_g u_1 + J \nabla_g u_2)\\
    &  & \\
    & - & g (\nabla_g u_1, \nabla_g v_1 + J \nabla_g v_2)\\
    &  & \\
    & = & g (\nabla_g v_1, J \nabla_g u_2) - g (\nabla_g u_1, J \nabla_g
    v_2)\\
    &  & \\
    & = & \omega \left( \nabla_g u_2, \nabla_g v_1 \right) - \omega \left(
    \nabla_g v_2, \nabla_g u_1 \right)\\
    &  & \\
    & = & \omega \left( J \nabla_g u_2, J \nabla_g v_1 \right) + \omega (J
    \nabla_g u_1, J \nabla_g v_2)\\
    &  & \\
    & = & \omega \left( (d u_2)_{\omega}^{\ast}, (d v_1)_{\omega}^{\ast}
    \right) + \omega ((d u_1)_{\omega}^{\ast}, (d v_2)_{\omega}^{\ast})\\
    &  & \\
    & = & - \left\{ u_1, v_2 \right\}_{\omega} - \left\{ u_2, v_1
    \right\}_{\omega} .
  \end{eqnarray*}
  On the other hand
  \begin{eqnarray*}
    u^{\left[ \xi, \eta \right]}_2 & = & - u^{J \left[ \xi, \eta \right]}_1 =
    - u^{\left[ \xi, J \eta \right]}_1,
  \end{eqnarray*}
  since $\xi$ is holomorphic. We infer that for some constant $C_2 \in
  \mathbbm{R}$ holds the identities
  \begin{eqnarray*}
    u^{\left[ \xi, \eta \right]}_2 + C_2 & = & - \xi . u_1^{J \eta} + J \eta .
    u_1^{\xi} \\
    &  & \\
    & = & \xi . u^{\eta}_2 + J \eta . u_1^{\xi} \\
    &  & \\
    & = & \xi . v_2 + J \eta . u_1 \\
    &  & \\
    & = & g (\nabla_g v_2, \nabla_g u_1 + J \nabla_g u_2)\\
    &  & \\
    & + & g (\nabla_g u_1, J \nabla_g v_1 - \nabla_g v_2)\\
    &  & \\
    & = & g (\nabla_g v_2, J \nabla_g u_2) + g \left( \nabla_g u_1, J
    \nabla_g v_1 \right)\\
    &  & \\
    & = & \omega \left( \nabla_g u_2, \nabla_g v_2 \right) + \omega \left(
    \nabla_g v_1, \nabla_g u_1 \right)\\
    &  & \\
    & = & \omega \left( J \nabla_g u_2, J \nabla_g v_2 \right) - \omega
    \left( J \nabla_g u_1, J \nabla_g v_1 \right)\\
    &  & \\
    & = & \omega \left( (d u_2)_{\omega}^{\ast}, (d v_2)_{\omega}^{\ast}
    \right) - \omega \left( (d u_1)_{\omega}^{\ast}, (d v_1)_{\omega}^{\ast}
    \right)\\
    &  & \\
    & = & \left\{ u_1, v_1 \right\}_{\omega} - \left\{ u_2, v_2
    \right\}_{\omega} .
  \end{eqnarray*}
  We conclude that for all $u, v \in \mathbbm{K}_-$ holds the identity
  \begin{eqnarray*}
    \nabla_{g, J} i \left\{ u, v \right\}_{\omega, \Omega} & = & \left[
    \nabla_{g, J} u, \nabla_{g, J} v \right],
  \end{eqnarray*}
  which shows that $i \left\{ \cdot, \cdot \right\}_{\omega, \Omega}$ is a
  complex Lie algebra product over $\mathbbm{K}_-$ and that the map $\chi$ is
  a morphism of complex Lie algebras.
  
  {\tmstrong{Step B,C}}. The statements {\tmstrong{B}} and {\tmstrong{C}}
  follow from corollary \ref{Eigenf-HolVct} and the remarkable identity
  (\ref{E1CxE1}).
  
  {\tmstrong{Step D}}. We show now the statement {\tmstrong{D}}. We observe
  first that for all $u, v \in C^{\infty} (X, \mathbbm{C})$ holds the identity
  \begin{eqnarray*}
    \int_X i \left\{ u, v \right\}_{\omega} \Omega & = & - \int_X i
    B^{\Omega}_{g, J} u \cdot v \Omega .
  \end{eqnarray*}
  Indeed thanks to the computations in step {\tmstrong{A}} we deduce
  \begin{eqnarray*}
    \int_X i \left\{ u, v \right\}_{\omega} \Omega & = & - \int_X \left[
    \left\{ u_1, v_2 \right\}_{\omega} + \left\{ u_2, v_1 \right\}_{\omega}
    \right] \Omega\\
    &  & \\
    & + & i \int_X \left[ \left\{ u_1, v_1 \right\}_{\omega} - \left\{ u_2,
    v_2 \right\}_{\omega} \right] \Omega\\
    &  & \\
    & = & \int_X \left[ \left\langle \nabla_g v_1, J \nabla_g u_2
    \right\rangle_g + \left\langle \nabla_g u_1, J \nabla_g v_2
    \right\rangle_g \right] \Omega\\
    &  & \\
    & + & i \int_X \left[ \left\langle \nabla_g v_2, J \nabla_g u_2
    \right\rangle_g + \left\langle \nabla_g u_1, J \nabla_g v_1
    \right\rangle_g \right] \Omega .
  \end{eqnarray*}
  Integrating by parts we infer
  \begin{eqnarray*}
    \int_X i \left\{ u, v \right\}_{\omega} \Omega & = & \int_X \left[ v_1
    \cdot B^{\Omega}_{g, J} u_2 - u_1 \cdot B^{\Omega}_{g, J} v_2 \right]
    \Omega\\
    &  & \\
    & + & i \int_X \left[ v_2 \cdot B^{\Omega}_{g, J} u_2 + u_1 \cdot
    B^{\Omega}_{g, J} v_1 \right] \Omega\\
    &  & \\
    & = & \int_X \left[ B^{\Omega}_{g, J} u_1 \cdot v_2 + B^{\Omega}_{g, J}
    u_2 \cdot v_1 \right] \Omega\\
    &  & \\
    & - & i \int_X \left[ B^{\Omega}_{g, J} u_1 \cdot v_1 - B^{\Omega}_{g, J}
    u_2 \cdot v_2 \right] \Omega\\
    &  & \\
    & = & - \int_X i B^{\Omega}_{g, J} u \cdot v \Omega,
  \end{eqnarray*}
  thanks to the fact that $B^{\Omega}_{g, J}$ is $L_{\Omega}^2$-anti-adjoint.
  Thus if $u \in \mathbbm{K}_-$
  \begin{eqnarray*}
    \int_X i \left\{ u, \bar{u} \right\}_{\omega} \Omega & = & \int_X
    (\Delta^{\Omega}_g - 2\mathbbm{I}) u \cdot \bar{u} \Omega \geqslant 0 .
  \end{eqnarray*}
  The K\"ahler-Ricci-Soliton assumption implies the commutation identity
  \begin{eqnarray*}
    \left[ \Delta^{\Omega}_{g, J}, \Delta^{\Omega}_{g, - J} \right] & = & 0 .
  \end{eqnarray*}
  We infer that
  \begin{equation}
    \label{KerLapK} \Delta^{\Omega}_{g, J} - 2\mathbbm{I}: \mathbbm{K}_-
    \longrightarrow \mathbbm{K}_-,
  \end{equation}
  is a well defined non-negative $L_{\Omega}^2$-self-adjoint operator and let
  $(\lambda_j)^N_{j = 0} \subset \mathbbm{R}_{\geqslant 0}$, $\lambda_0 = 0$
  be it's spectrum. Notice also that by definition of $\mathbbm{K}_-$ this
  operator coincides with the operator
  \begin{eqnarray*}
    - 2 i B^{\Omega}_{g, J} : \mathbbm{K}_- & \longrightarrow & \mathbbm{K}_-
    .
  \end{eqnarray*}
  Thus $u \in \mathbbm{K}_-$ is an eigen-vector corresponding to the
  eigenvalue $\lambda_j$ if and only if $u \in \mathbbm{K}_-$ satisfies
  \begin{eqnarray*}
    (J \nabla_g f) . u & = & \frac{\lambda_j}{2} i u .
  \end{eqnarray*}
  This rewrites as
  \begin{eqnarray*}
    i \left\{ f, u \right\}_{\omega, \Omega} & = & \frac{\lambda_j}{2} u,
  \end{eqnarray*}
  and is equivalent to the equation
  \begin{eqnarray*}
    \left[ \nabla_g f, \nabla_{g, J} u \right] & = & \frac{\lambda_j}{2}
    \nabla_{g, J} u .
  \end{eqnarray*}
  Notice also that the kernel of (\ref{KerLapK}) is given by the identity
  \begin{eqnarray*}
    \mathbbm{K}_+ \cap \mathbbm{K}_- & = & \mathbbm{K}_{\mathbbm{R}} \oplus
    J\mathbbm{K}_{\mathbbm{R}},\\
    &  & \\
    \mathbbm{K}_{\mathbbm{R}} & \assign & \tmop{Ker}_{\mathbbm{R}}
    (\Delta^{\Omega}_{g, \pm J} - 2\mathbbm{I}) .
  \end{eqnarray*}
  We deduce the required conclusion with $\mu_j = \lambda_j / 2$.
\end{proof}

\subsection{Consequences of the Bochner-Kodaira-Nakano formula}

The holomorphic and antiholomorphic Hodge Laplacian operators are related by
the Bochner-Kodaira-Nakano identity. At the level of $T_X$-valued 1-forms it
reduces to the identity
\begin{equation}
  \label{Wg-BKN-Id} \Delta^{- J}_{T_{X, g}} A = \Delta^J_{T_{X, g}} A +
  \frac{1}{6}  \left( J\mathcal{R}_g \wedge A \right) (\omega^{\ast} \wedge
  \bullet),
\end{equation}
where $\omega^{\ast} \equiv \omega^{- 1} \in C^{\infty} (X, \Lambda^{1, 1}_J
T_X \cap \Lambda_{\mathbbm{R}}^2 T_X)$ is the dual element associated to
$\omega$. If in holomorphic coordinates $\omega$ writes as
\begin{eqnarray*}
  \omega & = & \frac{i}{2} \omega_{k, \bar{l}} d z_k \wedge d \bar{z}_l,
\end{eqnarray*}
then
\begin{eqnarray*}
  \omega^{\ast} & = & 2 i \omega^{l, \bar{k}}  \frac{\partial}{\partial z_k}
  \wedge \frac{\partial}{\partial \bar{z}_l} .
\end{eqnarray*}
The factor $1 / 6$ in front of the last term on the right hand side of
(\ref{Wg-BKN-Id}) is due to the convention
\begin{eqnarray*}
  v_1 \wedge \ldots \wedge v_p & \assign & \sum_{\sigma \in S_p}
  \varepsilon_{\sigma} v_{\sigma_1} \otimes \cdots \otimes v_{\sigma_p} .
\end{eqnarray*}
We explicit the latter term. For this purpose we observe first that for any
$\alpha \in \Lambda^{1, 1}_J T^{\ast}_X \otimes_{\mathbbm{C}} E$ holds the
identity
\begin{eqnarray*}
  \tmop{Tr}_{\omega} \alpha & = & - \tmop{Tr}_g  \left[ \alpha (J \cdot, \cdot
  \left) \right] . \right.
\end{eqnarray*}
We infer the expressions
\begin{eqnarray*}
  \frac{1}{\mathbbm{6}6}  (J\mathcal{R}_g \wedge A) (\omega^{\ast} \wedge \xi)
  & = & (J\mathcal{R}_g \wedge A) \left( 2 i \omega^{l, \bar{k}} 
  \frac{\partial}{\partial z_k}, \frac{\partial}{\partial \bar{z}_l}, \xi
  \right)\\
  &  & \\
  & = & - \frac{1}{2} \tmop{Tr}_{\omega} [ (J\mathcal{R}_g \wedge A)
  (\cdummy, \cdummy, \xi)]\\
  &  & \\
  & = & \frac{1}{2}  (J\mathcal{R}_g \wedge A) (J e_k, e_k, \xi),
\end{eqnarray*}
for an arbitrary $g$-orthonormal real frame $(e_k)_k$. We explicit the
exterior product using the $J$-invariant properties of the curvature operator.
We obtain
\begin{eqnarray*}
 && (J\mathcal{R}_g \wedge A) (J e_k, e_k, \xi)
 \\
 \\
 & = & J\mathcal{R}_g  (J e_k,
  e_k) A \xi - J\mathcal{R}_g  (J e_k, \xi) A e_k + J\mathcal{R}_g  (e_k, \xi)
  A J e_k \\
  &  & \\
  & = & - \tmop{Tr}_{\omega} (J\mathcal{R}_g) A \xi -\mathcal{R}_g  (J e_k,
  \xi) J A e_k +\mathcal{R}_g  (e_k, \xi) J A J e_k\\
  &  & \\
  & = & - 2 \tmop{Ric}^{\ast} (g) A \xi +\mathcal{R}_g  (\xi, J e_k) J A e_k
  - \left[ \mathcal{R}_g \ast (J A J) \right] \xi\\
  &  & \\
  & = & - 2 \tmop{Ric}^{\ast} (g) A \xi -\mathcal{R}_g  (\xi, \eta_k) J A J
  \eta_k - \left[ \mathcal{R}_g \ast (J A J) \right] \xi,
\end{eqnarray*}
where $\eta_k \assign J e_k$. But $(\eta_k)_k$ is also a $g$-orthonormal real
frame. We infer
\begin{eqnarray*}
  (J\mathcal{R}_g \wedge A) (J e_k, e_k, \xi) & = & - 2 \tmop{Ric}^{\ast} (g)
  A \xi - 2 \left[ \mathcal{R}_g \ast (J A J) \right] \xi .
\end{eqnarray*}
We deduce that the Bochner-Kodaira-Nakano identity rewrites at the level of
$T_X$-valued 1-forms as
\begin{equation}
  \label{BKN-id} \Delta^{- J}_{T_{X, g}} A = \Delta^J_{T_{X, g}} A -
  \tmop{Ric}^{\ast} (g) A -\mathcal{R}_g \ast \left( A_J'' - A_J' \right),
\end{equation}
where $A_J'$ and $A_J''$ are respectively the $J$-linear and $J$-anti-linear
parts of $A$. Using the Weitzenb\"ock type formula in lemma
\ref{OmTX-Lap-RmLap} with $\Omega = C d V_g$ we infer
\begin{eqnarray*}
  \mathcal{L}^{\Omega}_g A & = & \Delta_g A + \nabla_g f \neg \nabla_g A -
  2\mathcal{R}_g \ast A\\
  &  & \\
  & = & \Delta_{T_{X, g}} A -\mathcal{R}_g \ast A - A \tmop{Ric}^{\ast} (g) +
  \nabla_g f \neg \nabla_g A\\
  &  & \\
  & = & \left( \Delta^J_{T_{X, g}} + \Delta^{- J}_{T_{X, g}} \right) A
  -\mathcal{R}_g \ast A - A \tmop{Ric}^{\ast} (g) + \nabla_g f \neg \nabla_g A
  .
\end{eqnarray*}
Using the Bochner-Kodaira-Nakano identity (\ref{BKN-id}) we deduce the
formulas
\begin{eqnarray*}
  \mathcal{L}^{\Omega}_g A & = & 2 \Delta^J_{T_{X, g}} A - \tmop{Ric}^{\ast}
  (g) A - A \tmop{Ric}^{\ast} (g) - 2\mathcal{R}_g \ast A_J'' + \nabla_g f
  \neg \nabla_g A,\\
  &  & \\
  \mathcal{L}^{\Omega}_g A & = & 2 \Delta^{- J}_{T_{X, g}} A +
  \tmop{Ric}^{\ast} (g) A - A \tmop{Ric}^{\ast} (g) - 2\mathcal{R}_g \ast A_J'
  + \nabla_g f \neg \nabla_g A,
\end{eqnarray*}
and thus the identities
\begin{equation}
  \label{dec-Lich1} \mathcal{L}^{\Omega}_g A_J' = 2 \Delta^J_{T_{X, g}} A_J'
  - \tmop{Ric}^{\ast} (g) A_J' - A_J' \tmop{Ric}^{\ast} (g) + \nabla_g f \neg
  \nabla_g A_J',
\end{equation}

\begin{equation}
  \label{dec-Lich2} \mathcal{L}^{\Omega}_g A_J'' = 2 \Delta^{- J}_{T_{X, g}}
  A_J'' + \tmop{Ric}^{\ast} (g) A_J'' - A_J'' \tmop{Ric}^{\ast} (g) + \nabla_g
  f \neg \nabla_g A_J'' .
\end{equation}
We point out that one can obtain directly these formulas by using the methods
in the proof of identities (\ref{Fund-exp-AntHol}), (\ref{Lich-AntiHol}) and
(\ref{div-JA}). We remind now that the properties (\ref{lin-Lich}) and
(\ref{ant-lin-Lich}) imply that $A \in \tmop{Ker} \mathcal{L}^{}_g$ if and
only if $A_J' \in \tmop{Ker} \mathcal{L}^{}_g$ and $A_J'' \in \tmop{Ker}
\mathcal{L}^{}_g$. Thus if $A \in \tmop{Ker} \mathcal{L}^{}_g$ we infer thanks
to the identity (\ref{dec-Lich1}) with $\Omega = C d V_g$,
\begin{eqnarray*}
  0 & = & \int_X \left\langle \mathcal{L}_g A'_J, A'_J \right\rangle_g d V_g\\
  &  & \\
  & = & 2 \int_X \left[ \left\langle \Delta^J_{T_{X, g}} A_J', A'_J
  \right\rangle_g  \; - \left\langle \tmop{Ric}^{\ast} (g) A_J', A'_J
  \right\rangle_g \right] d V_g .
\end{eqnarray*}
Using the identity between Riemannian and hermitian norms of $T_X$-valued
forms we obtain
\begin{eqnarray*}
  2 \int_X \left\langle \Delta^J_{T_{X, g}} A_J', A'_J \right\rangle_g d V_g &
  = & 2 \int_X \left\langle \Delta^J_{T_{X, g}} A_J', A'_J
  \right\rangle_{\omega} d V_g\\
  &  & \\
  & = & \int_X \left[ 2 \left| \partial^{\ast_g}_{T_{X, J}} A'_J
  \right|^2_{\omega}  \; + \;  \left| \partial^g_{T_{X, J}} A'_J
  \right|^2_{\omega} \right] d V_g\\
  &  & \\
  & = & \int_X \left[ 2 \left| \partial^{\ast_g}_{T_{X, J}} A'_J \right|^2_g 
  \; + \;  \left| \partial^g_{T_{X, J}} A'_J \right|^2_g \right] d V_g .
\end{eqnarray*}
We deduce
\begin{equation}
  \label{Lich-LinZero} \int_X \left[ 2 \left| \partial^{\ast_g}_{T_{X, J}}
  A'_J \right|^2_g  \; + \;  \left| \partial^g_{T_{X, J}} A'_J \right|^2_g  \;
  - 2 \left\langle \tmop{Ric}^{\ast} (g) A_J', A'_J \right\rangle_g \right] d
  V_g = 0 .
\end{equation}
Assume from now on the K\"ahler-Einstein condition $\tmop{Ric} (g) = \lambda
g$, $\lambda = \pm 1, 0$. The identity (\ref{dec-Lich2}) with $\Omega = C d
V_g$ implies in this case
\begin{eqnarray*}
  \mathcal{L}_g A''_J & = & 2 \Delta^{- J}_{T_{X, g}} A_J'',
\end{eqnarray*}
and thus
\begin{eqnarray*}
  \tmop{Ker} \mathcal{L}_g \cap C^{\infty} \left( X, T^{\ast}_{X, - J} \otimes
  T_{X, J} \right) & = & \mathcal{H}_g^{0, 1} \left( T_{X, J} \right) .
\end{eqnarray*}
Let now $A \in \tmop{Ker} \nabla^{\ast}_g$ and observe that for be-degree
reasons holds the decomposition
\begin{eqnarray*}
  0 = \nabla^{\ast}_g A & = & \nabla^{\ast}_{T_{X, g}} A_J' +
  \nabla^{\ast}_{T_{X, g}} A_J''\\
  &  & \\
  & = & \partial^{\ast_g}_{T_{X, J}} A_J' +
  \overline{\partial}^{\ast_g}_{T_{X, J}} A_J'' .
\end{eqnarray*}
Thus if $A \in \tmop{Ker} \nabla^{\ast}_g \cap \tmop{Ker} \mathcal{L}_g$ then
$\overline{\partial}^{\ast_g}_{T_{X, J}} A_J'' = 0$ and thus
$\partial^{\ast_g}_{T_{X, J}} A_J' = 0$ which implies
\begin{eqnarray*}
  \int_X \left[ \left| \partial^g_{T_{X, J}} A'_J \right|^2_g  \; - \; 2
  \left\langle \tmop{Ric}^{\ast} (g) A_J', A'_J \right\rangle_g \right] d V_g
  & = & 0,
\end{eqnarray*}
thanks to (\ref{Lich-LinZero}). We will still denote by $\mathcal{L}_g$ the
analogue operator over $C^{\infty} \left( X, S_{\mathbbm{R}}^2 T^{\ast}_X
\right)$. We infer that if $\lambda \neq 0$ then holds the identity
\begin{equation*}
  \tmop{Ker} \nabla^{\ast}_g \cap \tmop{Ker} \mathcal{L}_g \cap
  \mathbbm{D}^J_g  =  \left\{ v \in C^{\infty} \left( X, S_{\mathbbm{R}}^2
  T^{\ast}_X \right) \mid \hspace{0.25em} v = v_J'', \;v^{\ast}_g \in
  \mathcal{H}_g^{0, 1} \left( T_{X, J} \right) \right\},
\end{equation*}
i.e. there exists an isomorphism
\begin{eqnarray*}
  \tmop{Ker} \nabla^{\ast}_g \cap \tmop{Ker} \mathcal{L}_g \cap
  \mathbbm{D}^J_g & \longrightarrow & \mathcal{H}_g^{0, 1} \left( T_{X, J}
  \right)_{\tmop{sm}}\\
  v & \longmapsto & v^{\ast}_g .
\end{eqnarray*}
But $\mathcal{H}_g^{0, 1} \left( T_{X, J} \right)_{\tmop{sm}}   = 
\mathcal{H}_g^{0, 1} \left( T_{X, J} \right)$, thanks to lemma \ref{KE-Hsym}.
We conclude the following fact.

\begin{lemma}
  \label{harmonicKER}Over any compact non Ricci flat K\"ahler-Einstein
  manifold $\left( X, J, g \right)$ there exists the canonical isomorphism
  \begin{eqnarray*}
    \tmop{Ker} \nabla^{\ast}_g \cap \tmop{Ker} \mathcal{L}_g \cap
    \mathbbm{D}^J_g & \longrightarrow & H^{0, 1} (X, T_{X, J}) \simeq H^1 (X,
    \mathcal{O}(T_{X, J})) \\
    v & \longmapsto & \left\{ v^{\ast}_g \right\} .
  \end{eqnarray*}
\end{lemma}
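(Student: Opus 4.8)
The isomorphism in the statement factors as $v\longmapsto v_g^{\ast}\longmapsto\{v_g^{\ast}\}$, where the second arrow is the classical Dolbeault identification $\mathcal{H}_g^{0,1}(T_{X,J})\simeq H^{0,1}(X,T_{X,J})\simeq H^1(X,\mathcal{O}(T_{X,J}))$ furnished by Hodge theory. So the plan is to prove that the first arrow
\[ \tmop{Ker}\nabla^{\ast}_g\cap\tmop{Ker}\mathcal{L}_g\cap\mathbbm{D}^J_g\;\longrightarrow\;\mathcal{H}_g^{0,1}(T_{X,J}),\qquad v\longmapsto v_g^{\ast}, \]
is a linear isomorphism and then to compose. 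Injectivity is automatic, since $g^{-1}\colon S^2_{\mathbbm{R}}T^{\ast}_X\to\tmop{End}_g(T_X)$ is a bundle isomorphism, so the entire content is the identification of the image.

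First I would split $A:=v_g^{\ast}=A_J'+A_J''$ into its $J$-linear and $J$-anti-linear parts; by (\ref{lin-Lich}) and (\ref{ant-lin-Lich}) each part lies separately in $\tmop{Ker}\mathcal{L}_g$. In the K\"ahler--Einstein case with $\Omega=C\,dV_g$ the curvature formula (\ref{dec-Lich2}), together with $\tmop{Ric}^{\ast}(g)=\lambda\mathbbm{I}$ and $\nabla_gf=0$, reduces $\mathcal{L}_g$ on the anti-linear component to $2\Delta^{-J}_{T_{X,g}}$; hence $A_J''$ is $\overline{\partial}_{T_{X,J}}$-harmonic, so in particular $\overline{\partial}^{\ast_g}_{T_{X,J}}A_J''=0$ and $\overline{\partial}_{T_{X,J}}A_J''=0$. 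Reading the bidegree decomposition $\nabla^{\ast}_gA=\partial^{\ast_g}_{T_{X,J}}A_J'+\overline{\partial}^{\ast_g}_{T_{X,J}}A_J''$, the hypothesis $\nabla^{\ast}_gA=0$ then forces $\partial^{\ast_g}_{T_{X,J}}A_J'=0$. Combining this with the defining constraint $\partial^g_{T_{X,J}}A_J'=0$ of $\mathbbm{D}^J_g$ (see (\ref{Kah-D})) inside the integral identity (\ref{Lich-LinZero}) --- itself distilled from the Bochner--Kodaira--Nakano identity (\ref{BKN-id}) and its corollary (\ref{dec-Lich1}) --- collapses everything to $\lambda\int_X|A_J'|^2_g\,dV_g=0$, whence $A_J'=0$ because $\lambda\neq0$. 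Thus $v=v_J''$ with $v_g^{\ast}\in\mathcal{H}_g^{0,1}(T_{X,J})$. The converse is immediate: if $v_g^{\ast}\in\mathcal{H}_g^{0,1}(T_{X,J})$ is $J$-anti-linear then $\mathcal{L}_gv=2\Delta^{-J}_{T_{X,g}}v_g^{\ast}=0$, $\nabla^{\ast}_gv=\overline{\partial}^{\ast_g}_{T_{X,J}}v_g^{\ast}=0$ by bidegree, and $v\in\mathbbm{D}^J_g$ since $\overline{\partial}_{T_{X,J}}v_g^{\ast}=0$. Hence the image of $v\mapsto v_g^{\ast}$ is precisely $\mathcal{H}_g^{0,1}(T_{X,J})_{\tmop{sm}}$, the $g$-symmetric part of $\mathcal{H}_g^{0,1}(T_{X,J})$.

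The main obstacle --- which is already overcome in the preceding analysis --- is to upgrade $\mathcal{H}_g^{0,1}(T_{X,J})_{\tmop{sm}}$ to the full space $\mathcal{H}_g^{0,1}(T_{X,J})$, i.e.\ to know that on a non Ricci flat K\"ahler--Einstein manifold every $\Omega$-harmonic $T_{X,J}$-valued $(0,1)$-form is automatically $g$-symmetric. This is exactly Lemma \ref{KE-Hsym}, whose proof I would invoke: it rests on the vanishing $\mathcal{R}_g\ast A_{\tmop{as}}=0$ of the curvature action on $J$-anti-invariant $2$-forms (\ref{vanish-curv-anti}) together with the identity (\ref{div-JA}), which force the $g$-anti-symmetric part of such a harmonic form first to be parallel and then to vanish when $\lambda\neq0$. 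Granting this, $v\mapsto v_g^{\ast}$ is a bijection onto all of $\mathcal{H}_g^{0,1}(T_{X,J})$; composing with the Dolbeault isomorphism --- and noting that both maps are built solely from $(J,g)$ --- gives the asserted canonical isomorphism $v\mapsto\{v_g^{\ast}\}$ onto $H^1(X,\mathcal{O}(T_{X,J}))$.
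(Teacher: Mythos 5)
Your proposal is correct and follows essentially the same route as the paper: the splitting $v_g^{\ast}=A_J'+A_J''$, the Bochner--Kodaira--Nakano consequences (\ref{dec-Lich1})--(\ref{dec-Lich2}) and the integral identity (\ref{Lich-LinZero}) to kill $A_J'$ when $\lambda\neq 0$, the identification of the image with $\mathcal{H}_g^{0,1}\left(T_{X,J}\right)_{\tmop{sm}}$, and the appeal to lemma \ref{KE-Hsym} followed by the Dolbeault isomorphism. No gaps.
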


This result was proved by \cite{D-W-W2} in the negative K\"ahler-Einstein case
$\tmop{Ric} (g) = - g$.

\subsection{Polarized deformations of Fano manifolds}

In this subsection we review a few basic facts on deformation theory which
clarifies the Fano set up in the paper. In particular we wish the readers would avoid the frequent inaccuracies 
we found in the application of this theory to K\"ahler geometry.

\subsubsection{The Maurer-Cartan equation}

Let $(V, J_0)$ be a complex vector space of dimension $n$. We remind that the
data of a complex structure $J$ over $V$ is equivalent with a $n$-dimensional
complex subspace data $\Gamma \subset \mathbbm{C}V \assign V
\otimes_{\mathbbm{R}} \mathbbm{C}$ such that $\Gamma \cap \overline{\Gamma} =
\left\{ 0 \right\}$. A complex structure $J$ over $V$ is called
$J_0$-compatible if the projection map
\begin{eqnarray*}
  \pi^{0, 1}_{J_0} : V^{0, 1}_J & \longrightarrow & V^{0, 1}_{J_0},
\end{eqnarray*}
is surjective, i.e. a $\mathbbm{C}$-isomorphism. This is equivalent to the
condition $V_J^{0, 1} \cap V_{J_0}^{1, 0} = \left\{ 0 \right\}$, which in its
turn is equivalent to the existence of a $\mathbbm{C}$-linear map $\theta :
V^{0, 1}_{J_0} \longrightarrow V^{1, 0}_{J_0}$ such that
\begin{eqnarray*}
  V^{0, 1}_J & = & (\mathbbm{I}+ \theta) V^{0, 1}_{J_0} .
\end{eqnarray*}
If we set $\mu \assign (\theta + \bar{\theta})_{\mid V} \in \tmop{End}_{- J_0}
(V)$ then the condition $V^{0, 1}_J \cap \overline{V^{0, 1}_J} = \left\{ 0
\right\}$ is equivalent to say $\mathbbm{I}+ \mu \in \tmop{GL}_{\mathbbm{R}}
(V)$. Notice that we can obtain $\theta$ by the formula $\theta =
\mu_{\mathbbm{C}} \cdot \pi^{0, 1}_{J_0}$, with $\mu_{\mathbbm{C}} \in
\tmop{End}_{\mathbbm{C}} (\mathbbm{C}V)$ the natural complexification of
$\mu$. If we denote by $\mathcal{J} (V, J_0)$ the set of $J_0$-compatible
complex structures over $V$ and if we set
\begin{eqnarray*}
  C (V, J_0) & \assign & \left\{ \mu \in \tmop{End}_{- J_0} (V) \mid
  \mathbbm{I}+ \mu \in \tmop{GL}_{\mathbbm{R}} (V) \right\},
\end{eqnarray*}
we infer the existence of a bijection, called the Caley transform (see \cite{Gau})
\begin{eqnarray*}
  \chi : C (V, J_0) & \longrightarrow & \mathcal{J} (V, J_0) \\
  &  & \\
  \mu & \longleftrightarrow & J \assign (\mathbbm{I}+ \mu) J_0  (\mathbbm{I}+
  \mu)^{- 1}\\
  &  & \\
  \mu : = (J_0 + J)^{- 1} (J_0 - J) & \longleftrightarrow & J .
\end{eqnarray*}
Notice indeed that $\mathcal{J} (V, J_0)$ is the sub-set of the complex
structures such that $J_0 + J \in \tmop{GL}_{\mathbbm{R}} (V)$. We observe
that for any {\tmem{$\mu \in C (V, J_0)$}} as above $\mathbbm{I}- \mu \in
\tmop{GL}_{\mathbbm{R}} (V)$. Indeed
\begin{eqnarray*}
  - J_0 J  =  (\mathbbm{I}- \mu)  (\mathbbm{I}+ \mu)^{- 1} .
\end{eqnarray*}
Thus $\mu \in \tmop{End}_{- J_0} (V)$ satisfies $\mathbbm{I}+ \mu \in
\tmop{GL}_{\mathbbm{R}} (V)$ if and only if 
$$
(\mathbbm{I}- \mu^2) =
(\mathbbm{I}- \mu) (\mathbbm{I}+ \mu) \in \tmop{GL}_{J_0} (V)
$$ This last
condition is equivalent with
\begin{eqnarray*}
  \pi_{J_0}^{1, 0} \cdot (\mathbbm{I}- \mu^2)  =  (\mathbbm{I}_{V^{1,
  0}_{J_0}} - \theta \overline{\theta}_{}) \in \tmop{GL}_{\mathbbm{C}} (V^{1,
  0}_{J_0}) .
\end{eqnarray*}
We assume from now on that $(X, J_0)$ is a compact complex manifold and let
$\mathcal{J} (X, J_0)$ be the set of $J_0$-compatible smooth almost complex
structures. For any $J \in \mathcal{J} (X, J_0)$ let
\begin{eqnarray*}
  \theta  \equiv  \theta_J \in C^{\infty} (X, \Lambda^{0, 1}_{J_0}
  T^{\ast}_X \otimes_{\mathbbm{C}} T_{X, J_0}^{1, 0}), \quad(\mathbbm{I}_{T^{1,
  0}_{X, J_0}} - \theta \overline{\theta}_{}) \in \tmop{GL}_{\mathbbm{C}}
  (T_{X, J_0}^{1, 0}),
\end{eqnarray*}
be the corresponding inverse Caley transform. We show that the subset
$\mathcal{J}_{\tmop{int}} (X, J_0)$ of integrable almost complex structures is
given by the Maurer-Cartan equation
\begin{equation}
  \label{MCart-Eq}  \overline{\partial}_{T^{1, 0}_{X, J_0}} \theta +
  \frac{1}{2}  \left[ \theta, \theta \right] = 0,
\end{equation}
where for any $\alpha, \beta \in C^{\infty} (X, \Lambda^{0, \bullet}_{J_0}
T^{\ast}_X \otimes_{\mathbbm{C}} T_{X, J_0}^{1, 0})$ we define the exterior
differential Lie product
\begin{eqnarray*}
  \left[ \alpha, \beta \right] & \in & C^{\infty} (X, \Lambda^{0,
  \bullet}_{J_0} T^{\ast}_X \otimes_{\mathbbm{C}} T_{X, J_0}^{1, 0}),
\end{eqnarray*}
of degree $d = \deg \alpha + \deg \beta$ by the formula
\begin{eqnarray*}
  \left[ \alpha, \beta \right]  (\xi) & : = & \sum_{|I| = \deg \alpha}
  \varepsilon_I  \left[ \alpha (\xi_I), \beta (\xi_{\complement I}) \right],
\end{eqnarray*}
for all $\xi \in \bar{\mathcal{O}} (T_{X, J_0}^{0, 1})^{\times d}$. Notice
that this formula defines a priori only an element
\begin{eqnarray*}
  \left[ \alpha, \beta \right] & \in & \tmop{Alt}^d_{\bar{\mathcal{O}}} (
  \bar{\mathcal{O}} (T_{X, J_0}^{0, 1}) ; C^{\infty} (T_{X, J_0}^{1, 0})) .
\end{eqnarray*}
However we can define pointwise the section $\left[ \alpha, \beta \right]$ as
follows. For any $v \in (T_{X, J_0, x}^{0, 1})^{\times d}$
\begin{eqnarray*}
  \left[ \alpha, \beta \right] (v) & \assign & \left[ \alpha, \beta \right]
  (\xi)_{\mid x},
\end{eqnarray*}
with $\xi \in \bar{\mathcal{O}} (T_{X, J_0}^{0, 1})^{\times d}$ such that
$\xi_x = v$. This is well defined by the $\bar{\mathcal{O}}$-linearity of
$\left[ \alpha, \beta \right]$. Indeed the coefficients of $\xi$ with respect
to the local frame $( \bar{\zeta}_k)^n_{k = 1} \subset \bar{\mathcal{O}} (U,
T_{X, J_0}^{0, 1})$, with $\zeta_j \assign \frac{\partial}{\partial z_j}$, and
$J_0$-holomorphic coordinates $(z_1, \ldots, z_n)$, are $J_0$-anti-holomorphic
functions which value at the point $x$ is uniquely determined by $v$. The
section $\left[ \alpha, \beta \right]$ is smooth since its coefficients with
respect to the frame $( \bar{\zeta}_k)^n_{k = 1}$ are smooth functions.

Notice now that $(\mathbbm{I}+ \theta)  ( \bar{\zeta}_k)$, $k = 1, \ldots,
n$, is a local frame of the bundle $T_{X, J}^{0, 1}$ over an open set $U$.
Then the integrability of $J$ is equivalent to the condition
\begin{equation}
  \label{Integ-CX}  \left[ (\mathbbm{I}+ \theta) ( \bar{\zeta}_k),
  (\mathbbm{I}+ \theta) ( \bar{\zeta}_l) \right] \in C^{\infty} (U, T_{X,
  J}^{0, 1}),
\end{equation}
since the torsion form $\tau_J \in C^{\infty} (X, \Lambda^{0, 2}_{J_0}
T^{\ast}_X \otimes_{\mathbbm{C}} T_{X, J_0}^{1, 0})$ of $J$ satisfies
\begin{eqnarray*}
  \tau_J \left( (\mathbbm{I}+ \theta) ( \bar{\zeta}_k), (\mathbbm{I}+ \theta)
  ( \bar{\zeta}_l) \right) & = & \left[ (\mathbbm{I}+ \theta) (
  \bar{\zeta}_k), (\mathbbm{I}+ \theta) ( \bar{\zeta}_l) \right]^{1, 0}_J .
\end{eqnarray*}
We observe also the identities
\begin{eqnarray*}
  \left[ (\mathbbm{I}+ \theta) ( \bar{\zeta}_k), (\mathbbm{I}+ \theta) (
  \bar{\zeta}_l) \right] & = & \left[ \bar{\zeta}_k, \theta ( \bar{\zeta}_l)
  \right] + \left[ \theta ( \bar{\zeta}_k), \bar{\zeta}_l \right] + \left[
  \theta ( \bar{\zeta}_k), \theta ( \bar{\zeta}_l) \right]\\
  &  & \\
  & = & \left[ \bar{\zeta}_k, \theta ( \bar{\zeta}_l) \right]_{J_0}^{1, 0} -
  \left[ \bar{\zeta}_l, \theta ( \bar{\zeta}_k) \right]_{J_0}^{1, 0} +
  \frac{1}{2}  \left[ \theta, \theta \right]  ( \bar{\zeta}_k, \bar{\zeta}_l)
  \\
  &  & \\
  & = & \left( \overline{\partial}_{T^{1, 0}_{X, J_0}} \theta + \frac{1}{2} 
  \left[ \theta, \theta \right] \right) ( \bar{\zeta}_k, \bar{\zeta}_l) \in
  C^{\infty} (U, T_{X, J_0}^{1, 0}) .
\end{eqnarray*}
We have $T_{X, J}^{0, 1} \cap T_{X, J_0}^{1, 0} = 0_X$ by the
$J_0$-compatibility of $J$. We infer that if (\ref{Integ-CX}) holds then also
(\ref{MCart-Eq}) holds true and
\begin{eqnarray*}
  \left[ (\mathbbm{I}+ \theta) ( \bar{\zeta}_k), (\mathbbm{I}+ \theta) (
  \bar{\zeta}_l) \right] & = & 0 .
\end{eqnarray*}
On the other hand if (\ref{MCart-Eq}) is satisfied then the previous identity
is satisfied and thus (\ref{Integ-CX}) holds true.

\begin{remark}
  For any $\alpha, \beta \in C^{\infty} (X, \Lambda^{0, \bullet}_{J_0}
  T^{\ast}_X \otimes_{\mathbbm{C}} T_{X, J_0})$ we define the exterior
  differential Lie product
  \begin{eqnarray*}
    \left[ \alpha, \beta \right] & \in & C^{\infty} (X, \Lambda^{0,
    \bullet}_{J_0} T^{\ast}_X \otimes_{\mathbbm{C}} T_{X, J_0}),
  \end{eqnarray*}
  of degree $d = \deg \alpha + \deg \beta$ by the formula
  \begin{eqnarray*}
    \left[ \alpha, \beta \right] & : = & \left[ \pi_{_J}^{1, 0} \cdot
    \alpha_{_{\mathbbm{C}}}, \pi_{_J}^{1, 0} \cdot \beta_{_{\mathbbm{C}}}
    \right] + \overline{\left[ \pi_{_J}^{1, 0} \cdot \alpha_{_{\mathbbm{C}}},
    \pi_{_J}^{1, 0} \cdot \beta_{_{\mathbbm{C}}} \right]} .
  \end{eqnarray*}
  Then the Maurer-Cartan equation (\ref{MCart-Eq}) can be rewritten in the
  equivalent form
  \begin{eqnarray*}
    \overline{\partial}_{T_{X, J_0}}  \, \mu + \frac{1}{2}  \left[ \mu, \mu
    \right] & = & 0,
  \end{eqnarray*}
  since
  \begin{eqnarray*}
    \overline{\partial}_{T_{X, J_0}}  \, \mu & = & \overline{\partial}_{T^{1,
    0}_{X, J_0}} \theta \noplus + \overline{ \overline{\partial}_{T^{1, 0}_{X,
    J_0}} \theta},\\
    &  & \\
    \left[ \mu, \mu \right] & = & \left[ \theta, \theta \right] +
    \overline{\left[ \theta, \theta \right]} .
  \end{eqnarray*}
\end{remark}

Let now $B \subset \mathbbm{C}^p$ be the unitary open ball and observe that,
by a refinement of Ehresmann theorem for any proper holomorphic submersion
$\pi : \mathfrak{X} \longrightarrow B$ of a complex manifold $\mathfrak{X}$
onto $B$ with central fiber $(X, J_0) = \pi^{- 1} (0)$ there exists a smooth
map $\varphi : \mathfrak{X} \longrightarrow X$ such that the map
\begin{eqnarray*}
  (\varphi, \pi) : \mathfrak{X} & \longrightarrow & X \times B,
\end{eqnarray*}
is a diffeomorphism with $\varphi_{\mid_X} =\mathbbm{I}_X$ and with
$\varphi^{- 1} (x) \subset \mathfrak{X}$ complex sub-variety for all $x \in X$.

Let now $\theta \assign (\theta_t)_{t \in B} \subset C^{\infty} (X,
\Lambda^{0, 1}_{J_0} T^{\ast}_X \otimes_{\mathbbm{C}} T_{X, J_0}^{1, 0})$ with
$\theta_0 = 0$ and
\begin{eqnarray*}
  \det (\mathbbm{I}_{T^{1, 0}_{X, J_0}} - \theta_t \overline{\theta}_t) &
  \neq & 0,
\end{eqnarray*}
be a smooth family of $J_0$-compatible complex structures. We observe that the
almost complex manifold
\begin{eqnarray*}
  \mathfrak{X} \hspace{1.2em} \equiv \hspace{1.2em} \left( \mathfrak{X},
  \theta \right) & \assign & \bigsqcup_{t \in B} (X, \theta_t),
\end{eqnarray*}
is a complex one if and only if $\theta_t$ satisfies the Maurer-Cartan
equation (\ref{MCart-Eq}) for all $t \in B$ and the map
\begin{eqnarray*}
  t \in B & \longmapsto & \theta_t (x) \in \Lambda^{0, 1}_{J_0} T^{\ast}_{X,
  x} \otimes_{\mathbbm{C}} T_{X, J_0, x}^{1, 0},
\end{eqnarray*}
is holomorphic for all $x \in X$. Indeed the distribution $T_{\mathfrak{X},
\theta}^{0, 1}$ is integrable if and only if its local generators
$\bar{\tau}_r \assign \frac{\partial}{\partial \overline{t}_r}$ , $r = 1,
\ldots, p$, $(\mathbbm{I}+ \theta_t)  ( \bar{\zeta}_k)$, $k = 1, \ldots, n$,
satisfy the conditions
\begin{equation}
  \label{Part-InexCx}  \left[ \bar{\tau}_r, (\mathbbm{I}+ \theta_t) (
  \bar{\zeta}_k) \right] \in C^{\infty} (U, T_{X, \theta_t}^{0, 1}),
\end{equation}
and
\begin{equation}
  \label{Integ-CXt}  \left[ (\mathbbm{I}+ \theta_t) ( \bar{\zeta}_k),
  (\mathbbm{I}+ \theta_t) ( \bar{\zeta}_l) \right] \in C^{\infty} (U, T_{X,
  \theta_t}^{0, 1}) .
\end{equation}
The latter is equivalent with the Maurer-Cartan equation (\ref{MCart-Eq}). Let
$\theta_t = \theta^{k, l}_t  \bar{\zeta}^{\ast}_k \otimes \zeta_l$ be the
local expression of $\theta_t$. Then the identity
\begin{eqnarray*}
  \left[ \bar{\tau}_r, (\mathbbm{I}+ \theta_t) ( \bar{\zeta}_k) \right] & = &
  \bar{\tau}_r . \theta^{k, l}_t \zeta_l \in C^{\infty} (U, T_{X, J_0}^{0,
  1}),
\end{eqnarray*}
combined with the property $T_{X, \theta_t}^{0, 1} \cap T_{X, J_0}^{1, 0} =
0_X$, shows that (\ref{Part-InexCx}) holds true if and only if the map $t
\longmapsto \theta_t$ is holomorphic.

For any $p \in X$ a coordinate chart of $\mathfrak{X}$ in a open neighborhood
$U_p \times B$ of $\left( p, 0 \right)$ is given by a smooth function $f : U_p
\times B \longrightarrow \mathbbm{C}^n \times \mathbbm{C}^p$ such that
\[  \left\{ \begin{array}{l}
     \overline{\partial}_{_{J_0}} f \hspace{1.2em} + \hspace{1.2em}
     \partial_{_{J_0}} f \cdot \theta_t  \; = 0,\\
     \\
     \overline{\partial}_{_B} f \hspace{1.2em} = \hspace{1.2em} 0\\
     \\
     \det \left( d f \right)  \neq 0 .
   \end{array} \right. \]
In order to produce such family $\theta$ we need to remind a few basic facts
about Hodge theory.

\subsubsection{Basic facts about Hodge theory and
$\overline{\partial}$-equations}

Let $\omega$ be a hermitian metric over $X$ and let $(E,
\overline{\partial}_E, h)$ be a hermitian holomorphic vector bundle over it.
We define the anti-holomorphic Hodge Laplacian
\begin{eqnarray*}
  \Delta''_E & \assign & \overline{\partial}_E \overline{\partial}^{\ast}_E +
  \overline{\partial}^{\ast}_E \overline{\partial}_E,
\end{eqnarray*}
acting on the sections of $\Lambda_J^{p, q} T^{\ast}_X \otimes_{\mathbbm{C}}
E$. Let $\mathcal{E}^{p, q} (E) \assign C^{\infty} (X, \Lambda_J^{p, q}
T^{\ast}_X \otimes_{\mathbbm{C}} E)$ and set
\begin{eqnarray*}
  \mathcal{H}^{p, q} (E) & \assign & \tmop{Ker} \Delta''_E \cap
  \mathcal{E}^{p, q} \left( E \right) .
\end{eqnarray*}
We remind the $L^2$-Hodge decomposition
\begin{eqnarray*}
  \mathcal{E}^{p, q} (E) & = & \mathcal{H}^{p, q} (E) \oplus
  \overline{\partial}_E \mathcal{E}^{p, q - 1} (E) \oplus
  \overline{\partial}^{\ast}_E \mathcal{E}^{p, q + 1} (E) .
\end{eqnarray*}
We observe that if there exists two subspaces $L, V \subset C^{\infty} (X, E)$
such that the $L^2$-decomposition
\begin{eqnarray*}
  C^{\infty} (X, E) & = & L \oplus V,
\end{eqnarray*}
holds then $L$ and $V$ are closed subspaces of $C^{\infty} (X, E)$. Indeed $L =
V^{\bot}$ and $V = L^{\bot}$ by the $L^2$-decomposition. The same
consideration holds for the Sobolev spaces $W^k (X, E)$. Thus the $L^2$-Hodge
decomposition implies that the spaces $\overline{\partial}_E \mathcal{E}^{p, q
- 1} (E)$ and $\overline{\partial}^{\ast}_E \mathcal{E}^{p, q + 1} (E)$ are
closed in the smooth topology. We infer the $L^2$-decomposition
\begin{eqnarray*}
  \mathcal{E}^{p, q} (E) & = & \left[ \right. \tmop{Ker} \overline{\partial}_E
  \cap \mathcal{E}^{p, q} (E) \left] \right. \oplus
  \overline{\partial}^{\ast}_E \mathcal{E}^{p, q + 1} (E),
\end{eqnarray*}
and thus
\begin{eqnarray*}
  \tmop{Ker} \overline{\partial}_E \cap \mathcal{E}^{p, q} (E) & = &
  \mathcal{H}^{p, q} (E) \oplus \overline{\partial}_E \mathcal{E}^{p, q - 1}
  (E) .
\end{eqnarray*}
An other way to see this decomposition is the following. Let
\begin{eqnarray*}
  H_E : \mathcal{E}^{p, q} (E) & \longrightarrow & \mathcal{H}^{p, q} (E),
\end{eqnarray*}
be the $L^2$-projection operator over $\mathcal{H}^{p, q} (E)$. For any
$\alpha \in \mathcal{E}^{p, q} (E)$ there exists $\beta \in \mathcal{E}^{p, q
- 1} (E)$ and $\gamma \in \mathcal{E}^{p, q + 1} (E)$ such that
\begin{eqnarray*}
  \alpha & = & H_E \alpha + \overline{\partial}_E \beta +
  \overline{\partial}^{\ast}_E \gamma .
\end{eqnarray*}
Now if $\overline{\partial}_E \alpha = 0$ then $\overline{\partial}_E
\overline{\partial}^{\ast}_E \gamma = 0$, i.e $\overline{\partial}^{\ast}_E
\gamma = 0$. Let 
$$
W_k^{p, q} (E) \assign W^k (X, \Lambda_J^{p, q} T^{\ast}_X
\otimes_{\mathbbm{C}} E).
$$
We remind that the Green operator
\begin{eqnarray*}
  G_E : W_k^{p, q} (E) & \longrightarrow & \Delta''_E W_{k + 4}^{p, q} (E),
\end{eqnarray*}
is defined by the identity $\mathbbm{I}= H_E + \Delta''_E G_E$. The latter
implies $\tmop{Ker} G_E = \tmop{Ker} \Delta''_E$ and the $L^2$-orthogonal
decomposition
\begin{eqnarray*}
  \alpha & = & H_E \alpha + \overline{\partial}_E 
  \overline{\partial}^{\ast}_E G_E \alpha + \overline{\partial}^{\ast}_E 
  \overline{\partial}_E G_E \alpha .
\end{eqnarray*}
We show now the identity $\overline{\partial}_E G_E = G_E
\overline{\partial}_E$. Indeed $\overline{\partial}_E$-differentiating the
identity defining $G_E$ we infer
\begin{eqnarray*}
  \overline{\partial}_E \alpha & = & \overline{\partial}_E \Delta''_E G_E
  \alpha = \Delta''_E  \overline{\partial}_E G_E \alpha .
\end{eqnarray*}
Applying the same identity to $\overline{\partial}_E \alpha$ we obtain
$\overline{\partial}_E \alpha = \Delta''_E G_E \overline{\partial}_E \alpha$,
since $H_E \overline{\partial}_E = 0$ by orthogonality. Thus
\begin{eqnarray*}
  \Delta''_E  \left( \overline{\partial}_E G_E \alpha - G_E
  \overline{\partial}_E \alpha \right) & = & 0 .
\end{eqnarray*}
The fact that by definition $G_E W_k^{p, q} (E) = \Delta''_E W_{k + 4}^{p, q}
(E)$ implies the existence of $\beta \in W_{k + 4}^{p, q} (E)$ and $\gamma \in
W_{K + 3}^{p, q + 1} (E)$ such that
\begin{eqnarray*}
  G_E \alpha & = & \Delta''_E \beta,\\
  &  & \\
  G_E \overline{\partial}_E \alpha & = & \Delta''_E \gamma .
\end{eqnarray*}
We deduce
\begin{eqnarray*}
  \overline{\partial}_E G_E \alpha - G_E \overline{\partial}_E \alpha  = 
  \Delta''_E  \left( \overline{\partial}_E \beta - \gamma \right) = 0,
\end{eqnarray*}
thanks to the orthogonality of the Kernel and image of $\Delta''_E$.

We observe finally that the equation $\overline{\partial}_E \alpha = \beta$
admits a solution if and only if $\overline{\partial}_E \beta = 0$ and $H_E
\beta = 0$. In this case the unique solution of minimal $L^2$-norm is given by
$\alpha = \overline{\partial}^{\ast}_E G_E \beta$.

\subsubsection{The equation of holomorphic maps}

For any smooth map $f : \left( X, J_X \right) \longrightarrow \left( Y, J_Y
\right)$ we define the operators
\begin{eqnarray*}
  2 \partial_{_{J_X, J_Y}} f & \assign & d f  -  \left( J_Y \circ f
  \right) \cdot d f \cdot J_X  \in  C^{\infty}
  \left( X, \Lambda^{1, 0}_{_{J_X}} T^{\ast}_X \otimes_{_{\mathbbm{C}}}
  f^{\ast} T_{Y, J_Y} \right) ,\\
  &  & \\
  2 \overline{\partial}_{_{J_X, J_Y}} f & \assign & d f  +  \left( J_Y
  \circ f \right) \cdot d f \cdot J_X  \in 
  C^{\infty} \left( X, \Lambda^{0, 1}_{_{J_X}} T^{\ast}_X
  \otimes_{_{\mathbbm{C}}} f^{\ast} T_{Y, J_Y} \right) ,
\end{eqnarray*}
and we notice the elementary identities
\begin{eqnarray*}
  \partial_{_{J_X, J_Y}} f & = & \pi_{_Y}^{1, 0} \cdot d f \cdot \pi^{1,
  0}_{_X} +  \pi_{_Y}^{0, 1} \cdot d f \cdot
  \pi^{0, 1}_{_X} ,\\
  &  & \\
  \overline{\partial}_{_{J_X, J_Y}} f & = & \pi_{_Y}^{1, 0} \cdot d f \cdot
  \pi^{0, 1}_{_X}  +  \pi_{_Y}^{0, 1} \cdot d f
  \cdot \pi^{1, 0}_{_X}  .
\end{eqnarray*}
The map $f$ is called holomorphic if $\left( J_Y \circ f \right) \cdot d f = d
f \cdot J_X$. We deduce that the map $f$ is holomorphic if and only if
$\overline{\partial}_{_{J_X, J_Y}} f \; = \; 0$, thus if and only if
\begin{eqnarray*}
  \pi_{_Y}^{1, 0} \cdot d f \cdot \pi^{0, 1}_{_X} & = & 0  .
\end{eqnarray*}
We infer that a map $f : \left( X, J_{\varphi} \right) \longrightarrow \left(
Y, J_Y \right)$ is holomorphic if and only if
\begin{eqnarray*}
  \pi_{_Y}^{1, 0} \cdot d f_{\mid T_{X, J_{\varphi}}^{0, 1}} & = & 0
   .
\end{eqnarray*}
The identity
\begin{eqnarray*}
  T_{X, J_{\varphi}}^{0, 1} & = & \left( \pi_{_{J_0}}^{0, 1}  +
   \varphi \right) \mathbbm{C}T_X ,
\end{eqnarray*}
implies that $f : \left( X, J_{\varphi} \right) \longrightarrow \left( Y, J_Y
\right)$ is holomorphic if and only if
\begin{eqnarray*}
  \pi_{_Y}^{1, 0} \cdot d f \cdot \left( \pi_{_{J_0}}^{0, 1}  +
   \varphi \right) & = & 0  .
\end{eqnarray*}
This last condition rewrites as
\begin{eqnarray*}
  0 & = & \overline{\partial}_{_{J_0, J_Y}} f \cdot \pi_{_{J_0}}^{0, 1}
   +  \pi_{_Y}^{1, 0} \cdot d f \cdot
  \pi_{_{J_0}}^{1, 0} \cdot \varphi\\
  &  & \\
  & = & \overline{\partial}_{_{J_0, J_Y}} f \cdot \pi^{0, 1}_{_{J_0}}
  + \partial_{_{J_0, J_Y}} f \cdot \varphi.
\end{eqnarray*}
We explicit the latter condition in the case of a smooth map\\
$f : \left( X,
J_{\varphi} \right) \longrightarrow \left( X, J_{\theta} \right)$. Indeed
\begin{eqnarray*}
  0 & = & 2 \overline{\partial}_{_{J_0, J_{\theta}}} f \cdot \pi^{0,
  1}_{_{J_0}} +  2 \; \partial_{_{J_0,
  J_{\theta}}} f \cdot \varphi\\
  &  & \\
  & = & [\mathbbm{I} -  i \, (J_{\theta} \circ
  f)] \cdot (d f \cdot \pi^{0, 1}_{_{J_0}}  +  d f
  \cdot \varphi)\\
  &  & \\
  & = & [\mathbbm{I}  -  i \, (J_{\theta} \circ
  f)] \cdot \pi_{_{J_0}}^{1, 0} \cdot (d f \cdot \pi^{0, 1}_{_{J_0}}
   +  d f \cdot \varphi)\\
  &  & \\
  & + & [\mathbbm{I}  - i \, (J_{\theta} \circ
  f)] \cdot \pi_{_{J_0}}^{0, 1} \cdot (d f \cdot \pi^{0, 1}_{_{J_0}}
   +  d f \cdot \varphi)\\
  &  & \\
  & = & [\mathbbm{I}  - i \, (J_{\theta} \circ
  f)] \cdot \pi_{_{J_0}}^{1, 0} \cdot ( \overline{\partial}_{_{J_0}} f
   +  \partial_{_{J_0}} f \cdot \varphi)\\
  &  & \\
  & + & [\mathbbm{I}  -  i \, (J_{\theta} \circ
  f)] \cdot \pi_{_{J_0}}^{0, 1} \cdot (\partial_{_{J_0}} f  + 
  \overline{\partial}_{_{J_0}} f \cdot \varphi) \hspace{1.2em} .
\end{eqnarray*}
We explicit at this point the expression of $J_{\theta}$. For this purpose
let $\mu \; : = \; \theta \; + \overline{\theta}$ and decompose the identity
\begin{eqnarray*}
  J_{\theta} & : = & J_0  \left( \mathbbm{I}  - 
  \mu \right)  \left( \mathbbm{I} +  \mu
  \right)^{- 1}\\
  &  & \\
  & = & J_0  \left( \mathbbm{I}  - \mu \right)^2
  \left( \mathbbm{I}  -  \mu^2 \right)^{- 1}\\
  &  & \\
  & = & J_0  \left( \mathbbm{I}  -  2 \mu
   + \mu^2 \right)  \left( \mathbbm{I}
   -  \mu^2 \right)^{- 1}\\
  &  & \\
  & = & J_0 \left( \mathbbm{I}  -  2 \theta
   - 2 \overline{\theta} +
   \theta \overline{\theta}  + 
  \overline{\theta} \theta \right)  \left( \mathbbm{I} -
  \theta \overline{\theta}  - 
  \overline{\theta} \theta \right)^{- 1} .
\end{eqnarray*}
Decomposing in types we infer
\begin{eqnarray*}
  J_{\theta} & = & i \left( \mathbbm{I}_{1, 0} + 
  \theta \overline{\theta} \right) (\mathbbm{I}_{1, 0}  -
   \theta \overline{\theta})^{- 1} +
   2 i \overline{\theta} (\mathbbm{I}_{1, 0}  -
   \theta \overline{\theta})^{- 1}\\
  &  & \\
  & - & 2 i \theta (\mathbbm{I}_{0, 1}  -  \overline{\theta}
  \theta)^{- 1}  -  i (\mathbbm{I}_{0, 1}
   +  \overline{\theta} \theta) (\mathbbm{I}_{0, 1}
   -  \overline{\theta} \theta)^{- 1} .
\end{eqnarray*}
Let $A \assign \theta \overline{\theta}$. Using the trivial identity 
$$
\left(
\mathbbm{I} \; + \; A \right) \left( \mathbbm{I} \; - \; A \right)^{- 1} = \;
\mathbbm{I} \; + \; 2 A \left( \mathbbm{I} \; - \; A \right)^{- 1},
$$ 
we
conclude the expression
\begin{eqnarray*}
  J_{\theta} & = & \underbrace{i\mathbbm{I}_{1, 0}  +
   2 i \theta \overline{\theta} (\mathbbm{I}_{1, 0}
   -  \theta \overline{\theta})^{- 1}}_{_{\in
  \mathcal{E}^{1, 0} \left( T_{X, J_0}^{1, 0} \right)}}   +
  \underbrace{2 i \overline{\theta} (\mathbbm{I}_{1, 0}
   -  \theta \overline{\theta})^{- 1}}_{_{\in
  \mathcal{E}^{1, 0} \left( T_{X, J_0}^{0, 1} \right)}} \\
  &  & \\
  & - & \underbrace{2 i \theta (\mathbbm{I}_{0, 1} -
  \overline{\theta} \theta)^{- 1}}_{_{\in \mathcal{E}^{0, 1} \left( T_{X,
  J_0}^{1, 0} \right)}}   - 
  \underbrace{i\mathbbm{I}_{0, 1}  -  2 i
  \overline{\theta} \theta (\mathbbm{I}_{0, 1}  - 
  \overline{\theta} \theta)^{- 1}}_{_{\in \mathcal{E}^{0, 1} \left( T_{X,
  J_0}^{0, 1} \right)}}   .
\end{eqnarray*}
For notation simplicity we identify $J_{\theta} \equiv J_{\theta} \circ f$ and
thus $\theta \equiv \theta \circ f$. Using the previous expression we infer
the equalities
\begin{eqnarray*}
  \frac{1}{2} \; [\mathbbm{I} -  i \, (J_{\theta}
  \circ f)] \cdot \pi^{1, 0}_{_{J_0}} & = & \mathbbm{I}_{1, 0} 
  +  \theta \overline{\theta} (\mathbbm{I}_{1, 0} 
  - \theta \overline{\theta})^{- 1}  +
   \overline{\theta} (\mathbbm{I}_{1, 0}  -
   \theta \overline{\theta})^{- 1} \\
  &  & \\
  & = & (\mathbbm{I}_{1, 0}  -  \theta
  \overline{\theta})^{- 1}  +  \overline{\theta}
  (\mathbbm{I}_{1, 0}  - \theta
  \overline{\theta})^{- 1} ,\\
  &  & \\
  \frac{1}{2} \; [\mathbbm{I} -  i \, (J_{\theta}
  \circ f)] \cdot \pi^{0, 1}_{_{J_0}} & = & -  \theta
  (\mathbbm{I}_{0, 1} - \overline{\theta} \theta)^{- 1}
   -  \overline{\theta} \theta (\mathbbm{I}_{0, 1}
   - \overline{\theta} \theta)^{- 1} .
\end{eqnarray*}
The second equality follows from the trivial identity 
$$
\mathbbm{I} \; + \; A
\left( \mathbbm{I} \; - \; A \right)^{- 1} \; = \; \left( \mathbbm{I} \; - \;
A \right)^{- 1}.
$$
We deduce that the holomorphy condition for $f$ writes in
the form
\begin{eqnarray*}
  0 & = & \overline{\partial}_{_{J_0, J_{\theta}}} f \cdot \pi^{0, 1}_{_{J_0}}
  +  \partial_{_{J_0, J_{\theta}}} f \cdot
  \varphi\\
  &  & \\
  & = & (\mathbbm{I}_{1, 0} -  \theta
  \overline{\theta})^{- 1} \cdot ( \overline{\partial}_{_{J_0}} f
   +  \partial_{_{J_0}} f \cdot \varphi)
   - \theta (\mathbbm{I}_{0, 1}  -
  \overline{\theta} \theta)^{- 1} \cdot (\partial_{_{J_0}} f
   +  \overline{\partial}_{_{J_0}} f \cdot \varphi)\\
  &  & \\
  & + & \overline{\theta} (\mathbbm{I}_{1, 0}  - 
  \theta \overline{\theta})^{- 1} \cdot ( \overline{\partial}_{_{J_0}} f
  +  \partial_{_{J_0}} f \cdot \varphi)
   - \; \overline{\theta} \theta (\mathbbm{I}_{0, 1}
   - \overline{\theta} \theta)^{- 1} \cdot
  (\partial_{_{J_0}} f  + \overline{\partial}_{_{J_0}} f
  \cdot \varphi) .
\end{eqnarray*}
The fact that the second line is composed by elements in $\mathcal{E}^{0, 1}
\left( T_{X, J_0}^{1, 0} \right)$ and the third by elements in
$\mathcal{E}^{0, 1} \left( T_{X, J_0}^{0, 1} \right)$ implies that the
holomorphy condition for $f$ is equivalent to the equations
\begin{eqnarray*}
  (\mathbbm{I}_{1, 0}  -  \theta
  \overline{\theta})^{- 1} \cdot ( \overline{\partial}_{_{J_0}} f
   +  \partial_{_{J_0}} f \cdot \varphi) & = &
  \theta (\mathbbm{I}_{0, 1} \hspace{1.2em} -  \overline{\theta}
  \theta)^{- 1} \cdot (\partial_{_{J_0}} f  + 
  \overline{\partial}_{_{J_0}} f \cdot \varphi),\\
  &  & \\
  \overline{\theta} (\mathbbm{I}_{1, 0} -  \theta
  \overline{\theta})^{- 1} \cdot ( \overline{\partial}_{_{J_0}} f
  + \partial_{_{J_0}} f \cdot \varphi) & = &
  \overline{\theta} \theta (\mathbbm{I}_{0, 1} - 
  \overline{\theta} \theta)^{- 1} \cdot (\partial_{_{J_0}} f  +
   \overline{\partial}_{_{J_0}} f \cdot \varphi) .
\end{eqnarray*}
But the last one is obtained multiplying both sides of the first with
$\overline{\theta}$. We infer that the holomorphy condition for $f$ writes as
\begin{eqnarray*}
  \pi_{_{J_0}}^{1, 0} \cdot \overline{\partial}_{_{J_0}} f +
   \partial_{_{J_0}} f \cdot \varphi & = & (\mathbbm{I}_{1, 0}
  -  \theta \overline{\theta}) \theta
  (\mathbbm{I}_{0, 1}  - \overline{\theta}
  \theta)^{- 1} \cdot (\partial_{_{J_0}} f  + 
  \overline{\partial}_{_{J_0}} f \cdot \varphi).
\end{eqnarray*}
We notice now the identity
\begin{eqnarray*}
  \theta & = & (\mathbbm{I}_{1, 0}  -  \theta
  \overline{\theta}) \theta (\mathbbm{I}_{0, 1}  -
  \overline{\theta} \theta)^{- 1} .
\end{eqnarray*}
The latter follows decomposing the trivial identity
\begin{eqnarray*}
  \mu & = & \left( \mathbbm{I}  - \mu^2 \right)
  \mu \left( \mathbbm{I}  - \mu^2 \right)^{- 1}
  .
\end{eqnarray*}
We conclude finally that the map $f : \left( X, J_{\varphi} \right)
\longrightarrow \left( X, J_{\theta} \right)$ is holomorphic if and only if
\begin{eqnarray*}
  \pi_{_{J_0}}^{1, 0} \cdot \overline{\partial}_{_{J_0}} f  +
  \partial_{_{J_0}} f \cdot \varphi & = &  \left( \theta \circ
  f \right) \cdot (\partial_{_{J_0}} f + 
  \overline{\partial}_{_{J_0}} f \cdot \varphi).
\end{eqnarray*}
For any $f \in \tmop{Diff} \left( X \right)$ sufficiently close to the
identity in $C^1$-norm, the almost complex structure $f^{\ast} J_{\theta}$ is
$J_0$-compatible, i.e. $\det \left( J_0 + f^{\ast} J_{\theta} \right) 
\neq  0$. Thus there exists a unique form $\theta_f$ such that $f^{\ast}
J_{\theta}  \; = \; J_{\theta_f}$. 

By definition the map $f : \left( X,
J_{\theta_f} \right) \longrightarrow \left( X, J_{\theta} \right)$ is
holomorphic. We conclude that $\theta_f$ is given by the formula
\begin{equation}
  \pi_{_{J_0}}^{1, 0} \cdot \overline{\partial}_{_{J_0}} f  -
   \left( \theta \circ f \right) \cdot \partial_{_{J_0}} f = -
  \left[ \partial_{_{J_0}} f  - \left( \theta
  \circ f \right) \cdot \overline{\partial}_{_{J_0}} f \right] \cdot \theta_f
  , \label{teta-F}
\end{equation}
and thus
\begin{eqnarray*}
  \theta_f & = & -  \left[ \partial_{_{J_0}} f  -
   \left( \theta \circ f \right) \cdot \overline{\partial}_{_{J_0}} f
  \right]_{\mid T^{1, 0}_{X, J_0}}^{- 1} \left[ \pi_{_{J_0}}^{1, 0} \cdot
  \overline{\partial}_{_{J_0}} f  -  \left( \theta
  \circ f \right) \cdot \partial_{_{J_0}} f \right] ,
\end{eqnarray*}
as long as
\begin{eqnarray*}
  \left[ \partial_{_{J_0}} f  -  \left( \theta \circ f \right)
  \cdot \overline{\partial}_{_{J_0}} f \right]_{\mid T^{1, 0}_{X, J_0}} & \in
  & \tmop{GL}_{_{\mathbbm{C}}} \left( T^{1, 0}_{X, J_0} \right) .
\end{eqnarray*}
Adding the complex conjugate we infer
\begin{eqnarray}
  \overline{\partial}_{_{J_0}} f  -  \left( \mu
  \circ f \right) \cdot \partial_{_{J_0}} f & = & - \left[
  \partial_{_{J_0}} f  -  \left( \mu \circ f \right) \cdot
  \overline{\partial}_{_{J_0}} f \right] \cdot \mu_f , 
  \label{mu-F}
\end{eqnarray}
and thus
\begin{eqnarray*}
  \mu_f & = & -  \left[ \partial_{_{J_0}} f -
  \left( \mu \circ f \right) \cdot \overline{\partial}_{_{J_0}} f \right]^{-
  1} \left[ \overline{\partial}_{_{J_0}} f - 
  \left( \mu \circ f \right) \cdot \partial_{_{J_0}} f \right],
\end{eqnarray*}
as long as
\begin{eqnarray*}
  \partial_{_{J_0}} f  -  \left( \mu \circ f \right) \cdot
  \overline{\partial}_{_{J_0}} f & \in & \tmop{GL} \left( T_X \right) .
\end{eqnarray*}

\subsubsection{The Kuranishi space of a compact complex manifold}\label{Kuran-Sect}

Let $\left( X, J \right)$ be a complex manifold and consider
\begin{eqnarray*}
  E_J'' & \assign & T^{\ast}_{X, - J} \otimes T_{X, J},\\
  &  & \\
  E_{g, J}'' & \assign & \tmop{End}_g (T_X) \cap E_J'',\\
  &  & \\
  \mathcal{C}_J & : = & \left\{ \mu \in \mathcal{E}(E_J'') \mid \left( 1 + \mu
  \right) \in \tmop{GL}_{_{\mathbbm{R}}} \left( T_X \right),
  \overline{\partial}_{T_{X, J}}  \, \mu + \frac{1}{2}  \left[ \mu, \mu
  \right] = 0 \right\} .
\end{eqnarray*}
Then the Caley transform (see \cite{Gau}) provides a bijection
\begin{eqnarray*}
  \tmop{Cal}_J : \mathcal{C}_J & \longrightarrow & \mathcal{J}_{\tmop{int}} \\
  &  & \\
  \mu & \longleftrightarrow & J \assign (\mathbbm{I}+ \mu) J_0  (\mathbbm{I}+
  \mu)^{- 1}\\
  &  & \\
  \mu : = (J_0 + J)^{- 1} (J_0 - J) & \longleftrightarrow & J .
\end{eqnarray*}
For notations convenience we will restrict our considerations to the Fano case
even if the result that will follow and its argument holds for a general
compact complex manifold. For any polarized Fano manifold $(X, J, \omega)$ we
define also the sub-set of $\Omega$-divergence free tensors in $\mathcal{C}_J$
\begin{eqnarray*}
  \mathcal{C}^{\tmop{div}}_{J, g} & \assign & \left\{ \mu \in \mathcal{C}_J
  \mid \overline{\partial}^{\ast_{g, \Omega}}_{T_{X, J}} \mu =
  0_{_{_{_{_{}}}}} \right\} \nosymbol .
\end{eqnarray*}
We denote by $H^0 \left( T_{X, J} \right)^{\bot} \cap W_k \left( T_{X, J}
\right)$ the $L_{g, \Omega}^2$-orthogonal space to the space of holomorphic
vector fields inside $W_k \left( T_{X, J}
\right)$. For any $\xi \in \mathcal{E} \left( T_{X, J} \right)$ of
sufficiently small norm the map $e \left( \xi \right) : X \longrightarrow X$
defined by
\begin{eqnarray*}
  e \left( \xi \right)_x & : = & \exp_{g, x} \left( \xi_x \right),
\end{eqnarray*}
is a smooth diffeomorphism. For readers convenience we provide a proof (in the
Fano case) of the following fundamental result due to Kuranishi \cite{Kur}.

\begin{theorem}
  \label{Kuranishi}{\tmstrong {$($The Kuranishi space $\mathcal{K}_{J, g}$.$)$}}
   For any polarized Fano manifold $(X, J,
  \omega)$ and any integer $k > n + 1$ with $n \assign \dim_{_{\mathbbm{C}}}
  X$ there exists;
  
  {\bf(A)}
   $\varepsilon, \delta \in \mathbbm{R}_{> 0}$,
  a complex analytic subset $\mathcal{K}_{J, g} \subseteq \mathcal{H}_{g,
  \Omega}^{0, 1} \left( T_{X, J} \right) \cap B^g_{\delta} \left( 0 \right)$,
  $0 \in \mathcal{K}_{J, g}$ and a holomorphic embedding
  \begin{eqnarray*}
    \mu : \mathcal{H}_{g, \Omega}^{0, 1} \left( T_{X, J} \right) \cap
    B^g_{\delta} \left( 0 \right) & \longrightarrow & B^{W^{0, 1}_k \left(
    T_{X, J} \right)}_{\varepsilon} \left( 0 \right),
  \end{eqnarray*}
  with $\mu_0 = 0$, which restricts to a bijection
  \begin{eqnarray*}
    \mu : \mathcal{K}_{J, g} & \longrightarrow & \mathcal{C}^{\tmop{div}}_{J,
    g} \cap B^{W^{0, 1}_k \left( T_{X, J}  \right)}_{\varepsilon} \left( 0
    \right),
  \end{eqnarray*}
  with the property $d_0 \mu \left( v \right) = v$, for all $v \in
  \tmop{TC}_{\mathcal{K}_{J, g}, 0} \assign$the tangent cone of
  $\mathcal{K}_{J, g}$ at the origin.
  
  {\bf (B)} $\varepsilon_0 \nocomma \nocomma \in
  \mathbbm{R}_{> 0}$, $\varepsilon_0 < \varepsilon$, and a smooth map
  \begin{eqnarray*}
    B^{W^{0, 1}_k \left( T_{X, J} \right)}_{\varepsilon_0} \left( 0 \right) &
    \longrightarrow & H^0 \left( T_{X, J} \right)^{\bot} \cap W_k \left( T_{X,
    J} \right)\\
    &  & \\
    \varphi & \longmapsto & \xi_{\varphi},
  \end{eqnarray*}
  with $\xi_0 = 0$, such that $\overline{\partial}^{\ast_{g, \Omega}}_{T_{X,
  J}} \varphi_{e \left( \xi_{\varphi} \right)} = 0$ which restricts to an
  application
  \begin{eqnarray*}
    B^{W^{0, 1}_k \left( T_{X, J} \right)}_{\varepsilon_0} \left( 0 \right)
    \cap \mathcal{E}^{0, 1} \left( T_{X, J} \right) & \longrightarrow & H^0
    \left( T_{X, J} \right)^{\bot} \cap \mathcal{E} \left( T_{X, J} \right),
  \end{eqnarray*}
  and such that the map
  \begin{eqnarray*}
    \mathcal{C}_J \cap B^{W^{0, 1}_k \left( T_{X, J} \right)}_{\varepsilon_0}
    \left( 0 \right) & \longrightarrow & \mathcal{C}^{\tmop{div}}_{J, g} \cap
    B^{W^{0, 1}_k \left( T_{X, J} \right)}_{\varepsilon} \left( 0 \right)\\
    &  & \\
    \varphi & \longmapsto & \mu \left( \varphi \right) \assign \varphi_{e
    \left( \xi_{\varphi} \right)},
  \end{eqnarray*}
  is well defined.
\end{theorem}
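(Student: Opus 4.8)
The plan is to follow the classical Kuranishi construction, but carried out with respect to the weighted Laplacian $\Delta^{\Omega,-J}_{T_{X,g}}$ and the hermitian product $(\ref{L2omOm-prod})$, using the Hodge decomposition for $\overline{\partial}_{T_{X,J}}$ recorded in the appendix. First I would set up the formal power series solution of the Maurer--Cartan equation $(\ref{MCart-Eq})$ in the Kuranishi gauge: given $v\in\mathcal H^{0,1}_{g,\Omega}(T_{X,J})$ one seeks $\mu(v)\in W^{0,1}_k(T_{X,J})$ with $\overline{\partial}^{\ast_{g,\Omega}}_{T_{X,J}}\mu(v)=0$ solving
\[
  \mu(v)\;=\;v\;-\;\tfrac12\,\overline{\partial}^{\ast_{g,\Omega}}_{T_{X,J}}\,G_{T_{X,J}}\,[\mu(v),\mu(v)],
\]
where $G_{T_{X,J}}$ is the (weighted) Green operator. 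Banach fixed point / implicit function theorem on the ball $B^{W^{0,1}_k}_\varepsilon(0)$ gives a unique holomorphic solution $\mu$ with $\mu_0=0$ and $d_0\mu=\mathrm{id}$; the Sobolev index $k>n+1$ guarantees that $W^{0,1}_k$ is a Banach algebra so that the quadratic term is well-behaved. Then $\mu(v)$ solves $(\ref{MCart-Eq})$ if and only if $H_{T_{X,J}}[\mu(v),\mu(v)]=0$ (by the Hodge decomposition and the identity $\overline{\partial}_{T_{X,J}}\mu(v)=-\tfrac12[\mu(v),\mu(v)]+(\text{harmonic part})$, together with the standard Kuranishi argument that the $\overline{\partial}^{\ast}$–exact error actually vanishes because $\overline{\partial}_{T_{X,J}}[\mu,\mu]=0$ by the graded Jacobi identity). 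Defining $\mathcal K_{J,g}:=\{v\in\mathcal H^{0,1}_{g,\Omega}(T_{X,J})\cap B^g_\delta(0)\mid H_{T_{X,J}}[\mu(v),\mu(v)]=0\}$ exhibits it as a complex analytic subset, and $\mu$ restricts to the claimed bijection onto $\mathcal C^{\mathrm{div}}_{J,g}\cap B^{W^{0,1}_k}_\varepsilon(0)$. The statement $d_0\mu(v)=v$ on the tangent cone is immediate from $d_0\mu=\mathrm{id}$.

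For part (B) I would invoke the slice theorem for the action of $\mathrm{Diff}(X)$ (equivalently, the exponential map $\xi\mapsto e(\xi)$ and its effect $(\ref{mu-F})$ on the Kuranishi parameter). Concretely, the map
\[
  W^{0,1}_k(T_{X,J})\times\bigl(H^0(T_{X,J})^\perp\cap W_k(T_{X,J})\bigr)\;\longrightarrow\;W^{0,1}_{k-1}(T_{X,J}),\qquad (\varphi,\xi)\longmapsto \overline{\partial}^{\ast_{g,\Omega}}_{T_{X,J}}\varphi_{e(\xi)},
\]
has, at $(\varphi,\xi)=(0,0)$, partial derivative in the $\xi$-direction equal to $\overline{\partial}^{\ast_{g,\Omega}}_{T_{X,J}}\overline{\partial}_{T_{X,J}}=\Delta^{\Omega,-J}_{T_{X,g}}$ (up to the holomorphic/antiholomorphic bidegree bookkeeping), which by elliptic theory is an isomorphism from $H^0(T_{X,J})^\perp\cap W_k$ onto $H^0(T_{X,J})^\perp\cap W_{k-1}$. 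The implicit function theorem then produces the smooth map $\varphi\mapsto\xi_\varphi$ on a smaller ball $B^{W^{0,1}_k}_{\varepsilon_0}(0)$ with $\xi_0=0$ and $\overline{\partial}^{\ast_{g,\Omega}}_{T_{X,J}}\varphi_{e(\xi_\varphi)}=0$; elliptic regularity (the fact that $\overline{\partial}_{T_{X,J}}\varphi$ smooth forces $\xi_\varphi$ smooth via the elliptic equation) gives the restriction to smooth sections. Combining with part (A), $\mu(\varphi):=\varphi_{e(\xi_\varphi)}$ lands in $\mathcal C^{\mathrm{div}}_{J,g}\cap B^{W^{0,1}_k}_\varepsilon(0)$ once $\varepsilon_0$ is chosen small enough that $\varphi\in\mathcal C_J$ implies $\varphi_{e(\xi_\varphi)}$ still satisfies the Maurer--Cartan equation and stays in the $\varepsilon$-ball, using the formula $(\ref{mu-F})$ that transforms $\overline{\partial}$-closedness into $\overline{\partial}$-closedness.

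The main obstacle, as usual in Kuranishi theory, is not any single estimate but the careful interplay of three things: ensuring the fixed-point scheme for $\mu$ closes up in the Banach-algebra Sobolev space (controlling $\|[\mu,\mu]\|_{W_k}\lesssim\|\mu\|_{W_k}^2$ and the operator norms of $G_{T_{X,J}}$ and $\overline{\partial}^{\ast_{g,\Omega}}_{T_{X,J}}$), verifying that the $\overline{\partial}^{\ast}$–exact remainder in the Maurer--Cartan equation genuinely vanishes (this is where $\overline{\partial}_{T_{X,J}}[\mu,\mu]=0$ and the injectivity of $\overline{\partial}_{T_{X,J}}$ on $\overline{\partial}^{\ast}$-exact forms enter), and tracking that the two successive applications of the implicit function theorem — one for $\mu$, one for the gauge $\xi_\varphi$ — can be done on nested balls with compatible radii so that all the ``well-defined'' claims hold simultaneously. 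The weighted versions of the Hodge decomposition and Green operator behave exactly like the unweighted ones since $\Delta^{\Omega,-J}_{T_{X,g}}$ is elliptic with the same symbol, so no new analytic input beyond $(\ref{Fund-exp-AntHol})$ and standard elliptic theory is required; the bookkeeping is the real work.
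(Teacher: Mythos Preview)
Your plan is essentially the paper's proof: the Kuranishi fixed-point equation $\mu=H_{T_{X,J}}\mu-\tfrac12\,\overline{\partial}^{\ast_{g,\Omega}}_{T_{X,J}}G_{T_{X,J}}[\mu,\mu]$ together with the obstruction $H_{T_{X,J}}[\mu,\mu]=0$, solved by the inverse function theorem on $W^{0,1}_k$, followed by the gauge-fixing implicit function theorem for $\xi_\varphi$ (the paper composes with $G_{T_{X,J}}$ so that the linearization is the identity rather than the Laplacian, but this is cosmetic). One slip worth correcting: $\overline{\partial}_{T_{X,J}}[\mu,\mu]$ is \emph{not} zero by the graded Jacobi identity; rather $\overline{\partial}_{T_{X,J}}[\mu,\mu]=2[\overline{\partial}_{T_{X,J}}\mu,\mu]$ by the derivation property, and the paper combines this with $[[\mu,\mu],\mu]=0$ (which \emph{is} Jacobi) to obtain $\psi=\overline{\partial}^{\ast_{g,\Omega}}_{T_{X,J}}G_{T_{X,J}}[\psi,\mu]$ for the error $\psi:=\overline{\partial}_{T_{X,J}}\mu+\tfrac12[\mu,\mu]$, whence $\|\psi\|_k\leqslant C\|\mu\|_k\|\psi\|_k$ forces $\psi=0$ for small $\mu$.
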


\begin{proof}
  We divide Kuranishi's proof in a few steps.
  
  {\tmstrong{STEP A1}}. We show first that the system
  \[ (S_1)  \left\{ \begin{array}{l}
       \overline{\partial}_{T_{X, J}}  \, \mu + \frac{1}{2}  \left[ \mu, \mu
       \right] = 0,\\
       \\
       \overline{\partial}^{\ast_{g, \Omega}}_{T_{X, J}} \mu = 0 .
     \end{array} \right. \]
  is equivalent to the system
  \[ (S_2)  \left\{ \begin{array}{l}
       \mu + \frac{1}{2}  \overline{\partial}^{\ast_{g, \Omega}}_{T_{X, J}}
       G_{T_{X, J}}  \left[ \mu, \mu \right] = H_{T_{X, J}}  \, \mu,\\
       \\
       H_{T_{X, J}} \, \left[ \mu, \mu \right] = 0,
     \end{array} \right. \]
  provided that $\mu$ is sufficiently close to $0$. Indeed let $\mu$ be a
  solution of $(S_1)$. Then the considerations about the resolution of the
  $\overline{\partial}$-equation imply the second equation in $(S_2)$.
  Moreover if we set
  \begin{eqnarray*}
    \varphi & \assign & - \frac{1}{2}  \overline{\partial}^{\ast_{g,
    \Omega}}_{T_{X, J}} G_{T_{X, J}}  \left[ \mu, \mu \right],
  \end{eqnarray*}
  then $\alpha \assign \mu - \varphi$ satisfies $\overline{\partial}_{T_{X,
  J}}  \, \alpha = 0$ and $\overline{\partial}^{\ast_{g, \Omega}}_{T_{X, J}}
  \alpha = 0$. Thus $\alpha \in \mathcal{H}_{g, \Omega}^{0, 1} (T_{X, J})$ and
  $H_{T_{X, J}} \mu = \alpha$ since
  \begin{eqnarray*}
    H_{T_{X, J}} \overline{\partial}^{\ast_{g, \Omega}}_{T_{X, J}} & = & 0,
  \end{eqnarray*}
  by orthogonality. This shows that also the first equation in $(S_2)$ holds.
  Assume now that $\mu$ is a solution of $(S_2)$. It is clear that the second
  equation in $(S_1)$ holds true. We set
  \begin{eqnarray*}
    \psi & \assign & \overline{\partial}_{T_{X, J}}  \, \mu + \frac{1}{2} 
    \left[ \mu, \mu \right],
  \end{eqnarray*}
  and we observe the equalities
  \begin{eqnarray*}
    \psi & = & - \frac{1}{2}  \overline{\partial}_{T_{X, J}}
    \overline{\partial}^{\ast_{g, \Omega}}_{T_{X, J}} G_{T_{X, J}}  \left[
    \mu, \mu \right] + \frac{1}{2}  \left[ \mu, \mu \right]\\
    &  & \\
    & = & \frac{1}{2}  \overline{\partial}^{\ast_{g, \Omega}}_{T_{X, J}}
    \overline{\partial}_{T_{X, J}}  \, G_{T_{X, J}}  \left[ \mu, \mu \right]\\
    &  & \\
    & = & \frac{1}{2}  \overline{\partial}^{\ast_{g, \Omega}}_{T_{X, J}}
    G_{T_{X, J}} \overline{\partial}_{T_{X, J}}  \left[ \mu, \mu \right]\\
    &  & \\
    & = & \overline{\partial}^{\ast_{g, \Omega}}_{T_{X, J}} G_{T_{X, J}} 
    \left[ \overline{\partial}_{\mathcal{T}} \mu, \mu \right] .
  \end{eqnarray*}
  We deduce the identity
  \begin{eqnarray*}
    \psi & = & \overline{\partial}^{\ast_{g, \Omega}}_{T_{X, J}} G_{T_{X, J}} 
    \left[ \psi, \mu \right] .
  \end{eqnarray*}
  The assumption $k > n + 1$ implies that the Sobolev embedding $W^k \subset
  C^{k - n}$ holds true. Using the standard estimates on the Sobolev norms of
  $W^{\bullet, \bullet}_k$
  \begin{eqnarray*}
    \|G_{T_{X, J}}  \, \varphi \|_{k + 2} & \leqslant & C_0  \| \varphi
    \|_k,\\
    &  & \\
    \| \left[ \varphi, \psi \right] \|_{k - 1} & \leqslant & C_2  \| \varphi
    \|_k \| \psi \|_k,
  \end{eqnarray*}
  we obtain
  \begin{eqnarray*}
    \| \psi \|_k & \leqslant & C_1  \| \left[ \psi, \mu \right] \|_{k - 1}
    \leqslant C_1 C_2  \| \psi \|_k  \| \mu \|_k .
  \end{eqnarray*}
  Thus if $\| \mu \|_k \leqslant \varepsilon / (C_1 C_2)$ for some
  $\varepsilon \in (0, 1)$ then $(1 - \varepsilon) \| \psi \|_k \leqslant 0$,
  which holds true if and only if $\psi = 0$.
  
  {\tmstrong{STEP A2}}. We remind that the previous discussion shows that the
  first equation in $(S_2)$ is equivalent to the condition
  \begin{eqnarray*}
    F (\mu) \assign \mu + \frac{1}{2}  \overline{\partial}^{\ast_{g,
    \Omega}}_{T_{X, J}} G_{T_{X, J}}  \left[ \mu, \mu \right] & \in &
    \mathcal{H}_{g, \Omega}^{0, 1} (T_{X, J}) .
  \end{eqnarray*}
  Let $\Xi_k \subset W_k^{0, 1} (T_{X, J})$ be the subset of the elements
  satisfying this condition. We notice that the map
  \begin{eqnarray*}
    F : W^{0, 1}_k (T_{X, J}) & \longrightarrow & W^{0, 1}_k (T_{X, J}),
  \end{eqnarray*}
  is well defined and continuous thanks to the estimate
  \begin{eqnarray*}
    \| \overline{\partial}^{\ast_{g, \Omega}}_{T_{X, J}} G_{T_{X, J}}  \left[
    \mu, \mu \right] \|_k & \leqslant & C_1  \| \left[ \mu, \mu \right] \|_{k
    - 1} \leqslant C_1 C_2  \| \mu \|^2_k .
  \end{eqnarray*}
  We infer that $F$ is also holomorphic since $F -\mathbbm{I}$ is a continuous
  quadratic form. The fact that the differential of $F$ at the origin is the
  identity implies the existence of an inverse holomorphic map $F^{- 1}$ in a
  neighborhood $B^{W_k}_{\varepsilon} (0)$ of the origin. Restricting this to
  $\mathcal{H}_{g, \Omega}^{0, 1} (T_{X, J}) \cap B^{W_k}_{\varepsilon} (0)$
  we deduce the existence of a holomorphic map
  \begin{eqnarray*}
    \alpha \in \mathcal{H}_{g, \Omega}^{0, 1} (T_{X, J}) \cap
    B^{W_k}_{\varepsilon} (0) & \longmapsto & \mu_{\alpha} \in W^{0, 1}_k
    (T_{X, J}),
  \end{eqnarray*}
  such that
  \begin{eqnarray*}
    \mu_{\alpha} + \frac{1}{2}  \overline{\partial}^{\ast_{g,
    \Omega}}_{T_{X, J}}
    G_{T_{X, J}}  \left[ \mu_{\alpha}, \mu_{\alpha} \right] & = & \alpha .
  \end{eqnarray*}
  By construction $\tmop{Im} \left( \alpha \longmapsto \mu_{\alpha} \right)$
  represents a neighborhood of the origin inside $\Xi_k$. It is clear that
  $\mu_{\alpha}$ is of class $C^{k - n}$ by the Sobolev embedding. We show
  further that $\mu_{\alpha}$ is smooth for a sufficiently small choice of
  $\varepsilon$. Indeed applying the Hodge Laplacian $\Delta^{\Omega, -
  J}_{T_{X, g}}$ to both sides of the previous identity and using the
  equalities
  \begin{eqnarray*}
    \Delta^{\Omega, - J}_{T_{X, g}} \, \overline{\partial}^{\ast_{g,
    \Omega}}_{T_{X, J}} G_{T_{X, J}} =  \overline{\partial}^{\ast_{g,
    \Omega}}_{T_{X, J}} \Delta^{\Omega, - J}_{T_{X, g}} G_{T_{X, J}}
     =  \overline{\partial}^{\ast_{g,
    \Omega}}_{T_{X, J}} ,
  \end{eqnarray*}
  (notice that $\overline{\partial}^{\ast_{g, \Omega}}_{T_{X, J}} H_{T_{X, J}}
  \; = \; 0$) we obtain the equation
  \begin{eqnarray*}
    \Delta^{\Omega, - J}_{T_{X, g}} \mu_{\alpha} \noplus \noplus \; + \; 
    \frac{1}{2}  \overline{\partial}^{\ast_{g, \Omega}}_{T_{X, J}} \left[
    \mu_{\alpha}, \mu_{\alpha} \right] & = & 0 \;,
  \end{eqnarray*}
  which rewrites also as
  \begin{eqnarray*}
    \Delta^{\Omega, - J}_{T_{X, g}}  \, \mu_{\alpha} \noplus \noplus \; + \; 
    \frac{1}{2} \; \mu_{\alpha} \ast \nabla_{g_{}}^2 \mu_{\alpha} & = &
    \frac{1}{2} \nabla_{g_{}} \mu_{\alpha} \ast \nabla_{g_{}} \mu_{\alpha} +
    \frac{1}{2} \; \mu_{\alpha} \ast \nabla_{g_{}}^{} \mu_{\alpha} \ast
    \nabla_g f_{_{_{_{}}}}  \;,
  \end{eqnarray*}
  where $\ast$ denotes adequate contraction operators. The fact that the
  $C^0$-norm of $\mu_{\alpha}$ can be made arbitrary small for sufficiently
  small $\varepsilon$ implies that the operator
  \[ \Delta^{\Omega, - J}_{T_{X, g}} \noplus \noplus \; + \;  \frac{1}{2} \;
     \mu_{\alpha} \ast \nabla_{g_{}}^2  \;, \]
  is elliptic. Then the smoothness of $\mu_{\alpha}$ follows by standard
  elliptic bootstrapping. We denote by $\mathcal{K}_{J, g}$ the zero set of
  the holomorphic map
  \begin{eqnarray*}
    \chi : \mathcal{H}_{g, \Omega}^{0, 1} (T_{X, J}) \cap
    B^{W_k}_{\varepsilon} (0) & \longrightarrow & \mathcal{H}_{g, \Omega}^{0,
    2} (T_{X, J})\\
    &  & \\
    \alpha & \longmapsto & H_{T_{X, J}} \left[ \mu_{\alpha}, \mu_{\alpha}
    \right]  \; .
  \end{eqnarray*}
  Then the set $\left\{ \mu_{\alpha} \mid \alpha \in \mathcal{K}_{J, g}
  \right\}$ covers the set of the solutions of the system $\left( S_2 \right)$
  in a neighborhood of the origin.
  
  {\tmstrong{STEP B}}. We observe first that $\overline{\partial}^{\ast_{g,
  \Omega}}_{T_{X, J}} \varphi = 0$ if and only if $G_{T_{X, J}}
  \overline{\partial}^{\ast_{g, \Omega}}_{T_{X, J}} \varphi = 0$. Indeed
  \[ \tmop{Im} \overline{\partial}^{\ast_{g, \Omega}}_{T_{X, J}} \bot
     \tmop{Ker} G_{T_{X, J}}, \]
  since $\tmop{Ker} G_{T_{X, J}} = \tmop{Ker} \Delta^{\Omega, - J}_{T_{X,
  g}}$. Thus in order to construct the application $\varphi \longmapsto
  \xi_{\varphi}$ we need to find the zeros of the map
  \begin{eqnarray*}
    R : W^{0, 1}_k \left( T_{X, J} \right) \times \left[ H^0 \left( T_{X, J}
    \right)^{\bot} \cap B^{W_k \left( T_{X, J} \right)}_{\varepsilon_0} \left(
    0 \right) \right] & \longrightarrow & H^0 \left( T_{X, J} \right)^{\bot}
    \cap W_k \left( T_{X, J} \right)\\
    &  & \\
    \left( \varphi, \xi \right) & \longmapsto & G_{T_{X, J}}
    \overline{\partial}^{\ast_{g, \Omega}}_{T_{X, J}} \varphi_{e \left( \xi
    \right)},\\
    &  & \\
    \left( 0, 0 \right) & \longmapsto & 0 .
  \end{eqnarray*}
  For notations simplicity we denote $\Psi : \left( \mu, f \right) \longmapsto
  \mu_f$. With these notations the formula (\ref{mu-F}) writes as
  \begin{eqnarray*}
    \overline{\partial}_{_J} e \left( t \xi \right) & = & - \partial_{_J} e
    \left( t \xi \right) \cdot \Psi \left( 0, e \left( t \xi \right) \right) .
  \end{eqnarray*}
  Time deriving this identity at $t = 0$ and using the fact that $\frac{d}{d
  t}_{\mid t = 0} e \left( t \xi \right) = \xi$, $\Psi \left( 0, \tmop{Id}_X
  \right) = 0$ and $e \left( 0 \right) = \tmop{Id}_X$ we obtain
  \begin{eqnarray*}
    \overline{\partial}_{T_{X, J}}  \, \xi & = & - D_f \Psi \left( 0,
    \tmop{Id}_X \right) \cdot \xi,
  \end{eqnarray*}
  where $D_f \Psi$ denotes the partial Frechet derivative of $\Psi$ in the
  variable $f$. We observe now that for any $\xi \in W_k \left( T_{X, J}
  \right)$ holds the decomposition formula
  \begin{eqnarray*}
    \xi & = & H_{T_{X, J}}  \, \xi \noplus + G_{T_{X, J}}
    \overline{\partial}^{\ast_{g, \Omega}}_{T_{X, J}}
    \overline{\partial}_{T_{X, J}} \xi .
  \end{eqnarray*}
  Thus if $\xi \in H^0 \left( T_{X, J} \right)^{\bot} \cap W_k \left( T_{X, J}
  \right)$ then holds the identity
  \begin{eqnarray*}
    \xi & = & G_{T_{X, J}} \overline{\partial}^{\ast_g}_{T_{X, J}}
    \overline{\partial}_{T_{X, J}} \xi .
  \end{eqnarray*}
  We conclude the identity $D_{\xi} R \left( 0, 0 \right) =\mathbbm{I}$ and
  the existence of the map $\varphi \longmapsto \xi_{\varphi}$ by the implicit
  function theorem. In local coordinates we can consider the expansion
  \begin{eqnarray*}
    e \left( \xi \right) & = & \tmop{Id}_X + \xi + O ( \left| \xi
    \right|^2) .
  \end{eqnarray*}
  Then the formula (\ref{mu-F}) implies the local identity
  \begin{eqnarray*}
    \varphi_{e \left( \xi \right)} & = & \overline{\partial}_{T_{X, J}} \xi +
    \varphi + Q \left( \varphi, \xi \right),
  \end{eqnarray*}
  with $Q$ an analytic function (depending on the local coordinates) Then the
  condition $\overline{\partial}^{\ast_{g, \Omega}}_{T_{X, J}} \varphi_{e
  \left( \xi_{\varphi} \right)} = 0$ implies
  \[ \Delta^{\Omega, - J}_{T_{X, g}} \xi_{\varphi} +
     \overline{\partial}^{\ast_{g, \Omega}}_{T_{X, J}} \varphi +
     \overline{\partial}^{\ast_{g, \Omega}}_{T_{X, J}} Q \left( \varphi,
     \xi_{\varphi} \right) = 0 . \]
  Thus $\xi_{\varphi}$ is smooth if $\varphi$ is smooth by elliptic
  regularity.
\end{proof}

\subsubsection{Parametrization of a sub-space of the $\omega$-compatible
complex structures}\label{Pol-Kuran-Sect}

Let $\left( X, J, \omega \right)$ be a polarized Fano manifold and consider
the set
\begin{eqnarray*}
  \mathcal{C}_{\omega, J} & \assign & \left\{ \mu \in \mathcal{E}(E_{g,
  J}'') \mid g \left( 1 \pm \mu \right) > 0, \overline{\partial}_{T_{X, J}} 
  \, \mu + \frac{1}{2}  \left[ \mu, \mu \right] = 0 \right\},
\end{eqnarray*}
with $g \assign - \omega J$. Then the Caley transform restricts to a bijection
(see \cite{Gau})
\begin{eqnarray*}
  \tmop{Cal}_J : \mathcal{C}_{\omega, J} & \longrightarrow &
  \mathcal{J}_{\omega} .
\end{eqnarray*}
We define also the sub-set of $\Omega$-divergence free tensors in
$\mathcal{C}_{\omega, J}$
\begin{eqnarray*}
  \mathcal{C}^{\tmop{div}}_{\omega, J} & \assign & \left\{ \mu \in
  \mathcal{C}_{\omega, J} \mid \overline{\partial}^{\ast_{g, \Omega}}_{T_{X,
  J}} \mu = 0_{_{_{_{_{}}}}} \right\} \nosymbol .
\end{eqnarray*}
\begin{definition}
  \label{Pol-Kuranishi}{\tmstrong{$($The Kuranishi space of polarized deformations$)$}} For
  any polarized Fano manifold $(X, J, \omega)$ we define the Kuranishi space
  of $\omega$-polarized complex deformations as the complex analytic subset
  \begin{eqnarray*}
    \mathcal{K}^{\omega}_J & \assign & \left\{ \alpha \in \mathcal{K}_{J, g}
    \mid \mu_{\alpha} = \left( \mu_{\alpha} \right)_g^T \right\} .
  \end{eqnarray*}
\end{definition}

With these notations the map $\mu$ in theorem \ref{Kuranishi} restricts to a
bijection
\begin{eqnarray*}
  \mu : \mathcal{K}^{\omega}_J & \longrightarrow &
  \mathcal{C}^{\tmop{div}}_{\omega, J} \cap B^{W^{0, 1}_k \left( T_{X, J}
  \right)}_{\varepsilon} \left( 0 \right) .
\end{eqnarray*}
For any $\alpha\in \mathcal{K}^{\omega}_J$ we define $J_{\alpha}:=\tmop{Cal}_J\mu_{\alpha}$.
Let also $\mathcal{U}_{\omega} \subset C_{\Omega}^{\infty} \left( X,
\mathbbm{C} \right)_0$ be an open neighborhood of the origin such that $\omega
+ d d_{_{J_{\alpha}}}^c u_1 > 0$ for all $\alpha \in \mathcal{K}^{\omega}_J$
and $u = u_1 + i u_2 \in \mathcal{U}_{\omega}$, with $u_j$ real valued. We
define the real vector field
\begin{eqnarray*}
  \xi^{\alpha, u}_t & \assign & - \left( \omega + t d d_{_{J_{\alpha}}}^c u_1
  \right)^{- 1} \left( d_{_{J_{\alpha}}}^c u_1 \noplus + \frac{1}{2} d u_2
  \right),
\end{eqnarray*}
for all $t \in \left( - \varepsilon, 1 + \varepsilon \right)$, for some small
$\varepsilon > 0$. We define also the family of diffeomorphisms $\left(
\Phi^{\alpha, u}_t \right)_{t \in \left( - \varepsilon, 1 + \varepsilon
\right)}$ over $X$ given by $\partial_t \Phi^{\alpha, u}_t = \xi^{\alpha, u}_t
\circ \Phi_t^{\alpha, u}$, with $\Phi_0^{\alpha, u} = \tmop{Id}_X$. We set
finally
\begin{eqnarray*}
  J_{\alpha, u} & \assign & \left( \Phi^{\alpha, u}_1 \right)^{\ast}
  J_{\alpha} .
\end{eqnarray*}
With these notations holds the following lemma.

\begin{lemma}
  \label{param-pol-Cxdef}The map
  \begin{eqnarray*}
    \mathcal{K}^{\omega}_J \times \mathcal{U}_{\omega} & \longrightarrow &
    \mathcal{J}_{\omega},\\
    &  & \\
    \left( \alpha, u \right) & \longmapsto & J_{\alpha, u},
  \end{eqnarray*}
  is well defined and its differential at the origin is given by the fiberwise
  injection
  \begin{eqnarray*}
    \tmop{TC}_{\mathcal{K}^{\omega}_J, 0} \oplus \Lambda^{\Omega, \bot}_{g, J}
    & \longrightarrow & \tmop{TC}_{\mathcal{J}_{\omega}, J}\\
    &  & \\
    \left( A, v \right) & \longmapsto & - J \left[ \overline{\partial}_{T_{X,
    J}} \nabla_{g, J}  \overline{v} + 2 A_{_{_{_{}}}} \right] .
  \end{eqnarray*}
\end{lemma}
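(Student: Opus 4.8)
The plan is to prove the two assertions of the lemma in turn: that $J_{\alpha,u}$ is a well-defined point of $\mathcal{J}_{\omega}$, and then the formula for the differential of the parametrisation at $\left(0,0\right)$, together with its injectivity.

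\textbf{Well-definedness.} Shrinking the ball around the origin in $\mathcal{K}^{\omega}_J$, the tensor $\mu_{\alpha}$ supplied by theorem \ref{Kuranishi} is $g$-symmetric (by definition of $\mathcal{K}^{\omega}_J$), $J$-anti-linear, of arbitrarily small norm, and satisfies the Maurer--Cartan equation; hence $\mu_{\alpha}\in\mathcal{C}^{\tmop{div}}_{\omega,J}\subset\mathcal{C}_{\omega,J}$ and $J_{\alpha}:=\tmop{Cal}_J\mu_{\alpha}\in\mathcal{J}_{\omega}$ via the Caley transform bijection $\mathcal{C}_{\omega,J}\to\mathcal{J}_{\omega}$. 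Choosing $\mathcal{U}_{\omega}$ so that $\omega+t\,dd^c_{J_{\alpha}}u_1>0$ for all $t\in(-\varepsilon,1+\varepsilon)$ and all $\alpha$, the time-dependent fields $\xi^{\alpha,u}_t$ are smooth and, $X$ being compact, their flow $(\Phi^{\alpha,u}_t)_t$ exists on $(-\varepsilon,1+\varepsilon)$. Since $d\left(\tfrac12 du_2\right)=0$, Cartan's formula gives $\partial_t\big((\Phi^{\alpha,u}_t)^{\ast}(\omega+t\,dd^c_{J_{\alpha}}u_1)\big)=0$, so $(\Phi^{\alpha,u}_1)^{\ast}(\omega+dd^c_{J_{\alpha}}u_1)=\omega$; that is, $\Phi^{\alpha,u}_1$ is a symplectomorphism from $(X,\omega)$ onto $(X,\omega+dd^c_{J_{\alpha}}u_1)$. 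As $\omega+dd^c_{J_{\alpha}}u_1$ is a K\"ahler form for the integrable structure $J_{\alpha}$, the structure $J_{\alpha}$ is $(\omega+dd^c_{J_{\alpha}}u_1)$-compatible, and a symplectomorphism pulls back a compatible pair to a compatible pair; hence $J_{\alpha,u}=(\Phi^{\alpha,u}_1)^{\ast}J_{\alpha}\in\mathcal{J}_{\omega}$.

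\textbf{The differential at the origin.} Fix $\left(A,v\right)\in\tmop{TC}_{\mathcal{K}^{\omega}_J,0}\oplus\Lambda^{\Omega,\bot}_{g,J}$ and differentiate the curve $s\mapsto J_{sA,sv}=(\Phi^{sA,sv}_1)^{\ast}J_{sA}$ at $s=0$. Since $\mu_0=0$ and $d_0(\alpha\mapsto\mu_{\alpha})(A)=A$ (theorem \ref{Kuranishi}, using $\tmop{TC}_{\mathcal{K}^{\omega}_J,0}\subseteq\tmop{TC}_{\mathcal{K}_{J,g},0}$), differentiating $\tmop{Cal}_J\mu=(\mathbbm{I}+\mu)J(\mathbbm{I}+\mu)^{-1}$ at $\mu=0$ yields $\tfrac{d}{ds}\big|_0 J_{sA}=[A,J]=-2JA$, because $A$ is $J$-anti-linear. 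For the gauge factor, $J_{sA}=J+O(s)$ forces the $\alpha$-dependence of $\xi^{sA,sv}_t$ to enter only at second order, so $\xi^{sA,sv}_t=-s\,\omega^{-1}\big(d^c_Jv_1+\tfrac12 dv_2\big)+O(s^2)$ (and this is $t$-independent to first order); hence $\Phi^{sA,sv}_1=\tmop{Id}_X+s\,\eta_v+O(s^2)$ with $\eta_v=-\omega^{-1}\big(d^c_Jv_1+\tfrac12 dv_2\big)$, and $\tfrac{d}{ds}\big|_0(\Phi^{sA,sv}_1)^{\ast}J=L_{\eta_v}J$. The elementary K\"ahler identities $\nabla_g f=2\,\omega^{-1}d^c_J f$ and $\omega^{-1}df=-J\nabla_g f$ give $\eta_v=-\tfrac12\nabla_{g,J}\overline{v}$, and the general identity $(L_YJ)^{\ast}_g=2J\,\overline{\partial}_{T_{X,J}}Y$ (a direct consequence of $\nabla_gJ=0$, exactly as in the proof of lemma \ref{Exist-SRF}) then gives $(L_{\eta_v}J)^{\ast}_g=-J\,\overline{\partial}_{T_{X,J}}\nabla_{g,J}\overline{v}$. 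Adding the two contributions, $\tfrac{d}{ds}\big|_0(J_{sA,sv})^{\ast}_g=-J\big(\overline{\partial}_{T_{X,J}}\nabla_{g,J}\overline{v}+2A\big)$, which is the claimed formula.

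\textbf{Injectivity.} If $-J\big(\overline{\partial}_{T_{X,J}}\nabla_{g,J}\overline{v}+2A\big)=0$, then since $J$ is invertible $\overline{\partial}_{T_{X,J}}\nabla_{g,J}\overline{v}=-2A$. The left-hand side is $\overline{\partial}_{T_{X,J}}$-exact and $A\in\tmop{TC}_{\mathcal{K}^{\omega}_J,0}\subseteq\mathcal{H}^{0,1}_{g,\Omega}(T_{X,J})$ is $\overline{\partial}$-harmonic, so by Hodge orthogonality both sides vanish: $A=0$ and $\overline{v}\in\tmop{Ker}\mathcal{H}^{0,1}_{g,J}$. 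By the identity (\ref{kern-HessOI}) this gives $v\in\tmop{Ker}(\Delta^{\Omega}_{g,J}-2\mathbbm{I})$, whence $v=0$ since $v\in\Lambda^{\Omega,\bot}_{g,J}$ is $L^2_{\Omega}$-orthogonal to that kernel. I expect the only genuinely delicate point to be the bookkeeping in the differential computation — extracting the first-order part of the Moser flow $\Phi^{sA,sv}_1$ and matching the normalisations in $\xi^{\alpha,u}_t$ against the factors $\tfrac12$ and $2$ in the target expression; the well-definedness and the injectivity reduce cleanly to facts already available (the Caley transform, theorem \ref{Kuranishi}, Hodge theory, lemma \ref{iso-CxGrad} and the identity (\ref{kern-HessOI})).
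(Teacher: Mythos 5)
Your proof is correct and follows essentially the same route as the paper: Moser's trick via Cartan's formula for well-definedness, and the sum of the Cayley-transform differential $-2JA$ with the gauge contribution $L_{\eta_v}J=-J\,\overline{\partial}_{T_{X,J}}\nabla_{g,J}\overline{v}$ coming from $\eta_v=-\tfrac{1}{2}\nabla_{g,J}\overline{v}$. The only addition is your explicit injectivity argument via Hodge orthogonality and the identity (\ref{kern-HessOI}), which the paper leaves implicit and which is a correct and useful supplement.
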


\begin{proof}
  Let denote for simplicity $\omega_t \assign \omega + t d d_{_{J_{\alpha}}}^c
  u_1$ and we observe the elementary identities
  \begin{eqnarray*}
    \dot{\omega}_t & = & d d_{_{J_{\alpha}}}^c u_1 = - d (\xi^{\alpha, u}_t
    \neg \omega_t) = - L_{\xi^{\alpha, u}_t} \omega_t ._{}
  \end{eqnarray*}
  We infer
  \begin{eqnarray*}
    \frac{d}{d t}  \left[_{_{_{_{_{}}}}} \left( \Phi^{\alpha, u}_t
    \right)^{\ast} \omega_t \right] & = & \left( \Phi^{\alpha, u}_t
    \right)^{\ast}  \left( \dot{\omega}_t \noplus + L_{\xi^{\alpha, u}_t}
    \omega_{t_{_{_{}}}} \right) = 0,
  \end{eqnarray*}
  and thus $\left( \Phi^{\alpha, u}_1 \right)^{\ast} \omega_1 = \left(
  \Phi^{\alpha, u}_0 \right)^{\ast} \omega_0 = \omega$, i.e.
  \begin{eqnarray*}
    \left( \Phi^{\alpha, u}_1 \right)^{\ast} \left( \omega + d
    d_{_{J_{\alpha}}}^c u_1 \right)  & = & \omega .
  \end{eqnarray*}
  The fact that the complex structure $J_{\alpha}$ is integrable implies that
  the form $\omega_1$ is $J_{\alpha}$-invariant. (This is no longer true in
  the non-integrable case!) We conclude $J_{\alpha, u} \in
  \mathcal{J}_{\omega}$.
  
  We compute now the differential at the origin. We consider for this purpose
  a smooth family $\left( u \left( s \right) \right)_s \subset
  \mathcal{U}_{\omega}$ such that $u \left( 0 \right) = 0$ and $\dot{u} \left(
  0 \right) = v$. We denote for simplicity $\xi_{t, s} \assign \xi_t^{0, u
  \left( s \right)}$ and $\Phi_{t, s} \assign \Phi^{0, u \left( s \right)}_t$.
  Then deriving with respect to $s$ at $s = 0$ the identity
  \begin{eqnarray*}
    \frac{\partial}{\partial t} \Phi_{t, s} & = & \xi_{t, s} \circ \Phi_{t,
    s},
  \end{eqnarray*}
  and using the fact that $\xi_{t, 0} = 0$, (which implies in particular
  $\Phi_{t, 0} = \tmop{Id}_X$) we obtain
  \begin{eqnarray*}
    \frac{\partial}{\partial s} _{\mid_{s = 0}}  \frac{\partial}{\partial t}
    \Phi_{t, s} & = & \frac{\partial}{\partial s} _{\mid_{s = 0}} \xi_{t, s} 
    \; + \; d_x \xi_{t, 0} \cdot \frac{\partial}{\partial s} _{\mid_{s = 0}}
    \Phi_{t, s}\\
    &  & \\
    & = & \frac{\partial}{\partial s} _{\mid_{s = 0}} \xi_{t, s} .
  \end{eqnarray*}
  On the other hand deriving with respect to $s$ at $s = 0$ the identity
  \begin{eqnarray*}
    \xi_{t, s} \neg \left( \omega + t d d_{_J}^c u_1 \left( s
    \right)_{_{_{_{}}}} \right) & = & - \left( d_{_J}^c u_1 \left( s \right)
    \noplus + \frac{1}{2} d u_2 \left( s \right) \right),
  \end{eqnarray*}
  we obtain
  \begin{eqnarray*}
    \left( \frac{\partial}{\partial s} _{\mid_{s = 0}} \xi_{t, s} \right) \neg
    \; \omega & = & - \left( d_{_J}^c v_1 \noplus + \frac{1}{2} d v_2 \right),
  \end{eqnarray*}
  and thus
  \begin{eqnarray*}
    \frac{\partial}{\partial s} _{\mid_{s = 0}} \xi_{t, s} & = & - \frac{1}{2}
    \nabla_{g, J}  \overline{v} .
  \end{eqnarray*}
  Commuting the derivatives in $s$ and $t$ we infer the identity
  \begin{eqnarray*}
    \frac{\partial}{\partial t}  \frac{\partial}{\partial s} _{\mid_{s = 0}}
    \Phi_{t, s} & = & - \frac{1}{2} \nabla_{g, J}  \overline{v} .
  \end{eqnarray*}
  Integrating in $t$ from $0$ to $1$ we deduce
  \begin{eqnarray*}
    \eta \assign \frac{\partial}{\partial s} _{\mid_{s = 0}} \Phi_{1, s} & = &
    - \frac{1}{2} \nabla_{g, J}  \overline{v},
  \end{eqnarray*}
  since $\Phi_{0, s} = \tmop{Id}_X$. We infer
  \begin{eqnarray*}
    \frac{d}{d s} _{\mid_{s = 0}} J_{0, u \left( s \right)} & = & L_{\eta} J =
    - J \overline{\partial}_{T_{X, J}} \nabla_{g, J}  \overline{v} .
  \end{eqnarray*}
  Assume now $\left( \alpha \left( s \right) \right)_s \subset \mathcal{K}_{J,
  \omega}$ is a smooth curve with $\alpha \left( 0 \right) = 0$ and
  $\dot{\alpha} \left( 0 \right) = A$. Then
  \begin{eqnarray*}
    \frac{d}{d s} _{\mid_{s = 0}} J_{\alpha \left( s \right), u \left( s
    \right)} & = & \frac{d}{d s} _{\mid_{s = 0}} J_{\alpha \left( s \right)} +
    \frac{d}{d s} _{\mid_{s = 0}} J_{0, u \left( s \right)},
  \end{eqnarray*}
  with
  \begin{eqnarray*}
    \frac{d}{d s} _{\mid_{s = 0}} J_{\alpha \left( s \right)} & = & - 2 J A,
  \end{eqnarray*}
  thanks to the properties of the differential of the Caley transform (see \cite{Gau}).
\end{proof}

\begin{lemma}
  For any point $J \in \mathcal{J}_{\omega}$ holds the inclusions 
  \begin{eqnarray*}
  &&  \left[ \overline{\partial}_{T_{X, J}} \nabla_{g, J} C^{\infty} \left( X,
    \mathbbm{C} \right)_{_{_{_{}}}} \right] \oplus_{\Omega}
    \tmop{TC}_{\mathcal{K}^{\omega}_J, 0} 
    \\
    \\
    & \subseteq &
    \tmop{TC}_{\mathcal{J}_{\omega}, J}\\
    &  & \\
    & \subseteq & \left[ \overline{\partial}_{T_{X, J}} \nabla_{g, J}
    C^{\infty} \left( X, \mathbbm{C} \right)_{_{_{_{}}}} \right]
    \oplus_{\Omega} \tmop{TC}_{\mathcal{K}_{J, g}, 0} .
  \end{eqnarray*}
\end{lemma}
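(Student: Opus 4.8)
The plan is to prove the two inclusions separately: the lower one will be essentially a restatement of lemma~\ref{param-pol-Cxdef}, while the upper one will combine the unconstrained Kuranishi slice of theorem~\ref{Kuranishi}(B) with the decomposition of corollary~\ref{dec-VarJ} and the uniqueness of the weighted Hodge decomposition. Throughout I identify a tangent vector $\dot{J}$ of a curve in $\mathcal{J}_{\omega}$ through $J$ with the section $v^{\ast}_g := - J \dot{J} \in C^{\infty}(X, T^{\ast}_{X, - J} \otimes T_{X, J})$; since $\omega$ is fixed this is just $v^{\ast}_g = g^{-1}\dot{g}$ for the associated curve $g_t = -\omega J_t$, and it is $g$-symmetric with $\overline{\partial}_{T_{X, J}} v^{\ast}_g = 0$ because $\mathcal{M}_{\omega} = - \omega \cdot \mathcal{J}_{\omega}$ and $\tmop{TC}_{\mathcal{M}_{\omega}, g} \subseteq \mathbbm{D}^J_{g, [0]} \subseteq \mathbbm{D}^J_g$ by (\ref{TConeM}). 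I also record that $\overline{\partial}_{T_{X, J}} \nabla_{g, J}$ annihilates $\overline{\Lambda^{\Omega}_{g, J}}$ by (\ref{kern-HessOI}) and that $C^{\infty}_{\Omega}(X, \mathbbm{C})_0 = \Lambda^{\Omega}_{g, J} \oplus \Lambda^{\Omega, \bot}_{g, J}$, whence $\overline{\partial}_{T_{X, J}} \nabla_{g, J} C^{\infty}(X, \mathbbm{C}) = \overline{\partial}_{T_{X, J}} \nabla_{g, J} \Lambda^{\Omega, \bot}_{g, J}$; by the weighted Hodge decomposition this space is $L^2_{\Omega}$-orthogonal to $\mathcal{H}^{0, 1}_{g, \Omega}(T_{X, J})$, so all internal sums written $\oplus_{\Omega}$ below are automatically orthogonal once the set-theoretic inclusions are known.

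\textbf{Lower inclusion.} I would feed lemma~\ref{param-pol-Cxdef} into the definition of the tangent cone: since $(\alpha, u) \mapsto J_{\alpha, u}$ takes values in $\mathcal{J}_{\omega}$, every vector in the image of its differential at the origin is a tangent vector of $\mathcal{J}_{\omega}$ at $J$, hence lies in $\tmop{TC}_{\mathcal{J}_{\omega}, J}$. By that lemma this differential is $(A, v) \mapsto - J[\overline{\partial}_{T_{X, J}} \nabla_{g, J} \overline{v} + 2 A]$ on $\tmop{TC}_{\mathcal{K}^{\omega}_J, 0} \oplus \Lambda^{\Omega, \bot}_{g, J}$, so in the $v^{\ast}_g$-picture its image is $\overline{\partial}_{T_{X, J}} \nabla_{g, J} \Lambda^{\Omega, \bot}_{g, J} \oplus_{\Omega} \tmop{TC}_{\mathcal{K}^{\omega}_J, 0}$, which by the first paragraph equals the left hand side of the statement. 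The scalar factors $-1, 2$ that appear when matching normalizations are harmless because both $\tmop{TC}_{\mathcal{K}^{\omega}_J, 0}$ and $\tmop{TC}_{\mathcal{K}_{J, g}, 0}$ are $\mathbbm{C}^{\ast}$-invariant: $\mathcal{K}_{J, g}$ is complex analytic by construction, and $\mathcal{K}^{\omega}_J$ is cut out inside it by the vanishing of $\alpha \mapsto \mu_{\alpha} - (\mu_{\alpha})^T_g$, a holomorphic map since the transpose operation is $\mathbbm{C}$-linear on $T_{X, J}$-valued forms.

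\textbf{Upper inclusion.} Let $(J_t)$ be a smooth curve in $\mathcal{J}_{\omega}$ with $J_0 = J$ and set $v^{\ast}_g := - J \dot{J}_0$. Corollary~\ref{dec-VarJ} (applied to $v \in \mathbbm{D}^J_{g,[0]} \subseteq \mathbbm{D}^J_g$, where $(v^{\ast}_g)^{0,1}_J = v^{\ast}_g$) yields a unique $\psi_v \in \Lambda^{\Omega, \bot}_{g, J}$ and a unique harmonic $A_v \in \mathcal{H}^{0, 1}_{g, \Omega}(T_{X, J})$ with $v^{\ast}_g = \overline{\partial}_{T_{X, J}} \nabla_{g, J} \overline{\psi_v} + A_v$; it remains only to upgrade $A_v$ from $\mathcal{H}^{0,1}_{g,\Omega}(T_{X,J})$ to $\tmop{TC}_{\mathcal{K}_{J, g}, 0}$. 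For this I would invoke theorem~\ref{Kuranishi}(B): for $t$ small the tensor $\varphi_t$ with $J_t = \tmop{Cal}_J(\varphi_t)$ lies in the domain of the correction map, so $J_t = (\psi_t)^{\ast} J_{\alpha(t)}$ with $\psi_t := e(\xi_{\varphi_t})^{-1}$ a smooth path of diffeomorphisms, $\psi_0 = \tmop{Id}_X$, and $\mu_{\alpha(t)} := \tmop{Cal}_J^{-1}(J_{\alpha(t)})$ smooth in $t$ with $\mu_{\alpha(0)} = 0$ and $\dot{\alpha}_0 := \tfrac{d}{dt}\big|_{t = 0} \mu_{\alpha(t)} \in \tmop{TC}_{\mathcal{C}^{\tmop{div}}_{J, g}, 0} = \tmop{TC}_{\mathcal{K}_{J, g}, 0}$ (using $d_0 \mu = \tmop{Id}$ from part (A)). Differentiating $J_t = \psi_t^{\ast} J_{\alpha(t)}$ at $t = 0$, using $L_{\eta} J = 2 J \overline{\partial}_{T_{X, J}} \eta$ for the integrable structure $J$ (the normalization already used, up to gauge, in the proofs of lemmas~\ref{Exist-SRF}, \ref{positiv-G} and \ref{param-pol-Cxdef}) together with the derivative of the Caley transform from \cite{Gau}, I get $v^{\ast}_g = 2 \overline{\partial}_{T_{X, J}} \eta - 2 \dot{\alpha}_0$, where $\eta$ is the vector field generating $(\psi_t)$ at $t = 0$. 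Now I compare this with the corollary~\ref{dec-VarJ} decomposition of the same $v^{\ast}_g$: the terms $\overline{\partial}_{T_{X, J}} \eta$ and $\overline{\partial}_{T_{X, J}} \nabla_{g, J} \overline{\psi_v}$ are $\overline{\partial}_{T_{X, J}}$-exact while $\dot{\alpha}_0$ and $A_v$ are $\Delta^{\Omega, - J}_{T_{X, g}}$-harmonic, so uniqueness of the weighted Hodge decomposition forces $A_v = - 2 \dot{\alpha}_0 \in \tmop{TC}_{\mathcal{K}_{J, g}, 0}$ (again by $\mathbbm{C}^{\ast}$-invariance of the complex analytic tangent cone). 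Hence $v^{\ast}_g \in [\overline{\partial}_{T_{X, J}} \nabla_{g, J} C^{\infty}(X, \mathbbm{C})] \oplus_{\Omega} \tmop{TC}_{\mathcal{K}_{J, g}, 0}$, which is the second inclusion.

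\textbf{Main obstacle.} The delicate point is this matching step in the upper inclusion: one must know that the harmonic component produced by the \emph{abstract} decomposition of corollary~\ref{dec-VarJ} coincides with the \emph{explicit} Kuranishi parameter $\dot{\alpha}_0$. Uniqueness of the weighted Hodge decomposition settles this, but it requires the correction field $\eta = \dot{\psi}_0$ to be genuinely smooth — guaranteed by the smooth dependence of $\xi_{\varphi}$ on $\varphi$ in theorem~\ref{Kuranishi}(B) — and it requires the several normalizations ($v^{\ast}_g = - J \dot{J}$, the $- 2 J$ in the Caley differential, the constant in $L_{\eta} J = 2 J \overline{\partial}_{T_{X, J}} \eta$) to be mutually consistent; I would pin these down by direct comparison with the formulas already established in the proofs of lemmas~\ref{Exist-SRF}, \ref{positiv-G} and \ref{param-pol-Cxdef}, after which the two inclusions together give the stated squeeze.
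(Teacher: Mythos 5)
Your proof is correct and takes essentially the same route as the paper: the lower inclusion is read off from lemma \ref{param-pol-Cxdef}, and the upper inclusion rests on differentiating the gauge-fixing construction of theorem \ref{Kuranishi} (B) at the origin and identifying the harmonic component of the velocity with an element of $\tmop{TC}_{\mathcal{K}_{J, g}, 0}$. The only cosmetic difference is that the paper differentiates the identity (\ref{mu-F}) for $\mu \left( \varphi_t \right)$ directly and then passes to general curves ``by gauge transformation'', whereas you write $J_t = \psi_t^{\ast} J_{\alpha \left( t \right)}$ and match the resulting splitting against the decomposition of corollary \ref{dec-VarJ} via uniqueness of the weighted Hodge decomposition --- the same computation in different bookkeeping.
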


\begin{proof}
  The first inclusion is a direct consequence of lemma \ref{param-pol-Cxdef}.
  In order to show the second one let $\left( \varphi_t \right)_t \subset
  \mathcal{C}_J \cap B^{W^{0, 1}_k \left( T_{X, J} \right)}_{\varepsilon_0}
  \left( 0 \right)$ with $\varphi_0 = 0$ and set for notation simplicity $e_t
  \assign e \left( \xi_{\varphi_t} \right)$. With these notations, the identity
  (\ref{mu-F}) writes as
  \begin{eqnarray*}
    \overline{\partial}_{_J} e_t  -  \left(
    \varphi_t \circ e_t \right) \cdot \partial_{_J} e_t  \;=\;  - 
    \left[ \partial_{_J} e_t  -  \left( \varphi_t \circ e_t
    \right) \cdot \overline{\partial}_{_J} e_t \right] \cdot \mu \left(
    \varphi_t \right) .
  \end{eqnarray*}
  Time deriving this at $t = 0$ and using the obvious equality $\dot{e}_0 = D
  \xi \left( 0 \right) \dot{\varphi}_0$ we deduce the equality
  \begin{eqnarray*}
    \overline{\partial}_{T_{X, J}} \left[ D_{_{_{_{}}}} \xi \left( 0 \right)
    \dot{\varphi}_0 \right] - \dot{\varphi}_0  \;= \; - \frac{d}{d t} _{\mid_{t
    = 0}} \mu \left( \varphi_t \right) .
  \end{eqnarray*}
  This combined with the identity $\overline{\partial}^{\ast_{g,
  \Omega}}_{T_{X, J}} \mu \left( \varphi_t \right) \equiv 0$ implies
  \begin{eqnarray*}
    \overline{\partial}^{\ast_{g, \Omega}}_{T_{X, J}} 
    \overline{\partial}_{T_{X, J}} \left[ D_{_{_{_{}}}} \xi \left( 0 \right)
    \dot{\varphi}_0 \right] - \overline{\partial}^{\ast_{g, \Omega}}_{T_{X,
    J}}  \dot{\varphi}_0 \; = \; 0 .
  \end{eqnarray*}
  Thus if $\overline{\partial}^{\ast_{g, \Omega}}_{T_{X, J}}  \dot{\varphi}_0
  = 0$ then $D_{_{_{_{}}}} \xi \left( 0 \right) \dot{\varphi}_0 = 0$ and
  \begin{eqnarray*}
    \dot{\varphi}_0 \; = \;  \frac{d}{d t} _{\mid_{t = 0}} \mu \left( \varphi_t
    \right) .
  \end{eqnarray*}
  We infer the equality
  \begin{eqnarray*}
    &  & \left\{ A \in \mathcal{H}_{g, \Omega}^{0, 1} (T_{X, J}) \mid \exists
    \left( J_t \right)_t \subset \mathcal{J}_{\tmop{int}} :J_0=J,\; \dot{J}_0 = A
    \right\}\\
    &  & \\
    & = & \left\{ A \in \mathcal{H}_{g, \Omega}^{0, 1} (T_{X, J}) \mid
    \exists \left( \varphi_t \right)_t \subset \mathcal{C}^{\tmop{div}}_{J,g} :\varphi_0=0,\;
    \dot{\varphi}_0 = A \right\}\; =\; \tmop{TC}_{\mathcal{K}_{J, g}, 0} .
  \end{eqnarray*}
  By gauge transformation we deduce
  \begin{eqnarray*}
    &  & \left\{ A \in \mathcal{H}_{g, \Omega}^{0, 1} (T_{X, J}) \mid \exists
    \left( J_t \right)_t \subset \mathcal{J}_{\omega} :J_0=J,\; H_{T_{X, J}} 
    \dot{J}_0 = A \right\}\\
    &  & \\
    & \subseteq & \left\{ A \in \mathcal{H}_{g, \Omega}^{0, 1} (T_{X, J})
    \mid \exists \left( J_t \right)_t \subset \mathcal{J}_{\tmop{int}} :J_0=J,\;
    \dot{J}_0 = A \right\},
  \end{eqnarray*}
  and thus the required inclusion.
\end{proof}

This result combined with the existence of the isomorphism $\eta$ and with the triple decomposition identity
(\ref{smt-trp-spl-T}) implies the inclusions (\ref{First-incTC}) and (\ref{Second-incTC}) in the introduction of the paper.
\\
\\
\\
{\noindent}\tmtextbf{Acknowledgments. }We warmly thank Jean-Michel Bismut for
giving us the opportunity to explain our Soliton-K\"ahler-Ricci flow at his
weekly seminar in Orsay.

We warmly thank also Duong H. Phong for sharing with us a huge number of quite
delicate points in contemporary K\"ahler geometry.

We are also very grateful to Robert Bryant for explaining us in detail the
key $\tmop{Spin}^c$ computation in \cite{D-W-W2} and to Xianzhe Dai for providing us
more technical details about their beautiful work.

We thank Paul Gauduchon for sending us a copy of his monumental book \cite{Gau}
and for shearing his great knowledge in K\"ahler geometry with us.

We thank also Stuart Hall for explaining to us in great detail the
construction of the Dancer-Wang K\"ahler-Ricci-Soliton and the vanishing of
the stability integral in \cite{Ha-Mu2} in some particular harmonic directions
over this soliton.{\hspace*{\fill}}{\medskip}

\vspace{1cm}
\noindent
Nefton Pali
\\
Universit\'{e} Paris Sud, D\'epartement de Math\'ematiques 
\\
B\^{a}timent 425 F91405 Orsay, France
\\
E-mail: \textit{nefton.pali@math.u-psud.fr}
\end{document}